\def\ind(E){{\lim\limits_{\buildrel{\longleftarrow}\over{E}}}}
\def\scrA{{{\mathscr A}}}
\def\scrK{{{\mathscr K}}}
\def\C{\mathbb{C}}
\def\H{\mathbf{H}}
\def\HT{{R}}
\def\cb{\mathbf{c}}
\def\fb{\mathbf{f}}
\def\xb{\mathbf{x}}
\def\Uen{{{\mathfrak{U}}}}
\def\scrU{{{\mathscr U}}}
\def\Wen{{{\mathfrak{W}}}}
\def\Sen{{{\mathfrak{S}}}}
\def\Heis{{{\mathscr{H}}}}
\def\PW{{{\widetilde W}}}
\def\eps{{\varepsilon}}
\def\ben{{\mathfrak{b}}}
\def\gen{{\mathfrak{g}}}
\def\hen{{\mathfrak{h}}}
\def\men{{\mathfrak{m}}}
\def\den{{\mathfrak{d}}}
\def\uen{{\mathfrak{u}}}
\def\pen{{\mathfrak{p}}}
\def\sen{{\mathfrak{s}}}
\def\V{\mathbf{V}}
\def\len{{\mathfrak{l}}}
\def\nen{{\mathfrak{n}}}
\def\Dc{{\mathcal D}}
\def\Lc{{\mathcal L}}
\def\Oc{{\mathcal O}}
\def\dim{\mathrm{dim}}
\def\Hom{\mathrm{Hom}}
\def\det{\mathrm{det}}
\def\End{\mathrm{End}}
\def\cdeg{\operatorname{cdeg}}
\def\lg{G}
\def\ad{{\mathrm{ad}}}
\def\Ind{\mathrm{Ind}}
\def\Z{\mathbb{Z}}
\def\E{{X}}
\def\H{\mathbf{H}}
\def\Cb{\mathbf{C}}
\def\kb{\mathbf{k}}
\def\Ecb{\pmb{\mathcal E}}
\def\B{\mathbb{A}}
\def\Sc{\mathcal{S}}
\def\Hc{\mathcal{H}}
\def\f{\mathbf{f}}
\def\Hin{\mathrm{Hilb}_n}
\def\Hilb{\mathrm{Hilb}}
\def\Hi{\mathrm{Hilb}}
\def\U{{\mathbf U }}
\def\Sb{{\mathbf{S}}}
\def\SH{\mathbf{S}{\mathbf{H}}}
\def\SHo{\mathbf{S}{\mathbf{H}}}
\def\SHoo{{\SH^0}}
\def\SC{\mathbf{S}{\mathbf{C}}}
\def\qed{\hfill $\sqcap \hskip-6.5pt \sqcup$}
\def\N{\mathbb{N}}
\def\a{a}
\def\b{b}
\def\cc{\mathbf{c}}
\def\CC{\mathbb{C}}
\def\Am{\mathbb{A}}
\def\Bm{\mathbb{B}}
\def\cm{\mathbb{c}}
\def\Km{\mathbb{K}}
\def\Lm{\mathbb{L}}
\def\Hm{\mathbb{H}}
\def\Sm{\mathbb{S}}
\def\ZZ{\mathbb{Z}}
\def\Lb{{\mathbf{L}}}
\def\Wb{{\mathbf{W}}}
\def\E{{\mathbf{E}}}
\def\Shb{\mathbf{Sh}}
\def\mod{{\text{mod}}}
\def\eu{\operatorname{eu}\nolimits}
\def\op{{\operatorname{op}\nolimits}}
\def\tor{{\operatorname{tor}\nolimits}}
\def\cc{\operatorname{c}\nolimits}
\def\Hom{\operatorname{Hom}\nolimits}
\def\mod{\operatorname{mod}\nolimits}
\def\End{\operatorname{End}\nolimits}
\def\Aut{\operatorname{Aut}\nolimits}
\def\Diff{\operatorname{Diff}\nolimits}
\def\Ker{\operatorname{Ker}\nolimits}
\def\Im{\operatorname{Im}\nolimits}
\def\dim{{\operatorname{dim}\nolimits}}
\def\Frac{{\operatorname{Frac}\nolimits}}
\def\SYM{\operatorname{SYM}\nolimits}
\theoremstyle{plain}
\newtheorem{theo}{Theorem}[section]
\newtheorem{lem}[theo]{Lemma}
\newtheorem{prop}[theo]{Proposition}
\newtheorem{cor}[theo]{Corollary}
\newtheorem*{claim}{Claim}
\newtheorem{conj}[theo]{Conjecture}
\newtheorem{slem}[theo]{Claim}
\newtheorem*{theoa}{Theorem A}
\newtheorem*{theob}{Theorem B}
\newtheorem*{theoc}{Theorem C}
\newtheorem*{theo*}{Theorem}
\newtheorem*{prop*}{Proposition}
\newtheorem{conj*}{Conjecture}
\theoremstyle{definition}
\newtheorem{df}[theo]{Definition}
\theoremstyle{remark}
\newtheorem{ex}[theo]{Example}
\newtheorem{rem}[theo]{Remark}
\numberwithin{equation}{section}
\title[Degenerate DAHA, W-algebras and instantons]
{Cherednik algebras, W-algebras and 
the equivariant cohomology of the moduli space of instantons on $\mathbb{A}^2$}
\author{O. Schiffmann, E. Vasserot}
\begin{document}

\begin{abstract} 
We construct a representation of the affine $W$-algebra 
of $\gen\len_r$ on the equivariant
homology space of the moduli space of $U_r$-instantons, 
and we identify the corresponding module.
As a corollary, we give a proof of a version of the AGT conjecture 
concerning pure $N=2$
gauge theory for the group $SU(r)$.  Another proof  has been announced by Maulik and Okounkov.
Our approach uses a deformation of the universal enveloping algebra of $W_{1+\infty}$,
which acts on the above homology space and which 
specializes to $W(\gen\len_r)$ for all $r$. This deformation
is constructed from a limit, as $n$ tends to $\infty$, 
of the spherical degenerate double affine Hecke algebra of $GL_n$.
\end{abstract}

\maketitle


\maketitle

\setcounter{tocdepth}{2}

\tableofcontents

\section{Introduction}

\vspace{.15in}

In their recent study of $N=2$ super-symmetric gauge theory in dimension four, 
the authors of \cite{AGT} observed a
striking relation with two-dimensional Conformal Field Theory. 
More precisely, they observed in some examples and
conjectured in many other an equality between the conformal blocks of Liouville theory associated with a punctured Riemann
surface and the group $U_r$ on the one hand and the instanton part of the 
Nekrasov partition function for a suitable
four-dimensional gauge theory associated with the group $U_r$ on the other hand. 
Numerous partial results in this direction
have been obtained in the physics litterature,
see e.g., \cite{FL} and the references therein. 
In mathematical
terms, the AGT conjecture suggests in particular the existence of a representation of the 
affine $W$-algebra of $G$ on the equivariant
intersection cohomology of the moduli space of $G^L$-instantons on 
$\mathbb{R}^4$ satisfying some extra properties (relating the fundamental class and the Whittaker vector),
see \cite{BFRF} and \cite{Gaiotto}. Here $G,$ $G^L$ are a pair of
complex reductive groups which are dual to each other in the sense of Langlands.
For the gauge group $G=G^L=GL_r,$ a construction of this action 
will be given by Maulik and Okounkov. 
It uses ideas from symplectic geometry, see, e.g., Okounkov's talk in Jerusalem in December 2010.
The purpose of this paper is to give, again for $G=GL_r$, an 
alternative construction of this action which is inspired by our previous work
\cite{SV2}. It is based on degenerate double affine Hecke algebras.

Let us describe our main result more precisely. Let $M_r=\bigsqcup_{n \geqslant 0} M_{r,n}$ be the moduli 
space of rank $r$ torsion free coherent sheaves
on $\mathbb{P}^2$, equipped with a framing along 
$\mathbb{P}^1_{\infty} \subset \mathbb{P}^2$.
It is a smooth symplectic variety of dimension
$2rn$. It is acted upon by an $r+2$-dimensional torus $\widetilde{D}=( \CC^\times)^2 \times D$ 
where $(\CC^\times)^2$ acts on
$\mathbb{P}^2$ and $D=(\CC^\times)^r$ acts on the framing. When $r=1$, 
the moduli space $M_{1,n}$ is isomorphic to the Hilbert scheme 
$Hilb_n$ of $n$ points on $\CC^2$. 
In the mid $90$s, Nakajima constructed a 
representation of the rank one Heisenberg 
algebra on the space
$$\widetilde\Lb^{(1)}=\bigoplus_{n \geqslant 0} H_*(Hilb_n)$$
by geometric methods, which 
identifies it with the standard level
one Fock space, see \cite{NakHilb} and \cite{Groj}. 
The case of the equivariant Borel-Moore homology 
$$\Lb^{(1)}=\bigoplus_{n \geqslant 0} H_*^{\widetilde D}(Hilb_n)$$ 
was considered later in \cite{V}.
For $r \geqslant 1$ there is still a representation of a rank one Heisenberg algebra
on the space 
$$\mathbf{L}^{(r)}=\bigoplus_{n \geqslant 0} H^{\widetilde{D}}_*(M_{r,n}),$$
but it is neither irreducible nor cyclic, see \cite{Baranov}.
A construction of a representation of a $r$-dimensional
Heisenberg algebra on $\mathbf{L}^{(r)}$ has also been given in
\cite{LicataSavage}.
Now, let 
$$
R_r=\CC[x,y,e_1, \ldots, e_r],\qquad
K_r=\CC(x,y,e_1, \ldots, e_r)
$$
be the cohomology ring of the
classifying space of $\widetilde{D}$ and its fraction field. 
The space $\Lb^{(r)}$ is an
$R_r$-module. Set $\Lb^{(r)}_K=\Lb^{(r)}\otimes_{R_r}K_r$.
We'll abbreviate 
$$\kappa=-y/x,\qquad \varepsilon_i=e_i/x,\qquad \xi=1-\kappa,\qquad i\in[1,r].$$ 
Let $W_k(\gen\len_r)$ be the
level $k$ affine $W$-algebra of $\gen\len_r$.
Recall that the cup product in equivaraint cohomology yields a bilinear map
$$(\bullet,\bullet):\Lb^{(r)}_K\times\Lb^{(r)}_K\to K_r$$
called the \emph{intersection pairing}.
Set $\vec{e}=(e_1,e_2,\dots,e_r)$, $\vec\varepsilon=\vec{e}/x$ and
$\rho=(0,-1, -2, \ldots, 1-r)$.
Here is the main result of this paper. 

\vspace{.1in}

\begin{theo*} 
(a) There is a representation of  $W_k(\gen\len_r)$ of 
level $k=\kappa-r$ on $\Lb^{(r)}_K$, 
identifying it with the Verma module $M_\beta$ of highest weight
$\beta=-(\vec\varepsilon+\xi\rho)/\kappa.$

(b) This action is quasi-unitary with respect to the intersection pairing 
on $\Lb^{(r)}_K$.

(c) The \emph{Gaiotto state} $G=\sum_{n \geqslant 0}G_n$, $G_n= [M_{r,n}],$
is a Whittaker vector of $M_\beta$.
\end{theo*}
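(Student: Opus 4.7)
The plan is to deduce the theorem from a careful study of a large algebra $\SH^c$ obtained as a limit of spherical degenerate DAHAs of $GL_n$ as $n\to\infty$, together with a geometric action of $\SH^c$ on $\Lb^{(r)}_K$ by natural correspondences on the moduli spaces $M_{r,n}$. The main novelty is that $\SH^c$ should surject onto $W_k(\gen\len_r)$ for every $r$, with the appropriate specialization of central parameters. All three assertions (a), (b), (c) will then be pulled back from statements about $\SH^c$.

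First I would construct $\SH^c$ as an inverse/direct limit over $n$ of the spherical degenerate DAHA, in the spirit of our earlier work in \cite{SV2}. The generators are best described as Hecke-like operators modelled on commuting families of Dunkl-type operators; passing to the limit $n\to\infty$ produces a family of elements $D_{l,k}$ indexed by $(l,k)\in \ZZ\times\N$, with commutation relations that deform the relations of $U(W_{1+\infty})$. A PBW-type theorem together with an explicit set of relations for $\SH^c$ should be established first, so that later one can write down representations simply by specifying the action of a generating set. The action on $\Lb^{(r)}_K$ would be constructed by realizing the generators of $\SH^c$ as equivariant Nakajima-style correspondences on pairs $(M_{r,n},M_{r,n+l})$; compatibility with the abstract relations of $\SH^c$ is checked by rank-one reduction to the Hilbert scheme $\Hi_n$, where our previous work already describes the relevant action, and by a torus-fixed-point localization argument reducing everything to products of rank-one pieces.

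Next, for part (a), the key algebraic step is to prove that $\SH^c$, specialized so that the central parameters take the values determined by $\kappa=-y/x$ and $r$, admits a surjection onto $W_k(\gen\len_r)$ with $k=\kappa-r$. The most natural strategy is to match the two sides by realizing both inside the algebra $\End(\Lb^{(r)}_K)$: one has generators of $\SH^c$ realized geometrically, and on the other hand the free field (bosonic) realization of $W_k(\gen\len_r)$, after identifying $\Lb^{(r)}_K$ with a polynomial Fock space via torus fixed-point localization. Having the surjection, the $W_k(\gen\len_r)$-action on $\Lb^{(r)}_K$ is automatic, and to recognize the module one computes the action of the Cartan generators on the cyclic vector $[M_{r,0}]\in H^{\widetilde D}_*(\mathrm{pt})$; the equivariant parameters $e_i/x$ and the shift by $\rho$ come out because the localization of $[M_{r,n}]$ at a fixed point labelled by an $r$-tuple of Young diagrams produces products of shifted hook factors involving $\eps_i$ and $\xi=1-\kappa$. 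Cyclicity of $[M_{r,0}]$ (and irreducibility, after passing to $K_r$) is proved by the usual localization plus lower-triangularity with respect to the basis of fixed points, identifying $\Lb^{(r)}_K$ with the Verma module $M_\beta$.

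For part (b), I would introduce an anti-involution $\omega$ on $\SH^c$ such that $\omega(D_{l,k})=D_{-l,k}$ (or the appropriate $W$-algebra analogue), and verify that, under the geometric realization, $\omega$ corresponds to taking transpose of correspondences with respect to the intersection pairing on $\Lb^{(r)}_K$. Since $\omega$ descends to the standard Chevalley anti-involution of $W_k(\gen\len_r)$, this is exactly the quasi-unitarity statement. For part (c), the Whittaker property for $G=\sum_n [M_{r,n}]$ is most naturally formulated in $\SH^c$: the fundamental classes $[M_{r,n}]$ are characterized, via localization, by a simple generating function identity, which in turn translates into the statement that the positive-degree generators of $\SH^c$ act on $G$ by a fixed character concentrated on the simple positive root modes and vanishing on higher ones. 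Pushing this through the surjection $\SH^c\tto W_k(\gen\len_r)$ yields the Whittaker condition for $M_\beta$. The main obstacle in the whole scheme is the algebraic one: controlling the presentation of $\SH^c$ well enough to establish the surjection onto $W_k(\gen\len_r)$ uniformly in $r$, and matching the resulting level and highest weight precisely with the geometric data $\kappa,\vec\eps,\rho$; once this bridge is built, (b) and (c) follow with comparatively little extra work.
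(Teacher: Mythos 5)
Your overall scaffolding — build a limit algebra $\SH^{\cb}$ from spherical degenerate DAHAs, construct a geometric action on $\Lb^{(r)}_K$ by correspondences, pull back a $W_k(\gen\len_r)$-structure through an algebra map, then deduce the Verma identification, quasi-unitarity, and the Whittaker property by localization — is indeed the architecture of the paper's proof, and parts (b) and (c) of your plan are essentially what the paper does (Proposition~\ref{prop:unitarity} provides exactly the anti-involution compatible with the intersection pairing, and Section~9 reads $G$'s Whittaker condition off of localization identities). However, there is a genuine gap in your treatment of the central algebraic step in part~(a), which makes the bridge you describe unbuildable as stated.

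Specifically, you propose to ``match the two sides by realizing both inside $\End(\Lb^{(r)}_K)$'' and to ``prove that $\SH^c$ admits a surjection onto $W_k(\gen\len_r)$.'' Neither of these is how the comparison can actually be carried out. The free-field (Miura) realization of $W_k(\gen\len_r)$ lives in $\End(\pi^{(1^r)})$, that is, in the $r$-fold tensor power $(\Lb^{(1)}_{K_r})^{\otimes r}$ and not in $\End(\Lb^{(r)}_K)$; the geometric module $\Lb^{(r)}_K$ and the Fock-space module $(\Lb^{(1)}_{K_r})^{\otimes r}$ are \emph{different} $\SH^{(r)}_K$-modules, not identified by localization. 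To put $\SH^{(r)}_K$ into $\End(\pi^{(1^r)})$ in the first place, one needs a coproduct $\pmb\Delta$ on $\SH^{\cb}$ (Theorem~\ref{7.7:thm2}) defining the ``free field'' representation $\rho^{(1^r)}$, and this coproduct is itself obtained nontrivially by degenerating the coproduct on the elliptic Hall algebra $\Sm\Hm^\cm$. Your sketch omits this Hopf-algebra structure entirely, and without it there is no free-field representation of $\SH^{(r)}_K$ to compare against. Moreover, the map that the paper constructs is an \emph{embedding} $\Theta:\SH^{(r)}_K\hookrightarrow\scrU(W_k(\gen\len_r))$ (surjectivity holds only after passing to a degreewise completion $\Uen(\SH^{(r)}_K)$); a raw surjection $\SH^c\tto W_k(\gen\len_r)$ would pull $W_k$-modules \emph{back} to $\SH^c$-modules, which is the wrong direction for equipping the geometrically-defined $\SH^c$-module $\Lb^{(r)}_K$ with a $W_k$-action. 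Finally, you say nothing about \emph{why} the image of $\SH^{(r)}_K$ inside $\End(\pi^{(1^r)})$ lands in $\scrU(W_k(\gen\len_r))$ at all; the paper's key input here is the Feigin--Frenkel theorem characterizing $W_k(\gen\len_r)$ as the intersection $\bigcap_i\scrU^{\omega_i}$ of subalgebras cut out by screening operators, together with the coassociativity of $\pmb\Delta$ reducing the check to the cases $r=1,2$. Without the coproduct and without the screening-operator characterization, the matching step in your plan cannot be completed.
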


\vspace{.1in}

Parts $(a)$ and $(b)$ are proved in Theorem \ref{Theo:Main}
and part $(c)$ is proved in Proposition \ref{prop:8.12}.
Note that $W_k(\gen\len_1)$ is a
Heisenberg algebra of rank one. So the theorem may be seen as a generalization 
to higher ranks of the representation of the Heisenberg algebra
on the equivariant cohomology of the Hilbert scheme. 
For instance, for $r=2$ we get an action of the
Virasoro algebra on the cohomology of the moduli space of 
$U_2$-instantons on $\mathbb{R}^4$.  
The relation with the AGT conjecture for the
pure $N = 2$ supersymmetric
gauge theory is the following.
Recall that Nekrasov's partition function is the generating function of the integral
of the equivariant cohomology class $1\in H^{\widetilde D}_*(M_{r,n})$, i.e., we have
$$Z(x,y, \vec{e}\,;\,q)=\sum_{n\geqslant 0} q^n\big([M_{r,n}],[M_{r,n}]\big).$$
The element $G$ belongs to the completed Verma module
$$\widehat M_\beta=\prod_{n\geqslant 0}M_{\beta,n},\qquad
M_{\beta,n}=H_*^{\widetilde D}(M_{r,n})\otimes_{R_r}K_r.$$
Let $\{W_{d,l}\,;\,l\in\Z,\,d\in[1,r]\}$ be the set of the
Fourier modes of the generating fields of $W_k(\gen\len_r)$.
Then $M_\beta$ has a unique 
bilinear form $(\bullet,\bullet)$ such that 
the highest weight vector has norm 1 and the
adjoint of $W_{d,-l}$ is $W_{d,l}$ for $l\geqslant 0$ (up to a sign). 
Then, the element $G$ is uniquely determined by the
Whittaker condition and we have
$$Z(x,y, \vec{e}\,;\,q)=\sum_{n\geqslant 0}q^n(G_n,G_n).$$

Let us now explain the main steps of the proof. 
Since $W$-algebras do not possess, 
beyond the case of $\gen\len_3$,  a presentation by
generators and relations, we cannot hope to construct directly the action of $W_k(\gen\len_r)$
on $\mathbf{L}^{(r)}_K$ by some correspondences. 
Our approach relies instead on an intermediate 
algebra $\SH^{\cb}$, defined over 
the field $F=\CC(\kappa),$ which is interesting in its own right, 
and which does act on $\mathbf{L}^{(r)}_K$ by some correspondences. 
The actual definition of $\SH^{\cb}$ is rather involved. 
Its main properties are summarized below. 
Let $\SH_n$ denote the 
spherical degenerate double affine Hecke algebra of $GL_n$. Let 
$\Lambda=F[p_l\,;\,l\geqslant 1]$. Let
$$\mathscr{H}=\langle \cb_0, b_l \;;\; l \in \Z \rangle$$ be the Heisenberg 
algebra of central charge $\cb_0/\kappa$. 
First, we prove the following in Section 1 and Appendix F.

\begin{prop*}
(a) The algebra $\SH^{\cb}$ is $\ZZ$-graded, $\N$-filtered and has a 
triangular decomposition 
$$\SH^{\cb}= \SH^> \otimes \SH^{\cb,0} \otimes \SH^<,\qquad
\SH^{\cb,0}=F[\mathbf{c}_l\,;\,l\geqslant 1]\otimes F[D_{0,l}\,;\,l\geqslant 1].$$
Here $F[\mathbf{c}_l\,;\,l\geqslant 1]$ is a central subalgebra.
The Poincar\'e polynomials of $\SH^<$ and $\SH^>$ are 
$$P_{\SH^{>}}(t,q)=\prod_{r>0}\prod_{l \geqslant 0} \frac{1}{1-t^rq^l}, \qquad 
P_{\SH^{<}}(t,q)=\prod_{r<0}\prod_{l \geqslant 0} \frac{1}{1-t^rq^l}.$$

(b) Let $\SHo$ be the specialization of $\SH^{\cb}$ at 
$\mathbf{c}_0=0$ and $\mathbf{c}_l=-\kappa^l\omega^l$ for $l\geqslant 1$. 
For  $n\geqslant 1$ there is a surjective algebra
homomorphism $\Psi_n :\SHo \to \SH_n$ with
$\bigcap_n \Ker\Psi_n=\{0\}$. 

\vspace{1mm}

(c) The part of order $\leqslant 0$ for the $\N$-filtration
is $\SH^\cb[\leqslant\!\! 0] = \mathscr{H}\otimes F[\cb_l\,;\,l\geqslant 2]$.
The algebra
$\SH^{\cb}$ is generated by
$\SH^{\cb}[\leqslant 0]$ and $D_{0,2}$.

\vspace{1mm}

(d) Let $\SH^{(1,0, \ldots)}$ be the specialization of $\SH^{\cb}$ at 
$\mathbf{c}_0=1$ and $\mathbf{c}_l=0$ for $l\geqslant 1$. It has a 
faithful representation in $\Lambda$ 
such that $\mathscr{H}$ acts in the standard 
way and $D_{0,2}$ acts as
the Laplace-Beltrami (or Calogero-Sutherland) operator
$$D_{0,2}=\kappa \,\square=\frac{1}{2}\kappa(1-\kappa)
\sum_{l\geqslant 1}(l-1)b_{-l}b_l+ 
\frac{1}{2}\kappa^2\sum_{l, k \geqslant 1}
\big(b_{-l-k}b_l b_k + b_{-l}b_{-k} b_{l+k}\big).$$

(e) The specialization of $\SH^{(1,0, \ldots)}$ at $\kappa=1$ is isomorphic to 
the universal enveloping algebra of the Witt algebra 
$W_{1+\infty}$.

\end{prop*}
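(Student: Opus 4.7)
My plan is to construct $\SH^\cb$ as a limit of the spherical degenerate DAHAs $\SH_n$, which makes the surjectivity of $\Psi_n$ in part (b) automatic. First I would fix canonical generators $D_{r,l}^{(n)} \in \SH_n$, for $r \in \Z$ and $l \geqslant 0$, built from Dunkl-type operators and multiplication by power-sum elements, and show that their commutators take the form $[D_{r_1,l_1}^{(n)}, D_{r_2,l_2}^{(n)}] = \sum D_{r,l}^{(n)}$ with coefficients that are polynomial in certain central symmetric functions of the Dunkl variables and polynomial in $n$. Taking an inverse limit of the $\SH_n$, with $n$ playing the role of a continuous parameter absorbed into new central generators, then defines $\SH^\cb$, and one lets $\cb_l$ be the image of these central symmetric functions. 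The statement $\bigcap_n \Ker \Psi_n = 0$ in (b) follows because any nonzero element of $\SH^\cb$ is a finite polynomial in the $D_{r,l}$, and hence becomes visibly nonzero in $\SH_n$ for $n$ sufficiently large.

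For the triangular decomposition in (a), I would let $\SH^>$ (resp.\ $\SH^<$) be the subalgebra generated by the $D_{r,l}$ with $r>0$ (resp.\ $r<0$), and $\SH^{\cb,0}$ the horizontal degree zero part. The spanning statement $\SH^\cb = \SH^> \cdot \SH^{\cb,0} \cdot \SH^<$ follows from a PBW-style argument: using the commutation relations one pushes positive-degree generators to the left and negative-degree ones to the right, at the cost of degree-zero terms. The main obstacle is linear independence of these PBW monomials, which I would attack via the faithful polynomial representation of part (d): one exhibits a highest-weight vector whose images under distinct normal-ordered monomials in the $D_{r,l}$ are linearly independent in $\Lambda$. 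The Poincar\'e polynomials of $\SH^>$ and $\SH^<$ then follow from a character count, each PBW monomial being labelled by a multiset of pairs $(r,l)$.

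For part (c), the inclusion $\mathscr{H} \otimes F[\cb_l : l \geqslant 2] \subseteq \SH^\cb[\leqslant 0]$ is visible by identifying the Heisenberg generators $b_l$ with suitable $D_{r,0}$ ($r \neq 0$), while the opposite inclusion follows from the triangular decomposition once one identifies the horizontal degree zero monomials. To show that $\SH^\cb[\leqslant 0]$ together with $D_{0,2}$ generate the whole algebra, I would argue by induction on the vertical filtration: iterated brackets of the form $[D_{0,2}, D_{r,l}] = 2r\, D_{r,l+1} + (\text{lower order})$ let one climb from the degree-zero generators contained in $\mathscr{H}$ to all of the $D_{r,l}$.

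Finally, for (d) and (e), the classical polynomial representation of each $\SH_n$ on $\CC[x_1, \ldots, x_n]^{S_n}$, under which $D_{0,2}^{(n)}$ acts as the Calogero-Sutherland Hamiltonian, stabilizes to an action on $\Lambda$ with the explicit formula given in (d). Faithfulness for $\SH^{(1,0,\ldots)}$ follows from (a) together with the fact that distinct PBW monomials act as linearly independent operators on $\Lambda$. For (e), specializing at $\kappa=1$ degenerates the commutation relations of $\SH^{(1,0,\ldots)}$ into those of $U(W_{1+\infty})$: the $\kappa=1$ Calogero-Sutherland operator becomes quadratic in the Heisenberg generators and its iterated brackets with the $D_{r,0}$ produce exactly the generating fields of $W_{1+\infty}$ with their classical commutation relations.
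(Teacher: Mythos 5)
Your proposal contains a genuine gap in the proof of the triangular decomposition in part (a), and the construction of $\SH^\cb$ you outline has a technical obstruction that the paper explicitly addresses.

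First, you propose to build $\SH^\cb$ as an inverse limit of the full spherical DDAHAs $\SH_n$. But as the paper notes, the algebras $\SH_n$ do not form a projective system: the projection maps $\pi_{n+1,n}:\Lambda_{n+1}\to\Lambda_n$ intertwine the operators $D^{(n+1)}_{0,l},D^{(n+1)}_{l,0}$ with $D^{(n)}_{0,l},D^{(n)}_{l,0}$ for $l\geqslant 0$, but no such statement holds for the negative Fourier modes $D^{(n)}_{-l,0}$ involving Laurent variables. This is exactly why the paper constructs the limit only for the half $\SH^+$ (as a subalgebra of $\prod_n\SH^+_n$), applies the anti-involution to get $\SH^-$, and then defines $\SH$, and subsequently $\SH^\cb$, purely by generators and relations (Definitions \ref{df:1.1}, \ref{1.8:def}). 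Your plan skips this step entirely, but the step is where the actual content lies.

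Second and more seriously, your proof of linear independence of PBW monomials — and hence of the triangular decomposition — relies on the faithfulness of the polynomial representation of part (d). But that representation belongs to the \emph{specialization} $\SH^{(1,0,\ldots)}$, where the central parameters have been killed ($\cb_0=1$, $\cb_l=0$). An algebra with free central generators $\cb_l$, $l\geqslant 0$, cannot have a faithful representation on which each $\cb_l$ acts by a scalar; any such representation would see only one central character. The circularity is structural, not just an omission. The paper's proof of the injectivity in Proposition \ref{1.8:prop1} instead uses the faithfulness of the representations $\rho^{(r)}$ of $\SH^{(r)}_K$ on $\Lb^{(r)}_K$ (the equivariant cohomology of the moduli space of rank-$r$ torsion-free sheaves), established in Proposition \ref{prop:faithfulr}. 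The point is that as $r$ varies the central characters $\cb_l\mapsto p_l(\eps_1,\dots,\eps_r)$ with $\eps_a$ formal sweep out enough of the center, and the commutative diagram \eqref{diagram0} then forces the injectivity for the unspecialized $\SH^\cb$. This is a genuinely harder and more global argument than the one-representation approach you outline, and it is essentially the reason the paper develops the correspondences on $M_{r,n}$ in Sections 3--6 before returning to finish this proposition. Your treatments of (b), (c), (d) and (e) are otherwise aligned with the paper's arguments (the limit of the Jack-polynomial eigenvalue computations for (d), the $\kappa=1$ degeneration to $W_{1+\infty}$ for (e)), but they all presuppose the triangular decomposition of (a), so the gap propagates.
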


\vspace{.1in}

We do not give a presentation of $\SH^{\cb}$ by generators and relations.
We do not need it. 
However, the subalgebras $\SH^{>}$ and $\SH^{<}$ have realizations as 
shuffle algebras, see Theorem \ref{thm:shuffleC} and 
Corollary \ref{cor:SC/SH}. The central subalgebra 
$F[\mathbf{c}_l\,;\,l\geqslant 0]$ is not finitely generated,
but  only two of the generators are essential, i.e.,
the rest may be split off. 

A construction of a similar limit $\Sm\Hm^\cm$ of the 
spherical double affine Hecke algebras of $GL_n$ as $n$ 
tends to infinity appears
in \cite{SV1}. The algebra $\Sm\Hm^\cm$ depends on two 
parameters $t,q$, and $\SH^{\cb}$ may be obtained by degeneration of
$\Sm\Hm^\cm$ as $t \mapsto 1$ and $q \mapsto 1$ with $t=q^{-\kappa}$,
in much the same way as the trigonometric Cherednik algebra is
obtained by degeneration of the elliptic Cherednik algebra, see Section 7. 
In \cite{SV2} it was shown that $\Sm\Hm^\cm$ 
acts on the space $\bigoplus_{n \geqslant 0} K^{\widetilde{D}}(M_{r,n})$, where
$K^{\widetilde{D}}$ is the equivariant algebraic K-theory. Adapting the 
arguments of \textit{loc.~ cit.} to the equivariant cohomology setup, we
prove the following in Theorem \ref{thm:SH/Ur}, Proposition \ref{prop:faithfulr} and
Lemma \ref{lem:cyclicr}.
Let $\SH^{(r)}_K$ be
the specialization of $\SH^{\cb} \otimes K_r$ to $\mathbf{c}_0=r$ and
$\mathbf{c}_i=p_i(e_1/x, \ldots, e_r/x)$.

\vspace{.1in}

\begin{theoa} 
There is a faithful representation $\rho^{(r)}$ of $\SH^{(r)}_K$ on 
$\Lb^{(r)}_K$ such that 
$\Lb^{(r)}_K$ is generated by the fundamental class of $M_{r,0}$.
\end{theoa}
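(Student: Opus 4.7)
The plan is to adapt the K-theoretic construction of \cite{SV2}, where $\Sm\Hm^\cm$ was shown to act on $\bigoplus_n K^{\widetilde D}(M_{r,n})$, to the present cohomological and degenerate setting. By Proposition $(c)$, it is enough to produce the action of $\SH^\cb[\leqslant 0]$ and of $D_{0,2}$ by geometric correspondences, check the few relations that bind them, and then verify the specialisation, faithfulness and cyclicity separately.

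First, I would define the operators. The Heisenberg subalgebra $\mathscr{H} \subset \SH^\cb[\leqslant 0]$ is realised by the standard Baranovsky--Nakajima correspondences on $\Lb^{(r)}_K$; the central elements $\cb_l$ act as scalars built from characteristic classes of the framing torus $D$, so that the specialisation $\cb_0 \mapsto r$ and $\cb_l \mapsto p_l(\vec\varepsilon)$ defining $\SH^{(r)}_K$ matches by construction; and $D_{0,2}$ is defined as cup product with the second Chern character of the tautological sheaf on $M_{r,n}$. For $r=1$ this operator must reduce, in power-sum coordinates on $\Lambda \cong \Lb^{(1)}_K$, to the Calogero--Sutherland Laplacian of Proposition $(d)$, which is exactly the content of \cite{V}.

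Second, I would check that these operators satisfy the relations of $\SH^\cb$. Since no presentation is available, the strategy is to verify compatibility at every finite level through the surjections $\Psi_n$ of Proposition $(b)$: one localises $\Lb^{(r)}_K$ at the torus-fixed points $M_{r,n}^{\widetilde D}$, indexed by $r$-tuples of partitions, and matches the local action of our generators with the image under $\Psi_n$ of explicit elements of $\SH_n$ acting on its polynomial representation. Because $\bigcap_n \Ker \Psi_n = \{0\}$, a relation valid at every finite $n$ is valid in $\SH^\cb$, and the required relations follow from their K-theoretic analogues in \cite{SV2} by the degeneration $t = q^{-\kappa}$, $q \to 1$ described in Section~7.

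Third, I would establish cyclicity and faithfulness. Cyclicity is obtained by induction on $n$: the creation half of the Heisenberg algebra together with $D_{0,2}$ already builds every class in $H^{\widetilde D}_*(M_{r,n}) \otimes_{R_r} K_r$ from $[M_{r,0}]$, because the Heisenberg operators produce, in the fixed-point basis, a generating set that $D_{0,2}$ then separates through its distinct eigenvalues on different $r$-tuples of partitions. For faithfulness, equivariant localisation again reduces the question to rank one: up to shifts in the parameters $e_1,\dots,e_r$, the action of $\SH^{(r)}_K$ on $\Lb^{(r)}_K$ decomposes into $r$ commuting copies of the rank-one representation on $\Lambda$ of Proposition $(d)$, which is faithful, so any element annihilating $\rho^{(r)}$ must vanish.

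The main obstacle is the second step. Without a presentation of $\SH^\cb$, one must propagate every relation through the family $\Psi_n$, and this requires a uniform and explicit description of how the cohomological correspondences restrict to the $\widetilde D$-fixed points and match the $t,q \to 1$ degeneration of the formulas of \cite{SV2}. Organising this matching so that it is simultaneously compatible with the $\N$-filtration of $\SH^\cb$ and with arbitrary rank $r$ is where the bulk of the technical work will lie.
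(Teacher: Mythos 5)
Your high-level skeleton---define geometric correspondences, verify the relations of $\SH^{\cb}$, then prove cyclicity and faithfulness---matches the paper's strategy in outline, but there are two genuine gaps that your argument does not fill.

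First, your faithfulness argument is wrong. You assert that, up to parameter shifts, the action of $\SH^{(r)}_K$ on $\Lb^{(r)}_K$ decomposes into $r$ commuting copies of the rank-one representation. This is false for $\rho^{(r)}$: the Hecke correspondences $M_{r,n,n+1}$ couple the $r$ constituent partitions of a fixed point, and the operators $f_{\pm 1,l}$ do not factor through any tensor decomposition of $\Lb^{(r)}_K$. You appear to be conflating $\rho^{(r)}$ with the free field realization $\rho^{(1^r)}$ on $(\Lb^{(1)}_{K_r})^{\otimes r}$, which \emph{is} built from $r$ copies of rank one; but that representation is introduced only later (Section 8), requires the Hopf structure of Theorem \ref{7.7:thm2}, and is a priori a different module. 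The paper's actual proof (Proposition \ref{prop:faithfulr}, proved in Appendix B.3) is a separation argument: one writes a putative element of the kernel in triangular form $\sum_i P_i R_i Q_i$, evaluates its matrix coefficients on $r$-partitions assembled via a $\circledast$-glueing of three independent shapes, and exploits the one-parameter automorphism families of Lemma \ref{L:automut} to force the linear independence of the middle factors. Nothing in your sketch substitutes for this, and the decomposition you invoke in its place does not exist.

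Second, you omit the cohomological Hall algebra $\SC$ built from the commuting varieties $C_n$ (Section 4), which is the paper's central intermediate object. The isomorphism between the positive half of the convolution algebra and $\SH^{(r),>}_K$ is established by showing (Theorem \ref{thm:isomC/U}, Corollary \ref{cor:SC/SH}) that the Hecke correspondences factor through a representation of $\SC_{K_r}$, identified with a shuffle algebra, and only then matched with $\SH^{>}$. Your proposal to instead verify relations by propagating through the maps $\Psi_n : \SHo \to \SH_n$ does not replace this step: $\Psi_n$ targets $\SH_n$, whose polynomial representation lives on $\Lambda_n$, not on $\Lb^{(r)}_K$, and for $r\geqslant 2$ there is no commutative square relating the geometric action to the $\SH_n$-action through which you could transport relations. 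The one place the paper does compare fixed-point formulas with a polynomial representation is precisely the case $r=1$ (Proposition \ref{P:proof1}); for general $r$ the crucial commutator $[h_{-1,k},h_{1,l}]$ is verified by a direct localization computation (Proposition \ref{prop:glue/r}, Appendix B), not by a degeneration or limit over $n$. Finally, your cyclicity sketch gets the shape of Lemma \ref{lem:cyclicr} but misassigns the roles of the generators: separation requires a Nullstellensatz argument using the full family $\{D_{0,l}\,;\,l\geqslant 1\}$, not $D_{0,2}$ alone, and the creation step uses the Hecke operators $D_{1,l}$, not the Heisenberg operators $b_{-l}$, since the latter only produce the very special classes obtained from $|0\rangle$ by symmetric-function multiplication.
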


\vspace{.05in}

This representation is given by convolution with 
correspondences supported on the nested instanton 
spaces. In the proof of Theorem A an important role is 
played by the commuting varieties
$$C_n=\{ (u,v) \in (\gen\len_n)^2\,;\, [u,v]=0\}$$
and the \emph{cohomological Hall algebra}, which is 
an associative algebra on
$$\Cb'=\bigoplus_{n \geqslant 0}H^T(C_n),\qquad T=(\CC^\times)^2.$$
Let $\Uen(W_k(\gen\len_r))$ be the current algebra of $W_k(\gen\len_r)$.
We'll use a quotient $\scrU(W_k(\gen\len_r))$ of $\Uen(W_k(\gen\len_r))$ 
whose definition is given in Section \ref{sec:miura}. 
It is a $\ZZ$-graded, $\mathbb{N}$-filtered, degreewise topological
associative algebra with 1 which is degreewise complete. Let $\Uen(\SH^{(r)}_K)$ be the degreewise
completion of $\SH^{(r)}_K$, which is defined in Definition \ref{df:degreewise}. 
Our main theorem is a consequence of the 
following results, proved in Theorem \ref{8.7:thm} and Corollary \ref{8.7:cor}, and in
Theorem \ref{Theo:Main} and Proposition \ref{prop:8.12}. 
Put $k=\kappa-r$. First, we have

\vspace{.1in}

\begin{theob}
There is an embedding of graded and filtered algebras
$$\Theta: \SH^{(r)}_K \longrightarrow \scrU(W_k(\gen\len_r))$$
which extends to a surjective morphism 
$\Uen(\SH^{(r)}_K) \longrightarrow \scrU(W_k(\gen\len_r))$.
The map $\Theta$ induces an equivalence between the categories of admissible  
$\SH^{(r)}_K$
and $\scrU(W_k(\gen\len_r))$ modules. 
\end{theob}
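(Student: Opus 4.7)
The plan is to construct $\Theta$ by hand on a small generating set and to verify it is a well-defined embedding by comparing two a priori independent actions on $\Lb^{(r)}_K$. By part (c) of the Proposition, after the specialization $\cb_0=r$, $\cb_i=p_i(\vec\varepsilon)$, the algebra $\SH^{(r)}_K$ is generated by the Heisenberg part $\mathscr{H}$ together with the element $D_{0,2}$. It therefore suffices to define $\Theta$ on the modes $b_l$ ($l\in\ZZ$) and on $D_{0,2}$, and to show that the relations between these elements in $\SH^{(r)}_K$ are preserved.

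The natural recipients in $\scrU(W_k(\gen\len_r))$ are dictated by the Miura (free-field) realization of the affine W-algebra. Under the decomposition $\gen\len_r=\gen\len_1\oplus\sen\len_r$, $W_k(\gen\len_r)$ contains a distinguished Heisenberg current attached to the $\gen\len_1$-summand, whose Fourier modes are the natural targets of the $b_l$, up to a normalization involving $\kappa$. For $\Theta(D_{0,2})$ I would use the mode of the conformal vector of $W_k(\gen\len_r)$ (which exists for all $r$), corrected by a Sugawara-type quadratic term in the Heisenberg modes and by a linear shift in the highest-weight parameter $\beta$, chosen so that the action on the highest weight vector matches the eigenvalue predicted by the Calogero-Sutherland formula of part (d) of the Proposition. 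These choices are pinned down uniquely by compatibility with the $\ZZ$-grading and $\N$-filtration, together with the $r=1$ case, where part (e) identifies $\SH^{(1,0,\ldots)}$ with the enveloping algebra of $W_{1+\infty}$.

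The hard part is showing that $\Theta$ extends to a well-defined algebra homomorphism, since we have no presentation of $\SH^{(r)}_K$ at our disposal. I would proceed indirectly: by Theorem~A the action $\rho^{(r)}$ of $\SH^{(r)}_K$ on $\Lb^{(r)}_K$ is faithful, so it is enough to exhibit on $\Lb^{(r)}_K$ a $\scrU(W_k(\gen\len_r))$-module structure agreeing with $\rho^{(r)}$ on $\Theta(b_l)$ and $\Theta(D_{0,2})$; then all relations of $\SH^{(r)}_K$ are automatically satisfied by their images, and $\Theta$ is at once a homomorphism and injective. Such a W-algebra action on $\Lb^{(r)}_K$ is supplied by the Miura realization applied to the identification, due to Nakajima and Baranov, of $\Lb^{(r)}_K$ with a tensor product of $r$ Fock spaces indexed by the fixed points of the framing torus: the diagonal Heisenberg generator matches $\rho^{(r)}(b_l)$ by Nakajima's formulas, while the identification of $\Theta(D_{0,2})$ with $\rho^{(r)}(D_{0,2})$ is an explicit fixed-point computation exploiting the Laplace-Beltrami expression of part (d).

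Surjectivity onto $\scrU(W_k(\gen\len_r))$ is then obtained inductively on $d\in[1,r]$. The modes $W_{1,l}$ lie in the image by construction, and iterated commutators of $\Theta(D_{0,l})$ with $\Theta(b_m)$ produce the higher modes $W_{d,l}$, because on the W-algebra side such commutators are controlled by OPE residues while on the $\SH$-side they are computable via the triangular decomposition. Passing to the degreewise completion yields the surjection $\Uen(\SH^{(r)}_K)\to\scrU(W_k(\gen\len_r))$. Finally, the equivalence of admissible module categories is essentially formal: pulling back along $\Theta$ gives a functor from $\scrU(W_k(\gen\len_r))$-modules to $\SH^{(r)}_K$-modules, the surjection provides the quasi-inverse, and admissibility is precisely the condition ensuring that the action is determined by its graded pieces, so the two functors are mutually inverse on admissible objects.
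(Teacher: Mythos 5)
There is a genuine gap in the well-definedness step, and it hides a circularity. You propose to exhibit a $\scrU(W_k(\gen\len_r))$-module structure \emph{on $\Lb^{(r)}_K$ itself} that agrees with $\rho^{(r)}(b_l)$ and $\rho^{(r)}(D_{0,2})$, by identifying $\Lb^{(r)}_K$ with a tensor product of $r$ Fock spaces via fixed points and invoking the Miura realization. This cannot work as stated. The operator $\rho^{(r)}(D_{0,2})$ is \emph{diagonal} on the fixed-point basis $\{[I_\lambda]\}$ of $\Lb^{(r)}_K$ (it is $x^{-1}f_{0,1}$, see \eqref{3.6:rem}), whereas the free-field expression one is forced to take for $\Theta(D_{0,2})$ — the analogue of the Laplace--Beltrami operator on $\pi^{(1)\otimes r}$ — is \emph{not} diagonal on the product of Jack bases: it contains the mixing term $\kappa\xi\sum_{l\geqslant 1}l\,b^{(i)}_l\otimes b^{(j)}_{-l}$. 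So the naive fixed-point bijection $[I_\lambda]\mapsto J_{\lambda^{(1)}}\otimes\cdots\otimes J_{\lambda^{(r)}}$ does not intertwine the two $D_{0,2}$-actions; the isomorphism $\Lb^{(r)}_K\simeq\Lb^{(1^r)}_K$ that \emph{does} intertwine them is precisely the content of Theorem~C, which the paper derives \emph{from} Theorem~B. Your well-definedness argument therefore presupposes what must be proved.

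Two ingredients of the actual proof are missing from your outline, and they are the ones that break the circularity. First, the topological Hopf structure $\pmb\Delta$ on $\SH^\cb$ (Section~7 / Theorem~\ref{7.7:thm2}), obtained by degenerating the elliptic Hall algebra: it is what furnishes the free-field representation $\rho^{(1^r)}=\rho^{(1)\otimes r}\circ\pmb\Delta^{r-1}$ on the bona fide tensor-product Fock space $\Lb^{(1^r)}_K$, a module entirely independent of $\Lb^{(r)}_K$. The paper proves Theorem~B working only with $\Lb^{(1^r)}_K$. Second, the Feigin--Frenkel screening characterization $\bigcap_{i=1}^{r-1}\scrU^{\omega_i}=\scrU(W_k(\gen\len_r))$: even once $\rho^{(1^r)}(D_{0,2})$ is known to lie in $\scrU(W_k(\gen\len_1))^{\widehat\otimes r}$, one must show it lands in the much smaller $W$-algebra, and coassociativity of $\pmb\Delta$ reduces this to the $r=2$ computation only \emph{because} of Feigin--Frenkel. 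Without it there is no inductive structure and one would face a direct computation of unbounded complexity.

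Finally, your proposed formula for $\Theta(D_{0,2})$ — ``the mode of the conformal vector corrected by a Sugawara-type quadratic term'' — is structurally wrong already for $r=1$: part~(d) gives $D_{0,2}=\kappa\square$, which is cubic in the Heisenberg modes $b_{\pm l}$, not quadratic. For $r=2$ the correct expression (Section~8.8) contains the genuinely cubic terms $\sum_l\,\pmb :\!W_{1,-l}W_{2,l}\!\pmb :$ and $\sum_{k,l}\pmb :\!W_{1,-k-l}W_{1,k}W_{1,l}\!\pmb :$ alongside a quadratic $W_1W_1$ term and $\xi W_{2,0}$. The quadratic-plus-linear ansatz cannot reproduce the Calogero--Sutherland Hamiltonian. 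Your sketch of the surjectivity step (iterated commutators produce the higher $W_{d,l}$) is in the right spirit and is close to what the paper does via the standard filtration, but without the preceding embedding it is moot.
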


\vspace{.1in}

This allows us to regard $\Lb^{(r)}_K$ as a $W_k(\gen\len_r)$-module. Then, we have the following.

\vspace{.1in}

\begin{theoc}
The representation $\mathbf{L}_K^{(r)}$ of  $W_k(\gen\len_r)$ is a Verma module.
It is quasi-unitary with respect to the intersection pairing.
The element $G$ is a Whittaker vector for $W_k(\gen\len_r)$.
\end{theoc}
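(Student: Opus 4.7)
The plan is to transport the $\SH^{(r)}_K$-module structure on $\Lb^{(r)}_K$ provided by Theorem A through the embedding $\Theta$ of Theorem B, thereby making $\Lb^{(r)}_K$ into an admissible $\scrU(W_k(\gen\len_r))$-module which is cyclic on $[M_{r,0}]$. The three assertions of the theorem then reduce to geometric computations on this model.

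For the Verma module identification, I first check that $[M_{r,0}]$ is a highest weight vector. Since $M_{r,0}$ is a point and $\Lb^{(r)}_K$ is graded by $n$, and since the subalgebra $\SH^<$ in the triangular decomposition of $\SH^{\cb}$ strictly lowers this grading, it annihilates $[M_{r,0}]$. Under $\Theta$ this translates into annihilation by the positive modes $W_{d,l}$ for $l>0$. The highest weight is read off from the eigenvalues of the commutative Cartan $\SH^{\cb,0}$ on $[M_{r,0}]$, which I compute by equivariant localization at the unique torus-fixed point of $M_{r,0}$, using the explicit formulas for the generators $D_{0,l}$ coming from parts (c), (d) of the Proposition. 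Matching these eigenvalues against the $W$-algebra highest weight convention through $\Theta$ produces exactly $\beta = -(\vec\varepsilon + \xi\rho)/\kappa$; the shift by $\xi\rho$ arises as the standard normal-ordering correction built into $\Theta$. Faithfulness of the $\SH^{(r)}_K$-action (Theorem A) together with the equivalence of admissible module categories in Theorem B then forces $\Lb^{(r)}_K \cong M_\beta$.

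For quasi-unitarity, the $\SH^{(r)}_K$-action is given by convolutions with explicit correspondences on nested instanton moduli, and each such correspondence has a geometric transpose obtained by swapping the two projections. The resulting pair of operators are adjoint with respect to the intersection pairing on $\Lb^{(r)}_K$. The task is to verify that $\Theta$ intertwines this geometric transpose with the standard anti-involution $W_{d,l}\mapsto \pm W_{d,-l}$ of $\scrU(W_k(\gen\len_r))$. By part (c) of the Proposition, it suffices to check this on $\SH^{\cb}[\leqslant 0]$ and on $D_{0,2}$, which reduces the claim to a finite verification on a small generating set.

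For the Whittaker property, I verify the defining conditions on $G=\sum_n [M_{r,n}]$ directly. Transported through $\Theta$, these conditions assert that the preimages in $\SH^{(r)}_K$ of the positive modes $W_{d,l}$ act on $G$ by fixed scalars or zero. The relevant preimages are convolutions between $M_{r,n}$ and $M_{r,n-l}$, and on fundamental classes they are computed by an excess intersection on the nested instanton moduli; a direct dimension count together with the explicit description of $D_{0,2}$ in part (d) of the Proposition and the shuffle realization of $\SH^>,\SH^<$ kills all but finitely many contributions and pins down the surviving scalars. The main obstacle I anticipate is the careful bookkeeping of signs and normalizations needed to exactly match the W-algebra Whittaker conditions; this is a computation on low-degree generators rather than a new conceptual step, and once the Verma module identification of part (a) has fixed all constants, uniqueness of the Whittaker vector in $\widehat M_\beta$ will close the argument.
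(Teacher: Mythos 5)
Your overall strategy---transport the $\SH^{(r)}_K$-module structure through $\Theta$ and then verify each claim by geometric computations on $\Lb^{(r)}_K$---is the same as the paper's. The quasi-unitarity and Whittaker parts are broadly in the right spirit (though the paper leans heavily on the order-filtration compatibility of $\Theta$, Lemma \ref{8.6:lem2}, to kill $W_{d,l}\cdot G$ for $d<r$, whereas you invoke the shuffle realization and a dimension count; these can be made to work and the paper even notes this alternative in a remark at the end of Appendix G). But your highest weight computation has a real gap.

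You propose to read off the highest weight by localizing the Cartan $\SH^{\cb,0}$ at the unique fixed point of $M_{r,0}$. This does not compute $\beta$: the generators $D_{0,l}$ of $\SH^{\cb,0}$ all annihilate $[I_\emptyset]$, and the only nonzero eigenvalues of $\SH^{\cb,0}$ on the vacuum are the scalars $\cb_l=p_l(\vec\varepsilon)$, not the elementary symmetric functions $e_d(\beta)$ with their $\xi\rho$ shift. The issue is that $\Theta^{-1}(W_{d,0})$ for $d\geqslant 2$ is not in the Cartan $\SH^{\cb,0}$ at all---it is an infinite sum of triangular monomials $D_{k_1,l_1}\cdots D_{k_t,l_t}$ with $\sum k_i=0$, living in the degreewise completion $\Uen(\SH^{(r)}_K)$. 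To evaluate it on the vacuum you must first normal-order (put the $D_{0,l}$'s and $D_{-1,l}$'s on the right, which requires Proposition \ref{1.8:prop1}), observe these terms annihilate the vacuum (Lemma \ref{lem:weight2}), and then compare with the free field model $\Lb^{(1^r)}_K$ where Proposition \ref{prop:pibeta} supplies the answer. This passage through the coproduct and the free field module $\pi^{(1^r)}$ is the step that actually produces the $\xi\rho$ shift; it is not a ``normal-ordering correction built into $\Theta$'' that you can bypass by localization, and it is where the paper's real work happens. You also omit the graded-dimension check (number of $r$-partitions versus the character of $M_\beta$), which is needed to upgrade cyclicity-plus-highest-weight into an isomorphism with the Verma module, since Theorem B's equivalence of admissible module categories by itself does not tell you which $W$-module you have.
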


Theorem C is proved by some simple explicit calculation. 
Let us briefly indicate how we prove Theorem B. Our approach
rests upon the following crucial fact proved in Theorem \ref{7.7:thm2}.

\vspace{.1in}

\begin{prop*} The algebra
$\SH^{\cb}$ is equipped with a topological Hopf algebra structure.
The comultiplication is
uniquely determined by the following formulas 
$$\pmb\Delta(\cb_l)= \cb_l\otimes 1 + 1 \otimes \cb_l, \qquad l\geqslant 0,$$
$$\pmb\Delta(b_0)=b_0 \otimes 1 + 1 \otimes b_0 + \xi \cb_0 \otimes \cb_0,
\qquad\pmb\Delta(b_{l})=b_{l} \otimes 1 + 1 \otimes b_l, \qquad l \neq 0,$$
$$\pmb\Delta(D_{0,2})=D_{0,2} \otimes 1 + 1 \otimes D_{0,2} 
+\kappa\xi\sum_{l\geqslant 1}lb_{l}\otimes b_{-l}.$$
\end{prop*}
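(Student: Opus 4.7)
The plan is to construct $\pmb\Delta$ geometrically through equivariant localization on $M_{r,n}$, using the faithful representations of Theorem A. Uniqueness of $\pmb\Delta$ is automatic: by parts (c)--(d) of the first proposition in the introduction, $\SH^\cb$ is generated by $\{\cb_l\}_{l \geqslant 0}$, $\{b_l\}_{l \in \Z}$, and $D_{0,2}$, which are precisely the elements on which the coproduct is prescribed, so any algebra morphism to the topological tensor square is determined by these formulas.

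For existence, I would proceed as follows. Fix $r_1,r_2 \geqslant 0$ and set $r = r_1+r_2$. The one-parameter subgroup $\lambda(t)=(1,\ldots,1,t,\ldots,t) \hookrightarrow D$, with the last $r_2$ coordinates equal to $t$, has fixed locus
$$M_{r,n}^\lambda = \bigsqcup_{n_1+n_2=n} M_{r_1,n_1}\times M_{r_2,n_2},$$
reflecting the splitting of the framing. Equivariant localization yields, after passing to an appropriate further localization of the base, an isomorphism $\Lb^{(r)}_K \cong \Lb^{(r_1)}_K \otimes \Lb^{(r_2)}_K$ that intertwines the central specializations via $p_l(\vec\varepsilon) = p_l(\vec\varepsilon^{(1)}) + p_l(\vec\varepsilon^{(2)})$. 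The key assertion to verify is that, under this isomorphism, the transported action of $\SH^{(r)}_K$ lands inside $\SH^{(r_1)}_K \hat\otimes \SH^{(r_2)}_K$; by the faithfulness of the representations of Theorem A (as $r_1,r_2$ grow), these transported actions assemble into a well-defined topological algebra homomorphism $\pmb\Delta: \SH^\cb \to \SH^\cb \hat\otimes \SH^\cb$. The formula for $\pmb\Delta(\cb_l)$ reflects the additivity of power sums; for $\pmb\Delta(b_l)$ with $l \neq 0$ it reflects the fiberwise action of the Nakajima correspondences on the product fixed locus; and the extra term $\xi\cb_0 \otimes \cb_0$ for $\pmb\Delta(b_0)$ comes from the rank-dependent normalization of the zero mode.

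The formula for $\pmb\Delta(D_{0,2})$ is the heart of the computation and where I expect the main obstacle. One must match the residue contribution of the tangent Euler class of $M_{r,n}$ restricted to each component $M_{r_1,n_1}\times M_{r_2,n_2}$ against the Calogero-Sutherland expression in part (d) of the first proposition, and show that the two-body interaction contributes exactly the correction $\kappa\xi \sum_{l\geqslant 1} l\, b_l \otimes b_{-l}$. Once this is established, coassociativity of $\pmb\Delta$ follows from the associativity of iterated fixed-point decompositions under three-factor splittings $r=r_1+r_2+r_3$; the counit corresponds to the $r=0$ specialization of the central generators $\cb_l$, and the antipode is built recursively along the $\Z$-grading, well-definedness at each step being ensured by the triangular decomposition $\SH^\cb = \SH^> \otimes \SH^{\cb,0}\otimes \SH^<$ from part (a) of the first proposition.
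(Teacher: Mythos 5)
Your approach is genuinely different from the paper's, and the difference is worth spelling out: the paper explicitly \emph{avoids} a geometric construction of the coproduct. Instead (Sections 7.1--7.6), the authors realize $\SH^{\cb}$ as a degeneration of the elliptic Hall algebra $\Sm\Hm^{\cm}$ (Theorem~\ref{7.7:thm1}), then show that the topological coproduct ${}^{\sigma}\!\Delta$ on $\Sm\Hm^{\cm}$ --- which exists because $\Sm\Hm^{\cm}$ is the Drinfeld double of a Hall algebra, hence carries Green's coproduct twisted by a Fourier transform --- preserves the $h$-adic lattice $\Sc\Hc_{\scrA}$ and degenerates to $\pmb\Delta$ (Theorem~\ref{7.7:thm2}); the stated formulas then come out of explicit Taylor expansions such as \eqref{F:kip1}, using the estimates of Lemma~\ref{7.7:lem-form}. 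The paper's introduction states this choice pointedly: ``we do not give a geometric interpretation of our map $\pmb\Delta$.'' Your route is the one the authors decided not to take, and is closer to the quiver-variety tensor-product constructions of Nakajima and Malkin, or to Maulik--Okounkov.

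The main gap in your argument is the step you yourself single out: the claim that the transported $\SH^{(r)}_K$-action under the fixed-point isomorphism $\Lb^{(r)}_K \cong \Lb^{(r_1)}_K \otimes \Lb^{(r_2)}_K$ lands inside $\SH^{(r_1)}_K \,\widehat\otimes\, \SH^{(r_2)}_K$ rather than merely in $\End(\Lb^{(r_1)}_K \otimes \Lb^{(r_2)}_K)$. This is not a bookkeeping point; it is the entire content of the theorem, and you give no argument for it. Localization along the one-parameter subgroup $\lambda(t)$ produces an isomorphism only after inverting Euler classes of normal bundles, and the resulting operators a priori have denominators mixing the two tensor factors in a way that does not obviously lie in a tensor product of subalgebras. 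Making this work is exactly what requires extra geometric input (Lagrangian properties of the Hecke correspondences, stable envelopes, or a direct fixed-point calculation as in Appendix~\ref{app:B}). Without that, the argument asserts its own conclusion. The uniqueness half of your argument is fine --- $\SH^{\cb}$ is indeed generated by $\{\cb_l\}$, $\{b_l\}$ and $D_{0,2}$ by Proposition~\ref{prop:generators/involution2} --- and the coassociativity-from-iterated-splittings remark is plausible, but the existence step is where the proof has to live, and it is missing. The paper's Hall-algebra degeneration sidesteps this difficulty entirely, at the cost of invoking the machinery of \cite{SV2}; your geometric route, if completed, would be more intrinsic but requires a substantial lemma (well-definedness of the localized comodule structure) that is absent here.
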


Using this coproduct, we equip the category of admissible $\SH^{\cb}$-modules with a monoidal 
structure. In particular $(\Lb^{(1)}_K)^{\otimes r}$ is equipped with a faithful representation of
$\SH^{(r)}_K$. We call it the \emph{free field realization} representation. 
We then compare this free field representation of $\SH^{(r)}_K$ 
with the free field representation of $W_k(\gen\len_r)$
using some explicit computations in the cases $r=1,2$, the coassociativity of $\pmb\Delta$ and the
fundamental result of Feigin and Frenkel  \cite{FF0}, \cite{FF} which characterizes $W_k(\gen\len_r)$ 
as the intersection of some \emph{screening operators}.

One remark about the Hopf algebra structure on $\SH^{\cb}$ is in order. 
It was observed in \cite{VVcop} that, under Nakajima's realization of affine quantum groups
in terms of equivariant K-theory of quiver varieties, the coproduct of the quantum groups
could be constructed geometrically using some fixed subsets of the quiver varieties.
In later works, a geometric construction of tensor products of representations
in terms of both cohomology and K-theory of some quiver varieties
was given in \cite{Nakcop}, \cite{Malkin}.
In this paper, we do not give a geometric interpretation of our map $\pmb\Delta$. 
In fact, we obtain it by degenerating a similar coproduct on the
algebra $\Sm\Hm^\cm$. The existence of a Hopf algebra structure
on $\Sm\Hm^\cm$, in 
equivariant $K$-theory, is not more natural than on $\SH^{\cb}$,
in equivariant cohomology. However, since $\Sm\Hm^\cm$ is
identified with a central extension of the Drinfeld
double of the spherical Hall algebra $\widehat{\Ecb}$ of an elliptic curve 
over a finite field, see \cite{SV2}, and since this Hall algebra has
a coproduct\footnote{The correct choice of coproduct on 
$\widehat{\Ecb}$ here is not the standard one, but rather the standard one
twisted by a \textit{Fourier transform}, see \eqref{sigma}.},
the algebra $\Sm\Hm^\cm$ is also equipped with a comultiplication.
We do not know, however, 
of a similar isomorphism involving $\SH^{\cb}$ which would
give directly the comultiplication.

Some of the methods and results of this paper generalize to 
the case of the moduli spaces of instantons on resolutions of simple Kleinian 
singularities, equivalently, the Nakajima quiver varieties attached to 
affine Dynkin diagrams. We'll come back to this question elsewhere.

To finish, let us say a few words concerning the organization of this paper. 
The construction and properties of the algebra $\SH^{\cb}$
are given in Section 1. In Sections 2 and 3 we define some convolution 
algebra acting on the space $\Lb^{(r)}_K$ and state our first main result,
Theorem \ref{thm:SH/Ur}, which claims that this algebra is isomorphic to 
$\SH^{(r)}_K$. In Section 4 we introduce the commuting variety and its
convolution algebra, the so-called \emph{cohomological Hall algebra}. 
The proof of Theorem \ref{thm:SH/Ur} is given in Sections 5 and 6.
Section 7 is devoted to the construction of the Hopf algebra structure on 
$\SH^{\cb}$. Section 8 discusses the free field realizations of
$\SH^{(r)}_K$ and $W_k(\gen\len_r)$, and compares them 
(first for $r=1$ then $r=2$ and then for arbitrary $r$). Theorem \ref{8.7:thm}
is proved in Section 8.9, and part $(a)$ of our main Theorem is proved in 
Section 8.11, see Theorem \ref{Theo:Main}. Finally,
Section 9 is devoted to the Whittaker property of the Gaiotto state, with 
respect to both $\SH^{(r)}_K$ and $W_k(\gen\len_r)$.
Several technical lemmas are postponed to the appendices. 
In particular, the relation with $W_{1+\infty}$ is explained in Appendix F.

\vspace{.15in}

\subsection{Notation} 
\label{sec:notation}
We'll use the continental way of drawing a partition 
$\lambda=(\lambda_1 \geqslant \lambda_2 \geqslant \ldots)$, 
with rows going from the bottom up of successive
length $\lambda_1, \lambda_2,$ etc. 
If $s$ is a box in the diagram of a partition $\lambda$, we denote by $x(s),y(s), l(s), a(s)$ the number
of boxes lying \textit{strictly} to the west, resp. south, resp. north, 
resp. east, of the box $s$. 

\begin{ex}
For the box $s$ in the partition $(5,4^2,2,1)$ depicted below

\vspace{.1in}

\centerline{
\begin{picture}(60,60)
\put(0,0){\line(0,1){60}}
\put(12,0){\line(0,1){60}}
\put(24,0){\line(0,1){48}}
\put(36,0){\line(0,1){36}}
\put(48,0){\line(0,1){36}}
\put(60,0){\line(0,1){12}}
\put(0,0){\line(1,0){60}}
\put(0,12){\line(1,0){60}}
\put(0,24){\line(1,0){48}}
\put(0,36){\line(1,0){48}}
\put(0,48){\line(1,0){24}}
\put(0,60){\line(1,0){12}}
\put(40,4){$s$}
\end{picture}}

\centerline{Figure 1. The partition $(5,4^2,2,1)$ and a box in it}

\vspace{.1in}

\noindent
we have $x(s)=3$, $y(s)=0$, $l(s)=2$ and $a(s)=1$. 
\end{ex}

When we need to stress the dependance on the partition $\lambda$
we will write $a_{\lambda}(s)$ and $l_{\lambda}(s)$. 
This notation extends in an obvious way to boxes $s$ 
which might lie outside of $\lambda$ (in which case, $a_{\lambda}(s)$ or $l_{\lambda}(s)$ could be negative). 
For instance, if $\lambda=(5,4^2,2,1)$ as in Figure 1 above and $x(s)=4, y(s)=2$ then $a_{\lambda}(s)=-1$ 
and $l_{\lambda}(s)=-2$. We will occasionaly refer to a box through its coordinates $s=(x(s)+1, y(s)+1)$.
As usual, the length of a partition $\lambda$ is denoted $l(\lambda)$, 
and the conjugate partition is denoted 
$\lambda'$. Finally, if $s$ is a box of a partition $\lambda$ 
then we denote by $R_s$ and $C_s$, the set of 
all boxes of $\lambda$ in the same row and same column respectively, 
as $s$, with $s$ excepted.
We call {\it $r$-partition of $n$} a $r$-tuple of partitions 
with total weight $n$. Given two $r$-partitions
$\lambda=(\lambda^{(1)},\lambda^{(2)},\dots\lambda^{(r)})$ 
and $\mu=(\mu^{(1)},\mu^{(2)},\dots\mu^{(r)})$ we write
$$\lambda\subset\mu\iff\lambda^{(a)}\subset\mu^{(a)},\,\forall a.$$

For any commutative ring $A$ we set 
\begin{equation}
\Lambda_{n,A}=A[X_1, \ldots, X_n]^{\mathfrak{S}_n} ,\qquad
\Lambda_A=A[X_1, X_2, \ldots]^{\mathfrak{S}_\infty}. 
\end{equation}
Note that $\Lambda_A$ is the Macdonald algebra of symmetric functions.
Let $\pi_n$ be the obvious projection 
\begin{equation}
\pi_n: \Lambda_A\to \Lambda_{n,A}. 
\end{equation}
For any ring $A$ let $\delta$ be the map $A\to A\otimes A$ given by
\begin{equation}
\delta(a)=a\otimes 1 + 1 \otimes a.
\end{equation}
For $r\geqslant 1$ let $\delta^{r-1}:A\to A^{\otimes r}$ be the map obtained by 
iterating $r-1$ times the map $\delta$.
Let 
\begin{equation}
e_l=e_l(X_1, X_2, \ldots),\qquad
p_l=p_l(X_1, X_2,\ldots), \qquad m_\lambda=m_\lambda(X_1,X_2,\dots)
\end{equation}
be the $l$-th elementary symmetric function,  
the $l$-th power sum polynomial  
and the monomial 
symmetric function,
see e.g., \cite[chap.~I]{Mac}. 
Let 
\begin{equation}
e_l^{(n)}=e_l(X_1, \ldots, X_n),\qquad
p_l^{(n)}=p_l(X_1, \ldots, X_n),\qquad
m_\lambda^{(n)}=m_\lambda(X_1,X_2,\ldots,X_n)
\end{equation}
be the corresponding functions in $\Lambda_{A,n}$.
If no confusion is possible we abbreviate
\begin{equation}
e_l=e_l^{(n)},\qquad p_l=p_l^{(n)},\qquad
m_\lambda=m_\lambda^{(n)}.
\end{equation}
We write also
\begin{equation}
\gathered
\Z^2_0=\Z^2\setminus (0,0),\\
\N^2_0=\N^2\setminus (0,0),\\
\mathscr E=\{(\epsilon,l)\;;\;\epsilon=-1,0,1,\;
l\geqslant 0\}\setminus (0,0),\\
\mathscr E^+=\{(\epsilon,l)\in\mathscr E\;;\;\epsilon\geqslant 0\},\\
\mathscr E^-=\{(\epsilon,l)\in\mathscr E\;;\;\epsilon\leqslant 0\},\\
\endgathered
\end{equation}
\vspace{.2in}

\section{The algebra $\SH^{\cb}$}

\vspace{.15in}

\subsection{The DDAHA}
We define
\begin{equation}
\gathered
G=GL_n,\qquad H=(\CC^\times)^n,\qquad \hen=Lie(H),\\
\CC[H]=\CC[X_1^{\pm 1}, \ldots, X_n^{\pm 1}],\quad
\CC[\hen]=\CC[x_1, \ldots, x_n],\quad\CC[\hen^*]=\CC[y_1, \ldots, y_n].
\endgathered
\end{equation}
Here $(y_1, \ldots, y_n)$ is the basis dual to 
$(x_1, \ldots, x_n)$.
The symmetric group $\Sen_n$ 
acts on $H$, $\hen$ and $\hen^*$. 
Let $s_1, \ldots, s_{n-1}$ be the standard generators of $\Sen_n$.
For $i \neq j$ let $s_{ij}$ be the transposition $(ij)$.
Finally, set $F=\C(\kappa)$ and $A=\C[\kappa]$.
The \emph{degenerate double affine Hecke algebra} (=DDAHA) 
of $G$ is the associative $F$-algebra
${\H}_n$  generated by $F[H],$ $ F[\hen^*]$ 
and $F[\Sen_n]$ subject to the following set of relations
\begin{equation}\label{E:1}
sX_i^{\pm 1}=X_{s(i)}^{\pm 1}\, s,\qquad s \in \Sen_n,
\end{equation}
\begin{equation}\label{E:2}
s_iy=s_i(y)s_i-\kappa \langle x_i-x_{i+1}, y\rangle,
\qquad y\in\mathfrak h^*,
\end{equation}
\begin{equation}\label{E:3}
[y_i,X_j]=\begin{cases} -\kappa X_i s_{ij}\qquad &\text{if}\; i < j,\\
X_i +\kappa \big( \sum_{k <i} X_k s_{ik} + \sum_{k>i} X_i s_{ik}\big) \qquad &\text{if}\; i=j,\\
-\kappa X_j s_{ij}\qquad &\text{if}\; i > j.
\end{cases}
\end{equation}
Let 
$\Sb=\frac{1}{n!}\sum_{s \in \Sen_n} s$ 
be the complete idempotent in $\CC[\Sen_n]$. 
The \emph{spherical DDAHA} of $G$ is 
\begin{equation}
\mathbf{S}{\H}_n=\Sb\cdot  {\H}_n \cdot \Sb.
\end{equation}
Let ${\H}^{+}_n\subset \H_n$ be the $F$-subalgebra  
generated by $\Sen_n$ and $\{ y_i , X_i \;;\; i\in[1,n]\}$. 
This is a deformation of the algebra of polynomial
differential operators on $H$. 
Similarly, let ${\H}^-_n\subset \H_n$ be the subalgebra 
generated by $\Sen_n$ and $\{y_i, X_i^{-1}\;;\; i\in[1,n]\}$.
Write 
\begin{equation}
\SH^\pm_n=\Sb \cdot {\H}^\pm_n\cdot \Sb,\qquad\SH^0_n=\Sb F[\hen^*]  \Sb.
\end{equation}

\vspace{.1in}

\begin{rem}
Formally setting $\kappa =0$ in the relations of $\H_n$ yields a presentation 
of the crossed product $\Diff(H) \rtimes\Sen_n$, 
with $y_i$ degenerating to $X_i\partial_{X_i}$. 
The spherical DAHA is a deformation of the ring $\Diff(H)^{\Sen_n}$ 
of symmetric differential operators on the torus $H$.
\end{rem}

\vspace{.15in}

\subsection{Filtrations on $\H_n$ and $\SH_n$}

We define the \emph{order filtration} on $\H_n$ by  
letting $y_i$ be of order 1 and  $s,X_i^{\pm 1}$  be of
order 0. 
We define the \emph{rank grading} 
on $\H_n$ by giving to $s,y_i$ the degree 0 and to $X_i^{\pm 1}$ the 
degree $\pm 1$. 
Let $\H_n[r,\leqslant\! l]$ be the piece of $\H_n$ of degree $r$ 
and of order $\leqslant l$.
The piece of degree $r$ 
and of order $\leqslant l$ in $\SH_n$ is
\begin{equation}
\gathered
\SH_n[r,\leqslant\! l]=\Sb\cdot\H_n[r,\leqslant\! l]\cdot\Sb=
\SH_n\cap\H_n[r,\leqslant\! l].
\endgathered
\end{equation}
Similarly, we set
\begin{equation}
\gathered
\SH^+_n[r,\leqslant\! l]=\Sb\cdot\H^+_n[r,\leqslant\! l]\cdot\Sb=
\SH^+_n\cap\H_n[r,\leqslant\! l].
\endgathered
\end{equation}
All the constructions given above make sense over the ring $A$.
For instance, let 
${\H}_{n,A}\subset\H_n$ be the 
$A$-subalgebra generated by 
$\Sen_n$, $A[\hen^*]$ and $A[H]$, and put
\begin{equation}
\gathered
\SH_{n,A}=\Sb \cdot {\H}_{n,A}\cdot \Sb=\SH_n\cap\H_{n,A},\\
\H_{n,A}[r,\leqslant\! l]=\H_{n,A}\cap \H_n[r,\leqslant\! l],\\
\SH^+_{n,A}[r,\leqslant\! l]=
\Sb\cdot\H^+_{n,A}[r,\leqslant\! l]\cdot\Sb=
\SH^+_{n,A}\cap\H_{n}[r,\leqslant\! l].
\endgathered
\end{equation}
The PBW theorem for ${\H}_{n,A}$ implies that
any element of has a unique decomposition in the form
\begin{equation}
\sum_{s\in\Sen_n}h_s(X)g_s(y)s,\quad
g_s(y)\in A[\hen^*],\quad
h_s(X)\in A[H].
\end{equation}
Therefore, we have
$\H_{n,A}\otimes_AF=\H_n$. Since
$\SH_{n,A}$
is a direct summand of the $A$-module $\H_{n,A},$
we have also
$\SH_{n,A}\otimes_AF=\SH_n.$
A similar argument yields
\begin{equation}\label{1.4}
\H_{n,A}[r,\leqslant \! l]\otimes_AF=\H_n[r,\leqslant\! l],\qquad
\SH_{n,A}[r,\leqslant\! l]\otimes_AF=\SH_n[r,\leqslant\! l].
\end{equation}
Let $\overline\H_{n,A}$ and $\overline\SH_{n,A}$ be the graded $A$-algebras 
associated with  the order filtrations
on $\H_{n,A}$ and $\SH_{n,A}$ respectively.
Let us state some useful consequences of the PBW theorem.
Whenever this makes sense we may abbreviate $\ad(z)$ 
for the commutator with $z$.

\vspace{.1in}

\begin{prop}
\label{prop:coco}
(a)
An element $u\in\SH_{n,A}$ is of order $\leqslant k$ if and only if
\begin{equation}\label{E:newk1}
\ad(z_1) \circ \cdots \circ
\ad(z_k) (u)\in \Sb\cdot A[H]^{\Sen_n}\cdot\Sb,\qquad\forall z_1, \ldots, z_k \in 
\Sb\cdot A[H]^{\Sen_n}\cdot\Sb.
\end{equation}

(b)
The obvious maps yield $A$-algebra isomorphisms
$$A[H\times\hen^*]\rtimes\Sen_n=\overline \H_{n,A},
\qquad
A[H\times\hen^*]^{\Sen_n}\cdot\Sb=\Sb\cdot A[H\times\hen^*]\cdot\Sb
=\overline \SH_{n,A}.$$
\end{prop}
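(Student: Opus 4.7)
The plan is to derive (b) from the PBW theorem for $\H_{n,A}$ and then exploit the resulting description of the associated graded to prove (a) by a Poisson-geometric argument.

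For part (b), I would observe that each defining relation (\ref{E:1})--(\ref{E:3}) is filtered: the differences $s_i y - s_i(y) s_i$ and $[y_i, X_j]$ are each of order $\leqslant 0$, while the natural products $y \cdot s_i$ and $y_i \cdot X_j$ are of order $\leqslant 1$. Combined with the PBW $A$-module decomposition $\H_{n,A} \cong A[H] \otimes_A A[\hen^*] \otimes_A A[\Sen_n]$ via multiplication (already invoked in the excerpt to derive (\ref{1.4})), this shows that in $\overline{\H_{n,A}}$ the relations collapse to $s_i y = s_i(y) s_i$ and $y_i X_j = X_j y_i$, giving the smash product $A[H \times \hen^*] \rtimes \Sen_n$. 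Sandwiching by $\Sb$ and using $s \Sb = \Sb = \Sb s$ for $s \in \Sen_n$ identifies $\overline{\SH_{n,A}}$ with the fixed-point algebra $A[H \times \hen^*]^{\Sen_n} \cdot \Sb$.

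For part (a), I would proceed by induction on $k$. The base case $k = 0$ is the identification $\SH_n[\ast, \leqslant\! 0] = \Sb \cdot A[H]^{\Sen_n} \cdot \Sb$, which is the order-zero part of (b). The forward direction of the inductive step is immediate from the \emph{commutativity} of $\overline{\SH_{n,A}}$: if $u$ has order $\leqslant k$ and $z$ has order $0$, the two top symbols commute in the associated graded, forcing $[z,u]$ to have order $\leqslant k-1$, after which the inductive hypothesis applies.

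The main obstacle is the converse. Suppose $u$ has order exactly $m > k$ with nonzero top symbol $\bar u \in A[H \times \hen^*]^{\Sen_n}$ of $y$-degree $m$; I need to exhibit $z_1, \ldots, z_k \in \Sb \cdot A[H]^{\Sen_n} \cdot \Sb$ for which $\ad(z_1) \circ \cdots \circ \ad(z_k)(u)$ has positive order, contradicting the hypothesis. The key input is that for any $f \in A[H]^{\Sen_n}$ and any lift $\tilde u \in \H_n$ of $u$, the commutator $[\Sb f \Sb, u] = \Sb \cdot [f, \tilde u] \cdot \Sb$ (using $\Sb f = f \Sb$) has order $\leqslant m - 1$ and top symbol equal to the \emph{standard} symplectic Poisson bracket $\{f, \bar u\} = \sum_i X_i(\partial_{y_i} f \cdot \partial_{X_i} \bar u - \partial_{X_i} f \cdot \partial_{y_i} \bar u)$ on $T^\ast H = H \times \hen^*$; the $\kappa$-correction terms in (\ref{E:3}) involving transpositions $s_{ij}$ cancel in pairs thanks to the $\Sen_n$-invariance of $f$, as one checks by a short direct computation for $f = p_{k'}(X) = \sum_i X_i^{k'}$. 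Specializing then gives $\{p_{k'}, \bar u\} = -k' \sum_i X_i^{k'} \partial_{y_i} \bar u$, and simultaneous vanishing for $k' = 1, \ldots, n$ would force $\partial_{y_i} \bar u = 0$ for all $i$ via the invertibility of the Vandermonde matrix $(X_i^{k'})$ over $\Frac A[H]$, contradicting $\deg_y \bar u > 0$. Iterating $k$ times produces a $k$-fold commutator whose top symbol has degree $m - k > 0$, hence has strictly positive order, contradicting the hypothesis.
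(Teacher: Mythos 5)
Your proof is correct, and it follows the paper's overall strategy: PBW plus degeneration of the relations for (b), commutativity of $\overline{\SH}_{n,A}$ for the forward direction of (a), and lowering the order by bracketing with elements of $\Sb\cdot A[H]^{\Sen_n}\cdot\Sb$ for the converse. Where you differ, and where you are in fact more careful, is in the converse of (a). The paper reduces the order using $\ad(\Sb p_1\Sb)$ alone (with $p_1 = X_1 + \cdots + X_n$), asserting that $\ad(p_1)(u) \in \H_{n,A}[<\! j]$ forces $u \in \H_{n,A}[\leqslant\! j]$. Taken literally, that implication fails: the derivation $\sum_i X_i \partial_{y_i}$ of $A[H\times\hen^*]$ that $\ad(p_1)$ induces on symbols has a nontrivial kernel in positive $y$-degree. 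For example, with $n = 2$ one has $\ad(p_1)(X_1y_2 - X_2y_1) = 0$, hence $\ad(p_1)\big(\Sb(X_1y_2 - X_2y_1)^2\Sb\big) = 0$, while that element has order $2$. This does not threaten the proposition itself, since $\ad(\Sb p_2\Sb)$ already lowers the order of this element to $1$, but a single-generator argument cannot detect it. Your use of all power sums $p_1, \ldots, p_n$ together with the Vandermonde observation, namely that simultaneous vanishing of $\sum_i X_i^{k'}\partial_{y_i}\bar u$ for $k' = 1, \ldots, n$ forces $\partial_{y_i}\bar u = 0$ for all $i$, supplies exactly the missing ingredient. Your subsidiary claim that $[p_{k'}, y_j] = -k' X_j^{k'}$ in $\H_{n,A}$, with no residual $\kappa$-terms, is correct and is cleanest to verify in the polynomial representation, where $y_j$ acts by a Dunkl operator whose divided-difference part commutes with any symmetric Laurent polynomial in the $X_i$. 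The one thing you glide over is that the index $k'$ you choose may vary from step to step in the $k$-fold iteration; this is harmless, since after each commutator one still has a nonzero symbol of positive $y$-degree and the Vandermonde lemma applies afresh.
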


\begin{proof} Let $\SH_{n,A}[\leqslant\! k]$ be the space of
the elements of order $\leqslant\! k$ in $\SH_{n,A}$.
We have
\begin{equation}\label{E:newk2}
\H_{n,A}[\leqslant \! k]=
\Big\{\sum_s h_s(X)g_s(y)s\;;\; \deg(g_s) \leqslant k,\; \forall s\Big\}.
\end{equation} 
Let $U_k$ be the set of elements of $\SH_{n,A}$ satisfying \eqref{E:newk1}. 
The inclusion $\SH_{n,A}[\leqslant\! k] \subset U_k$
follows from (\ref{E:3}). We prove the reverse inclusion by induction. 
For $k=0$ there is nothing to prove, so let us assume
that $U_l \subset \SH_{n,A}[\leqslant \!l]$ for all $l <k$. 
We have $\ad(X_1 + \cdots + X_n) (y_i)=X_i$ for all $i$. From this
and (\ref{E:newk2}) we deduce that 
\begin{equation}
\{u \in \H_{n,A}\;;\; \ad(X_1 + \cdots + X_n)(u) \in \H_{n,A}[<\! j]\} 
\subset \H_{n,A}[\leqslant\! j].
\end{equation}
In particular, 
we have $U_k \subset \H_{n,A}[\leqslant\! k]$. We are done. 
\end{proof}

\vspace{.1in}

\begin{lem}\label{lem:uno} The $F$-algebra $\SH_n$ is generated by 
$\Sb F[\hen^*]\Sb$ and 
$\Sb F[H]\Sb$. The $F$-algebra $\SH_n^+$ 
is generated by $\Sb F[\hen^*]\Sb$ and $\Sb F[X_1,\dots,X_n]\Sb$.
\end{lem}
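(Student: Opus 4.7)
The plan is to use the order filtration on $\SH_n$ (with $\deg y_i=1$ and $\deg X_i^{\pm 1}=0$) and induct on the order. Let $\mathcal{B}\subset \SH_n$ denote the subalgebra generated by $\Sb F[H]\Sb$ and $\Sb F[\hen^*]\Sb$; I would show that $\SH_n[\leqslant\! k]\subset \mathcal{B}$ for every $k\geqslant 0$. The base case $k=0$ is immediate because the PBW description gives $\SH_n[\leqslant\! 0]=\Sb F[H]\Sb$ (no $y$'s appear).

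For the inductive step, the key input is Proposition \ref{prop:coco}(b), which identifies the associated graded $\overline{\SH_n}$ with the commutative ring $F[H\times\hen^*]^{\Sen_n}\cdot\Sb$; the commutator on $\SH_n$ then endows this graded ring with a Poisson bracket. An inspection of relation \eqref{E:3} shows that this bracket is the restriction of the canonical symplectic bracket on $T^*H$, so $\{y_i,X_j\}=\delta_{ij}X_i$. By a classical result in invariant theory ($\Sen_n$-invariants on two copies of the defining representation), the ring $F[H\times\hen^*]^{\Sen_n}$ is generated as a commutative algebra by the polarized power sums $p_{r,s}=\sum_iX_i^ry_i^s$ with $r\in\Z$, $s\geqslant 0$, $(r,s)\neq(0,0)$, and a Leibniz computation yields
\[
\{p_{0,s+1},\,p_{r,0}\}=(s+1)\,r\,p_{r,s}.
\]
Hence each $p_{r,s}$ with $r\neq 0$ is a single Poisson bracket of elements of $F[\hen^*]^{\Sen_n}$ and $F[H]^{\Sen_n}$, while $p_{r,0}$ and $p_{0,s}$ lie directly in one of these subrings.

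Translated back to $\SH_n$: the subalgebra $\mathcal{B}$ contains all $\Sb p_{r,0}\Sb$, $\Sb p_{0,s}\Sb$ and the commutators $[\Sb p_{0,s+1}\Sb,\,\Sb p_{r,0}\Sb]$, the last having order $\leqslant\! s$ with principal symbol $(s+1)r\,p_{r,s}$. Since products of elements of $\mathcal{B}$ respect symbols at the top order, $\mathcal{B}$ reaches every $\Sen_n$-invariant polynomial of bounded $y$-degree at the symbol level. Given $u\in\SH_n[\leqslant\! k]$, I would then lift its symbol $\bar u$ to some $v\in\mathcal{B}\cap\SH_n[\leqslant\! k]$ with $\bar v=\bar u$; the difference $u-v$ lies in $\SH_n[\leqslant\! k-1]$, hence in $\mathcal{B}$ by induction, and so $u\in\mathcal{B}$.

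The $\SH_n^+$ statement follows by the same argument, using only the polarized power sums with $r\geqslant 0$ and invoking the evident PBW analogue $\overline{\SH_n^+}=F[X_1,\dots,X_n,y_1,\dots,y_n]^{\Sen_n}\cdot\Sb$ of Proposition \ref{prop:coco}(b). The main technical point I expect to require care is verifying that commutators in $\SH_n$ realize the claimed symplectic Poisson bracket at the symbol level; this is a direct consequence of \eqref{E:3} but must be carried out inside the spherical algebra, since $\overline{\H_n}$ itself is not commutative.
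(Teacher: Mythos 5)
Your proof is correct but takes a genuinely different route from the paper's. The paper specializes at $\kappa=0$, where $\SH_{n,A}/(\kappa)$ becomes the ring of symmetric Laurent differential operators on $H$, invokes a classical generation fact for that ring (its uncited ``Claim''), and lifts back to generic $\kappa$ via Nakayama's lemma applied to the finite-rank pieces $\SH_{n,A}^+[r,\leqslant l]$. Your argument bypasses the specialization entirely: you work directly in the associated graded $\overline\SH_n\simeq F[H\times\hen^*]^{\Sen_n}\cdot\Sb$ furnished by Proposition~\ref{prop:coco}(b) and produce generators at each order via the Poisson bracket, then lift order by order. Both routes ultimately rest on Weyl's first fundamental theorem for $\Sen_n$ (polarized power sums $\sum_i X_i^r y_i^s$ generate the invariant ring); yours applies it directly rather than through the intermediary of $\kappa=0$, which buys a more self-contained argument at the cost of the technical verification you yourself flag. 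That verification is the right thing to worry about: relation~\eqref{E:3} does \emph{not} give $\{y_i,X_j\}=\delta_{ij}X_i$ at a glance, since for $i\neq j$ the commutator $[y_i,X_j]$ carries a genuinely $\kappa$-dependent term $-\kappa X_{\min(i,j)}s_{ij}$, and the $i=j$ term likewise has $\kappa$-corrections involving transpositions. These only cancel after pairing and symmetrizing by $\Sb$, and the cancellation is a small computation, not an ``inspection''. A cleaner way to establish the needed fact is via the polynomial representation: under $\rho_n$ the order filtration on $\SH_n$ agrees with the filtration by differential-operator order, the principal symbol of $\rho_n(y_i)$ is $X_i\xi_i$ (the Cherednik correction has lower order), and hence the induced Poisson structure on $\overline\SH_n$ is manifestly the canonical symplectic one, $\kappa$-free. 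With that in hand your Leibniz computation $\{p_{0,s+1},p_{r,0}\}=(s+1)r\,p_{r,s}$ and the ensuing induction on the order filtration go through, and the $\SH_n^+$ statement follows in parallel by restricting to $r\geqslant 0$; you also avoid the paper's final reduction step $(X_1\cdots X_n)^{-l}\cdot\SH_n^+\supset\SH_n$, since your invariants $p_{r,s}$ for $r<0$ handle the Laurent case directly.
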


\begin{proof} 
First, we have  an isomorphism
\begin{equation}
\SH_{n,A}/(\kappa)\simeq \CC\big[X_1^{\pm 1}, X_1 \partial_{X_1}, \ldots, 
X^{\pm 1}_n, X_n \partial_{X_n}\big]^{\Sen_n}.
\end{equation}
A similar result holds for $\SH_{n,A}^+=\Sb \cdot {\H}_{n,A}^+ \cdot\Sb$. 
Next, the following is well-known.

\begin{claim}
The algebra
$\CC[X^{\pm 1}_1,X_1\partial_{X_1},\ldots,X^{\pm 1}_n,
X_n\partial_{X_n}]^{\Sen_n}$ 
is generated by 
$\CC[X^{\pm 1}_1, \ldots, X^{\pm 1}_n]^{\Sen_n}$ and 
$\CC[X_1 \partial_{X_1}, \ldots, X_n\partial_{X_n}]^{\Sen_n}$.
An similar result holds
for
$\CC[X_1, X_1 \partial_{X_1},\ldots, X_n, X_n\partial_{X_n}]^{\Sen_n}$.
\end{claim}

\noindent
We now prove the second statement of Lemma~\ref{lem:uno}.
We have 
\begin{equation}
\SH_{n,A}^+=\bigoplus_{r \geqslant 0}
\bigcup_{l\geqslant 0}\SH_{n,A}^+[r,\leqslant\! l]
\end{equation}
and $\SH_{n,A}^+[r,\leqslant\! l]$ is a free $A$-module 
of finite rank such that 
\begin{equation}\label{SH+RF}\SH_{n,A}^+[r,\leqslant\! l]\otimes_AF=\SH_n^+[r,\leqslant\! l],
\end{equation}
because $\SH_{n,A}^+[r,\leqslant\! l]$ is a direct summand in the $A$-module
$\H_{n,A}^+$ and $\H_{n,A}^+\otimes_AF=\H_n^+$.
The claim above implies that $\SH_{n,A}^+[r,\leqslant\! l]/(\kappa)$ is linearly spanned by
a suitable set of monomials in the elements $\Sb \sum_i X_i^l \Sb$ and 
$\Sb \sum_{i} y_i^l \Sb$
 for $l \geqslant 0$.
Thus, by Nakayama's lemma and \eqref{SH+RF}, we have that
$\SH_n^+[r,\leqslant\! l]$ is linearly spanned over 
$F$ by the same set of monomials. 
This proves the second statement in the lemma. 
The first one is now for instance a consequence of 
the fact that any element of $\SH_n$ 
belongs to $(X_1 \cdots X_n)^{-l}\cdot\SH^+_n$ for $l$ big enough. 
\end{proof} 

\vspace{.1in}

The assignment $X_i \mapsto X_i^{-1}$, $y_i \mapsto y_i$, $s \mapsto s^{-1}$ 
extends to an algebra antiautomorphism
$\pi$ of ${\H}_{n,A}$, as may be directly seen from the defining relations. 
It restricts to an algebra antiautomorphism of
$\SH_{n,A}$ taking
$\SH_{n,A}^+$ to $\SH_{n,A}^-$. Thus $\SH_{n,A}^-$ may be identified with $(\SH_{n,A}^+)^{\op}$.

\vspace{.1in}
 
\begin{rem}\label{rem:uno} 
Let $A_1$  be the localization of $A$ at the ideal $(\kappa-1)$. We define
$\H_{n,A_1}$ and $\SH_{n,A_1}$ in the obvious way. Lemma \ref{lem:uno}
holds true with $F$ replaced by $A_1$. The proof is similar to the proof
of \cite[thm.~4.6]{BEG}. It suffices to observe that the specialization
of $\SH_{n,A_1}$ at $\kappa=1$ is a simple algebra, 
because it is a (Ore) localization
of a simple spherical rational DAHA by \cite[prop.~4.1]{Su}. 
\end{rem}

\vspace{.15in}

\subsection{The polynomial representation}\label{sec:pol}
The tautological representation of 
$\Diff(H) \ltimes {\Sen_n}$ on $\CC[H]$ 
can be deformed to a representation of  ${\H}_{n,A}$ on $A[H]$, see \cite{C2}. 
This representation is defined by the following explicit formulas
\begin{align}\label{E:dunkl}
&\rho_n (s)=s,\\
&\rho_n(X_i^{\pm 1}) =X_i^{\pm 1},\\
&\rho_n(y_i)=
X_i \partial_{X_i} + \kappa \sum_{k \neq i} \frac{1-s_{ik}}{1-X_k/X_i} + 
\kappa \sum_{k <i} s_{ik}.
\end{align}
From now on we'll write
\begin{equation}
\Lambda_{n,A}=A[X_1, \ldots, X_n]^{\Sen_n},\qquad
\Wb_{n,A}=A[X_1^{\pm 1}, \ldots, X_n^{\pm 1}],\qquad
\V_{n,A}=\Wb_{n,A}^{\Sen_n}.
\end{equation}
We'll abbreviate
\begin{equation}
\Lambda=\Lambda_F,\qquad
\Lambda_n=\Lambda_{n,F},\qquad
\Wb_n=\Wb_{n,F},\qquad
\V_n=\V_{n,F}.
\end{equation}

\vspace{.15in}

\begin{theo}[Cherednik] The assignment $\rho_n$ 
defines an embedding $\rho_n:{\H}_{n,A}\to\End(\Wb_{n,A})$ 
which takes
$\mathbf{S}{\H}_{n,A}$ into $\End (\V_ {n,A})$.
\end{theo}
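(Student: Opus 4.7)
The plan is to verify three points in succession: (i) that the formulas for $\rho_n$ are compatible with all defining relations of $\H_{n,A}$ and hence extend to an algebra homomorphism; (ii) that this homomorphism is injective; and (iii) that it sends $\mathbf{S}\H_{n,A}$ into $\End(\V_{n,A})$.

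For (i), I would check the relations one by one. Relation \eqref{E:1} is evident since $\Sen_n$ acts on $\Wb_{n,A}$ by permutation of variables and each $X_i$ by multiplication. Relation \eqref{E:3} is a direct computation: expanding $[\rho_n(y_i), X_j]$, the derivation $X_i\partial_{X_i}$ produces the diagonal term $\delta_{ij}X_i$, while the divided-difference summands $\kappa(1-s_{ik})/(1-X_k/X_i)$ and the tail $\kappa\sum_{k<i}s_{ik}$ generate exactly the transposition correction terms listed in \eqref{E:3}. For \eqref{E:2} it suffices to test on the dual basis $y_j$, which again reduces to a direct expansion. What is implicit but essential is the commutativity $[\rho_n(y_i),\rho_n(y_j)]=0$: this is needed so that the subalgebra $F[\hen^*]$ of $\H_{n,A}$ is actually mapped by $\rho_n$. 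I would recall its proof (grouping the rational contributions of the cross terms by pairs of reflections and showing pairwise cancellation), rather than merely cite it, since it is the most subtle identity of the list.

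For (ii), the key observation is that, with respect to the order filtration on $\H_{n,A}$, the leading symbol of $\rho_n(y_i)$ is $X_i\partial_{X_i}$. By the PBW theorem for $\H_{n,A}$, any element has a unique decomposition $h=\sum_{s\in\Sen_n}h_s(X)g_s(y)s$. The associated graded of $\rho_n(h)$ is therefore $\sum_s h_s(X)g_s(X\partial_X)s$, viewed inside $\Wb_{n,A}\otimes A[X_1\partial_{X_1},\ldots,X_n\partial_{X_n}]\rtimes\Sen_n$. This latter algebra acts faithfully on $\Wb_{n,A}$: testing on monomials $X_1^{\lambda_1}\cdots X_n^{\lambda_n}$ one separates first by weights (forcing each $h_s$-contribution to lie in a fixed $X$-degree), then uses that the generic eigenvalues $\lambda_i$ of $X_i\partial_{X_i}$ distinguish the polynomials $g_s$ and the permutations $s$. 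Hence $\rho_n(h)=0$ forces $h=0$.

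For (iii), note that $\rho_n(\Sb)=(1/n!)\sum_{s\in\Sen_n}s$ is the standard symmetrization projector $\Wb_{n,A}\tto\V_{n,A}$. Consequently, for any $h\in\H_{n,A}$, the operator $\rho_n(\Sb\cdot h\cdot\Sb)=\rho_n(\Sb)\rho_n(h)\rho_n(\Sb)$ is zero on the kernel of the symmetrizer and has image inside $\V_{n,A}$; combined with (ii), this yields the desired embedding $\mathbf{S}\H_{n,A}\hookrightarrow\End(\V_{n,A})$. The main obstacle in this plan is the Dunkl--Cherednik commutativity of the operators $\rho_n(y_i)$; the rest is a careful but routine unwinding of the formulas together with a standard leading-symbol argument.
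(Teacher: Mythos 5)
The paper gives no proof of this statement; it is quoted as Cherednik's theorem with a citation to \cite{C2}. There is therefore no "paper's own proof" to compare against, so let me assess your argument on its own terms.

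Your three-step plan is the standard one and, as far as I can see, correct. A few remarks on where the real work sits. In step (i) you are right that the essential point is the pairwise commutativity of the trigonometric Dunkl--Cherednik operators $\rho_n(y_i)$: the defining relations \eqref{E:1}--\eqref{E:3} plus this commutativity characterize the representation, and it is the commutativity that is the genuinely nontrivial identity. The relation \eqref{E:2} is \emph{not} an extra thing to check against the dual basis: once $[\rho_n(y_i),\rho_n(y_j)]=0$ is established, \eqref{E:2} follows from \eqref{E:1} together with the explicit form of $\rho_n(y_i)$ by conjugating by $s_i$, and it is worth phrasing the verification that way to avoid duplicating work. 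In step (ii), the leading-symbol argument is exactly the intended mechanism, and it connects with the paper's Proposition~\ref{prop:coco}(b), which identifies $\overline\H_{n,A}$ with $A[H\times\hen^*]\rtimes\Sen_n$; your observation that the order-$0$ corrections $\kappa\sum_{k\neq i}(1-s_{ik})/(1-X_k/X_i)+\kappa\sum_{k<i}s_{ik}$ preserve $\Wb_{n,A}$ and do not contribute to the top symbol is the key point that makes the associated-graded representation the obvious one, and faithfulness of that classical representation then finishes the argument. In step (iii), your remark that $\rho_n(\Sb h\Sb)$ kills $\ker\rho_n(\Sb)$ and hence that vanishing on $\V_{n,A}$ forces vanishing on all of $\Wb_{n,A}$ cleanly recovers faithfulness of the spherical representation, which the paper uses freely afterwards even though the theorem as stated only asserts that $\mathbf{S}\H_{n,A}$ is taken into $\End(\V_{n,A})$. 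One small caveat worth being explicit about: the direct-sum decomposition $\Wb_{n,A}=\V_{n,A}\oplus\ker\rho_n(\Sb)$ used in (iii) relies on $1/n!\in A$, which holds here because $A=\C[\kappa]$, but this is a hypothesis and should be flagged if one wants the statement over more general base rings.
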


\vspace{.15in}

The representation $\rho_n$ is called the
\emph{polynomial representation}.
The space $\Lambda_{n,A}$ is preserved by 
the action of the subalgebra $\SH_{n,A}^+$. 
Let $\rho_n^+$ denote the corresponding faithful
representation of $\SH_{n,A}^+$ on $\Lambda_{n,A}$.
We set 
\begin{equation}\label{E:defD}
\tilde D^{(n)}_{0,l}=\Sb\,p_l(y_1,\dots,y_n)\,\Sb/l,\qquad l\geqslant 1.
\end{equation}
The elements $\tilde D_{0,l}^{(n)}$ 
generate a commutative subalgebra called the algebra of \emph{Sekiguchi operators}. The joint 
spectrum in $\Lambda_{n,A}$ of the operators $\tilde D^{(n)}_{0,l}$ 
consists of the \textit{Jack polynomials} 
$J_{\lambda}^{(n)}$, for $\lambda$ a partition with at most $n$ parts, 
and their eigenvalues are as follows \cite{Mac}.
See Section \ref{sec:SH+infty} below for 
details on the notation for Jack polynomials.
Consider the generating function
\begin{equation}
\Delta_n(u)=
\Sb\prod_{i=1}^n(u + y_i)\Sb=\sum_{i=1}^n\Sb\,e_i\Sb\,u ^{n-i}.
\end{equation}

\vspace{.1in}

\begin{lem}[Macdonald]\label{lem:eigenvalues} 
For $l(\lambda)\leqslant n$  we have
$$\Delta_n(u) \cdot J_{\lambda}^{(n)}=
\prod_{i=1}^n \big( u + \lambda_i + \kappa(n-i) \big) \,
J_{\lambda}^{(n)}.$$
\end{lem}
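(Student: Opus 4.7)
The strategy rests on the spectral theory of the commuting Cherednik operators $y_1,\dots,y_n$ in the polynomial representation $\rho_n$, together with the standard bridge between non-symmetric and symmetric Jack polynomials.

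The first step is a reduction. Since $y_1,\dots,y_n$ commute pairwise, $\prod_i(u+y_i)$ is a symmetric polynomial in them; a direct consequence of relation \eqref{E:2} (applied inductively to symmetric polynomials in the $y_i$) is that $\prod_i(u+y_i)$ commutes with every $s\in\Sen_n$, and hence with $\Sb$. Therefore
\[
\Delta_n(u)=\Sb\prod_{i=1}^n(u+y_i)\Sb=\prod_{i=1}^n(u+y_i)\cdot\Sb,
\]
and since $J_\lambda^{(n)}\in\Lambda_{n,A}$ is $\Sb$-invariant, the claim reduces to proving
\[
\prod_{i=1}^n(u+y_i)\cdot J_\lambda^{(n)}=\prod_{i=1}^n\bigl(u+\lambda_i+\kappa(n-i)\bigr)\cdot J_\lambda^{(n)}.
\]

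Next I would invoke the theory of non-symmetric Jack polynomials (Opdam, Cherednik, Knop--Sahi). The space $F[X_1,\dots,X_n]$ admits a basis $\{E_\mu\}_{\mu\in\N^n}$ of simultaneous eigenvectors of the commuting family $y_1,\dots,y_n$, and for a partition $\mu=\lambda$ the multiset of eigenvalues of the $y_i$'s on $E_\lambda$ coincides with $\{\lambda_i+\kappa(n-i)\}_{i=1}^n$. Moreover $\Sb\cdot E_\lambda$ is a nonzero scalar multiple of $J_\lambda^{(n)}$. Combining this with the first step yields
\[
\prod_i(u+y_i)\cdot J_\lambda^{(n)}\;\propto\;\Sb\cdot\prod_i(u+y_i)\cdot E_\lambda=\prod_i\bigl(u+\lambda_i+\kappa(n-i)\bigr)\cdot\Sb E_\lambda,
\]
from which the lemma follows on cancelling the proportionality constant.

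The main obstacle is a bookkeeping check of conventions: the explicit form of $y_i$ used in the paper contains the extra tail $\kappa\sum_{k<i}s_{ik}$, which shifts the spectrum relative to the most common normalizations in the literature. I would verify the spectrum first on the base case $\lambda=0$: a direct calculation gives $y_i\cdot 1=\kappa(i-1)\cdot 1$, so the multiset of eigenvalues is $\{0,\kappa,\dots,(n-1)\kappa\}=\{\kappa(n-i)\}_{i=1}^n$, matching the claim. The extension to general $\lambda$ then follows from the triangular action of the $y_i$ on the monomial basis of $F[X_1,\dots,X_n]$ together with the recursive construction of the $E_\lambda$ via Lusztig-type intertwiners of the form $\phi_i=s_i+\kappa/(y_{i+1}-y_i)$. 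As a self-contained alternative, one may prove by induction on $|\lambda|$ in dominance order that $\Sb p_l(y)\Sb$ is upper-triangular in the monomial basis $\{m_\mu^{(n)}\}$ with diagonal entry $p_l(\lambda+\kappa\delta)$, where $\delta=(n-1,\dots,1,0)$, and then recover the generating series $\Delta_n(u)$ via Newton's identities; this route matches the original derivation in Macdonald's book.
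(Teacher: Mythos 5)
The paper does not prove this lemma; it is attributed to Macdonald and cited to \cite{Mac}, so there is no in-text argument to compare your proposal against. Your proof is correct. The reduction step — that $\prod_i(u+y_i)$, being a symmetric polynomial in the commuting operators $y_1,\dots,y_n$, commutes with $\Sen_n$ and hence with $\Sb$ — follows directly from relation \eqref{E:2} (e.g.\ $s_j(y_j+y_{j+1})=(y_j+y_{j+1})s_j$ and $s_j\,y_jy_{j+1}=y_jy_{j+1}\,s_j$). The passage through non-symmetric Jack polynomials is sound: only the \emph{multiset} of eigenvalues of the $y_i$ on $E_\lambda$ is needed, so the convention-dependent assignment of individual eigenvalues to individual indices is harmless, and your base-case check $y_i\cdot 1=\kappa(i-1)$ confirms compatibility with the paper's Dunkl-operator formula. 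The self-contained alternative you mention — triangularity of $\Sb\,p_l(y)\,\Sb$ on the monomial basis with diagonal entry $p_l(\lambda+\kappa\delta)$, $\delta=(n-1,\dots,1,0)$, followed by Newton's identities — is essentially Macdonald's own derivation, and is therefore the closest match to the reference that the paper actually cites.
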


\vspace{.1in}

The above lemma only gives the eigenvalues of the elements 
$\Sb\, e_i\Sb$ for 
$i\in[1,n],$ but this is enough to determine the eigenvalues of all 
the operators $\tilde D^{(n)}_{0,l}$.
In fact, Lemma~\ref{lem:eigenvalues} has the following immediate corollary.

\vspace{.1in}

\begin{cor}\label{C:Philemon} For $f\in\C[y_1, \ldots, y_n]^{\Sen_n}$
and $l(\lambda)\leqslant n$ we have
$$\Sb f\Sb\cdot J_{\lambda}^{(n)}=
f(\lambda_1+\kappa(n-1), \lambda_2+\kappa(n-2), 
\ldots, \lambda_n)\, J_{\lambda}^{(n)}.$$
\end{cor}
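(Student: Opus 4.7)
The plan is to reduce the corollary to Lemma~\ref{lem:eigenvalues} by expressing an arbitrary symmetric polynomial $f\in\C[y_1,\ldots,y_n]^{\Sen_n}$ as a polynomial in the elementary symmetric functions $e_1,\ldots,e_n$ of the $y_i$, since Lemma~\ref{lem:eigenvalues} directly provides the eigenvalue of each $\Sb\,e_k(y)\,\Sb$.

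First I would read off from Lemma~\ref{lem:eigenvalues} the eigenvalue of each individual operator $\Sb\,e_k(y)\,\Sb$. Setting $\mu_i=\lambda_i+\kappa(n-i)$ and comparing coefficients of $u^{n-k}$ in the equality $\Delta_n(u)\cdot J_\lambda^{(n)}=\prod_{i=1}^n(u+\mu_i)\,J_\lambda^{(n)}$, one obtains
$$\Sb\,e_k(y)\,\Sb\cdot J_\lambda^{(n)}=e_k(\mu_1,\ldots,\mu_n)\,J_\lambda^{(n)}, \qquad k=1,\ldots,n.$$
Next I would verify that the elements $e_k(y_1,\ldots,y_n)\in\H_n$ commute with $\Sen_n$, hence with $\Sb$. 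From relation \eqref{E:2} one extracts $s_iy_i=y_{i+1}s_i-\kappa$, $s_iy_{i+1}=y_is_i+\kappa$, and $s_iy_j=y_js_i$ for $j\neq i,i+1$; combined with the commutativity $[y_i,y_j]=0$, a direct computation gives $[s_i,y_i+y_{i+1}]=0$ and $[s_i,y_iy_{i+1}]=0$. Using the decomposition
$$e_k(y_1,\ldots,y_n)=e_k' + (y_i+y_{i+1})\,e_{k-1}' + y_iy_{i+1}\,e_{k-2}',$$
where the $e_j'$ are the elementary symmetric polynomials in $\{y_l : l\neq i,i+1\}$ (and so automatically commute with $s_i$), we conclude that $[s_i,e_k(y)]=0$ for every simple reflection, hence $[s,e_k(y)]=0$ for all $s\in\Sen_n$, and in particular $[\Sb,e_k(y)]=0$.

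Finally, writing $f=P(e_1,\ldots,e_n)$ for a polynomial $P$ via the fundamental theorem of symmetric functions, the commutation $[\Sb,e_k(y)]=0$ together with $\Sb^2=\Sb$ yields the identity
$$(\Sb\,e_{i_1}\,\Sb)\cdots(\Sb\,e_{i_m}\,\Sb)=\Sb\,e_{i_1}\cdots e_{i_m}\,\Sb$$
of operators on $\Wb_{n,A}$. Iterating the first step shows that the left hand side acts on $J_\lambda^{(n)}$ as multiplication by $e_{i_1}(\vec\mu)\cdots e_{i_m}(\vec\mu)$, and by linearity $\Sb\,f\,\Sb\cdot J_\lambda^{(n)}=f(\vec\mu)\,J_\lambda^{(n)}$, which is the desired formula. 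The only non-formal step is the commutation $[s_i,e_k(y)]=0$; once this is in hand, everything else reduces to straightforward manipulation of the spherical idempotent.
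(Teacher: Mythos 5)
Your proof is correct, and it fills in precisely the standard details that the paper glosses over by calling the corollary ``immediate'': namely, that relation \eqref{E:2} forces $[s_i, e_k(y)]=0$, so that symmetric polynomials in the $y_i$ commute with $\Sb$, which together with $\Sb^2=\Sb$ lets you extend the eigenvalue computation of Lemma~\ref{lem:eigenvalues} for $\Sb\,e_k\,\Sb$ multiplicatively to arbitrary $\Sb\,f\,\Sb$. This is the same route the authors have in mind; you have simply written it out.
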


\vspace{.1in}

Since the joint spectrum of the $\tilde D^{(n)}_{0,l}$ is simple, the Jack 
polynomials $\{J_{\lambda}^{(n)}\}$ are completely determined 
by Lemma~\ref{lem:eigenvalues}, up to a scalar. Following Stanley 
\cite{Stanley} we normalize this scalar by requiring that . 

\begin{equation}
J_{\lambda}^{(n)} \in \bigoplus_{(1^n) < \mu \leqslant \lambda}
F m_{\mu} + |\lambda|!\, X_1\cdots X_n .
\end{equation}

For future use, we state here the 
Pieri rules for Jack polynomials \cite[thm~6.1]{Stanley}. 
For a pair of partitions $\mu \subset \lambda$ with $|\lambda|-|\mu|=1$ we write
\begin{equation}
\psi_{\lambda \backslash \mu}=\prod_{s \in C_{\lambda \backslash{\mu}}} \frac{h_{\mu}(s)}{h_{\lambda}(s)} 
\prod_{s \in R_{\lambda \backslash \mu}} \frac{h^{\mu}(s)}{h^{\lambda}(s)}
\end{equation}
where $C_{\lambda\backslash \mu}$ and $R_{\lambda \backslash \mu}$ are as in
Section \ref{sec:notation}.
Here, for any box $s$ of a 
partition $\lambda$, we have set
\begin{equation}
h_{\lambda}(s)=\kappa l_{\lambda}(s) + (a_{\lambda}(s) +1), 
\qquad h^{\lambda}(s)=\kappa(l_{\lambda}(s)+1) + a_{\lambda}(s).
\end{equation}

\vspace{.1in}

\begin{theo}[Stanley]\label{T:Pieri} 
For $l(\mu)\leqslant n$ we have
\begin{equation}\label{E:Pieri}
e_1\, J_{\mu}^{(n)}=
\sum_{\lambda} \psi_{\lambda \backslash \mu}\,
 J_{\lambda}^{(n)}, \end{equation}
where the sum ranges over all partitions $\lambda$ of length at most $n$ with $\mu \subset \lambda$ and 
$|\sigma|=|\mu| +1$.
\end{theo}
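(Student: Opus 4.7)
The plan is to reduce this Pieri rule to a norm computation using the Jack scalar product. First I would verify triangularity: the expansion of $e_1 \cdot J_\mu^{(n)}$ in the basis $\{J_\nu^{(n)}\}$ is supported on partitions $\lambda \supset \mu$ with $|\lambda|=|\mu|+1$ and $l(\lambda)\leqslant n$. Indeed, by Lemma \ref{lem:eigenvalues} together with Stanley's normalization, each $J_\nu^{(n)}$ is of the form $m_\nu$ plus strictly lower terms in dominance order (plus the integral correction at $(1^n)$). Combined with the classical Pieri rule $e_1 \cdot m_\mu = \sum_\lambda m_\lambda$ summed over partitions $\lambda$ covering $\mu$ in Young's lattice, and inverting the triangular change of basis between $\{m_\sigma\}$ and $\{J_\nu^{(n)}\}$, this pins down the support of the expansion.

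Next, to compute the coefficient $c_{\lambda,\mu} = \psi_{\lambda\backslash \mu}$, I would invoke the Jack inner product $\langle \cdot, \cdot\rangle_\kappa$ on $\Lambda_F$, for which the $\{J_\nu\}$ are pairwise orthogonal with explicit squared norms $j_\nu = \prod_{s \in \nu} h^\nu(s) h_\nu(s)$. Under this pairing, multiplication by $p_1 = e_1$ is adjoint (up to a $\kappa$-scalar) to the derivation $\partial/\partial p_1$ acting on power-sum generators. Hence
\[
c_{\lambda,\mu} = \frac{\langle e_1 J_\mu,\, J_\lambda\rangle_\kappa}{j_\lambda} = \frac{j_\mu}{j_\lambda}\, c'_{\mu,\lambda},
\]
where $c'_{\mu,\lambda}$ is the coefficient of $J_\mu$ in $p_1^\perp J_\lambda$. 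The skew coefficient $c'_{\mu,\lambda}$ is determined by comparing the leading monomial terms $m_\mu$ on both sides; up to an overall $\kappa$-factor independent of the pair $(\lambda,\mu)$ this skew coefficient equals $1$, which reduces the computation to the evaluation of the ratio $j_\mu/j_\lambda$.

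The main obstacle is the hook-length cancellation that converts $j_\mu/j_\lambda$ into the compact product appearing in the definition of $\psi_{\lambda\backslash \mu}$. Concretely, for every box $s$ of $\mu$ lying neither in the column $C_{\lambda\backslash \mu}$ nor in the row $R_{\lambda\backslash \mu}$ of the added corner, the arm and leg lengths $a(s)$ and $l(s)$ are the same in $\mu$ and in $\lambda$, so both factors $h_\mu(s)/h_\lambda(s)$ and $h^\mu(s)/h^\lambda(s)$ equal $1$ and drop out of the product. For boxes in the column of the added corner only the leg length changes, so only the factor $h_\mu(s)/h_\lambda(s)$ survives; symmetrically, in the row only $h^\mu(s)/h^\lambda(s)$ survives. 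Matching the product of these surviving factors against the direct contribution of the added corner box itself reproduces exactly the stated product formula for $\psi_{\lambda\backslash \mu}$, completing the proof.
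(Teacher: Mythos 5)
The paper does not prove Theorem~\ref{T:Pieri} at all; it is quoted verbatim from Stanley's \cite[thm~6.1]{Stanley}. Your proposal attempts a proof from scratch, which is reasonable, but it has two real gaps.

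First, the triangularity argument does not establish the claimed support. Dominance-order triangularity gives $J_\mu^{(n)}=\sum_{\nu\leqslant\mu}v_{\mu\nu}m_\nu$ and hence $e_1 J_\mu^{(n)}$ is an $F$-combination of $m_\sigma$ for $\sigma$ covering some $\nu\leqslant\mu$ in Young's lattice. Such $\sigma$ need not contain $\mu$; for example $e_1 J_{(3)}$ has $m$-support $\{(4),(3,1),(2,2),(2,1,1),(1^4)\}$ yet the Pieri rule asserts the $J$-support is only $\{(4),(3,1)\}$. The vanishing of the coefficients of $J_{(2,2)}$, $J_{(2,1,1)}$, $J_{(1^4)}$ is a genuine cancellation and cannot be seen from a dominance filtration alone. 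You need either the orthogonality argument for skew Jack functions, a positivity argument (e.g.\ the Knop--Sahi combinatorial formula), or a direct computation with the Sekiguchi operators to restrict the support to $\lambda\supset\mu$.

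Second, and more seriously, the claim that the skew coefficient $c'_{\mu,\lambda}$ in $p_1^\perp J_\lambda=\sum_\mu c'_{\mu,\lambda}J_\mu$ ``equals $1$ up to a $\kappa$-factor independent of $(\lambda,\mu)$'' is false, and the leading-term comparison you invoke does not determine $c'_{\mu,\lambda}$ because the coefficient of $m_\mu$ in $p_1^\perp J_\lambda$ receives contributions from $J_\nu$ for all $\nu\geqslant\mu$, not just $\nu=\mu$. A direct check in Stanley's normalization (with $\alpha=1/\kappa$, so $p_1^\perp=\alpha\,\partial_{p_1}$) shows that $c'_{(2),(2,1)}=\alpha(2+\alpha)/(1+\alpha)$ while $c'_{(1,1),(2,1)}=\alpha(1+2\alpha)/(1+\alpha)$; these are genuinely different rational functions, not constants. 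The duality $c_{\lambda,\mu}j_\lambda=c'_{\mu,\lambda}j_\mu$ is correct, but it merely trades one unknown for another: determining $c'_{\mu,\lambda}$ (equivalently, the dual Pieri coefficients) is exactly as hard as the original problem. Your hook-length cancellation in the last paragraph is sound as an identity about $j_\mu/j_\lambda$, but it only yields $\psi_{\lambda\backslash\mu}$ after the nontrivial skew coefficient has been inserted, so the key step of the proof is missing. A correct argument along your lines would need an independent determination of $c'_{\mu,\lambda}$, for instance via specialization at $x_i=q^{i-1}$ (Stanley's approach) or via the duality $\omega_\alpha$.
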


\vspace{.15in}

\subsection{The normalized Sekiguchi operators}
The eigenvalue of the operator
$\tilde D^{(n)}_{0,l}$ on the Jack polynomial 
$J_{\lambda}^{(n)}$ for $l(\lambda)\leqslant n$ 
do depend on $n$. In order to correct this, 
we will introduce a new set of diagonalisable operators $D^{(n)}_{0,l}$, whose 
eigenvalues on the 
$J_{\lambda}^{(n)}$'s are independent of $n$. We may think of these
new operators as normalized Sekiguchi operators. 
We'll use the following simple combinatorial lemma. 
Given a box $s$ in the diagram of a partition $\lambda$ we'll write
\begin{equation}\label{E:defc}
c(s)=x(s)-\kappa y(s).
\end{equation}

\vspace{.1in}

\begin{lem}\label{lem:Br} For $l\in \N$ there exists a 
unique element $B^{(n)}_l\in A[y_1, \ldots, y_n]^{\Sen_n}$
such that 
$$B^{(n)}_l(\lambda_1-\kappa,  \lambda_2-2\kappa, \ldots, \lambda_n-n\kappa)=
\sum_{ s \in \lambda} c(s)^l,
\qquad l(\lambda)\leqslant n.$$
\end{lem}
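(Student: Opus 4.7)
The plan is to produce $B^{(n)}_l$ explicitly as a coefficient of a generating series and then to verify uniqueness by a Zariski density argument. Summing over boxes row by row, I observe
$$\sum_{s\in\lambda}e^{c(s)z}=\sum_{i=1}^n e^{-(i-1)\kappa z}\sum_{j=0}^{\lambda_i-1}e^{jz}=\sum_{i=1}^n e^{-(i-1)\kappa z}\cdot\frac{e^{\lambda_iz}-1}{e^z-1}.$$
Clearing the denominator and substituting $y_i=\lambda_i-i\kappa$, so that $\lambda_i-(i-1)\kappa=y_i+\kappa$, yields the key identity
$$(e^z-1)\sum_{s\in\lambda}e^{c(s)z}=\sum_{i=1}^n e^{(y_i+\kappa)z}-\sum_{i=1}^n e^{-(i-1)\kappa z}.$$

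The right-hand side is manifestly symmetric in $y_1,\ldots,y_n$ with coefficients in $A=\C[\kappa]$: the first sum depends only on the multiset $\{y_i\}$, while the second does not involve the $y_i$'s at all. Both sums equal $n$ at $z=0$, so the right-hand side vanishes at $z=0$ and is therefore divisible by $e^z-1$ in the formal power-series ring. Dividing, I obtain a power series in $z$ whose coefficients lie in $A[y_1,\ldots,y_n]^{\Sen_n}$. Defining $B^{(n)}_l$ to be $l!$ times the coefficient of $z^l$, the construction immediately gives $B^{(n)}_l(\lambda_1-\kappa,\ldots,\lambda_n-n\kappa)=\sum_{s\in\lambda}c(s)^l$, which settles existence.

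For uniqueness, suppose $B\in A[y_1,\ldots,y_n]^{\Sen_n}$ vanishes at $(\lambda_1-\kappa,\ldots,\lambda_n-n\kappa)$ for every partition $\lambda$ with $l(\lambda)\leqslant n$. Specializing $\kappa$ to any fixed value $\kappa_0\in\C$ gives a symmetric polynomial $B|_{\kappa=\kappa_0}\in\C[y_1,\ldots,y_n]$ vanishing on the subset $\{(\lambda_1-\kappa_0,\ldots,\lambda_n-n\kappa_0)\}\subset\C^n$. This subset is Zariski dense (by symmetry, polynomial vanishing on all ordered integer tuples forces vanishing on all of $\N^n$, which is well known to be Zariski dense). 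Hence $B|_{\kappa=\kappa_0}=0$ for every $\kappa_0\in\C$, which forces $B=0$.

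The only substantive point is the telescoping identity above. A direct attempt to write $\sum_{s\in\lambda}c(s)^l$ as a symmetric function of the $\lambda_i$'s fails because the inner row sum $\sum_{j=0}^{\lambda_i-1}(j-(i-1)\kappa)^l$ carries a factor $(i-1)\kappa$ that breaks the symmetry. The shift $y_i=\lambda_i-i\kappa$ absorbs this asymmetry precisely when it is combined with the $(e^z-1)^{-1}$ generating-function trick; once this observation is in hand, both existence and uniqueness are formal.
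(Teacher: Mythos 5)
Your proof is correct, and it takes a genuinely different route from the paper. The paper's proof proceeds by interpolation: it defines $T_{r,i}(z)\in A[z]$ by $T_{r,i}(l)=\sum_{j=1}^l((j-1)-\kappa(i-1))^r$, sets $\tilde T_{r,i}(z)=T_{r,i}(z+\kappa i)$, and reduces existence to showing that $\tilde T_{r,i}-\tilde T_{r,j}$ is constant — which it proves by a forward-difference computation $\tilde T_{r,i}(z)-\tilde T_{r,i}(z-1)=(z-1+\kappa)^r$. You instead package the row sums into an exponential generating series, clear the geometric denominator by multiplying by $e^z-1$, and observe that the right-hand side $\sum_i e^{(y_i+\kappa)z}-\sum_i e^{-(i-1)\kappa z}$ is visibly symmetric in the $y_i$ with coefficients in $\Q[\kappa]\subset A$; dividing back (legitimate since the RHS vanishes at $z=0$) produces $B^{(n)}_l$ explicitly. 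These are two faces of the same idea (both amount to realizing the row contribution as a shifted Bernoulli-type polynomial evaluated at $y_i=\lambda_i-i\kappa$), but your generating-function formulation makes the symmetry immediate and gives a closed expression, whereas the paper's difference argument is more elementary but less constructive.

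One small imprecision in your uniqueness step: the parenthetical "by symmetry, polynomial vanishing on all ordered integer tuples forces vanishing on all of $\N^n$" is not the relevant observation, since the zero set in question is the shifted set $\{(\lambda_1-\kappa_0,\ldots,\lambda_n-n\kappa_0)\}$, whose coordinates are not ordered integers and where symmetry of $B$ plays no role. The density is in fact direct: the affine substitution $\lambda_i=\mu_i+\mu_{i+1}+\cdots+\mu_n$ with $\mu\in\N^n$ has invertible linear part and carries $\N^n$ bijectively onto the set of partitions of length $\leqslant n$, so the shifted set is the image of $\N^n$ under an affine isomorphism of $\C^n$, hence Zariski dense. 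With that correction, the argument is clean; the paper itself simply says "the unicity statement is clear."
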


\begin{proof} There exists polynomials $T_{r,i}(z) \in A[z]$ 
such that for all $l \geqslant 0$ we have 
$$T_{r,i}(l) =\sum_{j=1}^{l} ((j-1)-\kappa (i-1))^r.$$ Then, for $l(\lambda)\leqslant n,$ we have
$$
\sum_{s \in \lambda} c(s)^r 
=\sum_{i=1}^{n} T_{r,i}(\lambda_i)
=\sum_{i=1}^{n} \tilde{T}_{r,i}(\lambda_i-\kappa i),
\qquad\tilde{T}_{r,i}(z)=T_{r,i}(z+\kappa i).$$
The existence of $B^{(n)}_r$ will be proved if we can show that 
$\tilde{T}_{r,i}(z) -\tilde{T}_{r,j}(z) \in A$ 
for any $i,j$ (as polynomials in $z$). 
For this, it is enough to show that for all $i,j$
\begin{equation}\label{E:aaatry}
\tilde{T}_{r,i}(z)-\tilde{T}_{r,i}(z-1)=\tilde{T}_{r,j}(z)-\tilde{T}_{r,j}(z-1).
\end{equation}
We have $T_{r,i}(z)-T_{r,i}(z-1)=(z-1-\kappa (i-1))^r,$ 
since this holds for any $z \in \N$. Therefore
$\tilde{T}_{r,i}(z)-\tilde{T}_{r,i}(z-1)=(z-1+\kappa)^r$ 
for any $i$, from which (\ref{E:aaatry}) is immediate. 
The unicity statement is clear. 
\end{proof}

\vspace{.15in}

Now,  we define the operators
\begin{equation}
D^{(n)}_{0,l} = \Sb B_{l-1}^{(n)}(y_1-n\kappa, y_2-n\kappa, 
\ldots, y_n-n\kappa) \Sb,\qquad l\geqslant 1.
\end{equation}
By Corollary~\ref{C:Philemon} 
and Lemma~\ref{lem:Br} we have, 
\begin{equation}\label{E:Horace}
D^{(n)}_{0,l} \cdot J_{\lambda}^{(n)}=
\sum_{s \in \lambda}c(s)^{l-1}J_{\lambda}^{(n)},\quad
 l(\lambda)\leqslant n.
\end{equation}
In particular, we have $D^{(n)}_{0,l}(1)=0$ and
the eigenvalues of  $D^{(n)}_{0,l}$ are 
independent of $n$.
It is easy to see from the 
proof of Lemma~\ref{lem:Br} that 
\begin{equation}
B_{l-1}^{(n)}=p_l/l + q_l,
\end{equation} with $q_l$ a symmetric 
function of degree $<\!l$. 
Thus
$\{B_0^{(n)}, \ldots, B_{n-1}^{(n)}\}$ is a system of generators of the 
$A$-algebra
$A[y_1, \ldots, y_n]^{\Sen_n}$. 
Hence, we have the following.

\vspace{.1in}

\begin{lem}
The $A$-algebra $\SH^0_{n,A}$ is generated by  $\{D^{(n)}_{0,l}\;;\; l \geqslant 1\}$.
\end{lem}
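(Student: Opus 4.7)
The plan is to deduce the lemma directly from the structural information about the polynomials $B_{l-1}^{(n)}$ established just above in the paper. The key point is that passing to the spherical subalgebra, and then applying the affine shift $y_i \mapsto y_i - n\kappa$, are both compatible with the filtration by degree, so generation boils down to a triangularity statement for $B_{l-1}^{(n)}$ with respect to the power sums.

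First I would identify $\SH^0_{n,A}$ with $A[y_1,\dots,y_n]^{\Sen_n}$. Indeed, any element $f\in A[\hen^*]^{\Sen_n}$ commutes with $\Sb$, so the map $f\mapsto \Sb f\Sb$ defines an $A$-algebra isomorphism
\[
A[y_1,\dots,y_n]^{\Sen_n}\;\xrightarrow{\sim}\;\SH^0_{n,A}=\Sb A[\hen^*]\Sb.
\]
Under this identification $D^{(n)}_{0,l}$ corresponds to $B_{l-1}^{(n)}(y_1-n\kappa,\dots,y_n-n\kappa)$. The translation $y_i\mapsto y_i-n\kappa$ is an $A$-algebra automorphism of $A[y_1,\dots,y_n]^{\Sen_n}$, so it suffices to show that $\{B_{l-1}^{(n)}\;;\;l\geqslant 1\}$ generates $A[y_1,\dots,y_n]^{\Sen_n}$ as an $A$-algebra.

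For this, recall the remark preceding the lemma: $B_{l-1}^{(n)}=p_l/l+q_l$ with $q_l$ a symmetric polynomial of degree strictly less than $l$. Since $A\supset\Q$, the power sums $\{p_1,\dots,p_n\}$ generate $A[y_1,\dots,y_n]^{\Sen_n}$ as an $A$-algebra. I proceed by induction on $l$: for $l=1$, $B_0^{(n)}=p_1$ (up to an element of $A$). Assuming $p_1,\dots,p_{l-1}$ lie in the subalgebra $A\langle B_0^{(n)},\dots,B_{l-2}^{(n)}\rangle$, the correction term $q_l$, being a polynomial in $p_1,\dots,p_{l-1}$, also lies there, whence $p_l=l\,B_{l-1}^{(n)}-l q_l$ lies in $A\langle B_0^{(n)},\dots,B_{l-1}^{(n)}\rangle$. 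Thus $\{B_0^{(n)},\dots,B_{n-1}^{(n)}\}$ already generates $A[y_1,\dots,y_n]^{\Sen_n}$, and a fortiori so does $\{B_{l-1}^{(n)}\;;\;l\geqslant 1\}$.

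Transporting back via the isomorphism above and composing with the shift automorphism gives the statement. There is no real obstacle here: once the identification $\SH^0_{n,A}\simeq A[y_1,\dots,y_n]^{\Sen_n}$ is in place, the claim is a routine triangularity argument, and the only mild point to check is that the shift by $n\kappa$ is invertible over $A$, which is automatic as it is an $A$-algebra automorphism.
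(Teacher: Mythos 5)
Your proof is correct and follows exactly the route the paper takes: the paper records $B_{l-1}^{(n)} = p_l/l + q_l$ with $\deg q_l < l$ and then immediately concludes that $\{B_0^{(n)},\dots,B_{n-1}^{(n)}\}$ generates $A[y_1,\dots,y_n]^{\Sen_n}$, leaving the identification $\SH^0_{n,A}\simeq A[y_1,\dots,y_n]^{\Sen_n}$ and the translation by $n\kappa$ implicit. You have simply written out the same triangularity-plus-shift argument in full detail.
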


\begin{rem}\label{rem:LBn}
For each partition $\lambda$ let  $\lambda'$ be the conjugate partition and set
$n(\lambda)=\sum_i\lambda'_i(\lambda'_i-1)/2$.
The formula \eqref{E:Horace} yields
$$D^{(n)}_{0,2}\cdot J_\lambda^{(n)}=
(n(\lambda')-\kappa n(\lambda))\,J_\lambda^{(n)}.$$ 
Thus, we have
$D^{(n)}_{0,2}=\kappa\,\square_n,$
where $\square_n$ is the
\textit{Laplace-Beltrami operator}. See e.g., 
\cite[chap.~VI, sec.~4, Ex.~3]{Mac} where $\square_n$ is denoted $\square^{\kappa^{-1}}_n$.
\end{rem}

\vspace{.15in}

\subsection{The algebras $\SH^+_{n}$ and $\SH^-_{n}$}
Our aim is to construct some limit of the algebra $\SH_n$ and of the 
representation $\rho_n$ as $n$ tends to infinity. 
The algebras $\SH_n$ do not seem to form a 
nice projective system. 
Instead, our method is as follows 
\begin{itemize}

\item first we define limits $\SH^{\pm}$ for the subalgebras $\SH^{\pm}_n$,

\item then we define $\SH$ as 
some amalgamated product of $\SH^+$ with $\SH^{-}$. 
\end{itemize}
For this we first need to understand some 
relations between $\SH^+_{n}$ and $\SH^-_{n}$ inside $\SH_n$. 
This is what we do in the present paragraph.
For $l\geqslant 1$ we set
\begin{equation}
\label{1.5:form}
\gathered
D^{(n)}_{0,0}=n\Sb,\\
D_{\pm l,0}^{(n)}=\Sb p_l^{(n)}(X^{\pm 1}_1, \ldots, X^{\pm 1}_n) \Sb, \\
D^{(n)}_{1,l}=[D^{(n)}_{0,l+1},D^{(n)}_{1,0}], \\ 
D^{(n)}_{-1,l}=[D_{-1,0}^{(n)}, D^{(n)}_{0,l+1}].
\endgathered\end{equation}
By Lemma \ref{lem:uno}, the $F$-algebra
$\SH^\pm_n$ is generated by 
$\{D_{0, l}^{(n)},\,D_{\pm l,0}^{(n)}\,;\,l\geqslant 1\}$.

\begin{df}
Let $\SH^{>}_n$ be the $F$-subalgebra of $\SH^+_n$ generated by 
$\{D_{1,l}^{(n)}\;;\; l \geqslant 0\}$.
We define the $F$-subalgebra $\SH^<_n$ of $\SH^-_n$ in a similar way. 
\end{df}

\begin{ex} The following identities hold
\begin{equation}\label{E:rel4}
\gathered
D^{(n)}_{1,1}=\Sb \big( \sum_i X_i y_i \big)\Sb- 
\kappa(n-1) D_{1,0}^{(n)}/2,\\
D^{(n)}_{-1,1}=\Sb \big( \sum_i y_i X_i^{-1}\big)\Sb-
\kappa(n-1)D_{-1,0}^{(n)}/2,\\
[D^{(n)}_{1,1}, D^{(n)}_{l,0}]=l D^{(n)}_{l+1,0}, \qquad [D^{(n)}_{-l,0},D^{(n)}_{-1,1}]=l D^{(n)}_{-l-1,0},
\qquad l\geqslant 0.
\endgathered
\end{equation}
\end{ex}

\vspace{.1in}

The following is immediate.

\begin{prop}\label{prop:1.16} For $l\geqslant 0$ the following hold

(a) $D_{l,0}^{(n)}\in\SH^>_n$  and $D_{-l,0}^{(n)}\in\SH^<_n$ for $l\neq 0$,

(b) for $l(\mu)\leqslant n$  we have
\begin{equation}\label{E:Pieri2}
D^{(n)}_{1,l} \cdot J_{\mu}^{(n)}=\sum_{\lambda} c(\lambda \backslash \mu)^l \,
\psi_{\lambda \backslash \mu}\, J_{\lambda}^{(n)}
\end{equation}
where 
the sum ranges over all partitions $\lambda$ with $l(\lambda)\leqslant n,$ 
$\mu \subset \lambda$ and 
$|\lambda|=|\mu| +1$.
\end{prop}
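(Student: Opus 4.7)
The proposition is essentially a direct unpacking of the definitions together with the Pieri rule and the diagonalisation formula \eqref{E:Horace}. I would proceed as follows.

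For part (a), the definition of $\SH^>_n$ already gives $D_{1,0}^{(n)},D_{1,1}^{(n)}\in\SH^>_n$. The commutator identities displayed in \eqref{E:rel4}, namely
\[
[D^{(n)}_{1,1},D^{(n)}_{l,0}]=l\,D^{(n)}_{l+1,0},\qquad l\geqslant 0,
\]
immediately yield $D^{(n)}_{l,0}\in\SH^>_n$ for every $l\geqslant 1$ by induction on $l$. The case of $D^{(n)}_{-l,0}\in\SH^<_n$ is identical, using instead the companion identity $[D^{(n)}_{-l,0},D^{(n)}_{-1,1}]=l\,D^{(n)}_{-l-1,0}$. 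So (a) is essentially formal.

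For part (b), the plan is to expand the definition $D^{(n)}_{1,l}=[D^{(n)}_{0,l+1},D^{(n)}_{1,0}]$ on the Jack polynomial $J^{(n)}_\mu$ and use the two inputs we already have. First, by Stanley's Pieri rule (Theorem~\ref{T:Pieri}), since $D^{(n)}_{1,0}=\Sb\,p_1(X_1,\dots,X_n)\,\Sb=\Sb\,e_1\,\Sb$, we have
\[
D^{(n)}_{1,0}\cdot J^{(n)}_\mu=\sum_\lambda \psi_{\lambda\backslash\mu}\,J^{(n)}_\lambda,
\]
the sum ranging over partitions $\lambda$ with $l(\lambda)\leqslant n$, $\mu\subset\lambda$ and $|\lambda|=|\mu|+1$. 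Second, by \eqref{E:Horace},
\[
D^{(n)}_{0,l+1}\cdot J^{(n)}_\nu=\Bigl(\sum_{s\in\nu}c(s)^l\Bigr)J^{(n)}_\nu
\]
for any partition $\nu$ with $l(\nu)\leqslant n$.

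Combining these and taking the commutator, we find
\[
D^{(n)}_{1,l}\cdot J^{(n)}_\mu
=\sum_\lambda \psi_{\lambda\backslash\mu}\Bigl(\sum_{s\in\lambda}c(s)^l-\sum_{s\in\mu}c(s)^l\Bigr)J^{(n)}_\lambda
=\sum_\lambda \psi_{\lambda\backslash\mu}\,c(\lambda\backslash\mu)^l\,J^{(n)}_\lambda,
\]
where in the last step we use that the diagrams of $\lambda$ and $\mu$ differ by the single box $\lambda\backslash\mu$, so the difference of the two content sums collapses to $c(\lambda\backslash\mu)^l$.

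There is no substantive obstacle here: the only thing to watch is the index shift between the defining formulas \eqref{1.5:form} and the eigenvalue formula \eqref{E:Horace} (so that $D^{(n)}_{0,l+1}$ produces the exponent $l$, matching the statement). Everything else is a one-line manipulation built from tools already available in the text.
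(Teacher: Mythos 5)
Your proof is correct and is essentially the argument the paper has in mind (it simply labels the proposition ``immediate'' without writing it out): part (a) is an induction via the bracket identities \eqref{E:rel4}, and part (b) follows by expanding the commutator $D^{(n)}_{1,l}=[D^{(n)}_{0,l+1},D^{(n)}_{1,0}]$ against $J^{(n)}_\mu$, applying Stanley's Pieri rule (Theorem~\ref{T:Pieri}) for $D^{(n)}_{1,0}=\Sb\,e_1\,\Sb$ and the eigenvalue formula \eqref{E:Horace} for $D^{(n)}_{0,l+1}$, and observing that the content sums over $\lambda$ and $\mu$ differ precisely by $c(\lambda\backslash\mu)^l$.
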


\vspace{.15in}

Note that \eqref{E:Pieri2} and \eqref{E:Horace} imply that \eqref{1.5:form} 
holds also for $l=0$.
The next result describes some of the relations between the three algebras 
$\SH^>_n$, $\SH^0_n$ and $\SH^<_n$. As we will see in Proposition 
\ref{P:triang} below, these relations (which, thanks to the 
introduction of $D^{(n)}_{0,0}$, do not depend on $n$) are the only ones which survive in the limit 
$n \to \infty$. 
For $l\geqslant 0$ we write
\begin{equation}\gathered
\xi=1-\kappa,\\
\lg_0(s)=-\log(s),\\
\lg_l(s)=(s^{-l}-1)/l,\qquad l\neq 0,\\
\varphi_l(s)=
\sum_{q=1,-\xi,-\kappa}s^l\bigl(\lg_l(1-qs)-\lg_l(1+qs)\bigr).
\endgathered
\end{equation}

\vspace{.1in}

\begin{prop}
\label{P:<0>} 
The following relations hold in $\SH_n$
\begin{equation}\label{E:rel'1}
[D^{(n)}_{0,l}, D^{(n)}_{1,k}]=D^{(n)}_{1,l+k-1},
\end{equation}
\begin{equation}\label{E:rel'2}
[D^{(n)}_{0,l}, D^{(n)}_{-1,k}]=-D^{(n)}_{-1,l+k-1},
\end{equation}
\begin{equation}\label{E:rel3}
[D_{-1,k}^{(n)},D_{1,l}^{(n)}]=E^{(n)}_{k+l}
\end{equation}
where the elements $E^{(n)}_{k+l}$ are determined through the formula
\begin{equation}\label{E:rel35}
\gathered
1+\xi \sum_{l\geqslant 0} E^{(n)}_{l}\,s^{l+1}=K(\kappa,D_{0,0}^{(n)},s) 
\;\exp\big(\sum_{l\geqslant 0} D^{(n)}_{0,l+1}\, \varphi_l(s)\big),\\
K(\kappa,\omega,s)
=\frac{\big( 1 + \xi s\big)\big( 1 + \kappa\omega s\big)}
{1 +\xi s+ \kappa \omega s}.
\endgathered
\end{equation}
\end{prop}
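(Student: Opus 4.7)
The plan is to verify \eqref{E:rel'1} and \eqref{E:rel'2} directly in the faithful polynomial representation of Section \ref{sec:pol}, and then to prove \eqref{E:rel3} by showing successively that $[D^{(n)}_{-1,k}, D^{(n)}_{1,l}]$ is diagonal in the Jack basis, depends only on $k+l$, and has the stated generating function as its eigenvalue on Jack polynomials.

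For \eqref{E:rel'1}, I apply both sides to a Jack polynomial $J^{(n)}_\mu$ with $l(\mu)\leqslant n$ and combine the diagonal formula \eqref{E:Horace} with the Pieri formula \eqref{E:Pieri2} to obtain
\[
[D^{(n)}_{0,l}, D^{(n)}_{1,k}]\, J^{(n)}_\mu = \sum_\lambda \psi_{\lambda\backslash\mu}\, c(\lambda\backslash\mu)^k \Bigl(\sum_{s\in\lambda}c(s)^{l-1} - \sum_{s\in\mu}c(s)^{l-1}\Bigr) J^{(n)}_\lambda = D^{(n)}_{1,l+k-1}\,J^{(n)}_\mu,
\]
where the parenthesized difference equals $c(\lambda\backslash\mu)^{l-1}$. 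Faithfulness of the polynomial representation on $\SH^+_n$ gives \eqref{E:rel'1}. For \eqref{E:rel'2}, I apply the antiautomorphism $\pi$ introduced after Lemma \ref{lem:uno}: since $\pi(y_i)=y_i$, it fixes $\SH^0_n$ pointwise, and since $\pi(D^{(n)}_{1,0})=D^{(n)}_{-1,0}$, anti-multiplicativity combined with the inductive definition $D^{(n)}_{\pm 1,l}=\pm[D^{(n)}_{0,l+1},D^{(n)}_{\pm 1,0}]$ gives $\pi(D^{(n)}_{1,l})=D^{(n)}_{-1,l}$. Applying $\pi$ to \eqref{E:rel'1} then yields \eqref{E:rel'2}.

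For \eqref{E:rel3}, I first compute $[D^{(n)}_{-1,k}, D^{(n)}_{1,l}]\,J^{(n)}_\mu$ using \eqref{E:Pieri2} and the dual Pieri expansion for $D^{(n)}_{-1,k}\,J^{(n)}_\lambda$ in terms of partitions $\nu\subset\lambda$ with $|\lambda|-|\nu|=1$ (this dual formula follows from \eqref{E:Pieri2} by duality of Jack polynomials under the Macdonald inner product, under which $D^{(n)}_{-1,k}$ and $D^{(n)}_{1,k}$ are mutually adjoint up to an explicit normalization). The coefficient of $J^{(n)}_{\mu'}$ for $\mu'\neq\mu$ factors through two intermediate partitions $\mu\cup\{s_1\}$ and $\mu\setminus\{s_2\}$, where $\mu'=(\mu\cup\{s_1\})\setminus\{s_2\}$, and it vanishes by the classical commutativity identity for Jack-Pieri coefficients, proved by a direct arm-leg expansion. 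Hence the commutator is diagonal in the Jack basis, so it lies in $\SH^0_n$ and in particular commutes with $D^{(n)}_{0,2}$. The Jacobi identity for the triple $(D^{(n)}_{0,2}, D^{(n)}_{-1,k}, D^{(n)}_{1,l})$ combined with \eqref{E:rel'1} and \eqref{E:rel'2} then gives
\[
[D^{(n)}_{-1,k+1}, D^{(n)}_{1,l}] - [D^{(n)}_{-1,k}, D^{(n)}_{1,l+1}] = -[D^{(n)}_{0,2}, [D^{(n)}_{-1,k}, D^{(n)}_{1,l}]] = 0,
\]
so the commutator depends only on $k+l$, legitimizing the notation $E^{(n)}_{k+l}$.

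The main obstacle is the final identification of the eigenvalue $\varepsilon_m(\mu)$ of $E^{(n)}_m$ on $J^{(n)}_\mu$ with the formula \eqref{E:rel35}. The previous steps express $\sum_{m\geqslant 0}\varepsilon_m(\mu)\,s^{m+1}$ as an alternating sum over the addable and removable boxes $\square$ of $\mu$, each contributing $\pm s/(1-c(\square)s)$ weighted by an explicit product of Jack-Pieri coefficients. The task is to recognize this sum as the $s$-expansion of $\xi^{-1}\bigl(K(\kappa, D^{(n)}_{0,0}, s)\,\exp\bigl(\sum_l D^{(n)}_{0,l+1}\varphi_l(s)\bigr)-1\bigr)$. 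Using the explicit expansion of $\varphi_l(s)$, one computes that on $J^{(n)}_\mu$ the exponential factor becomes the product $\prod_{s\in\mu}\prod_{q\in\{1,-\xi,-\kappa\}}(1-(c(s)-q)s)/(1-(c(s)+q)s)$. Recognizing this product as a ratio of hook-type polynomials over $\mu$ and telescoping along the rim of $\mu$ reduces the verification to a combinatorial identity expressing the alternating sum over addable/removable boxes as the logarithmic $s$-derivative of this product, with the rational prefactor $K(\kappa, n, s)$ absorbing the $n$-dependent boundary contributions at the extremal corners of $\mu$. This combinatorial identity is essentially a Jack-level shadow of Macdonald's norm formula and is the key technical ingredient of the proof.
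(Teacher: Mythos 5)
Your treatment of \eqref{E:rel'1} matches the paper exactly, and your derivation of \eqref{E:rel'2} by pushing \eqref{E:rel'1} through the anti-involution $\pi$ is correct and a clean way to spell out what the paper leaves implicit. The real issue is with \eqref{E:rel3}.

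The gap is that the dual Pieri expansion you invoke for $D^{(n)}_{-1,k}J^{(n)}_\lambda$ is simply not available on all of $\Lambda_n$: the operator $e_{-1}^{(n)}=\sum_i X_i^{-1}$ does not preserve polynomials, so $e_{-1}^{(n)}J^{(n)}_\mu$ fails to lie in $\Lambda_n$ whenever $l(\mu)<n$, and no expansion in the Jack basis exists. In particular the claim that $D^{(n)}_{-1,k}$ and $D^{(n)}_{1,k}$ are ``mutually adjoint up to an explicit normalization'' under the Macdonald inner product cannot be meant literally, since the pairing lives on $\Lambda_n$ while $D^{(n)}_{-1,k}$ does not stabilize that space. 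The Pieri formula for $e_{-1}^{(n)}$ holds only when $l(\lambda)=n$ (equivalently on $e_n^{(n)}\Lambda_n$), where $J^{(n)}_\lambda$ is divisible by $X_1\cdots X_n$ (Lemma \ref{lem:kop25} and Proposition \ref{E:Pierin}). Consequently your entire Jack-basis computation for the commutator only verifies the identity on a proper subspace of the polynomial representation, and faithfulness does not let you conclude anything from that alone. This is precisely the obstacle the paper solves in Appendix B.1 via the inner automorphism $\sigma=\mathrm{Ad}(e_n^{(n)})$: one exhibits a $\sigma$-stable finite-dimensional subspace of $\SH_n$ containing both sides of \eqref{E:rel3}, and then Lemma \ref{lem:kop2} shows that checking the relation on $(e_n^{(n)})^k\Lambda_n$ for a single $k$ suffices. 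Without some version of this reduction your argument does not close.

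Two further remarks. First, your logic deduces $[D^{(n)}_{0,2},[D^{(n)}_{-1,k},D^{(n)}_{1,l}]]=0$ from the commutator being ``diagonal in the Jack basis, so it lies in $\SH^0_n$''; but diagonality on a basis of (a subspace of) $\Lambda_n$ does not by itself put an element of $\SH_n$ into $\SH^0_n$, and one still has to invoke faithfulness on all of $\mathbf{V}_n$ or again the $\sigma$-reduction. The paper avoids this subtlety altogether by rewriting $[D^{(n)}_{-1,k},D^{(n)}_{1,l}]$ directly in terms of $k+l$ using the already-established relations \eqref{E:rel'1}, \eqref{E:rel'2} (see \eqref{E:kop1}). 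Second, your sketch for the final generating-function identification does correspond in spirit to the paper's route (the partial-fraction identity of Lemma \ref{lem:kop5} and the telescoping identity of Lemma \ref{lem:kop6}), but as written it remains a heuristic; that identification is exactly where the $n$-dependent factor $K(\kappa, D^{(n)}_{0,0}, s)$ emerges from the boundary term $a_0=-n\kappa$, and it needs the explicit Garsia--Tesler style computation to be a proof.
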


\begin{proof} The first two relations are easily deduced from (\ref{E:Horace}) 
and (\ref{E:Pieri2}), and from the faithfulness of the polynomial
representation $\Lambda_n$. The third relation is the result of a direct 
computation, see Appendix \ref{app:A}. 
\end{proof}

\vspace{.1in}

From now on we'll abbreviate $\otimes=\otimes_F$ 
(the tensor product of $F$-vector spaces).

\vspace{.1in}

\begin{prop}\label{P:SH>} 
The multiplication map induces isomorphisms
$$\SH^>_n \otimes \SH^0_n\to\SH^+_n,\qquad \SH^0_n \otimes \SH^<_n\to\SH^-_n.$$
\end{prop}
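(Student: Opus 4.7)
The statement for $\SH_n^-$ follows from that for $\SH_n^+$ by applying the algebra antiautomorphism $\pi$ (defined just after Lemma~\ref{lem:uno}), which sends $\SH_n^+$ to $\SH_n^-$ and $\SH_n^>$ to $\SH_n^<$ while preserving $\SH_n^0$. Indeed, $\pi(D_{1,l}^{(n)})=\pi[D_{0,l+1}^{(n)},D_{1,0}^{(n)}]=[D_{-1,0}^{(n)},D_{0,l+1}^{(n)}]=D_{-1,l}^{(n)}$, so $\pi$ interchanges the generating sets. I therefore focus on the map $\SH_n^>\otimes \SH_n^0\to\SH_n^+$.

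For the surjectivity, I use Lemma~\ref{lem:uno}: $\SH_n^+$ is generated as an $F$-algebra by $\SH_n^0$ and $\Sb F[X_1,\dots,X_n]\Sb$. The latter is generated by the power-sum elements $D_{l,0}^{(n)}=\Sb p_l(X)\Sb$ for $l\geqslant 1$, each of which lies in $\SH_n^>$ by Proposition~\ref{prop:1.16}(a). Hence $\SH_n^+$ is generated by $\SH_n^>$ and $\SH_n^0$. Relation~\eqref{E:rel'1} gives $[\SH_n^0,D_{1,l}^{(n)}]\subset \SH_n^>$, and iterating the Leibniz rule $[x,yz]=[x,y]z+y[x,z]$ extends this to $[\SH_n^0,\SH_n^>]\subset\SH_n^>$. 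Consequently $\SH_n^0\cdot\SH_n^>\subset \SH_n^>\cdot\SH_n^0$, so $\SH_n^>\cdot\SH_n^0$ is a subalgebra of $\SH_n^+$ containing both generating subalgebras, and therefore equals $\SH_n^+$.

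For the injectivity, I would pass to the associated graded with respect to the order filtration. By Proposition~\ref{prop:coco}(b), $\gr\,\SH_n^+\cong F[X_1,\dots,X_n,y_1,\dots,y_n]^{\Sen_n}$ and $\gr\,\SH_n^0\cong F[y_1,\dots,y_n]^{\Sen_n}$. The multiplication map is compatible with the filtrations, so by a standard filtered-to-graded lemma the injectivity of the multiplication reduces to that of
$$\bar\mu:\gr\,\SH_n^>\otimes_F\gr\,\SH_n^0\longrightarrow\gr\,\SH_n^+.$$
A direct Poisson-bracket computation identifies the symbol of $D_{1,l}^{(n)}$ with $\sigma_l=\sum_i X_iy_i^l$, and via the relations of~\eqref{E:rel4} the symbol of $D_{l,0}^{(n)}$ with $p_l(X)=\sum_i X_i^l$. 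Describing the resulting graded subalgebra $\gr\,\SH_n^>\subset F[X,y]^{\Sen_n}$, injectivity of $\bar\mu$ is equivalent to the PBW-type statement that $F[X,y]^{\Sen_n}$ is free over $F[y]^{\Sen_n}$ with a basis afforded by $\gr\,\SH_n^>$, which one verifies through a polarization argument in $F[X,y]$ together with a Hilbert-series comparison on each rank--order bigraded piece.

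The main obstacle is the last step: $\gr\,\SH_n^>$ is strictly larger than the commutative subalgebra generated by the $\sigma_l$'s, because commutators in $\SH_n^>$ produce symbols of unexpectedly low orders. For instance, $[D_{1,1}^{(n)},D_{l-1,0}^{(n)}]$ has order~$0$ rather than~$1$, so its symbol $p_l(X)$ appears in a different bidegree than one would naively expect. Bookkeeping these contributions carefully, and matching the resulting graded pieces with those of $F[X,y]^{\Sen_n}/(\gr\,\SH_n^0\text{-module structure})$, is the real combinatorial and invariant-theoretic content of the proof.
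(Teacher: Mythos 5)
You treat surjectivity exactly as the paper does, invoking Lemma~\ref{lem:uno}, Proposition~\ref{prop:1.16}(a) and relation~\eqref{E:rel'1}; the reduction of the $\SH_n^-$ statement via $\pi$ is a clean addition to what the paper leaves implicit. One small imprecision: what your Leibniz induction and the straightening actually need is $\SH_n^0\cdot\SH_n^>\subset\SH_n^>\cdot\SH_n^0$, which follows from $[D_{0,l}^{(n)},\SH_n^>]\subset\SH_n^>$ for the \emph{generators} $D_{0,l}^{(n)}$ alone; the blanket claim $[\SH_n^0,\SH_n^>]\subset\SH_n^>$ for arbitrary elements of $\SH_n^0$ is not what that induction gives (for $a=a_1a_2$ you get $[a,b]=a_1[a_2,b]+[a_1,b]a_2$, which is only visibly in $\SH_n^>\cdot\SH_n^0$).

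For injectivity you take a genuinely different route from the paper. The paper points to a degeneration at $\kappa=0$ in the spirit of Lemma~\ref{lem:uno}: work over $A=\C[\kappa]$, reduce modulo $(\kappa)$ to the explicit ring $\C[X_1,X_1\partial_{X_1},\ldots,X_n,X_n\partial_{X_n}]^{\Sen_n}$, verify injectivity there, and lift using freeness over $A$. You pass instead to the associated graded of the order filtration via Proposition~\ref{prop:coco}(b). That is a legitimate alternative --- indeed close to how the paper itself treats the analogous injectivity step in Proposition~\ref{P:triang} --- and your filtered-to-graded reduction is sound. The obstacle you flag is the real one: because commutators such as $[D_{1,1}^{(n)},D_{l-1,0}^{(n)}]=(l-1)D_{l,0}^{(n)}$ drop in order, $\gr\,\SH_n^>$ is strictly larger than the subalgebra of $F[X,y]^{\Sen_n}$ generated by the symbols $\sum_iX_iy_i^l$ (for instance $p_2(X)\in\gr\,\SH_n^>$ is not a polynomial in those), and pinning down $\gr\,\SH_n^>$ exactly together with the Poincar\'e-series match is the actual content, which you do not carry out. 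Neither does the paper, which leaves the details to the reader, so your sketch is at comparable completeness by a different method; you have correctly located where the remaining work lies.
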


\begin{proof}  
By  Lemma \ref{lem:uno}, the algebra
$\SH^+_n$ is generated by the pair of subalgebras $\SH^>_n$, $\SH^0_n$. 
Next,  \eqref{E:rel'1} implies that
$[D^{(n)}_{0,l}, \SH^>_n] \subset \SH^>_n$ for $l\geqslant 0.$ 
Thus we have
$\SH^0_n \cdot \SH^>_n = \SH^>_n \cdot \SH^0_n$. 
The surjectivity of the multiplication map 
\begin{equation}
m:\SH^>_n \otimes \SH^0_n {\to} \SH^+_n
\end{equation}
follows. 
To show that $m$ is injective, we may use a degeneration 
argument similar to the one in 
Lemma~\ref{lem:uno}. 
We leave the details to the reader.
\end{proof}

\vspace{.1in}

\begin{cor}\label{C:triangn} 
The multiplication map induces a surjective map 
$\SH_n^> \otimes \SH^0_n \otimes \SH^<_n \to \SH_n$.
\end{cor}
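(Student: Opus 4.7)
The plan is to reduce the statement, via Proposition \ref{P:SH>}, to a single non-trivial commutation. By Proposition \ref{P:SH>} we have $\SH_n^+ = \SH_n^> \cdot \SH_n^0$ and $\SH_n^- = \SH_n^0 \cdot \SH_n^<$, so the image of the multiplication map $m : \SH_n^> \otimes \SH_n^0 \otimes \SH_n^< \to \SH_n$ is precisely the subspace $\SH_n^+ \cdot \SH_n^-$. On the other hand, by Lemma \ref{lem:uno} the algebra $\SH_n$ is generated by $\Sb F[\hen^*]\Sb \subset \SH_n^0 \subset \SH_n^+$ together with $\Sb F[H]\Sb$, and the latter is contained in the product $\Sb F[X_1,\dots,X_n]\Sb \cdot \Sb F[X_1^{-1},\dots,X_n^{-1}]\Sb \subset \SH_n^+ \cdot \SH_n^-$. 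Hence $\SH_n^+$ and $\SH_n^-$ generate $\SH_n$ as an $F$-algebra, and it suffices to show that the subspace $\SH_n^+ \cdot \SH_n^-$ is closed under multiplication, i.e.
$$\SH_n^- \cdot \SH_n^+ \subset \SH_n^+ \cdot \SH_n^-.$$

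Using that $\SH_n^0$ normalizes both $\SH_n^>$ and $\SH_n^<$ (consequences of \eqref{E:rel'1} and \eqref{E:rel'2}, since $\SH_n^>$ is generated by the $D^{(n)}_{1,l}$ and $\SH_n^<$ by the $D^{(n)}_{-1,k}$), this reduces further to the claim
$$\SH_n^< \cdot \SH_n^> \subset \SH_n^> \cdot \SH_n^0 \cdot \SH_n^<. \qquad (\ast)$$
I would establish $(\ast)$ by induction on the length of monomials in the generators $D^{(n)}_{-1,k}$ and $D^{(n)}_{1,l}$. The base case where one of the two monomials has length $0$ is trivial. The engine is the commutation \eqref{E:rel3}, $[D^{(n)}_{-1,k}, D^{(n)}_{1,l}] = E^{(n)}_{k+l}$, combined with \eqref{E:rel35}, which expresses $E^{(n)}_{k+l}$ as a polynomial in the $D^{(n)}_{0,*}$, hence as an element of $\SH_n^0$. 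Applying this relation once commutes a single $D^{(n)}_{-1,k}$ past a single $D^{(n)}_{1,l}$ at the cost of producing an $\SH_n^0$-term that must then be absorbed into the middle position.

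The main obstacle is the bookkeeping in the induction, since the Leibniz rule produces a sum of shorter monomials flanked by $\SH_n^0$-factors that must be transported to the middle via the normalization properties of $\SH_n^0$. A clean way to organize this is a nested induction: first on the length of the $\SH_n^>$-factor $a$, and within that on the length of the $\SH_n^<$-factor $\alpha$. At each step, commuting the rightmost $D^{(n)}_{-1,k_p}$ of $\alpha$ past the leftmost $D^{(n)}_{1,l_1}$ of $a$ via \eqref{E:rel3} splits $\alpha a$ into a term already in $\SH_n^> \cdot \SH_n^<$ (apply induction to the interior shorter monomials) plus a term of the form $\alpha' \cdot E^{(n)}_{k_p+l_1} \cdot a'$ of strictly smaller total length, which again falls under the inductive hypothesis after using $\SH_n^0 \cdot \SH_n^> \subset \SH_n^> \cdot \SH_n^0$ and $\SH_n^< \cdot \SH_n^0 \subset \SH_n^0 \cdot \SH_n^<$. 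The process thus terminates in the triangular form $a\gamma\alpha$, proving surjectivity.
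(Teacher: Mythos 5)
Your proof is correct and takes essentially the same route as the paper: both reduce, via Proposition \ref{P:SH>} and Lemma \ref{lem:uno}, to a straightening argument driven by relations \eqref{E:rel'1}--\eqref{E:rel3}, with the key input being that $E^{(n)}_{k+l}$ lies in $\SH^0_n$ by \eqref{E:rel35}. The paper merely asserts that the straightening is possible; your nested induction on monomial length supplies the bookkeeping it leaves to the reader.
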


\begin{proof} By Proposition~\ref{P:SH>} and Lemma \ref{lem:uno}, 
the $F$-algebra $\SH_n$ is generated by the triplet of subalgebras $\SH^>_n$, 
$ \SH^0_n$, $ \SH^>_n$, hence by the collection of generators $\{D_{1,l}^{(n)}, D_{0,l}^{(n)}, D_{-1,l}^{(n)}\}$. 
We must check that any monomial in these generators may be `straightened' 
into a linear combination of monomials in which the generators 
$\{D_{1,l}^{(n)}$, $D_{0,l}^{(n)}$, $D_{-1,l}^{(n)}\}$ appear in that fixed order.  It is not difficult to see that 
relations \eqref{E:rel'1}-\eqref{E:rel3} enable one to do this. \end{proof}

\vspace{.15in}

\subsection{The algebra $\SH^{+}$} 
\label{sec:SH+infty}
Let us now address the problem of constructing a 
limit $\SH^{+}$ of $\SH^+_n$. 
The following result is well-known, see e.g., \cite[Prop. 2.5]{Stanley}.

\begin{lem} For  $l(\lambda)\leqslant n$ 
and for any positive integer $m<n$ we have
$$J_{\lambda}^{(n)}(X_1, \ldots, X_m, 0, \ldots, 0)=
\begin{cases} J_{\lambda}^{(m)}(X_1, \ldots, X_m) \ &
 \text{if}\  l(\lambda) \leqslant m,\\ 0 \ & \text{if}\  l(\lambda) >m. \end{cases}$$
\end{lem}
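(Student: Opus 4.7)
The proof splits into the two cases $l(\lambda) > m$ and $l(\lambda) \leqslant m$, and I would treat them separately.

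For the vanishing case $l(\lambda) > m$, the argument is purely combinatorial. By the normalization condition displayed above, $J_\lambda^{(n)}$ lies in the $F$-span of the monomial symmetric functions $m_\mu^{(n)}$ for $\mu \leqslant \lambda$ in dominance order. An elementary property of dominance, combined with $|\mu| = |\lambda|$, forces $l(\mu) \geqslant l(\lambda)$: evaluating partial sums at $k = l(\mu)$ gives $|\mu| \leqslant \lambda_1 + \cdots + \lambda_k$, which contradicts $|\mu| = |\lambda|$ whenever $l(\lambda) > k$. Hence every $\mu$ appearing satisfies $l(\mu) > m$, and every monomial of $m_\mu^{(n)}$ involves at least $l(\mu) > m$ distinct variables, so vanishes under $X_{m+1} = \cdots = X_n = 0$. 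This yields $J_\lambda^{(n)}(X_1, \ldots, X_m, 0, \ldots, 0) = 0$.

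For the stability case $l(\lambda) \leqslant m$, it suffices by iteration to handle $n = m + 1$, i.e., setting a single variable to zero. Write $g := J_\lambda^{(n)}|_{X_n = 0}$; my plan is to show that $g$ is a simultaneous eigenfunction of the normalized Sekiguchi operators $D^{(n-1)}_{0,l}$ with the same eigenvalues $\sum_{s \in \lambda} c(s)^{l-1}$ as $J_\lambda^{(n-1)}$, and then fix the scalar by comparing leading coefficients. The key technical step is the intertwining
$$
(D^{(n)}_{0,l} f)|_{X_n = 0} = D^{(n-1)}_{0,l}(f|_{X_n = 0}), \qquad f \in \Lambda_{n, A}.
$$
The $n\kappa$-shift built into $D^{(n)}_{0,l} = \Sb B^{(n)}_{l-1}(y_i - n\kappa)\Sb$ is tailored precisely so that this intertwining holds, mirroring the $n$-independence of the eigenvalues in \eqref{E:Horace} and of the polynomial differences $\tilde T_{r,i}(z) - \tilde T_{r,j}(z)$ exhibited in the proof of Lemma~\ref{lem:Br}. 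Once the intertwining is established, simplicity of the joint Sekiguchi spectrum (different partitions have different multisets $\{c(s) : s \in \lambda\}$, so different eigenvalue tuples across all $l \geqslant 1$) forces $g = c \cdot J_\lambda^{(n-1)}$ for some $c \in F$, which is pinned down to $c = 1$ by comparing the coefficient of a convenient monomial (e.g., $m_\lambda$ or $X_1^{\lambda_1} \cdots X_{l(\lambda)}^{\lambda_{l(\lambda)}}$) on both sides; Stanley's normalization is $n$-independent for $n \geqslant l(\lambda)$, so these coefficients coincide.

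The main obstacle is the intertwining identity. Verifying it directly requires analyzing how the Dunkl operators $y_i = X_i \partial_{X_i} + \kappa \sum_{k \neq i}(1 - s_{ik})/(1 - X_k/X_i) + \kappa \sum_{k < i} s_{ik}$ behave under $X_n \mapsto 0$: the denominators $1 - X_n/X_i$ for $i<n$ tend to $1$, while $y_n$ develops a singular contribution that must be absorbed by the $n\kappa$-shift. Alternatively, one could bypass the intertwining entirely by a direct induction on $|\lambda|$ using the Pieri rule (Theorem~\ref{T:Pieri}): the Pieri coefficients $\psi_{\nu \setminus \mu}$ are $n$-independent, $e_1^{(n)}$ specializes to $e_1^{(m)}$ after setting $X_{m+1} = \cdots = X_n = 0$, and combining this with the vanishing from the first case matches the specialization of $J_\lambda^{(n)}$ against the Pieri recursion for $J_\lambda^{(m)}$, giving a self-contained argument.
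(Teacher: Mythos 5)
Your vanishing case is correct: the dominance argument showing every $\mu \leqslant \lambda$ appearing in the $m$-expansion of $J_\lambda^{(n)}$ has $l(\mu) \geqslant l(\lambda)$ is the standard and right reduction. The stability case, however, has a genuine gap in both routes you propose. In route (a), everything hinges on the intertwining $(D^{(n)}_{0,l}f)|_{X_n=0} = D^{(n-1)}_{0,l}(f|_{X_n=0})$, which you assert is ``tailored precisely'' to hold but leave unverified; in fact this is the whole content of what needs proving. The $n\kappa$-shift in the definition of $D^{(n)}_{0,l}$ is tuned so that the \emph{eigenvalues} on the $J_\lambda^{(n)}$ are $n$-independent, and that is a strictly weaker statement than the operator identity --- the two become equivalent only once one already knows the $J_\lambda^{(n)}$ stabilize under $X_n=0$, which is exactly what is to be proved. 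The paper itself records the intertwining $\pi_{n+1,n}\circ D^{(n+1)}_{0,l}=D^{(n)}_{0,l}\circ\pi_{n+1,n}$ as a \emph{consequence} of this lemma a few lines later (in Section 1.6), so deriving the lemma from the intertwining is circular unless you prove the intertwining directly from the Dunkl-operator presentation, which you do not do.

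Route (b) is also underdetermined as stated. Writing $v_\lambda := J_\lambda^{(n)}|_{X_{m+1}=\cdots=X_n=0} - J_\lambda^{(m)}$, the Pieri rule gives $\sum_\lambda \psi_{\lambda\backslash\mu}\,v_\lambda = 0$ for each partition $\mu$ of size $d-1$, with unknowns indexed by partitions $\lambda$ of size $d$; since there are strictly more partitions of $d$ than of $d-1$, this system cannot force $v_\lambda=0$ on its own, so ``combining this with the vanishing case... gives a self-contained argument'' skips the decisive step. It can be repaired by injecting the triangularity in dominance (each $v_\lambda$ is supported on $m_\nu$ with $\nu<\lambda$ strictly, because the $m_\lambda$-coefficient of $J_\lambda$ is $n$-independent) and then inducting carefully along dominance, but you must actually carry that out. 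The cleanest fix --- and the one Stanley's Prop.~2.5 uses, which the paper cites in lieu of a proof --- is to establish the intertwining for the \emph{single} operator $D^{(n)}_{0,2}=\kappa\square_n$ by a direct computation from the explicit Laplace--Beltrami formula (not via Dunkl operators), observe that its eigenvalue $n(\lambda')-\kappa n(\lambda)$ is strictly monotone along dominance so that the eigenspace with leading term a multiple of $m_\lambda$ inside the span $\{m_\mu: \mu \leqslant \lambda\}$ is one-dimensional, and fix the scalar by the $n$-independence of the $J$-normalization.
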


\vspace{.1in}

This lemma allows one to define the limit of the symmetric polynomials
$J_{\lambda}^{(n)}$ as 
$n$ tends to infinity. We will write $J_\lambda=J_{\lambda}({X})$ 
for this limit. 
It is called the \textit{integral form of Jack's
symmetric function associated with the parameter $\alpha=1/\kappa$}.
It is denoted by the symbol $J_\lambda^{(1/\kappa)}$ in 
\cite[chap.~VI,(10.22-3)]{Mac}.
The family $\{J_{\lambda}\;;\; \lambda \in \Pi\}$ 
forms a $F$-basis of 
$\Lambda$,
see \cite[chap.~VI]{Mac}. 
The map
$\pi_n :\Lambda \to \Lambda_n$
is given by
$\pi_n(J_{\lambda})=J_{\lambda}^{(n)}$ if 
$l(\lambda) \leqslant n$ and $\pi_n(J_{\lambda})=0$ otherwise.
The operators $D^{(n)}_{l,0},$ 
for $l \in \N$, being the multiplication in $\Lambda_n$ 
by symmetric functions, obviously stabilize in the limit $\Lambda$, 
since $\Lambda$ is a ring. For instance $D^{(n)}_{1,0}$ 
is given by the Pieri formula 
(\ref{E:Pieri}), whose coefficients are independent of $n$. 
In other words, we have 
\begin{equation}
\pi_{n+1,n} \circ  D^{(n+1)}_{l,0}= D^{(n)}_{l,0} \circ \pi_{n+1,n}
\end{equation}
where we have denoted by 
\begin{equation}
\pi_{n+1,n} : \Lambda_{n+1} \to \Lambda_n
\end{equation}
the projection maps. The kernels of the maps $\pi_{n+1,n}$ 
are linearly spanned 
by Jack polynomials, and  the operators $D^{(n)}_{0,l}$ are diagonalisable on the basis of Jack polynomials 
with eigenvalues independent of $n$. 
This implies that for all $n,l\geqslant 1$ we have
\begin{equation}
\pi_{n+1,n} \circ  D^{(n+1)}_{0,l}= D^{(n)}_{0,l} \circ \pi_{n+1,n}.
\end{equation}
Since the polynomial representation is faithful and since 
the $F$-algebra $\SH^+_n$ is generated by 
\begin{equation}
\{D^{(n)}_{0,l}, \,D^{(n)}_{l,0}\;;\;l \geqslant 1\},
\end{equation}
we deduce that the assignement
\begin{equation}
D^{(n+1)}_{0,l} \mapsto D^{(n)}_{0,l}, \qquad 
D^{(n+1)}_{l,0} \mapsto D^{(n)}_{l,0}
\end{equation}
extends to a well-defined and surjective $F$-algebra homomorphism 
\begin{equation}
\Phi_{n+1,n}: \SH^+_{n+1} \to \SH^+_n.
\end{equation}
This allows us to consider the following algebra.

\vspace{.1in}

\begin{df}\label{df:1.1} 
We define $\SH^{+}$ to be the $F$-subalgebra of 
$\prod_{n\geqslant 1}\SH^+_n$ generated by the families 
$D_{0,l}=(D^{(n)}_{0,l})$ and
$D_{l,0}=(D^{(n)}_{l,0})$ with
$l \geqslant 1.$ 
\end{df}

\vspace{.1in}

By construction, there are surjective maps 
\begin{equation}
\Phi_n~: \SH^{+} \to \SH^+_n,\quad
D_{0,l}\mapsto D^{(n)}_{0,l},\quad
D_{l,0}\mapsto D^{(n)}_{l,0},\quad l\geqslant 1,
\end{equation}
such that $\bigcap_n \Ker(\Phi_n)=\{ 0\}$. 
Further, we have the following.

\vspace{.1in}

\begin{prop}\label{prop:rho+} 
There is a faithful representation
$\rho^+$ of $\SH^{+}$ on 
$\Lambda$ such that, for $l\geqslant 1,$
$$\rho^+(D_{0,l})(J_{\lambda})=\sum_{s \in \lambda} 
c(s)^{l-1} J_{\lambda},\qquad\rho^+(D_{l,0})= \text{multiplication\;by\;} p_l.$$
The map $\pi_n$ intertwines the representation
$\rho^+$ with the representation
$\rho_n^+$ of $\SH^+_n$ on $\Lambda_n$.
\end{prop}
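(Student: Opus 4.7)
The plan is to construct $\rho^+$ as an inverse limit of the $\rho_n^+$, exploiting the fact that the defining generators of $\SH^+$ are themselves compatible sequences under the projections $\pi_n$. First I would define candidate endomorphisms $\tilde D_{0,l}$ and $\tilde D_{l,0}$ of $\Lambda$ by the two prescribed formulas. Both are manifestly well-defined: the first is diagonal in the $F$-basis $\{J_{\lambda}\}$, and the second is multiplication by $p_l$ in the ring $\Lambda$.

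Next I would verify the intertwining identities at the level of generators:
$$\pi_n \circ \tilde D_{0,l} = D^{(n)}_{0,l} \circ \pi_n, \qquad \pi_n \circ \tilde D_{l,0} = D^{(n)}_{l,0} \circ \pi_n, \qquad l \geqslant 1.$$
The first equality is a direct comparison of eigenvalues on each $J_\mu$ using \eqref{E:Horace} together with the fact that $\pi_n(J_\mu)$ equals $J_\mu^{(n)}$ or vanishes according to whether $l(\mu)\leqslant n$. The second is immediate since $\pi_n(p_l) = p_l^{(n)}$ and both sides act by multiplication.

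To upgrade this into a representation $\rho^+$ of $\SH^+$, I would set $\rho^+(u) = P(\tilde D_{0,l},\tilde D_{l,0})$ for any polynomial expression $u = P(D_{0,l},D_{l,0})$ in the generators. Well-definedness is the only substantive point: if $u=0$ in $\SH^+\subset \prod_n \SH^+_n$, then $\Phi_n(u)=0$ in $\SH^+_n$ for every $n$, hence $\rho_n^+(\Phi_n(u))=0$ on $\Lambda_n$ by Cherednik's theorem. Combined with the single-generator intertwining, this forces $\pi_n\circ P(\tilde D_{0,l},\tilde D_{l,0})=0$ for all $n$. Since any element of $\Lambda$ is a finite linear combination of Jack polynomials and $\pi_n(J_\lambda)=J_\lambda^{(n)}\neq 0$ as soon as $n\geqslant l(\lambda)$, one has $\bigcap_n \Ker\pi_n = \{0\}$, and consequently $P(\tilde D_{0,l},\tilde D_{l,0})=0$ on all of $\Lambda$. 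This shows that $\rho^+$ is well-defined and that the full intertwining $\pi_n\circ \rho^+(u)=\rho_n^+(\Phi_n(u))\circ \pi_n$ follows.

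Finally, for faithfulness, suppose $\rho^+(u)=0$. Then $\rho_n^+(\Phi_n(u))\circ \pi_n = 0$ for each $n$; surjectivity of $\pi_n$ together with faithfulness of $\rho_n^+$ gives $\Phi_n(u)=0$, whence $u\in\bigcap_n \Ker\Phi_n = \{0\}$ by Definition~\ref{df:1.1}. I do not anticipate any serious obstacle: the entire argument is a bookkeeping exercise that assembles three already-established ingredients, namely the compatibility $\pi_{n+1,n}\circ D^{(n+1)}_{\bullet}=D^{(n)}_{\bullet}\circ \pi_{n+1,n}$ for each type of generator, the faithfulness of $\rho_n^+$, and the separation property $\bigcap_n \Ker\Phi_n=\{0\}$ built into the construction of $\SH^+$.
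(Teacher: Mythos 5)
Your argument is correct and is exactly the one the paper leaves implicit: Proposition~\ref{prop:rho+} appears without a written proof, following the phrase ``Further, we have the following'' after the definition of $\SH^+$ as a subalgebra of $\prod_n \SH^+_n$ and the observation $\bigcap_n\Ker\Phi_n=\{0\}$, and your write-up just makes rigorous the three ingredients (generator-level intertwining with $\pi_n$, faithfulness of each $\rho_n^+$, and separation of both $\pi_n$ and $\Phi_n$) that the paper is implicitly relying on.
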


\vspace{.1in}

Observe that $\{D_{0,l}\;; l \geqslant 1\}$ generates a free commutative 
algebra which is isomorphic to $\Lambda$.
The same holds for $\{D_{l,0}\;; l \geqslant 1\}$.
We define a $\N$-grading on $\SH^{+},$
called the \emph{rank grading}, 
by putting $D_{l,0}$ in degree $l$ and $D_{0,l}$ in degree $0$. We define
a $\N$-filtration on $\SH^{+},$ called the \emph{order filtration}, such that 
an element $u$ is of order $\leqslant k$ if 
\begin{equation}
\ad(z_1) \circ \cdots \circ
\ad(z_k) (u)\in F[D_{l,0}\;;\;l\in\N],\qquad\forall z_1, \ldots, z_k 
\in F[D_{l,0}\;;\;l\in\N].
\end{equation}
Let $\SH^{+}[r,\leqslant\! l]$ the piece of degree $r$ and order
$\leqslant l$. Note that any element of $\SH^+$ has indeed a finite order.
Consider the Poincar\'e polynomial 
\begin{equation}
P_{\SH^{+}}(t,q)=\sum_{r,l \geqslant 0} 
\dim\bigl(\overline\SH^{+}[r, l]\bigr) \, t^rq^l,\qquad
\overline\SH^{+}[r, l]=\SH^{+}[r,\leqslant\! l]
/\SH^{+}[r, <\!l].
\end{equation}

\begin{lem} \label{lem:poincare+}
The Poincar\'e polynomial  of
$\SH^{+}$ is given by
$$P_{\SH^{+}}(t,q)=
\prod_{r,l}\frac{1}{1-t^rq^l},\qquad (r,l)\in\N^2_0.$$
\end{lem}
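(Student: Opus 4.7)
The plan is to identify the bi-graded algebra $\gr\SH^{+}$ (for the order filtration and rank grading) with the polynomial algebra on one generator in each bi-degree $(r,l)\in\N^2_0$, whose Hilbert series is exactly $\prod(1-t^rq^l)^{-1}$.

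First I would construct, for every $(r,l)\in\N^2_0$, an element $D_{r,l}\in\SH^{+}[r,\leqslant\! l]$ of rank $r$ and order exactly $l$: the cases $l=0$ and $r=0$ are the given generators, and for $r,l\geqslant 1$ set $D_{r,l}:=\frac{1}{r}[D_{0,l+1},D_{r,0}]$. To check its order I would pass to the commutative ring $\overline{\SH^{+}_n}\cong F[X_1,\dots,X_n,y_1,\dots,y_n]^{\Sen_n}$ (the spherical analogue of Proposition \ref{prop:coco}(b)) and compute the induced Poisson bracket from \eqref{E:3}, for which $\{y_i,X_j\}=\delta_{ij}X_j$:
\begin{equation*}
\frac{1}{r(l+1)}\{p_{0,l+1},\,p_{r,0}\}=\frac{1}{r(l+1)}\sum_i r(l+1)X_i^r y_i^l=p_{r,l}^{(n)}.
\end{equation*}
So the symbol of $D_{r,l}$ in $\overline{\SH^{+}_n}$ is the power sum $p_{r,l}^{(n)}=\sum_iX_i^ry_i^l$; the cases $l=0$ and $r=0$ give symbols $p_{r,0}^{(n)}$ and $p_{0,l}^{(n)}/l$ respectively, the latter because $B^{(n)}_{l-1}\equiv p_l/l$ modulo lower order.

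Next I would run a straightening argument to show that ordered monomials in $\{D_{r',l'}\,;\,(r',l')\in\N^2_0\}$ span $\SH^{+}$. Since $\SH^{+}$ is generated by $\{D_{r,0},D_{0,l}\}$ and the only nontrivial mixed commutator among these yields $[D_{0,l+1},D_{r,0}]=rD_{r,l}$, any word in the generators can be reordered at the cost of producing strictly lower-order terms involving the $D_{r,l}$; iterating on the order filtration yields an expression as a sum of ordered monomials in $\{D_{r',l'}\}$. The number of such monomials of total bi-degree $(r,l)$ is exactly the coefficient of $t^rq^l$ in $\prod(1-t^{r'}q^{l'})^{-1}$, which gives the upper bound on $P_{\SH^{+}}(t,q)$.

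For the matching lower bound, I would use the injectivity $\bigcap_n\Ker(\Phi_n)=0$ together with the commutator characterization of the order filtration on $\SH^+$ to show that the symbol map $\overline{\SH^{+}}[r,l]\hookrightarrow\overline{\SH^{+}_n}[r,l]$ is injective for $n$ sufficiently large: if all iterated commutators of $u\in\SH^+$ with elements of the commutative subalgebra $F[D_{r',0}\,;\,r'\geqslant 1]$ have image zero in every $\SH^{+}_n$, then they vanish already in $\SH^{+}$ and hence $u$ has order $<l$. Under this symbol map the ordered monomials in the $D_{r',l'}$'s go to the corresponding monomials in the power sums $p_{r',l'}^{(n)}$, which are linearly independent in $\overline{\SH^{+}_n}[r,l]$ for $n$ large, because the surjection $F[p_{r',l'}\,;\,(r',l')\in\N^2_0]\twoheadrightarrow F[X_i,y_i]^{\Sen_n}$ is bijective in the bi-degree $(r,l)$ once $n$ is large. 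The most delicate step is this injectivity of the symbol map: because the order filtration on $\SH^+$ is defined indirectly via iterated commutators, one must carefully track these commutators through the maps $\Phi_n$ and invoke $\bigcap_n\Ker(\Phi_n)=0$ in the right way.
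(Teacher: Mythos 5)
Your proof is correct and follows essentially the same approach as the paper's: both identify the associated graded of $\SH^{+}$ (for the order filtration and rank grading) with the ring $F[X_1,X_2,\ldots,y_1,y_2,\ldots]^{\Sen_\infty}$, which by Weyl's theorem is a free polynomial algebra on the polarized power sums $\sum_k X_k^r y_k^l$, $(r,l)\in\N^2_0$. The paper stays at the level of associated-graded algebras, using Proposition~\ref{prop:coco} to get the surjections $\overline\Phi_n$ and deferring the symbol computation $\overline D_{r,l}=c_{r,l}\sum_k \overline X_k^r \overline y_k^l$ to the proof of Proposition~\ref{prop:filtration-SHc}(a), whereas you carry that Poisson-bracket computation out here and split the rest into a spanning (straightening) step and a matching linear-independence step — same content, presented more element-by-element.
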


\begin{proof} 
By Proposition \ref{prop:coco}$(b)$, the $F$-vector space
\begin{equation}
\overline\SH^+_{n}[r,l]=\SH^+_{n}[r,\leqslant\! l]/\SH^+_{n}[r, <\!l]
\end{equation}
is isomorphic to the subspace of polynomials in 
\begin{equation}
F[X_1,\dots,X_n,y_1,\dots,y_n]^{\Sen_n}
\end{equation}
of degree $r$ in the $X_i$'s and of degree $l$ in the $y_i$'s.
By Proposition \ref{prop:coco}$(a)$
we have 
\begin{equation}
\Phi_n(\SH^{+}[r,\leqslant \!l])=\SH^+_{n}[r,\leqslant \!l].
\end{equation}
Thus $\Phi_n$ induces a surjective map
\begin{equation}
\overline\Phi_n~:\overline \SH^{+} \to \overline\SH^+_n,\quad
\overline D_{0,l}\mapsto p_l^{(n)}(y_1,\dots,y_n)/l,\quad
\overline D_{l,0}\mapsto p_l^{(n)}(X_1,\dots,X_n),\quad l\geqslant 1.
\end{equation}
Thus $\overline{\SH}^{+}[r,l]$ is identified with the space of 
symmetric polynomials in infinitely many variables
\begin{equation}\label{toto1.6}
F[X_1,X_2,\dots,y_1, y_2\dots]^{\Sen_\infty}
\end{equation}
of degree $r$ in the $X_i$'s and degree $l$ in the $y_i$'s. 
By Weyl's theorem the $F$-algebra 
\eqref{toto1.6}
is freely generated by the invariants $\sum_{k\geqslant 1} X_k^ry_k^l$ for 
$r,l\geqslant 0$ and $(r,l)\neq(0,0)$.
The result easily follows.
\end{proof}

\vspace{.1in}

\begin{rem}
The order filtration on $\SH^+$ is \textit{not} the same as the
filtration given by putting  $D_{l,0}$ 
of order $0$ and $D_{0,l}$ of order $\leqslant l$ (see however Proposition~\ref{prop:filtration-SHc}). 
\end{rem}

\vspace{.1in}

\begin{rem}\label{rem:LBinfty} We have
$\rho^+(D_{0,2})=\kappa\,\square,$
where $\square$
is the Laplace-Beltrami operator in infinitely many variables,  i.e.,
$\square=\square^{\kappa^{-1}}=
\underset{\longleftarrow}{\lim} \;\square^{\kappa^{-1}}_n$ in Macdonald's notations,
see Remark\;\ref{rem:LBn}.
\end{rem}

\vspace{.1in}

\begin{rem}\label{rem1.26} There is a unique $F$-algebra homomorphism 
$\varepsilon^+:\SH^+\to F$ such that 
$\varepsilon^+(D_{0,l})=\varepsilon^+(D_{l,0})=0$ for  $l\geqslant 1$.
Indeed,  the sum of 
$\bigoplus_{r\geqslant 1}\SH^+[r]$ and of
the augmentation ideal of $F[D_{0,l}\,;\,l\geqslant 1]$ is a two-sided ideal of
$\SH^+$.
\end{rem}

\vspace{.15in}

\subsection{The algebra $\SHo$} 
\label{sec:SHinfty}
Our next objective is the construction of the  
limit of the whole algebra $\SH_n$.
We construct 
$\SHo$ by `gluing' together two copies 
of $\SH^{+}$, denoted $\SH^{+}$ 
and $\SH^{-}$, with 
$\SH^{-} = (\SH^{+})^{\op}$, 
along the subalgebra 
\begin{equation}
\SHoo=F[D_{0,l}\,;\,l\geqslant 0].
\end{equation}
The extra generator $D_{0,0}$, 
which accounts for the limit of the $D^{(n)}_{0,0}$'s, may be 
considered as a formal parameter. We'll write $\omega=D_{0,0}$.
For $l\geqslant 1$ let $D_{-l,0} \in \SH^{-}$ be the element 
mapping to $D_{-l,0}^{(n)}$ for any $n$.
Consider elements
\begin{equation}\label{E:defD1l}
D_{1,l}=[D_{0,l+1}, D_{1,0}] , 
\quad
D_{-1,l}=[D_{-1,0},D_{0,l+1}], \quad l \geqslant 0.
\end{equation}
Let $\SH^>$ be the $F$-subalgebra of $\SH^{+}$ 
generated by $\{D_{1,l}\;;\; l \geqslant 0\}$. This is 
the limit of $\SH^>_n$ as $n$ tends to infinity. Now put
$\SH^<=(\SH^>)^{\op}.$
We may view $\SH^{-}$ and $\SH^<$ as the limits
of $\SH^-_n$ and $\SH^<_n$ respectively. 
Note that $\SH^<$ is the $F$-subalgebra of $\SH^{-}$ 
generated by $\{D_{-1,l}\;;\; l \geqslant 0\}$.
We define
\begin{equation}
\label{1.7:filt}
\SH^>[r,\leqslant\! l]=\SH^>\cap\SH^{+}[r,\leqslant\! l],
\qquad
\SH^>[r,\leqslant\! l]=\SH^>\cap\SH^{+}[r,\leqslant\! l].
\end{equation}

\vspace{.1in}

\begin{df} Let $\SHo$ be the $F$-algebra 
generated by $\SH^>,$ $\SHoo$ and $\SH^<$ 
modulo the following set of relations
\begin{equation}\label{E:rel0bis}
\omega=D_{0,0} \; \text{is\;central,}
\end{equation}
\begin{equation}\label{E:rel1bis}
[D_{0,l}, D_{1,k}]=D_{1,l+k-1},\quad l\geqslant 1,
\end{equation}
\begin{equation}\label{E:rel2bis}
[D_{0,l}, D_{-1,k}]=-D_{-1,l+k-1},\quad l\geqslant 1,
\end{equation}
\begin{equation}\label{E:rel3bis}
[D_{-1,k},D_{1,l}]=E_{k+l},\quad l,k\geqslant 0,
\end{equation}
where the elements $E_{k+l}$ are determined through the formula
\begin{equation}\label{E:formulaklop2}
\gathered
1+\xi \sum_{l\geqslant 0} E^{}_{l}s^{l+1}=K(\kappa,\omega,s) 
\exp\big(\sum_{l\geqslant 0} D^{}_{0,l+1} \varphi_l(s)\big).
\endgathered
\end{equation}
\end{df}

\vspace{.1in}

\noindent 
By Proposition~\ref{P:<0>}, there are surjective maps 
\begin{equation}
\Phi_n: \SHo \to \SH_n,\quad
D_{0,l}\mapsto D^{(n)}_{0,l},\quad
D_{\pm l,0}\mapsto D^{(n)}_{\pm l,0},\quad 
\omega\mapsto nS,\quad l\geqslant 1.
\end{equation}
As above, for each $l \geqslant 0$ we write
$D_{0,l}$ and
$D_{\pm l,0}$ for the families
$(D^{(n)}_{0,l})$ and
$(D^{(n)}_{\pm l,0})$ in
$\prod_{n\geqslant 1}\SH_n$. 
The definition of $\SHo$ is justified by the following
result.

\vspace{.1in}

\begin{prop}\label{P:triang} 
(a) The multiplication map induces  isomorphisms 
$$\SH^> \otimes \SHoo\simeq 
\SH^{+}\otimes F[\omega],\quad
\SHoo \otimes \SH^< \simeq 
\SH^{-}\otimes F[\omega],\quad
\SH^> \otimes \SHoo \otimes 
\SH^< \simeq \SHo.$$

(b) The map $\prod_{n\geqslant 1}\Phi_n$ identifies
$\SHo$ with the $F$-subalgebra of 
$\prod_{n\geqslant 1}\SH_n$ generated by $D_{0,l}$ and
$D_{\pm l,0}$ with
$l \geqslant 0.$ 
\end{prop}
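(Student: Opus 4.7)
The plan is to prove part (a) first, the triangular decomposition, and then derive (b) as an essentially formal consequence.

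For the first two isomorphisms in (a), I would pass to the limit in Proposition~\ref{P:SH>}. Since $\SH^{+}$ is by construction the subalgebra of $\prod_{n}\SH^+_n$ generated by the families $D_{0,l}=(D^{(n)}_{0,l})$ and $D_{l,0}=(D^{(n)}_{l,0})$ for $l\geqslant 1$, and since $\SH^{>}$ (resp. $F[D_{0,l};\,l\geqslant 1]$) maps componentwise into $\SH^>_n$ (resp. $\SH^0_n$), the multiplication $\SH^{>}\otimes F[D_{0,l};\,l\geqslant 1]\to \SH^{+}$ is surjective by relation \eqref{E:rel1bis} (which is the limit of its finite-$n$ counterpart), and injective by a Poincar\'e-polynomial count: one compares the dimensions of the rank- and order-graded pieces using Lemma~\ref{lem:poincare+} on one side and the finite-$n$ isomorphisms of Proposition~\ref{P:SH>} on the other. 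Tensoring with the polynomial algebra $F[\omega]$ in the independent central element $\omega=D_{0,0}$ yields $\SH^{>}\otimes\SHoo\simeq\SH^{+}\otimes F[\omega]$; the second isomorphism is obtained symmetrically.

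For the third isomorphism, I would first establish surjectivity of $\SH^{>}\otimes\SHoo\otimes\SH^{<}\to\SHo$ by a straightening argument. The commutation relations \eqref{E:rel1bis} and \eqref{E:rel2bis} guarantee $[\SHoo,\SH^{>}]\subset\SH^{>}$ and $[\SHoo,\SH^{<}]\subset\SH^{<}$, while \eqref{E:rel3bis} rewrites $D_{-1,k}D_{1,l}=D_{1,l}D_{-1,k}+E_{k+l}$ with $E_{k+l}\in\SHoo$ via \eqref{E:formulaklop2}. An induction on the number of ``wrong-ordered'' adjacent pairs (counting occurrences of $\SH^{<}\cdot\SH^{>}$ and $\SH^{<}\cdot\SHoo$ and $\SHoo\cdot\SH^{>}$) then brings every word into the required form. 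For injectivity, I would combine the first two isomorphisms with the embedding $\prod_n\Phi_n$: by Step~1 it suffices to show that $(\SH^{+}\otimes F[\omega])\otimes\SH^{-}\to\SHo$ is injective, which by construction reduces to the injectivity of a corresponding finite-$n$ multiplication $\SH^+_n\otimes\SH^-_n\to\SH_n$ (up to the central factor $nS$). This in turn is obtained by upgrading Corollary~\ref{C:triangn} from a surjection to an isomorphism via the same commutative-graded degeneration argument that underlies Proposition~\ref{P:SH>}: the associated graded of $\SH_n$ for the order filtration is the commutative ring $A[H\times\hen^*]^{\Sen_n}$ by Proposition~\ref{prop:coco}(b), in which the triangular decomposition becomes manifest.

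Part (b) follows almost immediately. Proposition~\ref{P:<0>} verifies that the relations \eqref{E:rel0bis}--\eqref{E:rel3bis} hold in $\prod_n\SH_n$ for the families $D_{0,l}$, $D_{\pm l,0}$ and $\omega=(nS)_n$ (in particular the latter is central at each level $n$), so $\prod_n\Phi_n$ descends to a well-defined algebra homomorphism $\SHo\to\prod_n\SH_n$ whose image is, by definition, the subalgebra generated by $D_{0,l}$ and $D_{\pm l,0}$ for $l\geqslant 0$. Injectivity follows from part~(a) together with the injectivity of $\SH^{\pm}\hookrightarrow\prod_n\SH^{\pm}_n$ built into Definition~\ref{df:1.1}: a nonzero element written in triangular form $\sum u_{>}^{(i)}u_0^{(i)}u_{<}^{(i)}$ projects to a nonzero element at sufficiently large $n$ by the finite-$n$ PBW just discussed.

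The central obstacle is the injectivity of the triangular multiplication in (a): Corollary~\ref{C:triangn} only provides surjectivity at finite $n$, so one cannot simply take a limit. The argument forces one through the $\kappa\to 0$ degeneration of $\SH_n$ to $\Diff(H)^{\Sen_n}\cong A[H\times\hen^*]^{\Sen_n}$, where triangularity is automatic, and then a careful Nakayama-style lifting back to $\SH_n$ and finally to $\SHo$. Everything else is either a limit of Proposition~\ref{P:SH>} or a direct rewriting using the defining relations.
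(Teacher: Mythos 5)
Your global plan (establish the triangular decomposition in (a), then derive (b)) matches the paper's, and your surjectivity/straightening argument for (a) is exactly the paper's. But there are two genuine problems in the injectivity argument for (a).

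First, the chain of reductions for the third isomorphism is not valid as stated. Once you know $\SH^>\otimes\SHoo\simeq\SH^+\otimes F[\omega]$, the triple tensor product becomes $(\SH^+\otimes F[\omega])\otimes\SH^<$, not $(\SH^+\otimes F[\omega])\otimes\SH^-$, and it is essential not to replace $\SH^<$ by $\SH^-$: the multiplication $\SH^+_n\otimes\SH^-_n\to\SH_n$ is \emph{not} injective, since both tensor factors contain $\SH^0_n$ (for instance $1\otimes\Sb\,p_1(y)\Sb-\Sb\,p_1(y)\Sb\otimes 1$ lies in the kernel). This is not a notational slip in your write-up; it is the basis of your claimed finite-$n$ reduction, and as written the claim is false. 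Moreover, even with the correct factors $\SH^+_n\otimes\SH^<_n\to\SH_n$, the real difficulty — which you dismiss with ``by construction reduces to'' — is precisely the passage from the limit to finite $n$. The paper handles this by observing that a putative $u\neq 0$ in $\Ker(m)$ lies in a bounded bidegree piece $\SH^>[\leqslant r_1,\leqslant l_1]\otimes\SHoo[\leqslant l_2]\otimes\SH^<[\leqslant r_3,\leqslant l_3]$, and that the componentwise maps $\Phi_n^{\otimes 3}$ are eventually injective on this finite-dimensional piece (using the embeddings $\SH^>,\SHoo,\SH^<\hookrightarrow\prod_n$ built into their construction), so $\Phi_n^{\otimes 3}(u)\neq 0$ for $n\gg 0$; the finite-$n$ injectivity of $\SH^>_n\otimes\SH^0_n\otimes\SH^<_n\to\SH_n$ on bounded pieces (via PBW/associated graded) then produces a contradiction. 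You state this mechanism correctly, but only in your discussion of (b); it is the crux of (a) and belongs there.

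Second, the Poincar\'e-polynomial count you invoke for the first two isomorphisms risks circularity: the Poincar\'e polynomial of $\SH^>$ is only established in the paper (Corollary~\ref{P:poincareSH>}) \emph{as a consequence} of Proposition~\ref{P:triang}(a). To make this approach non-circular one would still have to argue, exactly as above, that $\Phi_n$ is eventually an isomorphism on bounded pieces of $\SH^>$. In fact the paper sidesteps the first two isomorphisms entirely: it proves injectivity of the triple map $m$ directly and notes that the first two isomorphisms follow formally. That is the cleaner route, and you should adopt it rather than try to bootstrap the third isomorphism from the first two.
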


\begin{proof} The surjectivity statements in $(a)$
is proved as in Proposition~\ref{P:SH>} 
and Corollary~\ref{C:triangn}. To prove $(a)$
it thus remains to show that the multiplication map 
$$m: \SH^> \otimes \SHoo \otimes \SH^< \to \SHo$$
is injective.
Consider the following commutative diagram
\begin{equation}
\begin{split}
\xymatrix{\SH^> \otimes \SHoo \otimes \SH^< \ar[r]^-{m} \ar[d]_-{\Phi_n^{\otimes 3}} & \SHo \ar[d]^-{\Phi_n}\\
\SH^>_{n} \otimes \SH^{0}_n \otimes \SH^<_{n} \ar[r]^-{m} & \SH_n.
}
\end{split}
\end{equation}
Let $u \in \Ker(m)$ and assume that $u \neq 0$. 
There exists positive integers $r_1,r_3,l_1,l_2,l_3$ such that
$$u \in \SH^>[\leqslant\! r_1, \leqslant\! l_1] \otimes 
\SHoo[\leqslant\! l_2] \otimes \SH^<
[\leqslant\! r_3, \leqslant\! l_3].$$ 
By Definition \ref{df:1.1} we have 
\begin{equation}\label{inclusion11}
\SH^>\subset\prod_{n\geqslant 1}\SH_n^>,\qquad
\SH^<\subset\prod_{n\geqslant 1}\SH_n^<,\qquad
\SH^0\subset\prod_{n\geqslant 1}\SH_n^0.
\end{equation}
Since we have 
$$\SH_n^0=F[D_{0,l}^{(n)}\;;\;l\geqslant 1],\quad
\SHoo=F[D_{0,l}\;;\;l\geqslant 0],$$
we have also an inclusion
$\SHoo\subset\prod_{n\geqslant 1}\SH_n^0$
which identifies the element $\omega=D_{0,0}$ with the family $(nS)$.
Thus, for $n \gg 0$ we have $\Phi_n^{\otimes 3}(u) \neq 0$.
By passing to the associated graded and using the PBW theorem, 
we see that the restriction to
$$\SH^>_{n}[\leqslant\! r_1, \leqslant\! l_1] \otimes 
\SH^{0}_{n}[\leqslant\! l_2] 
\otimes \SH^<_{n}[\leqslant\! r_1, \leqslant\! l_3]$$
of the map $m$ is injective
for $n \gg 0$. But then $\Phi_n \circ m (u) \neq 0$, a contradiction. 
This shows that $\Ker(m)=\{0\}$. 
Our argument also implies that $\bigcap_n \Ker(\Phi_n \circ m)=\{0\}$.
Hence $\bigcap_n \Ker(\Phi_n)=\{0\}$ because $m$ is surjective.
This implies the part $(b)$.
 
\end{proof}

As a direct consequence of Lemma \ref{lem:poincare+} 
and Proposition~\ref{P:triang} $(a)$ 
we have the following.

\begin{cor}\label{P:poincareSH>} The Poincar\'e polynomials 
of $\SH^>$ and $\SH^<$ are
respectively given by
$$P_{\SH^>}(t,q)=
\prod_{r>0}\prod_{l \geqslant 0} \frac{1}{1-t^rq^l}, \qquad 
P_{\SH^<}(t,q)=\prod_{r<0}\prod_{l \geqslant 0} \frac{1}{1-t^rq^l}.$$
\end{cor}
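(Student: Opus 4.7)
The plan is to prove both formulas simultaneously by reducing, via Proposition \ref{P:triang}(a), to the known Poincar\'e polynomial $P_{\SH^{+}}$ of Lemma \ref{lem:poincare+}, and then to deduce the $\SH^<$ formula from the $\SH^>$ formula by invoking the antiautomorphism $\pi$ from Section 1.2.

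Using $\SHoo=F[\omega]\otimes F[D_{0,l}\,;\,l\geqslant 1]$ as $F$-algebras, Proposition \ref{P:triang}(a) gives, after cancelling the free factor $F[\omega]$, a multiplication-induced $F$-vector space isomorphism
$$\SH^{+}\;\simeq\;\SH^>\otimes F[D_{0,l}\,;\,l\geqslant 1].$$
This identification is automatically compatible with the rank grading. The crux is to check its compatibility with the order filtration, where on the right-hand side $D_{0,l}$ carries order $l$ (its order in $\SH^{+}$, by the construction in Section 1.4). To this end I would pass to the associated gradeds: by the proof of Lemma \ref{lem:poincare+}, $\bar\SH^{+}\cong F[X_i,y_i]^{\Sen_\infty}$ is freely generated by $\sum_k X_k^r y_k^l$ for $(r,l)\in\N^2_0$. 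The subalgebra $F[D_{0,l}\,;\,l\geqslant 1]$ accounts for the $r=0$ generators, and for $r\geqslant 1$ they are produced inside $\bar\SH^>$ as follows: by the iterated brackets in \eqref{E:rel4}, $D_{r,0}\in\SH^>$ for every $r\geqslant 1$ with symbol $\sum_k X_k^r$; then for $l\geqslant 1$, repeated application of the Jacobi identity to \eqref{E:rel'1} shows that the element $r^{-1}[D_{0,l+1},D_{r,0}]$ lies in $\SH^>$, has order $\leqslant l$, and has top symbol $\sum_k X_k^r y_k^l$ via the Poisson bracket identity
$$r^{-1}\bigl\{p_{l+1}(y)/(l+1),\,p_r(X)\bigr\}=\sum_k X_k^r y_k^l$$
in $F[X_i,y_i]^{\Sen_\infty}$, the symplectic form $\{y_i,X_j\}=\delta_{ij}X_i$ being the classical ($\kappa=0$) limit of relation \eqref{E:3}.

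Once the above isomorphism is shown to be bigraded, Poincar\'e polynomials multiply to give
$$P_{\SH^{+}}(t,q)=P_{\SH^>}(t,q)\cdot\prod_{l\geqslant 1}\frac{1}{1-q^l},$$
which combined with Lemma \ref{lem:poincare+} yields
$$P_{\SH^>}(t,q)=\prod_{(r,l)\in\N^2_0}(1-t^rq^l)^{-1}\cdot\prod_{l\geqslant 1}(1-q^l)=\prod_{r>0,\,l\geqslant 0}(1-t^rq^l)^{-1}.$$
For $P_{\SH^<}$, the antiautomorphism $\pi$ of Section 1.2 sends $X_i\mapsto X_i^{-1}$ and restricts to an antiisomorphism $\SH^+_n\to\SH^-_n$ identifying $\SH^>_n$ with $\SH^<_n$, reversing the rank grading and preserving the order filtration; passing to the limit yields $\SH^<\cong(\SH^>)^{\op}$ as filtered vector spaces with rank grading reversed. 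The stated formula for $P_{\SH^<}(t,q)$ then follows from that for $P_{\SH^>}(t,q)$ by the substitution $t\mapsto t^{-1}$.

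The main obstacle is the strictness check on the order filtration of the decomposition $\SH^{+}\simeq\SH^>\otimes F[D_{0,l}\,;\,l\geqslant 1]$, namely the production of sufficiently many elements of $\SH^>$ so that $\bar\SH^>$ realizes all free generators $\sum_k X_k^r y_k^l$ (with $r\geqslant 1$) of $F[X_i,y_i]^{\Sen_\infty}$. The Poisson-bracket construction sketched above settles this, but an equivalent shortcut is to work at finite $n$: the decomposition $\SH^+_n\simeq\SH^>_n\otimes\SH^0_n$ of Proposition \ref{P:SH>} is strictly filtered by the PBW theorem, and this property descends to the limit through the compatible surjections $\Phi_n$.
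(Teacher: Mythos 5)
Your proof is correct and follows the paper's approach: combine the triangular decomposition of Proposition~\ref{P:triang}(a) with the Poincar\'e polynomial of $\SH^{+}$ from Lemma~\ref{lem:poincare+}, then use the antiautomorphism $\pi$ (equivalently, $\SH^{<}=(\SH^{>})^{\op}$) to pass from $\SH^{>}$ to $\SH^{<}$; the paper simply records this as a ``direct consequence'' without comment. You are right to flag that the vector-space isomorphism $\SH^{+}\simeq\SH^{>}\otimes F[D_{0,l}\,;\,l\geqslant 1]$ must also be a \emph{filtered} isomorphism for Poincar\'e series to multiply, and your production of elements $r^{-1}[D_{0,l+1},D_{r,0}]\in\SH^{>}$ with leading symbol $\sum_k X_k^r y_k^l$ (together with the $r=0$ generators coming from $\SH^{0}$) establishes surjectivity of the map on associated gradeds, from which the strictness follows by the dimension count; this is precisely the content the paper leaves implicit.
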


\vspace{.1in}

For a future use, let us mention also the following basic facts.

\begin{prop}\label{prop:generators/involution} 
(a) The $F$-algebra $\SHo$ is generated by the elements 
$\omega$, $D_{1,0}$, $D_{-1,0}$, $D_{0,2}$.

(b) There is a unique anti-involution $\pi$ of 
$\SHo$ such that $\pi(D_{\pm 1,l})=D_{\mp 1,l}$,
$\pi(D_{0,l})=D_{0,l}$. 
\end{prop}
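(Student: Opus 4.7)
The approach for part (a) uses the triangular decomposition $\SHo \simeq \SH^> \otimes \SHoo \otimes \SH^<$ of Proposition~\ref{P:triang}(a) together with the defining relations \eqref{E:rel1bis}--\eqref{E:formulaklop2} to successively build up the generators of the three tensor factors from the four given elements. For part (b), the anti-involution will be induced from the natural anti-involution $\pi_n$ of $\H_{n,A}$ mentioned before Remark~\ref{rem:uno}, via the embedding $\SHo \hookrightarrow \prod_n \SH_n$ of Proposition~\ref{P:triang}(b).

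For (a), let $\mathcal{B} \subset \SHo$ denote the subalgebra generated by $\omega, D_{1,0}, D_{-1,0}, D_{0,2}$. Setting $l=2$ in \eqref{E:rel1bis} gives $[D_{0,2}, D_{1,k}] = D_{1,k+1}$, so iteration on $D_{1,0}$ produces $D_{1,k}$ for every $k \geqslant 0$, hence $\SH^> \subset \mathcal{B}$; symmetrically \eqref{E:rel2bis} yields $\SH^< \subset \mathcal{B}$. Relation \eqref{E:rel3bis} then places every $E_l = [D_{-1,0}, D_{1,l}]$ in $\mathcal{B}$. To extract the remaining generators $D_{0,m}$, $m \geqslant 1$, I would take the logarithm of both sides of \eqref{E:formulaklop2} and compare coefficients of $s^n$. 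An elementary expansion, using $1-\xi-\kappa = 0$ and $1-\xi^3-\kappa^3 = 3\kappa\xi$ to kill the $s^{l+1}$ term and identify the next, shows
$$\varphi_l(s) = (l+1)(l+2)\kappa\xi \, s^{l+3} + O(s^{l+5}).$$
Hence the coefficient of $s^n$ in $\sum_l D_{0,l+1}\varphi_l(s)$ is an $F$-linear combination of $D_{0,n-2}, D_{0,n-4}, \ldots$ with leading coefficient $(n-2)(n-1)\kappa\xi$ (nonzero in $F=\CC(\kappa)$) in front of $D_{0,n-2}$, while the corresponding coefficient on the other side is a polynomial in $\omega$ and in $E_0,\dots,E_{n-1}$, already known to lie in $\mathcal{B}$. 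An induction on $n$ thus places every $D_{0,m}$ inside $\mathcal{B}$, completing (a).

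For (b), uniqueness is immediate from (a). For existence, the text constructs an algebra anti-automorphism $\pi_n$ of $\H_{n,A}$ by $X_i \mapsto X_i^{-1}$, $y_i \mapsto y_i$, $s \mapsto s^{-1}$. Since $\pi_n^2$ is an automorphism fixing all generators, $\pi_n$ is an involution; since $\pi_n(\Sb) = \Sb$, it descends to $\SH_{n,A}$. Inspecting \eqref{E:defD} and \eqref{1.5:form} one checks directly that $\pi_n$ fixes each $D_{0,l}^{(n)}$ and $\omega = n\Sb$, and swaps $D_{l,0}^{(n)}$ with $D_{-l,0}^{(n)}$. Consequently the family $\pi := (\pi_n)_n$ preserves the image of $\SHo$ inside $\prod_{n\geqslant 1}\SH_n$ (Proposition~\ref{P:triang}(b)) and defines an anti-involution of $\SHo$ having the desired values on $D_{0,l}$ and on $D_{\pm l, 0}$. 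The equalities $\pi(D_{\pm 1, l}) = D_{\mp 1, l}$ then follow formally from \eqref{E:defD1l} and the anti-multiplicativity of $\pi$.

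The one genuinely nontrivial step is the inversion in (a); however, since the leading coefficient $(l+1)(l+2)\kappa\xi$ is manifestly nonzero in $F = \CC(\kappa)$, the triangular system is solvable and no real obstacle arises.
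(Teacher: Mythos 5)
Your proof is correct. Part (a) follows the same line as the paper's (terse) proof: iterated commutators of $D_{\pm 1,0}$ with $D_{0,2}$ produce all $D_{\pm 1,l}$, and the $E_l$'s then recover $\SHoo$; the inversion via the expansion $\varphi_l(s)=(l+1)(l+2)\kappa\xi\,s^{l+3}+O(s^{l+5})$ is exactly the detail the paper leaves implicit, and your triangularity/induction argument is accurate. For (b) the paper merely says "obvious"; your construction via the anti-automorphism $\pi_n$ of $\H_{n,A}$ restricted to $\SH_{n,A}$, combined with the embedding of Proposition~\ref{P:triang}(b), is a clean and correct way to make this precise, and the uniqueness deduction from (a) is fine.
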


\begin{proof} From \eqref{E:rel1bis}-\eqref{E:rel2bis}
we see that $D_{\pm 1, l}$ is an iterated commutator 
of $D_{\pm 1, 0}$ and $D_{0,2}$. From \eqref{E:rel3bis} we see 
that $\SHoo$ is 
generated by the commutators $[D_{-1,k}, D_{1,l}]$ for $k, l \geqslant 0$.
This proves $(a)$. Part $(b)$ is obvious.
\end{proof}

\vspace{.1in}


\begin{rem}
\label{rem:com}
Note that 
$\{D_{l,0}\;;\;l\in\Z\}$
generates a commutative subalgebra of $\SHo$
(use Proposition \ref{P:triang}$(b)$ and the commutativity of the elements
$D_{l,0}^{(n)}$, $l\in\Z$, in $\SH_n$). 
\end{rem}

\vspace{.1in}

\begin{rem}
In Corollary \ref{cor:SC/SH} we'll give an explicit description of the 
subalgebra $\SH^>$ of $\SH^{+}$.
\end{rem}

\vspace{.15in}

\subsection{The algebra $\SH^{\cb}$}
\label{sec:SHc}
Now, we define a central extension 
$\SH^{\cb}$ of $\SH$. 
To do this, we introduce a new
family $\cb=(\cb_0,\cb_1,\dots)$ of formal parameters, 
and for $l\geqslant 0$ we set
\begin{equation}
\label{1.68}
\phi_l(s)=s^l\lg_l(1+\xi s),\qquad 
\SH^{\cb,0}=F^\cb [D_{0,l}\;;\;l\geqslant 0],\qquad
F^\cb=F[\cb_{l}\;;\;l\geqslant 0].
\end{equation}

\vspace{.1in}

\noindent
\begin{df}
\label{1.8:def}
Let $\SH^{\cb}$ be the $F$-algebra generated by 
$\SH^>$, $\SH^{\cb,0}$, $\SH^<$ 
 modulo the following set of relations
\begin{equation}\label{E:rel0bis2}
\cb_l \; \text{is\;central,}
\end{equation}
\begin{equation}\label{E:rel1bis2}
[D_{0,l}, D_{1,k}]=D_{1,l+k-1},\quad l\geqslant 1,
\end{equation}
\begin{equation}\label{E:rel2bis2}
[D_{0,l}, D_{-1,k}]=-D_{-1,l+k-1},\quad l\geqslant 1,
\end{equation}
\begin{equation}\label{E:rel3bis2}
[D_{-1,k},D_{1,l}]=E_{k+l},\quad l,k\geqslant 0,
\end{equation}
where $D_{0,0}=0$ and the elements $E_{k+l}$ are determined through the formula
\begin{equation}
1+\xi\sum_{l\geqslant 0} E_{l}\,s^{l+1}=
\exp\bigl(\sum_{l\geqslant 0}(-1)^{l+1}\cb_l\phi_l(s)\bigr)\,
\exp\big(\sum_{l\geqslant 0} D_{0,l+1}\, \varphi_l(s)\big).
\end{equation}
\end{df}

\vspace{.1in}

\begin{rem} 
Given a family $c=(c_0,c_1,\dots)$ of elements in an extension of the field $F$,
let $\SH^{c}$ be the specialization of $\SH^{\cb}$ at $\cb=c$. 
The specialization at $\cb=0$ is canonically isomorphic to the specialization of
$\SHo$ at $\omega=0$. 
Next, a direct computation shows that
$$K(\kappa,\omega,s)=\exp\Big(\sum_{l\geqslant 0}(-1)^{l+1}(\delta_{l,0}-\kappa^l\omega^l)\phi_l(s)\Bigr).$$
Therefore, taking $c_0=0$ and $c_l=-\kappa^l\omega^l$ in $F(\omega)$ for $l\geqslant 1$,
we get an $F(\omega)$-algebra isomorphism
$\SH^{c}\to\SHo$ such that 
$D_{1,l}\mapsto D_{1,l}$ and $D_{-1,l}\mapsto D_{-1,l}$
for each $l\geqslant 0$. 
\end{rem}

\vspace{.1in}

\begin{rem}
We abbreviate $\SH^{\cb_0,\cb_1}$ for the algebra
associated with the familly of parameters $(\cb_0,\cb_1,0,\dots)$.
By Remark \ref{rem:combA} there is an algebra isomorphism
$\SH^\cb\to\SH^{\cb_0,\cb_1} \otimes F[\mathbf{c}_l\;;\; l \geqslant 2]$ 
such that 
$D_{1,l}\mapsto D_{1,l}$ and $D_{-1,l}\mapsto D_{-1,l}$
for each $l\geqslant 0$. 
In other words, the algebra $\SH^\cb$ depends only on the parameters
$\cb_0$, $\cb_1$ up to isomorphisms.
\end{rem}

\vspace{.1in}

\begin{prop} 
\label{prop:generators/involution2}
(a) The $F$-algebra $\SH^{\cb}$ is generated by 
$\cb_l$, $D_{1,0}$, $D_{-1,0}$, $D_{0,2}$.

(b) There is a unique anti-involution $\pi$ of
$\SH^{\cb}$ such that 
$\pi(\cb_l)=\cb_l,$
$\pi(D_{\pm 1,l})=D_{\mp 1,l}$
and
$\pi(D_{0,l})=D_{0,l}.$
\end{prop}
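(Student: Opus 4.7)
My plan is to follow the strategy of Proposition \ref{prop:generators/involution}, with one additional ingredient to handle the extra central parameters $\cb_l$.

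For part (a), I would first specialise relations \eqref{E:rel1bis2} and \eqref{E:rel2bis2} to $l=2$: this gives $D_{1,k+1}=[D_{0,2},D_{1,k}]$ and $D_{-1,k+1}=-[D_{0,2},D_{-1,k}]$, so by iteration every $D_{\pm 1,m}$ with $m\geqslant 0$ lies in the subalgebra $\mathcal{A}\subset\SH^{\cb}$ generated by $\cb_l,D_{\pm 1,0},D_{0,2}$. Relation \eqref{E:rel3bis2} then places $E_m=[D_{-1,0},D_{1,m}]$ in $\mathcal{A}$ for every $m\geqslant 0$. The additional step, compared with Proposition \ref{prop:generators/involution}, is that I need to invert the defining formula for the $E_m$'s to recover the $D_{0,l+1}$'s. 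Taking logarithms in $\mathcal{A}[[s]]$ gives
$$\log\Bigl(1+\xi\sum_{m\geqslant 0}E_m\, s^{m+1}\Bigr)-\sum_{l\geqslant 0}(-1)^{l+1}\cb_l\,\phi_l(s)=\sum_{l\geqslant 0}D_{0,l+1}\,\varphi_l(s).$$
A short power-series expansion, using $1-\xi-\kappa=0$ to cancel the would-be $s^{l+1}$ coefficient, shows $\varphi_l(s)=(l+1)(l+2)\kappa\xi\, s^{l+3}+O(s^{l+5})$. The leading coefficient is invertible in $F$, so comparing the coefficients of $s^{l+3}$ for $l=0,1,2,\ldots$ in order recursively expresses each $D_{0,l+1}$ as a polynomial in the $E_m$'s, the $\cb_k$'s and the earlier $D_{0,j}$'s. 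This puts every $D_{0,l}$ in $\mathcal{A}$, and combined with $D_{0,0}=0$ and $\SH^>,\SH^<\subset\mathcal{A}$ it shows $\mathcal{A}=\SH^{\cb}$. This triangularity computation is the only nontrivial step in the whole argument.

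For part (b), I would define $\pi$ on the three generating subalgebras of Definition~\ref{1.8:def}: on $\SH^{\cb,0}$ as the identity, and on $\SH^{>}$, $\SH^{<}$ as the anti-isomorphism sending $D_{\pm 1,l}\mapsto D_{\mp 1,l}$, which is well defined by descent from the classical anti-involution on each $\SH_n$ (the one fixing $\SH_n^0$ and exchanging $\SH_n^{\pm}$). After extending by $\pi(xy)=\pi(y)\pi(x)$, it remains to check compatibility with the defining relations \eqref{E:rel0bis2}--\eqref{E:rel3bis2}. Relation \eqref{E:rel0bis2} is trivially preserved since each $\cb_l$ is $\pi$-fixed and central; applying $\pi$ to \eqref{E:rel1bis2} produces \eqref{E:rel2bis2}, and vice versa; and because the right-hand side of \eqref{E:rel3bis2} involves only $\pi$-fixed elements (the $\cb_l$'s and the $D_{0,k}$'s), applying $\pi$ to that relation simply swaps the roles of $k$ and $l$. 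Uniqueness is immediate from the prescribed values on generators.
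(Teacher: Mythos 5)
Your proof is correct and takes essentially the same route as the paper, which reduces the statement to Proposition~\ref{prop:generators/involution} for $\SHo$: part (a) relies on generating $D_{\pm1,l}$ by iterated commutation with $D_{0,2}$ and then extracting the $D_{0,l+1}$ from the $E_m$, and part (b) is by checking the anti-involution against relations~\eqref{E:rel0bis2}--\eqref{E:rel3bis2}. You supply more detail than the paper (which simply says the $E_m$ generate $\SH^{\cb,0}$ and that part (b) is obvious): your explicit triangularity argument, based on the expansion $\varphi_l(s)=(l+1)(l+2)\kappa\xi\,s^{l+3}+O(s^{l+5})$, makes the inversion step concrete — note the paper records the coarser $O(s^{l+4})$ in \eqref{D7}, but since only odd powers of $s-\!$degree minus $l$ appear in $\varphi_l$, your sharper estimate is correct; in any case either suffices for the recursion.
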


\vspace{.1in}

\begin{proof} 
Parts $(a)$, $(b)$ are proved as Proposition~\ref{prop:generators/involution}.
\end{proof}

\vspace{.1in}

The following specialization of the algebra $\SH^\cb$ will be important for us. 

\vspace{.05in}

\begin{df}\label{def:spe}
For a field extension $F\subset K$ and an integer $r>0$ let $K_r=K(\eps_1,\dots,\eps_r),$
where $\eps_1,\dots,\eps_r$ are new formal variables. Consider the algebra homomorphism
$F^\cb\to K_r$, $\cb_l\mapsto c_l=p_l(\eps_1,\dots,\eps_r)$.
We define the $K_r$-algebra $\SH^{(r)}_{K}=\SH^{\cb}\otimes_{F^\cb} K_r$.
We write also 
$\SH^{(r),>}_{K}=\SH^>\otimes K_r$ and
$\SH^{(r),<}_{K}=\SH^<\otimes K_r.$
\end{df}

\vspace{.1in}

We can now prove the following.

\vspace{.05in}

\begin{prop}
\label{1.8:prop1}
The multiplication map
$\SH^> \otimes \SH^{\cb,0} \otimes 
\SH^< \to \SH^{\cb}$
is an isomorphism. 
\end{prop}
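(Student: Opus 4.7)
My plan follows the two-step structure of Proposition~\ref{P:triang}: establish surjectivity of $m$ by a straightening argument, and then establish injectivity by reducing to the analogous statement for $\SHo$. For surjectivity, the defining relations \eqref{E:rel1bis2}--\eqref{E:rel3bis2} together with the centrality of the $\cb_l$ give
\[
[\SH^{\cb,0},\SH^>]\subset\SH^>,\qquad [\SH^{\cb,0},\SH^<]\subset\SH^<,\qquad [\SH^<,\SH^>]\subset\SH^{\cb,0},
\]
where the last inclusion uses that each $E_{k+l}$ is a polynomial in the $D_{0,j}$ and the $\cb_j$. An induction on word length then shows that any product of generators can be rearranged into the canonical order $\SH^>\cdot\SH^{\cb,0}\cdot\SH^<$, exactly as in Corollary~\ref{C:triangn}, so $m$ is surjective.

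For injectivity, I would compare with $\SHo$ via specialization. The identity
\[
K(\kappa,\omega,s)=\exp\Bigl(\sum_{l\geqslant 0}(-1)^{l+1}(\delta_{l,0}-\kappa^l\omega^l)\phi_l(s)\Bigr)
\]
from the remark following Definition~\ref{1.8:def} shows that the $F^{\cb}$-algebra homomorphism $\sigma:F^{\cb}\to F(\omega)$, $\cb_0\mapsto 0$, $\cb_l\mapsto -\kappa^l\omega^l$ ($l\geqslant 1$), induces an isomorphism
\[
\SH^{\cb}\otimes_{F^{\cb},\sigma}F(\omega)\;\xrightarrow{\sim}\;\SHo\otimes_F F(\omega)
\]
respecting the triple decomposition. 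Hence any element of $\ker(m)$ specializes to an element in the kernel of the corresponding multiplication map for $\SHo$, which vanishes by Proposition~\ref{P:triang}(a). To promote this to injectivity of $m$ itself, I would first invoke Remark~\ref{rem:combA}, which identifies $\SH^{\cb}$ with $\SH^{\cb_0,\cb_1}\otimes F[\cb_l\,;\,l\geqslant 2]$, reducing the problem to the two-parameter algebra $\SH^{\cb_0,\cb_1}$; the extra polynomial tensor factor is harmless for checking injectivity of $m$.

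For the reduced problem, one needs flatness of $\SH^{\cb_0,\cb_1}$ over $F[\cb_0,\cb_1]$: then a non-zero element of $\ker(m)$ would have to survive at a generic specialization, contradicting injectivity in the $\SHo$-fiber. The flatness can be obtained by constructing an action of $\SH^{\cb}$ on the module $T=\SH^>\otimes\SH^{\cb,0}\otimes\SH^<$ that extends the left regular action of each factor and makes the map $m$ split at $1\otimes 1\otimes 1$, or equivalently by a filtration argument whose associated graded is pinned down by Proposition~\ref{prop:coco}. The principal technical point---the main obstacle---is exactly this flatness: the natural specialization $\sigma$ covers only the curve $\cb_0=0$ in the parameter space, so knowing injectivity along it is insufficient without a separate flatness input, and establishing this requires checking that the candidate PBW action of the generators on $T$ is compatible with all the relations of Definition~\ref{1.8:def}.
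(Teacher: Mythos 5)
Your surjectivity argument is exactly the paper's: the straightening argument from Corollary~\ref{C:triangn} carries over verbatim, so that part is fine (though strictly speaking $[\SH^<,\SH^>]\subset\SH^{\cb,0}$ only holds in the "right‐to‐left" sense needed for straightening, not as a literal subspace inclusion; this is harmless).

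For injectivity you have correctly diagnosed your own gap, but the fix you propose (flatness of $\SH^{\cb_0,\cb_1}$ over $F[\cb_0,\cb_1]$) is not what the paper does and is also not really the right repair. What is actually needed is much less: the source $T=\SH^>\otimes\SH^{\cb,0}\otimes\SH^<$ is tautologically a \emph{free} $F^{\cb}$-module (it is the tensor product of two $F$-vector spaces with the free polynomial ring $\SH^{\cb,0}=F^{\cb}[D_{0,l}]$), so an element of $T$ is pinned down by its finitely many $F^{\cb}$-coefficients. If $u\in\ker(m)$, then by commutativity of the specialization square the image of $u$ in $T\otimes_{F^{\cb}}K_r$ lies in the kernel of the specialized multiplication map for every $r$. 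Corollary~\ref{cor:injectivity} (proved geometrically in Appendix~B.3, via faithfulness of the representation of $\SH^{(r)}_K$ on $\Lb^{(r)}_K$) says that kernel is zero for \emph{every} $r\geqslant 1$. Since the specializations $\cb_l\mapsto p_l(\eps_1,\ldots,\eps_r)$ for all $r$ together separate elements of $F^{\cb}$ (i.e.\ $F^{\cb}\to\prod_r K_r$ is injective), the coefficients of $u$ all vanish, and $u=0$. No flatness of $\SH^{\cb}$ over $F^{\cb}$ enters.

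The substantive difference, then, is the choice of specialization family: you specialize only to $\SHo$ (a single curve $\cb_0=0$ in parameter space, as you note), whereas the paper specializes to the $\SH^{(r)}_K$ for all $r$, which is a Zariski-dense family and furthermore comes equipped with the geometric injectivity input (Corollary~\ref{cor:injectivity}) that does the work Proposition~\ref{P:triang} did for $\SHo$. Your reduction via Remark~\ref{rem:combA} to $\SH^{\cb_0,\cb_1}$ is a reasonable move, but even after that reduction your single specialization still misses the $\cb_0$-direction, and the flatness statement you would need to fill the gap is not established in your argument. Replacing it with the paper's family of specializations and freeness of $T$ over $F^{\cb}$ closes the proof without any flatness claim at all.
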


\begin{proof}
The injectivity follows from Corollary \ref{cor:injectivity} and the commutativity of the diagram
\begin{equation}\label{diagram0}
\begin{split}
\xymatrix{
\SH_K^{(r),>}\otimes\SH_K^{(r),0}\otimes
\SH_K^{(r),<}\ar[r]&
\SH_K^{(r)}\\
(\SH^>\,\otimes\,\SH^{\cb,0}\,\otimes\,\SH^<)\otimes K_r\ar[r]\ar[u]
&\SH^\cb\otimes K_r\ar[u]
}
\end{split}
\end{equation}
for all $r$.
The vertical maps are given by the specialization and the horizontal ones by the multiplication.
The proof of the surjectivity
is similar to the proof of Corollary \ref{C:triangn}. 
Since $\SH^\cb$ is generated by 
$\SH^>,$ $ \SH^{\cb,0}$ and
$\SH^<$, and since these subalgebras are respectively generated by 
$\{D_{1,l}\;; \;l \geqslant 1\}$, $\{D_{0,l}\;;\; l \geqslant 0\}$ 
and $\{D_{-1,l}\;;\;l \geqslant 0\}$,
it suffices to prove that any monomial 
$D_{\xb_1} \cdots D_{\xb_s}$ may be expressed as a linear 
combination of monomials in which the generators
$D_{1,l}$, $D_{0,l}$, $D_{-1,l}$ appear in that fixed order. 
The relations \eqref{E:rel0bis2}-\eqref{E:rel3bis2}
allow one to do that.
\end{proof}

\vspace{.1in}

\begin{rem} \label{rem1.42} There is a unique $F$-algebra homomorphism
$\varepsilon:\SH^\cb\to F$ such that $\varepsilon|_{\SH^+}=\varepsilon^+$,
$\varepsilon|_{\SH^-}=\varepsilon^+\circ\pi$ and $\varepsilon(\cb_l)=0$ for each
$l\in \N$. Use Remark \ref{rem1.26} and Definition \ref{1.8:def}.
\end{rem}

\vspace{.15in}

\subsection{The order filtration on $\SH^{\cb}$}
In this section we extend the order filtration on $\SH^+$ to $\SH^{\cb}$.
Let $\SH^\cb[s,\leqslant\! l]$ be the image by the multiplication map of the 
$F$-vector space
\begin{equation}\label{E:filtration-SHc}
\sum_{s_1,s_3,l_1,l_2,l_3}\SH^> 
[s_1,\leqslant\! l_1]
\otimes \SH^{\cb,0}
[\leqslant\! l_2]
\otimes \SH^< 
[s_3,\leqslant\! l_3].
\end{equation}
The sum is over all tuples such that $s_1+s_3=s$ and $l_1+l_2+l_3=l$.
The $F$-subspaces
$\SH^>[s_1,\leqslant\! l_1]$ and $\SH^<[s_3,\leqslant\! l_3]$
are as in \eqref{1.7:filt}, and
$\SH^{\cb,0}[\leqslant\! l_2]$
is the $F^\cb$-subalgebra of
$\SH^{\cb,0}$
spanned by the polynomials in the elements
$D_{0,l}$ of order $\leqslant l_2$.
By Proposition \ref{1.8:prop1},
the $F$-algebra $\SH^\cb$ carries 
a $\Z$-grading and a $\N$-filtration
\begin{equation}
\gathered
\SH^\cb=\bigoplus_{s\in\Z}\SH^\cb[s],\qquad
\SH^\cb=\bigcup_{l\in\N}\SH^\cb[\leqslant\! l],\\
\SH^\cb[s]=\bigcup_{l\in\N}\SH^\cb[s,\leqslant\! l],
\qquad
\SH^\cb[\leqslant\!l]=\bigoplus_{s\in\Z}\SH^\cb[s,\leqslant\!l].
\endgathered
\end{equation}
We will prove  that $\SH^{\cb}$, with this
filtration, is a filtered algebra.
Hence, the associated graded $\overline\SH^\cb$ is
an algebra. 
Next, following \eqref{E:rel4}, we define inductively the element
$D_{l,0}\in\SH^{\cb}$ so that 
$D_{-1,0},$ $D_{1,0}$ are as above and
\begin{equation}
\label{1.8:Dl0} 
[D_{1,1}, D_{l,0}]=l D_{l+1,0}, \qquad [D_{-l,0},D_{-1,1}]=l D_{-l-1,0},
\qquad l\geqslant 0.
\end{equation}
Finally, for $l,r \geqslant 1$, we set
\begin{equation}
\label{E:defdrl}
D_{r,l}=[D_{0,l+1}, D_{r,0}], \qquad D_{-r,l}=[D_{-r,0}, D_{0,l+1}].
\end{equation}
This notation is compatible with the previous definition of 
$D_{\pm 1, l}$. The elements $D_{r,l}$ satisfy the following properties,
see Lemma \ref{lem:D5},
\begin{equation}
\gathered
D_{r,l} \in \SH^{>},\qquad
D_{-r,l} \in \SH^<,\qquad
\pi(D_{l,0})=D_{-l,0},\qquad
[D_{0,1},D_{l,0}]=lD_{l,0}.
\endgathered
\end{equation}

\vspace{.05in}

\begin{prop}\label{prop:filtration-SHc} 
(a) The element $D_{r,d}$ is of order $d$, i.e., we have
$$D_{r,d} \in \SH^>[\leqslant\! d] \;\backslash \;\SH^>[<\! d], \qquad 
D_{-r,d} \in \SH^<[\leqslant\! d] \;\backslash \;\SH^<[<\! d],\qquad
r\geqslant 1.$$
The symbols
of the elements $D_{\pm r,d}$ with $(r,d) \in \N^2_0$
freely generate $\overline{\SH}^\pm.$

(b) 
The order filtration on $\SH^{\cb}$ is determined by assigning to
$D_{r,d}$, $\cb_l$ the orders $d$ and zero.

(c) For $l_1,l_2 \geqslant 0$ we have
$\SH^{\cb}[\leqslant\! l_1] \cdot \SH^{\cb}[\leqslant l_2] \subseteq 
\SH^{\cb}[\leqslant l_1+l_2].$
\end{prop}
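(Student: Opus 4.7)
The plan is to establish part (a) first; parts (b) and (c) will then follow from (a) combined with the triangular decomposition of Proposition \ref{1.8:prop1} and a careful bookkeeping of the defining commutation relations of $\SH^\cb$.

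For (a), I would work in the associated graded algebra $\overline{\SH}^+$. By the proof of Lemma \ref{lem:poincare+}, there is a canonical identification of bigraded algebras $\overline{\SH}^+\cong F[X_1,X_2,\ldots,y_1,y_2,\ldots]^{\Sen_\infty}$; by Weyl's theorem this polynomial algebra is freely generated by the power sums $\sum_i X_i^r y_i^d$ with $(r,d)\in\N^2_0$. The key observation is that since $F[D_{l,0};\,l\in\N]$ is commutative, the iterated commutator definition of the order filtration automatically forces $\ad(z_{k+1})\,\ad(z_k)\cdots\ad(z_1)(u)=0$ for every $u\in\SH^+[\leqslant k]$ and every $z_i\in F[D_{l,0}]$; using this and the fact that the various $\ad(z_i)$ mutually commute (by Jacobi, since $F[D_{l,0}]$ is commutative), a Leibniz computation yields both the filtered algebra property $\SH^+[\leqslant a]\cdot\SH^+[\leqslant b]\subseteq\SH^+[\leqslant a+b]$ and the Poisson property $[\SH^+[\leqslant a],\SH^+[\leqslant b]]\subseteq\SH^+[\leqslant a+b-1]$, endowing $\overline{\SH}^+$ with a Poisson bracket. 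The symbols of $D_{r,0}$ and $D_{0,d}$ are respectively $\sum_i X_i^r$ and $\sum_i y_i^d/d$, so the Poisson computation (performed in the finite-rank quotients using Proposition \ref{prop:coco}(b)) gives $\{\sum_i y_i^{d+1}/(d+1),\sum_j X_j^r\}=r\sum_i X_i^r y_i^d$. This is the symbol of $D_{r,d}=[D_{0,d+1},D_{r,0}]$, and is a nonzero scalar multiple of a free generator of $F[X,y]^{\Sen_\infty}$. This simultaneously shows that $D_{r,d}$ has order exactly $d$ and that the collection $\{\bar D_{r,d}\,;\,(r,d)\in\N^2_0\}$ freely generates $\overline{\SH}^+$. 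The corresponding statement for $\overline{\SH}^-$ follows by applying the anti-involution $\pi$ of Proposition \ref{prop:generators/involution2}(b).

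Given (a), Proposition \ref{1.8:prop1} exhibits a PBW basis of $\SH^\cb$ consisting of ordered monomials in the $D_{\pm r,d}$, the $D_{0,d}$ and the $\cb_j$; the definition \eqref{E:filtration-SHc} of the filtration, combined with (a), then identifies $\SH^\cb[\leqslant l]$ with the span of those monomials of total order $\leqslant l$ (with $D_{\pm r,d}$ and $D_{0,d}$ contributing $d$ and the $\cb_j$ contributing $0$), which is (b). For (c), it suffices to check that straightening a product of two PBW monomials via the defining relations \eqref{E:rel0bis2}--\eqref{E:rel3bis2} preserves the total order. The commutators $[D_{0,l},D_{\pm 1,k}]=\pm D_{\pm 1,l+k-1}$ have RHS of order strictly less than $l+k$; the higher-rank variants $[D_{0,l},D_{\pm r,k}]$ derived inductively from \eqref{1.8:Dl0}--\eqref{E:defdrl} via Jacobi behave similarly.

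The delicate point, which I expect to be the main obstacle, is showing $E_{k+l}\in\SH^{\cb,0}[\leqslant k+l]$, which controls relation \eqref{E:rel3bis2}. I would attack this directly from the generating function, reading off $E_m$ as $\xi^{-1}$ times the coefficient of $s^{m+1}$ in
$$\exp\Big(\sum_{j\geqslant 0}(-1)^{j+1}\cb_j\phi_j(s)\Big)\exp\Big(\sum_{n\geqslant 0} D_{0,n+1}\varphi_n(s)\Big).$$
An inspection of the definitions shows that $\phi_j(s)$ starts at order $s^{j+1}$, while $\varphi_n(s)$ starts at order $s^{n+3}$, the putative $s^{n+1}$ contribution vanishing because $1-\xi-\kappa=0$. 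Therefore any monomial $\cb_{j_1}\cdots\cb_{j_p} D_{0,n_1+1}\cdots D_{0,n_q+1}$ that contributes to the $s^{m+1}$-coefficient must satisfy $\sum(j_i+1)+\sum(n_i+3)\leqslant m+1$, which forces the order $\sum(n_i+1)$ of its $D$-part to be at most $m-1$ if $q\geqslant 1$, and $0$ if $q=0$. In either case this is at most $m=k+l$, giving the required bound and completing (c).
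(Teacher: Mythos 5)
Your treatment of part (a) is essentially the paper's argument: the paper also passes to $\overline{\SH}^+\cong F[X,y]^{\Sen_\infty}$ and computes $\overline D^{(n)}_{r,d}=c_{r,d}\sum_i\Sb\,\bar X_i^r\bar y_i^d\,\Sb$; your Poisson-bracket phrasing of the symbol computation is a cosmetic variant of the same calculation, and the dimension-counting step for freeness is the same. Your handling of the ``delicate point'' for $E_m$ is also correct and matches the paper's estimate $E_l\in l(l-1)\kappa D_{0,l-1}+\SH^{\cb,0}[\leqslant l-2]$.

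However, there is a genuine gap in your argument for (c), and as a result (b) is not fully established either. Your claim is that it suffices to check that straightening a product of two PBW monomials via the defining relations \eqref{E:rel0bis2}--\eqref{E:rel3bis2} preserves assigned order, and you verify this only for the commutators $[D_{0,l},D_{\pm 1,k}]$ and $[D_{-1,k},D_{1,l}]$. The difficulty is that, by part (a), the PBW basis of $\SH^<$ (resp.\ $\SH^>$) whose assigned order matches the order filtration must be taken with respect to the full family $\{D_{-r,d}\}_{r\geqslant 1}$, not just $\{D_{-1,k}\}$. If you instead expand an element of $\SH^<[\leqslant m]$ as a linear combination of words in the $D_{-1,k}$'s alone, the individual words can have assigned order strictly greater than $m$: for instance $D_{-2,0}=[D_{-1,0},D_{-1,1}]$ has order $0$ but is only expressible as a combination of words of assigned order $\geqslant 1$. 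Thus straightening words in $D_{\pm 1,k}$ produces the wrong bound. But if you instead work with the correct PBW basis in $D_{\pm r,d}$, the straightening requires control of the ``mixed higher-rank'' commutators $[D_{-r,d},D_{r',d'}]$ with $r,r'\geqslant 2$, which are not among the defining relations of $\SH^\cb$ and which your proposal does not address at all; the phrase ``the higher-rank variants \dots\ behave similarly'' is asserted, not proved, and it is only stated for the $[D_{0,l},D_{\pm r,k}]$ commutators, which are the easy ones. This is precisely the obstruction that the paper circumvents by introducing the alternative generators $Y_{\pm r,d}$ of \eqref{yrd}: these are chosen so that each $\ad(Y_{r,d})$ decomposes as an iterated composition of $\ad(D_{0,l})$, $\ad(D_{\pm 1,1})$ and $\ad(D_{\pm 1,0})$, and the filtration estimates $\ad(D_{\pm 1,1})(Y_{\mp r,l})\in\SH^{\mp}[\leqslant l]$ and $\ad(D_{\pm 1,0})(Y_{\mp r,l})\in\SH^{\mp}[<l]$ can then be verified by induction on the construction of $Y$, using the seed identities $[D_{-1,1},D_{1,1}]=E_2$ and $[D_{-1,1},D_{1,0}]=[D_{-1,0},D_{1,1}]=E_1$. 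Some device of this kind --- either the $Y$'s, or an equally careful induction on rank for $\ad(D_{\pm r,d})$ --- is needed to make the straightening argument go through, and your proposal is missing it.
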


\begin{proof} It is known that $D^{(n)}_{0,d}$ is of order $d$ in $\SH^+_n$ 
for any $n$ hence $D_{0,d}$ is of order $d$ in $\SH^+$. Similarly, $D_{r,0}$ is of order zero. It follows
that $D_{r,d}$ is of order at most $d$. 
Let $\overline D_{\pm r,d}$ be the symbol of the element $D_{\pm r,d}$.
A direct computation shows that
\begin{equation}\label{Eq:starstar}
\overline{D}^{(n)}_{r,d}= 
c_{r,d} \sum_i \Sb\, \overline X_i^r \overline y_i^d\, \Sb\in \overline{\SH}^+_n,
\qquad c_{r,d} \in F^\times. 
\end{equation}
This means that $D^{(n)}_{r,d}$, 
hence also $D_{r,d}$, is of order $d$. Equation \eqref{Eq:starstar} 
also shows that the set
$\{\overline D^{(n)}_\xb\;;\; \xb \in \N^2_0\}$ 
generates $\overline{\SH}^+_n$, and therefore that 
$\{\overline{D}_\xb\;;\; \xb \in \N^2_0\}$ 
likewise generates $\overline{\SH}^+$. Comparing graded dimensions we get
that these same generators freely generate $\overline{\SH}^+$. This proves 
$(a)$ for $\SH^+$. The same proof works for $\SH^-$.

We now turn to part $(b)$. 
Let $\SH^{\cb}[\preccurlyeq\! l]$ temporarily denote 
the degree at most $l$ piece of $\SH^{\cb}$
with respect to the filtration defined by $(b)$. By $(a)$ we have 
$\SH^{\pm}[\preccurlyeq\! l] = \SH^{\pm}[\leqslant\! l]$ for any $l$, and
the same holds for $\SH^>$, $\SH^{\cb,0}$ and $\SH^<$. From the definition 
\eqref{E:filtration-SHc} we immediately have
$\SH^{\cb}[\leqslant\! l] \subseteq \SH^{\cb}[\preccurlyeq\! l]$. 
By construction, we have
\begin{equation}
\SH^{\cb}[\preccurlyeq\! l]=\{u_1 u_2 \cdots u_s\,;\,
u_i \in \SH^{\epsilon_i}[\leqslant\! l_i],\,\epsilon_i \in \{>,0,<\},\,l_1 + \cdots + l_s=l\}.
\end{equation} 
Thus, in order to 
show the 
inclusion $\SH^{\cb}[\preccurlyeq\! l] \subseteq \SH^{\cb}[\leqslant\! l]$, 
it is enough to prove that 
\begin{equation}\label{E:slurp0}
\SH^{\cb}[\leqslant\! l_1]\cdot\SH^{\cb}[\leqslant\! l_2]\subseteq
\SH^{\cb}[\leqslant l_1+l_2],
\end{equation}
which reduces to
\begin{equation}\label{E:slurp0.1}
\ad(D_{r,d})(\SH^{\cb}[\leqslant\! l]) \subset \SH^{\cb}[\leqslant\! l+d].
\end{equation}
Rather than using the elements $D_{r,d}$ we introduce a more convenient
set of elements. Define inductively, for $r \geqslant 2$ and $d \geqslant 1$,
\begin{equation}\label{yrd}
Y_{r,d}=\begin{cases} [D_{1,1}, Y_{r-1,d}] & \text{if}\; r -1\neq d\\ 
[D_{1,0}, Y_{r-1,d+1}] & \text{if}\; r-1=d, \end{cases} \qquad
Y_{-r,d}=\begin{cases} [D_{-1,1}, Y_{1-r,d}] & \text{if}\; r-1 \neq d\\ 
[D_{-1,0}, Y_{1-r,d+1}] & \text{if}\; r-1=d. \end{cases}
\end{equation}
We have $Y_{r,d} \in \SH^{>}$ 
and $Y_{-r,d} \in \SH^<$. One shows by arguments similar to
those used in (a) above that $Y_{r,d}$ is of order exactly $d$
and that the symbols $\overline Y_{r,d}$ freely generate $\overline{\SH}^{\pm}$.
We will now prove that
\begin{equation}\label{E:slurp1}
\ad(Y_{r,d})(\SH^{\cb}[\leqslant\! l]) \subset \SH^{\cb}[\leqslant\! l+d].
\end{equation}

Since $\ad(Y_{r,d})$ is an iterated commutator of operators 
$\ad(D_{0,l}),$ $\ad(D_{\pm 1, 1}),$ $\ad(D_{\pm 1, 0})$, it is enough to
prove \eqref{E:slurp1} for each of those. For $D_{0,l}$ this comes from the 
fact that $\SH^{\pm}$ are filtered algebras. For the
others it is enough to show that
\begin{equation}\label{E:slurp2}
\ad(D_{\pm 1 , 1}) (Y_{\pm r,l}) \in \SH^{\pm}[\leqslant\! l], \quad  
\ad(D_{\pm 1 , 0}) (Y_{\pm r,l}) \in \SH^{\pm}[<\! l] \quad 
r \geqslant 0,\quad l \geqslant 0,
\end{equation}
\begin{equation}\label{E:slurp3}
\ad(D_{\pm 1 , 1}) (Y_{\mp r,l}) \in \SH^{\mp}[\leqslant\! l], \quad  
\ad(D_{\pm 1 , 0}) (Y_{\mp r,l}) \in \SH^{\mp}[<\! l] \quad 
r > 0,\quad l \geqslant 0.
\end{equation}
Both \eqref{E:slurp2} and \eqref{E:slurp3} easily follow from the inductive 
definition of $Y_{r,d}$ and from the relations 
$$[D_{-1,1}, D_{1,1}]=E_2,\qquad 
[D_{-1,1}, D_{1, 0}]=[D_{-1, 0}, D_{1,1}]=E_1.$$
Statement $(c)$ was proved on the way. 
\end{proof}

\vspace{.15in}

\subsection{Wilson operators on $\SH^>$}
\label{sec:WilsonSH}
Recall that $\SH^0=F[D_{0,l}\,;\,l\geqslant 1]$. 
By \eqref{E:rel1bis2} the commutator with $D_{0,l}$
preserves $\SH^>$ and the operators 
$\ad(D_{0,l})$ commute with each other. This
extends uniquely to an action of the algebra $\SH^0$ on $\SH^>$ satisfying
\begin{equation}
D_{0,l} \bullet u=[D_{0,l},u], \qquad u \in \SH^>,\qquad l \geqslant 0.
\end{equation}
Recall that $\Lambda$
carries a comultiplication given by 
\begin{equation}
\Delta(p_l)=p_l \otimes 1 + 1 \otimes p_l,\qquad l\geqslant 1.
\end{equation}
We'll use Sweedler's notation $\Delta(x)=\sum x_1 \otimes x_2$.
We identify $\SH^0$ and $\Lambda$ via 
$D_{0,l} \mapsto p_l$. We hence have an action 
\begin{equation}
\bullet :\Lambda \otimes \SH^> \to \SH^>, 
\end{equation}
which we call the action by \textit{Wilson operators}.
For a field extension $F \subset K$ let 
$\bullet$ denote again the corresponding action of
$\Lambda_K$ on $\SH^>_K$.
The following lemma is left to the reader.

\begin{lem}\label{lem:WilsonSH}
(a) The action of $\Lambda$ on $\SH^>$ preserves each graded piece of $\SH^>$,

(b) the action of $\Lambda$ on the degree $n$ part of $\SH^>$ 
factors through $\Lambda_n$,

(c) the Wilson operators are compatible with the coproduct, namely
$$x \bullet (uv)=\sum (x_1 \bullet u) (x_2 \bullet v),
\qquad x \in \Lambda,\qquad u,v \in \SH^>.$$
\end{lem}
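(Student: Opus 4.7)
My plan is to use the faithful polynomial representation $\rho^+$ of $\SH^+$ on $\Lambda$ as the main tool throughout. Part (a) is immediate: each generator $D_{0,l}$ of $\SH^0$ has rank degree zero, so $\ad(D_{0,l})$ preserves the rank grading of $\SH^>$; extending multiplicatively, every element of $\Lambda \cong \SH^0$ acts on $\SH^>$ preserving each rank-graded component.

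For (b), the starting point is that $\rho^+(D_{0,l})$ is diagonal on the Jack basis with eigenvalue $p_l^\lambda := \sum_{s \in \lambda} c(s)^{l-1}$ on $J_\lambda$. For $u \in \SH^>[n]$, write $\rho^+(u)(J_\mu) = \sum_\lambda c_\mu^\lambda J_\lambda$, the sum taken over $\lambda \supset \mu$ with $|\lambda \setminus \mu| = n$. A direct computation yields
$$\rho^+(D_{0,l} \bullet u)(J_\mu) = \sum_\lambda c_\mu^\lambda (p_l^\lambda - p_l^\mu) J_\lambda = \sum_\lambda c_\mu^\lambda \, q_{l-1}\bigl(c(\lambda \setminus \mu)\bigr) J_\lambda,$$
where $q_k$ denotes the $k$-th power sum. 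The crucial point is that the $\mu$-dependence drops out: the coefficient involves only the contents of the $n$ added boxes. Iterating via the module identity $(p_{l_1} \cdots p_{l_k}) \bullet u = \ad(D_{0,l_1}) \circ \cdots \circ \ad(D_{0,l_k})(u)$ and applying the same single-commutator formula at each step, one obtains by induction on the polynomial degree of $P$
$$\rho^+(P \bullet u)(J_\mu) = \sum_\lambda c_\mu^\lambda \, \widetilde{P}\bigl(c(\lambda \setminus \mu)\bigr) J_\lambda, \qquad P \in \Lambda,$$
where $P \mapsto \widetilde P$ is the algebra homomorphism $\Lambda \to \Lambda_n$ determined by $p_l \mapsto q_{l-1}(t_1, \ldots, t_n)$ (with $q_0 = n$). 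Since the image of this map is all of $\Lambda_n$ and $\rho^+$ is faithful, the Wilson action on $\SH^>[n]$ factors through $\Lambda_n$.

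For (c), the Leibniz rule for commutators immediately gives
$$D_{0,l} \bullet (uv) = (D_{0,l} \bullet u)\,v + u\,(D_{0,l} \bullet v),$$
which is precisely the compatibility with the primitive coproduct $\Delta(p_l) = p_l \otimes 1 + 1 \otimes p_l$. Since the $p_l$'s generate $\Lambda$ as an algebra and $\Delta$ is an algebra morphism, a short induction on the number of factors in a monomial $p_{l_1} \cdots p_{l_k}$ propagates the identity $x \bullet (uv) = \sum (x_1 \bullet u)(x_2 \bullet v)$ to arbitrary $x \in \Lambda$. The main obstacle is part (b): one must carefully distinguish the module action (the iterated adjoint $\ad(D_{0,l}) \circ \ad(D_{0,m})$) from the naive commutator $[D_{0,l} D_{0,m}, u] = D_{0,l}[D_{0,m},u] + [D_{0,l},u] D_{0,m}$ with the product, and verify the telescoping cancellation that eliminates the $\mu$-dependent cross terms $q_{m-1}(\mu) q_{l-1}(\lambda \setminus \mu) + q_{l-1}(\mu) q_{m-1}(\lambda \setminus \mu)$ present in the latter but absent from the former. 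This cancellation is what forces the clean factorization through $\Lambda_n$.
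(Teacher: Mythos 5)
The paper leaves Lemma \ref{lem:WilsonSH} to the reader, so there is no in-paper proof to compare against. Your argument is correct and follows what is certainly the intended route, namely transporting the question along the faithful representation $\rho^+$ of $\SH^+$ on $\Lambda$.

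Part (a) is as immediate as you say: $\ad(D_{0,l})$ has rank degree zero, and multiplicativity extends this to all of $\Lambda\cong\SH^0$. Part (c) is the Leibniz identity $[D_{0,l},uv]=[D_{0,l},u]v+u[D_{0,l},v]$ together with the fact that $\Delta$ is an algebra morphism and the $p_l$ generate. The only substantive part is (b), and your key computation is exactly right: since $\rho^+(D_{0,l})$ is diagonal on Jack polynomials with eigenvalue $\sum_{s\in\lambda}c(s)^{l-1}$, and an element $u\in\SH^>[n]$ moves $J_\mu$ only to $J_\lambda$ with $\mu\subset\lambda$ and $|\lambda\setminus\mu|=n$ (a product of $n$ Pieri operators $D_{1,l}$), the commutator $[D_{0,l},u]$ rescales each matrix coefficient by $\sum_{s\in\lambda\setminus\mu}c(s)^{l-1}$, a symmetric polynomial in the $n$ added contents. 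Iterating and invoking faithfulness of $\rho^+$ shows the $\Lambda$-action factors through the algebra surjection $\Lambda\to\Lambda_n$ sending $p_l\mapsto p_{l-1}^{(n)}$ (with $p_0^{(n)}=n$, consistent with $p_1\bullet u=[D_{0,1},u]=nu$). This is a surjection onto $\Lambda_n$, though not the standard truncation $\pi_n$; the lemma as stated only asserts factorization through $\Lambda_n$, so this is fine, but it is worth being aware of the index shift when this is later compared with the Wilson action on $\U^{(r),>}_K$, which factors through $\pi_n$ on the nose.

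One small point: your closing remark about ``telescoping cancellation'' is a phantom worry. The module action of a product $p_{l_1}\cdots p_{l_k}$ is \emph{defined} as the composite $\ad(D_{0,l_1})\circ\cdots\circ\ad(D_{0,l_k})$ (not as $[D_{0,l_1}D_{0,l_2}\cdots,\,u]$), so there are no $\mu$-dependent cross terms to cancel; each step of the iteration directly multiplies the coefficient by another $p_{l_i-1}^{(n)}$ of the added contents, and nothing more needs to be checked.
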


\vspace{.15in}

\subsection{The Heisenberg and Virasoro subalgebras}
\label{sec:heis}
For $l\geqslant 1$ we define the following elements 
\begin{equation}
\label{bl}
\gathered
b_{l}=(-x)^{-l}D_{-l,0},\qquad b_{-l}=y^{-l}D_{l,0},\qquad b_0=E_1/\kappa,\\
H_l=(-x)^{-l}D_{-l,1}/l+(1-l)\cb_0\xi b_{l}/2,\qquad
H_{-l}=y^{-l}D_{l,1}/l+(1-l)\cb_0\xi b_{-l}/2,\\
H_{0}=[H_1,H_{-1}]/2.
\endgathered
\end{equation}
These elements will be important to define a Virasoro subalgebra 
in a completion of $\SH^{(r)}_K$.
In Appendix \ref{app:D} we prove the following.

\begin{prop} \label{prop:heis}
For $k,l\in\Z$ we have
\begin{equation}\label{heis}
[b_{l},b_{-k}]=l\,\delta_{l,k}\,\cb_0/\kappa,
\end{equation}
\begin{equation}\label{heis2}
[H_{-1},b_{l}]=-l\,b_{l-1},
\qquad
[H_{1}, b_{l}]=- l b_{l+1}.
\end{equation}
\end{prop}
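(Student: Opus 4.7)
The plan is to unwind the definitions in \eqref{bl} and reduce both identities to explicit commutator computations in $\SH^{\cb}$ using the defining relations \eqref{E:rel0bis2}--\eqref{E:rel3bis2}, the recursions \eqref{1.8:Dl0}, and a careful extraction of the low-order elements $E_0$, $E_1$, $E_2$ from \eqref{E:formulaklop2}. The key preliminary is the identity $\sum_{q\in\{1,-\xi,-\kappa\}}q=1-\xi-\kappa=0$, which forces $\varphi_l(s)=O(s^{l+3})$ for every $l\geqslant 0$; hence the second exponential in \eqref{E:formulaklop2} contributes $1+O(s^3)$, and expanding the first yields $E_0=\cb_0$ and $E_1=\tfrac{1}{2}(\cb_0^2-\cb_0)\xi-\cb_1\in F^{\cb}$, both central, and $E_2=(\text{central})+2\kappa D_{0,1}$, which is \emph{not} central. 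I will also use the grading identity $[D_{0,1},D_{\pm m,0}]=\pm m\,D_{\pm m,0}$, obtained from \eqref{E:rel1bis2}--\eqref{E:rel2bis2} and \eqref{1.8:Dl0} by a short induction.

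For \eqref{heis}, the content for $l,k\neq 0$ is the computation of $[D_{-l,0},D_{k,0}]$ with $l,k\geqslant 1$; the cases $l=0$ or $k=0$ reduce to the centrality of $b_0=E_1/\kappa$, while the mixed-sign cases reduce to the commutativity of $\{D_{\pm r,0}\}_{r\geqslant 1}$ in $\SH^\pm$, inherited via Proposition~\ref{P:triang}(b) from the commutativity of $p_l(X_1,\ldots,X_n)$-multiplications in $\SH^+_n$. The base case $[D_{-1,0},D_{1,0}]=E_0=\cb_0$ is immediate. Writing $D_{k+1,0}=\tfrac{1}{k}[D_{1,1},D_{k,0}]$ and applying Jacobi gives
\[
[D_{-1,0},D_{k+1,0}]=\tfrac{1}{k}[E_1,D_{k,0}]+\tfrac{1}{k}[D_{1,1},[D_{-1,0},D_{k,0}]],
\]
so the centrality of $E_1$ yields $[D_{-1,0},D_{k,0}]=0$ for $k\geqslant 2$; applying the anti-involution $\pi$ of Proposition~\ref{prop:generators/involution2}(b) gives the symmetric vanishing, and a double induction handles the general off-diagonal case. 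For the diagonal $l=k\geqslant 2$, the recursion $D_{-l,0}=[D_{-l+1,0},D_{-1,1}]/(l-1)$ combined with Jacobi reduces the computation to the noncentral part of $E_2$; the grading identity then produces the factor $l$ and $l-1$ powers of $\kappa$, yielding $[D_{-l,0},D_{l,0}]=l\kappa^{l-1}\cb_0$, which after the normalization by $(-x)^{-l}y^{-l}$ gives \eqref{heis}.

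For \eqref{heis2}, the factor $(1-l)$ in \eqref{bl} vanishes at $l=1$, so $H_1=(-x)^{-1}D_{-1,1}$ and $H_{-1}=y^{-1}D_{1,1}$. The relation $[H_1,b_l]=-l\,b_{l+1}$ then follows directly from $[D_{-l,0},D_{-1,1}]=l\,D_{-l-1,0}$ in \eqref{1.8:Dl0}. For $[H_{-1},b_l]$, I would compute $[D_{1,1},D_{-l,0}]$ by induction: the base case $[D_{1,1},D_{-1,0}]=-E_1$ translates (after normalization) into $-b_0$, and for $l\geqslant 2$ the expansion
\[
[D_{1,1},D_{-l,0}]=\tfrac{1}{l-1}\bigl([[D_{1,1},D_{-l+1,0}],D_{-1,1}]-[D_{-l+1,0},E_2]\bigr),
\]
combined with the inductive hypothesis and the noncentral part $2\kappa D_{0,1}$ of $E_2$ via the grading identity, produces exactly $-l\kappa D_{-l+1,0}$, which yields $[H_{-1},b_l]=-l\,b_{l-1}$. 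The extension to $l\in\Z$ (including $l\leqslant 0$) follows from the centrality of $b_0$ and an application of $\pi$.

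The main obstacle is the bookkeeping in the diagonal Heisenberg case $l=k\geqslant 2$ and in the derivation relation for $[D_{1,1},D_{-l,0}]$: one must verify inductively that the cumulative contribution of the noncentral term $2\kappa D_{0,1}$ in $E_2$, combined with the grading identity, produces exactly the factor $l$ on the right-hand side, with all further $D_{0,m}$-contributions ($m\geqslant 2$, coming from $E_{\geqslant 3}$) canceling out at each order. Controlling this cancellation through the iterated Jacobi expansions is the technical heart of the computation.
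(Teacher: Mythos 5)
Your argument follows essentially the same route as the paper's Appendix~D: the key ingredients are the centrality of $E_0$ and $E_1$, the fact that $E_2-2\kappa D_{0,1}$ is central, the grading identity $[D_{0,1},D_{l,0}]=lD_{l,0}$, the intermediate commutator $[D_{1,1},D_{-l,0}]=-\kappa l D_{1-l,0}$, and a Jacobi-based induction, which the paper packages as a chain of short lemmas before the final double induction. One small remark: your closing concern about contributions from $E_{\geqslant 3}$ is moot, since in the Jacobi expansion you wrote down the only $E$-term that ever appears is $E_2$ (through $[D_{1,1},D_{-1,1}]=-E_2$), so there is no higher-order cancellation to control.
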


\vspace{.05in}

\noindent Let $\mathscr{H}$ be the Heisenberg subalgebra of $\SH^{\cb}$ generated by $\{b_l\,;\,l \in \ZZ\}$
and $\cb_0$.
 
\vspace{.1in}


\begin{rem}
A direct computation yields, see Section \ref{A:comb},
$$E_0=\cb_0,\qquad 
E_1=-\cb_1+\cb_0(\cb_0-1)\xi/2,
\qquad E_2=\cb_2+\cb_1(1-\cb_0)\xi+
\cb_0(\cb_0-1)(\cb_0-2)\xi^2/6+2\kappa D_{0,1}.$$
For $l\geqslant 2$ we have also
\begin{equation}
\label{leadingterm}
E_l =l(l-1)\kappa D_{0,l-1}\ \mod\ \SH^{\cb,0}[\leqslant\! l-2].
\end{equation}
Recall that $\SH^{\cb,0}[\leqslant\! l-2]$ is the space of elements of 
$\SH^{\cb,0}$ of order at most $l-2$.
\end{rem}

\vspace{.2in}

\section{Equivariant cohomology of the Hilbert scheme}

\vspace{.1in}

In this section, we recall briefly the structure of the 
Hilbert scheme of points on the complex plane $\CC^2$, 
and we define a convolution algebra acting on 
its (equivariant, Borel-Moore) homology groups.
This is essentially a homology version of the 
K-theoretic construction given in \cite{SV2}, to which we
refer the reader for a more detailed treatment. 
All the geometric properties of the Hilbert scheme which
we use below may be found in \cite{ES}, \cite{V}.

\vspace{.15in}

\subsection{Equivariant cohomology and Borel-Moore homology} 
\label{sec:homology}
Let $G$ be a complex, connected, linear algebraic group 
and let $X$ be a $G$-variety, that is an algebraic variety
equipped with a rational $G$ action. 
By a variety we always mean a complex quasi-projective variety.
Let $H_G^i(X)$ and $H^G_i(X)$ be the equivariant 
cohomology group and the equivariant Borel-Moore homology group of $X$, 
with $\C$ coefficients. 
We write
\begin{equation}
H_G(X)=\bigoplus_i H^i_G(X), \qquad H^G(X)=\bigoplus_i H^G_i(X).
\end{equation}
Both of these spaces are graded modules over the graded ring 
$R_G=H_G(\bullet)$. Recall that $H^G_i(X)=H^{-i}_G(X,\Dc)$ where $\Dc$ is the
$G$-equivariant dualizing complex, see \cite[def.~3.5.1]{BL} or \cite[sec.~5.8]{GKM}.
Recall that
\begin{equation}
H_i^G(X)=H_{i+2\dim E-2\dim G}(X\times_GE),
\end{equation}
where $E\to E/G$ is a principal $G$-bundle such that $H^j(E)=0$ for $j=1,2,\dots,i$.
The cup product endows 
$H_G(X)$ with the structure of a graded commutative $R_G$-algebra. We denote by 
$[Y] \in H^G(X)$ the fundamental class of a $G$-stable subvariety $Y \subset X$.
If $Y$ is pure of dimension $d$ then the class $[Y]$ has the degree $2d$.
Let us now assume that $X$ is smooth and connected. 
Then the map $\alpha \mapsto\alpha\cdot[X]$, where $\cdot$ denotes the cap
product, defines a Poincar\'e 
duality isomorphism 
\begin{equation}\label{poincareduality}
H^i_G(X) \to H_{2\dim\;X-i}^G(X). 
\end{equation}
This allows us to define a 
product  on
$H^G(X)$, dual to the cup product on $H_G(X)$. 
If $E$ is a $G$-equivariant vector bundle over $X$ then we 
write
$$\cc^i(E) \in H^{2i}_G(X), \qquad
\cc_{i}(E)= \cc^i(E) \cdot [X] \in H^G_{2\dim\;X-2i}(X)$$
for the equivariant Chern classes of $E$. 
We write $\eu(E)=\cc_r(E)$ where $r$ is the rank of $E$.  
We call
$\eu(E)$ the {\it Euler class} of $E$.
We have 
$$\cc_1(E \oplus E')=\cc_1(E) + \cc_1(E'), \qquad 
\eu(E \oplus E')=\eu(E)\, \eu(E').$$
Fix a morphism $f:X\to Y$ of complex $G$-varieties. If $f$ is a proper map
there is a direct image homomorphism 
\begin{equation}\label{directimage}f_*:H^G_i(X)\to H^G_i(Y).
\end{equation}
If $f$ is a fibration or if $X$, $Y$ are smooth complex $G$-varieties
there is an inverse image 
homomorphism (given, in the second case, 
by the Poincar\'e duality isomorphism and the pull-back in
equivariant cohomology)
\begin{equation}\label{pullback}f^*:H^G_i(Y)\to H^G_{i+2d}(X),\qquad
d=\dim X-\dim Y.\end{equation}
Note that if $Y$ is smooth and $Z\subset Y$ is closed then
$H^G(Z)=H_G(Y,Y\setminus Z)$. So, if $X$, $Y$ are both smooth and $Z\subset Y$ 
is closed then the pull-back in equivariant cohomology gives a map
\begin{equation}
H^G(Z)=H_G(Y,Y\setminus Z)\to
H_G(X,X\setminus f^{-1}(Z))=H^G(f^{-1}(Z)).
\end{equation}
This maps fits into the commutative square
\begin{equation}
\begin{split}
\xymatrix{
H^G(Z)\ar[r]\ar[d]&H^G(f^{-1}(Z))\ar[d]\cr
H^G(Y)\ar[r]&H^G(X).
}
\end{split}
\end{equation}
Further, given a Cartesian square of smooth complex $G$-varieties
\begin{equation}\label{2.1:diagram1}
\begin{split}
\xymatrix{Y'\ar[d]_-{i'}&X'\ar[l]_-{g}\ar[d]^-{i}\\
Y&X\ar[l]_-{f},}
\end{split}
\end{equation}
where $i$, $i'$ are proper, 
we have the base change identity
$f^*i'_*=i_*g^*$.
If $Y'$, $X'$ are no longer smooth but $i$, $i'$ are closed embedding
then the inverse morphism $g^*$ is still well-defined and the base change
identity above holds.

\vspace{.1in}

\begin{ex}
\label{ex:2.1}
Assume that $T$ is a torus and that $X$ is a point. Then
$R_T=S(\mathfrak{t}^*)$, where $\mathfrak{t}$ is the Lie algebra of $T$. 
Let $E$ be a finite dimensional representation of $T$. Write it as a sum of characters 
$E=\chi_1 \oplus \cdots \oplus \chi_r$ with $\chi_\a : T \to \C^\times$. 
Then we have 
$\cc_l(E)=e_l(d\chi_1, \ldots, d\chi_r)$ and $d\chi_\a \in \mathfrak{t}^*$
is the differential of $\chi_\a$.
In particular, 
\begin{equation}
\cc_1(E)=\sum_\a d\chi_\a \in R^2_T, \qquad \eu(E)=
\prod_\a d\chi_\a \in R^{2r}_T.
\end{equation}
Note that, since the Euler class is multiplicative,  we may consider the element
$\eu(E)$ in $K_T,$ the fraction field of $R_T$, for an arbitrary virtual $T$-module $E$. 
For any characters $\chi$,  $\chi'$ of $T$, we may 
abbreviate 
$\chi^*=\chi^{-1}$ and $\chi\otimes\chi'=\chi\,\chi'.$
If $T=(\C^\times)^2$ we get 
\begin{equation}
R_T=\CC[x,y],
\end{equation} where 
$x=\cc_1(q)=dq$, $y=\cc_1(t)=dt$ and
$q$, $t$ are the characters of $T$ given by
\begin{equation} q(z_1, z_2)=z_1^{-1}, \qquad t(z_1, z_2)=z_2^{-1}.
\end{equation}
\end{ex}

\vspace{.2in}

\subsection{Correspondences}
We can now define the {\it convolution product} in equivariant homology.
Let $X_1, X_2, X_3$ be smooth connected algebraic $G$-varieties. Let us denote by
$\pi_{ij}~: X_1 \times X_2 \times X_3 \to X_i \times X_j$ the 
projection along the factor not named. 
If $X_1, X_2, X_3$ are proper, there 
is a map
\begin{equation}
\star~: H^G(X_1 \times X_2) \otimes H^G(X_2 \times X_3) \to H^G(X_1 \times X_3),\quad 
\alpha \otimes \beta \mapsto 
\pi_{13!}\big(\pi_{12}^*(\alpha) \cdot \pi_{23}^*(\beta)\big).
\end{equation}
If $X_1=X_2=X_3=X$ then the map $\star$ equips $H^G(X \times X)$ with the structure of an 
associative $R_G$-algebra. If  $X_1=X_2=X$ and $X_3=\bullet$ then we obtain an action 
of the $R_G$-algebra $H^G(X \times X)$ on the $R_G$-module $H^G(X)$.
If $X_1, X_2, X_3$ are not proper but there is a smooth closed $G$-subvariety $Z\subset X_1\times X_2$ 
such that
the projection $Z\to X_1$ is proper then any element $z\in H^G(Z)$ defines an  $R_G$-linear
operator 
\begin{equation}\label{correspondence}
H^G(X_2)\to H^G(X_1),\qquad \alpha\mapsto z\star\alpha=\pi_{1!}\big(z\cdot\pi_{2}^*(\alpha)\big).
\end{equation}
If the projection $Z\to X_1$ is not proper but $i:Z^c\to Z$ is the inclusion of a smooth closed $G$-subvariety 
such that $\pi_1\circ i$ is proper, then any element $z^c\in H^G(Z^c)$ defines an $R_G$-linear
operator 
\begin{equation}H^G(X_2)\to H^G(X_1),\qquad \alpha\mapsto z^c\star\alpha=
\pi^c_{1!}\big(z^c\cdot\pi_{2}^{c,*}(\alpha)\big),
\qquad
\pi_a^c=\pi_a\circ i,
\end{equation}
and the projection formula implies that $z^c\star\alpha=i_*(z^c)\star\alpha$.

\vspace{.1in}

\begin{rem}
The $R_G$-modules $H^G(X_1)$, $H^G(X_2)$ are graded 
by the \emph{homological} and \emph{cohomological} degrees, for which $H^G_i(X_a)$ has the 
degree $i$ and $2\dim X_a-i$ respectively.
Let $\deg$ denote the homological degree
and $\cdeg$ the cohomological degree. 
Then, if $z\in H^G_i(Z)$ then the convolution by $z$ is an homogeneous operator
for the (co-)homological degrees, and we have
\begin{equation}
\deg(z\star\bullet)=i-2\dim X_2,\qquad
\cdeg(z\star\bullet)=2\dim X_1-i.
\end{equation}
\end{rem}

\vspace{.2in}

\subsection{The Hilbert scheme}
Let $\Hin$ denote the Hilbert 
scheme parametrizing length $n$ subschemes of $\CC^2$.
By Fogarty's theorem it is a smooth irreducible variety of dimension $2n$.
By associating to a closed point of $\Hin$ its ideal sheaf we obtain 
a bijection (at the level of points)
$$\Hin(\CC)=\{I \subset \CC[X,Y]\;;\; 
I\;\text{is\;an\;ideal\;of\;codimension\;} n\}.$$
Let us denote by $S=\CC[X,Y]$ the ring of regular functions on
$\CC^2$. The tangent space $T_I \Hin$ at a closed point $I \in
\Hin(\CC)$ is canonically isomorphic to the vector space
$\Hom_S(I, S/I)$.

\vspace{.15in}

\subsection{The torus action on $\Hin$}
\label{sec:torushilb}
Consider the torus $T=(\C^\times)^2$.
The torus $T$ acts on $\B^2$ via
$(z_1, z_2) \cdot (u,v)=(z_1u, z_2v)$.
There is an induced action on $S$ given by
$(z_1,z_2) \cdot P(X,Y)=P(z_1^{-1}X, z_2^{-1}Y)$
and one on $\Hin$ such that
\begin{equation}
(z_1, z_2) \cdot I=\{P(z_1^{-1}X, z_2^{-1}Y)\; ;\; P(X,Y) \in I\},
\quad\forall I\in\Hin(\C).
\end{equation}
This action has a finite number of isolated fixed points, indexed by the set of partitions of the integer $n$.
To such a partition $\lambda \vdash n$ corresponds the fixed point 
$I_\lambda$ where
\begin{equation}
I_\lambda=\bigoplus_{s\not\in \lambda} \C X^{x(s)}Y^{y(s)}.
\end{equation}
When $I=I_\lambda$ is a $T$-fixed
point, there is an induced $T$-action on $T_I \Hin$.
In order to describe this action, we fix a few notations concerning $T$.
Consider the characters $q,t$  as in Example \ref{ex:2.1}.
For $V$ a $T$-module let $[V]$ be its class in the Grothendieck group of $T$. We
abbreviate  $T_\lambda=[T_{I_\lambda}\Hin]$. It is given by
\begin{equation}\label{E:21}
T_\lambda=\sum_{s \in \lambda}(t^{l(s)}q^{-a(s)-1}+t^{-l(s)-1}q^{a(s)}).
\end{equation}
We set $\eu_{\lambda}=\eu(T^*_{\lambda})$.

\vspace{.15in}

\subsection{The Hecke correspondence $\Hi_{n,n+1}$} 
\label{sec:nestedhilb}
Let $k \geqslant 0$.
The nested Hilbert scheme $\Hilb_{n,n+k}$ is the reduced closed
subscheme of $\Hin \times \Hi_{n+k}$ parametrizing pairs of ideals
$(I,J)$ where $J \subset I$. One defines the nested Hilbert scheme
$\Hi_{n+k,n}$ in a similar fashion. Of course $\Hilb_{n,n}$ is simply
the diagonal of $\Hin \times \Hin$. The schemes $\Hilb_{n,n+k}$ are
 smooth if
$k=0$ or $k=1$, see \cite{Cheah}. The tangent space at a point
$(I,J) \in \Hilb_{n,n+k}$ is the kernel of the obvious map
\begin{equation}\label{E:24}
\psi: \Hom_S(I,S/I) \oplus \Hom_S(J,S/J) \to \Hom_S(J,S/I).
\end{equation}
When $k=1$ the map $\psi$ is surjective.
The diagonal $T$-action on $\Hin \times \Hi_{n+k}$ preserves
$\Hilb_{n,n+k}$. The fixed points contained in $\Hilb_{n,n+k}$ are those
pairs $I_{\mu,\lambda}=(I_{\mu},I_{\lambda})$ for which 
$\mu \subset \lambda$. 
The character of the fiber at $I_{\mu,\lambda}$ of the normal bundle
to $\Hi_{n,n+1}$ in $\Hin\times\Hilb_{n+1}$ is
\begin{equation}\label{E:26}
\begin{split}
N_{{\mu},{\lambda}}=\sum_{s \in \mu} \big(
t^{l_{\mu}(s)}q^{-a_{\lambda}(s)-1}+t^{-l_{\lambda}(s)-1}q^{a_{\mu}(s)}\big).
\end{split}
\end{equation}
Of course, similar formulas hold for the nested Hilbert scheme
$\Hilb_{n+1,n}$.

\vspace{.15in}

\subsection{The tautological bundles}
\label{sec:taut1}
Let $\Theta_n \subset \Hin \times \B^2$ be the universal family and
let $p: \Hin \times\B^2 \to \Hin$ be the projection. The
\textit{tautological bundle} of $\Hin$ is the locally free sheaf
$\tau_{n}=p_*(\mathcal{O}_{\Theta_n})$. The fiber of
$\tau_n$ at a point $I \in \Hin(\C)$ is $S/I$. The
character of the $T$-action on its fiber at the fixed point
$I_{\lambda}$ is
\begin{equation}\label{E:23}
\tau_\lambda=\sum_{s
\in \lambda} t^{y(s)}q^{x(s)}.
\end{equation}
Next, let $\pi_1$, $\pi_2$ be the projections of $\Hin \times
\text{Hilb}_{n+1}$ to $\Hin$ and $\text{Hilb}_{n+1}$ respectively.
Over $\Hilb_{n,n+1}$ there is a surjective map $\pi_2^*(
\tau_{n+1}) \to \pi_1^*( \tau_{n})$. Over
the point $(I,J)$ it specializes to the map $S/J \to
S/I$. The kernel sheaf is  a line bundle,
which we call the \textit{tautological bundle} of $\Hilb_{n,n+1}$ and
which we denote by $\tau_{n,n+1}$. Over a $T$-fixed
point $I_{\mu,\lambda}$ its character is
\begin{equation}\label{E:27}
{\tau}_{\mu,\lambda}=t^{y(s)}q^{x(s)}
\end{equation}
where $s=\lambda \backslash \mu$ is the unique box of $\lambda$ not
contained in $\mu$. 
Finally, let $\pi_1$, $\pi_2$ be the projections of $\Hin \times \Hin$
to $\Hin$. Over $\Hilb_{n,n}$ we have the vector bundle
$\tau_{n,n} =\pi_2^*(\tau_{n})=\pi_1^*(
\tau_{n})$. We call it the \textit{tautological bundle}
of $\Hi_{n,n}$. Over a $T$-fixed point $I_{\lambda,\lambda}$ its
character is
${\tau}_{\lambda,\lambda}={\tau}_{\lambda}.$

\vspace{.15in}

\subsection{The algebra $\widetilde\E^{(1)}_K$ and the 
$\widetilde\E^{(1)}_K$-module $\widetilde\Lb^{(1)}_K$}
Recall that 
\begin{equation}
R_T=\C[x,y],\qquad x=dq,\qquad y=dt. 
\end{equation}
Consider the fraction field
\begin{equation}
K_T=\Frac(R_T)=\C(x,y).
\end{equation}
If no confusion is possible we abbreviate 
\begin{equation}\label{RK}\HT=R_T,\qquad K=K_T.\end{equation}
The direct image by the inclusion
$\Hin^T \subset \Hin$ 
yields a canonical isomorphism
\begin{equation}\bigoplus_{\lambda \vdash n} 
K [I_{\lambda}] = H^T(\Hin) \otimes_{\HT} K.
\end{equation}
Similarly, there is an isomorphism
\begin{equation}
\bigoplus_{\substack{\lambda \vdash n\\ \mu \vdash m}} K[I_{\lambda,\mu}] 
=H^T(\Hin \times \text{Hilb}_m) \otimes_{\HT} K,\qquad
I_{\lambda,\mu}=(I_{\lambda},I_{\mu}).
\end{equation} 
So, we may  define a  
$K$-algebra structure on 
\begin{equation}\label{E:defek}
\widetilde\E^{(1)}_K=\bigoplus_{k \in \Z} \prod_{n}H^T(\text{Hilb}_{n+k} \times \text{Hilb}_n) \otimes_{\HT} K,
\end{equation}
together with an action on the $K$-vector space
\begin{equation}\label{E:deflk}
\widetilde\Lb^{(1)}_K=\bigoplus_n\widetilde\Lb^{(1)}_{n,K}
=\bigoplus_{n} H^T(\Hin) \otimes_{\HT} K.
\end{equation}
In (\ref{E:defek}), the product ranges over all values of $n \geqslant 0$ such that $n+k \geqslant 0$. 
The integer $k$ provides
a $\Z$-grading on $\widetilde\E^{(1)}_K$, and the $\N$-grading on 
$\widetilde\Lb^{(1)}_K$ turns it into a faithful graded $\widetilde\E^{(1)}_K$-module.

\vspace{.15in}

\subsection{The algebra $\widetilde\U^{(1)}_K$} 
\label{sec:U1}
Let $i: \Hilb_{n+1,n} \to \text{Hilb}_{n+1} \times \Hin$ 
be the closed embedding. For notational convenience, 
the pushforward $i_*\cc_1(\tau_{n+1,n})$ of the Chern class of
the line bundle $\tau_{n+1,n}$ on $\Hilb_{n+1,n}$ will simply be denoted by 
$\cc_1(\tau_{n+1,n})$.
We will use similar notation for the tautological 
bundles $\tau_{n,n+1}$ on $\Hilb_{n,n+1}$ and 
$\tau_{n,n}$ on $\Hilb_{n,n}$.
For $l \geqslant 0$ we consider the following elements in $\widetilde\E^{(1)}_K$
\begin{equation}
f_{1,l}=\prod_{n \geqslant 0} \cc_1(\tau_{n+1,n})^l,
\qquad
f_{-1,l}=\prod_{n \geqslant 0} \cc_1(\tau_{n,n+1})^l,
\qquad
e_{0,l}=\prod_{n \geqslant 0} \cc_l(\tau_{n,n}).
\end{equation}
We used the convention that 
$\cc_0(\tau_{n,n})=n[\Hilb_{n,n}]$. 
Let $\widetilde\U^{(1)}_K$ be the 
$K$-subalgebra of $\widetilde\E^{(1)}_K$ generated by 
$$\{f_{-1,l}, e_{0,l}, f_{1,l}\; ;\; l \geqslant 0\}. $$
Observe that since the $e_{0,l}$'s are supported on the diagonal 
of the Hilbert scheme, 
their convolution product is given by the cup
product in the equivariant cohomology groups of the Hilbert scheme. 
Therefore, the subalgebra of $\widetilde\U^{(1)}_K$ generated by 
$\{e_{0,l}\;;\;l \geqslant 0\}$ is
commutative. Let us introduce another set of elements 
$\{f_{0,l}\; ;\; l \geqslant 0\}$ 
defined through the following formula
\begin{equation}
\label{e/f}
\sum_{l \geqslant 1}  f_{0,l}s^{l-1}=-\partial_s \log(e(s)), \qquad 
e(s)=1 + \sum_{k \geqslant 1} (-1)^k e_{0,k}s^k,\qquad
f_{0,0}=e_{0,0}.
\end{equation}
Under restriction, the canonical
representation of $\widetilde\E^{(1)}_K$ on $\widetilde\Lb^{(1)}_K$ 
yields a faithful representation of
$\widetilde\U^{(1)}_K$ on $\widetilde\Lb^{(1)}_K$.
We call it the \emph{canonical representation}
of $\widetilde\U^{(1)}_K$ on $\widetilde\Lb^{(1)}_K$.

\vspace{.1in}

\begin{rem}
\label{rem:f0l}
Given a splitting  into a sum of line bundles
$\tau_{n,n}=\phi_1 \oplus \cdots \oplus\phi_n$, we get
$$f_{0,l}=\prod_{n \geqslant 0}p_l(f_1,\dots,f_n),\qquad f_i=\cc_1(\phi_i),\qquad l\geqslant 0.$$
\end{rem}

\vspace{.15in}

\subsection{From $\widetilde\U^{(1)}_K$ to $\widetilde\SH^{(1)}_K$}
\label{sec:U/SH}
Consider the inclusion
\begin{equation}\label{FK}
F\to K,\qquad \kappa \mapsto -y/x.
\end{equation} 
Let $\widetilde\SH^{(1)}_{K}$ be the specialization of $\SH^{\cb}\otimes K$ 
at $\cb=(1, 0, \ldots)$.
It can be viewed as 
a specialization of the $K_1$-algebra $\SH^{(1)}_{K}$ at $\eps_1=0$. 
We set 
\begin{equation}
h_{0,l+1}=x^{-l}f_{0,l}, \qquad h_{1,l}=x^{1-l}yf_{1,l}, 
\qquad h_{-1,l}=x^{-l}f_{-1,l},\qquad l\geqslant 0.
\end{equation}
We can now state our first result, compare 
\cite[thm.~3.1]{SV2}. 
The proof is given in Section \ref{sec:prooflevel1}.

\begin{theo}\label{thm:SH/U1} There is a $K$-algebra isomorphism
$\Psi:\widetilde\SH^{(1)}_{K} \to\widetilde\U^{(1)}_K$ such that
$D_{\xb} \mapsto h_{\xb}$ for $\xb\in\mathscr E.$
\end{theo}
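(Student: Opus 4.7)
The plan is to construct $\Psi$ by matching generators, using the polynomial representation $\rho^+$ of $\SH^+$ as a bridge between $\widetilde\SH^{(1)}_K$ and $\widetilde\U^{(1)}_K$. First, equivariant localization identifies $\widetilde\Lb^{(1)}_K=\bigoplus_{\lambda} K\cdot[I_\lambda]$. I would fix a $K$-linear isomorphism $\Omega:\Lambda_K\to\widetilde\Lb^{(1)}_K$, $J_\lambda\mapsto c_\lambda\,[I_\lambda]$, where the scalar $c_\lambda\in K^\times$ is a specific product of the hook factors $h_\lambda(s)$, $h^\lambda(s)$ of Section \ref{sec:pol}, to be pinned down in the next step.

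Second, I would compute the action of each generator $h_\xb$ on the fixed-point basis. The element $h_{0,l+1}$, being supported on the diagonal of $\Hin\times\Hin$, acts diagonally; by Remark \ref{rem:f0l} together with the character formula \eqref{E:23} and the substitution $\kappa=-y/x$, its eigenvalue on $[I_\lambda]$ equals $\sum_{s\in\lambda}c(s)^l$, matching $\rho^+(D_{0,l+1})$ on $J_\lambda$ by \eqref{E:Horace}. For $h_{\pm 1,l}$, Atiyah--Bott localization on $\Hilb_{n,n+1}$, combined with the normal-bundle character \eqref{E:26} and the tautological line bundle character \eqref{E:27}, yields a matrix coefficient from $[I_\mu]$ to $[I_\lambda]$ (with $\mu\subset\lambda$, $|\lambda\setminus\mu|=1$) equal to $c(\lambda\setminus\mu)^l$ times a ratio of equivariant Euler factors. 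The normalization $c_\lambda$ is chosen precisely so that this ratio telescopes into Stanley's coefficient $\psi_{\lambda\setminus\mu}$, thereby reproducing the Pieri formula \eqref{E:Pieri2} for $\rho^+(D_{1,l})$; the case of $h_{-1,l}$ is symmetric and furnishes the dual Pieri rule that extends $\rho^+$ to an action of $\widetilde\SH^{(1)}_K$ on $\Lambda_K$.

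Third, once the intertwiner $\Omega$ identifies the action of $h_\xb$ on $\widetilde\Lb^{(1)}_K$ with that of $D_\xb$ on $\Lambda_K$, the defining relations \eqref{E:rel0bis2}--\eqref{E:rel3bis2} of $\widetilde\SH^{(1)}_K$ are automatically satisfied by the $h_\xb$ on $\widetilde\Lb^{(1)}_K$, and hence in $\widetilde\U^{(1)}_K$ itself, since the canonical representation of the latter on $\widetilde\Lb^{(1)}_K$ is faithful by construction. This gives a surjective $K$-algebra homomorphism $\Psi:\widetilde\SH^{(1)}_K\to\widetilde\U^{(1)}_K$. For injectivity I would invoke the triangular decomposition of Proposition \ref{1.8:prop1} together with the faithfulness of $\rho^+$ on $\SH^+$ (Proposition \ref{prop:rho+}): the $\widetilde\SH^{(1)}_K$-action on $\Lambda_K$ obtained by pull-back along $\Omega$ restricts to faithful actions of $\SH^>\otimes K$ and $\SH^<\otimes K$ via the raising and lowering Pieri rules, and the non-degeneracy of the commutator pairing \eqref{E:rel3bis2} forces any element of $\ker\Psi$, written in triangular form, to vanish summand by summand.

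The main obstacle is the combinatorial identity at the heart of the second step: matching the Atiyah--Bott residue for $h_{1,l}$ against Stanley's $\psi_{\lambda\setminus\mu}$ through a suitable hook-product normalization $c_\lambda$, with the correct powers of $x$ and $y$ absorbed in the renormalizations $h_{\pm 1,l}=x^{\pm 1-l}y^{\pm}f_{\pm 1,l}$. This is the cohomological degeneration, along the limit $t=q^{-\kappa}$ and $t,q\to 1$ alluded to in the introduction, of the Macdonald-polynomial computation of \cite[Thm.~3.1]{SV2}; while the structure of the argument is by now standard, the precise bookkeeping of equivariant Euler-class denominators against the hook expressions $h_\lambda(s)$, $h^\lambda(s)$, and the determination of the signs arising from the substitution $\kappa=-y/x$, constitute the technical crux of the proof.
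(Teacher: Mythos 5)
Your overall strategy matches the paper's (Section \ref{sec:prooflevel1}): identify $\Lambda_K$ with $\widetilde\Lb^{(1)}_K$, match $\rho^+$ with the convolution action by comparing Pieri formulas on the Jack/fixed-point basis, and then glue the two halves. A small simplification at the outset: no scalar $c_\lambda$ is needed, since under $\kappa=-y/x$ one has $\psi_{\lambda\setminus\mu}=L_{\mu,\lambda}(x,y)$ on the nose, so the map $J_\lambda\mapsto[I_\lambda]$ of \eqref{Phi} already intertwines -- this is exactly the observation used in the proof of Proposition \ref{P:proof1}.

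There are, however, two genuine gaps. First, in your step three you assert that the defining relation \eqref{E:rel3bis2}, namely $[D_{-1,k},D_{1,l}]=E_{k+l}$ with $E_l$ determined by a generating series, is ``automatically satisfied'' by the $h_\xb$. It is not. Once the raising and lowering Pieri rules determine $h_{\pm 1,l}$, one must still verify that $[h_{-1,k},h_{1,l}]$ is the diagonal operator whose eigenvalues match the generating function for $E_{k+l}$ at $\cb=(1,0,\dots)$. This is the combinatorial crux of the gluing and is a real computation: in the paper it is Proposition \ref{prop:glue/r} specialized at $r=1$, $e_a=0$ (Appendix \ref{app:B}), which rests on the rational-function identity of Lemma \ref{lem:kop5}. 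Nothing about generator-matching renders it automatic. Second, your injectivity argument is incomplete. Bijectivity of $\Psi^{\pm}$ on the half-algebras, together with the triangular decomposition of the source $\widetilde\SH^{(1)}_K$, do not by themselves imply injectivity of $\Psi$; one also needs that the multiplication map $\widetilde\U^{(1),>}_K\otimes_K\widetilde\U^{(1),0}_K\otimes_K\widetilde\U^{(1),<}_K\to\widetilde\U^{(1)}_K$ is an isomorphism. This is Proposition \ref{P:proof2}, and its proof (Appendix \ref{app:C}) is a genuine, separate argument involving the glued partitions $\lambda_1\circledast\lambda_2\circledast\lambda_3$ and the one-parameter automorphism groups $\tau^{\pm}_u$ of Lemma \ref{L:automut}. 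The appeal to ``non-degeneracy of the commutator pairing'' does not deliver this missing ingredient.
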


\noindent
We identify $\widetilde\Lb^{(1)}_K $ with
$\Lambda_K$  
by the $K$-linear map 
\begin{equation}\label{Phi}
\Lambda_K\to\widetilde\Lb^{(1)}_K, \quad  
J_\lambda \mapsto [I_\lambda].
\end{equation}
Via $\Psi,$ the representation
of $\widetilde\U^{(1)}_K$ on $\widetilde\Lb^{(1)}_K $ gives
a faithful representation $\widetilde\rho^{(1)}$ 
of $\widetilde\SH^{(1)}_K$ on $\Lambda_K $.

\begin{prop}\label{P:newrep} We have

(a) $\widetilde\rho^{(1)}(b_{-l})=$ multiplication by $ p_l$ and 
$\widetilde\rho^{(1)}(b_{l})=l\kappa^{-1}\partial_{p_l}$ for $l \geqslant 1,$

(b) $\widetilde\rho^{(1)}(D_{0,1})=\sum_i X_i \partial_{X_i}$
and $\widetilde\rho^{(1)}(D_{0,2})=\kappa\square.$
\end{prop}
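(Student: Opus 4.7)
The plan is to verify each identity by translating the operators via $\Psi$ from Theorem~\ref{thm:SH/U1} into convolutions on $\widetilde{\Lb}^{(1)}_K$, then to carry out the resulting calculations via equivariant localization. Part (b) is essentially immediate from the machinery and should be written first; part (a) requires a combinatorial identity on hook lengths and an inductive propagation.

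For part (b): Under $\Psi$, $D_{0,1}\mapsto h_{0,1}=f_{0,0}=\sum_n n[\Hilb_{n,n}]$, so the action on $[I_\lambda]$ is cup product with a class whose restriction at the fixed point equals $|\lambda|$ times the identity; this matches the eigenvalue of $\sum_i X_i\partial_{X_i}$ on $J_\lambda$. For $D_{0,2}\mapsto h_{0,2}=x^{-1}f_{0,1}=x^{-1}\cc_1(\tau_n)$, use the $T$-character of $\tau_n$ at $I_\lambda$ from \eqref{E:23} to see that its restriction is $\sum_{s\in\lambda}(x(s)\,x+y(s)\,y)$. Dividing by $x$ and substituting $y=-\kappa x$ gives the eigenvalue $\sum_{s\in\lambda}c(s)$, which by Remark~\ref{rem:LBinfty} combined with \eqref{E:Horace} is the eigenvalue of $\kappa\,\square$ on $J_\lambda$. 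Faithfulness of the action on the Jack basis then forces the operator identity $\widetilde{\rho}^{(1)}(D_{0,2})=\kappa\,\square$.

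For part (a), I would first establish the base case $l=1$. Under $\Psi$, $b_{-1}=y^{-1}D_{1,0}\mapsto x\,f_{1,0}$. By equivariant localization applied to the Hecke correspondence $\Hilb_{n+1,n}\subset\Hilb_{n+1}\times\Hilb_n$, the coefficient of $[I_\lambda]$ in $b_{-1}\star[I_\mu]$ (for $\mu\subset\lambda$, $|\lambda\setminus\mu|=1$) equals $x\,\eu(N_{\lambda,\mu})/\eu_\lambda$, expressed as a product over the boxes of $\mu$ and $\lambda$ via \eqref{E:26} and \eqref{E:21}. The key combinatorial identity
$$\frac{x\,\eu(N_{\lambda,\mu})}{\eu_\lambda}=\psi_{\lambda\setminus\mu}$$
is a direct but delicate rearrangement of the linear factors $\kappa l(s)+(a(s)+1)$ and $\kappa(l(s)+1)+a(s)$ grouped along the row and column of the added box; this matches the Pieri coefficient in Theorem~\ref{T:Pieri}. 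Hence $\widetilde{\rho}^{(1)}(b_{-1})=p_1\cdot=e_1\cdot$.

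For $l\geqslant 2$, I would propagate this identification by induction using \eqref{1.8:Dl0}, namely $D_{l+1,0}=l^{-1}[D_{1,1},D_{l,0}]$, in conjunction with the polynomial representation $\rho^+$. On one hand, $\rho^+(D_{l,0})=p_l\cdot$ by Proposition~\ref{prop:rho+} and $\rho^+(D_{1,1})\cdot J_\mu=\sum_\lambda c(\lambda\setminus\mu)\,\psi_{\lambda\setminus\mu}\,J_\lambda$ by Proposition~\ref{prop:1.16}. On the other hand, $\widetilde{\rho}^{(1)}(D_{1,1})=\Psi(D_{1,1})=y\,f_{1,1}\,\star$, and a localization computation entirely analogous to Step~1, using the Chern class formula $\cc_1(\tau_{n+1,n})|_{(I_\lambda,I_\mu)}=x(s)x+y(s)y=x\,c(s)$ for $s=\lambda\setminus\mu$, shows that $\widetilde{\rho}^{(1)}(D_{1,1})=y\,\rho^+(D_{1,1})$ on Jack polynomials. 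Assuming inductively that $\widetilde{\rho}^{(1)}(D_{l,0})=y^l\rho^+(D_{l,0})=y^l p_l\cdot$, the recurrence applied inside both $\SH^+$ and the representation gives $\widetilde{\rho}^{(1)}(D_{l+1,0})=y^{l+1}p_{l+1}\cdot$, hence $\widetilde{\rho}^{(1)}(b_{-l-1})=p_{l+1}\cdot$.

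Finally, for the annihilation operators $b_l$ with $l\geqslant 1$, I would not compute them geometrically but rather deduce them algebraically. By Proposition~\ref{prop:heis} the Heisenberg relations $[b_l,b_{-k}]=l\,\delta_{l,k}\,\cb_0/\kappa$ hold, and in the specialization $\cb=(1,0,\dots)$ this becomes $[b_l,b_{-k}]=l\,\delta_{l,k}/\kappa$. Combined with the vacuum annihilation $\widetilde{\rho}^{(1)}(b_l)\cdot[I_\emptyset]=0$ (which holds by degree, since $\Hilb_{-l}=\emptyset$ and $b_l$ lowers the grading by $l$), together with $\widetilde{\rho}^{(1)}(b_{-k})=p_k\cdot$ already known, this uniquely forces $\widetilde{\rho}^{(1)}(b_l)=l\kappa^{-1}\partial_{p_l}$ by the standard Leibniz argument on the polynomial basis of $\Lambda_K$. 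The main obstacle of the entire proof is the hook-length identity in Step~1 of part~(a); once that is in hand, the rest is either formal or a routine localization matching the polynomial representation already identified in Sections \ref{sec:pol}--\ref{sec:SH+infty}.
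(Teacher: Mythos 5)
Your overall strategy coincides with the paper's: identify $\widetilde\rho^{(1)}$ with $\rho^+$ on the positive half by matching the localization formulas against the Pieri rule, invoke Remark~\ref{rem:LBinfty} and Proposition~\ref{prop:rho+} for the explicit formulas, and then determine the annihilation operators $b_l$ algebraically from the Heisenberg relations and the vacuum-cyclicity of $\Lambda_K$. The Heisenberg/vacuum argument you give for $b_l$, $l\geqslant 1$, is essentially verbatim what the paper does. The difference is that you re-derive from scratch what the paper simply cites: your Step~1 for part (a), matching $x\,\eu(N_{\lambda,\mu})/\eu_\lambda$ against the Pieri coefficient $\psi_{\lambda\setminus\mu}$, is precisely the content of the adjacent Proposition~\ref{P:proof1} (the identity $L_{\mu,\lambda}=\psi_{\lambda\setminus\mu}$ deduced from \eqref{E:34} and \eqref{E:Pieri2}), which the paper quotes as an established fact. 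So you are reproving a result rather than using it; this is not wrong, but it is considerably longer than necessary.

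There is one genuine gap in your argument. In your inductive step you assert that $\widetilde\rho^{(1)}(D_{l,0})=y^l\,\rho^+(D_{l,0})$ and $\widetilde\rho^{(1)}(D_{1,1})=y\,\rho^+(D_{1,1})$, i.e.\ that $\widetilde\rho^{(1)}$ differs from $\rho^+$ on $\SH^+$ by powers of $y$. This directly contradicts Proposition~\ref{P:proof1}(b), which asserts that the map \eqref{Phi} intertwines $\rho^+$ with the canonical representation of $\widetilde\U^{(1),+}_K$ \emph{with no twist}: unwinding the definitions of $h_{1,l}=x^{1-l}y f_{1,l}$ against \eqref{E:34}, one finds $\widetilde\rho^{(1)}(D_{1,l})=\rho^+(D_{1,l})$ exactly, and hence $\widetilde\rho^{(1)}(D_{l,0})=\rho^+(D_{l,0})=p_l$. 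The factor $y^l$ you have inserted has the effect of forcing $\widetilde\rho^{(1)}(b_{-l})=p_l$ given the definition $b_{-l}=y^{-l}D_{l,0}$ in \eqref{bl}, but it is not justified by the intertwining, and introducing it makes the proof internally inconsistent with Proposition~\ref{P:proof1}. Moreover, the induction itself is redundant: once one knows that $\widetilde\rho^{(1)}$ restricted to $\SH^+_K$ agrees with $\rho^+$ under \eqref{Phi} (which is what Proposition~\ref{P:proof1} gives, and what you essentially re-establish on the generators), the formula $\widetilde\rho^{(1)}(D_{l,0})=p_l$ follows immediately from Proposition~\ref{prop:rho+} for all $l$ with no further argument.
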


\begin{proof} The representation $\widetilde\rho^{(1)}$ 
extends the representation $\rho^+$ 
in Proposition \ref{prop:rho+}, see the proof of
Proposition \ref{P:proof1} for details.
Thus, for $l \geqslant 1$, the operators 
$\widetilde\rho^{(1)}(b_{-l})$, $\widetilde\rho^{(1)}(D_{0,1})$ and 
$\widetilde\rho^{(1)}(D_{0,2})$ are as in the proposition above 
by Remark \ref{rem:LBinfty}.
Next, we have $\widetilde\rho^{(1)}(b_{l}) \cdot 1=0$
and the map 
\begin{equation}
K[b_{-l}\,;\,l\geqslant 1]\to \Lambda_K,\quad u \mapsto 
\widetilde\rho^{(1)}(u) \cdot 1
\end{equation}
is an isomorphism. Further, by Proposition \ref{prop:heis}
the elements $b_{l}$, $l\in \Z$, generate a Heisenberg algebra
of central charge $\kappa^{-1}$.
This forces $\widetilde\rho^{(1)}(b_{l})$ to
be given by the formula above. 
\end{proof}

\vspace{.1in}

The representation $\widetilde\rho^{(1)}$ 
extends both the representation $\rho^+$
of $\SH^{+}$ in Proposition \ref{prop:rho+}
and the standard Fock space of the Heisenberg algebra.
We'll call it the Fock space of $\widetilde\SH^{(1)}_K$.

\vspace{.2in}

\section{Equivariant cohomology of the moduli space of torsion free sheaves}
\label{sec:3}
The Hilbert scheme of $\mathbb{A}^2$ is isomorphic to the moduli space of
framed torsion free rank one coherent sheaves on $\mathbb{P}^2$. 
We now generalize the considerations above to higher ranks.

\vspace{.1in}

\subsection{The moduli space of torsion free sheaves}
Fix integers $r>0$, $n\geqslant 0$. Let $M_{r,n}$ be the moduli
space of framed torsion-free sheaves on $\mathbb P^2$ with rank $r$
and second Chern class $n$. More precisely,
$\C$-points of $M_{r,n}$ are isomorphism classes of pairs
$(\mathcal E,\Phi)$ where $\mathcal E$ is a torsion-free sheaf which
is locally free in a neighborhood of $\ell_\infty$ and
$\Phi:\mathcal E|_{\ell_\infty}\to\Oc_{\ell_\infty}^r$ is a framing
at infinity. Here $\ell_\infty=\{[x:y:0]\in\mathbb P^2\}$ is the
line at infinity. Recall that $M_{r,n}$ is a smooth variety of
dimension $2rn$ which admits the following alternative description.
Let $E$ be a $n$-dimensional vector space. 
We have $M_{r,n}=M_{r,E}^s/GL_E$ where
$M_{r,E}^s=N_{r,E}^s\cap M_{r,E}$, with
\begin{equation}\label{notation1}\gathered 
N_{r,E}^s=\{(a,b,\varphi,v)\in N_{r,E}\,;\,
(a,b,\varphi,v)\ \text{is\ stable}\},\\ 
M_{r,E}=\{(a,b,\varphi,v)\in N_{r,E}\,;\,[a,b]+v\circ\varphi=0\},\cr
N_{r,E}=\gen_E^2\times\Hom(E,\C^r)\times\Hom(\C^r,E).\endgathered
\end{equation}
The $GL_E$-action is given by
$g(a,b,\varphi,v)=(gag^{-1},gbg^{-1},\varphi g^{-1},gv)$.
The tuple $(a,b,\varphi,v)$ is {\it stable} iff there is no proper subspace
$E_1\subsetneq E$ which is preserved by $a,b$ and contains $v(\C^r)$.
From now on we may abbreviate $G=GL_E$ and $\gen=\gen_E$.

\vspace{.2in}

\subsection{The torus action on $M_{r,n}$}
Put  $D=(\C^\times)^{r}$ and $T=(\C^\times)^2$.
We abbreviate $\widetilde D=D\times T$.
Set also $x=\cc_1(q)$, 
$y=\cc_1(t)$ and $e_\a=\cc_1(\chi_{\a})$ for  $\a\in[1,r]$.
We have
\begin{equation}
R_r=R_{\widetilde D}=\C[x,y,e_1,\dots,e_r],\qquad
K_r=K_{\widetilde D}=\C(x,y,e_1,\dots,e_r).
\end{equation}
The characters $q=\chi_x$, $t=\chi_y$ and $\chi_\a=\chi_{e_\a}$ of 
$\widetilde D$ are given by
\begin{equation}\label{3.3}
q(h,z_1, z_2)=z_1^{-1}, \quad t(h,z_1, z_2)=z_2^{-1}, 
\quad \chi_\a(h,z_1, z_2)=h_\a^{-1},\quad
h=(h_1,h_2,\dots h_r).
\end{equation}
We set also $v=(qt)^{-1}$.
We equip the variety $N_{r,E}$ 
with the $\widetilde D$-action given by
\begin{equation}
(h,z_1,z_2)\cdot(a,b,\varphi,v)=(z_1a,z_2b, z_1z_2h\varphi,vh^{-1}).
\end{equation}
This action has a finite number of isolated fixed points
which are indexed by the set of $r$-partitions of $n$. 
To the $r$-partition
$\lambda$ 
corresponds a fixed point $I_{\lambda}$ such that the character 
$T_\lambda$ of the $\widetilde D$-module 
$T_{I_\lambda}M_{r,n}$
is given by \cite[thm.~2.11]{NY}
\begin{equation}\label{HR:1}
T_\lambda=\sum_{\a,\b=1}^r\sum_{s\in\lambda^{(\a)}} \chi_{\a}
\chi_{\b}^{-1}t^{l_{\lambda^{(\b)}}(s)}q^{-a_{\lambda^{(\a)}}(s)-1}+
\sum_{\a,\b=1}^r\sum_{s\in\lambda^{(\b)}} \chi_{\a}
\chi_{\b}^{-1}t^{-l_{\lambda^{(\a)}}(s)-1}q^{a_{\lambda^{(\b)}}(s)}.
\end{equation}

\vspace{.2in}

\subsection{The Hecke correspondence $M_{r,n,n+1}$}
\label{sec:hecker}
Now, we assume that $\dim(E)=n+1$.
The \emph{Hecke correspondence}
is the geometric quotient 
$M_{r,n,n+1}=Z_{r,E}^s/G,$
where $Z_{r,E}^s$ is the variety of all tuples $(a,b,\varphi,v, E_1)$
where $(a,b,\varphi,v)\in M_{r,E}^s$ and $E_1\subset \Ker\,\varphi$ 
is a line 
preserved by $a,b$. 
We define also
\begin{equation}
M_{r,n,n+1}^c=\{(a,b,\varphi,v,E_1)\in Z_{r,E}^s\,;\,a|_{E_1}=b|_{E_1}=0\}/G.
\end{equation}
Write $E_2=E/E_1$ and consider the induced linear maps
\begin{equation}\bar v=\pi\circ v,\qquad
\bar a,\bar b\in\gen_{E_2},\qquad\bar\varphi\in\Hom(E_2,\C^r).
\end{equation}
Let $\pi_1$, $\pi_2$ be the projections of $M_{r,n}\times M_{r,n+1}$
to $M_{r,n}$, $M_{r,n+1}.$ The following is well-known.

\begin{prop}
(a) The variety
$Z_{r,E}^s$ is a $G$-torsor over $M_{r,n,n+1}$.

(b) The variety $M_{r,n,n+1}$ is a smooth variety of
dimension $2rn+r+1$.  

(c) The closed subvariety $M_{r,n,n+1}^c$ is also smooth.

(d) The map
$(a,b,\varphi,v)\mapsto (\bar a, \bar b,\bar\varphi,\bar v),\,
(a,b,\varphi,v)$ is a closed immersion $M_{r,n,n+1}\subset
M_{r,n}\times M_{r,n+1}$. 
The restriction of $\pi_2$ to  $M_{r,n,n+1}$ is proper. 
The restriction of $\pi_1$ to  $M_{r,n,n+1}^c$ is proper. 
\end{prop}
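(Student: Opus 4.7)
The plan is to verify (a)--(d) in sequence, following the standard framework for framed moduli spaces of instantons, cf. \cite{SV2}.

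For (a), the key is to show $G$ acts freely on $M_{r,E}^s$: if $g \in G$ stabilizes a stable tuple $(a,b,\varphi,v)$, then the fixed subspace $E^g \subseteq E$ contains $v(\C^r)$ (from $gv=v$) and is preserved by $a$ and $b$ (from $gag^{-1}=a$, $gbg^{-1}=b$), so stability forces $E^g = E$ and $g=\mathrm{id}$. Since $Z_{r,E}^s$ is a $G$-stable closed subvariety of $M_{r,E}^s \times \mathbb{P}(E)$ (cut out by $E_1 \subset \Ker\varphi$ together with the $a,b$-invariance of $E_1$), the $G$-action on $Z_{r,E}^s$ is free as well. Luna's étale slice theorem applied to this free action, or equivalently a GIT quotient with the canonical linearization, then produces $M_{r,n,n+1}$ as an algebraic variety over which $Z_{r,E}^s$ is a principal $G$-bundle.

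For (b), I would analyze a three-term deformation complex $C_p^\bullet$ at a point $p=(a,b,\varphi,v,E_1)\in Z_{r,E}^s$: $C_p^{-1}=\gen_E$ encodes the infinitesimal $G$-action, $C_p^0$ parametrizes infinitesimal deformations of the tuple together with the line $E_1\in\mathbb{P}(E)$ (subject to the $a,b$-invariance of $E_1$ and to $E_1\subset\Ker\varphi$), and $C_p^1$ encodes the linearization of the moment-map relation $[a,b]+v\varphi=0$ together with the induced linearized relations on $E_1$. Freeness of the $G$-action yields $H^{-1}(C_p^\bullet)=0$, and a stability/duality argument yields $H^1(C_p^\bullet)=0$. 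Then $T_{[p]} M_{r,n,n+1}\cong H^0(C_p^\bullet)$ has dimension $\chi(C_p^\bullet)=2rn+r+1$ independent of $p$, forcing smoothness. For (c), the same argument applies with the additional constraints $a|_{E_1}=b|_{E_1}=0$ imposed, which remove two dimensions from $C_p^0$ and two from $C_p^1$; the vanishings $H^{-1}=H^1=0$ persist and $H^0$ retains constant dimension, so $M_{r,n,n+1}^c$ is smooth. Equivalently, the two functions $a|_{E_1},b|_{E_1}\in\End(E_1)\cong\C$ on $M_{r,n,n+1}$ have linearly independent differentials along their common zero locus, exhibiting $M_{r,n,n+1}^c$ as a smooth complete intersection of codimension two.

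For (d), the assignment $(a,b,\varphi,v,E_1)\mapsto((\bar a,\bar b,\bar\varphi,\bar v),(a,b,\varphi,v))$ descends to a morphism $M_{r,n,n+1}\to M_{r,n}\times M_{r,n+1}$. Injectivity on points is immediate since the second factor recovers $(a,b,\varphi,v)$ up to $G$ and then $E_1=\Ker(E\to E_2)$ is determined by compatibility with the first factor; closedness of the image is the incidence condition that the first factor be a quotient of the second by an $a,b$-invariant line in $\Ker\varphi$, and injectivity of the differential is a direct tangent-space calculation. Properness of $\pi_2|_{M_{r,n,n+1}}$ is immediate since its fibers are closed subvarieties of the projective space $\mathbb{P}(\Ker\varphi)$. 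Properness of $\pi_1|_{M_{r,n,n+1}^c}$ is the most delicate point: the fiber over $(\bar a,\bar b,\bar\varphi,\bar v)\in M_{r,n}$ parametrizes extensions of this tuple by a trivial $\C[a,b]$-module with compatible framing, a finite-dimensional moduli problem whose compactness is checked via the valuative criterion, using that the extension data (after quotienting by the residual $GL(E_1)$-symmetry) forms a projective variety. The main obstacle will be the tangent-complex vanishings in (b) and (c); once those are in place, the remaining assertions reduce to standard moduli-theoretic and projective-geometry arguments.
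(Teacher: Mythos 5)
The paper does not prove this statement; it is simply asserted as ``well-known'' at the start of Section 3.3. So there is nothing internal to compare against. Judged on its own terms, your outline identifies the correct ingredients (freeness of the $G$-action on the stable locus, an ADHM-type deformation complex for smoothness, the incidence-variety description for the closed embedding and for properness of $\pi_2$), and these do assemble into a proof along the standard lines of Nakajima's treatment of quiver varieties and Hecke correspondences.

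That said, two places in your argument are substantive gaps rather than omitted routine details. First, the vanishing $H^1(C_p^\bullet)=0$ in (b) is the crux of the smoothness claim and you only gesture at a ``stability/duality argument''; one really needs to set up the Serre/symplectic duality $H^1(C_p^\bullet)\cong H^{-1}(C_p^\bullet)^*$ (using the canonical symplectic form on the ambient ADHM data) and note that the right-hand side vanishes by freeness of the $G$-action together with the stability of the quotient tuple $(\bar a,\bar b,\bar\varphi,\bar v)$ — the latter being a nontrivial verification that you should record, since it is also what makes $\pi_1$ well-defined. Second, in (d) the assertion that ``the extension data\dots forms a projective variety'' is not true as stated: the extension data lives inside a linear space and is cut by the linearized moment map and an open stability condition, so it is a priori only quasi-projective. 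What actually makes $\pi_1|_{M_{r,n,n+1}^c}$ proper is the constraint $a|_{E_1}=b|_{E_1}=0$, which prevents the length-one quotient from escaping to infinity; this must be invoked in the valuative-criterion argument, e.g.\ by checking that a one-parameter family of lifts over a punctured disc extends over the puncture precisely because the subline $E_1$ is annihilated by $a$, $b$ and $\varphi$ (equivalently, the extra point is pinned at the origin). Without this condition the map is genuinely not proper, so the argument cannot go through at the level of generality you wrote it.

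A smaller point on (c): the ``smooth complete intersection'' shortcut requires that the differentials of $a|_{E_1}$ and $b|_{E_1}$ be everywhere independent along the common zero locus. Since every $\widetilde D$-fixed point of $M_{r,n,n+1}$ already lies in $M_{r,n,n+1}^c$, this cannot be deduced from generic behavior and should be checked explicitly, for instance at fixed points via the character formula for $N_{\mu,\lambda}$ in \eqref{HR:3}, or by falling back on the deformation-complex argument you outline.
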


The pair $I_{\mu,\lambda}=(I_\mu,I_\lambda)$ belongs to $M_{r,n,n+1}$ if and only if $\mu\subset\lambda$ 
and the $r$-partitions
$\mu$, $\lambda$ have weight $n$, $n+1$ respectively. 
Let 
$N_{\mu,\lambda}$ be the character of the fiber at $I_{\mu,\lambda}$
of the normal bundle 
(in $M_{r,n}\times M_{r,n+1}$) of $M_{r,n,n+1}.$ 
We set also $N_{\lambda,\mu}=N_{\mu,\lambda}$. 
Finally, we define
\begin{equation}
\eu_{\lambda}=\eu(T_{\lambda}^*),\qquad\eu_{\lambda,\mu}=\eu_\lambda\eu_\mu.
\end{equation}

\vspace{.2in}

\subsection{The tautological bundles}
\label{sec:tautr}
The \textit{tautological bundle} of $M_{r,n}$ 
is the $\widetilde D$-equivariant
bundle $\tau_{n}=M_{r,E}^s\times_G E.$
The character of the $\widetilde D$-module $\tau_n|_{I_\lambda}$ is
given by \cite[thm.~2.11]{NY}, \cite[Lemma~6]{VV1}
\begin{equation}\label{HR:2}
\tau_\lambda=\sum_\a
\sum_{s \in \lambda^{(\a)}} \chi_{\a}^{-1}
t^{y(s)}q^{x(s)}.
\end{equation}
The characters $T_\lambda$ and $\tau_\lambda$ are related by the following equation
\begin{equation}\label{HR:2.5}
T_\lambda=-(1-q^{-1})(1-t^{-1})\tau_\lambda\otimes\tau_\lambda^*+
\tau_\lambda\otimes W^*+v\tau_\lambda^*\otimes W,
\end{equation}
where $W=\chi_1^{-1}+\cdots+\chi_r^{-1}$ is the tautological 
representation of $D$. For $\mu\subset\lambda$ we have also 
\begin{equation}\label{HR:3}
\begin{split}
\aligned
N_{{\mu},{\lambda}} &=-(1-q^{-1})(1-t^{-1})\tau_\mu\otimes\tau_\lambda^*+
\tau_\mu\otimes W^*+v\tau_\lambda^*\otimes W-v,\\
N_{{\mu},{\lambda}}&=\sum_{\a,\b}\sum_{s\in\mu^{(\a)}} 
\chi_{\a} \chi_{\b}^{-1}t^{l_{\mu^{(\b)}}(s)}q^{-a_{\lambda^{(\a)}}(s)-1}
+\sum_{\a,\b}\sum_{s\in\lambda^{(\b)}} 
\chi_{\a} \chi_{\b}^{-1}t^{-l_{\lambda^{(\a)}}(s)-1}q^{a_{\mu^{(\b)}}(s)}-v.
\endaligned
\end{split}
\end{equation}

Over $M_{r,n,n+1}$ there is a 
surjective map $\pi_2^*( \tau_{n+1}) \to \pi_1^*(
\tau_{n})$. The kernel sheaf is a
line bundle called the \textit{tautological bundle} of $M_{r,n,n+1}$
which we denote by $\tau_{n,n+1}$. Over  $I_{\mu,\lambda}$ its character is
\begin{equation}\label{HR:5}
\tau_{\mu,\lambda}=
\chi_{\a}^{-1}
t^{y(s)}q^{x(s)},\quad \mu\subset\lambda.
\end{equation}
Here $s=\lambda^{(\a)} \backslash \mu^{(\a)}$ is the unique box of
$\lambda$ not contained in $\mu$. We define the Hecke correspondence
$M_{r,n+1,n}$ and the tautological bundle $\tau_{n+1,n}$ over it in
the obvious way, so that we get $\tau_{\lambda,\mu}=\tau_{\mu,\lambda}$.

\vspace{.2in}

\vskip3mm

\subsection{The algebra $\E^{(r)}_{K}$ and the 
$\E^{(r)}_{K}$-module $\Lb^{(r)}_{K}$}
\label{sec:3.5}
Consider the graded $\HT_r$-modules
\begin{equation}
\gathered
\Lb^{(r)}_{n}=H^{\widetilde D}(M_{r,n}),\qquad
\Lb^{(r)}=\bigoplus_{n \geqslant 0}\Lb^{(r)}_{n},
\\
\E^{(r)}_n=\prod_k H^{\widetilde D}(M_{r,n+k} \times M_{r,k})
,\qquad
\E^{(r)}=\bigoplus_{n \in\Z}\E^{(r)}_{n},
\endgathered
\end{equation}
where the product ranges over all integers $k\geqslant 0$ with $n+k\geqslant 0$.
They are known to be torsion free. 
We abbreviate
\begin{equation}
\gathered
H^{\widetilde D}(M_{r,n}\times M_{r,m})_K=
H^{\widetilde D}(M_{r,n}\times M_{r,m})\otimes_{\HT_r}K_r,\\
\E^{(r)}_{n,K}=\prod_k H^{\widetilde D}(M_{r,n+k} \times M_{r,k})_{K},\qquad
\E^{(r)}_K=\bigoplus_{n \in\Z}\E^{(r)}_{n,K}\\
\Lb^{(r)}_{n,K}=\Lb^{(r)}_{n}\otimes_{\HT_r}K_r,
\qquad
\Lb^{(r)}_K=\bigoplus_{n \geqslant 0}\Lb^{(r)}_{n,K}.
\endgathered
\end{equation}
The variety $M_{r,n}$ is not proper but it has a finite number of fixed
points by the $\widetilde D$-action. The direct image by the obvious inclusion
$M_{r,n}^{\widetilde D}\to M_{r,n}$ provides us with canonical isomorphisms
\begin{equation}\label{LrK}
\Lb^{(r)}_{n,K}=\bigoplus_{\lambda}K_r[I_{\lambda}],
\qquad\E^{(r)}_{n,K}=\prod_{k}\bigoplus_{\lambda,\mu}
K_r[I_{\lambda,\mu}],
\end{equation}
where $\lambda$, $\mu$ run over the set of 
$r$-partitions of $n+k$, $k$ respectively.
This allows us to define, by convolution, an associative multiplication
on $\E^{(r)}_{K}$ and an action of 
$\E^{(r)}_{K}$ on $\Lb^{(r)}_{K}$.

\vspace{.2in}

\subsection{The algebras $\U^{(r)}_{K}$ and $\SH^{(r)}_{K}$}
\label{sec:Ur}
Consider the inclusion $F\subset K$ in \eqref{FK}.
For $l\geqslant 0$ we define the following elements in $\E^{(r)}_{K}$
\begin{equation}
\gathered
f_{1,l}=\prod_{n\geqslant 0} \cc_1({\tau}_{n+1,n})^l,\quad
f_{-1,l}=\prod_{n\geqslant 0} \cc_1({\tau}_{n,n+1})^l,\quad
e_{0,l}=\prod_{n\geqslant 0} \cc_l(\tau_{n,n}).
\endgathered
\end{equation}
We define also the element $f_{0,l}$ through the relations
\eqref{e/f}.
We abbreviate
\begin{equation}
h_{0,l+1}=x^{-l}f_{0,l}, \qquad h_{1,l}=x^{1-l}yf_{1,l}, 
\qquad h_{-1,l}=(-1)^{r-1}x^{-l}f_{-1,l}.
\end{equation}
From \eqref{HR:2} and the formulas above
we get the following identity, compare \eqref{E:36},
\begin{equation}\label{3.6:rem}
f_{0,l}([I_\lambda])=\sum_\a\sum_{s\in\lambda^{(\a)}}
c_\a(s)^l\,[I_\lambda],
\qquad
c_\a(s)=x(s)\,x+y(s)\,y-e_\a.
\end{equation}
Recall the field $K_r=K(\eps_1,\dots,\eps_r)$ from Definition \ref{def:spe}.
We fix
\begin{equation}
\eps_\a=e_\a/x,\qquad \a\in[1,r].\end{equation}
We consider the $K_r$-subalgebras of $\E^{(r)}_{K}$ given by
\begin{itemize}
\item $\U^{(r)}_{K}$ is generated by 
$\{f_{-1,l}, e_{0,l}, f_{1,l}\; ;\; l \geqslant 0\},$
\item $\U^{(r),+}_K$ by $\{e_{0,l}, f_{1,l}\; ;\;l \geqslant 0\}$
and $\U^{(r),-}_K$ by $\{f_{-1,l}, e_{0,l}\; ;\; l \geqslant 0\}$,
\item $\U^{(r),<}_K$ by $\{f_{-1,l}\;;\;l \geqslant 0\}$
and $\U^{(r),>}_K$ by $\{f_{1,l}\;;\;l \geqslant 0\}$,
\item $\U^{(r),0}_K$ by $\{f_{0,l}\;;\;l \geqslant 0\}$,
\end{itemize}
\vspace{.05in}
The following is proved in Section \ref{sec:proofthmSH/Ur}.

\vspace{.05in}

\begin{theo}
\label{thm:SH/Ur} 
The assignment 
$D_{\xb} \mapsto h_{\xb}$ 
for $\xb\in\mathscr E$ extends to a $K_r$-algebra isomorphism
$\Psi~:\SH^{(r)}_{K}\to\U^{(r)}_{K}$ which takes
$\SH^{(r),>}_{K}$, $\SH^{(r),0}_{K}$, $\SH^{(r),<}_{K}$
into $\U^{(r),>}_{K}$, $\U^{(r),0}_{K}$, $\U^{(r),<}_{K}$. 
\end{theo}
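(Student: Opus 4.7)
The plan is to follow closely the strategy of the rank one case (Theorem \ref{thm:SH/U1}), the essential new ingredients being (i) the framing torus $D$ and its equivariant parameters $e_1,\ldots,e_r$, which will play the role of the central parameters $\cb_1,\cb_2,\ldots$, and (ii) the fact that $M_{r,n}$ is no longer proper, so one has to argue that each $h_{\xb}$ is supported on a subvariety proper over one of the factors --- here the Hecke correspondences $M_{r,n,n+1}$ and $M_{r,n+1,n}$ take care of this. First I would define $\Psi$ on generators by $D_{\xb}\mapsto h_{\xb}$ and check the defining relations \eqref{E:rel0bis2}--\eqref{E:rel3bis2}. Centrality of $\cb_l$ is automatic since its image $p_l(\eps_1,\ldots,\eps_r)$ lies in the base $K_r$. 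The commutativity of the $h_{0,l}$ is immediate from the fact that they are supported on the diagonal and act by cup product on $H^{\widetilde D}(M_{r,n})$. The semidirect relations $[h_{0,l},h_{\pm 1,k}]=\pm h_{\pm 1,l+k-1}$ should follow from a local calculation on the Hecke correspondences: restricting $\cc_l(\tau_{n+1,n+1})-\cc_l(\tau_{n,n})$ to $M_{r,n,n+1}$ produces, via \eqref{HR:5} and fixed-point localization, exactly the power of $\cc_1(\tau_{n,n+1})$ that yields the expected shift in $k$.

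The core of the argument is the relation $[h_{-1,k},h_{1,l}]=E_{k+l}$ specified by \eqref{E:rel3bis2}--\eqref{E:formulaklop2}. The plan is to compute both sides by torus localization on $\Lb^{(r)}_K$. The composition of the two Hecke correspondences (after commutation) is diagonal in the fixed-point basis $\{[I_\lambda]\}$, and its eigenvalue on $[I_\lambda]$ is a sum over boxes $s$ adjacent to $\lambda$ of $\tau_{\mu,\lambda}^{k+l}$ weighted by products of Euler class ratios. Using the formula \eqref{HR:3} for $N_{\mu,\lambda}$, which decomposes into three families of weights (from $\tau\otimes\tau^*$, $\tau\otimes W^*$, and $v\tau^*\otimes W$) plus a correction $-v$, the generating function in $s$ should factorize and, after resumming the geometric series, produce precisely the product of three $\varphi_l(s)$ factors together with a framing contribution $\exp(\sum_l(-1)^{l+1}\cb_l\phi_l(s))$. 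The extra exponential arising from the $W$-term in \eqref{HR:3} is the new feature compared to the rank one computation and explains why the central parameters specialize to $p_l(e_1/x,\ldots,e_r/x)$.

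For bijectivity, surjectivity follows from Proposition \ref{prop:generators/involution2}(a) together with the iterated commutator identities \eqref{1.8:Dl0} and \eqref{E:defdrl}, since the images of $D_{1,0}$, $D_{-1,0}$, $D_{0,2}$ and the $\cb_l$ visibly generate $\U^{(r)}_K$. For injectivity I would combine the triangular decomposition $\SH^{(r),>}_K\otimes\SH^{(r),0}_K\otimes\SH^{(r),<}_K\to\SH^{(r)}_K$ from Proposition \ref{1.8:prop1} with a parallel decomposition of $\U^{(r)}_K$ obtained from the rank grading on $\E^{(r)}$: elements of $\U^{(r),>}_K$ raise the rank, those of $\U^{(r),<}_K$ lower it, and those of $\U^{(r),0}_K$ preserve it. The identification of the three pieces reduces to showing that the action on the universal vector $[M_{r,0}]\in\Lb^{(r)}_K$ separates enough matrix coefficients; Proposition \ref{prop:faithfulr} referenced in the introduction, together with fixed-point localization, should give the required faithfulness.

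The main obstacle is the generating function identity \eqref{E:formulaklop2}. Both sides are intricate power series, and the matching requires a delicate combinatorial identity at each fixed point, where one must resum a double product involving all rim boxes $s$ adjacent to $\lambda$ into the three-parameter shape $\varphi_l(s)$. I expect this to be the technical heart of the argument, and I anticipate that the cohomological Hall algebra of the commuting variety introduced in Section 4 will be the essential tool: it should provide a structured way to package the products of Euler class ratios coming from localization, so that the desired factorization becomes an identity of elements in that Hall algebra rather than a direct combinatorial identity of power series.
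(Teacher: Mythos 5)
Your overall strategy is close in spirit to the paper's, but there is a genuine gap in the middle that changes where the cohomological Hall algebra must enter, and it is not where you expect.

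You propose to ``define $\Psi$ on generators by $D_{\xb}\mapsto h_{\xb}$ and check the defining relations \eqref{E:rel0bis2}--\eqref{E:rel3bis2}.'' But those relations only govern the interaction between the three pieces $\SH^{>}$, $\SH^{\cb,0}$, $\SH^{<}$; they say nothing about the relations internal to $\SH^{>}$ and $\SH^{<}$. The algebra $\SH^{>}$ is not given by a presentation anywhere in the paper --- it is defined as a subalgebra of $\prod_n \SH_n^{>}$ --- so there is no list of relations for you to verify for the elements $h_{1,l}$. This is exactly why the cohomological Hall algebra $\SC$ is needed, and its role is quite different from the one you envisage. The paper proves (Theorem \ref{thm:isomC/U}) that the geometric action of $\Cb'$ on $\Lb^{(r)}$ descends to an isomorphism $\eta:\SC_{K_r}\to\U^{(r),>}_K$, $\theta_l\mapsto f_{1,l}$, with the well-definedness controlled by Wilson-operator arguments (Proposition \ref{prop:torsionfreeWilson}); and independently, the rank-one theorem together with this same construction at $r=1$ gives $\SC_K\cong\SH^{>}_K$, $\theta_l\mapsto x^lD_{1,l}$ (Corollary \ref{cor:SC/SH}). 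Composing these two identifications produces the isomorphism $\Psi^{>}:\SH^{(r),>}_K\to\U^{(r),>}_K$ without ever writing down relations for $\SH^{>}$. Your plan skips this entirely; without it, $\Psi^{>}$ is not known to be well-defined.

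By contrast, the generating-function identity \eqref{E:formulaklop2} --- which you flag as ``the technical heart'' and where you anticipate $\SC$ coming to the rescue --- is in fact handled (Proposition \ref{prop:glue/r}, Appendix D) by a direct equivariant localization computation on the fixed-point basis $\{[I_\lambda]\}$, entirely parallel to the rank-one computation of Appendix B and not routed through $\SC$ at all. You correctly identify the $W$-term in \eqref{HR:3} as the source of the new framing contribution $\exp(\sum_l(-1)^{l+1}\cb_l\phi_l(s))$, and your surjectivity/injectivity plan via the triangular decomposition and Proposition \ref{prop:faithfulr} matches the paper. So the arc of the argument is right, but the cohomological Hall algebra belongs in the step where you construct $\Psi^{>}$ and $\Psi^{<}$, not in the step where you compute $E_{k+l}$.
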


\vspace{.1in}

\noindent
The map $\Psi$ in Theorem \ref{thm:SH/Ur} gives a representation
\begin{equation}\label{rhor}
\rho^{(r)}~:\SH^{(r)}_{K} \longrightarrow \End(\Lb^{(r)}_{K}).
\end{equation}
This representation is faithful by Proposition \ref{prop:faithfulr}.

\vspace{.1in}

\begin{rem}\label{rem:3.6A}
We have $\rho^{(1)}(D_{0,2})=x^{-1}f_{0,1}$. Therefore, comparing 
Proposition \ref{P:newrep} with \eqref{3.6:rem} we get the following formula
$\rho^{(1)}(D_{0,2})+\varepsilon_1\rho^{(1)}(D_{0,1})=\kappa\square.$
\end{rem}

\vspace{.1in}

\begin{rem}\label{rem:int}
Since $\Lb^{(r)}$ is torsion 
free as a $R_r$-module, we can view it as a $R_r$-submodule
of $\Lb^{(r)}_K$. Since the projection 
$\pi_1:M_{r,n+1,n}\to M_{r,n+1}$ is proper, we have
$f_{1,l}(\Lb^{(r)})\subset \Lb^{(r)}$. Since 
$$xy\,c_1(\tau_{n,n+1})\in \operatorname{Im}
\big\{H_*^{\widetilde D}(M_{r,n,n+1}^c)\to 
H_*^{\widetilde D}(M_{r,n,n+1})\big\},$$
we have also $xy\,f_{-1,l}(\Lb^{(r)})\subset \Lb^{(r)}.$
Finally, we have $f_{0,l}(\Lb^{(r)})\subset \Lb^{(r)}.$
Therefore, the operators 
\begin{equation}
\rho^{(r)}\big(x^{l-1}y^{-1}D_{1,l}\big),\qquad
\rho^{(r)}\big(x^{l+1}yD_{-1,l}\big),\qquad
\rho^{(r)}\big(x^lD_{0,l+1}\big),\qquad
l\geqslant 0,
\end{equation}
preserve the lattice $\Lb^{(r)}$.
More generally, using \eqref{E:defdrl}, 
we get that the operators 
\begin{equation}
x^{d-1}y^{-l}\rho^{(r)}(D_{l,d}),\qquad
x^{d-1+2l}y^{l}\rho^{(r)}(D_{-l,d}),\qquad
l\geqslant 0
\end{equation}
preserve also the lattice $\Lb^{(r)}$.
\end{rem}

\vspace{.1in}

\begin{rem}
\label{rem:homdeg}
The $R_r$-module $\Lb^{(r)}$ is bi-graded : it is first graded
by the $c_2$, for which the degree $n$ piece is $\Lb^{(r)}_{n,K},$ and then
by the (co-)homological degree, for which $H^{\widetilde D}_i(M_{r,n})$ has the 
degree $4rn-2i$ or $2i$ respectively. 
The operator
$\rho^{(r)}(D_{\xb})$ is homogeneous for the (co-)homological degrees.
For $\xb\in\mathscr E$ with $\xb=(\epsilon,d)$ we have
\begin{equation}\label{homdeg}
\cdeg\big(\rho^{(r)}(D_\xb)\big)=2\epsilon(r+1).
\end{equation}
More generally, using \eqref{E:defdrl}, the formula \eqref{homdeg} is again true for any
$\epsilon\in\Z$.
\end{rem}

\vspace{.15in}

\subsection{The pairing on $\Lb^{(r)}_K$}
\label{sec:pairing}

The cup product equips the $K_r$-vector space $\Lb^{(r)}_K$ with a
$K_r$-bilinear form
$(\bullet,\bullet)$ such that for each $r$-partitions $\lambda$, $\mu$ we have
\begin{equation}\label{pairing}
([I_\lambda],[I_\mu])=\delta_{\lambda,\mu}\eu_\lambda.
\end{equation}
Let $f^*$ denote the adjoint of a $K_r$-linear operator $f$ on $\Lb^{(r)}_K$
with respect to this pairing.
Using this anti-involution, we can prove the following.

\begin{prop}
\label{prop:involutionU}
The assignment
$h_{1,l} \mapsto h_{-1,l}$ and $h_{0,l} \mapsto h_{0,l}$ 
for $l \geqslant 0$ extends to an algebra anti-involution
$\U^{(r),+}_K\to\U^{(r),-}_K$ which takes $\U^{(r),>}_K$ onto $\U^{(r),<}_K$. 
\end{prop}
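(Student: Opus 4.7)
The plan is to obtain the required anti-involution by transporting the anti-involution $\pi$ of $\SH^\cb$ from Proposition \ref{prop:generators/involution2}(b) through the algebra isomorphism $\Psi: \SH^{(r)}_K \to \U^{(r)}_K$ of Theorem \ref{thm:SH/Ur}. Since the two key ingredients are already in hand, the argument reduces to checking what happens on generators.

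First I would note that $\pi$ fixes each central parameter $\cb_l$ of $\SH^\cb$, hence is $F^\cb$-linear. Consequently, $\pi$ descends along the specialization $\cb_l \mapsto p_l(\eps_1, \ldots, \eps_r)$ from Definition \ref{def:spe} to a $K_r$-linear anti-involution $\pi_r$ of $\SH^{(r)}_K = \SH^\cb \otimes_{F^\cb} K_r$ which still satisfies $\pi_r(D_{\pm 1,l}) = D_{\mp 1,l}$ and $\pi_r(D_{0,l}) = D_{0,l}$.

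Next I would transport $\pi_r$ via $\Psi$, setting $\tilde\pi = \Psi \circ \pi_r \circ \Psi^{-1}$. Using $\Psi(D_\xb) = h_\xb$ for $\xb \in \mathscr{E}$, one immediately obtains an anti-involution $\tilde\pi$ of $\U^{(r)}_K$ with $\tilde\pi(h_{\pm 1,l}) = h_{\mp 1,l}$ and $\tilde\pi(h_{0,l}) = h_{0,l}$. Since $\U^{(r),+}_K$ is generated (as a $K_r$-algebra) by $\{h_{0,l}, h_{1,l}\}$ and $\U^{(r),-}_K$ by $\{h_{0,l}, h_{-1,l}\}$, the restriction of $\tilde\pi$ yields the desired anti-isomorphism $\U^{(r),+}_K \to \U^{(r),-}_K$. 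The fact that $\U^{(r),>}_K$ is sent onto $\U^{(r),<}_K$ follows from the same analysis applied to their generating sets $\{h_{1,l}\}$ and $\{h_{-1,l}\}$.

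There is essentially no obstacle: the only point to verify is that $\pi$ descends along the specialization, and this is automatic since $\pi$ is the identity on the $\cb_l$'s. An alternative, more geometric route would consist of taking the adjoint of convolution operators with respect to the intersection pairing \eqref{pairing}; under this pairing, the adjoint of convolution by $z \in H^{\widetilde D}(X_1 \times X_2)$ is convolution by the swapped class $\sigma_*(z) \in H^{\widetilde D}(X_2 \times X_1)$, and the swap sends $M_{r,n+1,n}$ to $M_{r,n,n+1}$, interchanging $\tau_{n+1,n}$ and $\tau_{n,n+1}$. This yields $f_{1,l}^\ast = f_{-1,l}$ and $e_{0,l}^\ast = e_{0,l}$, but the normalizing factors $x^{1-l}y$ versus $(-1)^{r-1}x^{-l}$ in the definitions of $h_{\pm 1,l}$ do not match, so the geometric adjoint differs from the anti-involution we want by an explicit scalar twist. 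The algebraic route through $\SH^\cb$ avoids this bookkeeping entirely.
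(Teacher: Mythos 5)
Your main plan is circular. You propose to transport the anti-involution $\pi$ of $\SH^{\cb}$ through the isomorphism $\Psi: \SH^{(r)}_K \to \U^{(r)}_K$ of Theorem~\ref{thm:SH/Ur}. But Theorem~\ref{thm:SH/Ur} is proved only in Section~6, and its proof \emph{uses} Proposition~\ref{prop:involutionU}: the isomorphism $\Psi^< : \SH^{(r),<}_K \to \U^{(r),<}_K$ is obtained via Remark~\ref{opisom}, which explicitly invokes Proposition~\ref{prop:involutionU} to turn the cohomological-Hall-algebra isomorphism $\eta : \SC_{K_r} \to \U^{(r),>}_K$ into a map landing in $\U^{(r),<}_K$. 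Until Proposition~\ref{prop:involutionU} is established, one has a handle on the $>$ side of $\U^{(r)}_K$ (through $\SC$) but no independent description of the $<$ side, so there is nothing to conjugate $\pi$ against. Your argument assumes exactly what is being used to get to $\Psi$.

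The ``alternative, more geometric route'' you mention and then dismiss is in fact the paper's proof, and the bookkeeping issue you raise is not an obstruction. Using the localization formulas for the action of $f_{\pm 1,l}$ on fixed points, one computes directly that $f_{1,l}^* = f_{-1,l}$ and $h_{0,l}^* = h_{0,l}$, whence $h_{1,l}^* = (-1)^{r-1}xy\,h_{-1,l}$. The scalar $(-1)^{r-1}xy$ is independent of $l$, so one composes the adjoint $\ast$ (an anti-homomorphism $\U^{(r),+}_K \to \U^{(r),-}_K$) with the graded rescaling $u \mapsto ((-1)^{r-1}xy)^{\deg u}\,u$ on $\U^{(r),-}_K$, which is an algebra automorphism because the rank grading is additive and $h_{-1,l}$ sits in degree $-1$. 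The composite is an anti-homomorphism sending $h_{1,l} \mapsto h_{-1,l}$ and $h_{0,l} \mapsto h_{0,l}$, and it clearly carries $\U^{(r),>}_K$ onto $\U^{(r),<}_K$. This geometric route is the only non-circular one available at this stage, and the rescaling step you were trying to avoid is entirely harmless.
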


\begin{proof}
By \eqref{formdeg}, for any $r$-partitions $\lambda$, $\pi$ such that
$\lambda\subset\pi$ and $|\lambda|=|\pi|-1$, we have
\begin{equation}
\big([I_\pi],f_{1,l}[I_\lambda]\big)=
\cc_1\big(\tau_{\lambda,\pi})^l\eu(N_{\lambda,\pi}^*\big)=
\big(f_{-1,l}[I_\pi],[I_\lambda]\big).
\end{equation}
Thus, we get the following 
\begin{equation}\label{unitarity}
f_{1,l}^*=f_{-1,l},\qquad 
h_{1,l}^*=(-1)^{r-1}xy\,h_{-1,l}.
\end{equation}
Clearly, we have also
\begin{equation}\label{unitaritybis}
h_{0,l}^*=h_{0,l}.
\end{equation}
The proposition follows.
\end{proof}

\begin{prop}\label{prop:unitarity} 
For $(l,d)\in\N^2_0$ we have
$\rho^{(r)}(D_{l,d})^*=(-1)^{(r-1)l}x^ly^l\rho^{(r)}(D_{-l,d}).$
\end{prop}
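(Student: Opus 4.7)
The plan is to proceed by induction on $l \geqslant 0$, using the inductive definitions of $D_{\pm l, d}$ from \eqref{1.8:Dl0}--\eqref{E:defdrl} together with the adjointness relations already established for the generators in $\mathscr E$.

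First I would dispose of the base cases. The case $l=0$ (with $d\geqslant 1$) is immediate from \eqref{unitaritybis}, since $\rho^{(r)}(D_{0,d})^* = h_{0,d}^* = h_{0,d} = \rho^{(r)}(D_{0,d})$ and the scalar $(-1)^{0}x^0y^0 = 1$. The case $l=1$, $d\geqslant 0$ is equally immediate from \eqref{unitarity}, since $\rho^{(r)}(D_{1,d})^* = h_{1,d}^* = (-1)^{r-1}xy\,h_{-1,d} = (-1)^{r-1}xy\,\rho^{(r)}(D_{-1,d})$.

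Next I would treat the inductive step for $d = 0$, $l \geqslant 2$. From \eqref{1.8:Dl0} we have $D_{l,0} = [D_{1,1},D_{l-1,0}]/(l-1)$. Using the general identity $[A,B]^* = -[A^*,B^*]$, the base case for $(1,1)$, and the inductive hypothesis for $(l-1,0)$, we compute
\begin{align*}
(l-1)\rho^{(r)}(D_{l,0})^*
&= -\bigl[(-1)^{r-1}xy\,\rho^{(r)}(D_{-1,1}),\;(-1)^{(r-1)(l-1)}x^{l-1}y^{l-1}\rho^{(r)}(D_{-(l-1),0})\bigr]\\
&= (-1)^{(r-1)l}x^l y^l\,\bigl[\rho^{(r)}(D_{-(l-1),0}),\,\rho^{(r)}(D_{-1,1})\bigr].
\end{align*}
The remaining commutator equals $(l-1)\rho^{(r)}(D_{-l,0})$ by the second relation of \eqref{1.8:Dl0}, so we conclude $\rho^{(r)}(D_{l,0})^* = (-1)^{(r-1)l}x^l y^l\,\rho^{(r)}(D_{-l,0})$, as desired.

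Finally I would bootstrap from $d=0$ to arbitrary $d\geqslant 1$ using \eqref{E:defdrl}, which gives $D_{l,d} = [D_{0,d+1},D_{l,0}]$ and $D_{-l,d} = [D_{-l,0},D_{0,d+1}]$. Applying $(-)^*$ once more and using the already established self-adjointness of $\rho^{(r)}(D_{0,d+1})$ together with the formula just proved for $\rho^{(r)}(D_{l,0})^*$ yields
\begin{equation*}
\rho^{(r)}(D_{l,d})^*
= -\bigl[\rho^{(r)}(D_{0,d+1}),\,(-1)^{(r-1)l}x^l y^l\rho^{(r)}(D_{-l,0})\bigr]
= (-1)^{(r-1)l}x^l y^l\,\rho^{(r)}(D_{-l,d}).
\end{equation*}
There is essentially no obstacle here beyond keeping track of signs and of the normalizing scalars $1/(l-1)$; the content of the proposition is really that the generators' adjointness relations transport through the commutator definitions of $D_{l,d}$. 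The one point demanding care is that the scalar $(-1)^{(r-1)l}x^l y^l$ is multiplicative in $l$ under the bracket with $D_{\pm 1,1}$, which is exactly what makes the induction propagate cleanly.
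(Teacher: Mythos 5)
Your proof is correct and takes essentially the same approach as the paper's, which simply states that the case $(l,d)\in\mathscr E$ follows from \eqref{unitarity}, \eqref{unitaritybis} and that the general case follows by applying \eqref{1.8:Dl0}, \eqref{E:defdrl}. You have spelled out the two-step induction (first on $l$ with $d=0$ via \eqref{1.8:Dl0}, then on $d$ via \eqref{E:defdrl}) that the paper leaves implicit, and correctly tracked the signs coming from $[A,B]^*=-[A^*,B^*]$ and the multiplicative accumulation of the scalar $(-1)^{(r-1)l}x^ly^l$.
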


\begin{proof}
For $(l,d)\in\mathscr E$ this  is \eqref{unitarity}, \eqref{unitaritybis}.
The claim follows by applying \eqref{1.8:Dl0}, \eqref{E:defdrl}. 
\end{proof}

\vspace{.1in}

\begin{rem}
The cup-product in cohomology gives 
a $K$-bilinear form 
$(\bullet,\bullet)$ 
on $\widetilde\Lb^{(1)}_{K}$ 
such that
\begin{equation}
\aligned
([I_\lambda],[I_\mu])
&=\delta_{\lambda,\mu}(-1)^{|\lambda|}
\prod_{s\in\lambda}\big(xa(s)-y(l(s)+1)\big)
\big(x(a(s)+1)-yl(s)\big).
\endaligned
\end{equation}
Under the map \eqref{Phi} this pairing is taken to 
$
\bigoplus_{n\geqslant 0}(-y^2)^n (\bullet,\bullet)_{1/\kappa},
$
where $(\bullet,\bullet)_{1/\kappa}$ is Macdonald inner product \cite[chap.~VI, sec.~10]{Mac}.
\end{rem}

\vspace{.15in}

\subsection{Wilson operators on $\U^{(r),>}_{K}$ and $\Lb^{(r)}_{K}$}
The product of the $\HT_r$-algebra homomorphisms
\begin{equation}
R_{\widetilde G}\to H_{\widetilde D}(M_{r,n}),\qquad
e_l^{(n)}\mapsto \cc_l(\tau_n),\qquad n\geqslant 0
\end{equation}
gives a
$R_r$-algebra homomorphism
\begin{equation}
\Lambda_{R_r}\to\prod_n H_{\widetilde D}(M_{r,n}),\qquad
p\mapsto p(\tau)=(p(\tau_n)).
\end{equation}
Composing it with the cup product, we get a 
$\Lambda_{K_r}$-module structure $\bullet$ on $\Lb^{(r)}_{K}$ 
which preserves the direct summand $\Lb^{(r)}_{n,K}$ for each $n$.
The $\Lambda_{K_r}$-action on $\Lb^{(r)}_{n,K}$ factors through a
$\Lambda_{n,K_r}$-action via the map $\pi_n$. 
We define an action of $\Lambda_{K_r}$
on $\End(\Lb^{(r)}_{K})$ by setting
\begin{equation}
p_l \bullet u=[p_l(\tau), u],  \qquad u \in \End(\Lb^{(r)}_{K}),\qquad
l \geqslant 1.
\end{equation}
This action preserves each graded component of $\End(\Lb^{(r)}_{K})$.
Note that the $K_r$-subalgebra $\U^{(r),>}_{K}$ 
of $\End(\Lb^{(r)}_{K})$ carries an induced $\N$-grading, 
with $f_{1,l}$ being of degree one for all $l$.

\vspace{.1in}

\begin{ex}\label{EX:wilsonU} The restriction of the Wilson operator $p_l$ to 
$\Lb^{(r)}_{n,K}$ is the cup product with 
\begin{equation}
p_l(\tau_n)=p_l(\cc_1(\rho_1), \ldots, \cc_1(\rho_n)),
\end{equation}
if $\tau_n=\rho_1 + \cdots + \rho_n$ is a sum of invertible $\widetilde D$-equivariant bundles.
Thus
$p_l \bullet f_{1,k}$ is represented by the correspondence
$$\prod_{n \geqslant 0}\cc_1(\tau_{n,n+1})^k (p_l(\tau_{n+1})-p_l(\tau_n))
=\prod_{n \geqslant 0} \cc_1(\tau_{n,n+1})^k p_l(\tau_{n,n+1})
=\prod_{n \geqslant 0} \cc_1(\tau_{n,n+1})^{l+k}$$
from which we get 
\begin{equation}
p_l \bullet f_{1,k}=f_{1,l+k}.
\end{equation}
For $r$-partitions $\lambda$, $\mu$ with $\mu\subset\lambda$ and for any 
$p\in\Lambda_{K_r}$ we write also
\begin{equation}
\tau_{\mu,\lambda}=\tau_{\lambda}-\tau_{\mu}, 
\qquad
p(\tau_{\mu,\lambda})=p(\cc_1(\rho_1), \ldots, \cc_1(\rho_n)),
\end{equation}
if $\tau_{\mu,\lambda}=\rho_1 + \cdots + \rho_n$ is a sum of $\widetilde D$-characters.
\end{ex}

\vspace{.1in}

The following lemma is left to the reader.

\begin{lem}\label{L:WilsonUL}
(a) The action of $\Lambda_{K_r}$ on $\End(\Lb^{(r)}_{K})$ 
preserves $\U_{K}^{(r),>}$,

(b) the action of $\Lambda_{K_r}$ on the degree $n$ part
of $\U^{(r)}_{K}$ factors
through $\Lambda_{n,K_r}$,

(c) 
for $a \in \Lambda_{K_r},$ $ u,u' \in \U^{(r),>}_{K}$ and 
$v \in \Lb^{(r)}_{K}$ we have
$$a \bullet  u(v)=\sum (a_1 \bullet u)(a_2 \bullet v), \qquad 
a \bullet uu'=\sum (a_1 \bullet u)(a_2 \bullet u'),$$

(d) the $K_r$-algebra isomorphism 
$\Psi: \SH^{(r),>}_{K} \to \U^{(r),>}_{K}$ 
intertwines the $\Lambda_{K_r}$-actions.
\end{lem}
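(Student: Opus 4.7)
The plan is to reduce each part to a verification on the generators of $\U^{(r),>}_K$, exploiting that the adjoint action $[p_l(\tau),-]$ is a derivation on $\End(\Lb^{(r)}_K)$.

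For part (c), the operators $p_l(\tau)$, acting by cup-product, commute pairwise in $\End(\Lb^{(r)}_K)$, so the iterated adjoint action yields a well-defined $\Lambda_{K_r}$-action. The Leibniz rule for commutators gives
\begin{equation*}
[p_l(\tau), u(v)]=[p_l(\tau),u](v)+u([p_l(\tau),v]), \qquad [p_l(\tau), uu']=[p_l(\tau),u]u'+u[p_l(\tau),u'],
\end{equation*}
which is precisely the desired identity for $a=p_l$ under the primitive coproduct $\Delta(p_l)=p_l\otimes 1+1\otimes p_l$. The general case follows inductively from multiplicativity of $\Delta$ on monomials in the $p_l$'s.

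For part (a), $\U^{(r),>}_K$ is generated by $\{f_{1,k}\}_{k\geqslant 0}$, so it suffices to check that $p_l\bullet f_{1,k}\in\U^{(r),>}_K$. Example \ref{EX:wilsonU} extends to arbitrary rank via \eqref{HR:5}: on the Hecke correspondence $M_{r,n,n+1}$, $\tau_{\mu,\lambda}$ is the single character $\chi_\alpha^{-1}t^{y(s)}q^{x(s)}$, so $p_l(\tau_{\mu,\lambda})=\cc_1(\tau_{\mu,\lambda})^l$ and $p_l\bullet f_{1,k}=f_{1,l+k}$. Combined with part (c), this gives $p_l\bullet u\in\U^{(r),>}_K$ for all $u\in\U^{(r),>}_K$ by induction on the degree of $u$ as a polynomial in the $f_{1,k}$'s, and iterated application then extends the conclusion to all of $\Lambda_{K_r}$.

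For part (b), fix $u\in\U^{(r),>}_K[n]$; since $u$ is a polynomial in the $f_{1,k}$'s, its matrix coefficients $u_{\lambda,\mu}$ in the fixed-point basis are supported on nested pairs $\mu\subset\lambda$ with $|\lambda|-|\mu|=n$. Writing $u[I_\mu]=\sum_\lambda u_{\lambda,\mu}[I_\lambda]$, a direct computation yields $(p\bullet u)_{\lambda,\mu}=u_{\lambda,\mu}(p(\tau_\lambda)-p(\tau_\mu))=u_{\lambda,\mu}p(\tau_{\mu,\lambda})$ for $p\in\Lambda_{K_r}$, and iteratively $(f\bullet u)_{\lambda,\mu}=u_{\lambda,\mu}f(\tau_{\mu,\lambda})$ for any $f\in\Lambda_{K_r}$. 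By \eqref{HR:2}, $\tau_{\mu,\lambda}=\tau_\lambda-\tau_\mu$ is a sum of exactly $n$ torus characters, one per box of $\lambda\setminus\mu$, so $f(\tau_{\mu,\lambda})$ factors through $\pi_n:\Lambda_{K_r}\to\Lambda_{n,K_r}$.

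Part (d) is the main obstacle. Applying the standard generating-function manipulation of \eqref{e/f} identifies $f_{0,l}$ with the cup-product action of $p_l(\tau)$ for $l\geqslant 1$, so $\Psi(D_{0,l+1})=h_{0,l+1}=x^{-l}p_l(\tau)$. This matches the two identifications of $\Lambda_{K_r}$ into the diagonal subalgebras $\SH^{(r),0}_K$ and $\U^{(r),0}_K$ compatibly under $\Psi$. The intertwining property then reduces to checking equality of actions on the generators $D_{1,k}$ of $\SH^{(r),>}_K$: the commutator relation \eqref{E:rel1bis2} in $\SH^\cb$ and the identity $p_l\bullet f_{1,k}=f_{1,l+k}$ from part (a) correspond under the explicit formula $\Psi(D_{1,k})=h_{1,k}=x^{1-k}y f_{1,k}$, which may be verified directly using the computation of commutators in $\U^{(r)}_K$ established in parts (a)--(c).
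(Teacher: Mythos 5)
The paper leaves this lemma to the reader, so there is no official proof to compare against; I can only assess your argument on its merits. Parts (a), (b), (c) are correct and along the expected lines (in (b), note that the identity $p(\tau_\lambda)-p(\tau_\mu)=p(\tau_{\mu,\lambda})$ is a feature of power sums only; as you say, the formula $(f\bullet u)_{\lambda,\mu}=u_{\lambda,\mu}f(\tau_{\mu,\lambda})$ for general $f$ then follows from multiplicativity of the $\bullet$-action, so the conclusion is fine).

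Part (d) has a genuine gap. You assert that $\Psi(D_{0,l+1})=x^{-l}p_l(\tau)$ ``matches the two identifications of $\Lambda_{K_r}$... compatibly under $\Psi$,'' but it does not. Section 1.10 declares the identification $D_{0,l}\mapsto p_l$, so on the $\SH$-side the Wilson action is $p_l\bullet u=[D_{0,l},u]$, giving $p_l\bullet D_{1,k}=D_{1,l+k-1}$ by \eqref{E:rel1bis2}. On the $\U$-side, $p_l\bullet f_{1,k}=f_{1,l+k}$. Applying $\Psi(D_{1,k})=x^{1-k}yf_{1,k}$ one finds
$$\Psi(p_l\bullet D_{1,k})=\Psi(D_{1,l+k-1})=x^{2-l-k}yf_{1,l+k-1},\qquad
p_l\bullet\Psi(D_{1,k})=x^{1-k}yf_{1,l+k},$$
which differ both by an index shift and a power of $x$. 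The ``direct verification'' you invoke at the end is precisely where this fails. The underlying issue is that the identification $D_{0,l}\mapsto p_l$ in Section 1.10 is itself inconsistent with Lemma \ref{lem:WilsonSH}(b) (e.g.\ it gives $p_2\bullet D_{1,0}=D_{1,1}\ne D_{1,0}=p_1^2\bullet D_{1,0}$ in degree one, so the action would not factor through $\Lambda_1$); the normalization that makes both that lemma and part (d) here true is $p_l\leftrightarrow x^lD_{0,l+1}$, with $\Psi(x^lD_{0,l+1})=p_l(\tau)$. Under that corrected identification, $p_l\bullet D_{1,k}=x^lD_{1,l+k}$ and $\Psi(p_l\bullet D_{1,k})=x^{1-k}yf_{1,l+k}=p_l\bullet\Psi(D_{1,k})$ as wanted. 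You need to flag this renormalization explicitly and carry out the resulting bookkeeping rather than asserting the identifications agree.
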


\vspace{.1in}

For an element $u \in \U^{(r),>}_{K}$ and for $r$-partitions 
$\lambda,$ $\mu$ let 
$\langle \lambda \;;\; u \;;\; \mu \rangle$ be the coefficient of 
$[I_{\lambda}]$ in $u( [I_{\mu}])$. This coefficient is zero
unless $\mu \subset \lambda$. For $p \in \Lambda_{K_r}$ we have
\begin{equation}
\langle \lambda \;;\; p \bullet u \;;\; \mu \rangle=p(\tau_{\mu,\lambda}) \langle \lambda \;;\; u \;;\; \mu \rangle.
\end{equation}
We will say an element $p \in \text{Frac}(\Lambda_{n,K_r})$ is \emph{regular} 
if it is regular at $\tau_{\mu,\lambda}$
for any $\lambda, \mu$ with $|\lambda \backslash \mu|=n$. 
If $p$ is regular then its action on $\U^{(r),>}_{K}$ is well-defined. Indeed, it is well-defined on any operator 
$\gamma \in \End(\Lb^{(r)}_K)$ satisfying
$$\langle \lambda \;;\; \gamma \;;\; \mu \rangle \neq 0 \Rightarrow \mu \subset \lambda.$$

We now provide an explicit description of the action 
of some element of
$\U^{(r),>}_{K}$ on $\Lb^{(r)}_{K}$ in terms of Wilson operators. 
For this we define a surjective $K_r$-linear map
\begin{equation}
\iota : K_r[z_1, \ldots, z_n] \to \U^{(r),>}_{K}, 
\qquad z_1^{l_1} \cdots z_n^{l_n} \mapsto f_{1,l_1} \cdots f_{1,l_n}
\end{equation}
and a twisted symmetrization map
\begin{equation}
\begin{split}
\varpi_n : \begin{cases}K_r[z_1, \ldots, z_n] &\to 
K_r(z_1, \ldots, z_n)^{\mathfrak{S}_n}=\text{Frac}(\Lambda_{n,K_r})\\
P(z_1, \ldots, z_n) &\mapsto 
\text{SYM}_n\big(g(z_1, \ldots, z_n) P(z_1, \ldots, z_n)\big)
\end{cases}
\end{split}
\end{equation}
where $\SYM_n$
is the standard symmetrization map 
\begin{equation}
\SYM_n:\quad
K_r[z_1, \ldots, z_n] \to K_r[z_1, \ldots, z_n]^{\mathfrak{S}_n},\qquad
P \mapsto \sum_{\sigma} \sigma \cdot P,\end{equation} and where 
\begin{equation}
g(z_1, \ldots, z_n)=\prod_{i<j} g(z_i-z_j),
\qquad
g(z)=\frac{(z+x)(z+y)}{z(z+x+y)}.
\end{equation}
For $n\geqslant 1$ consider the element 
$\gamma_n$ in $\E^{(r)}_{n,K}$ given by
\begin{equation}
\gamma_n=\prod_{\mu \subset \lambda} a_{\mu,\lambda} 
\eu_{\mu}^{-1} [I_{\lambda\mu}],
\end{equation}
\begin{equation}\label{E:defclassfond}
a_{\mu,\lambda}=\eu\big( (1-q)(1-t) ( \tau^*_{\mu,\lambda}
\otimes \tau_\lambda) - \tau^*_{\mu,\lambda} \otimes W -nv^{-1}\big),
\end{equation}
where the product ranges over all $r$-partitions $\lambda,$ $\mu$ 
such that $\mu\subset\lambda$, $|\lambda|=|\mu|+n$.
This element
gives rise to an operator of degree $n$ in 
$\End(\Lb^{(r)}_{K})$. Let $\gamma_n$ denote also this operator. It does not belong to 
$\U^{(r),>}_{K}$ unless $n=1$ (then it is the product of the
fundamental classes of the correspondences $M_{r,k,k+1}$ for $k\geqslant 0$).

\begin{lem}\label{L:shuffleWilson} For 
$P \in K_r[z_1, \ldots, z_n]$ the element $\varpi_n(P)$ is regular and 
$\iota (P)=\varpi_n(P) \bullet \gamma_n$
in $\End(\Lb^{(r)}_{K})$.
\end{lem}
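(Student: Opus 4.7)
The strategy is to compare the two sides by computing their matrix coefficients $\langle\lambda;\cdot;\mu\rangle$ in the fixed-point basis. Both $\iota(P)$ and $\varpi_n(P)\bullet\gamma_n$ are operators of degree $n$ whose matrix coefficients vanish unless $\mu\subset\lambda$ with $|\lambda|=|\mu|+n$, so I only need to check these cases. By $K_r$-linearity of both $\iota$ and $\varpi_n$, I may restrict to $P=z_1^{l_1}\cdots z_n^{l_n}$, so that $\iota(P)=f_{1,l_1}\cdots f_{1,l_n}$.

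For the left hand side, fixed-point localization on each Hecke correspondence $M_{r,k,k+1}$ and \eqref{HR:5} give the one-step formula
\[
\langle\lambda';f_{1,l};\mu'\rangle=c(\lambda'\setminus\mu')^{l}\,\eu(N^{*}_{\mu',\lambda'})/\eu_{\lambda'}
\]
for $\mu'\subset\lambda'$ with $|\lambda'|=|\mu'|+1$. Iterating, $\langle\lambda;\iota(P);\mu\rangle$ decomposes as a sum over those orderings $\sigma\in\Sen_n$ of the boxes $\{s_1,\dots,s_n\}=\lambda\setminus\mu$ for which each partial union $\mu_i(\sigma)=\mu\cup\{s_{\sigma(1)},\dots,s_{\sigma(i)}\}$ is a valid $r$-partition, with summand
\[
P\bigl(c(s_{\sigma(1)}),\dots,c(s_{\sigma(n)})\bigr)\prod_{i=1}^{n}\frac{\eu\bigl(N^{*}_{\mu_{i-1}(\sigma),\mu_i(\sigma)}\bigr)}{\eu_{\mu_i(\sigma)}}.
\]
For the right hand side, \eqref{E:defclassfond} and fixed-point localization yield $\gamma_n[I_\mu]=\sum_{\lambda\supset\mu}(a_{\mu,\lambda}/\eu_\mu)\,[I_\lambda]$, and the Wilson action intertwines $p\in\Lambda_{K_r}$ with evaluation of $p$ at the contents of the added boxes, by Example \ref{EX:wilsonU} and \eqref{HR:5}; hence
\[
\langle\lambda;\varpi_n(P)\bullet\gamma_n;\mu\rangle=\varpi_n(P)\bigl(c(s_1),\dots,c(s_n)\bigr)\,a_{\mu,\lambda}/\eu_\mu.
\]

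After clearing the common factor $a_{\mu,\lambda}/\eu_\mu$ and using that $P$ is arbitrary, the identity decouples, for each $\sigma\in\Sen_n$, into a per-ordering equality
\[
\frac{\eu_\mu}{a_{\mu,\lambda}}\prod_{i=1}^{n}\frac{\eu\bigl(N^{*}_{\mu_{i-1}(\sigma),\mu_i(\sigma)}\bigr)}{\eu_{\mu_i(\sigma)}}=\prod_{i<j}g\bigl(c(s_{\sigma(i)})-c(s_{\sigma(j)})\bigr)
\]
for valid $\sigma$, together with the vanishing of the right hand side for invalid $\sigma$. The vanishing is immediate from $g(-x)=g(-y)=0$: if $\sigma$ places a box $s$ before its western (resp.\ southern) neighbour $s'$ in $\lambda$, then $c(s)-c(s')=x$ (resp.\ $y$) and the corresponding factor of $g$ in the shuffle product vanishes. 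The main per-ordering identity is a telescoping computation: substitute \eqref{HR:2.5} and \eqref{HR:3}, isolate from the telescoping product the ordering-independent "global" piece $(1-q)(1-t)\tau_{\mu,\lambda}^{*}\otimes\tau_\lambda-\tau_{\mu,\lambda}^{*}\otimes W-nv^{-1}$ that assembles into $a_{\mu,\lambda}$, and match the remaining "pairwise" interaction between boxes added at different steps to $\prod_{i<j}g(c(s_{\sigma(i)})-c(s_{\sigma(j)}))$. This is the cohomological counterpart of the K-theoretic shuffle identity carried out in \cite{SV2}.

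Finally, the regularity of $\varpi_n(P)$ at any content tuple $(c(s_1),\dots,c(s_n))$ follows by the standard shuffle-algebra residue argument: the simple poles of $\SYM_n(g(z_1,\dots,z_n)P)$ along $z_i=z_j$ and $z_i-z_j=-x-y$ cancel pairwise under the $\Sen_n$-symmetrization, so $\varpi_n(P)$ is a bona fide element of $\Lambda_{n,K_r}$ (or, at worst, regular at all content tuples arising from skew $r$-diagrams). The main obstacle is the per-ordering telescoping identity above; once it is established, the remainder is the bookkeeping of fixed-point localization together with the symmetrization cancellation.
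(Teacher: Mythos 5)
Your approach matches the paper's almost exactly: both compute the matrix coefficients $\langle\lambda;\iota(P);\mu\rangle$ by expanding the product over the Hecke correspondences as a sum over orderings of the skew boxes, split the resulting Euler class into a global factor that assembles into $a_{\mu,\lambda}$ and a pairwise factor that matches $g(z_{\sigma(1)},\dots,z_{\sigma(n)})$, and kill the inadmissible orderings via zeros of $g$. The paper's computation \eqref{Eq:prooflemfundclass} is precisely the "telescoping identity" you postulate.

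Two points would need tightening in a full write-up. First, your vanishing statement has the sign reversed: you compute $c(s)-c(s')=x$ (resp.\ $y$) when $s'$ is the western (resp.\ southern) neighbour of $s$, and then invoke $g(-x)=g(-y)=0$; but $g(x)$ and $g(y)$ do \emph{not} vanish. The vanishing holds because in the pairing that actually arises from \eqref{Eq:prooflemfundclass} (or equivalently, after re-indexing your addition order as the paper's removal order), the relevant factor is $g(c(s')-c(s))=g(-x)$ or $g(-y)$, not $g(c(s)-c(s'))$. This is the sort of bookkeeping slip that comes from using the addition filtration $\mu_i(\sigma)$ but keeping the $\prod_{i<j}g(z_i-z_j)$ indexing from the removal order. (For the same reason, $f_{1,l_n}$ is applied first, so the exponent paired with $s_{\sigma(1)}$ in your summand should be $l_n$ rather than $l_1$; the discrepancy washes out after symmetrizing over $\sigma$, but the displayed formula is not literally correct.) Second, the regularity claim "the simple poles along $z_i=z_j$ and $z_i-z_j=-x-y$ cancel pairwise under the $\Sen_n$-symmetrization" is wrong for the second family: the transposition $(ij)$ maps $z_i-z_j=-x-y$ to $z_i-z_j=x+y$, a different hyperplane, so there is no pairwise cancellation of those residues and $\varpi_n(P)$ is generally \emph{not} in $\Lambda_{n,K_r}$. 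What is true (and what the paper uses) is that the evaluation at the specific content tuples $\tau_{\mu,\lambda}$ is finite: whenever two skew boxes differ by $(1,1)$, both the east and south neighbours of the lower one are also skew, and the resulting $g(-x)$ or $g(-y)$ zero dominates the candidate pole; the paper packages this as the statement that the Euler class in \eqref{E:killop} is well defined, because the trivial character appears in that virtual character with non-negative multiplicity. Your fallback remark anticipates this, but the residue-cancellation argument as stated does not establish it.
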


\begin{proof} See Appendix \ref{app:D2}.
\end{proof}

\vspace{.1in}

\begin{prop}\label{prop:torsionfreeWilson}
The action of $\Lambda_{n,K_r}$ 
on the degree $n$ part of $\U^{(r),>}_{K}$ is torsion free.
\end{prop}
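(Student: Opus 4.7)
The plan is to reduce the claim to a Zariski-density statement for the ``evaluation tuples'' $\tau_{\mu,\lambda}$, by going through the shuffle realization of Lemma~\ref{L:shuffleWilson}.

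Suppose $p\in\Lambda_{n,K_r}$ and $u\in\U^{(r),>}_{K,n}$ satisfy $p\bullet u=0$ with $u\neq 0$; we must show $p=0$. By surjectivity of $\iota$ we first write $u=\iota(P)$. The relation $p_l\bullet f_{1,k}=f_{1,l+k}$ of Example~\ref{EX:wilsonU}, the Leibniz rule of Lemma~\ref{L:WilsonUL}(c), and the primitivity of each $p_l$ for the coproduct of $\Lambda_{K_r}$, give by induction
$p_l\bullet\iota(z_1^{k_1}\cdots z_n^{k_n})=\iota\bigl((z_1^l+\cdots+z_n^l)\,z_1^{k_1}\cdots z_n^{k_n}\bigr)$;
since the $p_l$ generate $\Lambda_{K_r}$, this extends to $p\bullet\iota(P)=\iota(p\cdot P)$ for all $p\in\Lambda_{n,K_r}$. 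Since $p$ is symmetric, $\varpi_n(pP)=p\cdot\varpi_n(P)$, and Lemma~\ref{L:shuffleWilson} yields
\begin{equation*}
p\bullet u=\iota(pP)=\bigl(p\cdot\varpi_n(P)\bigr)\bullet\gamma_n.
\end{equation*}

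Next we read off matrix coefficients in the fixed-point basis. Using the Wilson action formula and the construction of $\gamma_n$ one obtains
\begin{equation*}
\langle\lambda\,;\,p\bullet u\,;\,\mu\rangle = p(\tau_{\mu,\lambda})\,\varpi_n(P)\bigl|_{\tau_{\mu,\lambda}}\,\langle\lambda\,;\,\gamma_n\,;\,\mu\rangle
\end{equation*}
for every pair $\mu\subset\lambda$ with $|\lambda\setminus\mu|=n$. The normalization built into \eqref{E:defclassfond} guarantees $\langle\lambda\,;\,\gamma_n\,;\,\mu\rangle\in K_r^\times$ for every such pair, and $u\neq 0$ forces $\varpi_n(P)$ to be a nonzero symmetric rational function regular on each $\tau_{\mu,\lambda}$; its zero locus in $\operatorname{Spec}(\Lambda_{n,K_r})$ is therefore a proper Zariski-closed subset. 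The hypothesis $p\bullet u=0$ then forces $p(\tau_{\mu,\lambda})=0$ for every $(\mu,\lambda)$ in the complement of this subset.

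The main obstacle is the following density claim: the collection $\{\tau_{\mu,\lambda}\}_{|\lambda\setminus\mu|=n}$ is Zariski-dense in $\operatorname{Spec}(\Lambda_{n,K_r})$. To prove it we will use a translation argument. Starting from a ``single row'' pair $\mu^{(1)}=(k)$, $\lambda^{(1)}=(k+n)$ (all other colors empty), the $n$ added boxes occupy positions $(k,0),\dots,(k+n-1,0)$ and yield the tuple $\bigl(kx-e_1,(k+1)x-e_1,\dots,(k+n-1)x-e_1\bigr)$. As $k$ runs over $\N$ each coordinate is translated by $x$, and the transcendence of $x$ over $\C$ ensures that the restriction of any nonzero symmetric polynomial to this $1$-parameter family is a nonzero polynomial in $k$, hence vanishes for only finitely many $k$. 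Combining such families obtained by varying the shape of $\lambda\setminus\mu$---distributing the added boxes among the $r$ colors and across different rows---produces enough independent constraints to force any symmetric $p$ vanishing on all $\tau_{\mu,\lambda}$ to be identically zero. Once this density is established, the analysis of the previous paragraph yields $p=0$, completing the proof.
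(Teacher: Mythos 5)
Your overall structure tracks the paper's: reduce via Lemma~\ref{L:shuffleWilson} to the injectivity of $p\mapsto p\bullet\gamma_n$, then invoke the nonvanishing of the matrix coefficients of $\gamma_n$ together with a Zariski-density statement. But the central step is simply asserted rather than proved. The claim that ``the normalization built into \eqref{E:defclassfond} guarantees $\langle\lambda;\gamma_n;\mu\rangle\in K_r^\times$'' has no content: by construction $\langle\lambda;\gamma_n;\mu\rangle=a_{\mu,\lambda}$, and deciding whether $a_{\mu,\lambda}\neq 0$ is precisely the nontrivial work. An equivariant Euler class $\eu(E)$ of a \emph{virtual} $\widetilde D$-module $E$ lies in $K_r^\times$ exactly when the trivial character occurs with total multiplicity zero in $E$, and this must actually be checked for $E=(1-q)(1-t)\bigl(\tau^*_{\mu,\lambda}\otimes\tau_\lambda\bigr)-\tau^*_{\mu,\lambda}\otimes W-nv^{-1}$. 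The paper's proof does exactly this, decomposing the multiplicity as a sum of contributions from the boxes $s\in\lambda\setminus\mu$ and checking, via the explicit formula \eqref{E:mult-triv2} for $\tau_{\mu,\lambda}$, that each contribution vanishes. Omitting this verification leaves a genuine gap, since a priori $\gamma_n$ could kill a whole stratum of matrix coefficients.

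A secondary gap is the density claim. Your single-row family only realizes the one-parameter arithmetic progressions $\bigl(kx-e_1,(k+1)x-e_1,\dots,(k+n-1)x-e_1\bigr)$, and a nonzero symmetric polynomial such as $\prod_{i<j}\bigl((z_j-z_i)^2-x^2(j-i)^2\bigr)$ vanishes identically on that family, so no conclusion follows from it alone; the sentence ``combining such families \dots\ produces enough independent constraints'' is exactly where the real argument has to live and is not carried out. (The paper also asserts the density without proof, so this is a lesser issue than the first; a clean route is to put the $n$ added boxes at addable corners of large partitions in a single color, realizing all tuples $\{a_ix+b_iy-e_1\}$ with $a_1<\dots<a_n$, $b_1>\dots>b_n$, and to use that $z_i\mapsto a_ix+b_iy-e_1$ is injective on $K_r[z_1,\dots,z_n]$ because $x,y$ are algebraically independent over $\C(e_1,\dots,e_r)$.)
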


\begin{proof} By Lemma \ref{L:shuffleWilson}, it is enough to show that the map
\begin{equation}\Lambda_{n,K_r} \to \End(\Lb^{(r)}_{K}),\qquad
p \mapsto p \bullet \gamma_n\end{equation}
is injective. Now, an element 
$p \in \Lambda_{n,K_r}$ annihilates $\gamma_n$ if and only if 
\begin{equation}
\mu \subset \lambda,\qquad |\lambda \backslash \mu|=n,\qquad
a_{\mu,\lambda} \neq 0\quad\Rightarrow\quad p(\tau_{\mu,\lambda})=0.
\end{equation}
We claim
that in fact $a_{\mu,\lambda} \neq 0$ for any pair satisfying 
$\mu \subset \lambda$ and $|\lambda \backslash \mu|=n$. 
This indeed implies that 
any $p$ which annihilates $\gamma_n$ must be zero because the collection 
of possible values of $(\cc_1(\rho_1), \ldots, \cc_1(\rho_n))$ is Zariski 
dense in $K_r^n$. 
To prove the claim we must check that the trivial representation 
does not appear in 
\begin{equation}\label{E:mult.triv}
(1-q)(1-t) (\tau^*_{\mu,\lambda} \otimes \tau_\lambda)-\tau^*_{\mu,\lambda} \otimes W.
\end{equation}
Recall that 
\begin{equation}\label{E:mult-triv2}
\tau_{\mu,\lambda}=\sum_{a=1}^r \sum_{s \in \lambda^{(a)} \backslash \mu^{(a)}} \chi_a^{-1} t^{y(s)}q^{x(s)}.
\end{equation}
The multiplicity of the trivial representation in \eqref{E:mult.triv} is a sum of 
contributions from each box $s \in \lambda \backslash \mu$. It is easy to check
using \eqref{E:mult-triv2} that this contribution is precisely zero for each box.
We are done.
\end{proof}

\vspace{.2in}

\section{Equivariant cohomology of the commuting variety}

In this section we introduce an algebra $\mathbf{SC}$ in the equivariant cohomology of the 
commuting variety. Then we provide a description of $\SC$ in terms of shuffle algebras. 
In Section 6 we will construct an 
action of $\mathbf{SC}$ on 
$\Lb^{(r)}$ and we'll compare
$\SC$ with $\SH^>$. 

\vspace{.15in}

\subsection{Correspondences in equivariant Borel-Moore homology}
\label{sec:correspondence}
Let $G$ be a complex linear algebraic group. 
Let $P\subset G$ a parabolic subgroup and $M\subset P$ a Levi
subgroup. Fix a $M$-variety $Y$.  The group $P$ acts on $Y$ through
the obvious group homomorphism $P\to M$.
Let $X=G\times_PY$ be the induced $G$-variety.
Now assume that $Y$ is smooth. For any smooth subvariety $O\subset Y$
let $T^*_OY$ be the conormal bundle to $O$. It is
well-known that the induced $M$-action on $T^*Y$ is Hamiltonian and
that the zero set of the moment map is the closed $M$-subvariety
$$T^*_MY=\bigsqcup_{O}T^*_OY,$$
where $O$ runs over the set of $M$-orbits. Further we have \cite{SV2}
\begin{equation}
\label{induction}
T^*X=T^*_P(G\times Y)/P,\qquad T^*_GX=G\times_PT^*_MY. 
\end{equation}
So the
induction yields a canonical isomorphism 
\begin{equation}H^M(T^*_MY)=H^G(T^*_GX).\end{equation}
We'll call \emph{fibration} a smooth morphism which is locally trivial in
the analytic topology. Let $X'$ be a smooth $G$-variety and $V$ be a
smooth $M$-variety. Fix $M$-equivariant
homomorphisms
\begin{equation}\label{4.1:diagram1}
\xymatrix{Y&V\ar[l]_p\ar[r]^q&X'}
\end{equation}
with $p$ a fibration and $q$ a
closed embedding. Set $W=G\times_PV$ and consider the following maps
\begin{equation}\gathered
\xymatrix{X&W\ar[l]_f\ar[r]^g& X'},\\
f: (g,v)\, \mod\, P\mapsto (g,p(v))\, \mod\, P,\\
g: (g,v)\, \mod\, P\mapsto gq(v).
\endgathered
\end{equation}
Note that $V$, $W$, $X$, $X'$ are smooth.
Further, the map $f$ is a $G$-equivariant fibration, the map $g$ is a
$G$-equivariant proper morphism, and the map $f\times g$ is a closed
embedding $W\subset X\times X'$. See \cite{SV2} for details.
We'll identify $W$ with its image in $X\times X'$. The $G$-variety
\begin{equation}
Z=T^*_W(X\times X')\end{equation}
is again smooth and
the obvious projections yield $G$-equivariant maps
\begin{equation}\label{4.1:diagram2}
\xymatrix{T^*X&Z\ar[l]_-\phi\ar[r]^-\psi&T^*X'}.
\end{equation} 
We define the $G$-variety
\begin{equation}
Z_G=Z\cap(T^*_GX\times T^*_GX').
\end{equation}
The following is immediate.

\begin{lem}
(a) The map $\psi$ is proper, the varieties $T^*X$, $Z$ and $T^*X'$ are smooth.

(b) We have $\phi^{-1}(T^*_GX)=Z_G$
and $\psi(Z_G)\subset T^*_GX'$.
\end{lem}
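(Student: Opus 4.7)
The plan is to verify the smoothness assertions and the properness of $\psi$ in (a) directly from the construction, then establish (b) using a moment-map argument that leverages the $G$-stability of $W$.

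Smoothness is immediate: $Y$ and $V$ are smooth by hypothesis, so the induced $G$-varieties $X=G\times_PY$ and $W=G\times_PV$ are smooth, and the cotangent bundles $T^*X$, $T^*X'$ are smooth as well. Since $W$ is smooth and closed in the smooth variety $X\times X'$ via the embedding $f\times g$, the conormal bundle $Z=T^*_W(X\times X')$ is a vector bundle over $W$, hence smooth.

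For properness of $\psi$, I would exhibit a factorization through a base change of $g$. Consider the conormal exact sequence of vector bundles on $W$,
\begin{equation*}
0\to Z\to f^*T^*X\oplus g^*T^*X'\to T^*W\to 0,
\end{equation*}
where the second arrow sends $(\alpha_X,\alpha_{X'})$ to $df^*\alpha_X+dg^*\alpha_{X'}$. Since $f$ is a fibration, $df$ is everywhere surjective, so $df^{*}:f^*T^*X\hookrightarrow T^*W$ is a subbundle inclusion. This forces the projection $Z\to g^*T^*X'=W\times_{X'}T^*X'$, $(\alpha_X,\alpha_{X'})\mapsto\alpha_{X'}$, to be injective on fibers and hence a closed embedding of vector bundles. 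Now $\psi$ factors as this closed embedding followed by the natural projection $W\times_{X'}T^*X'\to T^*X'$, which is the base change of the proper map $g:W\to X'$ along $T^*X'\to X'$ and is therefore proper. Consequently $\psi$ is proper.

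For part (b), the decisive observation is that the conormal bundle to a $G$-stable smooth subvariety always lies in the zero locus of the corresponding moment map. Here, the moment map for the diagonal $G$-action on $T^*(X\times X')$ is $\mu_X\oplus\mu_{X'}$, where $\mu_X$, $\mu_{X'}$ are the moment maps for $T^*X$, $T^*X'$. Since $W$ is $G$-stable, for any $\xi\in\gen$ the vector field $\xi_{X\times X'}$ is tangent to $W$ along $W$, so for $(\alpha_X,\alpha_{X'})\in Z$ the pairing with $\xi_{X\times X'}$ vanishes, giving
\begin{equation*}
\mu_X(\alpha_X)+\mu_{X'}(\alpha_{X'})=0.
\end{equation*}
Identifying $T^*_GX=\mu_X^{-1}(0)$ and $T^*_GX'=\mu_{X'}^{-1}(0)$ (which is compatible with $T^*_GX=G\times_PT^*_MY$ via the induction formula in the text), this equality yields $\alpha_X\in T^*_GX\Leftrightarrow\alpha_{X'}\in T^*_GX'$ on $Z$, which simultaneously proves $\phi^{-1}(T^*_GX)=Z_G$ (the reverse inclusion being trivial) and $\psi(Z_G)\subset T^*_GX'$. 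There is no real obstacle here; the only point to handle with care is the agreement between the two descriptions of $T^*_GX$, one as the image of $G\times_PT^*_MY$ under induction and the other as the zero fiber of $\mu_X$, which is a standard fact about Hamiltonian reduction applied to the $G$-variety $X$.
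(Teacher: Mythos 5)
The paper declares this lemma ``immediate'' and supplies no proof, so there is nothing to compare against; your argument is correct and supplies exactly the details one would expect. The smoothness assertions, the factorization of $\psi$ through the closed embedding $Z\hookrightarrow g^*T^*X'$ (using surjectivity of $df$ to get fiberwise injectivity) followed by the proper base change of $g$, and the moment-map computation $\mu_X(\alpha_X)+\mu_{X'}(\alpha_{X'})=0$ on the conormal bundle of the $G$-stable subvariety $W$ all check out, and the identification of $T^*_GX$ with $\mu_X^{-1}(0)$ is consistent with the paper's \eqref{induction}.
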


\vskip3mm

\noindent We'll abbreviate
$\phi_G=\phi|_{Z_G}$ and $\psi_G=\psi|_{Z_G}$.  We have the following diagram
of singular varieties
\begin{equation}\label{4.1:diagram3}
\xymatrix{T^*_GX&Z_G\ar[l]_-{\phi_G}\ar[r]^-{\psi_G}&T^*_GX'}.
\end{equation}
Since the map $\psi_G$ is 
proper the direct image yields maps
\begin{equation}\gathered
\psi_{G,*}:H^G(Z_G)\to H^G(T^*_GX').
\endgathered
\end{equation}
Since $Z$, $T^*X$ are smooth and $\phi^{-1}(T^*_GX)=Z_G$,
the pull-back by $\phi$ yields a map
\begin{equation}\gathered
\phi_G^*:H^G(T^*_GX)\to H^G(Z_G).
\endgathered
\end{equation}
Composing $\psi_{G,*}$ and $\phi_G^*$ we get a map
\begin{equation}\label{CO:00}\psi_{G,*}\circ\phi_G^*:H^G(T^*_GX)\to H^G(T^*_GX').
\end{equation}
By \eqref{induction} the induction yields also an isomorphism
\begin{equation}H^M(T^*_MY)=H^G(T^*_GX).
\end{equation}
Composing it by $\psi_{G,*}\circ \phi_G^*$
we obtain a map
\begin{equation}\label{CO:0}H^M(T^*_MY)\to H^G(T^*_GX').\end{equation}

\vskip3mm

\subsection{The commuting variety}
We'll apply the general construction above to the commuting variety. 
First, we fix some notation. Let $E$ be a
finite dimensional $\CC$-vector space. 
Write
\begin{equation}
\gen_E=\End(E),\qquad
C_E=\{(a,b)\in \gen_E\times\gen_E\,;\,[a,b]=0\}.
\end{equation}
We may abbreviate $G=GL_E$, $\gen=\gen_E$
and $C=C_\gen=C_E$.
Put $\widetilde G=T\times G$ with $T=(\CC^\times)^{2}.$ 
The group $\widetilde G$ acts on $C$ :
the subgroup $G$ acts diagonally by the adjoint action on $\gen$, while $T$
acts by $(e,f)\cdot(a,b)=(ea,fb).$
We set $\Cb'_{E}=H^{\widetilde G}(C).$ 
Let $K_{\widetilde G}$ be the fraction field of $R_{\widetilde G}$. 
Let $\mathscr V_n$ be the groupoid formed by all $n$-dimensional
vector spaces with their isomorphisms and set 
$\mathscr V=\bigoplus_{n\geqslant 0}\mathscr V_n$.
An isomorphism $E\to E'$ yields an $\HT$-module isomorphism 
$\Cb'_{E}\to \Cb'_{E'}.$ Let
$\Cb'$ be the colimit of the system  $(\Cb'_E)$ where $E$ varies in 
$\mathscr V$. It is a $\N$-graded vector space.
The piece $\Cb'_n$ of degree $n$ is 
the colimit over the groupoid $\mathscr V_n$.

\vspace{.2in}

\subsection{The cohomological Hall algebra}
\label{sec:cohHall}
Fix a flag of finite dimensional vector spaces
\begin{equation}\label{flag}
\xymatrix{0\ar[r]& E_1\ar[r]& E\ar[r]& E_2\ar[r]& 0}.
\end{equation}
Set $G=GL_E$, $M=GL_{E_1}\times GL_{E_2}$ and $P=\{g\in G;g(E_1)=E_1\}$. 
Let $\gen$, $\men$ and $\pen$ be the corresponding Lie algebras. Put
$Y=\men$, $V=\pen$, and $X'=\gen$. The $G$-action on
$X'$ and the $M$-action on $Y$ are the adjoint ones. Put 
\begin{equation}
C_\men=(\men\times\men)\cap C,\quad
C_\pen=(\pen\times\pen)\cap C_\gen,\quad
\widetilde C_\men=\{(d,a,b)\in\pen\times\men\times\men;\,
d_\men=[a,b]\},
\end{equation}
where $p:\pen\to\men$, 
$a\mapsto a_\men$
is the canonical projection. 
We apply the general construction in Section 
\ref{sec:correspondence}
to the diagram \eqref{4.1:diagram1} equal to
\begin{equation}\xymatrix{\men&\pen\ar[l]_p\ar[r]^q&\gen},
\end{equation}
where $q$ is the obvious inclusion. 
The $P$-actions on $\pen\times\pen$ and on
$\pen\times\men\times\men$ are the obvious ones.
Further we identify $\gen^*=\gen$
and $\men^*=\men$ via the trace.

\begin{lem}\label{lem:Comm}
(a) There are isomorphisms of $G$-varieties
$$\begin{gathered}
T^*X=G\times_P\widetilde C_\men,\quad
Z=G\times_P(\pen\times\pen),\quad
T^*X'=\gen\times\gen.
\end{gathered}$$ 

(b) For 
$a,b\in\pen$ we have
$$\begin{gathered}
\phi((g,a,b)\ \mod\ P)=(g,[a,b],a_\men,b_\men)\ \mod\
P,\quad \psi((g,a,b)\ \mod\ P)=(gag^{-1},gbg^{-1}).
\end{gathered}$$

(c) There are isomorphisms of $G$-varieties
$$\begin{aligned}
T^*_GX=G\times_PC_\men,\quad
Z_G=G\times_PC_\pen,\quad
T^*_GX'=C_\gen.
\end{aligned}$$

(d)
The maps $\phi$, $\psi,$
$\phi_G$, $\psi_G$ in the following diagram are the obvious ones
$$\xymatrix{G\times_PC_\men&G\times_PC_\pen\ar[l]_-{\phi_G}\ar[r]^-{\psi_G}&
C_\gen},$$
$$\xymatrix{G\times_P\widetilde C_\men&G\times_P(\pen\times\pen)
\ar[l]_-{\phi}\ar[r]^-{\psi}&
\gen\times\gen}.$$
\end{lem}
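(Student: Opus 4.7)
The plan is to verify the four assertions by an explicit computation, reducing where possible to a slice at the identity coset of $G/P$ and extending by $G$-equivariance. Throughout I use the trace form to identify $\gen^*=\gen$ and $\men^*=\men$; relative to the triangular decomposition $\gen=\nen^-\oplus\men\oplus\nen$ with $\pen=\men\oplus\nen$, the annihilator of $\pen$ is $\pen^\perp=\nen$, hence $\pen^*\cong\gen/\nen\cong\nen^-\oplus\men$ as vector spaces.

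For (a), the identification $T^*X'=\gen\times\gen$ is immediate. For $X=G\times_P\men$, I apply the Hamiltonian reduction $T^*X=\mu_P^{-1}(0)/P$, where $\mu_P$ combines the moment map for the free $P$-action on $G$ by right translation with the $P$-moment map on $T^*\men$, the latter being $(a,b)\mapsto[a,b]\in\men^*\subset\pen^*$. Under the trace identifications, the vanishing locus of $\mu_P$ consists of tuples $(g,\xi,a,b)$ with $\xi\in\pen$ and $\xi_\men=[a,b]$; renaming $\xi$ as $d$ yields $T^*X=G\times_P\widetilde C_\men$. The identification $Z=G\times_P(\pen\times\pen)$ is handled similarly: on the slice at $[1_G,\cdot]$ the embedding $W\hookrightarrow X\times X'$ reduces to $\pen\hookrightarrow\men\times\gen$, $v\mapsto(v_\men,v)$, and the conormal condition, tested separately against tangent vectors in $\men$ and $\nen$, forces the covector on $\gen$ to lie in $\pen$ and determines the covector on $\men$; the slice is then parametrized freely by a pair in $\pen\times\pen$.

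For (b), with (a) in hand, $\phi$ and $\psi$ are by construction the two projections from $T^*_W(X\times X')$ to the cotangent bundles of the factors. Tracing the identifications yields the stated formulas; note in particular that $[a,b]_\men=[a_\men,b_\men]$ for $a,b\in\pen$, since $[\nen,\pen]\subset\nen$, so that the image of $\phi$ does land in $\widetilde C_\men$. For (c), the induction identity \eqref{induction} gives $T^*_GX=G\times_P T^*_M\men=G\times_PC_\men$ and $T^*_GX'=C_\gen$. To compute $Z_G=Z\cap(T^*_GX\times T^*_GX')$, I substitute the formulas of (b): the condition $\psi([g,a,b])\in C_\gen$ is equivalent to $[a,b]=0$ in $\gen$, which by restriction forces $[a_\men,b_\men]=0$, so the condition $\phi([g,a,b])\in T^*_GX$ becomes automatic. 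Hence $Z_G=G\times_PC_\pen$. Part (d) is then immediate, as $\phi_G,\psi_G$ are by definition the restrictions of $\phi,\psi$.

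The main obstacle is keeping the duality conventions consistent: $\pen^*$ is naturally the quotient $\gen/\nen$ rather than $\pen$ itself, and the parametrization of $\widetilde C_\men$ by triples $(d,a,b)\in\pen\times\men\times\men$ emerges only after one re-expresses the moment-map condition so that the $\nen$-component of $d$ is free. Once this bookkeeping is in place the rest of the argument is a direct inspection on the slice.
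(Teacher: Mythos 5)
The paper states Lemma~\ref{lem:Comm} without an explicit proof, treating it as an immediate specialization of the general framework in Section~4.1 (compare also Lemma~\ref{lem:toto89}, which cites \cite[lem.~8.2]{SV2}). Your proof therefore supplies precisely the computation one expects: reduce to the slice at $[1_G,\cdot]$, use the trace form and the identity $\pen^\perp=\nen$, and identify the moment-map zero set, and this is the natural (and essentially the only reasonable) route. The key bookkeeping points you flag — $\pen^*\cong\gen/\nen$, the parametrization of $\widetilde C_\men$, and $[a,b]_\men=[a_\men,b_\men]$ for $a,b\in\pen$ since $[\nen,\pen]\subset\nen$ — are exactly the ones that make the computation go through.

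One small imprecision deserves a remark. In the $Z_G$ step you write that $[a,b]=0$ ``by restriction forces $[a_\men,b_\men]=0$, so the condition $\phi([g,a,b])\in T^*_GX$ becomes automatic.'' This suggests the $\phi$-condition is merely $[a_\men,b_\men]=0$. In fact, by part (b), $\phi([g,a,b])=(g,[a,b],a_\men,b_\men)$, and $C_\men\hookrightarrow\widetilde C_\men$ is the locus where the $\pen$-component $d$ vanishes; so $\phi([g,a,b])\in G\times_PC_\men$ directly imposes $[a,b]=0$ (from which $[a_\men,b_\men]=0$ follows), i.e., the $\phi$-condition and the $\psi$-condition coincide. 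Your conclusion $Z_G=G\times_PC_\pen$ is of course unaffected, but spelling this out would make the logic tighter.
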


\vspace{.2in}

\noindent 
We define as in
(\ref{CO:0}) a $\HT$-linear map 
\begin{equation}
H^{\widetilde M}(C_\men)\to H^{\widetilde G}(C_\gen).
\end{equation}
By the Kunneth formula, it can be
viewed as a map
\begin{equation}\label{CO:3}\Cb'_{E_1}\otimes_{\HT} 
\Cb'_{E_2}\to \Cb'_{E}.\end{equation}
The following is proved as in \cite[prop.~7.5]{SV2}.

\vspace{.1in}

\begin{prop} The map (\ref{CO:3}) equips $\Cb'$
with the structure of a $\HT$-algebra with 1.
\end{prop}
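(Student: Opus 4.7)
The plan is to check the two algebra axioms for the map \eqref{CO:3}: the existence of a two-sided unit, and associativity.

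For the unit, take $E=0$, so that $C_E=\{*\}$ and $\Cb'_0=\HT$. The class $1\in\HT$ is the candidate. For any $E$, setting $E_1=0$ (resp.\ $E_2=0$) in the flag \eqref{flag} forces $P=G$ and $\pen=\men=\gen$, so the diagram of Lemma~\ref{lem:Comm}$(d)$ collapses to $C_\gen\leftarrow C_\gen\to C_\gen$ with both arrows equal to the identity; hence left and right multiplication by $1$ acts as the identity on $\Cb'_E$.

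For associativity, the plan is the standard triple-flag argument. Fix a three-step flag $0\subset E_1\subset E_1'\subset E$, and set $E_2=E_1'/E_1$, $E_3=E/E_1'$. Let $M=GL_{E_1}\times GL_{E_2}\times GL_{E_3}$ and let $P\subset G$ be the parabolic stabilizing the flag; write $\pen=\mathrm{Lie}(P)$ and $\men=\mathrm{Lie}(M)$. The construction of Section~\ref{sec:cohHall} applied to the three-step data
$$\xymatrix{C_\men & G\times_P C_\pen \ar[l]_-{\phi_G}\ar[r]^-{\psi_G} & C_\gen}$$
yields a trilinear map $\mu\colon\Cb'_{E_1}\otimes\Cb'_{E_2}\otimes\Cb'_{E_3}\to\Cb'_E$. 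I would show that both $(x_1\cdot x_2)\cdot x_3$ and $x_1\cdot(x_2\cdot x_3)$ coincide with $\mu(x_1\otimes x_2\otimes x_3)$. For $(x_1\cdot x_2)\cdot x_3$, factor through the parabolic $P'\supset P$ stabilizing only $E_1'\subset E$: the composition of the two smaller convolution correspondences sits over the fibered product, which is canonically identified with $G\times_P C_\pen$ via the natural isomorphism
$$G\times_{P'}\bigl(P'\times_P C_\pen\bigr)\;\cong\;G\times_P C_\pen,$$
and the composed pull-push coincides with $\psi_{G,*}\phi_G^*$ applied to the triple correspondence. A symmetric argument using the parabolic stabilizing only $E_1\subset E$ handles $x_1\cdot(x_2\cdot x_3)$.

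The hard part will be verifying the base change identity underlying the composition of the two smaller correspondences. Since $C_\men$, $C_\pen$, $C_\gen$ are singular, ordinary smooth base change does not apply directly. However, the map $\phi$ in \eqref{4.1:diagram2} is a vector-bundle projection by Lemma~\ref{lem:Comm}$(b)$, hence smooth, and $\psi$ is proper; working at the level of the smooth ambient varieties $T^*X$, $Z$, $T^*X'$ one obtains the base change from standard Cartesian diagrams, and descends back to the singular Borel-Moore homology of $T^*_GX$, $Z_G$, $T^*_GX'$ by restriction, exactly as in the K-theoretic analogue treated in \cite[sec.~7]{SV2}. This is the only substantive technical step; all other points are formal manipulations of induction diagrams together with the Kunneth isomorphism used in \eqref{CO:3}.
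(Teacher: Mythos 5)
Your strategy---checking that the class of the point in $\Cb'_0$ is a two-sided unit, and proving associativity via the triple-flag / parabolic-factorization argument---is the standard one, and it is what the reference to \cite[prop.~7.5]{SV2} in the paper amounts to. The unit computation is correct: when $E_1=0$ one has $P=G$ and $\pen=\men=\gen$, all three corners of the correspondence collapse to $C_\gen$ with identity arrows, and the K\"unneth factor is just $\HT$, so left and right multiplication by $1$ are identities.

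However, the claim that ``the map $\phi$ in \eqref{4.1:diagram2} is a vector-bundle projection by Lemma~\ref{lem:Comm}$(b)$, hence smooth'' is incorrect, and the citation does not support it. From Lemma~\ref{lem:Comm}$(b)$, in the identifications $\pen\times\pen\cong\uen^2\times\men^2$ and $\widetilde C_\men\cong\uen\times\men^2$, the map $\phi$ sends $(a_\uen,b_\uen,a_\men,b_\men)$ to $\big([a,b]_\uen,a_\men,b_\men\big)$, whose first coordinate is quadratic in $(a_\uen,b_\uen)$; the differential at the origin of $\pen^2$ has image $\{0\}\times\men^2$, so $\phi$ is neither a vector-bundle projection nor smooth (it is not even flat: the fiber over the origin is the commuting variety $C_\uen$, which for abelian $\uen$ has dimension $2\dim\uen$, whereas the generic fiber dimension is $\dim\uen$). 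What actually makes $\phi_G^*$ well defined, and what underlies the base change you need, is not the smoothness of $\phi$ but the smoothness of the total spaces $T^*X$, $Z$, $T^*X'$ together with the supported pull-back and base-change facts recorded in Section~\ref{sec:homology}: pull-back between smooth varieties restricted to closed supports, and base change for Cartesian squares whose middle row consists of smooth varieties with proper closed embeddings. Your instinct to ``work at the level of smooth ambient varieties and descend by restriction'' is correct; replace the smoothness claim about $\phi$ by a direct invocation of that formalism and the argument goes through.
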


We call the $\N$-graded $\HT$-algebra $\Cb'$ 
the \emph{cohomological Hall algebra}.
Let $\SC'$ be the $\HT$-subalgebra of $\Cb'$ 
generated by $\Cb'_1$. 
We'll abbreviate
$\SC'_n=\Cb'_n\cap\SC'$
and $G=GL_n$.
The direct image by the obvious inclusion
$C_\gen\subset\gen\times\gen$, which is a proper map,
yields a $R_{\widetilde G}$-module homomorphism
\begin{equation}\label{CO:14}\Cb'_n\to H^{\widetilde G}(\gen\times\gen).
\end{equation} We conjecture that
(\ref{CO:14}) is an injective map. 
Since the kernel of (\ref{CO:14}) is the torsion 
submodule $\Cb^\tor_n$ of $\Cb'_n$ by the localization theorem, 
this conjecture is equivalent to
the following one. 

\begin{conj}\label{C:torsion} The
$R_{\widetilde G}$-module $\Cb'_n$ is torsion-free.
\end{conj}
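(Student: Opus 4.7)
The plan is to establish $\widetilde G$-equivariant formality of the commuting variety $C_n$, which in particular would imply that $\Cb'_n = H^{\widetilde G}(C_n)$ is free, hence torsion-free, over $R_{\widetilde G}$. The starting point is the observation that the unique $\widetilde G$-fixed point of $C_n$ is $(0,0)$, and that the diagonal $\CC^\times \subset T$ acting by $(u,v)\mapsto(tu,tv)$ contracts the whole of $C_n$ onto this point. Thus one is in precisely the setting where a Bialynicki--Birula-type decomposition, refined by a maximal torus $H \subset G$, has the best chance of trivializing the equivariant cohomology.

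The first step I would take is to construct a $\widetilde G$-stable stratification $C_n = \bigsqcup_\alpha S_\alpha$ indexed by the common Jordan type of the pair $(u,v)$ under its simultaneous spectral decomposition. Each stratum is a $\widetilde G$-bundle over a base on which $T \times H$ acts with unique fixed point $(0,0)$, and inherits the contracting $\CC^\times$-action. On each such stratum one expects $H^{\widetilde G}_*(S_\alpha)$ to be a free $R_{\widetilde G}$-module, generated by the fundamental classes of the closures of attracting sets. Assembling these pieces via the long exact sequences of the stratification, I would then argue that the associated spectral sequence degenerates at $E_1$, yielding a filtration of $H^{\widetilde G}_*(C_n)$ whose associated graded is free over $R_{\widetilde G}$.

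The main obstacle is the singular nature of $C_n$: the commuting scheme is not even known to be reduced, and the geometry of the Jordan-type strata is poorly understood for large $n$, so the requisite cleanness of the decomposition is problematic. A possibly cleaner alternative route would be via an equivariant resolution: consider the ``Steinberg-type'' variety
\[
\widetilde C_n = \{(u,v,F_\bullet) \,;\, [u,v]=0,\; u,v \text{ preserve the complete flag } F_\bullet\},
\]
which is smooth and $\widetilde G$-equivariant, admits a Bialynicki--Birula decomposition by Schubert-type cells, and whose equivariant Borel--Moore homology is therefore manifestly free over $R_{\widetilde G}$. One would then study the proper pushforward $\pi_*: H^{\widetilde G}_*(\widetilde C_n)\to H^{\widetilde G}_*(C_n)$ and try to exhibit a $R_{\widetilde G}$-linear splitting (for instance from a Kirwan-type surjectivity statement, or from an explicit basis adapted to the pushforward), which would identify $\Cb'_n$ with a direct summand of a free module and hence prove it is torsion-free. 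The hardest part of this alternative is precisely the construction of such a splitting: since $\pi$ has positive-dimensional fibres over the deepest strata, one cannot expect it to be birational onto the generic locus, and a delicate analysis of the fibres of $\pi$ over each Jordan-type stratum would be required.
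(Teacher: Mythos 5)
The statement you are attempting to prove is not a theorem of the paper but an open \emph{conjecture} (the authors introduce it explicitly as Conjecture~\ref{C:torsion}, equivalent to the conjectural injectivity of the forgetful map $\Cb'_n\to H^{\widetilde G}(\gen\times\gen)$ in \eqref{CO:14}, and they supply no proof). So there is no argument in the paper to compare against; your proposal must stand or fall on its own, and at present it does not constitute a proof.

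Both routes stop at the hard point and you say so yourself, but the second route also contains a concrete error. The variety
$\widetilde C_n=\{(u,v,F_\bullet)\;;\;[u,v]=0,\ u,v\ \text{preserve}\ F_\bullet\}$
is \emph{not} smooth: it fibers over the flag variety $G/B$ with fiber $C_\ben=\{(u,v)\in\ben^2\,;\,[u,v]=0\}$, the commuting variety of the Borel subalgebra $\ben$, which is already singular for $n=2$. Indeed $C_{\ben}$ is then the affine quadric cone in $\ben^2\simeq\CC^6$ cut out by $(u_{11}-u_{22})v_{12}=(v_{11}-v_{22})u_{12}$, whose singular locus is the two-dimensional family of scalar pairs. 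Consequently $H^{\widetilde G}_*(\widetilde C_n)$ has no reason to be a free $R_{\widetilde G}$-module, and the claimed splitting of $\pi_*$ (which in any case you do not construct, and to which Kirwan surjectivity does not apply since $\pi$ is a proper map, not a GIT quotient) has nothing to split into. The kind of smooth resolution that \emph{is} available, and that the paper actually exploits in Sections~4--6 (namely $Z=G\times_P(\pen\times\pen)$ with a parabolic $P$, used to build the shuffle-algebra realization), is a correspondence living over $\gen\times\gen$, not over the commuting scheme itself, and it does not obviously yield a retract of $\Cb'_n$. The first route fares no better: the Jordan-type strata are non-proper, singular, and of unknown dimension/geometry for large $n$, so neither the freeness of $H^{\widetilde G}_*(S_\alpha)$ nor the $E_1$-degeneration of the stratification spectral sequence is established, or even plausible without a new idea. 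In short, the obstacles you flag are real, and the one step you present as settled (smoothness of $\widetilde C_n$) is false; the conjecture remains open.
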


Let $\Cb_n$, $\SC_n$ be the image of $\Cb'_n$, $\SC'_n$
by \eqref{CO:14} and set
\begin{equation}
\Cb=\bigoplus_{n\geqslant 0}\Cb_n,\qquad
\SC=\bigoplus_{n\geqslant 0}\SC_n.
\end{equation}
We call $\SC$ the \emph{spherical subalgebra} of $\Cb.$

\begin{prop}\label{prop:SC}
The map (\ref{CO:14}) yields surjective 
$\HT$-algebra homomorphisms 
$\Cb' \to \Cb$ and $\SC' \to \SC$.
\end{prop}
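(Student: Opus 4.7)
The plan is to observe that the kernel of \eqref{CO:14}, which by the localization-theoretic argument cited just before Conjecture \ref{C:torsion} equals the $\HT$-torsion submodule $\Cb^\tor = \bigoplus_n \Cb^\tor_n$ of $\Cb'$, is automatically a two-sided ideal of $\Cb'$ for the Hall multiplication. The quotient $\Cb' / \Cb^\tor$ will then inherit an $\HT$-algebra structure, and under the identification $\Cb' / \Cb^\tor \simeq \Cb$ provided by the first isomorphism theorem (since $\Cb$ is by definition the image of \eqref{CO:14}), the map \eqref{CO:14} becomes a surjective $\HT$-algebra homomorphism.

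To see that $\Cb^\tor$ is a two-sided ideal, recall that the Hall product $\alpha * \beta = \psi_{G,*} \circ \phi_G^*(\alpha \otimes \beta)$ is built from a pullback along $\phi_G$ and a proper pushforward along $\psi_G$ between $\widetilde G$-equivariant Borel-Moore homology groups. Both operations are $\HT$-linear, so the Hall multiplication is $\HT$-bilinear. Hence if $h \alpha = 0$ for some $h \in \HT \setminus \{0\}$ and $\alpha \in \Cb'_{E_1}$, then for any $\beta \in \Cb'_{E_2}$ we have
$$h(\alpha * \beta) = (h \alpha) * \beta = 0, \qquad h(\beta * \alpha) = \beta * (h \alpha) = 0,$$
so $\Cb^\tor$ absorbs multiplication from both sides. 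Consequently the induced map $\Cb' \to \Cb$ is a surjective $\HT$-algebra homomorphism, and restricting to the subalgebra $\SC' \subseteq \Cb'$, whose image under \eqref{CO:14} is by definition $\SC \subseteq \Cb$, yields the corresponding surjection $\SC' \to \SC$.

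The only non-trivial input is thus the identification $\ker(\text{\eqref{CO:14}}) = \Cb^\tor_n$, which itself rests on the localization theorem: $H^{\widetilde G}(\gen \times \gen) = \HT \cdot [\gen \times \gen]$ is free of rank one since $\gen \times \gen$ is equivariantly contractible, while both $C_E$ and $\gen \times \gen$ have the unique fixed point $(0,0)$ under any maximal torus $T' \subseteq \widetilde G$ (the $T$-factor already forces $a = b = 0$). Combined with the factorization of $\{(0,0)\} \hookrightarrow \gen \times \gen$ through $C_E$, these facts force $\iota_{E,*}$ to be an isomorphism after $K_{\widetilde G}$-localization, hence its kernel is precisely $\Cb^\tor_n$. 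Once this is granted, the rest of the argument is purely formal.
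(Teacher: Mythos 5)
Your strategy—to identify the kernel of \eqref{CO:14} with the torsion submodule and argue that it is automatically a two-sided ideal—is a reasonable route, but as written it rests on a mis-identification of which torsion is in play, and this is a genuine gap. The paper's remark preceding Conjecture \ref{C:torsion} says the kernel of \eqref{CO:14} is the torsion of $\Cb'_n$ as an $R_{\widetilde G}$-\emph{module}, where $\widetilde G = T\times GL_n$ varies with $n$; you instead assert it is the $\HT$-torsion ($\HT=R_T$), and your final paragraph even writes $H^{\widetilde G}(\gen\times\gen)=\HT\cdot[\gen\times\gen]$, which is false: that space is free of rank one over $R_{\widetilde G}$, not over $\HT$, and over $\HT$ it has infinite rank. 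Your computation $h(\alpha*\beta)=(h\alpha)*\beta$ is valid precisely because the Hall product is $\HT$-bilinear, but it is \emph{not} $R_{\widetilde G}$-bilinear — the group, and hence the coefficient ring, changes across the grading, and the $R_{GL_n}$-action is only compatible with the product through a coproduct twist (Lemma \ref{L:WilsonC}(c)). So if $\alpha$ is annihilated by some $c\in R_{GL_{n_1}}$ but by no element of $R_T$, your argument does not produce any element of $R_{\widetilde{GL}_{n_1+n_2}}$ annihilating $\alpha*\beta$, and there is no reason to expect $R_{\widetilde G}$-torsion to coincide with $\HT$-torsion (the characters of the normal bundle give hyperplanes of the form $\{x+e_i-e_j=0\}$, which project surjectively onto $\mathfrak t$).

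The paper's proof avoids the torsion bookkeeping entirely: it identifies each $\Cb_E$ with the image of a pushforward into a \emph{free} $R_{\widetilde G}$-module built from the smooth ambient space, and then exhibits a commutative square in which both rows are induction/pullback/pushforward chains and the vertical arrows are the pushforwards \eqref{CO:14}. Commutativity of that square directly shows the Hall product sends the kernels on the top into the kernel on the bottom — i.e., it shows the ideal property geometrically rather than through an annihilator computation. If you want to salvage your algebraic approach, you would need to prove separately that $R_{\widetilde G}$-torsion of $\Cb'_{n_1}$ times anything in $\Cb'_{n_2}$ lands in the $R_{\widetilde{GL}_{n_1+n_2}}$-torsion of $\Cb'_{n_1+n_2}$, which is exactly the content the commutative diagram supplies and does not follow from $\HT$-bilinearity alone.
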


\begin{proof} For $E\in\mathscr V$
let $\Cb_E$ be the quotient of $\Cb'_E$ by its torsion 
$R_{\widetilde GL_E}$-submodule $\Cb_E^\tor$.
Given $E_1,E_2,E$ as in \eqref{flag}, we must check that the map 
(\ref{CO:3}) fits into a commutative square
\begin{equation}\begin{split}\xymatrix{
\Cb'_{E_1}\otimes_{\HT}\Cb'_{E_2}\ar[r]\ar[d]&\Cb'_E\ar[d]\cr
\Cb_{E_1}\otimes_{\HT}\Cb_{E_2}\ar[r]&\Cb_E.}
\end{split}
\end{equation}
Recall that $\Cb_E$ is identified with the image by the obvious map
\begin{equation}
H^{\widetilde G}(C_\gen)\to H^{\widetilde G}(\gen\times\gen).
\end{equation}
Similarly, since $\widetilde C_\men$ is isomorphic to 
$\uen\times\men\times\men$ 
as a $\widetilde M$-module, where $\uen$ is the nilpotent radical of $\pen$,
we can identify $\Cb_{E_1}\otimes_{\HT}\Cb_{E_2}$ with the image of 
the direct image by the obvious inclusion
\begin{equation}
H^{\widetilde  M}(C_\men)\to H^{\widetilde M}(\widetilde C_\men).
\end{equation}
So the proposition follows from the commutativity of the diagram 
\begin{equation}
\begin{split}
\xymatrix{
H^{\widetilde G}(G\times_PC_\men)\ar[r]^-{\phi_G^*}\ar[d]&
H^{\widetilde G}(G\times_PC_\pen)\ar[r]^-{\psi_{G,*}}\ar[d]
&H^{\widetilde G}(C_\gen)\ar[d]\cr
H^{\widetilde G}(G\times_P\widetilde C_\men)\ar[r]^-{\phi^*}&
H^{\widetilde G}(G\times_P(\pen\times\pen))\ar[r]^-{\psi_{*}}
&H^{\widetilde G}(\gen\times\gen).}
\end{split}
\end{equation}
\end{proof}

\vspace{.1in}

For any commutative ring extension $R\subset L$ we abbreviate
\begin{equation}
\Cb'_{L}=\Cb'\otimes_{\HT}L,\qquad
\SC'_{L}=\SC'\otimes_{\HT}L,\qquad 
\SC_{L}=\SC\otimes_{\HT}L,\qquad etc.
\end{equation}

\vspace{.2in}

\subsection{The shuffle algebra}
Fix $E\in\mathscr V_n$. 
Let $G=GL_n$ and let $D\subset G$ be a maximal torus. 
The Poincar\'e duality and
the inverse image by the
obvious inclusion $\{0\}\to\gen\times\gen$ yield an isomorphism
$H^{\widetilde G}(\gen\times\gen)=R_{\widetilde G}.$
Composing it with \eqref{CO:14} we get a $R_{\widetilde G}$-linear map
\begin{equation}
\gamma_G:\Cb'_n\to R_{\widetilde G}.
\end{equation}
Taking the tensor power over $\HT$, we define a $R_{\widetilde D}$-linear map
\begin{equation}
\gamma_D=(\gamma_{\CC^\times})^{\otimes n}:
(\Cb'_1)^{\otimes n}\to R_{\widetilde D}.
\end{equation}
Recall that there are obvious isomorphisms
\begin{equation}
\gathered
R_{\widetilde D}=\HT[z_1,z_2,\dots,z_n],\qquad 
R_{\widetilde G}=\HT[z_1,z_2,\dots,z_n]^{\Sen_n}.
\endgathered
\end{equation}
Let
$K_{\widetilde D}$ and
$K_{\widetilde G}$ 
be the fraction fields.
We have the usual symmetrization operator
\begin{equation}
\SYM_n: K_{\widetilde D}\to K_{\widetilde G}.
\end{equation}

\vspace{.1in}

\begin{prop}\label{P:KHall} 
We have the commutative diagram
$$\xymatrix{
(\Cb'_1)^{\otimes
n}\ar[r]^{\mu_n}\ar[d]_{\gamma_D}&\Cb'_n\ar[d]^{\gamma_G}\cr
R_{\widetilde D}\ar[r]^{\nu_n}&R_{\widetilde G},}
$$ where $\mu_n$ is the multiplication in $\Cb'$ and
$\nu_n$ is given by
\begin{equation}\label{E:KHallform}
\gathered
\nu_n(P(z_1, \ldots, z_n))=
\SYM_n\bigl(k(z_1,z_2,\dots z_n)P(z_1,z_2,\dots z_n)\bigr),\\
k(z)=z^{-1}(x+y+z)(x-z)(y-z),\\
k(z_1,z_2,\dots z_n)=\prod_{i<j}k(z_i-z_j).
\endgathered
\end{equation}
\end{prop}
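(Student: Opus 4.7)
My plan is to prove the commutativity by passing to a smooth ambient variety and applying Atiyah--Bott torus localization with respect to $\widetilde D = D\times T$, where $D\subset G$ is a maximal torus.

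First I would interpret $\mu_n$ as $n$-fold iterated multiplication for the \emph{complete} flag $0\subset E_1\subset\cdots\subset E_n=E$, so that $M=D$ and $P=B$ is a Borel. With this choice, the general construction of Section~4.3 presents $\mu_n$ as the composite
\[
(\Cb'_1)^{\otimes n}=H^{\widetilde D}(\hen\times\hen)\xrightarrow{\mathrm{ind}}
H^{\widetilde G}(G\times_B C_\hen)
\xrightarrow{\phi_G^*}H^{\widetilde G}(G\times_B C_\ben)
\xrightarrow{\psi_{G,*}}H^{\widetilde G}(C_\gen),
\]
where $C_\hen=\hen\times\hen$. To compose with $\gamma_G$, I would consider the Cartesian square
\[
\xymatrix{G\times_B C_\ben\ar[r]^-{\psi_G}\ar@{^{(}->}[d]_-{j_X}&C_\gen\ar@{^{(}->}[d]^-{j_Y}\\
G\times_B(\ben\times\ben)\ar[r]^-{\psi}&\gen\times\gen}
\]
together with the vector bundle $\phi:G\times_B(\ben\times\ben)\to G\times_B(\hen\times\hen)$ extending $\phi_G$. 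Base change $j_{Y,*}\psi_{G,*}=\psi_*j_{X,*}$, the projection formula $j_{X,*}j_X^*=\bullet\cup[G\times_B C_\ben]$, and the identity $\phi_G^*=j_X^*\phi^*$ then rewrite $\gamma_G\circ\mu_n(P)$ as
\[
\gamma_G(\mu_n(P))\;=\;\iota_0^*\,\psi_*\Bigl([G\times_B C_\ben]\cdot\phi^*(\tilde P)\Bigr),
\]
where $\tilde P\in H^*_{\widetilde G}(G\times_B(\hen\times\hen))=R_{\widetilde D}$ corresponds to $P$ and $\iota_0:\{0\}\hookrightarrow\gen\times\gen$. Crucially, all varieties involved are now smooth (resp.\ the class is a fundamental class), and $\psi$ is proper.

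Next I would apply torus localization. The $\widetilde D$-fixed locus of $\gen\times\gen$ is $\{(0,0)\}$, so $\iota_0^*\iota_{0,*}(1)=\mathrm{eu}(\gen\oplus\gen)_{(0,0)}$; the $\widetilde D$-fixed locus of $G\times_B(\ben\times\ben)$ consists of the $n!$ isolated points $(w,0,0)$ with $w\in W=\Sen_n$, since $\widetilde D$-fixed vectors inside the fibre $\ben\times\ben$ force both components to lie in $\hen$ and then to be $0$ by the $T$-scaling. Pulling back $\phi^*(\tilde P)$ to the fixed point $(w,0,0)$ yields $w\!\cdot\!P=P(z_{w(1)},\dots,z_{w(n)})$, and pulling back the fundamental class $[G\times_B C_\ben]$ yields, by the regular embedding $C_\ben=\{[a,b]=0\}\subset \ben\times\ben$, the Euler class $w\!\cdot\!\mathrm{eu}(\uen\otimes qt)$. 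Writing $\mathcal E(w)$ for the Euler class of the tangent space $T_{(w,0,0)}(G\times_B(\ben\times\ben))=w\!\cdot\!(\gen/\ben\oplus \ben\otimes q\oplus \ben\otimes t)$, the localization formula gives
\[
\gamma_G(\mu_n(P))\;=\;\mathrm{eu}(\gen\oplus\gen)_{(0,0)}\cdot\sum_{w\in W}w\!\cdot\!\Bigl(\frac{P\cdot \mathrm{eu}(\uen\otimes qt)_{(0,0)}}{\mathcal E(e)}\Bigr).
\]
Since all Euler classes transform equivariantly under $W$, the sum is the standard Weyl symmetrization $\SYM_n$ applied to the bracketed expression.

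The last step is to check that the weight function in front of $P$ reduces to $k(z_1,\dots,z_n)=\prod_{i<j}k(z_i-z_j)$. Splitting the three tangent pieces into root-by-root contributions at $e$, each pair $i<j$ contributes (from $\gen/\ben$, from $\ben\otimes q$, and from $\ben\otimes t$ respectively) the three factors $z_i-z_j$, $z_i-z_j+x$, $z_i-z_j+y$, while the fundamental class $[C_\ben]$ contributes, for each pair $i<j$, the Euler factor $z_i-z_j-x-y$; the zero roots and the classes $\mathrm{eu}(\gen\oplus\gen)_{(0,0)}$, $\mathrm{eu}(\hen\oplus\hen)$ cancel across numerator and denominator. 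After canceling these, the ratio for each pair collapses to $k(z_i-z_j)$, which gives exactly \eqref{E:KHallform}. The main obstacle is this last bookkeeping: one must track the $T$-weights on $\ben$, $\ben$, $\gen/\ben$ and $\uen$ together with the correct signs coming from Borel-Moore Poincar\'e duality, and verify that the spurious factors carried by the dimension shift and the zero-weight contributions (both those of $\hen$ and those of the origin in $\gen$) cancel cleanly between $\mathrm{eu}(\gen\oplus\gen)$, $\mathcal E(e)$ and $\mathrm{eu}(\uen\otimes qt)$, leaving exactly the rational function $k(z_1,\dots,z_n)$.
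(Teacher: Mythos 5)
Your proof is correct and takes essentially the same approach as the paper: both proofs push the class into the smooth ambient spaces $Z=G\times_B(\ben\times\ben)$ and $\gen\times\gen$, exploit properness of $\psi$, and then compute $h^*\psi_*$ by Atiyah--Bott localization over the $n!$ fixed points indexed by $W$, with the symmetrization $\SYM_n$ arising from the sum over $w$ and the factor $k(z_i-z_j)$ from the ratio of Euler classes. The only organizational difference is that the paper first uses the self-intersection formula to replace the class on $Z$ by one supported on the zero section $G/B\hookrightarrow Z$ (dividing by $\eu(q^{-1}\ben^*+t^{-1}\ben^*)$), simplifies $\eu(q^{-1}\gen^*+t^{-1}\gen^*)\,\eu(q^{-1}\ben^*+t^{-1}\ben^*)^{-1}=\eu(q^{-1}\nen+t^{-1}\nen)$, and only then integrates over $(G/B)^{\widetilde D}$; you instead localize in one shot on $Z^{\widetilde D}=\{(w,0,0)\}$, which amounts to the same sum. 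The Euler-class cancellations you flag at the end are exactly the ones the paper carries out in its displayed chain of equalities, so there is no gap.
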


\begin{proof} 
Let $\den$ be the Lie algebra of $D$.
Since $C_\den$ is a vector space, 
the $R_{\widetilde D}$-module $H^{\widetilde D}(C_\den)$ is spanned by 
the set 
\begin{equation}
\{z^m\cdot[C_\den]\;;\;m\in\N^n\},\qquad
z^m=z_1^{m_1}z_2^{m_2}\cdots z_n^{m_n},\qquad
m=(m_1,m_2,\dots m_n)\in\N^n. 
\end{equation}
Here $[C_\den]$ is the fundamental class and 
$\cdot$ is the $R_{\widetilde D}$-module structure on
$H^{\widetilde D}(C_\den)$.  Note that
\begin{equation}\label{CO:24}\gamma_D(z^m\cdot[C_\den])=z^m.\end{equation} 
Let $B\subset G$ be a Borel
subgroup containing $T$. Let $\ben=\text{Lie}(B)$ and let
$\nen$ be its nilpotent radical.
We have
\begin{equation}
T^*_GX=G\times_BC_\den,\quad
T^*X=G\times_B(\nen\times C_\den),\quad
C_\den=\den\times\den,\quad
Z=G\times_B(\ben\times\ben).
\end{equation} 
Let $\Ind$ denote the induction 
\begin{equation}
H^{\widetilde D}(\bullet)=
H^{\widetilde B}(\bullet)\to H^{\widetilde G}(G\times_B\bullet).
\end{equation}
Consider the elements in $H^{\widetilde G}(T^*_GX)$ given by
\begin{equation}
\alpha_m=\Ind(z^m\cdot[C_\den]),\qquad m\geqslant 0.
\end{equation}
For a future use, we consider also the following
commutative diagram
\begin{equation}
\begin{split}
\xymatrix{
T^*_GX\ar[d]_j&&T^*_GX'=C_\gen\ar[d]\cr
T^*X&Z\ar[r]^-\psi\ar[l]_-\phi&T^*X'=\gen\times\gen\cr
&G/B\ar[u]_i\ar[r]^\pi&\{0\}.\ar[u]_{h}}
\end{split}
\end{equation}
The vertical maps are the obvious inclusions.
The multiplication \eqref{CO:3} gives 
\begin{equation}\label{CO:15}\nu_n(z^m)=
h^*\psi_*\phi^*j_*(\alpha_m).
\end{equation}
Now, we compute the right hand side of (\ref{CO:15}). 
We have
\begin{equation}
j_*(\alpha_m)=\Ind\bigl(z^m\eu(v\nen^*)\cdot[\nen\times
C_\den]\bigr).
\end{equation}
Therefore we have also
\begin{equation}
\phi^*
j_*(\alpha_m)=
\Ind\bigl(z^m\eu(v\nen^*)\cdot[\ben\times\ben]\bigr).
\end{equation}
Tensoring by $K_{\widetilde G}$, the maps $i_*$, $i^*$ become invertible by
the localization theorem. We have
\begin{equation}
\aligned
\nu_n(z^m)&=h^*\psi_*i_*\Ind\bigl(z^m \eu(v\nen^*)
\eu(q^{-1}\ben^*+t^{-1}\ben^*)^{-1}\cdot[G/B]\bigr),\cr
&=h^*h_*\pi_*\Ind\bigl(z^m \eu(v\nen^*)
\eu(q^{-1}\ben^*+t^{-1}\ben^*)^{-1}\cdot[G/B]\bigr),\cr
&=\eu(q^{-1}\gen^*+t^{-1}\gen^*)\cdot
\pi_*\Ind\bigl(z^m \eu(v\nen^*)
\eu(q^{-1}\ben^*+t^{-1}\ben^*)^{-1}\cdot[G/B]\bigr),\cr &=
\pi_*\Ind\bigl(z^m \eu(v\nen^*+q^{-1}\nen+
t^{-1}\nen)\cdot[G/B]\bigr).
\endaligned
\end{equation}
Thus the integration over the set $(G/B)^{\widetilde D}$ 
yields the formula
\begin{equation}
\nu_n(z^m)=\SYM_n(k(z_1,z_2,...z_n)\, z^m).
\end{equation}
\end{proof}

\vspace{.1in}

Now, we equip the $R$-module
\begin{equation}
\Shb=\bigoplus_{n \geqslant 0} \Shb_n,\qquad
\Shb_n=\HT[z_1, \ldots, z_n]^{\Sen_n}
\end{equation}
with the shuffle multiplication given by
\begin{equation}
(P\cdot Q)(z_1, \ldots, z_{m+n})=
\frac{1}{n! m!}\, \SYM_{n+m} \big( \prod_{i,j} k(z_i-z_j) 
P(z_1, \ldots, z_n) Q(z_{n+1}, \ldots, z_{n+m})\big).
\end{equation}
The product runs over all $i,j$ with 
$1 \leqslant i \leqslant n < j \leqslant n+m$. 
For $\dim\,E=1$ and $l\geqslant 0$ we abbreviate 
\begin{equation}\theta_l=z^l\cdot[C_E].\end{equation}
The following direct consequence of Proposition \ref{P:KHall}
is the first main result of this chapter.

\begin{theo}\label{thm:shuffleC} There is a unique $R$-algebra embedding 
$\mathbf{SC} \subset \Shb$ such that
$\theta_l \mapsto (z_1)^l$, where $z_1$ is viewed as 
an element in $\Shb_1$.
\end{theo}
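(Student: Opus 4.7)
The plan is to exploit the map $\gamma_G$ from Proposition \ref{P:KHall}, which already realizes $\SC$ as an $\HT$-submodule of $\Shb$. Since $\gen\times\gen$ is an affine space, Poincar\'e duality combined with pullback to the origin identifies $H^{\widetilde G}(\gen\times\gen)$ with $R_{\widetilde G}=\Shb_n$. Under this identification, the submodule $\SC_n$ is, by definition (see \eqref{CO:14}), the image of $\SC'_n$ in $\Shb_n$, so $\gamma_G:\SC_n\hookrightarrow\Shb_n$ is an injection.

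To promote this injection to an algebra embedding (equipping $\SC$ with its Hall product and $\Shb$ with its shuffle product), I would check that $\gamma_G$ is multiplicative on the generators $\theta_l$. Since $\SC$ is generated over $\HT$ by $\{\theta_l\;;\;l\geqslant 0\}$, it suffices to show that for $n\geqslant 1$ and $l_1,\dots,l_n\geqslant 0$,
\[
\gamma_G(\theta_{l_1}\cdots\theta_{l_n})=z_1^{l_1}\cdot z_1^{l_2}\cdots z_1^{l_n}\qquad\text{in }\Shb_n,
\]
the right-hand side being an iterated shuffle product. Proposition \ref{P:KHall} applied to $\mu_n$ together with \eqref{CO:24} yields
\[
\gamma_G(\theta_{l_1}\cdots\theta_{l_n})=\nu_n(z_1^{l_1}\cdots z_n^{l_n})=\SYM_n\Bigl(\prod_{i<j}k(z_i-z_j)\,z_1^{l_1}\cdots z_n^{l_n}\Bigr).
\]
It then remains to verify by induction on $n$ that the iterated shuffle product equals this same symmetrization. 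For the inductive step, writing $P_{n-1}=\SYM_{n-1}(\prod_{i<j<n}k(z_i-z_j)\prod_{a<n}z_a^{l_a})$, one expands $P_{n-1}\cdot z_1^{l_n}$ using the definition of the shuffle product and invokes the identity $\SYM_n(\sigma\cdot X)=\SYM_n(X)$ for $\sigma\in\Sen_{n-1}\subset\Sen_n$; the resulting multiplicity $(n-1)!$ cancels the prefactor $1/(n-1)!$ in the shuffle definition, delivering the desired equality.

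Uniqueness of the embedding is immediate since $\SC$ is generated over $\HT$ by the $\theta_l$. The only step demanding genuine care is the inductive identity between iterated shuffle products and a single symmetrization; this is a routine but slightly finicky combinatorial calculation, for which the factorial normalizations in the definition of $\Shb$ are tuned precisely.
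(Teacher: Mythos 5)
Your proposal is correct and takes the same route as the paper, which simply records the theorem as a ``direct consequence'' of Proposition \ref{P:KHall}; you have merely unwound that remark, checking that $\nu_n$ is the $n$-fold shuffle multiplication via the symmetrization/normalization calculation, which is the verification the paper leaves implicit.
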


\vspace{.1in}

We state one useful consequence.

\vspace{.1in}

\begin{cor}\label{Cor:automshuffle} For $u \in \C$ the assignment 
$\theta_l \mapsto \sum_{i=0}^l \binom{l}{i} u^{l-i} \theta_i$ extends to an 
algebra automorphism $\tau_u \in \Aut(\mathbf{SC})$. Moreover, 
we have $\tau_u \circ \tau_v=\tau_{u+v}$ for $u, v \in \C$.
\end{cor}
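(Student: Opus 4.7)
My plan is to realize $\tau_u$ as the restriction of a simultaneous translation on the shuffle algebra $\Shb$, using the embedding $\mathbf{SC} \subset \Shb$ from Theorem \ref{thm:shuffleC}. For each $u \in \CC$ I would define an $\HT$-linear map $\tilde\tau_u \colon \Shb \to \Shb$ by
$$
\tilde\tau_u\bigl(P(z_1, \ldots, z_n)\bigr) = P(z_1 + u, \ldots, z_n + u), \qquad P \in \Shb_n.
$$
This is visibly well-defined (it preserves $\Sen_n$-invariance), $\HT$-linear, and satisfies $\tilde\tau_u \circ \tilde\tau_v = \tilde\tau_{u+v}$, hence each $\tilde\tau_u$ is an $\HT$-linear automorphism of the graded module $\Shb$ with inverse $\tilde\tau_{-u}$.

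The first substantive step is to check that $\tilde\tau_u$ is an algebra automorphism of $\Shb$. This is the point at which the specific form of the shuffle kernel matters: the factor $\prod_{i<j} k(z_i - z_j)$ depends only on the differences $z_i - z_j$, so it is invariant under the simultaneous shift $z_i \mapsto z_i + u$. A direct computation from the definition of the shuffle product then yields
$$
\tilde\tau_u(P \cdot Q) = \tilde\tau_u(P) \cdot \tilde\tau_u(Q), \qquad P \in \Shb_n,\ Q \in \Shb_m,
$$
because the symmetrization operator $\SYM_{n+m}$ commutes with the substitution $z_i \mapsto z_i + u$.

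The second step is to show that $\tilde\tau_u$ preserves the subalgebra $\mathbf{SC}$. By Theorem \ref{thm:shuffleC}, $\mathbf{SC}$ is generated as an $\HT$-algebra by $\mathbf{SC}_1$, which contains the elements $\theta_l \leftrightarrow z_1^l$ for all $l \geqslant 0$. Since
$$
\tilde\tau_u(z_1^l) = (z_1 + u)^l = \sum_{i=0}^{l} \binom{l}{i} u^{l-i} z_1^i
$$
is a finite $\HT$-linear combination of the $\theta_i$'s, it lies in $\mathbf{SC}_1$. Hence $\tilde\tau_u(\mathbf{SC}) \subseteq \mathbf{SC}$, and applying the same argument to $\tilde\tau_{-u}$ gives equality. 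We define $\tau_u$ as the restriction $\tilde\tau_u|_{\mathbf{SC}}$; by construction it acts on generators by the stated formula, and the identities $\tau_u \tau_{-u} = \operatorname{id}_{\mathbf{SC}}$ and $\tau_u \circ \tau_v = \tau_{u+v}$ follow from the corresponding identities for $\tilde\tau$.

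There is really no serious obstacle here; the only point deserving care is verifying that the shuffle kernel $\prod k(z_i - z_j)$ is invariant under the diagonal shift, which holds simply because $k$ is evaluated on differences. Everything else (well-definedness, multiplicativity, preservation of $\mathbf{SC}$, invertibility, the group law) follows formally.
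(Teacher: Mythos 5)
Your proposal is correct and follows exactly the same route as the paper: realize $\tau_u$ via the embedding $\mathbf{SC}\subset\Shb$ as the restriction of the simultaneous shift $z_i\mapsto z_i+u$, and observe that the shuffle kernel $k(z_1,\ldots,z_n)$ depends only on differences $z_i-z_j$ and is therefore invariant. The paper's argument is a terse two-sentence version of the same idea; you have merely spelled out the formal bookkeeping (well-definedness, multiplicativity, preservation of $\mathbf{SC}$ using that $\mathbf{SC}_1$ contains all the $\theta_l$, invertibility via $\tau_{-u}$, and the group law).
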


\begin{proof} Under the embedding $\mathbf{SC} \subset \Shb$ the
map $\tau_u$ is the restriction of the automorphism induced by the substitution
$z_i \mapsto z_i+u$. Observe that 
$k(z_1, \ldots, z_n)$ is invariant under this substitution.
\end{proof}

\vspace{.15in}

\subsection{Wilson operators on $\Cb$ and $\mathbf{SC}$} 
The canonical $R_{\widetilde{GL}_n}$-module structure on $\Cb'_n$ gives a 
graded $\Lambda$-algebra structure on $\Cb'$ and $\Cb$, which we will
denote by $\bullet$.

\vspace{.1in}

\begin{lem}\label{L:WilsonC} 
(a) The action of $\Lambda$ on $\Cb,$ $ \Cb'$ preserves the spherical 
subalgebras $\mathbf{SC}',$ $ \mathbf{SC}$.

(b) The $\Lambda$-action on $\Cb'_n,$ $\Cb_n$ 
factors through $\Lambda_{n}$.

(c) 
For $p \in \Lambda$ and $u,v \in \Cb'$ (or $\Cb$) we have
$p \bullet (uv)=\sum (p_1 \bullet u)(p_2 \bullet v).$
\end{lem}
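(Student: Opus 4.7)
The plan is to establish (b) immediately from the definitions, to deduce (c) by a projection-formula argument applied to the correspondence defining the cohomological Hall multiplication, and to derive (a) as a formal consequence of (c). Part (b) is tautological: the $\Lambda$-action on $\Cb'_n$ is, by construction, the cup-product action induced from the composition $\Lambda\xrightarrow{\pi_n}\Lambda_n=R_{GL_n}\hookrightarrow R_{\widetilde{GL}_n}$, and the same conclusion holds for $\Cb_n$ via the $R_{\widetilde{GL}_n}$-linear surjection $\Cb'_n\twoheadrightarrow\Cb_n$.

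The central step is (c). I would use the description of the Hall multiplication from Lemma~\ref{lem:Comm} as the composition of the Kunneth identification $\Cb'_{E_1}\otimes_\HT\Cb'_{E_2}=H^{\widetilde M}(C_\men)$ with $\psi_{G,*}\circ\phi_G^*$ for
\begin{equation*}
\xymatrix{G\times_PC_\men&G\times_PC_\pen\ar[l]_-{\phi_G}\ar[r]^-{\psi_G}& C_\gen}.
\end{equation*}
The desired identity will follow from the projection formula once one understands how $p_l$ of the tautological $GL_E$-module pulls back along $\psi_G$. The key geometric input is that the pullback to $G\times_P C_\pen$ of the tautological $GL_E$-representation $E$ carries a canonical two-step filtration whose successive quotients are the $\phi_G$-pullbacks of the tautological $M$-representations $E_1$ and $E_2$; this is immediate from the $P$-stable flag $0\subset E_1\subset E$ and the block form of the $P$-action on $E$. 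By the Whitney sum formula $c(E)=c(E_1)c(E_2)$, and since the power sum $p_l$ is primitive for the standard Hopf structure on $\Lambda$, this yields
\begin{equation*}
\psi_G^*\,p_l(E)=\phi_G^*\bigl(p_l(E_1)\otimes 1+1\otimes p_l(E_2)\bigr).
\end{equation*}
The projection formula for the proper map $\psi_G$ then delivers, for $u\in\Cb'_{E_1}$ and $v\in\Cb'_{E_2}$,
\begin{equation*}
p_l\bullet(u\cdot v)=(p_l\bullet u)\cdot v+u\cdot(p_l\bullet v),
\end{equation*}
which is precisely (c) in the case $p=p_l$. The extension to arbitrary $p\in\Lambda$ is automatic, since $\Delta\colon\Lambda\to\Lambda\otimes\Lambda$ is a ring map determined by its values on the generators $p_l$, and $\bullet$ is a ring action.

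Part (a) then follows formally. The subspace $\Cb'_1$ is tautologically $\Lambda$-stable, and since $\SC'$ is the subalgebra it generates, an immediate induction based on (c) shows that $\SC'$ is $\Lambda$-stable. The analogous statements for $\Cb$ and $\SC$ follow by passing along the $\Lambda$-equivariant surjections $\Cb'\twoheadrightarrow\Cb$, $\SC'\twoheadrightarrow\SC$ of Proposition~\ref{prop:SC}, the equivariance being a direct consequence of the projection formula applied to the proper closed immersion $C_\gen\subset\gen\times\gen$. The only technical point requiring care is the existence of the filtration of the tautological $GL_E$-bundle on $G\times_P C_\pen$, but this is a routine consequence of the block-upper-triangular form of the $P$-action on $E$ and so poses no real obstacle.
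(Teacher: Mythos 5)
Your proposal is correct and follows essentially the same route as the paper: the paper proves (c) by observing the commutativity of the diagram identifying the comultiplication $\Delta$ on $\Lambda$ with the restriction map $R_{\widetilde{GL}_{n+m}}\to R_{\widetilde M}$, and deduces (a) from (c) together with $p_l\bullet\theta_k=\theta_{l+k}$; your filtration-plus-projection-formula argument is precisely the geometric content behind that commutative diagram, spelled out in more detail. The only thing the paper leaves more implicit is the $\Lambda$-equivariance of the surjection $\Cb'\to\Cb$, which you correctly justify by the projection formula for the proper inclusion $C_\gen\subset\gen\times\gen$.
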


\begin{proof} Statement $(b)$ is clear. 
Observe that $p_l \bullet \theta_k=\theta_{l+k}$, hence $(c)$
implies $(a)$. Finally $(c)$ is a consequence of the commutativity 
of the following diagram
\begin{equation}
\begin{split}
\xymatrix{\Lambda_K \ar[d]_-{\Delta} \ar[r]^-{\pi_{n+m}} & \Lambda_{n+m,K} 
\ar@{=}[r] &   R_{\widetilde{GL}_{n+m}} \ar[d]\\
\Lambda_K \otimes_K \Lambda_K \ar[r]^-{\pi_n \otimes \pi_m} & 
\Lambda_{n,K} \otimes_K \Lambda_{m,K} 
\ar@{=}[r]  &R_{\widetilde{M}}}
\end{split}
\end{equation}
where $M\subset GL_{n+m}$ is the standard parabolic with Levi 
$GL_n \times GL_m$, and where the rightmost arrow is 
the restriction map.
\end{proof}

\vspace{.2in}

\section{Proof of Theorem~\ref{thm:SH/U1}}
\label{sec:prooflevel1}

\subsection{Part 1 : the positive and negative halves} 
Our first task is to construct an isomorphism 
$\widetilde\U^{(1),+}_K \to\SH^{+}_{K}$. 
For this, we will use the canonical representation 
of $\U_K^{(1),+}$ on $\widetilde\Lb^{(1)}_K$. It is the restriction of the 
canonical representation of $\U_K^{(1)}$ on $\Lb^{(1)}_K$
considered in Section \ref{sec:U1}.

\vspace{.1in}

\begin{prop}
\label{P:proof1}
(a) The map 
$D_{\xb} \mapsto h_{\xb}$ for
$\xb\in\mathscr E^+$ yields an
algebra isomorphism $\Psi^+:\widetilde\SH^{(1),+}_{K}\to\widetilde\U^{(1),+}_K$ 
which takes $\widetilde\SH^{(1),>}_{K}$ onto $\widetilde\U^{(1),>}_K$. 

(b) The map \eqref{Phi} intertwines $\rho^+$ 
with the canonical representation of $\widetilde\U^{(1),+}_K$ on $\widetilde\Lb^{(1)}_K$.
\end{prop}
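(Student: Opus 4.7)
The plan is to establish (a) and (b) together by transporting the polynomial representation $\rho^+$ of $\widetilde\SH^{(1),+}_K=\SH^+\otimes K$ to the canonical representation of $\widetilde\U^{(1),+}_K$ on $\widetilde\Lb^{(1)}_K$ via the identification $\Phi:J_\lambda\mapsto[I_\lambda]$. By Lemma \ref{lem:uno} and \eqref{E:defD1l}, $\widetilde\SH^{(1),+}_K$ is generated by $\{D_{0,l+1},D_{1,l}:l\geqslant 0\}$, and by \eqref{e/f} the algebra $\widetilde\U^{(1),+}_K$ is generated by the corresponding family $\{h_{0,l+1},h_{1,l}\}$. It therefore suffices to check generator-by-generator that $\Phi$ intertwines $\rho^+(D_\xb)$ with $h_\xb$, and then conclude using faithfulness of $\rho^+$.

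For the diagonal part, Remark \ref{rem:f0l} gives $f_{0,l}$ as cup product with $p_l(\tau_n)$, whose Chern roots at $I_\lambda$ are $x(s)x+y(s)y=x\cdot c(s)$ for $s\in\lambda$ by \eqref{E:23} and $\kappa=-y/x$. Hence $h_{0,l+1}([I_\lambda])=(\sum_{s\in\lambda}c(s)^l)\,[I_\lambda]$, matching $\rho^+(D_{0,l+1})(J_\lambda)$ by Proposition \ref{prop:rho+}. For the Hecke operator, a $\widetilde D$-localization on the smooth correspondence $\Hilb_{n,n+1}$ gives
\[
\langle I_\lambda \,|\, f_{1,l}\,|\, I_\mu\rangle=\frac{c_1(\tau_{\mu,\lambda})^l\,\eu(N_{\mu,\lambda})}{\eu_\lambda}
\]
for $\mu\subset\lambda$, $|\lambda\setminus\mu|=1$, and zero otherwise. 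Using $c_1(\tau_{\mu,\lambda})=x\cdot c(\lambda\setminus\mu)$ from \eqref{HR:5} (at $r=1$) and the prefactor $x^{1-l}y$ in the definition of $h_{1,l}$, the matrix coefficient of $h_{1,l}$ becomes $c(\lambda\setminus\mu)^l\cdot xy\,\eu(N_{\mu,\lambda})/\eu_\lambda$. By the Pieri formula \eqref{E:Pieri2} for $\rho^+(D_{1,l})$, the matching thus reduces to the identity
\[
\frac{xy\,\eu(N_{\mu,\lambda})}{\eu_\lambda}=\psi_{\lambda\setminus\mu}.
\]

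I would prove this identity by writing out $\eu_\lambda$ and $\eu(N_{\mu,\lambda})$ using \eqref{E:21} and \eqref{E:26} and substituting $y=-\kappa x$. The two Euler factors of $\eu_\lambda$ coming from the added box $s_0=\lambda\setminus\mu$, at which $l_\lambda(s_0)=a_\lambda(s_0)=0$, contribute exactly $c_1(q^{-1})\cdot c_1(t^{-1})=xy$; boxes $s\in\mu$ outside the row and column of $s_0$ have $l_\mu(s)=l_\lambda(s)$ and $a_\mu(s)=a_\lambda(s)$ and therefore contribute identical factors to $\eu_\lambda$ and $\eu(N_{\mu,\lambda})$ that cancel; boxes $s$ in the column of $s_0$ contribute $h^\lambda(s)/h^\mu(s)$ and boxes in the row contribute $h_\lambda(s)/h_\mu(s)$, yielding $\eu_\lambda/\eu(N_{\mu,\lambda})=xy/\psi_{\lambda\setminus\mu}$ as required. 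With this in hand the two families of generators satisfy the same relations on the faithful module $\widetilde\Lb^{(1)}_K$, so $D_\xb\mapsto h_\xb$ extends to a surjective algebra homomorphism $\Psi^+$, and injectivity follows from the faithfulness of $\rho^+$ (Proposition \ref{prop:rho+}). The restriction $\widetilde\SH^{(1),>}_K\simeq\widetilde\U^{(1),>}_K$ follows from Proposition \ref{P:SH>} and the fact that $\widetilde\U^{(1),>}_K$ is generated by $\{f_{1,l}\}$ alone; part (b) is then the intertwining we have just checked. The main obstacle is the arm/leg identity above; once it is verified, everything else is essentially formal.
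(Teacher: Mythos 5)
Your approach is essentially the same as the paper's: the paper cites formulas \eqref{E:34}, \eqref{E:36}, \eqref{E:Horace}, \eqref{E:Pieri2} and reduces to the identity $\psi_{\lambda\setminus\mu}=L_{\mu,\lambda}$, which after unwinding the localization computation for $f_{1,l}$ is exactly your identity $xy\,\eu(N^*_{\mu,\lambda})/\eu_\lambda=\psi_{\lambda\setminus\mu}$. One small slip in your factor-by-factor verification: boxes in the column $C_{\lambda\setminus\mu}$ contribute $h_\lambda(s)/h_\mu(s)$ (the leg-hooks) and boxes in the row $R_{\lambda\setminus\mu}$ contribute $h^\lambda(s)/h^\mu(s)$ (the arm-hooks) to the ratio $\eu_\lambda/\eu(N^*_{\mu,\lambda})$, i.e.\ you have the two families transposed; with this correction the product is indeed $xy/\psi_{\lambda\setminus\mu}$ and the rest of the argument goes through.
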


\begin{proof} 
First, we compare the action of  $D_{\xb}$ on $\Lambda_{K}$
with the action of $h_{\xb}$ on $\widetilde\Lb^{(1)}_K$, 
under the isomorphism \eqref{Phi}. 
These actions are described by the formulas \eqref{E:Horace}
and (\ref{E:Pieri2}) for $D_{\xb}$,
and by the formulas \eqref{E:36} and \eqref{E:34} 
for $h_{\xb}$. 
These formulas coincide, because $\psi_{\lambda\setminus\mu}=L_{\mu,\lambda}$.
Since $\rho^+$ 
is a faithful representation, 
see Proposition~\ref{prop:rho+},
this yields the isomorphism $\Psi^+$ above.
\end{proof}

\vspace{.1in}

\begin{rem}
By Propositions \ref{prop:generators/involution2} 
and  \ref{prop:involutionU},
the $K$-algebras $\widetilde\SH^{(1),-}_K$ and 
$\widetilde\U^{(1),-}_K$ are isomorphic to the 
opposite $K$-algebras of $\widetilde\SH^{(1),+}_K$ and $\widetilde\U^{(1),+}_K$ respectively.
Thus, by Proposition~\ref{P:proof1}, the assignment 
$D_{\xb} \mapsto h_{\xb}$
for $\xb\in\mathscr E^-$ extends to an algebra isomorphism 
$\Psi^-:\widetilde\SH^{(1),-}_{K} \to\widetilde\U^{(1),-}_K$. 
\end{rem}

\vspace{.15in}

\subsection{Part 2 : glueing the positive and negative halves}
\label{sec:part3}
We must prove that the two algebra isomorphisms $\Psi^+,$ $\Psi^-$
glue together to an algebra homomorphism 
\begin{equation}\label{Psi} 
\Psi:\widetilde\SH_{K}^{(1)}\to\widetilde \U^{(1)}_K.
\end{equation} 
It suffices to check  \eqref{E:rel3bis2}.
The proof of this relation follows from 
Appendix \ref{app:B} by setting $r=1$ and $e_\a=0$ there.
To finish the proof of Theorem~\ref{thm:SH/U1}, it remains to show that the map 
$\Psi$ is an isomorphism. 
Since it is clearly surjective, we only have to check that it is 
injective. Our argument is based on the existence of 
triangular decompositions for $\widetilde\SH^{(1)}_K$ and $\widetilde\U^{(1)}_K$. 
First, let us quote the following proposition
whose proof is given in Appendix \ref{app:C}.

\vspace{.1in}

\begin{prop}\label{P:proof2} The multiplication gives an isomorphism 
$$m:\widetilde\U^{(1),>}_K \otimes_K \widetilde\U^{(1),0}_K \otimes_K 
\widetilde\U^{(1),<}_K \to\widetilde\U^{(1)}_K.$$
\end{prop}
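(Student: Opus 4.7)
The plan is to establish surjectivity and injectivity of $m$ separately; the latter is the main difficulty.

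For surjectivity, I would show that any monomial in the generators $h_{1,l}$, $h_{0,l}$, $h_{-1,l}$ of $\widetilde\U^{(1)}_K$ can be rewritten as a sum of ordered monomials (positive half, then zero part, then negative half). Three families of relations suffice: the commutators $[h_{0,l}, h_{\pm 1,k}] = \pm h_{\pm 1, l+k-1}$, which are inherited from $\SH^{\cb}$ via the isomorphisms $\Psi^{\pm}$ of Proposition~\ref{P:proof1} and preserve each half; and the cross-commutators $[h_{-1,k}, h_{1,l}]$, which lie in $\widetilde\U^{(1),0}_K$ by the explicit computation in Appendix~\ref{app:B} specialized to $r=1$, $e_\alpha = 0$. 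A standard straightening argument then yields surjectivity.

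For injectivity, I would exploit the faithful action of $\widetilde\U^{(1)}_K$ on $\widetilde\Lb^{(1)}_K = \Lambda_K$ together with the $\Z$-grading by rank. Suppose $u = \sum_i u^>_i \otimes u^0_i \otimes u^<_i$ lies in $\Ker m$; decompose $u$ by total degree, group the summands by the degree of $u^<_i$, and let $-N$ be the most negative degree that occurs, with the corresponding $u^<_i$ chosen linearly independent over $K$. Apply the relation $\sum_i u^>_i u^0_i u^<_i = 0$ to the fixed-point classes $[I_\mu]$ with $|\mu| = N$. Terms with $\deg u^<_j > -N$ land in graded pieces of $\widetilde\Lb^{(1)}_K$ distinct from those produced by the $-N$-degree terms and can be separated by projection onto the appropriate rank sector, while the $-N$-degree terms reduce to $\sum_i c^\mu_i (u^>_i u^0_i)\,[I_\emptyset]$, where $c^\mu_i \in K$ is the coefficient of $[I_\emptyset]$ in $u^<_i[I_\mu]$. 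Letting $\mu$ vary over partitions of $N$ and using the faithfulness of $\widetilde\U^{(1),+}_K$ on $\widetilde\Lb^{(1)}_K$—which combines Proposition~\ref{P:proof1} with the PBW-type statement of Proposition~\ref{P:SH>}—forces the $-N$-degree contribution of $u$ to vanish; iterate downward on $N$.

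The hard step will be making the separation of degrees rigorous. The cleanest route I see is to transfer the order filtration on $\SH^{\cb}$ to $\widetilde\U^{(1),\pm}_K$ through $\Psi^{\pm}$, and to filter $\widetilde\U^{(1),0}_K$ by the cohomological degree. Then $m$ becomes a filtered map, and on the associated graded the three factors commute up to lower-order terms, so that the associated graded multiplication is essentially a multiplication of three polynomial algebras in algebraically independent generators—whose injectivity is elementary. Lifting back by a standard filtered-to-graded argument then gives injectivity of $m$ itself.
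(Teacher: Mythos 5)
Your surjectivity sketch matches the paper's (straightening, using $[h_{0,l},h_{\pm 1,k}]=\pm h_{\pm 1,l+k-1}$ and the computation of $[h_{-1,k},h_{1,l}]$ from Appendix~\ref{app:B}). The injectivity argument, however, has two genuine gaps, one in each of the routes you outline.

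In the first route, you decompose $u$ into rank-homogeneous components, group the summands by $\deg u^<_i$, apply $m(u)$ to $[I_\mu]$ with $|\mu|=N$, and claim that terms with $\deg u^<_j>-N$ "land in graded pieces distinct from those produced by the $-N$-degree terms and can be separated by projection onto the appropriate rank sector." This is false: if $u$ is homogeneous of rank degree $d$, then $\deg u^>_j=d-\deg u^<_j$ for every $j$, so every composite $u^>_j u^0_j u^<_j[I_\mu]$ lands in the single sector $\widetilde\Lb^{(1)}_{N+d,K}$. Rank projection cannot distinguish the contributions. There is a second problem: even for the $-N$-degree summands, reducing to $[I_\emptyset]$ only exposes the constant term of the polynomial $R_i=u^0_i$ in the $f_{0,l}$'s, since $f_{0,l}[I_\emptyset]=0$ for $l\geqslant 1$. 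Nothing in the argument probes $R_i$ beyond its constant term, so linear independence of the $R_i$'s is never used. Compare this with the paper's device of the stacked partitions $\lambda_1\circledast\lambda_2\circledast\lambda_3$: the rectangular spacers force the annihilation operators to act entirely in $\lambda_1$ and the creation operators entirely in $\lambda_3$, giving the factorization (\ref{E:PP5}), and the free middle partition $\lambda_2$ can then be varied to probe $R_i$ via its eigenvalues. That geometric separation is precisely what your version is missing.

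The second route (pass to the associated graded for the order filtration, where the three factors commute, and invoke an "elementary" injectivity of a multiplication of three polynomial algebras) is circular as stated. Each of $\gr\widetilde\U^{(1),>}_K$, $\gr\widetilde\U^{(1),0}_K$, $\gr\widetilde\U^{(1),<}_K$ is indeed a polynomial algebra in its own generators, but the "elementary" step you want — that the union of all these generators is algebraically independent inside $\gr\widetilde\U^{(1)}_K$ — is exactly the graded form of the triangular decomposition you are trying to prove, and $\gr\widetilde\U^{(1)}_K$ is only a quotient of the tensor product until that is established. You also cannot import the triangular decomposition of $\SH^\cb$ (Proposition~\ref{1.8:prop1}), since its injectivity is proved via Corollary~\ref{cor:injectivity}, which depends on Theorem~\ref{thm:SH/Ur}, which itself depends on the present proposition through Theorem~\ref{thm:SH/U1}. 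Some independent input (e.g. a faithful graded representation with explicit combinatorics) is required at this step, and supplying it takes you back to something of the complexity of the paper's $\circledast$-argument.
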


\vspace{.1in}

Let $\Psi^>,$ $ \Psi^0,$ $ \Psi^<$ be the restrictions of 
$\Psi^+$, $\Psi^-$  
to $\widetilde\SH^{(1),>}_K,$ $\widetilde\SH^{(1),0}_K$ and $\widetilde\SH^{(1),<}_K$.
We have the following commutative diagram
\begin{equation}\begin{split}
\label{C:proof1}\xymatrix{ \widetilde\SH^{(1),>}_{K} \otimes_K
\widetilde\SH^{(1),0}_{K} \otimes_K\widetilde \SH^{(1),<}_{K}
\ar[rr]^-{\Psi^>\otimes\Psi^0\otimes\Psi^<} \ar[d]_-{m} && 
\widetilde\U^{(1),>}_K \otimes_K \widetilde\U^{(1),0}_K \otimes_K \widetilde\U^{(1),<}_K \ar[d]^-{m}\\
\widetilde\SH_{K}^{(1)}\ar[rr]^-{\Psi}& &\widetilde \U^{(1)}_K.}
\end{split}
\end{equation}
Further, we have the following isomorphisms
\begin{equation}
\widetilde\SH^{(1),0}_K=K[D_{0,l}\,;\,l\geqslant 1], \qquad
\widetilde\U^{(1),0}_K=K[h_{0,l}\,;\,l\geqslant 1].
\end{equation}
Thus, by Proposition~\ref{P:proof1} and Proposition~\ref{P:proof2}, 
the top arrow and the right one are isomorphisms. 
The left arrow is surjective by Proposition \ref{1.8:prop1}.
Thus the left arrow and the bottom one are both isomorphisms.
Theorem~\ref{thm:SH/U1} is proved.

\vspace{.2in}

\section{Proof of Theorem \ref{thm:SH/Ur} }
\label{sec:proofthmSH/Ur}

\subsection{Part 1 : the positive and negative halves} 
Given $E_1, E\in\mathscr V$ with
$E_1\subset E$, we write 
\begin{equation}
\gathered
E_2=E/E_1, \qquad
M=GL_{E_1}\times GL_{E_2},\qquad P=\{g\in G\,;\,g(E_1)=E_1\},\\
X'=\gen\times \Hom(\C^r,E),\quad Y=\men\times \Hom(\C^r,E_2),\qquad
V=\pen\times \Hom(\C^r,E).
\endgathered
\end{equation}
Here $\pen$, $\men$ are the Lie algebras of $P$, $M$.
Consider the obvious maps
\begin{equation}
\pi:E\to E_2,\qquad
p:V\to Y,\qquad
q:V\to X'.
\end{equation}
For $x\in \pen$ let $x_\men$ be its projection in $\men$, modulo the nilpotent radical $\uen$ of $\pen$.
Let $X$, $W$, $Z$, $Z_G$, $\phi$, $\psi$
be as in Section \ref{sec:correspondence}
and $M_{r,E}$, $N_{r,E}$ be as in \eqref{notation1}.
Define 
\begin{equation}\gathered
N_\men=(\gen_{E_1})^2\times N_{r,E_2}=\men^2\times\Hom(E,\CC^r)\times\Hom(\CC^r,E),\\
M_\men=C_{E_1}\times M_{r,E_2}=\{(a,b,\varphi,v)\in
N_\men\,;\,0=[a,b]+v\circ\varphi\},\\
N_\pen=\pen^2\times \Hom(E_2,\C^r)\times \Hom(\C^r,E), \\
M_\pen=N_\pen\cap M_{r,E}=\{(a,b,\varphi,v)\in
N_\pen\,;\,0=[a,b]+v\circ\varphi\},\\
\widetilde N_\men=\{(c,a,b,\varphi,v)\in\pen\times
N_\men\,;\,c_\men=[a,b]+v\circ\varphi\}\simeq \uen\times N_\men.\\
\endgathered
\end{equation}
We have the following technical lemma \cite[lem.~8.2]{SV2}.

\vspace{.1in}

\begin{lem}
\label{lem:toto89}
(a) We have canonical isomorphisms of $G$-varieties
$$\begin{aligned}
T^*X=G\times_P\widetilde N_\men,\quad Z=G\times_PN_\pen,
\quad T^*X'=N_{r,E},
\end{aligned}$$
$$\begin{aligned}
T^*_GX=G\times_P M_\men,\quad 
Z_G=G\times_PM_\pen,
\quad T^*_GX'=M_{r,E}.
\end{aligned}$$

(b) The maps $\phi:Z\to T^*X$ and $\psi:Z\to T^*X'$ in 
\eqref{4.1:diagram2} are given,
for $(a,b,\varphi,v)\in N_\pen$, by
$$\begin{gathered}
\phi((g,a,b,\varphi,v)\ \mod\ P)=
(g,[a,b]+v\circ\varphi,a_\men,b_\men,\varphi,\pi\circ v)\ \mod\
P,\cr \psi((g,a,b,\varphi,v)\ \mod\
P)=(gag^{-1},gbg^{-1},(\varphi\circ\pi)g^{-1},gv).
\end{gathered}$$

(c) The inclusion $T^*_GX\subset T^*X$ is induced by the inclusion
$M_\men\to\widetilde N_\men$, $(a,b,\varphi,v)\mapsto(0,a,b,\varphi,v)$.
The inclusion $Z_G\subset Z$ is induced by the obvious inclusion
$M_\pen\subset N_\pen$.
The inclusion $T^*_GX'\subset T^*X'$ is the obvious one.
\end{lem}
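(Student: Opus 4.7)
The proof proposal is a direct generalization of Lemma~\ref{lem:Comm} above (which handles the purely commuting-variety case) to the framed moduli setting; it is essentially a careful bookkeeping of the general construction of Section~\ref{sec:correspondence} applied to the present diagram \eqref{4.1:diagram1}. Throughout, I would use the trace form to identify $\gen^*=\gen$, $\men^*=\men$, $\pen^*=\gen/\uen_-$, and the natural duality $\Hom(E,\C^r)\otimes\Hom(\C^r,E)\to\C$. The plan is to identify each of the six varieties explicitly, then verify the formulas for $\phi,\psi$, and finally recover $(c)$ as a direct consequence.

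For $X'=\gen\times\Hom(\C^r,E)$, which is a $G$-module, the cotangent bundle is $X'\times(X')^*$, and under the above identifications this is $\gen^2\times\Hom(E,\C^r)\times\Hom(\C^r,E)=N_{r,E}$. A standard computation of the moment map for a linear $G$-action yields $\mu_G(a,b,\varphi,v)=[a,b]+v\circ\varphi$, so $T^*_GX'=\mu_G^{-1}(0)=M_{r,E}$. For $X=G\times_PY$ with $Y=\men\times\Hom(\C^r,E_2)$, formula \eqref{induction} gives $T^*X=T^*_P(G\times Y)/P$; after left-trivializing $T^*G=G\times\gen^*$ and writing the $P$-moment map constraint, one finds $T^*X=G\times_P(\uen\times T^*Y)=G\times_P(\uen\times N_\men)$. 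Writing a general point in the form $(c,a,b,\varphi,v)$ with $c\in\pen$ subject to $c_\men=\mu_M(a,b,\varphi,v)=[a,b]+v\circ\varphi$ (where $\mu_M$ is the $M$-moment map on $T^*Y$) recovers $\widetilde N_\men\simeq\uen\times N_\men$. Imposing the full $G$-moment map (i.e.\ $c=0$) then yields $T^*_GX=G\times_PM_\men$.

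For $Z=T^*_W(X\times X')$ with $W=G\times_PV$ and $V=\pen\times\Hom(\C^r,E)$, the closed embedding $V\hookrightarrow Y\times X'$, $(a,v)\mapsto(p(a,v),q(a,v))$, together with \eqref{induction} identifies $Z$ with $G\times_P(T^*_PV)$, where $T^*_PV$ is the conormal bundle of $V$ inside $Y\times X'$ intersected with the zero set of the $P$-moment map on $T^*(Y\times X')$; unwinding this yields $Z=G\times_PN_\pen$, and imposing the $G$-moment map constraint gives $Z_G=G\times_PM_\pen$. The formulas for $\phi$ and $\psi$ in $(b)$ are then read off from the definitions in \eqref{4.1:diagram2}: $\phi$ is induced by $p:V\to Y$ together with the $P$-moment map constraint (which forces $c_\men=[a,b]+v\circ\varphi$), while $\psi$ is induced by $q:V\hookrightarrow X'$ together with the $G$-action on the $G$-factor, giving the stated conjugation formula. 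Part $(c)$ is then automatic: the inclusions $M_\men\subset\widetilde N_\men$ and $M_\pen\subset N_\pen$ amount to setting $c=0$ (respectively intersecting with the $G$-moment map zero locus), and the inclusion $M_{r,E}\subset N_{r,E}$ is literally the moment map equation.

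The main technical nuisance, exactly as in \cite[Lem.~8.2]{SV2}, is to keep the various dualities and moment map conventions consistent so that the $M$-moment map on $T^*Y$ lands in $\men^*\simeq\men$ as the class $[a,b]+v\circ\varphi$ modulo $\uen$, and so that restriction from $T^*X$ to $T^*_GX$ really amounts to killing the $\uen$-component $c$. Once those sign and trace normalizations are fixed, the lemma reduces to routine linear algebra.
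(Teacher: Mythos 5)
The paper does not actually prove this lemma; it simply cites \cite[lem.~8.2]{SV2}, whose argument is exactly what you sketch: trace dualities, the linear-moment-map computation $\mu_G(a,b,\varphi,v)=[a,b]+v\circ\varphi$, and the induction formula \eqref{induction} applied to $X=G\times_PY$, $X'$, and $W=G\times_PV$. This is also the strategy underlying Lemma~\ref{lem:Comm}, so your outline is correct and in the right spirit; parts $(b)$ and $(c)$ do indeed drop out of the identifications in $(a)$ by unwinding the definitions of $\phi,\psi$ and the moment-map zero loci.

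One small correction on the bookkeeping: under the trace pairing one has $\pen^\perp=\uen$ (the nilradical of $\pen$ itself, not of the opposite parabolic), so $\pen^*\cong\gen/\uen$, not $\gen/\uen_-$. What actually enters the computation of $T^*X=G\times_P\widetilde N_\men$ is the dual fact $\uen^\perp=\pen$: the constraint $\xi|_\uen=0$ forces the $\gen$-covector $c$ to lie in $\pen$, and the residual constraint $\xi|_\men=\mu^Y_M(y,\eta)$ pins down its Levi part $c_\men$, which is precisely the defining equation of $\widetilde N_\men$. This slip in naming $\pen^*$ does not affect your argument.
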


\vspace{.1in}

Using this lemma, we can now prove the following.

\vspace{.1in}

\begin{prop} 
There is a representation $\eta'$ of $\Cb'$ on $\Lb^{(r)}$
such that $\eta'(\theta_l)=f_{1,l}$ for $l\in\N$. 
\end{prop}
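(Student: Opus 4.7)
The plan is to define $\eta'$ by convolution, paralleling the construction of the Hall product on $\Cb'$ in Section \ref{sec:cohHall}. For integers $m, n \geqslant 0$, fix a flag $E_1 \subset E$ with $\dim E_1 = m$, and set $E_2 = E/E_1$ of dimension $n$, $G = GL_E$, $M = GL_{E_1} \times GL_{E_2}$, and $P$ the corresponding parabolic. Apply the general construction \eqref{CO:0} to the data $(Y, V, X')$ introduced at the beginning of Section 6. By Lemma \ref{lem:toto89}, $T^*_G X' = M_{r,E}$ and $T^*_G X = G \times_P M_\men$ with $M_\men = C_{E_1} \times M_{r, E_2}$. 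Taking equivariant Borel--Moore homology, passing to the colimit over $\mathscr V$, and using the Künneth formula, this produces an $\HT$-linear convolution map
$$\eta'_{m,n}~:~\Cb'_m \otimes_{\HT} \Lb^{(r)}_n \longrightarrow \Lb^{(r)}_{m+n}.$$

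To check that the resulting map $\eta' = \sum_{m,n} \eta'_{m,n}$ defines a $\Cb'$-module structure on $\Lb^{(r)}$, one has to verify associativity with respect to the multiplication on $\Cb'$ defined in \eqref{CO:3}. Given a two-step flag $E_1 \subset E_1 \oplus E_2' \subset E$ with associated parabolics $P_1 \subset P_2 \subset G$, one writes down the two iterated correspondences of type \eqref{4.1:diagram3} (the first pair coming from induction in two stages, the second from direct induction from the Levi $GL_{E_1} \times GL_{E_2'} \times GL_{E_3}$ to $G$). The equality of the two resulting maps is then a standard diagram chase: it follows from the transitivity of parabolic induction combined with base change applied to the evident Cartesian squares built from Lemma \ref{lem:toto89}, exactly in the spirit of the proof of associativity of the Hall product in Section \ref{sec:cohHall}, and of \cite[prop.~8.3]{SV2} in the $K$-theoretic setting.

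It remains to identify $\eta'(\theta_l)$ with $f_{1,l}$. Specialize to $m = 1$, so $E_1 \cong \C$, $C_{E_1} = \C \times \C$, and $\theta_l = z^l \cdot [C_{E_1}]$ with $z$ the coordinate on $\gen_{E_1}$. In this case $P$ is a maximal parabolic, and $M_\pen$ is precisely the locus of tuples $(a, b, \varphi, v)$ for which $E_1$ is preserved by $a, b$ and contained in $\Ker\,\varphi$; after restriction to the stable locus and passage to the $G$-quotient, the correspondence $G \times_P M_\pen$ becomes the Hecke correspondence $M_{r,n,n+1}$ of Section \ref{sec:hecker}. Under this identification the two maps in \eqref{4.1:diagram3} become $\pi_1, \pi_2 : M_{r,n,n+1} \to M_{r,n+1} \times M_{r,n}$, while the generator $z$ pulled back to $Z_G$ corresponds to the Chern class $c_1(\tau_{n,n+1})$ of the tautological line. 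A direct pull--push computation then yields
$$\eta'(\theta_l)\big|_{\Lb^{(r)}_n} = \pi_{1\,*}\bigl(c_1(\tau_{n,n+1})^l \cdot \pi_2^*(\bullet)\bigr) = f_{1,l}\big|_{\Lb^{(r)}_n},$$
as required. The main subtlety in the whole argument lies in the associativity verification, where one must carefully reconcile the stability condition defining $M_{r,n}$ with the algebraic correspondences built from $M_\pen$; this is handled by the standard observation that subsheaves of stable sheaves with quotient in the correct class remain stable, so the loci in question agree on an open subset and the relevant fundamental classes match.
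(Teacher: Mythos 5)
Your overall plan — build $\eta'$ by convolution from the correspondence of Lemma~\ref{lem:toto89}, check associativity, then identify $\eta'(\theta_l)$ with $f_{1,l}$ — matches the paper. But the central technical step is missing. You claim that the general construction \eqref{CO:0} applied to the data of Section~6 ``produces an $\HT$-linear convolution map $\Cb'_m \otimes_{\HT} \Lb^{(r)}_n \to \Lb^{(r)}_{m+n}$.'' It does not: by Lemma~\ref{lem:toto89} the map \eqref{CO:0} has source $H^{\widetilde M}(C_{E_1}\times M_{r,E_2})$ and target $H^{\widetilde G}(M_{r,E})$, i.e.\ the Borel--Moore homology of the full prequotient ADHM varieties, not of the moduli spaces. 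The descent identifications $\Lb^{(r)}_n = H^{\widetilde GL_{E_2}}(M^s_{r,E_2})$ and $\Lb^{(r)}_{m+n} = H^{\widetilde GL_E}(M^s_{r,E})$ hold only after restriction to the stable loci. There is no canonical map from $H^{\widetilde G}(M_{r,E})$ to $H^{\widetilde G}(M^s_{r,E})$ compatible with the convolution structure that would fix this for free; the paper instead introduces the stable loci $N^s_\pen$, $M^s_\pen$, $Z^s$, $Z^s_G$ and re-runs the construction there. Two things have to be checked: that the proper map $\psi$ still restricts to a proper map $\psi_s : Z^s \to N^s_{r,E}$ (this works because $Z^s = Z \cap \psi^{-1}(N^s_{r,E})$), and that $\phi$ restricts to $\phi_s : Z^s \to T^*X^s$ with $\phi_s^{-1}(T^*_GX^s)=Z^s_G$, which requires showing that a stable tuple on $E$ induces a stable tuple on $E_2=E/E_1$. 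This is exactly the stability verification that drives the paper's proof, and it is absent from your construction paragraph.

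Relatedly, you locate the stability subtlety in the associativity check, but it already arises in defining $\eta'_{m,n}$, as above. Your phrasing ``subsheaves of stable sheaves with quotient in the correct class remain stable'' also points the wrong way: in ADHM language the relevant fact is that a quotient $(\bar a,\bar b,\bar\varphi,\bar v)$ on $E_2=E/E_1$ of a stable tuple on $E$, by an $a,b$-invariant subspace $E_1\subset\ker\varphi$, is again stable (any destabilizing $F\subsetneq E_2$ would pull back to a destabilizing subspace of $E$). Once the stable-locus version of the correspondence is in place, your remaining steps — associativity via transitivity of parabolic induction and base change, and the identification of $Z^s_G$ as a $G$-torsor over $M_{r,n+1,n}$ so that $\phi_s^*\Ind(\theta_l\otimes x)$ descends to $\cc_1(\tau_{n+1,n})^l\pi_2^*(x)$ — are in agreement with the paper.
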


\begin{proof}
To define $\eta'$ we consider the closed embeddings 
\begin{equation}
N_\pen\subset N_{r,E},\qquad M_\pen\subset M_{r,E},\qquad
(a,b,\varphi,v)\mapsto(a,b,\varphi\circ\pi,v).
\end{equation} 
Then, we set 
\begin{equation}
N_\pen^s=N^s_{r,E}\cap N_\pen,\qquad M^s_\pen=M^s_{r,E}\cap M_\pen,
\qquad
Z^s=G\times_PN^s_\pen,\qquad
Z^s_G=G\times_PM^s_\pen.
\end{equation}
Note that $N^s_\pen$, $M^s_\pen$, $Z^s$, $Z^s_G$ are open in
$N_\pen$, $M_\pen$, $Z$ and $Z_G$. 
Next, the proper map $\psi:Z\to T^*X'$ restricts to a
proper map $\psi_s:Z^s\to N^s_{r,E}$, 
because $Z^s=Z\cap\psi^{-1}(N^s_{r,E})$.
Finally, we have $\psi_s(Z^s_G)\subset M^s_{r,E}$, because $Z^s_G=Z_G\cap Z^s$.
Thus, taking the direct image by $\psi_s$, we get the commutative diagram 
\begin{equation}
\label{step1}
\begin{split}
\xymatrix{
H^{\widetilde G}(Z^s_G)\ar[r]^-{\psi_{s,*}}\ar[d]
&H^{\widetilde G}(M^s_{r,E})\ar[d]\\
H^{\widetilde G}(Z^s)\ar[r]^-{\psi_{s,*}}
&H^{\widetilde G}(N^s_{r,E}).}
\end{split}
\end{equation}
Now, set $N^s_\men=(\gen_{E_1})^2\times N_{r,E_2}^s$, 
$M_\men^s=C_{E_1}\times M_{r,E_2}^s$,
$\widetilde N^s_\men=\widetilde N_\men\cap(\pen\times N_\men^s)$
and
$$T^*X^s=G\times_P\widetilde N_\men^s,\quad
T^*_GX^s=G\times_PM^s_\men.$$
For $(a,b,\varphi,v)\in N_\pen^s$ we have 
$(a_\men,b_\men,\varphi,\pi\circ v)\in N_\men^s$.
Thus the map $\phi:Z\to T^*X$ restricts to a map
$\phi_s:Z^s\to T^*X^s$. The varieties $Z^s$ and $T^*X^s$ are both smooth and
we have 
$\phi^{-1}_s(T^*_GX^s)=Z^s\cap Z_G=Z^s_G$. Hence 
the pull-back by $\phi_s$ gives the commutative diagram 
\begin{equation}
\label{step2}
\begin{split}
\xymatrix{
H^{\widetilde G}(T^*_GX^s)\ar[r]^-{\phi_s^*}\ar[d]&
H^{\widetilde G}(Z^s_G)\ar[d]
\\
H^{\widetilde G}(T^*X^s)\ar[r]^-{\phi_s^*}&
H^{\widetilde G}(Z^s).
}
\end{split}
\end{equation}
Set $n_1=\dim\, E_1$, $n_2=\dim\, E_2$ and $n=n_1+n_2$. 
Since $M^s_{r,E_2}$ is a $GL_{E_2}$-torsor over $M_{r,n_2}$, 
by descent we have an isomorphism
\begin{equation}
\Lb^{(r)}_{n_2}=H^{\widetilde GL_{E_2}}(M^s_{r,E_2}).
\end{equation}
Thus, the induction and the Kunneth formula yield an isomorphism
\begin{equation}\label{isom22}\Ind:\Cb'_{n_1}\otimes_{\HT}\Lb^{(r)}_{n_2}=
H^{\widetilde M}(C_{E_1}\times M^s_{r,E_2})=
H^{\widetilde G}(T^*_GX^s).\end{equation}
We have also
$\Lb^{(r)}_{n}=H^{\widetilde G}(M^s_{r,E}).$
Thus, composing \eqref{isom22}
with \eqref{step1} and \eqref{step2}, 
we get a map
\begin{equation}
\label{step3}
\Cb'_{n_1}\otimes_{\HT}\Lb^{(r)}_{n_2}\to\Lb^{(r)}_{n}.
\end{equation}
The same argument as in 
the proof of \cite[prop.~7.9]{SV2} implies that
\eqref{step3} defines a $R$-linear representation of $\Cb'$ on $\Lb^{(r)}$.
Details are left to the reader.
Let $\eta'$ denote this representation.

Now, we compute the image of the element $\theta_l\in\Cb'_1$ by the map $\eta'$.
To do so, we change slightly the notation.
Assume that $E_1\in\mathscr V_1$ and $E\in\mathscr V_{n+1}$.
Fix $x\in\Lb^{(r)}_{n}$
and let $y$ be the image of $\theta_l\otimes x$ by
the map 
\begin{equation}\Cb'_{1}\otimes_{\HT}\Lb^{(r)}_{n}\to\Lb^{(r)}_{n+1}
\end{equation}
given in \eqref{step3}.
We must check that $y=\cc_1(\tau_{n+1,n})^l\cdot x$.
By definition of \eqref{step3}
we have 
\begin{equation}
y=\psi_{s,*}\phi_s^*\Ind(\theta_l\otimes x).
\end{equation}
The variety $Z^s_G$ is the set of all pairs
$\big((a,b,\varphi,v), E_1\big)$
where $(a,b,\varphi,v)\in M^s_{r,E}$, $a,b\in\pen$ and $\varphi(E_1)=0$. 
It is a smooth $G$-torsor over $M_{r,n+1,n}$. 
Hence, by descent we have an isomorphism
\begin{equation}\label{descent}
H^{\widetilde G}(Z^s_G)\to H^T(M_{r,n+1,n}).
\end{equation}
So we have the commutative diagram
\begin{equation}
\label{step4}
\begin{split}
\xymatrix{
H^{\widetilde G}(Z^s_G)\ar[r]^-{\psi_{s,*}}\ar[d]
&H^{\widetilde G}(M^s_{r,E})\ar[d]\\
H^{T}(M_{r,n+1,n})\ar[r]^-{\pi_{1,*}}
&H^{T}(M_{r,n+1})}
\end{split}
\end{equation}
where both vertical maps are given by descent.
Therefore, it is enough to observe that the isomorphism \eqref{descent}
takes
$\phi_{s}^*\Ind(\theta_l\otimes x)$ to
$\cc_1(\tau_{n+1,n})^l\,\pi_2^*(x)$.
\end{proof}

\vspace{.1in}

%

Since the representation of $\U^{(r)}_K$ on $\Lb^{(r)}_K$ is faithful, 
the map $\eta'$ gives a surjective $K_r$-algebra homomorphism 
\begin{equation}
\eta':\SC'_{K_r}\to\U^{(r),>}_K, \qquad
\eta'(\theta_l)=f_{1,l}, \qquad
l\in\N. 
\end{equation}
We can now prove the following.

\begin{theo} \label{thm:isomC/U}
The map $\eta'$ factors to a 
$K_r$-algebra isomorphism 
\begin{equation}\eta:\SC_{K_r}\to\U_{K}^{(r),>}\end{equation}
taking $\theta_l$ to $f_{1,l}$,
which commutes with the action of $\Lambda_{K_r}$.
\end{theo}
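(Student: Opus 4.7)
The plan is as follows. First, since $K_r$ is a field extension of $R=\HT$, the extension of scalars kills all $R$-torsion, and by Proposition \ref{prop:SC} the kernel of $\SC'\tto\SC$ is $R$-torsion; hence $\SC'_{K_r}\xrightarrow{\sim}\SC_{K_r}$, and $\eta'$ descends automatically to a $K_r$-algebra surjection $\eta:\SC_{K_r}\tto\U^{(r),>}_K$ with $\eta(\theta_l)=f_{1,l}$. Surjectivity is clear from the generating set of $\U^{(r),>}_K$.

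For injectivity I compare the shuffle and Wilson pictures in each degree $n$. Consider the $K_r$-linear maps $\varphi_k, \varphi_g: K_r[z_1,\ldots,z_n]\to\Frac(\Lambda_{n,K_r})$ defined by
\begin{equation*}
\varphi_k(Q)=\SYM_n\bigl(k(z_1,\ldots,z_n)\,Q\bigr),\qquad \varphi_g(Q)=\SYM_n\bigl(g(z_1,\ldots,z_n)\,Q\bigr),
\end{equation*}
where $k(z)=z^{-1}(x+y+z)(x-z)(y-z)$ and $g(z)=(z+x)(z+y)/(z(z+x+y))$, extended to $n$ variables as the usual product $\prod_{i<j}$. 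By Theorem \ref{thm:shuffleC} and Proposition \ref{P:KHall}, $\SC_{K_r,n}$ is identified with the image of $\varphi_k$, while Lemma \ref{L:shuffleWilson} expresses the surjection $\iota:K_r[z_1,\ldots,z_n]\tto\U^{(r),>}_{K,n}$, $z_1^{l_1}\cdots z_n^{l_n}\mapsto f_{1,l_1}\cdots f_{1,l_n}$, in the form $\iota(Q)=\varphi_g(Q)\bullet\gamma_n$. Since $\eta(\theta_{l_1}\cdots\theta_{l_n})=f_{1,l_1}\cdots f_{1,l_n}$ by construction, one has $\iota=\eta\circ\varphi_k$, whence $\ker\varphi_k\subseteq\ker\iota$. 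On the other hand, Proposition \ref{prop:torsionfreeWilson} (whose proof extends to regular rational functions via the Zariski density of the tuples $\tau_{\mu,\lambda}$ in $K_r^n$, and $\varphi_g(Q)$ is regular by Lemma \ref{L:shuffleWilson}) gives $\ker\iota=\ker\varphi_g$.

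A direct calculation yields $k(z)=-((x+y)^2-z^2)\,g(-z)$, and since $-((x+y)^2-z^2)$ is an even function, this translates in $n$ variables to the identity $k(z_1,\ldots,z_n)=H(z)\cdot\bigl(\sigma_0\cdot g(z_1,\ldots,z_n)\bigr)$, where $H(z)=\prod_{i<j}\bigl(-((x+y)^2-(z_i-z_j)^2)\bigr)$ is $\Sen_n$-symmetric and $\sigma_0$ is the longest permutation acting on polynomials. By the $\Sen_n$-invariance of $\SYM_n$ one obtains $\varphi_k(Q)=H\cdot\varphi_g(\sigma_0\cdot Q)$, hence $\ker\varphi_k=\sigma_0\cdot\ker\varphi_g$. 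Combined with $\ker\varphi_k\subseteq\ker\varphi_g$ and $\sigma_0^2=1$, this forces $\ker\varphi_g\subseteq\sigma_0\cdot\ker\varphi_g=\ker\varphi_k$, so $\ker\varphi_k=\ker\varphi_g=\ker\iota$, which forces $\eta$ to be injective. Finally, compatibility of $\eta$ with the $\Lambda_{K_r}$-action follows because $\eta$ is an algebra homomorphism and the two Wilson actions satisfy the coproduct identities of Lemmas \ref{L:WilsonC}(c) and \ref{L:WilsonUL}(c); it therefore suffices to check on generators, where $p_l\bullet\theta_k=\theta_{l+k}$ matches $p_l\bullet f_{1,k}=f_{1,l+k}$ of Example \ref{EX:wilsonU} via $\eta(\theta_l)=f_{1,l}$.

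The main obstacle in the plan is the kernel identification $\ker\varphi_k=\ker\varphi_g$: although the relation $k(z)=-((x+y)^2-z^2)g(-z)$ between the two kernels is elementary, the sign flip $z\mapsto -z$ does not interact transparently with the symmetrization $\SYM_n$, and kernel equality is reached only indirectly, by combining the automatic inclusion $\ker\varphi_k\subseteq\ker\varphi_g$ coming from the well-definedness of $\eta$ with the reversal identity $\ker\varphi_k=\sigma_0\cdot\ker\varphi_g$ forced by the kernel relation.
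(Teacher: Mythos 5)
The proposal does not match the paper's proof structure and, more importantly, contains a gap in its very first step that propagates circularly through the rest.

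Your opening claim is that the kernel of $\SC'\tto\SC$ is $R$-torsion (with $R=\HT=\C[x,y]$), so that base change to the field $K_r$ kills it automatically and $\SC'_{K_r}\xrightarrow{\sim}\SC_{K_r}$. This misreads the paper. The kernel of $\Cb'_n\to H^{\widetilde G}(\gen\times\gen)$ is identified (see the discussion around Conjecture \ref{C:torsion}) with the torsion submodule over $R_{\widetilde G}=\HT[z_1,\ldots,z_n]^{\Sen_n}$, not over $\HT$. Tensoring with $K_r$ over $\HT$ does not make $R_{\widetilde G}\otimes_\HT K_r=\Lambda_{n,K_r}$ a field, so the torsion need not vanish. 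The standard localization theorem, applied to the maximal torus of $\widetilde G=T\times GL_n$, inverts Euler classes such as $\prod_{i\neq j}(x+z_i-z_j)(y+z_i-z_j)$, which lie in $R_{\widetilde G}$ but not in $\HT$; it does not yield $\HT$-torsion. Had the kernel been $\HT$-torsion, the authors would have had no reason to introduce the Wilson-operator machinery in their proof at all. The paper's actual route to the factorization is to establish that $\eta'$ intertwines the $\Lambda_{n,K_r}$-actions (checked on generators via Lemmas \ref{L:WilsonC}(c), \ref{L:WilsonUL}(c), Example \ref{EX:wilsonU}) and then use Proposition \ref{prop:torsionfreeWilson}: since the target is Wilson-torsion-free and the kernel of $\SC'_{n,K_r}\to\SC_{n,K_r}$ is exactly the Wilson torsion, $\eta'$ must kill it.

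This gap is not cosmetic, because your injectivity argument is built on top of it. The chain $\iota=\eta\circ\varphi_k\Rightarrow\ker\varphi_k\subseteq\ker\iota=\ker\varphi_g$ uses the well-definedness of $\eta$, which is precisely what step 1 was supposed to deliver. You then combine this inclusion with the algebraic identity $\ker\varphi_k=\sigma_0\cdot\ker\varphi_g$ (which I checked: $k(z)=-((x+y)^2-z^2)\,g(-z)$, so in $n$ variables $k=H\cdot(\sigma_0\cdot g)$ with $H$ symmetric, giving $\varphi_k(Q)=H\cdot\varphi_g(\sigma_0\cdot Q)$) to force $\ker\varphi_k=\ker\varphi_g$. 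But the algebraic identity alone does not give the $\sigma_0$-stability of $\ker\varphi_g$ — the factor $g(z_1,\ldots,z_n)=\prod_{i<j}g(z_i-z_j)$ is not $\Sen_n$-symmetric, so there is no a priori reason for $\ker\varphi_g$ to be $\sigma_0$-invariant. Without the independent inclusion $\ker\varphi_k\subseteq\ker\varphi_g$ you cannot close the loop. So the proof is circular given the gap in step 1.

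The identification $\ker\iota=\ker\varphi_g$ (via Lemma \ref{L:shuffleWilson} and Proposition \ref{prop:torsionfreeWilson}, extended to regular rational functions) is fine, and the observation relating the shuffle kernel $k(z)$ of $\SC$ to the kernel $g(z)$ of the fundamental-class correspondence via the reversal $z\mapsto -z$ is a genuinely nice algebraic remark — in fact it is implicit in the paper that $\ker\varphi_k=\ker\varphi_g$ since the theorem is proved, but the paper reaches it geometrically (the localization argument that any two nonzero elements of $\SC_{n,K_r}$ are related by Wilson multiplication, combined with torsion-freeness and surjectivity) rather than through this identity. If you want to salvage your approach, establish the factorization first by the paper's method (Wilson commutativity plus torsion-freeness of the target), and only then bring in the $\sigma_0$-twist as an alternative proof of injectivity.
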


\begin{proof}
First, we claim that $\eta'$ commutes with Wilson operators.
It is enough to check it on generators by 
Lemma \ref{L:WilsonUL}$(c)$ and Lemma \ref{L:WilsonC}$(c)$. 
Example \ref{EX:wilsonU} gives
\begin{equation}
\eta'(p_l \bullet \theta_k)=\eta'(\theta_{l+k})=f_{1,l+k}
=p_l \bullet f_{1,k}=p_l \bullet \eta'(\theta_k),\qquad
l \geqslant 1,\qquad k \geqslant 0, 
\end{equation}
proving the claim. 
Next, by Proposition \ref{prop:torsionfreeWilson} 
the action of $\Lambda_{n,K_r}$ on 
$\U^{(r),>}_{K}[n]$ is torsion free. 
Hence the map $\eta'$ factors to a 
surjective $K_r$-algebra homomorphism 
\begin{equation}\eta:\SC_{K_r}\to\U_{K}^{(r),>}\end{equation}
taking $\theta_l$ to $f_{1,l}$.
It remains to show that $\eta$ 
is injective. Let $x \in \Sb\Cb_{n,K_r}$ and assume that $\eta(x)=0$. 
If $x \neq 0$ then, by the localization theorem, for any $y \in\Sb\Cb_{n,K_r}$ 
there exists $p,p' \in \Lambda_{n,K_r}$ such that $p \bullet x=p'\bullet y$. 
But, then, we have
\begin{equation}
p' \bullet \eta(y)=\eta(p' \bullet y)=\eta(p \bullet x)=
p \bullet \eta(x)=0.
\end{equation} 
It follows that $\eta(y)$ is torsion, hence $\eta(y)=0$ 
by Proposition \ref{prop:torsionfreeWilson}. This contradicts the surjectivity 
of $\eta$. We deduce that $x=0$, i.e., that $\eta$ is injective.
\end{proof}

\vspace{.1in}

Proposition \ref{P:proof1} and Theorem \ref{thm:isomC/U} (for $r=1$)
yield the following.

\vspace{.05in}

\begin{cor}
\label{cor:SC/SH}
There is a $K$-algebra isomorphism $\SC_K\to\SH^{>}_K,$
$\theta_l\mapsto x^{l}D_{1,l}$.
\end{cor}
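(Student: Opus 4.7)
The plan is to specialize Theorem~\ref{thm:isomC/U} to $r = 1$ and combine it with Proposition~\ref{P:proof1}. For $r = 1$ the framing torus $D = \CC^\times$ acts trivially on $M_{1,n} \simeq \Hilb_n$, so the operators $f_{1,l}$ and the Chern classes defining $\U^{(1),>}_K$ carry no dependence on the framing parameter $e_1$; setting $e_1 = 0$ in the isomorphism produced by Theorem~\ref{thm:isomC/U} with $r = 1$ therefore yields a $K$-algebra isomorphism $\eta\colon \SC_K \to \widetilde{\U}^{(1),>}_K$ sending $\theta_l \mapsto f_{1,l}$. Alternatively, one can simply rerun the proof of Theorem~\ref{thm:isomC/U} in the tilded setting of Section~2, since all of the geometric inputs (shuffle presentation via Proposition~\ref{P:KHall}, Wilson-operator torsion-freeness, surjectivity) go through verbatim in the absence of framing parameters.

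Proposition~\ref{P:proof1}(a) provides a $K$-algebra isomorphism $\Psi^+ : \widetilde{\SH}^{(1),+}_K \to \widetilde{\U}^{(1),+}_K$ which by construction restricts to an isomorphism of the graded $>$-subalgebras $\widetilde{\SH}^{(1),>}_K \to \widetilde{\U}^{(1),>}_K$ sending $D_{1,l}$ to $h_{1,l} = x^{1-l} y f_{1,l}$. Since the subalgebra $\SH^>$ of $\SH^\cb$ is defined with no reference to the central parameters $\cb_l$, the specialization $\widetilde{\SH}^{(1),>}_K$ coincides with $\SH^>_K$. The composition $(\Psi^+|_{\SH^>_K})^{-1} \circ \eta$ is then the required $K$-algebra isomorphism $\SC_K \to \SH^>_K$; tracking $\theta_l \mapsto f_{1,l}$ through the relation $h_{1,l} = x^{1-l} y f_{1,l}$, together with the identification $y = -\kappa x$ inside $K$, recovers the claimed formula $\theta_l \mapsto x^l D_{1,l}$ up to the standard normalization implicit in the relation $F \subset K$, $\kappa \mapsto -y/x$. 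The only genuine verification needed in the argument is that the passage from the $K_1$-level statement of Theorem~\ref{thm:isomC/U} to a $K$-level one is valid; this is immediate from flatness over $K$ together with the $e_1$-independence of the Chern-class generators $f_{1,l}$ in the $r = 1$ case.
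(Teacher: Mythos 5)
Your overall strategy — combining Proposition~\ref{P:proof1} with Theorem~\ref{thm:isomC/U} specialized to $r=1$ — is exactly what the paper does, and the flatness/specialization bookkeeping you sketch for passing between the tilded and untilded settings is fine. The one place that does not check out as written is the normalization. Tracking the composition literally, you get $\eta(\theta_l)=f_{1,l}$ and $\Psi(D_{1,l})=h_{1,l}=x^{1-l}y\,f_{1,l}$, hence $(\Psi^{>})^{-1}\circ\eta$ sends $\theta_l\mapsto x^{l-1}y^{-1}D_{1,l}$, which differs from the claimed $x^lD_{1,l}$ by a factor of $xy$ in each generator — equivalently by $(xy)^n$ on $\SC_n$. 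This factor is \emph{not} absorbed by the embedding $F\hookrightarrow K$, $\kappa\mapsto -y/x$: the element $xy$ is not a power of $\kappa$, so ``standard normalization implicit in $\kappa=-y/x$'' does not explain it. What actually closes the gap is that $\theta_l\mapsto c\,\theta_l$ extends, for any $c\in K^\times$, to the grading automorphism of $\SC_K$ that multiplies $\SC_n$ by $c^n$ (compatible with the shuffle product since the kernel $k$ is unaffected by overall rescaling of the $\theta_l$); precomposing with this automorphism at $c=xy$ converts $\theta_l\mapsto x^{l-1}y^{-1}D_{1,l}$ into the stated $\theta_l\mapsto x^lD_{1,l}$. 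You should make this automorphism explicit rather than attributing the discrepancy to the inclusion $F\subset K$.
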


\vspace{.05in}

\begin{rem}
\label{opisom}
Proposition \ref{prop:involutionU} and Theorem \ref{thm:isomC/U}
give a $K_r$-algebra homomorphism
\begin{equation}
\eta^\op:(\SC_{K_r})^{\op}\to\U_K^{(r),<},\qquad \theta_l\mapsto f_{-1,l}.
\end{equation}
Proposition \ref{prop:generators/involution2} 
and Corollary \ref{cor:SC/SH}
give a $K$-algebra isomorphism 
\begin{equation}
(\SC_K)^\op\to\SH^{<}_K,\qquad
\theta_l\mapsto x^{l}D_{-1,l}.
\end{equation}
We define $\U^{(r),>}$ and $\U^{(r),<}$ to be 
the images of $\SC_{R_r}$, $(\SC_{R_r})^\op$
by the maps $\eta$ and $\eta^\op$.
We have $R_r$-algebra isomorphisms
\begin{equation}
\SC_{R_r}\to\U^{(r),>},\qquad (\SC_{R_r})^{\op}\to\U^{(r),<}.
\end{equation}
\end{rem}

\vskip3mm

\subsection{Part 2 : glueing the positive and negative halves}  
Theorem \ref{thm:isomC/U},  Corollary \ref{cor:SC/SH} and Remark
\ref{opisom} give $K_r$-algebra isomorphisms 
\begin{equation}
\gathered
\Psi^>:\SH^{(r),>}_K\to\U_K^{(r),>},\qquad
\Psi^<:\SH^{(r),<}_K\to\U_K^{(r),<}
\endgathered
\end{equation}
such that $\Psi^>(D_{1, l})=h_{1, l}$ and $\Psi^<(D_{-1, l})=h_{-1, l}.$
Next, Appendix \ref{app:B} gives the following.

\begin{prop}
\label{prop:glue/r}
The class $[h_{-1,k},h_{1,l}]$ is supported on the diagonal of 
$M_{r,n}\times M_{r,n}$ and it coincides, as an element of $\U^{(r)}_K$, with
the operator $E_{k+l}$ on $\Lb^{(r)}_K$ given by
$$1+\xi\sum_{l\geqslant 0}E_{l}\,s^{l+1}
=\exp\Bigl(\sum_{l\geqslant 0}(-1)^{l+1}p_l(\eps_\a)\phi_l(s)\Bigr)\,
\exp\Bigl(\sum_{l\geqslant 0}h_{0,l+1}\,\varphi_l(s)\Bigr).$$
\end{prop}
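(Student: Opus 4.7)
The plan is to establish this by equivariant localization at the $\widetilde{D}$-fixed points of $M_{r,n}\times M_{r,n}$, which are indexed by pairs $(I_\lambda, I_\mu)$ of $r$-partitions of weight $n$. Since $\Lb^{(r)}_K$ is identified with $\bigoplus_\lambda K_r[I_\lambda]$ via \eqref{LrK}, it suffices to compute the matrix coefficients $\langle I_\nu, [h_{-1,k}, h_{1,l}]\, I_\lambda\rangle$ in $K_r$ and to compare them with those of the operator $E_{k+l}$ defined by the generating series. The first step is to write down explicitly the matrix coefficients of $h_{1,l}$ and $h_{-1,k}$ using Sections \ref{sec:hecker}--\ref{sec:tautr}: for $\mu\subset\lambda$ with $s=\lambda^{(\a)}\setminus\mu^{(\a)}$, the tautological line bundle has weight $\tau_{\mu,\lambda}=\chi_\a^{-1}t^{y(s)}q^{x(s)}$, so by \eqref{3.6:rem} the Chern class is $c_\a(s)$, and the Euler class of the normal bundle to $M_{r,n,n+1}$ is given by \eqref{HR:3}.

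The second step is to prove the diagonality claim. Expanding the commutator, one obtains for the matrix coefficient $\langle I_\nu, [h_{-1,k}, h_{1,l}]\, I_\lambda\rangle$ (with $|\nu|=|\lambda|$) a sum over intermediate fixed points, split into two parts: one where the intermediate partition $\pi$ satisfies $\lambda,\nu\subset\pi$ (from $h_{-1,k}h_{1,l}$), and one where $\pi\subset\lambda,\nu$ (from $h_{1,l}h_{-1,k}$). When $\nu\neq\lambda$, so that $\lambda$ and $\nu$ differ by removing one box and adding another, both sums have exactly one intermediate term each, and the pairwise cancellation follows from a direct rational identity in the characters using \eqref{HR:3}. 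This is the standard mechanism already present in the $r=1$ case \cite{V}, suitably extended by the framing.

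The third step is the diagonal identification. For $\nu=\lambda$, the matrix coefficient becomes a sum over addable boxes (from $h_{1,l}h_{-1,k}\cdot (-1)$) and removable boxes (from $h_{-1,k}h_{1,l}$) of $\lambda$, each weighted by a rational function in $c_\a(s), x, y, e_\b$ built from the Euler classes in \eqref{HR:3}. One must show that
$$1+\xi\sum_{k,l\geqslant 0}\langle I_\lambda, [h_{-1,k},h_{1,l}]\,I_\lambda\rangle\, s^{k+l+1}$$
matches the right hand side of the claimed formula. Using \eqref{3.6:rem}, the second exponential becomes $\exp\bigl(\sum_{l}(\sum_{\a,s\in\lambda^{(\a)}}c_\a(s)^l)\varphi_l(s)\bigr)$, which telescopes into a product over boxes of $\lambda$ of an explicit rational function. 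Similarly, the first exponential, involving $p_l(\varepsilon_\a)\phi_l(s)$, collects into a product over $\a$ of a rational function in $\varepsilon_\a s$. One then recognizes both sides as Chern character expansions of identical equivariant virtual classes built from $\tau_\lambda$, $W$ and $v$, via the key identity \eqref{HR:2.5}.

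The main obstacle is precisely this last rational identity, which must be carried out carefully to extract the correct combination of factors $K(\kappa,\omega,s)$ deformed into the $r$-dependent prefactor $\exp(\sum(-1)^{l+1}p_l(\varepsilon_\a)\phi_l(s))$. This is essentially what Appendix B verifies: the computation is an elaboration of the $r=1$ case (Proposition \ref{P:<0>}), where the universal generating function $K(\kappa,n,s)$ gets replaced by a product over the framing characters encoded in the parameters $c_l=p_l(\varepsilon_\a)$, consistently with Definition \ref{def:spe} of the specialization $\SH^{(r)}_K$.
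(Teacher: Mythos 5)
Your strategy is the same as the paper's: localize at the $\widetilde{D}$-fixed points, show the off-diagonal matrix coefficients cancel, and match the generating series of diagonal coefficients to the claimed exponential formula. The off-diagonal step is correctly sketched: for $\nu\neq\lambda$ of the same weight the two intermediate fixed points $\sigma\subset\lambda,\nu\subset\pi$ satisfy $\tau_\lambda+\tau_\nu=\tau_\sigma+\tau_\pi$, and from this the Euler-class cancellation \eqref{tutu2} follows.

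However, the diagonal identification is underspecified, and the identity you name as the crux (\eqref{HR:2.5}) is not the one that actually does the work. The matrix coefficient $\langle I_\lambda;[h_{-1,k},h_{1,l}];I_\lambda\rangle$ is naturally a sum over \emph{addable and removable boxes} of $\lambda$ weighted by Euler-class ratios (the characters $\cc_1(\tau_{\sigma,\lambda})$ and $\cc_1(\tau_{\lambda,\pi})$), while the right-hand side of the proposition is built from $h_{0,l+1}$, i.e., sums of powers of $c_\a(s)$ over \emph{all} boxes of $\lambda$ together with the framing weights $\eps_\a$. These two pictures are not equated by "telescoping"; the bridge is the specific identity of Lemma \ref{lem:toto1}, namely
$$H_\lambda:=(1-q)(1-t)\tau_\lambda-W=v^{-1}\!\!\sum_{\sigma\subset\lambda}\!\tau_{\sigma,\lambda}-\!\!\sum_{\lambda\subset\pi}\!\tau_{\lambda,\pi},$$
which simultaneously expresses the normal-bundle characters $N_{\lambda,\sigma}$, $N_{\lambda,\pi}$ in terms of the addable/removable box data, and converts the resulting product back into a product over all boxes of $\lambda$ and over $\a\in[1,r]$. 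Combined with the rational-function identity of Lemma \ref{lem:kop5} (which packages the residue sum into a product, exactly as in the $r=1$ DDAHA case, Proposition \ref{P:<0>}), this yields the factorization
$$1+\xi s C(s)=\prod_{\a}\frac{1+s\eps_\a+s\xi}{1+s\eps_\a}\cdot\exp\Bigl(\sum_{l\geqslant 0}(-1)^l p_l(f_{\lambda,i})x^{-l}\varphi_l(s)\Bigr),$$
which is then identified with the claimed formula via \eqref{3.6:rem} and \eqref{formule/log}. Without Lemma \ref{lem:toto1} the two generating series live on different combinatorial data and "recognizing both sides as Chern-character expansions of identical virtual classes" does not close the gap. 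So: right plan, but the argument as proposed stops short at the precise point where the $r$-dependence ($p_l(\eps_\a)$ appearing in front, instead of the scalar $K(\kappa,\omega,s)$ of the $r=1$ case) has to be produced.
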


We can now prove Theorem \ref{thm:SH/Ur}.
First, note that we have
a $K_r$-algebra homomorphism
\begin{equation}
\Psi~:\SH^{(r)}_K\to\U^{(r)}_K,\qquad
D_{\xb} \mapsto h_{\xb},\qquad\xb\in\mathscr E.
\end{equation}
Indeed, relation \eqref{E:rel3bis2} 
follows from Proposition \ref{prop:glue/r} and
\eqref{E:rel1bis2}, \eqref{E:rel2bis2} from Remark \ref{rem:f0l}. 
Thus, we are reduced to checking the following proposition,
whose proof is given in Section \ref{app:B.3}.

\begin{prop}
\label{prop:faithfulr}
The representation $\rho^{(r)}$ is faithful.
\end{prop}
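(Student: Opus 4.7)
The plan is to prove Proposition~\ref{prop:faithfulr} in two steps: first, that $\Psi\colon \SH^{(r)}_K\to\U^{(r)}_K$ is a $K_r$-algebra isomorphism; second, that the tautological action of $\U^{(r)}_K$ on $\Lb^{(r)}_K$ is faithful. Composing gives the injectivity of $\rho^{(r)}$.

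For the first step, Corollary~\ref{cor:SC/SH}, Theorem~\ref{thm:isomC/U} and Remark~\ref{opisom} already provide that the restrictions $\Psi^{>}\colon\SH^{(r),>}_K\to\U^{(r),>}_K$ and $\Psi^{<}\colon\SH^{(r),<}_K\to\U^{(r),<}_K$ are isomorphisms. The Cartan restriction $\Psi^{0}\colon\SH^{(r),0}_K\to\U^{(r),0}_K$ sends the free polynomial generators $D_{0,l}$ ($l\geqslant 1$) to $h_{0,l}$; using \eqref{3.6:rem}, the $h_{0,l}$ act simultaneously diagonalizably on the fixed-point basis with eigenvalues $\sum_\a\sum_{s\in\lambda^{(\a)}}c_\a(s)^{l-1}$, and a Vandermonde-type argument (letting $\lambda$ vary) shows these generators are algebraically independent, so $\Psi^0$ is an isomorphism. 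To assemble these into an isomorphism $\Psi$, I would establish the $r$-analog of Proposition~\ref{P:proof2}: the multiplication map
$$m\colon\U^{(r),>}_K\otimes_{K_r}\U^{(r),0}_K\otimes_{K_r}\U^{(r),<}_K\longrightarrow \U^{(r)}_K$$
is an isomorphism. Surjectivity follows from the triangular decomposition of $\SH^{(r)}_K$ (Proposition~\ref{1.8:prop1}) together with the surjectivity of $\Psi^\pm$; injectivity is proved by the same localization argument as in Appendix~C for $r=1$, namely by testing products of Hecke correspondences against torus-fixed-point pairs $I_{\lambda,\mu}$ and extracting their leading behaviour with respect to the order filtration.

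For the second step, the positive half $\U^{(r),>}_K$ acts faithfully on $\Lb^{(r)}_K$: under the shuffle presentation $\U^{(r),>}_K\cong\SC_{K_r}\subset\Shb_{K_r}$ of Theorem~\ref{thm:isomC/U}, Lemma~\ref{L:shuffleWilson} expresses every $\iota(P)\in\U^{(r),>}_K[n]$ as $\varpi_n(P)\bullet\gamma_n\in\End(\Lb^{(r)}_K)$, and Proposition~\ref{prop:torsionfreeWilson} forces distinct nonzero elements to give distinct nonzero operators. The negative half is handled symmetrically via the anti-involution of Proposition~\ref{prop:involutionU}. For a general nonzero $u\in\U^{(r)}_K$, write $u=\sum_i u^{>}_i h_i u^{<}_i$ in the triangular form from Step~1 and apply to the basis $\{[I_\lambda]\}$: the $\ZZ$-grading separates terms by $\deg u^{>}_i-\deg u^{<}_i$, the diagonal action of the Cartan part separates within each slice by the distinct eigenvalues, and the faithfulness of the two halves forces all coefficients to vanish, concluding that the full $\U^{(r)}_K$-action is faithful.

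The principal obstacle is the triangular decomposition of $\U^{(r)}_K$ in Step~1. Its proof is a direct $r$-generalization of Proposition~\ref{P:proof2}, but the combinatorics of $r$-tuples of partitions and the linear-independence verification of leading symbols of products $u^{>}h u^{<}$ at the fixed-point pairs $I_{\lambda,\mu}$ are heavier than in rank one. An alternative is to bypass this decomposition and prove injectivity of $\rho^{(r)}$ directly, by applying a purported kernel element to $[I_\lambda]$ for $\lambda$ of increasing weight and exploiting the $\ZZ$-grading together with the Cartan eigenvalue separation; the essential linear-independence input remains the same.
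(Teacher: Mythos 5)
Your plan is essentially the same as the paper's, modulo packaging, and the technical crux is identical: the fixed-point localization argument of Appendix~C, transposed to $r$-partitions of the form $\lambda_1\circledast\lambda_2\circledast\lambda_3$ sitting in the first of the $r$ components. The paper, however, does not prove the triangular decomposition of $\U^{(r)}_K$ as a separate lemma. It takes a purported kernel element of $\rho^{(r)}$, writes it as $\sum_i P_iR_iQ_i$ using only the \emph{surjectivity} half of the $\SH^{(r)}_K$ multiplication map (a straightening argument, logically independent of injectivity), and applies the localization argument directly; both the faithfulness of $\rho^{(r)}$ and the injectivity of the multiplication map (Corollary~\ref{cor:injectivity}, which feeds back into Proposition~\ref{1.8:prop1}) then come out simultaneously. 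Thus your ``alternative'' route at the end is, in fact, the paper's actual proof. One caution for your Step~1: when citing Proposition~\ref{1.8:prop1} for the surjectivity of the $\U^{(r)}_K$ multiplication map, be explicit that you are using only the surjectivity part of that proposition, since in the paper's dependency order its injectivity part is downstream of the very argument you are reproducing. Your Step~2 is unnecessary and somewhat circular: $\U^{(r)}_K$ is by construction a subalgebra of the convolution algebra $\E^{(r)}_K$, and the latter already acts faithfully on $\Lb^{(r)}_K$ by fixed-point localization --- each $\E^{(r)}_{n,K}=\prod_k\bigoplus_{\lambda,\mu}K_r[I_{\lambda,\mu}]$ is a faithful algebra of correspondences, with $[I_{\lambda,\mu}]$ acting on $[I_\nu]$ as $\delta_{\mu,\nu}$ times a nonzero Euler class. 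So the shuffle-algebra and Wilson-operator machinery (Lemma~\ref{L:shuffleWilson}, Proposition~\ref{prop:torsionfreeWilson}) is not needed at this point, and invoking Theorem~\ref{thm:isomC/U} there is circular in spirit, since that theorem's content is precisely the faithfulness on $\Lb^{(r)}_K$ of the generators of $\U^{(r),>}_K$ realized through $\SC_{K_r}$.
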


\vspace{.2in}

\section{The comultiplication}
So far, we have defined an algebra $\SH^\cb$ and we have constructed a representation
$\rho^{(r)}$ of $\SH^\cb$ in $\Lb^{(r)}_K$. In order to compare $\SH^\cb$  with W-algebras, it is
important to equip it with a Hopf algebra structure. We do not know how to describe the
(topological) coproduct on $\SH^\cb$ in an elementary algebraic way. Our argument uses
our previous work \cite{SV2}. First, we prove that $\SH^\cb$ can be regarded as a degeneration of the 
elliptic Hall algebras which was studied there. This is Theorem \ref{7.7:thm1}. Next, using this result, 
we prove that the coproduct of the elliptic 
Hall algebra degenerates and induces a coproduct on
$\SH^\cb$. This is Theorem \ref{7.7:thm2}. 

\vspace{.1in}

\subsection{The DAHA}
We'll abbreviate $\Am=\C[q^{\pm 1/2}, t^{\pm 1/2}]$,
$\Km=\CC(q^{1/2},t^{1/2})$ and
$v^{1/2}=(qt)^{-1/2}$. Fix an integer $n> 1$.
The \emph{double affine Hecke algebra} (=DAHA) 
of $GL_n$ is the associative $\Km$-algebra
${\Hm}_n$ generated by
\begin{equation}
\label{7.1}
X_1^{\pm 1},\dots,X_n^{\pm 1}, 
Y_1^{\pm 1},\dots,Y_n^{\pm 1}, 
T_1,\dots,T_{n-1}
\end{equation}
subject to the following relations \cite[sec.~1.4.3]{C2}
\begin{equation}\label{E:1qt}
T_iX_iT_i=X_{i+1},\quad
T_i^{-1}Y_iT_i^{-1}=Y_{i+1},
\end{equation}
\begin{equation}\label{E:2qt}
T_iX_j=X_jT_i,\quad
T_iY_j=Y_jT_i,\quad j\neq i,i+1,
\end{equation}
\begin{equation}\label{E:3qt}
(T_i+t^{1/2})(T_i-t^{-1/2})=0,\quad
T_iT_{i+1}T_i=T_{i+1}T_iT_{i+1},
\end{equation}
\begin{equation}\label{E:4qt}
T_iT_j=T_jT_i,\quad j\neq i-1,i,i+1,
\end{equation}
\begin{equation}\label{E:5qt}
P X_i=X_{i+1}P,\quad P X_n=q^{-1}X_1P,
\quad P=Y_1^{-1}T_1\cdots T_{n-1}, 
\quad i\neq n,
\end{equation}
Let $\Hm_n^+$ be the $\Km$-subalgebra generated by
\begin{equation}
\label{7.7}
X_1,\dots,X_n, 
Y_1^{\pm 1},\dots,Y_n^{\pm 1}, 
T_1,\dots,T_{n-1},
\end{equation}
and let $\Sm$ be the complete idempotent. 
We set
\begin{equation}
\Sm\Hm_n=\Sm\,  \Hm_n  \Sm,\qquad \Sm\Hm_n^+=\Sm\,  \Hm_n^+  \Sm.
\end{equation}
For $\xb\in\Z^2_0$ we define an element
$P_\xb^{(n)}$ of $\Sm\Hm_n$ as in \cite[sec.~2.2]{SV1}.
For $l\geqslant 1$ we have
\begin{equation}
\label{7.1:form}
\gathered
P^{(n)}_{l,0}=q^l\Sm p_l(X_1,\dots,X_n)\Sm,\quad
P^{(n)}_{-l,0}=\Sm p_l(X_1^{-1},\dots,X_n^{-1})\Sm,\\
P^{(n)}_{0,l}=\Sm p_l(Y_1,\dots,Y_n)\Sm ,\quad
P^{(n)}_{0,-l}=q^l\Sm p_l(Y_1^{-1},\dots,Y_n^{-1})\Sm .
\endgathered
\end{equation}
There is a unique $\Km$-algebra automorphism 
\cite[sec.~3.1]{SV1}, \cite[sec.~3.2.2]{C2},
\begin{equation}\label{sigman}
\sigma:\Sm\Hm_n\to\Sm\Hm_n,\qquad
P_\xb^{(n)}\mapsto P^{(n)}_{\sigma(\xb)},\qquad
\sigma(i,j)=(j,-i).
\end{equation}
Let $\Hm_{n,\Am}$ be the $\Am$-subalgebra of $\Hm_n$ generated by
\eqref{7.1} and set 
$\Sm\Hm_{n,\Am}=\Sm\,\Hm_{n,\Am}\,\Sm$.
Note that 
\begin{equation}
\Hm_n=\Hm_{n,\Am}\otimes_\Am\Km,
\qquad\Sm\Hm_n=\Sm\Hm_{n,\Am}\otimes_\Am\Km,
\end{equation}
We have an $\Am$-basis of $\Hm_{n,\Am}$ given by \cite{C2}
\begin{equation}\label{PBWA}
\{ X^{{\alpha}}Y^{{\beta}}T_w\;;\; 
{\alpha} \in \Z^n, {\beta} \in \Z^n, w \in \mathfrak{S}_n\}.
\end{equation}
Consider the $\Km$-vector spaces
\begin{equation}
\mathbb{W}_n=\Wb_{n,\Km},\qquad
\mathbb{V}_n=\V_{n,\Km}.
\end{equation}
The $\Km$-algebra $\Hm_n$ is equipped with a faithful representation 
\cite[sect. 4.1]{SV1}
\begin{equation}
\varphi_n: \Hm_n \to \text{End}(\mathbb{W}_n)
\end{equation}
called the \emph{polynomial representation}.
The subalgebras $\Sm\Hm_n$ and $\Sm\Hm_n^+$ 
act faithfully on the subspaces
$\mathbb{W}^{\mathfrak{S}_n}_n$ and $\Lambda_{n,\mathbb{K}}.$
For a partition $\lambda$ with at most $n$ parts let $J_\lambda(X;q,t^{-1})$ be the 
integral form of the Macdonald polynomial $P_\lambda(X;q,t^{-1})$, see \cite[chap.~VI, (8.3)]{Mac}.
We abbreviate
\begin{equation}
J^{(n)}_\lambda(q,t^{-1})=J_\lambda(X_1,\dots,X_n;q,t^{-1}).
\end{equation}
This yields the following $\Km$-basis of $\Lambda_{n,\mathbb{K}}$
\begin{equation}
\{J^{(n)}_\lambda(q,t^{-1})\,;\,l(\lambda)\leqslant n\}.
\end{equation}
Finally, for $\xb\in\Z^2_0$ we define new elements
$u_\xb^{(n)}$, $\theta(u_\xb^{(n)})$ of $\Sm\Hm_n$ as follows.
First, we set
\begin{equation}
a_l=(q^l-1)(t^l-1),\qquad l\geqslant 1.
\end{equation}
Next, for $\xb=(i,j)$ and $l=\text{gcd}(\xb)$, we set
\begin{equation}\label{7.2:theta}
\gathered
P_\xb^{(n)}=(q^l-1)\,u^{(n)}_\xb,\qquad
\theta(u^{(n)}_\xb)=
t^{j(n-1)/2}u^{(n)}_\xb-\delta_{i,0}(t^{jn}-1)/a_j.
\endgathered
\end{equation}
We have the following formula \cite[cor.~1.5]{SV2}
\begin{equation}\label{7.2:eigenvalue}
\theta(u^{(n)}_{0,l})\cdot J^{(n)}_\lambda(q,t^{-1})=\sum_{s\in\lambda}q^{lx(s)}t^{ly(s)}\,
J^{(n)}_\lambda(q,t^{-1}),\qquad l\geqslant 1.
\end{equation}
By \cite{SV1}, \cite[sec.~1.3]{SV2} we have also
\begin{equation}
\label{7.20}
\gathered
{[u_{0,l}^{(n)}, u^{(n)}_{\pm 1,k}]}=\pm sgn(l)\, u^{(n)}_{\pm 1,k+l},
\endgathered
\end{equation}
where 
\begin{equation}\label{sgn}
sgn(l)=1,\qquad sgn(-l-1)=-1,\qquad l\geqslant 0.
\end{equation} 
To unburden the notation, 
let $\Sm\Hm_n$ denote also the smash product
\begin{equation}
\Km[u_{0,0}]\otimes_\Km\Sm\Hm_n,
\end{equation}
where $u_{0,0}^{(n)}$ is a new formal variable and the commutator with $u_{0,0}^{(n)}$ 
is the $\Km$-derivation 
\begin{equation}
[u_{0,0}^{(n)},u_{i,j}^{(n)}]=iu_{i,j}^{(n)}.
\end{equation}
The element $u^{(n)}_{0,0}$ acts on $\mathbb V_n$ as the grading operator.
We'll set $\theta(u^{(n)}_{0,0})=u^{(n)}_{0,0}$.

\vspace{.15in}

\subsection{The degeneration of $\Hm_n$}
Our aim is to construct a degeneration from $\Hm_n$ to 
$\mathbf{H}_n$. 
The degenerations of $\Hm_n$ have been 
extensively studied, see e.g., \cite{VV2}. Here we only need a very particular 
one introduced for the first time by Cherednik. 
We set 
\begin{equation}\scrK=F((h)),\qquad \scrA=F[[h]]. 
\end{equation}
We refer to \cite{Ka} for a reminder on 
topological $\scrA$-modules
(for the $h$-adic topology). 
Let $\widetilde\otimes$ denote the \emph{topological 
tensor product} of $\scrA$-modules.
An $\scrA$-module is \emph{topologically free} if it is isomorphic to $V[[h]]$ for
an $F$-vector space $V$.
Let $F\langle X\rangle$ be the free $F$-algebra on $X$.
For a future use, recall 
that a complete separated $\scrA$-algebra $\mathbf A$ is 
\emph{topologically generated} by a subset $X$
if the obvious continuous map $F\langle X\rangle [[h]]\to\mathbf A$ 
is surjective.

First, consider the algebras embedding $\Am\subset\scrA$ given by
\begin{equation}\label{completion1}
q^{1/2}\mapsto\exp(h/2),\qquad 
t^{1/2}\mapsto\exp(-\kappa h/2).
\end{equation}
Let $I$ be the ideal of $\Am$ given by $I=(h)\cap\Am$.
Let $\Bm_{n}\subset\Hm_n$ be the $\Am$-subalgebra generated by
$\Hm_{n,\Am}$ and the elements $(Y_i-1)/(q-1)$ with $i\in[1,n]$.
We set
\begin{equation}
\gathered
\Hc_{n,\scrA}=\varprojlim_k(\Bm_{n}/I^k\Bm_{n}).
\endgathered
\end{equation}
By \eqref{PBWA}
the $\Am$-algebra $\Bm_{n}$ is topologically linearly spanned by the elements of the form
$X^{{\alpha}}f(Y)T_w$ where $\alpha\in\Z^n$, $w\in\Sen_n$, and
$f(Y)\in \Am[Y_i^{\pm 1},(Y_i-q^kt^l)/(q-1)]$ for $i\in[1,n]$ and $k,l\in\Z$.
Consider the element $y_i$ in $\Hc_{n,\scrA}$ given by
\begin{equation}
y_i=\sum_{l \geqslant 1}(-1)^{l-1}(Y_i-1)^l/lh.
\end{equation}
We have faithful representations, see Section 1.3,
\begin{equation}
\varphi_n:\Hm_n\to\text{End}(\mathbb{W}_n),\qquad
\rho_n: \mathbf{H}_n \to \text{End}(\mathbf{W}_{n}).
\end{equation}
From \eqref{completion1} we get inclusions
\begin{equation}
\mathbb{W}_n\subset \mathbf{W}_{n}((h)),\qquad
\operatorname{End}(\mathbb{W}_n)\subset \operatorname{End}(\mathbf{W}_{n})((h)).
\end{equation}
We abbreviate 
\begin{equation}
\Oc(h^l)=h^l\Hc_{n,\scrA},\qquad\Hc_{n}=\Hc_{n,\scrA}/h\Hc_{n,\scrA},\qquad l\in\N.
\end{equation}

\vspace{.1in}

\begin{lem}
\label{lem:7.1}
The $\scrA$-module $\Hc_{n,\scrA}$ is topologically free.
As a topological $\scrA$-algebra it is generated by the set
$\{T_j, X_i^{\pm 1},y_i\,;\,i\in[1,n],\,j\in[1,n)\}.$
We have 
\begin{equation}
X_i^{\pm 1} \in \Oc(1), 
\quad T_j \in \Oc(1), 
\quad Y_i = 1 + \Oc(h),
\quad y_i=(Y_i-1)/h+\Oc(h).
\end{equation}
The map $\varphi_n$ yields a continuous embedding 
$\varphi_n:\Hc_{n,\scrA}\to\operatorname{End}(\mathbf{W}_n)[[h]].$ 
\end{lem}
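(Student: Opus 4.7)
My plan is to address the four assertions of the lemma in turn (order estimates; topological generation and freeness; continuous embedding), combining the PBW theorem \eqref{PBWA} for $\Hm_{n,\Am}$ with the faithfulness of the polynomial representations of both $\Hm_n$ and $\H_n$.

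First, I would dispose of the order estimates. The embedding \eqref{completion1} yields $q-1,\,t-1\in h\scrA^{\times}$, since $q-1=e^h-1=h(1+h/2+\cdots)$ and similarly for $t$. The generators $X_i^{\pm 1}$ and $T_j$ lie in $\Hm_{n,\Am}\subset\Bm_n$, so they trivially lie in $\Oc(1)$. The defining relation $Y_i-1=(q-1)z_i$ in $\Bm_n$ then gives $Y_i-1\in h\,\Hc_{n,\scrA}$, proving $Y_i=1+\Oc(h)$. Hence the $l$-th term of the series for $y_i$ is $(-1)^{l-1}(Y_i-1)^l/(lh)\in\Oc(h^{l-1})$; the series converges in the $h$-adic topology, its leading term is $(Y_i-1)/h$, and all remaining terms lie in $\Oc(h)$.

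Next I would construct the topological $\scrA$-basis. Algebraically $\Bm_n$ is generated over $\Am$ by $X_i^{\pm 1},T_j,Y_i^{\pm 1},z_i$. In $\Hc_{n,\scrA}$ the series $\exp(hy_i)=\sum_{k\geqslant 0}(hy_i)^k/k!$ converges and coincides with $Y_i$ (since $Y_i=1+hy_i+\Oc(h^2)$ and the exponential is the unique formal lift); similarly $Y_i^{-1}=\exp(-hy_i)$, and $z_i=(Y_i-1)/(q-1)$ unfolds to a convergent power series in $h$ with coefficients in $\scrA[y_i]$. Hence $Y_i^{\pm 1}$ and $z_i$ all lie in the $h$-adic closure of the $\scrA$-subalgebra generated by $\{T_j,X_i^{\pm 1},y_i\}$, proving topological generation by this set. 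For topological freeness I claim that $\mathcal{B}=\{X^{\alpha}y^{\beta}T_w\;;\;\alpha\in\ZZ^n,\,\beta\in\NN^n,\,w\in\Sen_n\}$ is a topological $\scrA$-basis of $\Hc_{n,\scrA}$. This reduces, through the formal change of variables $Y_i\leftrightarrow\exp(hy_i)$, to the PBW basis \eqref{PBWA} of $\Hm_{n,\Am}$; the crucial analytic input is that the Laurent subalgebra $\Am[Y^{\pm 1}]$ and the polynomial subalgebra $\scrA[y]$ have isomorphic $h$-adic completions, a formal version of the exponential correspondence between additive and multiplicative coordinates on the torus. This identification is the main technical obstacle: one must confirm that the change of variables survives completion, yielding neither spurious relations nor missing elements in the passage from $\Bm_n$ to $\Hc_{n,\scrA}$.

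Finally, for the continuous embedding. The polynomial representation $\varphi_n:\Hm_n\to\End(\Wm_n)$ is faithful, hence injective on $\Bm_n$. Since $\varphi_n(Y_i)=1+\Oc(h)$, the map $\varphi_n$ carries $I^k\Bm_n$ into $h^k\End(\Wb_n)[[h]]$, so it is continuous for the $h$-adic topology and extends uniquely to $\varphi_n:\Hc_{n,\scrA}\to\End(\Wb_n)[[h]]$. For injectivity, expand any $f\in\Ker(\varphi_n)$ in the basis $\mathcal{B}$ as $f=\sum c_{\alpha\beta w}(h)X^{\alpha}y^{\beta}T_w$ with $c_{\alpha\beta w}\in\scrA$. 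Reducing modulo $h$ gives $\sum\overline{c}_{\alpha\beta w}\rho_n(X^{\alpha}y^{\beta}w)=0$ in $\End(\Wb_n)$; by Cherednik's faithfulness of the polynomial representation $\rho_n$ of $\H_n$ (Section \ref{sec:pol}), the operators $\rho_n(X^{\alpha}y^{\beta}w)$ are $F$-linearly independent, so $\overline{c}_{\alpha\beta w}=0$ for every triple. Thus $f\in h\,\Hc_{n,\scrA}$; iterating the argument on $f/h\in\Ker(\varphi_n)$ yields $f=0$.
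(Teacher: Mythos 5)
Your proposal reaches the right conclusions by a genuinely different route than the paper's (very terse) proof, and it is mostly sound, but I want to flag two points.

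For topological freeness the paper invokes the standard criterion over $\scrA=F[[h]]$: a separated, complete, $h$-torsion-free $\scrA$-module is topologically free. This is a one-line argument (completeness and separation are built into the definition of $\Hc_{n,\scrA}$ as a projective limit, and torsion-freeness passes from $\Bm_n$). You instead try to exhibit a topological $\scrA$-basis $\{X^\alpha y^\beta T_w\}$ with $\beta\in\N^n$ directly, via the "exponential change of variables" $Y_i=\exp(hy_i)$. That is a stronger and more informative claim, but as you yourself note it is the delicate point: the PBW basis \eqref{PBWA} is a basis of $\Hm_{n,\Am}$, not of the strictly larger algebra $\Bm_n$ (which contains $z_i=(Y_i-1)/(q-1)$), and one must separately rule out collapse under $I$-adic completion. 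The cleanest way to confirm your basis is \emph{after} one has (i) topological freeness by the abstract criterion and (ii) the identification of the special fibre $\Hc_n\simeq\H_n$ of Proposition~\ref{P:degen1}(c); a Nakayama-type lift of the PBW basis of $\H_n$ then gives exactly your $\mathcal{B}$. Trying to establish $\mathcal{B}$ from scratch, before either of those facts, is circular or at least much harder than it looks.

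For the embedding, your induction-on-$h$ argument is correct in outline, but the step "reducing modulo $h$ gives $\sum\bar{c}_{\alpha\beta w}\rho_n(X^\alpha y^\beta w)=0$" is quietly invoking the fact that $\varphi_n\bmod h$ factors through $\H_n$ and equals (a conjugate of) Cherednik's $\rho_n$. That is precisely the content of Proposition~\ref{P:degen1}(b)--(d), which comes \emph{after} this lemma in the paper; it is a genuine (if routine) computation with the Dunkl--Cherednik operators, not something one gets for free from \eqref{E:dunkl}. It would be better to say this explicitly: either prove the degeneration of the Dunkl operators here, or phrase the injectivity of $\varphi_n$ so that it only uses the (easier) injectivity of $\varphi_n$ on $\Bm_n$ plus separatedness, deferring the identification with $\rho_n$ to the next proposition as the paper does. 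The order estimates in your first paragraph are fine and match the intended argument.
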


\begin{proof}
The $\scrA$-module $\Hc_{n,\scrA}$ is topologically free
because it is separated, complete and torsion free. 
The other statements are easy and are left to the reader.
\end{proof}

\vspace{.1in}

Finally, we set
\begin{equation}
\Hc_{n,\scrK}=\Hc_{n,\scrA}\otimes_\scrA\scrK.
\end{equation}
By base change, the map $\varphi_n$ yields a continuous embedding 
\begin{equation}
\varphi_n:\Hc_{n,\scrK}\to\operatorname{End}(\mathbf{W}_n)((h)).
\end{equation}
The following is standard. The proof is left to the reader.

\vspace{.1in}

\begin{prop}\label{P:degen1}
(a) 
We have $\Hc_{n,\scrA}=\{x\in\Hc_{n,\scrK}\,;\,\varphi_n(x)\in\operatorname{End}(\mathbf{W}_n)[[h]]\}$.

(b)
The map $\varphi_n$ factors to an injection
$\varphi'_n : 
\Hc_{n}\to\operatorname{End}(\mathbf{W}_{n}).$

(c) There is a unique $F$-algebra isomorphism 
$\phi_n:\Hc_{n}\to\H_n$ such that
\begin{equation*}\label{7.31:isomorphism}\qquad
\phi_n(X_j^{\pm 1})=X_{n+1-j}^{\pm 1},\qquad 
\phi_n(y_j)=y_{n+1-j}- (n-1)\kappa /2,\qquad \phi_n(T_i)=s_{n-i}.
\end{equation*}

(d) We have $\varphi'_n=\rho'_n \circ \phi_n$, where
$\rho'_n=w_0 \rho_n w_0$ and $w_0\in\Aut(\mathbf{W}_{n})$ is
given by $X_i \mapsto X_{n+1-i}$. 
\end{prop}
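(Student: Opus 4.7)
The four parts are coupled, so I will argue them in the order (c), (d), (b), (a), closing the injectivity loop along the way.

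The first step is to construct $\phi_n$ by sending the generators of $\Hc_n$ (reduced modulo $h$) to the elements indicated in (c), with the index reversal $j\mapsto n+1-j$. To check that the defining relations \eqref{E:1}--\eqref{E:3} of $\H_n$ hold, I will reduce each DAHA relation \eqref{E:1qt}--\eqref{E:5qt} modulo $h$ under $q=\exp(h),\,t=\exp(-\kappa h)$ and $Y_i=1+hy_i+O(h^2)$. The quadratic Hecke relation $(T_i+t^{1/2})(T_i-t^{-1/2})=0$ collapses to $T_i^2=1$ at $h=0$, so together with the braid relations this yields an embedded copy of $F[\Sen_n]$; the crossing relations \eqref{E:1qt}--\eqref{E:2qt} degenerate immediately to \eqref{E:1} and, by linearising in $h$, to \eqref{E:2}. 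The subtle relation is \eqref{E:3}: I will expand \eqref{E:5qt} to obtain $Y_iX_jY_i^{-1}$ to order $h$, then extract the coefficient of $h$ in the commutator $[(Y_i-1)/h,\,X_j]$. Case-by-case analysis ($i<j$, $i=j$, $i>j$), using $P=Y_1^{-1}T_1\cdots T_{n-1}$ iteratively and tracking the twist $PX_nP^{-1}=q^{-1}X_1$, reproduces the three lines of \eqref{E:3}. Surjectivity of $\phi_n$ is clear, as its image contains $s_i$, $X_j^{\pm 1}$ and $y_j$.

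For (d), I will degenerate the polynomial representation. Under $q=\exp(h),\,t=\exp(-\kappa h)$ the Cherednik operator $\varphi_n(Y_i)$ is a difference-reflection operator whose $h$-expansion reads $1+h\cdot D_i+O(h^2)$, where $D_i$ is precisely (after reversing indices) the trigonometric Dunkl operator of \eqref{E:dunkl}. Consequently $\varphi_n$ preserves the integral structure $\Hc_{n,\scrA}$ and descends to an $F$-algebra map $\varphi'_n:\Hc_n\to\End(\Wb_n)$ which, by direct matching of the formulas for $X_i$, $T_i$ and $y_i$ together with the reversal built into $\phi_n$, coincides with $\rho'_n\circ\phi_n$. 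This establishes (d).

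For (b) and the remaining injectivity in (c), I will use the faithfulness of $\rho_n$ (hence of $\rho'_n$, which is just its conjugate by $w_0$). Combined with $\varphi'_n=\rho'_n\circ\phi_n$ and surjectivity of $\phi_n$, any element of $\ker\phi_n$ must be killed by $\varphi'_n$; comparing PBW-type bases (the one on $\Hc_n$ inherited from the topologically free structure of Lemma~\ref{lem:7.1} and the standard PBW basis of $\H_n$) then forces the kernel to vanish. This simultaneously gives (c) and the injectivity of $\varphi'_n$, hence (b). Part (a) follows as a standard $h$-adic argument: given $x\in\Hc_{n,\scrK}$ with $\varphi_n(x)\in\End(\Wb_n)[[h]]$, write $x=h^{-k}x'$ with $x'\in\Hc_{n,\scrA}$ and $k$ minimal; if $k\geqslant 1$ then the reduction $\bar x'\in\Hc_n$ lies in $\ker\varphi'_n=0$, so $x'\in h\Hc_{n,\scrA}$, contradicting minimality.

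\textbf{Main obstacle.} The real work lies in step one, namely the degeneration of the P-relation \eqref{E:5qt} into the case-by-case commutator \eqref{E:3}. One must expand $P$ to first order in $h$, apply the twisted conjugation $PX_iP^{-1}$ iteratively, and cleanly isolate the linear-in-$h$ contribution of $[Y_i,X_j]$; keeping track of the reversal $i\mapsto n+1-i$ that is responsible for the $w_0$ in (d) is the only place where a careless bookkeeping could derail the argument.
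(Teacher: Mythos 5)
The paper states this proposition without proof (``The following is standard. The proof is left to the reader''), so there is no in-paper argument to compare against; your outline is a sound rendition of the standard Cherednik-degeneration argument and, in particular, the sequencing (c) $\to$ (d) $\to$ (b) $\to$ (a) is not circular. One small sharpening is worth making explicit in the injectivity step for (b)/(c): from the topological spanning set $X^{\alpha}f(Y)T_w$ of $\Bm_n$ you obtain, after reduction mod $h$, an $F$-linear \emph{spanning} set $\{X^{\alpha}g(y)s_w\}$ of $\Hc_n$ (not a priori a basis). Since $\phi_n$ sends this spanning set onto the PBW \emph{basis} of $\H_n$, any $u\in\Ker\phi_n$, once written as a finite linear combination of the spanning elements, must have all coefficients zero; this simultaneously shows $\Ker\phi_n=0$ and that the spanning set was in fact a basis. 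Phrasing it as ``comparing PBW-type bases'' slightly presupposes what you are proving, so it is cleaner to run the argument through the spanning-set-to-basis observation. With that adjustment, (d) follows by matching the $h$-linearization of the Cherednik operators with the Dunkl operators of \eqref{E:dunkl} (the index reversal producing the $w_0$), and your $h$-adic argument for (a) from $\Ker\varphi'_n=0$ is correct, using that $\Hc_{n,\scrK}=\bigcup_k h^{-k}\Hc_{n,\scrA}$ and that $\Hc_{n,\scrA}$ is $h$-adically separated.
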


\vspace{.15in}

\subsection{The degeneration of $\Sm\Hm_n$}
We now turn our attention to the spherical subalgebras. 
Set
\begin{equation}
\Sc\Hc_{n,\scrA}=
\mathbb{S} \cdot \Hc_{n,\scrA} \cdot \mathbb{S},
\qquad
\Sc\Hc_{n}=\Sc\Hc_{n,\scrA}/h\Sc\Hc_{n,\scrA}.
\end{equation}
The map $\varphi'_n$ factors to an injective map
\begin{equation}
\varphi'_n : \Sc\Hc_{n} 
\to \text{End}(\V_{n}).
\end{equation}
For $l\geqslant 1$ 
we consider the following elements
\begin{equation}
\label{7.4:Q1ln}
\gathered
Q_{0,l}^{(n)}=
h^{1-l}\sum_{k=0}^{l-1}\Bigl(\begin{matrix} l-1\\k\end{matrix}\Bigr)
(-1)^k\theta(u_{0,l-1-k}^{(n)}),\\
Q_{l,0}^{(n)}=(-1)^l\kappa^l P_{l,0}^{(n)},\qquad
Q_{-l,0}^{(n)}=P_{-l,0}^{(n)},\qquad
Q_{1,l}^{(n)}=[Q_{0,l+1}^{(n)},Q_{1,0}^{(n)}],\qquad
\endgathered
\end{equation}

\vspace{.1in}

\begin{prop}\label{P:degen2}
(a)  For $l\geqslant 1$ the elements $Q_{\pm l,0}^{(n)}$
and $Q_{0,l}^{(n)}$ belong to $\Sc\Hc_{n,\scrA}$.

(b) The map $\phi_n$ restricts to an
$F$-algebra isomorphism
$\Sc\Hc_{n}\to\SH_n$
such that we have
$P_{\pm l,0}^{(n)}\mapsto D_{\pm l,0}^{(n)}$
and $Q_{0,l}^{(n)}\mapsto D_{0,l}^{(n)}$
for $l\geqslant 1.$

(c) The algebra $\Sc\Hc_{n,\scrA}$ is topologically generated by 
$P^{(n)}_{\pm 1,0}$ and $Q^{(n)}_{0,2}$.

\end{prop}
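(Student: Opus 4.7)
My plan is to prove (a), (b), (c) in order, with the eigenvalue formula \eqref{7.2:eigenvalue} together with Proposition \ref{P:degen1} as the core tools.

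For (a), the elements $Q_{\pm l,0}^{(n)}=(\pm\kappa)^l P_{\pm l,0}^{(n)}$ are immediately integral, since $X_i^{\pm 1}\in\Hc_{n,\scrA}$ by Lemma \ref{lem:7.1} and $\kappa\in F\subset\scrA$. The nontrivial part concerns $Q_{0,l}^{(n)}$, which a priori lies only in $\Sc\Hc_{n,\scrK}$ because of the prefactor $h^{1-l}$. To prove $Q_{0,l}^{(n)}\in\Sc\Hc_{n,\scrA}$, I will invoke the integrality criterion of Proposition \ref{P:degen1}(a): it suffices to verify that $\varphi_n(Q_{0,l}^{(n)})\in\End(\mathbf{W}_n)[[h]]$. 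Recognizing $\sum_{k=0}^{l-1}\binom{l-1}{k}(-1)^k z^{l-1-k}=(z-1)^{l-1}$ and applying \eqref{7.2:eigenvalue} with $q^{mx(s)}t^{my(s)}=e^{mhc(s)}$ under the embedding \eqref{completion1}, I obtain
\begin{equation*}
\varphi_n(Q_{0,l}^{(n)})\cdot J^{(n)}_\lambda(q,t^{-1})=h^{1-l}\sum_{s\in\lambda}(e^{hc(s)}-1)^{l-1}\cdot J^{(n)}_\lambda(q,t^{-1}).
\end{equation*}
Since $e^{hc}-1\in h\scrA$, the factor $h^{1-l}$ cancels exactly and the eigenvalue equals $\sum_{s\in\lambda}c(s)^{l-1}+O(h)\in\scrA$. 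As $\{J^{(n)}_\lambda(q,t^{-1})\}$ is an $\scrA$-basis of $\Lambda_{n,\scrA}$ on which this operator is diagonal, integrality follows.

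For (b), Proposition \ref{P:degen1}(c) furnishes an $F$-algebra isomorphism $\phi_n:\Hc_n\to\H_n$. The Hecke idempotent $\Sm$ reduces to the group-algebra idempotent $\Sb$ modulo $h$ and is preserved by $\phi_n$, so $\phi_n$ restricts to an isomorphism $\Sc\Hc_n\to\SH_n$. The identification $\phi_n(P_{\pm l,0}^{(n)})=D_{\pm l,0}^{(n)}$ is direct, since the prefactor $q^l$ in \eqref{7.1:form} collapses to $1$ and $\phi_n$ only permutes the $X_j$'s, preserving symmetric polynomials. For $Q_{0,l}^{(n)}$, the eigenvalue computation from part (a), together with the classical degeneration of the integral Macdonald polynomials $J_\lambda^{(n)}(q,t^{-1})$ to the integral Jack polynomials $J_\lambda^{(n)}$ as $h\to 0$, shows that $\phi_n(Q_{0,l}^{(n)})$ acts on $\Lambda_n$ by $\sum_{s\in\lambda}c(s)^{l-1}$, which is exactly the action of $D_{0,l}^{(n)}$ by \eqref{E:Horace}. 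Faithfulness of Cherednik's polynomial representation then forces $\phi_n(Q_{0,l}^{(n)})=D_{0,l}^{(n)}$.

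For (c), I will use an $h$-adic Nakayama argument. Let $\mathbf A\subset\Sc\Hc_{n,\scrA}$ denote the closed $\scrA$-subalgebra topologically generated by $P_{\pm 1,0}^{(n)}$ and $Q_{0,2}^{(n)}$, which is well-defined by (a). By (b), the image of $\mathbf A$ inside $\Sc\Hc_n\cong\SH_n$ contains $D_{\pm 1,0}^{(n)}$ and $D_{0,2}^{(n)}$. The finite-$n$ analogue of Proposition \ref{prop:generators/involution}(a) - obtained by the same iterated-commutator argument using \eqref{E:rel'1}-\eqref{E:rel3} together with the leading term identity \eqref{leadingterm} specialized to $\SH_n$ - asserts that these three elements generate $\SH_n$ as an $F$-algebra. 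Hence $\mathbf A+h\Sc\Hc_{n,\scrA}=\Sc\Hc_{n,\scrA}$; iterating and invoking that $\Sc\Hc_{n,\scrA}$ is topologically free over $\scrA$ (so separated) while $\mathbf A$ is closed, the $h$-adic Nakayama lemma yields $\mathbf A=\Sc\Hc_{n,\scrA}$.

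The main technical obstacle is part (a): one must simultaneously verify the exact cancellation of the $h^{1-l}$ pole and lift eigenvalue integrality on the Macdonald basis to integrality as an operator in $\End(\mathbf{W}_n)[[h]]$. The binomial identity handles the cancellation, while the lift rests on the classical fact that integral Macdonald polynomials form an $\scrA$-basis of $\Lambda_{n,\scrA}$. A secondary subtlety in part (c) is ensuring separation of the quotient $\Sc\Hc_{n,\scrA}/\mathbf A$ needed for the Nakayama step, but this is automatic from topological freeness of $\Sc\Hc_{n,\scrA}$ and closedness of $\mathbf A$.
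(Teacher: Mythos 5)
Your overall strategy coincides with the paper's: compute the eigenvalue of $Q_{0,l}^{(n)}$ on the Macdonald basis, verify it lies in $\scrA$ after the binomial cancellation, and invoke the lattice criterion of Proposition~\ref{P:degen1}; then identify generators under $\phi_n$; then close with an $h$-adic Nakayama argument using that $\SH_n$ is generated by $D^{(n)}_{\pm 1,0}$ and $D^{(n)}_{0,2}$. Parts (b) and (c) are essentially the paper's proof.

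There is, however, a genuine error in your step for (a). You claim that $\{J^{(n)}_\lambda(q,t^{-1})\}$ is an $\scrA$-basis of $\Lambda_{n,\scrA}$. It is not: under the embedding \eqref{completion1} one has $1-t^{-1}=-\kappa h+O(h^2)$, and by \eqref{E:loijk} the integral Macdonald polynomial satisfies $J_\lambda^{(n)}(q,t^{-1})=(1-t^{-1})^{|\lambda|}\bigl(J_\lambda^{(n)}+O(h)\bigr)$, so $J_\lambda^{(n)}(q,t^{-1})\in h^{|\lambda|}\Lambda_{n,\scrA}$. The inverse change of basis to $\{m_\mu\}$ therefore has poles in $h$, and these elements cannot span $\Lambda_{n,\scrA}$ over $\scrA$. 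This matters: an operator that is diagonal with $\scrA$-eigenvalues in \emph{some} basis need not preserve an \emph{arbitrary} $\scrA$-lattice, only a lattice of the form $\bigoplus_\lambda\scrA\,c_\lambda v_\lambda$. The correct statement, which the paper encodes precisely in \eqref{E:loijk}, is that the \emph{rescaled} polynomials $(1-t^{-1})^{-|\lambda|}J_\lambda^{(n)}(q,t^{-1})$ reduce modulo $h$ to the integral Jack polynomials $J_\lambda^{(n)}$, which form an $F$-basis of $\Lambda_n$; a graded Nakayama argument then shows the rescaled family is an $\scrA$-basis of $\Lambda_{n,\scrA}$. Since rescaling is diagonal, your eigenvalue computation still applies verbatim, so the conclusion $\varphi_n(Q_{0,l}^{(n)})(\V_{n,\scrA})\subset\V_{n,\scrA}$ is saved; but you must state the basis correctly, as the unrescaled Macdonald polynomials do not do the job.

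A secondary remark on (b): saying ``$\phi_n$ restricts to an isomorphism'' because $\Sm\mapsto\Sb$ is too quick. The paper first observes that $h\Hc_{n,\scrA}\cap\Sc\Hc_{n,\scrA}=h\Sc\Hc_{n,\scrA}$ (so that $\Sc\Hc_n\to\Hc_n$ is injective, hence $\phi_n\vert_{\Sc\Hc_n}$ is injective), and only then deduces surjectivity from the fact that $\SH_n$ is generated by the images $\{D^{(n)}_{\pm l,0},D^{(n)}_{0,l}\}$ (Lemma~\ref{lem:uno}). You should supply this last generation step explicitly, as your identifications of the generator images alone do not yet give surjectivity.
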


\begin{proof} We first prove $(a)$. We have
$P_{\pm l, 0}^{(n)} \in \mathcal{S}\Hc_{n,\scrA}$. 
We consider the inclusions 
\begin{equation}
\V_{n},\, \mathbb V_n\subset
\V_{n,\scrK}
\end{equation}
associated with the obvious inclusion
$F\subset\scrK$ and with the embedding
$\Km\subset\scrK$ in \eqref{completion1}. 
We have, by \cite[chap.~VI, (10.23)]{Mac},
\begin{equation}\label{E:loijk}
(1-t^{-1})^{-|\lambda|}J_\lambda^{(n)}(q,t^{-1})= J_\lambda^{(n)}
\,\mod\,h\V_{n,\scrA}.               
\end{equation}  
By \eqref{7.2:eigenvalue}, for  
$l(\lambda)\leqslant n$, we have
\begin{equation}\label{E:hyut}
\begin{split}
Q_{0,l}^{(n)}\cdot J_\lambda^{(n)}(q,t^{-1})
&=h^{1-l}\sum_{s\in\lambda}
\sum_{k=0}^{l-1}
\Bigl(\begin{matrix}l-1\\k\end{matrix}\Bigr)
(-1)^k
q^{(l-1-k)c(s)}\,J_\lambda^{(n)}(q,t^{-1}),\\
&=\sum_{s\in\lambda}
\Big(\frac{q^{c(s)}-1}{h}\Big)^{l-1}\,J_\lambda^{(n)}(q,t^{-1}).
\end{split}
\end{equation}
By Proposition \ref{P:degen1} the $\scrA$-algebra 
$\Sc\Hc_{n,\scrA}$ is the subalgebra of 
\begin{equation}
\Sc\Hc_{n,\scrK}=\Sc\Hc_{n,\scrA}\otimes_\scrA\scrK
\end{equation}
which preserves the subspace $\V_{n,\scrA}$ of
$\mathbf{V}_{n,\scrK}$. 
It entails that $Q_{0,l}^{(n)} \in \Sc\Hc_{n,\scrA}$ as wanted. 
We now deal with $(b)$ and $(c)$. 
Note that $$h \Hc_{n,\scrA} \cap \Sc\Hc_{n,\scrA}= h \Sc\Hc_{n,\scrA}.$$
Therefore the natural map gives an injection
$
\Sc\Hc_{n} \to \Hc_{n}.
$
Since
$\Sc\Hc_{n,\scrA}=\Sm \cdot \Hc_{n,\scrA} \cdot \Sm$, the map $\phi_n$
in Proposition \ref{P:degen1} restricts to an injection
\begin{equation}\label{phi2}\phi_n: \Sc\Hc_{n} \to \mathbf{SH}_n.
\end{equation}
The equality $\phi_n(Q_{0,l}^{(n)})= D_{0,l}^{(n)}$
is a consequence of \eqref{E:Horace}, \eqref{E:loijk} and \eqref{E:hyut}. 
The equality $\phi_n(P_{\pm l,0}^{(n)})= D_{\pm l,0}^{(n)}$ follows from
\eqref{1.5:form} and \eqref{7.1:form}. The map $\phi_n$ in
\eqref{phi2} is surjective because, by Lemma \ref{lem:uno}, the $F$-algebra $\SH_n$ is generated by
$\{D^{(n)}_{\pm l,0},\,D^{(n)}_{0,l}\,;\,l\geqslant 1\}$.
Claim $(c)$ is a consequence of Nakayama's lemma together with the fact that 
$\mathbf{SH}_n$ is generated by
$D_{0,2}^{(n)}$ and $D^{(n)}_{\pm 1, 0}$.
\end{proof}

\vspace{.1in}

Let $\Sc\Hc^>_{n,\scrA},$ $\Sc\Hc^<_{n,\scrA}$ and $\Sc\Hc^{0}_{n,\scrA}$
be the closed $\scrA$-subalgebras of $\Sc\Hc_{n,\scrA}$
topologically generated respectively by the sets
$\{Q_{1,l}^{(n)}\,;\,l\geqslant 0\}$, $\{Q_{-1,l}^{(n)}\,;\,l\geqslant 0\}$ and
$\{Q_{0,l}^{(n)}\,;\,l\geqslant 0\}$.
We abbreviate
\begin{equation}
\Sc\Hc^>_n=\Sc\Hc_{n,\scrA}^>/h\,\Sc\Hc_{n,\scrA}^>,\qquad
\Sc\Hc^<_n=\Sc\Hc_{n,\scrA}^<\big/h\,\Sc\Hc_{n,\scrA}^<,\qquad
\Sc\Hc^{0}_n=\Sc\Hc_{n,\scrA}^{0}\big/h\,\Sc\Hc_{n,\scrA}^{0}.
\end{equation}
Using Proposition \ref{P:degen2}, we get the following.

\begin{cor}\label{cor:degen2}
The map $\phi_n$ gives $F$-algebra isomorphisms
$$\Sc\Hc_{n}^>\to\SH_n^>, \qquad
\Sc\Hc_{n}^<\to\SH_n^<,\qquad \Sc\Hc_n^{0} \to \SH_n^0$$
such that
$Q_{\pm l,0}^{(n)}\mapsto D_{\pm l,0}^{(n)}$, $Q_{\pm 1, l} \mapsto D_{\pm 1, l}$
and
$Q_{0,l}^{(n)}\mapsto D_{0,l}^{(n)}$
for $l\geqslant 1.$
\end{cor}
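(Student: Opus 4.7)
The plan is to deduce this corollary from Proposition~\ref{P:degen2}(b) by restricting the $F$-algebra isomorphism $\phi_n:\Sc\Hc_n\to\SH_n$ to each of the three closed subalgebras topologically generated by the $Q^{(n)}_{0,l}$, $Q^{(n)}_{1,l}$, and $Q^{(n)}_{-1,l}$, respectively. The argument has two main steps: first, identify the image of the generators; second, verify injectivity after reduction mod $h$.

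First I would check the compatibility on generators. For the Cartan part, $\phi_n(Q^{(n)}_{0,l})=D^{(n)}_{0,l}$ is already part of Proposition~\ref{P:degen2}(b). For the positive part, the definition \eqref{7.4:Q1ln} gives $Q^{(n)}_{1,0}=-\kappa P^{(n)}_{1,0}$, so from Proposition~\ref{P:degen2}(b) one has $\phi_n(Q^{(n)}_{1,0})=-\kappa D^{(n)}_{1,0}$, and then, since $\phi_n$ is an algebra homomorphism and $D^{(n)}_{1,l}=[D^{(n)}_{0,l+1},D^{(n)}_{1,0}]$ by \eqref{1.5:form},
$$\phi_n(Q^{(n)}_{1,l})=\phi_n\bigl([Q^{(n)}_{0,l+1},Q^{(n)}_{1,0}]\bigr)=[D^{(n)}_{0,l+1},-\kappa D^{(n)}_{1,0}]=-\kappa D^{(n)}_{1,l}.$$
The case of $Q^{(n)}_{-1,l}$ is analogous. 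Since $-\kappa\in F^\times$, the images generate exactly the same $F$-subalgebras of $\SH_n$ as the $D^{(n)}_{\pm 1,l}$ do, namely $\SH^>_n$ and $\SH^<_n$, and for the Cartan part trivially $\SH^0_n=F[D^{(n)}_{0,l}\,;\,l\geqslant 1]$. Composing the inclusion $\Sc\Hc^?_{n,\scrA}\hookrightarrow\Sc\Hc_{n,\scrA}$ with reduction mod $h$ and with $\phi_n$ therefore yields a surjective $F$-algebra homomorphism $\bar\phi^?_n:\Sc\Hc^?_n\twoheadrightarrow\SH^?_n$ for each $?\in\{>,<,0\}$.

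The main obstacle will be injectivity of $\bar\phi^?_n$, which is equivalent to the $h$-saturation statement $\Sc\Hc^?_{n,\scrA}\cap h\Sc\Hc_{n,\scrA}=h\Sc\Hc^?_{n,\scrA}$. To handle it, the plan is to exhibit a topological $\scrA$-basis of $\Sc\Hc^?_{n,\scrA}$ which forms part of a topological $\scrA$-basis of $\Sc\Hc_{n,\scrA}$. The case $?=0$ is the easiest: $\Sc\Hc^0_{n,\scrA}$ is the closure of the commutative $\scrA$-algebra topologically generated by the $Q^{(n)}_{0,l}$, which reduce mod $h$ to algebraically independent elements $D^{(n)}_{0,l}$ of $\SH^0_n$ (Lemma of Section~\ref{sec:SHc}), so that monomials in the $Q^{(n)}_{0,l}$ furnish the required basis by a topological Nakayama argument inside the topologically free $\scrA$-module $\Sc\Hc_{n,\scrA}$ (Lemma~\ref{lem:7.1}). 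For $?=>$ (and symmetrically $?=<$), I would lift any $F$-basis of $\SH^>_n$ coming from the PBW-type generators $\bar D^{(n)}_{r,d}$ (as in Proposition~\ref{prop:filtration-SHc}(a)) to elements of $\Sc\Hc^>_{n,\scrA}$ via iterated commutators of the $Q^{(n)}_{1,l}$, extend by lifts of bases of $\Sc\Hc^0_{n,\scrA}$ and $\Sc\Hc^<_{n,\scrA}$ built the same way, and use the surjectivity of the multiplication map on $\SH_n$ (Corollary~\ref{C:triangn}) together with topological Nakayama to conclude that the resulting elements form a topological $\scrA$-basis of $\Sc\Hc_{n,\scrA}$ whose intersection with $\Sc\Hc^>_{n,\scrA}$ is the prescribed basis. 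This yields the desired $h$-saturation and completes the proof.
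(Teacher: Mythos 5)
Your reduction to Proposition~\ref{P:degen2}(b) plus a saturation statement is the right starting point, but two things go wrong. First, the formulas: from $Q^{(n)}_{l,0}=(-\kappa)^lP^{(n)}_{l,0}$ and Proposition~\ref{P:degen2}(b) you get $\phi_n(Q^{(n)}_{l,0})=(-\kappa)^lD^{(n)}_{l,0}$ and $\phi_n(Q^{(n)}_{1,l})=-\kappa D^{(n)}_{1,l}$, which are not the formulas of the corollary. You note the scalar is a unit, which suffices for surjectivity and injectivity, but to produce the map that is actually asserted you must postcompose with an automorphism. The paper does this at the very end by observing that $\Sc\Hc^>_{n,\scrA}$ is $\N$-graded and that the grading automorphism $u\mapsto(-\kappa)^{\deg u}u$ sends $P^{(n)}_{l,0}\mapsto Q^{(n)}_{l,0}$; twisting the isomorphism by its inverse produces the advertised map. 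This step is missing from your proposal.

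Second, and more seriously, the injectivity. Injectivity of $\Sc\Hc^?_n\to\SH^?_n$ amounts to the $h$-saturation $\Sc\Hc^?_{n,\scrA}\cap h\Sc\Hc_{n,\scrA}=h\Sc\Hc^?_{n,\scrA}$, as you say. Your plan is to exhibit a topological $\scrA$-basis of $\Sc\Hc^?_{n,\scrA}$ that is part of one for $\Sc\Hc_{n,\scrA}$. However, the Cartan-case claim that the $D^{(n)}_{0,l}$ ($l\geqslant 1$) are algebraically independent in $\SH^0_n$ is false: $\SH^0_n\simeq F[y_1,\ldots,y_n]^{\Sen_n}$ is a polynomial ring in only $n$ variables, so the $D^{(n)}_{0,l}$ with $l>n$ satisfy algebraic relations. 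Hence the monomial basis you want does not exist, and the Nakayama argument as stated collapses. This is exactly where the problem is delicate: the elements $Q^{(n)}_{0,l}$ carry negative powers of $h$ in their definition \eqref{7.4:Q1ln}, so it is not clear a priori that $\Sc\Hc^0_{n,\scrA}$ equals the $h$-adic closure of a polynomial ring on finitely many of them. (A similar concern applies to your plan for the $>$ and $<$ pieces, where you do not explain how a PBW-type basis of the quotient $\SH^>_n$ is to be lifted coherently to $\scrA$.) The paper avoids explicit bases. It introduces the intermediate lattice $M'=\Sc\Hc^>_{n,\scrK}\cap\Sc\Hc_{n,\scrA}$, which is automatically $h$-saturated in $\Sc\Hc_{n,\scrA}$ because it is an intersection of lattices, and shows that $M'/hM'\to\SH^>_n$ is an isomorphism under $\phi_n$. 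Combined with $\Sc\Hc^>_{n,\scrA}\subset M'$ and the surjectivity of $\Sc\Hc^>_{n,\scrA}\to\SH^>_n$ you already established, topological Nakayama forces $\Sc\Hc^>_{n,\scrA}=M'$, from which the $h$-saturation and injectivity follow at once. You should replace your basis-construction step by this saturation argument, or else supply a genuinely correct lifted basis, which will be considerably harder.
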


\begin{proof} 
Let $\Sc\Hc^>_{n,\scrK}$ be the closed $\scrK$-subalgebra of $\Sc\Hc_{n,\scrK}$
generated by $\{Q_{1,l}^{(n)}\,;\,l \geqslant 0\}$. We have 
\begin{equation}
\Sc\Hc_{n,\scrA}^> \subset \Sc\Hc_{n,\scrK}^> \cap \Sc\Hc_{n,\scrA},
\end{equation} 
and the map $\phi_n$ yields an isomorphism
\begin{equation}
\Sc\Hc_{n,\scrK}^> \cap \Sc\Hc_{n,\scrA} /h (\Sc\Hc_{n,\scrK}^> \cap \Sc\Hc_{n,\scrA}) \to \SH_n^>.
\end{equation} 
Since the induced map
$\Sc\Hc^>_{n,\scrA} \to \SH^>_n$ is surjective, we deduce that 
\begin{equation}
\Sc\Hc_{n,\scrA}^> = \Sc\Hc^>_{n,\scrK} \cap \Sc\Hc_{n,\scrA}.
\end{equation} 
In particular, we have also 
\begin{equation}
\Sc\Hc_{n,\scrA}^> \cap h \Sc\Hc_{n,\scrA}=\Sc\Hc^>_{n,\scrK} \cap h \Sc\Hc_{n,\scrA}=h \Sc\Hc_{n,\scrA}^>.
\end{equation} 
This shows the existence of an $F$-algebra isomorphism 
\begin{equation}
\Sc\Hc_n^> \to \SH_n^>,\qquad
P_{l,0}^{(n)}\mapsto D_{l,0}^{(n)},
\qquad
[Q_{0,l+1}^{(n)},P_{1,0}^{(n)}]\mapsto D_{1,l}^{(n)}.
\end{equation} 
Since $\Sc\Hc_{n,\scrA}^>$ is $\mathbb{N}$-graded, there
exists an automorphism of $\Sc\Hc_{n,\scrA}^>$ sending $P_{l,0}^{(n)}$ to $Q_{l,0}^{(n)}$. This proves the corollary for
$\Sc\Hc^>_n$. The other cases are similar.
\end{proof}

\vspace{.15in}

\subsection{The algebra $\Sm\Hm^\cm$}
\label{sec:SHgamma}
Consider the $\Km$-algebra
$\widehat\Ecb$ in \cite[sec~1]{SV2} associated with the parameters
\begin{equation}
\sigma^{1/2}=q^{-1/2},\qquad\bar\sigma^{1/2}=t^{-1/2}.
\end{equation}
It is generated by elements $u_{\xb},$ $\kappa_\xb$ with
$\xb\in\Z^2_0,$ 
satisfying the relations in \cite[sec.~1.1]{SV2}.
For $\gcd(\xb)=1$ and $l \geqslant 1$, we set
\begin{equation}
\label{7.2:eq4}
\gathered
\alpha_l=(1-q^l)(1-t^l)(1-v^{-l})/l,\\
P_{l\xb}=(q^l-1)\,u_{l\xb},\qquad
\sum_{l\geqslant 0} \theta_{l\xb}\, s^{l}=
\exp\big(\sum_{l\geqslant 1} \alpha_{l}\, u_{l\xb}\,s^l\big).
\endgathered
\end{equation}
Since $\widehat\Ecb$ is an \textit{extended Hall algebra}
in Ringel's sense, see e.g., \cite[sec.~1.6]{Kap}, 
it admits a \emph{topological coproduct} $\Delta$, which is given by the following 
formula, compare \cite[sec. 7]{BS},
\begin{equation}
\gathered
\Delta(\kappa_\xb)=\kappa_\xb\otimes\kappa_\xb,\\
\Delta(u_{0,l})=
u_{0,l}\otimes 1+\kappa_{0,l}\otimes u_{0,l},\quad l\neq 0,\\
\Delta(u_{1,l})=
u_{1,l}\otimes 1+\sum_{k\geqslant 0}
\theta_{0,k}\kappa_{1,l-k}\otimes u_{1,l-k},\quad l\in\Z.
\endgathered
\end{equation}
The expression 
``topological coproduct" means that $\Delta$ 
maps into some completion of the tensor square of
$\widehat\Ecb$, see \cite[sec. 2]{BS} for details.
By \cite[sec. 5]{BS}, 
there is a unique $\Km$-algebra automorphism 
\begin{equation}\label{sigma}
\sigma:\widehat\Ecb\to\widehat\Ecb,\quad 
\kappa_\xb\mapsto \kappa_{\sigma(\xb)},\quad 
u_\xb\mapsto u_{\sigma(\xb)}.
\end{equation}
Compare \eqref{sigman}.
We define a new topological coproduct on $\widehat\Ecb$ by the formula
\begin{equation}{}^\sigma\!\Delta=(\sigma^{-1}\otimes\sigma^{-1})\circ\Delta\circ\sigma.
\end{equation}

\vspace{.05in}

Now, we fix a family of formal parameters $\cm_l$ with $l\in\Z$ and we set
\begin{equation}
\Km^\cm=\Km[ \cm_{l}\,;\,l\in\Z][\cm_0^{- 1}],\qquad
\Am^\cm=\Am[\cm_{l}\,;\,l\in\Z][\cm_0^{-1}].
\end{equation}
Let
$\Ecb^\cm$ be the specialization of
$\widehat\Ecb\otimes_\Km\Km^\cm$ at
$\kappa_{1,0}=\cm_0$ and $\kappa_{0,1}=1$. 
Let $u_{0,0}$ be a new formal variable, and consider the smash product
\begin{equation}
\Sm\Hm^\cm=\Km[u_{0,0}]\otimes_\Km\Ecb^\cm,
\end{equation}
where the commutator with $u_{0,0}$ 
is the $\Km^\cm$-derivation on
$\Ecb^\cm$ such that
\begin{equation}
[u_{0,0},u_{i,j}]=iu_{i,j},\qquad (i,j)\in\Z^2_0.
\end{equation}
The $\Km^\cm$-algebra $\Sm\Hm^\cm$ is $\Z^2$-graded,
with 
$\deg(u_\xb)=\xb$ and
$\deg(\kappa_\xb)=0.$
We have a
topological coproduct ${}^\sigma\!\Delta$ on $\Sm\Hm^\cm$ given by
the following formulas 
\begin{equation}
\label{formcoprod}
\gathered
{}^\sigma\!\Delta(\cm_0)=\cm_0\otimes\cm_0,\\
{}^\sigma\!\Delta(\cm_l)=\delta(\cm_l)\qquad\text{if}\ l\neq 0,\\
{}^\sigma\!\Delta(u_{l,0})=u_{l,0}\otimes 1+
(\cm_0)^l\otimes u_{l,0},\\
{}^\sigma\!\Delta(u_{l,1})=u_{l,1}\otimes 1+(\cm_0)^l\otimes u_{l,1}+
\sum_{k\geqslant 1}\theta_{-k,0}(\cm_0)^{k+l}\otimes u_{k+l,1}.
\endgathered
\end{equation}
Let $\Sm\Hm^>$, $\Sm\Hm^{\cm,0}$ and $\Sm\Hm^<$ 
be the $\Km$-subalgebras generated respectively by
\begin{equation}
\{u_{1,l}\,;\,l\in\Z\},\qquad
\Km^\cm\cup\{u_{0,l}\,;\,l\in\Z\},\qquad
\{u_{-1,l}\,;\,l\in\Z\}.
\end{equation}
The following holds.

\begin{lem}
%
(a) The multiplication yields an isomorphism 
$\Sm\Hm^>\otimes_\Km\Sm\Hm^{\cm,0}\otimes_\Km\Sm\Hm^<
\to\Sm\Hm^\cm.$

(b) We have
$\Sm\Hm^{\cm,0}
=\Km^\cm[u_{0,l}\,;\,l\in\Z].$
\end{lem}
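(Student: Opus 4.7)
The plan is to establish both parts by reducing to the structural properties of the elliptic Hall algebra $\widehat\Ecb$, combined with the fact that $\Sm\Hm^\cm$ is the smash product $\Km[u_{0,0}] \otimes_\Km \Ecb^\cm$, where $\Ecb^\cm$ is the specialization of $\widehat\Ecb \otimes_\Km \Km^\cm$ at $\kappa_{1,0} = \cm_0$ and $\kappa_{0,1} = 1$. The statement is the elliptic $(q,t)$-analog of the triangular decomposition of $\SH^\cb$ established in Proposition~\ref{1.8:prop1}.

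For part (b), I would first invoke the fact that in $\widehat\Ecb$ the family $\{u_{0,l} : l \in \Z\}$ generates a commutative subalgebra, which is a standard structural feature of the Hall algebra of an elliptic curve (and a $(q,t)$-lift of the commutativity of $\{D_{0,l}\}$ in $\SH^{\cb,0}$). This yields a natural surjection $\Km^\cm[u_{0,l} : l \in \Z] \twoheadrightarrow \Sm\Hm^{\cm,0}$. Algebraic independence of the $u_{0,l}$ over $\Km^\cm$ follows from computing eigenvalues on the polynomial representations $\mathbb{V}_n$: formula \eqref{7.2:eigenvalue} shows that $\theta(u^{(n)}_{0,l})$ acts diagonally on the Macdonald basis with eigenvalue $\sum_{s \in \lambda} q^{l x(s)} t^{l y(s)}$, and these are algebraically independent as $l$ varies for generic $\lambda$. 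Passing to the inverse limit $n \to \infty$ transports this independence to $\Sm\Hm^{\cm,0}$.

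For surjectivity in part (a), I would argue as follows. First, $\Sm\Hm^\cm$ is generated by $\Km^\cm$, $u_{0,0}$ and $\{u_{\pm 1, l}, u_{0, l} : l \in \Z\}$: higher slope generators $u_{r, l}$ with $|r| \geqslant 2$ arise as iterated commutators of the slope-$\pm 1$ and slope-$0$ generators, in the same way that $D_{\pm r, d}$ is built from $D_{\pm 1, 0}$, $D_{\pm 1, 1}$ in Section~1.9. Second, one can straighten any monomial into positive-Cartan-negative order using the relation \eqref{7.20}, which ensures $[u_{0, k}, u_{\pm 1, l}] = \pm \operatorname{sgn}(k)\, u_{\pm 1, l+k}$, together with the Serre-type relation of the elliptic Hall algebra placing each commutator $[u_{-1, k}, u_{1, l}]$ inside $\Sm\Hm^{\cm, 0}$.

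The main obstacle is injectivity in part (a). My approach is via the $(q,t) \to (1,1)$ degeneration with $t = q^{-\kappa}$, in parallel with the degeneration $\Sc\Hc_n \to \SH_n$ constructed in Proposition~\ref{P:degen2}. Under this degeneration $\Sm\Hm^\cm$ maps onto $\SH^\cb$ and carries the three subalgebras $\Sm\Hm^>$, $\Sm\Hm^{\cm,0}$, $\Sm\Hm^<$ onto $\SH^>$, $\SH^{\cb,0}$, $\SH^<$ respectively, so that the multiplication map of (a) degenerates to the isomorphism of Proposition~\ref{1.8:prop1}. Provided the degeneration is flat in the appropriate topological sense — which reduces to a PBW-type theorem for $\widehat\Ecb$, known from the Hall algebra theory of elliptic curves — injectivity propagates back to $\Sm\Hm^\cm$. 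A flatness-free alternative would be to construct a faithful representation of $\Sm\Hm^\cm$ extending the polynomial representations of the finite-rank $\Sm\Hm_n$'s, on which the ordered products $u^> u^0 u^<$ act linearly independently; this latter strategy is the $(q,t)$-lift of the inverse-limit argument used in the proof of Proposition~\ref{1.8:prop1}.
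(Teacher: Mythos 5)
The paper's own proof of this lemma is a pure citation: part (a) to the triangular decomposition of the elliptic Hall algebra established in \cite[Sec.~1.1]{SV2}, recalling the commutation relations from \cite[Sec.~1.2]{SV2} (reproduced as \eqref{7.2:eq}), and part (b) to \cite[Sec.~4]{BS}. You instead attempt a self-contained reconstruction, which is a genuinely different route. Your surjectivity argument in (a), via the commutation relations and straightening, is sound and matches the mechanism behind the citation.

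For injectivity in (a), your primary route --- degenerating $\Sm\Hm^\cm$ at $(q,t)\to(1,1)$ and lifting injectivity back from Proposition~\ref{1.8:prop1} --- is circular in context. The degeneration of the full $\Sm\Hm^\cm$ to $\SH^\cb$ is only constructed later, in Section 7.5 (Proposition~\ref{7.4:prop}, Theorem~\ref{7.7:thm1}), and that construction rests on the triangular structure of $\Sm\Hm^\cm$ being established here; only the finite-rank degenerations $\Sc\Hc_n\to\SH_n$ of Proposition~\ref{P:degen2} are available at this point. Moreover, the ``flatness/PBW input known from Hall algebra theory'' that you invoke is essentially the statement being proved (linear independence of the ordered monomials). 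Your flatness-free alternative --- extending the faithful polynomial representations of the $\Sm\Hm_n$'s to a faithful representation of $\Sm\Hm^\cm$ on which ordered products $u^>u^0u^<$ act independently --- is the non-circular route and is the one actually closer to the cited argument of \cite{SV2}, in the same spirit as the inverse-limit proof of Proposition~\ref{P:triang}.

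For (b) there is a genuine gap. Formula \eqref{7.2:eigenvalue} only gives the eigenvalues of $\theta(u^{(n)}_{0,l})$ for $l\geqslant 1$ acting on the Macdonald basis of $\Lambda_{n,\Km}$; the negative modes $u_{0,-l}$, $l\geqslant 1$, do not preserve $\Lambda_n$ and act on the Laurent space $\mathbb{V}_n$, for which you would need a diagonalizing basis and a corresponding eigenvalue formula that the paper does not record here. The algebraic independence of the full family $\{u_{0,l}\,;\,l\in\Z\}$ over $\Km^\cm$ therefore does not follow from what you wrote; you could close this via the $\mathrm{SL}(2,\Z)$-automorphism $\sigma$ of \eqref{sigma} (which exchanges slopes), via the anti-automorphism, or simply by citing \cite[Sec.~4]{BS} as the paper does. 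You should also make explicit that the commutativity of $\{u_{0,l}\}$ is not purely structural in $\widehat\Ecb$: there the commutator $[u_{0,l},u_{0,-l}]$ lands in the central part, and it is the specialization $\kappa_{0,1}=1$ defining $\Ecb^\cm$ that kills it.
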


\begin{proof}
Part $(a)$ follows from \cite[sec.~1.1]{SV2}, which is proved using the formulas \cite[sec.~1.2]{SV2}
\begin{equation}
\label{7.2:eq}
\gathered
{[u_{0,l}, u_{\pm 1,k}]}=\pm sgn(l)\, u_{\pm 1,k+l},\\
{[u_{-1,k},u_{1,l}]}=\begin{cases}
sgn(k+l)\,\cm_0^{sgn(k+l)}\,\theta_{0,k+l}/\alpha_1&\text{if}\ k+l\neq 0,\\
(\cm_0-\cm_0^{-1})/\alpha_1&\text{else},
\end{cases}
\endgathered
\end{equation}
where $sgn(l)$ is as in \eqref{7.20}. Part $(b)$ is \cite[Sec. 4]{BS}.

\end{proof}

\vspace{.1in}

Next, we consider the $\Am^\cm$-subalgebra $\Sm\Hm^\cm_\Am$ generated by
the elements $u_\xb$ with $\xb\in\Z^2$. We have
\begin{equation}
\Sm\Hm^\cm=\Sm\Hm_\Am^\cm\otimes_{\Am^\cm}\Km^\cm.
\end{equation}
Finally, let $\Sm\Hm_n^>$, $\Sm\Hm_n^<$ 
be the subalgebras of $\Sm\Hm_n$ generated by
\begin{equation}
\{u_{1,l}^{(n)}\,;\,l\in\Z\},\qquad
\{u_{-1,l}^{(n)}\,;\,l\in\Z\}.
\end{equation}
By \cite[thm. 3.1]{SV1}, \cite[sec.~1.4]{SV2} there is a unique surjective
algebra homomorphism 
\begin{equation}\label{Psin}
\Psi_n:\Sm\Hm^> \to \Sm\Hm_n^>,\qquad
u_{\xb} \mapsto \theta(u_{\xb}^{(n)}).
\end{equation}
The map $\Psi=\prod_n \Psi_n$ is an embedding 
of $\Sm\Hm^>$ into $\prod_n \Sm\Hm_n^>$
by \cite[thm. 4.6]{SV1}.

\vspace{.15in}

\subsection{The degeneration of $\Sm\Hm^\cm$}
\label{sec:coproduct}
For $\xb=(i,j)\in\Z^2_0$ and $l\geqslant 1$ we define 
\begin{equation}
\label{Q0l}
\gathered
u^\cm_{0,0}=u_{0,0},\qquad
u^\cm_\xb=u_\xb+\delta_{i,0}\,\cm_j/a_j,
\\
Q_{0,l}=
h^{1-l}\sum_{k=0}^{l-1}\Bigl(\begin{matrix} l-1\\k\end{matrix}\Bigr)
(-1)^ku_{0,l-1-k}^\cm,\\
Q_{l,0}=(-1)^l\kappa^l P_{l,0},\qquad
Q_{-l,0}=P_{-l,0},\\ 
Q_{1,l}=[Q_{0,l+1},Q_{1,0}],\qquad
Q_{-1,l}=-[Q_{0,l+1},Q_{-1,0}].
\endgathered
\end{equation}
We have an inclusion of $F$-algebras
$\Am^\cm\subset\scrA^\cb$, where $\scrA^\cb=F^\cb[[h]],$ which is given by
\begin{equation}
\label{7.7:deg}
\gathered
q^{1/2}\mapsto\exp(h/2),\qquad
t^{1/2}\mapsto\exp(-\kappa h/2),\qquad
\cm_0\mapsto\exp(\xi h \cb_0/2),\\
\cm_{\pm l}\mapsto\pm\sum_{k\geqslant 0}(\mp lh)^k\cb_k/k!,
\qquad l>0.
\endgathered
\end{equation}
Consider the ideal $I=(h)\cap\Am^\cm$ in $\Am^\cm$.
Let $\Bm^\cm\subset\Sm\Hm^\cm$ 
be the $\Am^\cm$-subalgebra 
generated by
$\{Q_{\pm l, 0}, Q_{0,l}\,;\,l \geqslant 1\}.$
We define an $\scrA^\cb$-algebra by setting
\begin{equation}
\Sc\Hc_{\scrA}^\cb=\varprojlim_k\big(\Bm^\cm/I^k\,\Bm^\cm\bigr).
\end{equation}
Let $\Sc\Hc^>_{\scrA}$ and $\Sc\Hc^<_{\scrA}$ 
be the closed $\scrA^\cb$-subalgebras of $\Sc\Hc_{\scrA}^\cb$ generated by 
the sets
$\{Q_{1,l} \,;\,l\geqslant 0\}$ and
$\{Q_{-1,l}\,;\,l\geqslant 0\}$.
We write 
\begin{equation}
\Sc\Hc^>=\Sc\Hc_{\scrA}^>/h\,\Sc\Hc_{\scrA}^>,\qquad
\Sc\Hc^<=\Sc\Hc_{\scrA}^</h\,\Sc\Hc_{\scrA}^<,\qquad
\Sc\Hc^\cb=\Sc\Hc_{\scrA}^\cb/h\,\Sc\Hc_{\scrA}^\cb.
\end{equation}

\vspace{.1in}

\begin{prop} \label{7.4:prop}
(a) The $\scrA$-modules $\Sc\Hc^>_{\scrA}$ and
$\Sc\Hc^<_{\scrA}$ are topologically free.

(b) There are $F$-algebra isomorphisms
$\phi:\Sc\Hc^>\to\SH^>$ and $\phi:\Sc\Hc^<\to\SH^<$
such that we have
$\phi(Q_{\pm l,0})=D_{\pm l,0}$ and
$\phi(Q_{\pm 1,l})=D_{\pm 1,l}$ for
$l\geqslant 1.$
\end{prop}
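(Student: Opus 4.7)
The plan is to realize $\Sc\Hc^>_\scrA$ and $\Sc\Hc^<_\scrA$ as $h$-adic completions of $\Am$-subalgebras of $\Sm\Hm^>$ and $\Sm\Hm^<$ respectively, and then to match their reductions modulo $h$ with $\SH^>$ and $\SH^<$ by going through the projective system $\{\Sm\Hm^>_n\}$ that defines $\SH^>$ in the first place, using the finite-level comparison already established in Corollary \ref{cor:degen2}.

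First I would observe that in the formulas \eqref{Q0l} the generators $Q_{\pm 1, l}$ lie inside $\Sm\Hm^>$ and $\Sm\Hm^<$ respectively and are independent of the central parameters $\cm_l$. Indeed the shifts $\delta_{i,0}\,\cm_j/a_j$ appearing in $u^\cm_{0,\cdot}$ are central and so drop out of the commutator $[Q_{0,l+1}, Q_{\pm 1, 0}]$. Hence I can identify $\Sc\Hc^>_\scrA$ with the $h$-adic completion of the $\Am$-subalgebra $\Bm^> \subset \Sm\Hm^>$ generated by $\{Q_{1, l}; l \geqslant 0\}$, and similarly for $\Sc\Hc^<_\scrA$. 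Next, for each $n \geqslant 1$, the surjection $\Psi_n: \Sm\Hm^> \to \Sm\Hm^>_n$ of \eqref{Psin} sends $Q_{1,l}$ to $Q^{(n)}_{1,l}$ (compare \eqref{Q0l} with \eqref{7.4:Q1ln}) and thus extends to a continuous surjective $\scrA$-algebra homomorphism $\bar\Psi_n: \Sc\Hc^>_\scrA \to \Sc\Hc^>_{n,\scrA}$. Reducing modulo $h$ and composing with the isomorphism $\Sc\Hc^>_n \simeq \SH^>_n$ of Corollary \ref{cor:degen2}, the product $\prod_n \bar\Psi_n$ yields an $F$-algebra map $\phi: \Sc\Hc^> \to \prod_n \SH^>_n$ with $\phi(Q_{1, l}) = D_{1, l}$. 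By the definition of $\SH^>$ as the $F$-subalgebra of $\prod_n \SH^>_n$ generated by $\{D_{1, l}; l \geqslant 0\}$ (following Definition \ref{df:1.1}), this $\phi$ factors through a surjection $\Sc\Hc^> \twoheadrightarrow \SH^>$. The case of $\Sc\Hc^< \twoheadrightarrow \SH^<$ is identical.

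It then remains to prove the injectivity of $\phi$ and the topological freeness of $\Sc\Hc^>_\scrA$, which is the main technical difficulty. I would exploit the compatible $\Z$-gradings by rank inherited from $\Sm\Hm^\cm$ together with the order filtration, both of which match the analogous structures on $\SH^>$ described in Proposition \ref{prop:filtration-SHc}. Using the relations \eqref{7.20} to straighten iterated commutators, and mimicking the argument of Proposition \ref{prop:filtration-SHc}, I would show that each bi-graded piece $\Sc\Hc^>_\scrA[r, \leqslant d]$ is topologically spanned over $\scrA$ by ordered monomials in the $Q_{1, l}$'s indexed by the same combinatorial data as the corresponding piece $\SH^>[r, \leqslant d]$. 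Comparing with the graded dimensions of $\SH^>$ from Corollary \ref{P:poincareSH>}, and using the already-proven surjectivity of $\phi$ as a lower bound, a rank comparison then forces these spanning sets to be $\scrA$-bases of each bi-graded piece, giving simultaneously the topological freeness of $\Sc\Hc^>_\scrA$ and the injectivity of $\phi$ modulo $h$. The case of $\Sc\Hc^<_\scrA$ and $\SH^<$ follows either by repeating the argument or by applying the anti-involution $\pi$ of Proposition \ref{prop:generators/involution2}.

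The main obstacle is precisely this last step: controlling the $\scrA$-module structure of each bi-graded piece of $\Sc\Hc^>_\scrA$. The order filtration is not given \emph{a priori} on $\Sm\Hm^>$, so it must be reconstructed using the analogue of Proposition \ref{prop:coco} in the deformed setting, and then shown to be preserved by $h$-adic completion. The key sub-step will be a leading-term analysis, parallel to \eqref{Eq:starstar}, verifying that the symbols of the $Q_{1, l}$ modulo $h$ exhibit the same freeness behaviour as the symbols of the $D_{1, l}$ inside $\overline{\SH}^>$, so that no unexpected $\scrA$-torsion appears in the completion.
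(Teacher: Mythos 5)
Your overall architecture for part $(b)$ — constructing $\phi$ by pushing the maps $\Psi_n$ of \eqref{Psin} through the finite-level isomorphisms $\phi_n$ of Corollary \ref{cor:degen2} and identifying the target with $\SH^>$ via its definition as a subalgebra of $\prod_n\SH_n^>$ — is exactly the paper's construction. But you mislocate the technical weight of the statement, and your closing step has a genuine gap. For part $(a)$ you treat topological freeness as the central obstacle, to be extracted jointly with injectivity from a rank comparison. In fact $(a)$ is immediate and entirely decoupled from $(b)$: the algebra $\Sc\Hc^>_{\scrA}$ is a closed $\scrA$-subalgebra of $\Sc\Hc^\cb_{\scrA}$, hence separated, $h$-adically complete, and $h$-torsion free, and any $\scrA=F[[h]]$-module with these three properties is topologically free. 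The paper dismisses $(a)$ in one sentence; there is nothing to ``reconstruct'' here.

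For the injectivity of $\phi$ you propose to define the order filtration on $\Sc\Hc^>_{\scrA}$, prove a PBW-type spanning result for ordered monomials in the $Q_{1,l}$, and compare Poincar\'e polynomials against Corollary \ref{P:poincareSH>}. None of those three intermediate steps is established, and each is nontrivial: the order filtration does not descend automatically from $\Sm\Hm^\cm$, the spanning result is a deformed PBW theorem, and the rank comparison requires showing that $\phi$ is strictly compatible with the order filtrations on both sides. The paper avoids all of this by filtering $\Sc\Hc^>_{\scrA}$ through the $\Z^2$-grading of $\Sm\Hm^\cm$ with the product partial order on $\Z^2$ — a structure that comes for free. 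Each piece $\Sc\Hc^>_{\scrA}[\leqslant\!\xb]$ then has finite $\scrA$-rank automatically, and since $\Psi=\prod_n\Psi_n$ already embeds $\Sm\Hm^>$ into $\prod_n\Sm\Hm_n^>$, the map $\Psi_n$ restricts to an isomorphism $\Sc\Hc^>_{\scrA}[\leqslant\!\xb]\to\Sc\Hc^>_{n,\scrA}[\leqslant\!\xb]$ once $n$ is large relative to $\xb$. Reducing modulo $h$ and composing with Corollary \ref{cor:degen2} then embeds $\Sc\Hc^>[\leqslant\!\xb]$ into $\SH_n^>$, so $\phi'$ and hence $\phi$ is injective — no leading-term analysis and no Poincar\'e-series bookkeeping required. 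Replacing your order filtration with the $\Z^2$-degree filtration is precisely what makes the argument close.
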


\begin{proof}
Part $(a)$ is obvious, because 
$\Sc\Hc^>_{\scrA}$ and $\Sc\Hc^<_{\scrA}$ are 
separated, complete and torsion free. Now we prove $(b)$.
First, consider the map $\Psi_n$.
For $l\neq 0$ we have $\theta(u_{l,0}^{(n)})=u_{l,0}^{(n)}$ by \eqref{7.2:theta}.
Thus, by \eqref{7.2:theta}, \eqref{7.4:Q1ln}, \eqref{7.2:eq4}, \eqref{Psin} and \eqref{Q0l} we have also
\begin{equation}\label{7.62}
\Psi_n(Q_{l,0})=Q_{l,0}^{(n)}.
\end{equation}
Next, for $l\geqslant 1$, the formulas
\eqref{7.2:eq4}, \eqref{7.2:eq}, \eqref{Psin} and \eqref{Q0l} give
\begin{equation}
\Psi_n(Q_{1,l})=\kappa(1-q)h^{-l}\sum_{k=0}^l\Bigl(\begin{matrix} l\\k\end{matrix}\Bigr)
(-1)^k\theta(u^{(n)}_{1,l-k}), 
\end{equation}
and by \eqref{7.2:theta},  \eqref{7.20} and \eqref{7.4:Q1ln} we have also
\begin{equation}\label{7.64}
Q_{1,l}^{(n)}=\kappa(1-q)h^{-l}\sum_{k=0}^l\Bigl(\begin{matrix} l\\k\end{matrix}\Bigr)
(-1)^k\theta(u^{(n)}_{1,l-k}).
\end{equation}
Therefore,  by \eqref{7.62}, \eqref{7.64} the map $\Psi_n$ 
gives rise to a continuous $\scrA$-algebra homomorphism 
\begin{equation}\label{Psinbis}
\Psi_n :\Sc\Hc_{\scrA}^>\to\Sc\Hc_{n,\scrA}^>,
\qquad
\Psi_n(Q_{l,0})=Q_{l,0}^{(n)},
\qquad \Psi_n(Q_{1,l})=Q_{1,l}^{(n)},
\qquad l\geqslant 1.
\end{equation}
The map $\Psi$ is a closed embedding 
$\Sc\Hc_{\scrA}^> \to \prod_n \Sc\Hc_{n,\scrA}^>$.
By Proposition \ref{P:<0>} 
and Corollary \ref{cor:degen2}, 
composing $\Psi$ and $\prod_n\phi_n$ we get a map
\begin{equation}\gathered
\phi': \Sc\Hc^> \to \prod_n \mathbf{SH}_n^>,\qquad
\phi'(Q_{l,0})=(D^{(n)}_{l,0}), 
\qquad \phi'(Q_{1,l})=(D^{(n)}_{1,l}).
\endgathered
\end{equation}
By definition of $\SH^>$, there is an embedding of $F$-algebras 
\begin{equation}
i: \mathbf{SH}^> \to \prod_n \mathbf{SH}_n^>,\qquad
i(D_{l,0})=(D^{(n)}_{l,0}), \qquad 
i(D_{0,l})=(D^{(n)}_{0,l}).
\end{equation}
Thus, we have a surjective 
$F$-algebra homomorphism $\phi$
which is given by
\begin{equation}
\phi=i^{-1}\circ\phi':\Sc\Hc^>\to\mathbf{SH}^>.
\end{equation}
We must prove that it is injective. 
We consider the partial order on $\Z^2$ given by
\begin{equation}
(r,d)\leqslant (r',d') \iff r \leqslant r'\ \text{and}\ d \leqslant d'.
\end{equation}
The $\Z^2$-grading on $\Sm\Hm^\cm$ yields a filtration on $\Sc\Hc_\scrA^>$
such that the piece
$\Sc\Hc_{\scrA}^>[\leqslant\!\xb]$
consists of the elements whose $\Z^2$-degree is $\leqslant\xb.$
The $\scrA$-module $\Sc\Hc^>_{\scrA}[\leqslant\! \xb]$
has a finite rank and we have
\begin{equation}
\gathered
\Sc\Hc^>_{\scrA}[\leqslant \! \xb] \cap 
h\, \Sc\Hc^>_{\scrA}=h\,\Sc\Hc^>_{\scrA}[\leqslant \! \xb],\\
\Sc\Hc_{\scrA}^>[\leqslant\!\xb]
\big/h\,\Sc\Hc_{\scrA}^>[\leqslant\!\xb]
\subset\Sc\Hc^>=
\bigcup_\xb
\Sc\Hc_{\scrA}^>[\leqslant\!\xb]\big/h\,\Sc\Hc_{\scrA}^>[\leqslant\!\xb].
\endgathered
\end{equation}
We define
$\Sc\Hc^>_{n,\scrA}[\leqslant\!\xb]$ in an identical fashion.
From Corollary \ref{cor:degen2}, we get
\begin{equation}
\label{inclusion32}
\Sc\Hc_{n,\scrA}^>[\leqslant\!\xb]
\big/h\,\Sc\Hc_{n,\scrA}^>[\leqslant\!\xb]
\subset\SH_n^>.
\end{equation}
Next, given $\xb$, for $n$ large enough the map 
$\Psi_n$ in \eqref{Psinbis} yields an isomorphism
\begin{equation}
\Sc\Hc_{\scrA}^>[\leqslant\!\xb]\to
\Sc\Hc_{n,\scrA}^>[\leqslant\!\xb].
\end{equation}
Thus it factors to an isomorphism 
\begin{equation}
\label{isom32}
\Sc\Hc_{\scrA}^>[\leqslant\!\xb]
\big/h\,\Sc\Hc_{\scrA}^>[\leqslant\!\xb]
\to\Sc\Hc_{n,\scrA}^>[\leqslant\!\xb]
\big/h\,\Sc\Hc_{n,\scrA}^>[\leqslant\!\xb].
\end{equation}
Composing \eqref{isom32} with \eqref{inclusion32} we obtain an inclusion,
for $n$ large enough,
\begin{equation}
\Sc\Hc_{\scrA}^>[\leqslant\!\xb]
\big/h\,\Sc\Hc_{\scrA}^>[\leqslant\!\xb]
\subset\SH_n^>.
\end{equation}
We conclude that $\phi'$ is injective.
Hence $\phi$ is also injective.
\end{proof}

\vspace{.1in}

Let $\Sc\Hc^{0,\cb}_{\scrA}$ be the closed 
$\scrA^\cb$-subalgebra of $\Sc\Hc^\cb_{\scrA}$ topologically generated by 
$\{Q_{0,l}\,;\,l\geqslant 1\}$. By Lemma \ref{7.4:prop}$(b)$
we have an isomorphism 
\begin{equation}
\label{isom33}
\Sc\Hc^{0,\cb}_{\scrA}=F^\cb[Q_{0,l+1}\,;\,l\geqslant 0][[h]].
\end{equation}
As above, we abbreviate
$
\Sc\Hc^{0,\cb}=
\Sc\Hc^{0,\cb}_{\scrA}/h\Sc\Hc^{0,\cb}_{\scrA}.
$
We can now prove the following theorem.

\vspace{.1in}

\begin{theo} \label{7.7:thm1}
(a)
There is an $F$-algebra isomorphism
$\phi:\Sc\Hc^\cb\to\SH^\cb$
such that
$$\phi(Q_{0,l})=D_{0,l}, \qquad \phi(Q_{\pm l,0})=D_{\pm l,0},\qquad
\phi(Q_{\pm 1,l})=D_{\pm 1,l},
\quad l\geqslant 1.$$

(b) The algebra $\Sc\Hc_{\scrA}^\cb$ is topologically generated 
by $Q_{-1,0},$ $Q_{1,0}$ and $Q_{0,2}$.
\end{theo}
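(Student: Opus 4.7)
The strategy is to exploit the triangular decomposition of $\Sm\Hm^\cm$, show that it survives the $h$-adic degeneration defining $\Sc\Hc^\cb$, and match it with the triangular decomposition of $\SH^\cb$ already furnished by Proposition~\ref{1.8:prop1}.

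First I would establish a PBW-type isomorphism
\[
\Sc\Hc^{>} \otimes_F \Sc\Hc^{0,\cb} \otimes_F \Sc\Hc^{<} \;\stackrel{\sim}{\longrightarrow}\; \Sc\Hc^{\cb},
\]
parallel to Proposition~\ref{1.8:prop1}. Such a decomposition for $\Sm\Hm^\cm$ was recorded in Section~\ref{sec:SHgamma}; by construction the $\scrA^\cb$-subalgebras $\Sc\Hc^{>}_\scrA$, $\Sc\Hc^{0,\cb}_\scrA$, $\Sc\Hc^{<}_\scrA$ are the $h$-adic completions of the three analogous pieces inside $\Sm\Hm^\cm$, so after reducing modulo $h$ surjectivity of the multiplication map is automatic. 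Injectivity follows from Proposition~\ref{7.4:prop}(a) together with the isomorphism \eqref{isom33}: each piece is topologically free over $\scrA^\cb$, and a product of $\scrA$-bases at the Hall-algebra level remains $h$-adically linearly independent in the triangular decomposition of $\Sm\Hm^\cm$.

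Combining this decomposition with the isomorphisms $\phi\colon \Sc\Hc^{>} \to \SH^{>}$ and $\phi\colon \Sc\Hc^{<} \to \SH^{<}$ from Proposition~\ref{7.4:prop}(b), together with the tautological identification $\Sc\Hc^{0,\cb} \simeq F^\cb[Q_{0,l}\,;\,l\geqslant 1] \simeq \SH^{\cb,0}$ via $Q_{0,l} \mapsto D_{0,l}$, gives an $F$-linear bijection $\phi\colon \Sc\Hc^\cb \to \SH^\cb$ of the desired form. To show $\phi$ is an algebra map, I would verify that the cross-relations \eqref{E:rel1bis2}--\eqref{E:rel3bis2} hold in $\Sc\Hc^\cb$ between the elements $Q_{0,l}$, $Q_{\pm 1, k}$. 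The commutators $[Q_{0,l}, Q_{\pm 1, k}]$ and their match with $Q_{\pm 1, l+k-1}$ follow directly from the definition \eqref{Q0l} together with the basic commutator \eqref{7.2:eq} in $\Sm\Hm^\cm$, after expanding the binomial coefficients appearing in $Q_{0,l}$.

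The heart of the argument is the identity $[Q_{-1, k}, Q_{1, l}] = E_{k+l}$ together with the matching of generating series on the right-hand side of \eqref{E:rel3bis2}. For this I would start from the Hall-algebra commutator \eqref{7.2:eq}, substitute the $h$-expansions of $\cm_0^{\pm 1}$ and $\cm_{\pm l}$ prescribed by \eqref{7.7:deg}, expand $\theta_{0, m}$ via \eqref{7.2:eq4}, and identify the leading $h$-behavior with the generating function in Definition~\ref{1.8:def}. The exponential $\exp\bigl(\sum (-1)^{l+1} \cb_l \phi_l(s)\bigr)$ should arise from the degeneration of the prefactor $\cm_0^{\pm 1}$ together with the $\cm_l$-series, while $\exp\bigl(\sum D_{0,l+1} \varphi_l(s)\bigr)$ should emerge from the generating series of the $\theta_{0, m}$ after the normalization $Q_{0,l}=h^{1-l}(\ldots)$ built into \eqref{Q0l}. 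Once the cross-relations are in place, $\phi$ is an algebra map between two algebras endowed with matching triangular decompositions, so bijectivity is immediate and part~$(a)$ follows.

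For part~$(b)$, by Proposition~\ref{prop:generators/involution2}(a) the algebra $\SH^\cb$ is generated as an $F^\cb$-algebra by $D_{\pm 1, 0}$ and $D_{0, 2}$; under the isomorphism $\phi$ of part~$(a)$ these correspond to $Q_{\pm 1, 0}$ and $Q_{0, 2}$. The central parameters $\cb_l \in F^\cb$ enter $\Sc\Hc^\cb_\scrA$ only through the coefficient ring, and the $h$-expansion formula \eqref{7.7:deg} shows they are recovered from the $h$-expansion of the commutators $[Q_{-1, 0}, Q_{1, l}]$ computed above; hence they are redundant as topological generators. A standard Nakayama-type argument in the $h$-adic topology then lifts generation modulo $h$ to topological generation of the complete $\scrA^\cb$-algebra $\Sc\Hc^\cb_\scrA$. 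The main obstacle will be the explicit $h$-expansion computation underlying the third paragraph: namely, verifying that $[Q_{-1, k}, Q_{1, l}] \pmod{h}$ reproduces $E_{k+l}$ exactly, with all combinatorial coefficients arising from the simultaneous degeneration of $\alpha_l$, $\theta_{0,m}$ and the $\cm_l$'s lining up correctly.
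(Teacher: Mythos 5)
Your approach is genuinely different from the paper's, and it has a logical gap at its foundation.

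The gap is the claim in your first step that surjectivity of the multiplication map
$m\colon \Sc\Hc^{>} \otimes \Sc\Hc^{0,\cb} \otimes \Sc\Hc^{<} \to \Sc\Hc^{\cb}$
is \emph{automatic} after reducing modulo $h$. It is not. To straighten an arbitrary monomial in the generators $Q_{\pm 1,l}, Q_{0,l}$ into triangular form \emph{while staying in the $\scrA$-lattice $\Sc\Hc^{\cb}_{\scrA}$}, you need to know that the cross-commutators land in the corresponding lattice pieces; the delicate case is $[Q_{-1,k},Q_{1,l}] \in \Sc\Hc^{0,\cb}_{\scrA}$, which is a priori only visible inside $\Sc\Hc^{0,\cb}_{\scrA}\otimes_{\scrA}\scrK$. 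This is exactly what Lemma~\ref{lem:surjectivity22} addresses, and your proposal orders things backwards: you invoke the triangular decomposition before establishing the cross-relations that it depends on. The two cannot be separated the way your plan suggests.

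The more substantive difference is how one gets $\phi$ to be an algebra map at all. You propose to verify \eqref{E:rel3bis2} by direct $h$-expansion of the Hall-algebra commutator \eqref{7.2:eq}, substituting \eqref{7.7:deg} and expanding $\theta_{0,m}$ via \eqref{7.2:eq4}, and you flag this matching of generating series as the ``main obstacle.'' That computation is indeed nontrivial, and the paper deliberately routes around it. Instead of verifying the relation, the paper compares the K-theoretic action of $\Sm\Hm^{\cm}$ on $\Lm^{(r)}_K$ (from \cite{SV2}) with the cohomological action of $\SH^{(r)}_K$ on $\Lb^{(r)}_K$, embedding both in $\Lc^{(r)}$ over $\scrK_r$ and showing $Q_{\xb}[I_\lambda]=D_{\xb}[I_\lambda]+\Oc(h)$ on generators (\eqref{7.7:Q0}, \eqref{7.7:Q+/-2}). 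Faithfulness of $\rho^{(r)}$ then \emph{produces} the algebra homomorphism $\Sc\Hc^{(r)}\to\SH^{(r)}_K$ with no relation-checking whatsoever, and the same mechanism supplies the lattice statement $[Q_{-1,l},Q_{1,k}]\in\Sc\Hc^{0,\cb}_{\scrA}$ needed for surjectivity. The commutative square \eqref{diagram}, with left and bottom arrows invertible, then yields injectivity. In short: what you treat as a long explicit combinatorial identity, the paper extracts for free from a faithful geometric representation, and what you treat as ``automatic'' is exactly the step the paper's geometric argument is designed to handle. Your plan could in principle be completed, but only after the $h$-expansion computation is actually carried out; as written it does not yet constitute a proof.
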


\vspace{.05in}

\begin{proof}
By Proposition \ref{prop:generators/involution2}
the $F$-algebra $\SH^{\cb}$ is generated by the elements 
$\cb_l$, $D_{\pm 1,0}$ and $D_{0,2}$. 
Thus, part $(b)$ follows from $(a)$.
Now, we prove $(a)$.
We'll identify the ring 
\begin{equation}
\Am_r=\Z[q^{\pm 1}, t^{\pm 1},\chi_1^{\pm 1},\dots,\chi_r^{\pm 1}]
\end{equation}
with the Grothendieck ring of the group $\widetilde D$ as in \eqref{3.3}. 
Let $\Lm^{(r)}_K$ be the \emph{localized Grothendieck group} of the category of
$\widetilde D$-equivariant coherent sheaves 
on $\bigsqcup_{n\geqslant 0}M_{r,n}.$ 
The word localized means that the ring of scalars is extended from 
the ring $\Am_r$ to the field
\begin{equation}
\Km_r=\Km(\chi_1,\dots,\chi_r).
\end{equation}
The set of fixed points $\{I_\lambda\}$ of $M_{r,n}$ for the 
$\widetilde D$-action gives bases in $\mathbf L^{(r)}_K$ 
and $\mathbb L^{(r)}_K$. 
Set 
\begin{equation}
\scrK_r=K_r((h)),\qquad\scrA_r=K_r[[h]].
\end{equation}
We have an  embedding $\Km_r\subset \scrK_r$ given by the following formulas,
compare \eqref{7.7:deg},
\begin{equation}
q=\exp(h),\qquad t=\exp(-\kappa h),\qquad \chi_\a=
\exp(\eps_\a h),\qquad
\kappa=-y/x,\qquad \eps_\a=e_\a/x.
\end{equation}
Identifying the bases above, we get inclusions of
$\Lb^{(r)}_K=\bigoplus_\lambda K_r\,[I_\lambda]$
and
$\mathbb L^{(r)}_K=\bigoplus_\lambda \Km_r\,[I_\lambda]$
into the $\scrK_r$-vector space
\begin{equation}
\Lc^{(r)}=\bigoplus_\lambda \scrK_r\,[I_\lambda].
\end{equation}

Now, a representation of $\Ecb\otimes_\Km\Km_r$ in $\Lm^{(r)}_K$ 
is constructed in \cite[sec.~8]{SV2}.
It can be upgraded to a representation of 
$\mathbb{S}\mathbb{H}^{\cm} \otimes_\Km \scrK_r$
on $\mathcal{L}^{(r)}$ in which $u_{0,0}$ acts as the grading operator.
We have
\begin{equation}\label{formpasdeg}
\gathered
\cm_0=v^{-r/2},\qquad 
\cm_{\pm l}=\pm p_{l}(\chi_\a^{\mp 1}),\qquad
u_{0,\pm l}^\cm=sgn(\pm l)\,\f_{0,\pm l},\\
u_{1,l}=v^{-1}(q-1)^rx^{1-r}\f_{1, l-r}, \qquad 
u_{-1,l}=(-1)^{r-1}\det(W)\,\cm_0^{-1}(q-1)^{-r}x^{r-1}\f_{-1, l},\\
\fb_{1,l}[I_\lambda]=\sum_{\lambda\subset\pi}\tau_{\lambda,\pi}^l
\Lambda(N_{\lambda,\pi}^*-T_\pi^*)\,[I_\pi],\qquad
\fb_{-1,l}[I_\lambda]=\sum_{\sigma\subset\lambda}\tau_{\sigma,\lambda}^l
\Lambda(N_{\sigma,\lambda}^*-T_\sigma^*)\,[I_\sigma],\\
\fb_{0,l}[I_\lambda]=
\sum_{\a,s}\chi_\a^{-l} q^{lx(s)}t^{ly(s)}[I_\lambda].
\endgathered
\end{equation}
Here $l\geqslant 0$ and $\Lambda$ is the Koszul complex and 
$\det(W)=(\chi_1\chi_2\dots\chi_r)^{-1}$. 
On the other hand, the representation $\rho^{(r)}$ is given by the following
formulas, see Section \ref{sec:Ur} and Appendix \ref{app:B}, 
\begin{equation}
\label{formdeg}
\gathered
\cb_l=p_l(\eps_\a),\\
D_{1,l}=x^{1-l}yf_{1, l}, \qquad 
D_{-1,l}=(-1)^{r-1}x^{-l}f_{-1, l},\qquad 
D_{0,l+1}= x^{-l}f_{0,l},\\
f_{1,l}[I_\lambda]=\sum_{\lambda\subset\pi}\text c_1(\tau_{\lambda,\pi})^l
\eu(N_{\lambda,\pi}^*-T_\pi^*)\,[I_\pi],\\
f_{-1,l}[I_\lambda]=
\sum_{\sigma\subset\lambda}\text c_1(\tau_{\sigma,\lambda})^l
\eu(N_{\sigma,\lambda}^*-T_\sigma^*)\,[I_\sigma],\\
f_{0,l}[I_\lambda]=\sum_{\a,s}
\cc_1(\chi_\a^{-1}q^{x(s)}t^{y(s)})^{l}\, [I_\lambda].
\endgathered
\end{equation}

The above formulas allow us to compare the action of 
$Q_{\pm 1,l},$ $Q_{0,l}$ and of
$D_{\pm 1,l}$, $D_{0,l}$. Write 
\begin{equation}
\Oc(h^l)=\bigoplus_\lambda h^l\scrA_r\,[I_\lambda],\qquad l\in\Z.
\end{equation}
Using \eqref{Q0l}, \eqref{formpasdeg} and \eqref{formdeg}, we get
\begin{equation}
\label{7.7:Q0}
Q_{0,l}[I_\lambda]=D_{0,l}[I_\lambda]+\Oc(h),
\qquad l\geqslant 1.
\end{equation}
Next, for $\sigma\subset\lambda\subset\pi$ 
such that $|\lambda|=|\sigma|+1=|\pi|-1$ we have
\begin{equation}
\dim(N_{\lambda,\pi}-T_\pi)=-r-1,\qquad \dim(N_{\sigma,\lambda}-T_\sigma)=r-1.
\end{equation}
Therefore, we have the following estimates in $\scrK_r$ 
\begin{equation}\label{estimate}
\gathered
\Lambda(N_{\lambda,\pi}^*-T_\pi^*)\equiv (x/h)^{1+r}\eu(N_{\lambda,\pi}^*-T_\pi^*),
\qquad
\Lambda(N_{\sigma,\lambda}^*-T_\sigma^*)\equiv
(x/h)^{1-r}\eu(N_{\sigma,\lambda}^*-T_\sigma^*)
\endgathered
\end{equation}
modulo lower terms for the $h$-adic topology.
Finally,  \eqref{Q0l}, \eqref{formpasdeg}, \eqref{formdeg} 
and \eqref{estimate} give
\begin{equation}
\label{7.7:Q+/-}
\gathered
Q_{1,0}[I_\lambda]=D_{1,0}[I_\lambda]+\Oc(h),
\qquad
Q_{-1,0}[I_\lambda]=D_{-1,0}[I_\lambda]+\Oc(h).
\endgathered
\end{equation}
By \eqref{E:rel1bis2}, \eqref{E:rel2bis2} and  \eqref{Q0l} we have
\begin{equation}
\gathered
D_{1,l}=[D_{0,l+1},D_{1,0}],\qquad
D_{-1,l}=-[D_{0,l+1},D_{-1,0}],\\
Q_{1,l}=[Q_{0,l+1},Q_{1,0}],\qquad
Q_{-1,l}=-[Q_{0,l+1},Q_{-1,0}],
\endgathered
\end{equation}
Thus \eqref{7.7:Q0} and \eqref{7.7:Q+/-} imply that
\begin{equation}
\label{7.7:Q+/-2}
\gathered
Q_{1,l}[I_\lambda]=D_{1,l}[I_\lambda]+\Oc(h),
\qquad
Q_{-1,l}[I_\lambda]=D_{-1,l}[I_\lambda]+\Oc(h).
\endgathered
\end{equation}
Now, the algebra homomorphism $F^\cb\to K_r$ in Definition \ref{def:spe} yields an 
algebra homomorphism $\scrA^\cb\to\scrA_r$. 
Consider the algebras
\begin{equation}
\Sc\Hc_{\scrA}^{(r)}=\Sc\Hc^\cb_{\scrA} \otimes_{\scrA^\cb} \scrA_r,\qquad
\Sc\Hc^{(r)}=\Sc\Hc^{(r)}_{\scrA} / h \Sc\Hc^{(r)}_{\scrA}.
\end{equation}
Note that the composed map $\mathbb A^{\mathbb c}\to\scrA^\cb\to K_r$ is given by
\begin{equation}
\cm_0=v^{-r/2},\qquad\cm_{\pm l}=\pm p_{l}(\chi_\a^{\mp 1}).
\end{equation}
We define $\Sc\Hc^{(r),>}$, $\Sc\Hc^{(r),<}$ and
$\Sc\Hc^{(r),0}$ in the same way,
using $\Sc\Hc^>_\scrA$,
$\Sc\Hc^<_\scrA$ and
$\Sc\Hc^{0,\cb}_\scrA$.
Formulas \eqref{7.7:Q0} and \eqref{7.7:Q+/-2} imply that the $\Am^\cm$-subalgebra 
$\mathbb B^{\mathbb c}\subset\Sm\Hm^\cm$ preserves the lattice $\Oc(1)$.
This yields a representation of $\Sc\Hc_\scrA^\cb$ on
$\Lc^{(r)}$ which preserves also
$\Oc(1)$ and which factors to a representation of
$\Sc\Hc^{(r)}$ on  $\Oc(1)/\Oc(h)=\Lb^{(r)}_K$.
Since  $\rho^{(r)}$ is faithful, 
this yields also an  algebra homomorphism
\begin{equation}\label{map666}\Sc\Hc^{(r)}\to\SH^{(r)}_K.\end{equation}
It is surjective, because 
$\SH^{(r)}_K$ is generated by the elements $D_{0,l+1}$, $D_{-1,l}$ and
$D_{1,l}$ with $l\geqslant 0$.

Now, the $\scrA$-algebra embeddings of
$\Sc\Hc_{\scrA}^>,$ $ \Sc\Hc_{\scrA}^>$ and 
$\Sc\Hc^{0,\cb}_\scrA$ into $\Sc\Hc_{\scrA}^\cb$ give obvious maps 
\begin{equation}
\Sc\Hc^{(r),>},\, 
\Sc\Hc^{(r),0},\,
\Sc\Hc^{(r),<}\to
\Sc\Hc^{(r)}.
\end{equation}
Composing them with \eqref{map666} we get $K_r$-algebra homomorphisms
\begin{equation}
\label{7.7:degen1}
\Sc\Hc^{(r),>}\to\SH^{(r),>}_K,\qquad
\Sc\Hc^{(r),<}\to\SH^{(r),<}_K,\qquad
\Sc\Hc^{(r),0}\to\SH^{(r),0}_K,
\end{equation}
which give the commutative square
\begin{equation}\label{diagram}
\begin{split}
\xymatrix{
\Sc\Hc^{(r),>}\,\otimes_{K_r}\,
\Sc\Hc^{(r),0}\,\otimes_{K_r}\,
\Sc\Hc^{(r),<}\ar[r]^-m\ar[d]&
\Sc\Hc^{(r)}\ar[d]\\
\SH^{(r),>}_K\,\otimes_{K_r}\,
\SH^{(r),0}_K\,\otimes_{K_r}\,
\SH^{(r),<}_K\ar[r]^-m&\SH^{(r)}_K.
}
\end{split}
\end{equation}
Here $m$ is the multiplication map.

Now, by Proposition \ref{7.4:prop} there are $K_r$-algebra isomorphisms
\begin{equation}
\label{7.7:degen2}
\Sc\Hc^{(r),>}\to\SH^{(r),>}_K,\qquad
\Sc\Hc^{(r),<}\to\SH^{(r),<}_K,\qquad
Q_{\pm l,0}\mapsto D_{\pm l,0},\qquad
Q_{\pm 1,l}\mapsto D_{\pm 1,l}.
\end{equation}
Further, by \eqref{1.68} and \eqref{isom33}, we have a $K_r$-algebra isomorphism
\begin{equation}
\label{7.7:degen3}
\Sc\Hc^{(r),0}
\to \SH^{(r),0}_K,\qquad 
Q_{0,l+1}\mapsto D_{0,l+1}.
\end{equation}
Thus the left vertical map in \eqref{diagram} is invertible.
The bottom horizontal map is invertible by Proposition \ref{1.8:prop1}. 
Thus the upper map $m$ is injective. 
Therefore, to prove that the right map is invertible it is enough to check the following.

\begin{lem}
\label{lem:surjectivity22}
The multiplication gives a surjective map 
$$m:\Sc\Hc^{(r),>}\,\otimes_{K_r}\, \Sc\Hc^{(r),0}\,\otimes_{K_r}\, 
\Sc\Hc^{(r),<}\to\Sc\Hc^{(r)}.$$ 
\end{lem}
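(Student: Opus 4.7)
The plan is to mirror the proof of Corollary \ref{C:triangn} and the surjectivity part of Proposition \ref{1.8:prop1}. Since $\Sc\Hc^\cb_\scrA$ is, by construction, topologically generated over $\scrA^\cb$ by the set $\{Q_{\pm l,0},\, Q_{0,l}\,;\,l \geqslant 1\}$, tensoring with $\scrA_r$ and reducing modulo $h$ shows that the $K_r$-algebra $\Sc\Hc^{(r)}$ is generated by the images of these elements. As $Q_{l,0}\in\Sc\Hc^{(r),>}$, $Q_{-l,0}\in\Sc\Hc^{(r),<}$ and $Q_{0,l}\in\Sc\Hc^{(r),0}$, and since $\Sc\Hc^{(r),>}$ and $\Sc\Hc^{(r),<}$ also contain $Q_{1,l}$ and $Q_{-1,l}$ for all $l\geqslant 0$ via their defining formulas \eqref{Q0l}, the algebra $\Sc\Hc^{(r)}$ is in fact generated by the three subalgebras $\Sc\Hc^{(r),>}$, $\Sc\Hc^{(r),0}$, $\Sc\Hc^{(r),<}$.

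The main step is then to establish in $\Sc\Hc^{(r)}$ the three commutator relations
$$[Q_{0,l+1},Q_{1,k}]\in\Sc\Hc^{(r),>},\quad [Q_{-1,k},Q_{0,l+1}]\in\Sc\Hc^{(r),<},\quad [Q_{-1,k},Q_{1,l}]\in\Sc\Hc^{(r),0},$$
for all $k,l\geqslant 0$. Granted these, any word in the generators of the three subalgebras can be rewritten as a sum of ordered monomials from $\Sc\Hc^{(r),>}\cdot \Sc\Hc^{(r),0}\cdot \Sc\Hc^{(r),<}$ exactly as in Corollary \ref{C:triangn}, yielding the surjectivity of $m$.

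To verify these commutator relations I would lift them to $\Sc\Hc^\cb_\scrA$ and apply the defining identities \eqref{7.2:eq} of the elliptic Hall algebra. Each $Q_{\pm 1,l}$ is a finite linear combination, with $h$-dependent coefficients, of the $u_{\pm 1, m}$'s for $0\leqslant m\leqslant l$, and each $Q_{0,l+1}$ is such a combination of the $u^\cm_{0,m}$'s. The first two relations then follow immediately from $[u^\cm_{0,m},u_{\pm 1,n}]=\pm sgn(m)\,u_{\pm 1,m+n}$, since the relevant indices $m-k$ that appear are all non-negative. For the third relation, the input $[u_{-1,k},u_{1,l}]$ already lies in $\Sm\Hm^{\cm,0}$ by \eqref{7.2:eq}, being expressed explicitly in terms of $\theta_{0,k+l}$ and $\cm_0^{\pm 1}$.

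The main obstacle is the third relation: the commutator $[Q_{-1,k},Q_{1,l}]$ carries poles in $h$ coming from the normalizations $h^{1-l}$ in \eqref{Q0l}, and one must check that these poles cancel against the $h$-expansions $\cm_0=\exp(\xi h\cb_0/2)$ and $\cm_{\pm l}=\pm\sum_{k\geqslant 0}(\mp lh)^k\cb_k/k!$ from \eqref{7.7:deg}, so that the reduction modulo $h$ is a well-defined element of $\Sc\Hc^{(r),0}$ rather than of a larger completion. This is precisely the $h$-adic analog of the defining relation \eqref{E:formulaklop2}, and its classical limit $h\to 0$ is that relation; a careful order-of-vanishing analysis, in the spirit of the estimates \eqref{estimate} already used in the proof of Theorem \ref{7.7:thm1}, should close the argument.
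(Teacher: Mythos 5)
Your reduction to the three commutator relations $[Q_{0,l},Q_{1,k}]\in\Sc\Hc^{(r),>}$, $[Q_{-1,k},Q_{0,l}]\in\Sc\Hc^{(r),<}$, $[Q_{-1,k},Q_{1,l}]\in\Sc\Hc^{(r),0}$, and your treatment of the first two via \eqref{7.2:eq}, is exactly the paper's argument, and you have correctly isolated the one nontrivial point, namely that $[Q_{-1,k},Q_{1,l}]$ a priori lies only in the localization $\Sc\Hc^{0,\cb}_{\scrA}\otimes_\scrA\scrK$ and one must show the $h$-poles cancel so that it actually lies in the $\scrA$-lattice $\Sc\Hc^{0,\cb}_{\scrA}$. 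The only divergence is how that last integrality check is closed: the paper verifies it by computing the image of $[Q_{-1,k},Q_{1,l}]$ under the faithful representation $\rho^{(r)}$ and observing that the action on the fixed-point basis stays $\scrA$-integral, whereas you propose a direct $h$-adic order-of-vanishing analysis against the expansions in \eqref{7.7:deg}. Both are legitimate; the representation-theoretic route is shorter because the fixed-point matrix coefficients are explicit and their regularity at $h=0$ is visible from \eqref{formpasdeg}--\eqref{estimate}, while your purely algebraic route would need a careful bookkeeping of the $h^{1-l}$ normalizations in \eqref{Q0l} against the $\alpha_l$ and $\cm_l$ expansions — doable, but more delicate.
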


\begin{proof}
It is enough to prove that
\begin{equation}
[Q_{0,l}, Q_{1, k}] \in \Sc\Hc^{(r),>}, \qquad 
[Q_{-1,l},Q_{0,k}] \in \Sc\Hc^{(r),<}, \qquad 
[Q_{-1,l}, Q_{1,k}] \in \Sc\Hc^{(r),0}.
\end{equation}
The first two relations follow from a simple computation, since \eqref{7.2:eq} implies that
\begin{equation}
[Q_{0,l}, Q_{1, k}]=Q_{1,l+k-1},\qquad
[Q_{-1,l},Q_{0,k}]=Q_{-1,l+k-1}.
\end{equation} 
For the third one, we must check that 
$[Q_{-1,l},Q_{1,k}]$ 
belongs to
$\Sc\Hc_{\scrA}^{0,\cm}.$
First, by \eqref{7.2:eq} we have 
\begin{equation}
[Q_{-1,l},Q_{1,k}] \in 
\Sc\Hc_{\scrA}^{0,\cb}\otimes_\scrA\scrK.
\end{equation}
Next, one can check that 
$[Q_{-1,l},Q_{1,k}]$ lies indeed in $\Sc\Hc^{0,\cm}_{\scrA}$
by looking at its image by $\rho^{(r)}$. The details are left to the reader.

\end{proof}

We have proved that the assignment 
$Q_{0,l} \mapsto D_{0,l},$ $Q_{\pm l, 0} \mapsto D_{\pm l, 0}$ extends to
an isomorphism of $K_r$-algebras $\Sc\Hc^{(r)} = \SH^{(r)}_K$
for any $r$. The theorem follows.
\end{proof}

\vspace{.1in}

\vspace{.15in}

\subsection{The coproduct of $\SH^{\cb}$}
\label{sec:coproduct2}
The $F$-algebra $\SH^\cb$ carries 
a $\Z$-grading 
$\SH^\cb=\bigoplus_{s\in\Z}\SH^\cb[s].$
We consider the topological tensor product
$\SH^\cb\,\widehat\otimes\,\SH^\cb$ over $F$ defined by
\begin{equation}
\gathered
\SH^\cb\,\widehat\otimes\,\SH^\cb=\bigoplus_{s\in\Z}
\varprojlim_N\Big(\bigoplus_{t\in\Z}
\big(\SH^\cb[s-t]\otimes\SH^\cb[t]\big)\Big)\Big/\mathscr I_N[s],\\
\mathscr I_N[s]=
\bigoplus_{t\geqslant N}\big(\SH^\cb[s-t]\otimes\SH^\cb[t]\big).
\endgathered
\end{equation}
We can now prove the following.

\begin{theo}
\label{7.7:thm2} 
(a) The map ${}^\sigma\!\Delta$ factors to a $F$-algebra homomorphism
$\pmb\Delta: \SH^\cb\to\SH^\cb\,\widehat\otimes\,\SH^\cb$ which
is uniquely determined by the following formulas

\begin{itemize}
\item $\pmb\Delta(\cb_l)= \delta(\cb_l)$ for $l\geqslant 0$,

\item $\pmb\Delta(D_{l,0})=\delta(D_{l,0})$ for $l\neq 0$,

\item $\pmb\Delta(D_{0,1})=\delta(D_{0,1})$,

\item $\pmb\Delta(D_{0,2})=\delta(D_{0,2})
+\xi\sum_{l\geqslant 1}l\kappa^{1-l}D_{-l,0}\otimes D_{l,0},$

\item
$\pmb\Delta(D_{1,1})=\delta(D_{1,1})+\xi\cb_0\otimes D_{1,0}$ and
$\pmb\Delta(D_{-1,1})=\delta(D_{-1,1})+\xi D_{-1,0}\otimes \cb_0$.
\end{itemize}

(b) The algebra homomorphism $\varepsilon:\SH^\cb\to F$ 
in Remark \ref{rem1.42} is a counit for
$\pmb\Delta$.
\end{theo}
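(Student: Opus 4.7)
The plan is to transport the topological coproduct $^\sigma\!\Delta$ on $\Sm\Hm^\cm$ down to $\SH^\cb$ via the identification $\phi:\Sc\Hc^\cb\to\SH^\cb$ of Theorem \ref{7.7:thm1}. The main technical step is to show that $^\sigma\!\Delta$ preserves the integral form, i.e., that it sends $\Sc\Hc_\scrA^\cb$ into an appropriate completion of $\Sc\Hc_\scrA^\cb\otimes_{\scrA^\cb}\Sc\Hc_\scrA^\cb$, so that reduction modulo $h$ yields a well-defined map $\pmb\Delta:\SH^\cb\to\SH^\cb\,\widehat\otimes\,\SH^\cb$. By Theorem \ref{7.7:thm1}(b), $\Sc\Hc_\scrA^\cb$ is topologically generated by $Q_{\pm 1,0}$ and $Q_{0,2}$ together with the formal parameters $\cb_l$, so it suffices to verify integrality and to compute the reduction modulo $h$ on these generators; the rest follows by multiplicativity and continuity.

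For $\cm_0=\exp(\xi h\cb_0/2)$ and $\cm_{\pm l}$ as in \eqref{7.7:deg}, the formulas $^\sigma\!\Delta(\cm_0)=\cm_0\otimes\cm_0$ and $^\sigma\!\Delta(\cm_l)=\delta(\cm_l)$ for $l\neq 0$ give $\pmb\Delta(\cb_l)=\delta(\cb_l)$ by logarithmic expansion at $h=0$. The coproduct $^\sigma\!\Delta(u_{l,0})=u_{l,0}\otimes 1+\cm_0^l\otimes u_{l,0}$ from \eqref{formcoprod}, combined with $\cm_0\equiv 1\pmod h$, yields $\pmb\Delta(D_{l,0})=\delta(D_{l,0})$ for all $l\neq 0$. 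The case $D_{0,1}=u_{0,0}$ is immediate since $u_{0,0}$ is the grading operator and is primitive.

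The main obstacle is the computation at $Q_{0,2}=h^{-1}(u_{0,1}^\cm-u_{0,0})$. Here one has to show that, despite the $h^{-1}$ prefactor, $^\sigma\!\Delta(Q_{0,2})$ admits an integral expansion whose leading term produces the claimed formula. The strategy is to express $u_{0,1}$ as a commutator of the generators for which $^\sigma\!\Delta$ is given explicitly, using relation \eqref{7.2:eq}, namely $u_{0,1}=\alpha_1^{-1}\cm_0^{-1}[u_{-1,0},u_{1,1}]$, and then apply $^\sigma\!\Delta$ as an algebra homomorphism using the formulas for $u_{\pm 1,0}$ and $u_{1,1}$ in \eqref{formcoprod}. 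Expanding the resulting nested commutator and carefully balancing the $h$-adic orders (the $h^{-1}$ factor in $Q_{0,2}$ is absorbed by the factor $\alpha_1=(q-1)(t-1)=-\kappa h^2+O(h^3)$ and by $u_{0,1}^\cm-u_{0,0}$ vanishing at $h=0$) produces an integral element. The cross-term $\sum_{k\geqslant 1}\theta_{-k,0}\cm_0^{k+l}\otimes u_{k+l,1}$ appearing in $^\sigma\!\Delta(u_{1,1})$ is what produces, after reduction, the contribution $\xi\sum_{l\geqslant 1}l\kappa^{1-l}D_{-l,0}\otimes D_{l,0}$; the factor $l\kappa^{1-l}$ arises from the normalization $Q_{l,0}=(-1)^l\kappa^l P_{l,0}$ with $P_{l,0}=(q^l-1)u_{l,0}$ together with the exponential relation between $\theta_{l,0}$ and the $u_{l,0}$ in \eqref{7.2:eq4}.

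Once $\pmb\Delta(D_{0,2})$ is established, the formulas for $D_{\pm 1,1}$ follow by multiplicativity from $D_{1,1}=[D_{0,2},D_{1,0}]$ and $D_{-1,1}=-[D_{0,2},D_{-1,0}]$, with the correction terms $\xi\cb_0\otimes D_{1,0}$ and $\xi D_{-1,0}\otimes\cb_0$ arising directly from commuting $D_{\pm 1,0}$ against the cross-term in $\pmb\Delta(D_{0,2})$ and using \eqref{heis}. Uniqueness of $\pmb\Delta$ is immediate from Proposition \ref{prop:generators/involution2}(a), which asserts that $\SH^\cb$ is generated by $\cb_l$, $D_{\pm 1,0}$ and $D_{0,2}$. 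Finally, part (b) is a direct verification on generators: since $\varepsilon$ vanishes on each of $\cb_l$, $D_{\pm 1,0}$, $D_{0,l+1}$ and $D_{l,0}$ ($l\neq 0$) by Remark \ref{rem1.42}, the counit axioms $(\varepsilon\otimes\mathrm{id})\pmb\Delta=\mathrm{id}=(\mathrm{id}\otimes\varepsilon)\pmb\Delta$ are checked directly on these generators using the explicit formulas from part (a), and extended by multiplicativity.
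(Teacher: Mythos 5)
Your overall strategy is the same as the paper's: transport the topological coproduct ${}^\sigma\!\Delta$ from $\Sm\Hm^\cm$ to $\SH^\cb$ via the degeneration isomorphism of Theorem~\ref{7.7:thm1}, verify that ${}^\sigma\!\Delta$ preserves the $\scrA$-lattice on the topological generators, and read off the reduction modulo $h$. The proof of uniqueness from Proposition~\ref{prop:generators/involution2}(a), the derivation of $\pmb\Delta(D_{\pm 1,1})$ by commuting $D_{\pm 1,0}$ into the correction term of $\pmb\Delta(D_{0,2})$, and the verification of the counit axiom on generators are all carried out exactly as the paper does (the paper in fact leaves parts (b) and the $D_{\pm 1,1}$ formulas implicit, so your elaboration is welcome).

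The one place where you deviate is the treatment of $Q_{0,2}$. You propose to first rewrite $u_{0,1}$ as a commutator using \eqref{7.2:eq} and then apply ${}^\sigma\!\Delta$ multiplicatively. This is an unnecessary detour: the display \eqref{formcoprod} already gives ${}^\sigma\!\Delta(u_{l,1})$ for \emph{all} $l$, so taking $l=0$ directly yields
$${}^\sigma\!\Delta(u_{0,1})=\delta(u_{0,1})+\sum_{k\geqslant 1}\theta_{-k,0}\,\cm_0^k\otimes u_{k,1},$$
which is what the paper uses. Your approach would also work, but it requires expanding a nested commutator of the longer expression ${}^\sigma\!\Delta(u_{1,1})$, which is strictly more computation for the same output. (The paper isolates the needed $h$-adic estimates in Lemma~\ref{7.7:lem-form}.) Moreover there are two small slips in your detour: the identity from \eqref{7.2:eq} and \eqref{7.2:eq4} reads $[u_{-1,0},u_{1,1}]=\cm_0\,\theta_{0,1}/\alpha_1=\cm_0 u_{0,1}$, so the correct inversion is $u_{0,1}=\cm_0^{-1}[u_{-1,0},u_{1,1}]$ with no $\alpha_1^{-1}$; and the quantity $(q-1)(t-1)$, which appears in $u_{0,1}^\cm=u_{0,1}+\cm_1/a_1$, is $a_1$ and not $\alpha_1$ (the latter is $(1-q)(1-t)(1-v^{-1})\sim\kappa\xi h^3$). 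These do not affect the overall logic, since the detour is dispensable, but they should be corrected if you want to present that route.
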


\vspace{.05in}

For $l\in\Z$ we abbreviate
$\Oc(h^l)=h^l\Sc\Hc_{\scrA}.$
First, let us quote the following formulas.

\vspace{.05in}

\begin{lem} \label{7.7:lem-form} The following hold

(a) $\alpha_{l}=\kappa\xi l^2h^3+\Oc(h^4)$,

(b) 
$\theta_{l,0}=\alpha_lu_{l,0} + \Oc(h^3)= 
\kappa \xi | l | h^2P_{l,0}+\Oc(h^3)$ for $l\neq 0$,

(c) $P_{l,1} =P_{l,0}+\Oc(h)$ for $l \neq 0$.



\end{lem}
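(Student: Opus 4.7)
My plan is to work in the $h$-adic topology on $\Sc\Hc^\cb_\scrA$ using the embedding $\mathbb A^\cm\subset\scrA^\cb$ from \eqref{7.7:deg}. For part (a), I substitute $q=\exp(h)$, $t=\exp(-\kappa h)$, and hence $v^{-1}=qt=\exp(\xi h)$ (using $\xi=1-\kappa$), and Taylor-expand each factor of $\alpha_l=(1-q^l)(1-t^l)(1-v^{-l})/l$ to leading order:
\begin{equation*}
1-q^l=-lh+\Oc(h^2),\quad 1-t^l=l\kappa h+\Oc(h^2),\quad 1-v^{-l}=-l\xi h+\Oc(h^2).
\end{equation*}
Multiplying the three expansions and dividing by $l$ yields $\alpha_l=l^2\kappa\xi h^3+\Oc(h^4)$; the same formula holds for $l<0$, either by the same computation (all three sign changes combine with the $1/l$ to give the unchanged sign) or by the elementary identity $\alpha_{-l}=\alpha_l$.

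For part (b), I first observe that $P_{l,0}\in\Sc\Hc^\cb_\scrA$ together with $P_{l,0}=(q^{|l|}-1)u_{l,0}$ and $q^{|l|}-1\in h\,\scrA^\cb$ forces $u_{l,0}\in h^{-1}\Sc\Hc^\cb_\scrA$; combined with (a) this gives $\alpha_l u_{l,0}\in\Oc(h^2)$. For the first equality, I extract the coefficient of $s^{|l|}$ in the exponential in \eqref{7.2:eq4} applied to the primitive vector $(1,0)$ if $l>0$ (resp.\ $(-1,0)$ if $l<0$): it equals $\alpha_l u_{l,0}$ plus a sum of products $\prod_{i=1}^k\alpha_{k_i}u_{k_i\xb}$ with $k\geqslant 2$, and each such product lies in $\Oc(h^{2k})\subset\Oc(h^4)\subset\Oc(h^3)$. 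For the second equality, I expand $(q^{|l|}-1)^{-1}=(|l|h)^{-1}(1+\Oc(h))$ and combine with (a):
\begin{equation*}
\alpha_l u_{l,0}=\frac{l^2\kappa\xi h^3+\Oc(h^4)}{|l|h+\Oc(h^2)}\,P_{l,0}=|l|\kappa\xi h^2\,P_{l,0}+\Oc(h^3),
\end{equation*}
using $l^2/|l|=|l|$.

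For part (c), I descend to the finite level via the surjections $\Psi_n\colon\Sm\Hm^\cm\to\Sm\Hm_n$, using that $\prod_n\Psi_n$ embeds $\Sc\Hc^\cb_\scrA$ into $\prod_n\Sc\Hc_{n,\scrA}$ (by an argument identical to the one already run in Proposition~\ref{7.4:prop}). At finite level I invoke the explicit $SL_2(\Z)$-equivariant construction of the elements $P^{(n)}_\xb$ from \cite[sec.~2.2]{SV1}, which presents $P^{(n)}_{l,1}$ as a $\Sen_n$-symmetrized sum of DAHA monomials, each containing exactly one $Y$-factor; under the degeneration $Y_i=1+h y_i+\Oc(h^2)$ each such $Y$ becomes $1$ modulo $\Oc(h)$, so that the resulting $X$-only symmetrized sum coincides with $P^{(n)}_{l,0}$ up to a scalar of the form $q^{a}$ with $a\in\Z$, which is itself $1+\Oc(h)$. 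This gives $P^{(n)}_{l,1}=P^{(n)}_{l,0}+\Oc(h)$ in $\Sc\Hc_{n,\scrA}$ uniformly in $n$, and the embedding $\prod_n\Psi_n$ transfers this to $P_{l,1}-P_{l,0}\in h\,\Sc\Hc^\cb_\scrA$, which simultaneously establishes that $P_{l,1}$ lies in $\Sc\Hc^\cb_\scrA$. The main obstacle is (c): parts (a) and (b) are routine bookkeeping of leading terms, but (c) requires extracting the precise DAHA-level formula for $P^{(n)}_{l,1}$ from the conventions of \cite{SV1} and verifying that no hidden inverse powers of $h$ creep in when projecting to the spherical subalgebra, so that the difference $P^{(n)}_{l,1}-P^{(n)}_{l,0}$ is genuinely in $h\,\Sc\Hc_{n,\scrA}$ rather than merely in $\Sc\Hc_{n,\scrA}\otimes_\scrA\scrK$.
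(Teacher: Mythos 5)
Parts (a) and (b) are correct and are exactly the paper's (terse) argument made explicit: the Taylor expansion of $\alpha_l$ under \eqref{7.7:deg}, the observation that $P_{l,0}=(q^{|l|}-1)u_{l,0}$ with $P_{l,0}\in\Oc(1)$ forces $\alpha_lu_{l,0}\in\Oc(h^2)$, and the extraction of the $s^{|l|}$-coefficient from the exponential in \eqref{7.2:eq4}. No complaints there.

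Part (c) is where you diverge from the paper, and your route has a genuine gap. You reduce everything to the assertion that $P^{(n)}_{l,1}$ is, up to a power of $q$, a symmetrized sum of DAHA monomials each containing exactly one $Y$-factor. That assertion is not established anywhere in this paper, and for $|l|\geqslant 2$ it is not a formal consequence of the conventions recalled here: the elements $P^{(n)}_\xb$ off the coordinate axes are produced by the $SL(2,\Z)$-action, and their explicit DAHA expressions are delicate (this is precisely why you flag it yourself as ``the main obstacle''). As written, the key step of (c) is asserted rather than proved, and the detour through finite $n$ plus the lattice-integrality worry you raise at the end are both symptoms of having chosen a harder path. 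The paper's proof avoids all of this and stays inside $\Sm\Hm^{\cm}$: by \eqref{sigma} and \eqref{7.2:eq} one has $u_{\pm l,1}=\pm[u_{0,1},u_{\pm l,0}]$, and by \eqref{Q0l} one has $u_{0,1}=(q-1)Q_{0,2}+u_{0,0}-\cm_1/(q-1)(t-1)$; since the last term is central, $[u_{0,0},P_{\pm l,0}]=\pm lP_{\pm l,0}$, and $Q_{0,2},P_{\pm l,0}\in\Oc(1)$ by definition of the lattice, one gets
\begin{equation*}
P_{\pm l,1}=(q-1)u_{\pm l,1}=\pm\frac{(q-1)^2}{q^l-1}[Q_{0,2},P_{\pm l,0}]+\frac{l(q-1)}{q^l-1}P_{\pm l,0}=P_{\pm l,0}+\Oc(h),
\end{equation*}
using $(q-1)^2/(q^l-1)=\Oc(h)$ and $l(q-1)/(q^l-1)=1+\Oc(h)$. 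If you want to keep your approach you must actually prove the monomial formula for $P^{(n)}_{l,1}$ (with its normalization) from \cite{SV1}; otherwise replace (c) by the commutator computation above, which uses only relations already available in the paper.
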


\begin{proof}
Part $(a)$ follows from \eqref{7.2:eq4}.
Note that $P_{l,0}\in\Oc(1)$ by definition of $\Sc\Hc_\scrA^\cb$.
Thus, $(b)$ follows from \eqref{7.2:eq4}, which gives the following 
formulas for $l\geqslant 1$
\begin{equation}
P_{\pm l,0}=(q^l-1)\,u_{\pm l,0},\qquad
\sum_{l\geqslant 0} \theta_{l,0}\, s^{l}=
\exp\big(\sum_{l\geqslant 1} \alpha_{l}\, u_{l,0}\,s^l\big).
\end{equation}
Finally, for $l\geqslant 1$, using
\eqref{sigma}, \eqref{7.2:eq} we get
\begin{equation}
u_{\pm l,1}=\pm[u_{0,1},u_{\pm l,0}].
\end{equation}
From \eqref{Q0l} we get also
\begin{equation}
u_{0,1}=(q-1)Q_{0,2}+u_{0,0}-\cm_1/(q-1)(t-1).
\end{equation}
Thus, part $(c)$ follows from the following computation 
\begin{equation}
\aligned
P_{\pm l,1}&=(q-1)u_{\pm l,1}\\
&=\pm (q-1)[u_{0,1},P_{\pm l,0}]/(q^l-1)\\
&=\pm (q-1)^2[Q_{0,2},P_{\pm l,0}]/(q^l-1)+
l(q-1)P_{\pm l,0}/(q^l-1)\\
&=P_{\pm l,0}+\Oc(h).
\endaligned
\end{equation}
\end{proof}

We can now turn to the proof of the theorem.

\begin{proof} 
We must prove that ${}^\sigma\!\Delta$ preserves the lattice
$\Sc\Hc_\scrA$ and we must compute the image of the elements
$Q_{l,0}$, $Q_{0,1}$ and $Q_{0,2}$.
By \eqref{formcoprod} we have
${}^\sigma\!\Delta(P_{l,0})=\delta(P_{l,0})$  for all $l\in\Z$.
Thus, we have also $\pmb\Delta (D_{l,0})=\delta(D_{l,0})$.
Next, using \eqref{formcoprod} and \eqref{Q0l}, we get
$$Q_{0,1}=u_{0,0},\qquad {}^{\sigma}\!\Delta(Q_{0,1})=\delta(Q_{0,1}) .$$  
This implies that $\pmb\Delta(D_{0,1})=\delta(D_{0,1})$.
Finally, using \eqref{formcoprod} and \eqref{Q0l} again, we get
\begin{equation}
\gathered
Q_{0,2}=(q-1)^{-1}(u_{0,1}^\cm-u_{0,0}),\\
{}^{\sigma}\!\Delta(Q_{0,2})=\delta(Q_{0,2})+(q-1)^{-1}\sum_{k\geqslant 1}\theta_{-k,0}\otimes u_{k,1}.
\endgathered
\end{equation}
Thus, by Lemma~\ref{7.7:lem-form} we have
\begin{equation}\label{F:kip1}
{}^{\sigma}\!\Delta(Q_{0,2})=\delta(Q_{0,2}) + 
\kappa h\xi\sum_{k \geqslant 1} kP_{-k,0} \otimes P_{k,0} + \Oc(h^2).
\end{equation}
This implies that 
\begin{equation}
\pmb\Delta(D_{0,2})=\delta(D_{0,2})
+\xi\sum_{l\geqslant 1}l\kappa^{1-l}D_{-l,0}\otimes D_{l,0}.
\end{equation}

\end{proof}

For a future use, let us mention the following fact.
For $l \geqslant 0$ we put
\begin{equation}
\SH^-[\leqslant \!-l]=\bigoplus_{s \geqslant l} \SH^-[-s], \qquad 
\SH^+[\geqslant\! l]=\bigoplus_{s \geqslant l} \SH^+[s]
\end{equation}
where the grading is the rank mentioned above.

\begin{lem}\label{lem:coproduct-D0n} 
For $l \geqslant 1$ we have
$\pmb\Delta(D_{0,l}) = \delta(D_{0,l})$ modulo
$\SH^-[\leqslant\! -1]\, \widehat{\otimes}\, \SH^+[\geqslant\! 1].$
\end{lem}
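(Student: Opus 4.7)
I would argue by induction on $l$. The base cases $l=1,2$ follow immediately from the explicit formulas of Theorem~\ref{7.7:thm2}: one has $\pmb\Delta(D_{0,1})=\delta(D_{0,1})$ exactly, while for $l=2$ the correction
\[
\pmb\Delta(D_{0,2})-\delta(D_{0,2})\;=\;\xi\sum_{k\geqslant 1}k\kappa^{1-k}\,D_{-k,0}\otimes D_{k,0}
\]
has each summand visibly in $\SH^-[-k]\otimes\SH^+[k]\subset\SH^-[\leqslant-1]\,\widehat\otimes\,\SH^+[\geqslant 1]$.

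For the inductive step at level $l\geqslant 3$, the key algebraic input is the recursive description of $D_{0,l}$ coming from Proposition~\ref{P:<0>} together with \eqref{leadingterm}:
\[
[D_{-1,0},D_{1,l+1}]\;=\;E_{l+1}\;=\;l(l+1)\kappa\,D_{0,l}\;+\;R_{l+1},\qquad R_{l+1}\in\SH^{\cb,0}[\leqslant l-1].
\]
Since $\pmb\Delta(D_{-1,0})=\delta(D_{-1,0})$ is known exactly, applying $\pmb\Delta$ and subtracting $\delta$ on both sides shows that the problem reduces to proving
\[
\bigl[\delta(D_{-1,0}),\,\pmb\Delta(D_{1,l+1})\bigr]\;-\;\pmb\Delta(R_{l+1})\;\equiv\;l(l+1)\kappa\,\delta(D_{0,l})\pmod{I},
\]
where $I=\SH^-[\leqslant-1]\,\widehat\otimes\,\SH^+[\geqslant 1]$. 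By the induction hypothesis, $\pmb\Delta(R_{l+1})$ is controlled modulo $I$, so the essence is a careful analysis of the commutator above, and in particular of $\pmb\Delta(D_{1,l+1})$.

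The cleanest route is to transport the computation to the Hopf algebra $\Sm\Hm^\cm$ of Section~\ref{sec:coproduct} and then degenerate via Theorem~\ref{7.7:thm1}. The coproduct ${}^\sigma\!\Delta$ is explicit on the generators $u_{\pm 1,k}$ by formulas \eqref{formcoprod}, and the defining relation $[u_{-1,0},u_{1,l+1}]=\cm_0\theta_{0,l+1}/\alpha_1$ in $\widehat\Ecb$ lets one compute ${}^\sigma\!\Delta(\theta_{0,l+1})$ as a single commutator. A direct inspection shows it is of the form
\[
{}^\sigma\!\Delta(\theta_{0,l+1})\;=\;\delta(\theta_{0,l+1})\;+\;(\text{terms lying in } \Sm\Hm^<\,\widehat\otimes\,\Sm\Hm^>),
\]
and inverting the exponential relation between the $\theta_{0,\bullet}$ and the $u_{0,\bullet}$ from Section~\ref{sec:SHgamma}, together with a secondary induction on the degree, yields the same congruence for $u_{0,l}^{\mathbf c}$. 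Under the degeneration $h\to 0$ of Theorem~\ref{7.7:thm1}, $\Sm\Hm^<$ maps to $\SH^<\subset\SH^-$ and $\Sm\Hm^>$ to $\SH^>\subset\SH^+$, so this congruence specializes to $\pmb\Delta(D_{0,l})\equiv\delta(D_{0,l})\pmod{I}$ as required.

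The main obstacle is the explicit commutator computation in $\widehat\Ecb$: the non-trivial cross terms $\sum_{k\geqslant 1}\theta_{-k,0}\cm_0^{k+1}\otimes u_{k+1,l+1}$ appearing in ${}^\sigma\!\Delta(u_{1,l+1})$ interact with both tensor pieces of ${}^\sigma\!\Delta(u_{-1,0})$ and one must verify that all resulting corrections to $\delta(\theta_{0,l+1})$ can be written as sums of elementary tensors $a\otimes b$ with $a\in\Sm\Hm^<$ and $b\in\Sm\Hm^>$ (and not merely of strictly positive/negative rank), so that the degeneration lands precisely in $I$.
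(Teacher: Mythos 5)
Your Route 1 — inducting on $l$, using the base cases from Theorem~\ref{7.7:thm2}, writing $E_{l+1}=[D_{-1,0},D_{1,l+1}]=l(l+1)\kappa D_{0,l}+R_{l+1}$, and noting that $\pmb\Delta(D_{-1,0})$ is primitive — is essentially the skeleton of the paper's argument. But you stop at ``the essence is a careful analysis of $\pmb\Delta(D_{1,l+1})$'' and then pivot to the degeneration route, whereas the paper actually carries out that analysis directly in $\SH^\cb$: it writes $\pmb\Delta(D_{1,l})=\ad(\pmb\Delta(D_{0,2}))^l(\delta(D_{1,0}))$, expands the $l$-fold adjoint using $\pmb\Delta(D_{0,2})=\delta(D_{0,2})+\xi\sum_k k\kappa^{1-k}D_{-k,0}\otimes D_{k,0}$, and tracks the result modulo the \emph{finer} ideal $\SH^-[\leqslant\!-1]\,\widehat\otimes\,\SH^+[\geqslant\! 2]$ (two steps up on the right, not one). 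That extra step on the right is what makes the subsequent commutator with $\delta(D_{-1,0})$ land exactly in $I=\SH^-[\leqslant\!-1]\,\widehat\otimes\,\SH^+[\geqslant\! 1]$, yielding $\pmb\Delta(E_l)=\delta(E_l)+\xi\sum_{k=1}^l E_{l-k}\otimes E_{k-1}$ mod $I$. Combined with \eqref{leadingterm} and the fact that $\pmb\Delta$ is an algebra map, one closes the recursion. This explicit expansion of the iterated adjoint is the one genuinely missing step in your Route 1.

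Your Route 2 via $\Sm\Hm^{\cm}$ is a legitimate alternative, but it trades a finite algebraic computation for two additional layers: (i) the conversion chain $\theta_{0,\bullet}\to u_{0,\bullet}\to u^{\cm}_{0,\bullet}\to Q_{0,\bullet}$, which involves inverting the exponential generating-function relation and the binomial sums of \eqref{Q0l}, and (ii) a careful degeneration of the correction terms through $h\to 0$. You correctly flag the core risk — that the cross terms from ${}^\sigma\!\Delta(u_{1,l+1})$ might not factor as $a\otimes b$ with $a$ of strictly negative rank in $\SH^-$ and $b$ of strictly positive rank in $\SH^+$ — but that is precisely the point one must \emph{prove}, and Route 1 resolves it in a couple of lines once the $\ad(\pmb\Delta(D_{0,2}))^l$ expansion is written out. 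I would encourage you to complete Route 1: note that all terms produced by inserting the correction $\xi\sum_k k\kappa^{1-k}D_{-k,0}\otimes D_{k,0}$ more than once have right-hand tensor factor of rank $\geqslant 2$, and $[D_{-k,0},D_{1,0}]=\cb_0\delta_{k,1}$ kills all but $k=1$, which is what produces the $\xi\sum_{k=1}^l E_{l-k}\otimes D_{1,k-1}$ correction to $\delta(D_{1,l})$ and, after bracketing with $\delta(D_{-1,0})$, the formula for $\pmb\Delta(E_l)$.
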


\begin{proof} 
A simple computation shows that, modulo 
$\SH^-[\leqslant\! -1]\, \widehat{\otimes}\, \SH^+[\geqslant\! 2]$, we have
\begin{equation}
\begin{split}
\pmb\Delta(D_{1,l})&= \pmb\Delta(\ad(D_{0,2})^l (D_{1,0}))\\
&=\ad \big(\delta(D_{0,2}) + 
\xi \sum_{l \geqslant 1} l\kappa^{1-l}D_{-l,0} 
\otimes D_{l,0}\big)^l \big(\delta(D_{0,1})\big)\\
&=\delta(D_{1,l}) +\xi \sum_{k=1}^l 
\ad\big(\delta(D_{0,2})\big)^{k-1} \circ 
\ad(D_{-1,0} \otimes D_{1,0}) \circ 
\ad \big(\delta(D_{0,2})\big)^{l-k} \big(\delta(D_{1,0})\big)\\
&=\delta(D_{1,l}) + \xi \sum_{k=1}^l E_{l-k} \otimes D_{1,k-1}.
\end{split}
\end{equation}
Applying the commutator with $\pmb\Delta(D_{-1,0})$, 
we get, modulo 
$\SH^-[\leqslant\! -1]\, \widehat{\otimes}\, \SH^+[\geqslant\! 1]$,
\begin{equation}\label{E:Delta-En}
\pmb\Delta(E_l)=\delta(E_l) + \xi \sum_{k=1}^l E_{l-k} \otimes E_{k-1}.
\end{equation}
It follows in particular that 
\begin{equation}
\pmb\Delta(D_{0,l}) \in \SH^{\cb,0} \,\otimes\, \SH^{\cb,0} + 
\SH^-[\leqslant\! -1] \,\widehat{\otimes}\, \SH^+[\geqslant\! 1].
\end{equation}
Using 
\eqref{leadingterm},
modulo the ideal 
$\SH^-[\leqslant\! -1] \,\widehat{\otimes}\, \SH^+[\geqslant\! 1]$, 
we deduce from
\eqref{E:Delta-En} the desired estimate on $\pmb\Delta(D_{0,l})$.
\end{proof}

\vspace{.2in}

\section{Relation to $W_k(\gen\len_r)$}

\vspace{.15in}

\subsection{The vertex algebra $W_k(\gen\len_r)$}
\label{sec:vertex}
Fix an integer $r>0$, a field $\kb$ containing $\C$ and an element $k\in \kb$. Let
$W_k(\sen\len_r)_\kb$ be the \emph{$W$-algebra over $\kb$ at level $k$ associated 
with $\mathfrak{s}\mathfrak{l}_r$}.
We may abbreviate
$W_k(\sen\len_r)=W_k(\sen\len_r)_\kb.$
Recall that $W_k(\sen\len_r)$ is a $\Z$-graded vertex 
algebra with quasi-primary vectors 
$\PW_2,\PW_3\dots,\PW_r$ of 
\emph{conformal weight} $2,3,\dots r$. 
The corresponding fields are 
\begin{equation}
\PW_i(z)=\sum_{l\in\Z}\PW_{i,l}\,z^{-l-i},\qquad 
\PW_{i,l}\in\End(W_k(\sen\len_r)).
\end{equation}
Let $|0\rangle$ be the vacuum of $W_k(\gen\len_r)$.
Recall that $|0\rangle$ has the degree zero, and that 
$\PW_{i,l}$ is an operator of degree $-l$.
We abbreviate $\PW_{i,(l)}=\PW_{i,l-i+1},$ so that we have
$\PW_i=\PW_{i,(-1)}|0\rangle.$ 
Then $W_k(\gen\len_r)$ is spanned, as a $\kb$-vector space, by the elements
\begin{equation}\label{8.1:mon}
\PW_{i_1,(-l_1)}\PW_{i_2,(-l_2)}\dots\PW_{i_t,(-l_t)}|0\rangle,\qquad
l_i\geqslant 1,\qquad t\geqslant 0.
\end{equation}
The vertex algebra $W_k(\sen\len_r)$ admits a \emph{strict filtration},
in the sense of \cite[sec.~3.4, 3.8]{Ara}, such that the subspace
$W_k(\sen\len_r)[\leqslant\!d]$ is spanned by the elements \eqref{8.1:mon} with 
$i_1, i_2, \dots, i_t\geqslant 2$ and
\begin{equation}\label{8.1:filt}
i_1+i_2+\dots+i_t\leqslant d+t.
\end{equation}
We'll call it the \emph{order filtration}. 
This filtration differs from the \emph{standard filtration}
on any conformal vertex algebra \cite[sec.~3.5, rem.~4.11.3]{Ara}.
The associated graded $\overline W_k(\sen\len_r)$ 
is a commutative vertex algebra. 
Let $\overline W_{i,l}$ denote the symbol of $\PW_{i,l}$
in $\End(\overline W_k(\sen\len_r)).$
The vectors $\PW_2,\dots,\PW_r$
\emph{generate a PBW-basis} of
$W_k(\sen\len_r)$, see \cite[sec.~3.6, prop.~4.12.1]{Ara}.
This means that the map
\begin{equation}
\kb[w_{i,(-l)}\,;\,i\in[2,r],\,l\geqslant 1]\to
\overline W_k(\sen\len_r),\qquad
f(w_{i,(-l)})\mapsto f(\overline W_{i,(-l)})|0\rangle
\end{equation}
is invertible. 
Let $W_k(\gen\len_r)$ be the \emph{$W$-algebra over $\kb$ at level $k$ 
associated with $\frak g\frak l_r$}.
It is the tensor product of $W_k(\sen\len_r)$ 
with the vertex algebra associated with
a free bosonic field of conformal weight 1
\begin{equation}
\PW_1(z)=\sum_{l\in\Z}\PW_{1,l}\,z^{-l-1}.
\end{equation}
The results above generalize immediately to
$W_k(\gen\len_r)$.
In particular $W_k(\gen\len_r)$ 
admits a strict filtration such that the subspace
$W_k(\gen\len_r)[\leqslant\!d]$ is spanned by the elements \eqref{8.1:mon} with 
$i_1, i_2, \dots, i_t\geqslant 1$ as in \eqref{8.1:filt}.
Finally, recall that $\PW_2$ is a conformal vector of central charge 
\begin{equation}\label{Ck}
C_k=(r-1)-r(r^2-1)(k+r-1)^2/(k+r).
\end{equation}
In other words, the Fourier modes of the field 
$\PW_2(z)$ 
satisfy the relations
\begin{equation}
[\PW_{2,l},\PW_{2,k}]=(l-k)\PW_{2,l+k}+(l^3-l)\,\delta_{l,-k}\,C_k/12.
\end{equation}

\vspace{.15in}

\subsection{The current algebra of $W_k(\gen\len_r)$}
\label{sec:8.2}
Let $\Uen(W_k(\gen\len_r))$ be the \emph{current algebra} of $W_k(\gen\len_r)$,
see \cite[sec.~3.11]{Ara}.
It is a \emph{degreewise complete topological $\kb$-algebra}.
This means that it is a $\Z$-graded algebra
\begin{equation}
\Uen(W_k(\gen\len_r))=\bigoplus_{s\in\Z}\Uen(W_k(\gen\len_r))[s]
\end{equation}
which is equipped 
with a \emph{degreewise linear topology}
such that the multiplication
\begin{equation}
\Uen(W_k(\gen\len_r))[s]\times\Uen(W_k(\gen\len_r))[s']
\to\Uen(W_k(\gen\len_r))[s+s'] 
\end{equation}
is continuous, and that each piece 
$\Uen(W_k(\gen\len_r))[s]$  is complete.
We call the degree with respect to this grading the
\emph{conformal degree}, and we call
this degreewise linear topology the
\emph{standard degreewise topology}. 
See \cite[sec.~1]{MNT} and \cite[sec.~A.2]{Ara} for the terminology.
Next, the algebra $\Uen(W_k(\gen\len_r))$  
is equipped with a degreewise dense family of elements
\cite[sec.~3.9, prop.~3.11.1]{Ara}
\begin{equation}
\{v_{\{n\}}\,;\,v\in W(\gen\len_r),\,n\in\Z\}.
\end{equation} 
We'll abbreviate
$\PW_{i,l}=(\PW_i)_{\{l+i-1\}}.$ 
Thus $\PW_{i,l}$ may be viewed both as a linear operator on
$W_k(\gen\len_r)$ and as an element of $\Uen(W_k(\gen\len_r))$
of conformal degree $-l$.
We hope that this will not create any confusion.
Note that the elements
\begin{equation}
\PW_{i_1,l_1}\PW_{i_2,l_2}\dots\PW_{i_t,l_t}
\end{equation}
with 
$i_1,i_2,\dots,i_t\geqslant 1$ and $l_1+l_2+\dots+l_t=s$
span a dense subset of
$\Uen(W_k(\gen\len_r))[s]$.
Now, the order filtration on $W_k(\gen\len_r)$ induces a filtration on
$\Uen(W_k(\gen\len_r)),$ called again the \emph{order filtration}.
The element $\PW_{i,l}$ has the order $i-1$.
Let $\overline W_{i,l}$ denote its symbol in the piece \cite[thm.~3.13.3]{Ara}
\begin{equation}
\overline\Uen( W_k(\gen\len_r))[i]=
\Uen( W_k(\gen\len_r))[\leqslant\! i]/\Uen( W_k(\gen\len_r))[<\!i].
\end{equation}
The conformal weight yields a $\Z$-grading on
$\overline\Uen( W_k(\gen\len_r))[i]$ such that
$\overline W_{i,l}$ has the (conformal) degree $-l$.
Note that $\overline\Uen(W_k(\gen\len_r))$ is also a
degreewise complete topological  $\kb$-algebra. It is isomorphic to
the \emph{standard degreewise completion} of the algebra
\begin{equation}
\kb[w_{i,l}\,;\,i\in[1,r],\,l\in\Z]
\end{equation} as a degreewise topological  $\kb$-vector space.
Here $w_{i,l}$ is given the degree $-l$.

\vspace{.15in}

\subsection{The $W_k(\gen\len_r)$-modules}

\begin{df}
We define a $W_k(\gen\len_r)$-module to be a
$\Uen(W_k(\gen\len_r))$-module. 
A $W_k(\gen\len_r)$-module is \emph{admissible} if it is a 
$\Z$-graded $\Uen(W_k(\gen\len_r))$-module $M=\bigoplus_{s\in\Z}M[s]$ such that
$M[s]=0$ for $s\gg 0$. 
\end{df}
If $M$ is an admissible $W_k(\gen\len_r)$-module the action
\begin{equation}
\Uen(W_k(\gen\len_r))[s]\times M[s']\to M[s+s']
\end{equation}
is continuous with respect to the topology on
$\Uen(W_k(\gen\len_r))[s]$ and the discrete topology on $M$.

Now, let $\hen$ be the Cartan subalgebra of $\gen\len_r$.
For $\beta\in\hen$, the \emph{Verma module
with the highest weight}
$\beta$ is an admissible module  $M_\beta$ with basis elements
\begin{equation}\label{8.1:verma}
\PW_{i_1,-l_1}\PW_{i_2,-l_2}\dots\PW_{i_t,-l_t}|\beta\rangle,\qquad
l_i\geqslant 1,\qquad t\geqslant 0.
\end{equation}
Here $|\beta\rangle$ is the \emph{highest weight vector},
see \cite[sec.~5.1]{Ara}.
We have the following relations
\begin{equation}\label{8.1:verma2}
\PW_{i,0}|\beta\rangle=e_i(\beta)|\beta\rangle,
\qquad
\PW_{i,l}|\beta\rangle=0,
\qquad l\geqslant 1,
\end{equation}
where $e_i(\beta)$ is the evaluation of the $i$-th elementary
symmetric function at $\beta$.

\vspace{.1in}

\begin{rem}
The order filtration on $W_k(\gen\len_r)$ 
induces a filtration on  $M_\beta$ such that 
$M_\beta[\leqslant\!d]$ is spanned by the elements 
\begin{equation}\label{8.1:filtr}
\PW_{i_1,-l_1}\PW_{i_2,-l_2}\dots\PW_{i_t,-l_t}|\beta\rangle,\qquad
l_i\geqslant 1,\qquad t\geqslant 0,
\end{equation}
with $i_1,i_2,\dots,i_t$ satisfying 
\eqref{8.1:filt}. 
By \cite[prop.~5.1.1]{Ara}, the associated graded is a 
$\overline\Uen(W_k(\gen\len_r))$-module 
$\overline M_\beta$. The conformal weight yields a $\Z$-grading on $\overline M_\beta$.
As a graded vector space $\overline M_\beta$ is isomorphic to 
\begin{equation}
\kb[w_{i,-l}\,;\,i\in[1,r],\,l\geqslant 1],
\end{equation}
where $w_{i,-l}$ is given the degree $l$.
\end{rem}

\vspace{.15in}

\subsection{The quantum Miura transform for $W_k(\gen\len_r)$}
\label{sec:miura}
Let $b_1,b_2,\dots,b_r$ be a basis of $\hen$
and let $b^{(1)}, b^{(2)},\dots,b^{(r)}$ be the dual basis.
Let $\langle \bullet,\bullet\rangle$ denote both the canonical pairing
$\hen^*\times\hen\to\kb$ and the pairing $\hen^*\times\hen^*\to\kb$
such that $(b^{(i)})$ is orthonormal. 
Fix $\kappa\in \kb^\times$ and 
fix $r$ commuting boson fields
$b^{(1)}(z),b^{(2)}(z),\dots, b^{(r)}(z)$ 
of level $\kappa^{-1}$. Thus, we have
\begin{equation}
[b_l^{(i)},b_{-h}^{(j)}]=l\delta_{i,j}\delta_{l,h}/\kappa,\qquad
b^{(i)}(z)=\sum_{l\in\Z}b^{(i)}_lz^{-l-1}.
\end{equation}
Let $\mathscr H^{(r)}$ be the Heisenberg algebra
generated by the elements $b^{(i)}_l$ with $i\in[1,r]$ and $l\in\Z$. 
For $\beta\in\hen$ let $\pi_\beta$ be the $\mathscr H^{(r)}$-module
generated by the vector $|\beta\rangle$ with the relations 
\begin{equation}
b_{l}^{(i)}|\beta\rangle=\delta_{l,0}\,\langle b^{(i)},\beta\rangle\,
|\beta\rangle,\qquad
l\geqslant 0.
\end{equation}
To avoid confusions we may write $\pi_\beta=\pi_{\beta,\kb}$.
Consider the fields
\begin{equation}\label{oper}
b(z)=\sum_ib^{(i)}(z)\,b_i,\qquad
h(z)=\sum_i\langle h,b_i\rangle\,b^{(i)}(z),\qquad h\in\hen^*.
\end{equation}
We call $\pi_0$ the \emph{Fock space}. 
It has the structure of a conformal vertex algebra such that
\begin{equation}
Y(b^{(i)}_{-1}|0\rangle,z)=b^{(i)}(z).
\end{equation}
The Virasoro field has the central charge 
$r-12\langle h,h\rangle/\kappa$ and is given by
\begin{equation}
\frac{\kappa}{2}\sum_i\pmb :\!b^{(i)}(z)^2\!\pmb :+\,\partial_z h(z).
\end{equation}
Here $:\;:$ denotes the \textit{normal ordering}
(from right to left).
The module
$\pi_\beta$ has the structure of a module over $\pi_0$.

Now, let $h^{(1)}, h^{(2)},\dots, h^{(r)}$ 
be the weights of the first fundamental 
representation of $\sen\len_r$. 
Let also $\alpha_i$, $\omega_i$ be the simple roots and 
the fundamental weights of $\sen\len_r$,
and $\rho$ be the sum of the fundamental weights.
Given $Q\in\kb$ we 
define the fields $W_1(z),W_2(z),\dots,W_r(z)$ in
$\End(\pi_0)[[z^{-1},z]]$ by the following formula
\begin{equation}
\label{miura}
-\kappa\,\pmb :\!\prod_{i=1}^r\big(Q\,\partial_z+h^{(i)}(z)\big)
\!\pmb :\,=
\sum_{d=0}^r\,W_d(z)\,(Q\,\partial_z)^{r-d}.
\end{equation}
Note that
$$\sum_{i=1}^rh^{(i)}=0,\qquad
-\sum_{i\neq j}h^{(i)}\otimes h^{(j)}
=\sum_{i=1}^{r-1}\alpha_i\otimes\omega_i
=\sum_{i=1}^{r}b^{(i)}\otimes b^{(i)}-\frac{1}{r}J\otimes J,
\qquad J=\sum_ib^{(i)}.$$
Therefore, we have
\begin{equation}\label{W2}
\begin{aligned}
W_0(z)&=1,\\
W_1(z)&=0,\\
W_2(z)&=
-\kappa\sum_{i<j}\pmb :\!h^{(i)}(z)\,h^{(j)}(z)\!\pmb :+\,
\kappa Q\,\partial_z\rho(z)\\
&=\frac{\kappa}{2}\sum_{i=1}^{r-1}\pmb :\!\alpha_i(z)\omega_i(z)\!\pmb : 
+ \,\kappa Q\,\partial_z\rho(z)\\
&=\frac{\kappa}{2}\sum_{i=1}^r\pmb :\!b^{(i)}(z)^2\!\pmb : 
-\,\frac{\kappa}{2r}\pmb :\!J(z)^2\!\pmb :+ \,\kappa Q\,\partial_z\rho(z).
\end{aligned}
\end{equation}
For $r\geqslant 2$ the field $W_2(z)$ 
is a Virasoro field
of central charge 
\cite[prop.~4.10]{Kac}
\begin{equation}\label{CQ}
C_Q
=(r-1)-r(r^2-1)\kappa\,Q^2.
\end{equation}
Although this notation is not compatible with \eqref{W2}, we'll write 
\begin{equation}
W_1(z)=J(z)=\sum_{i=1}^r b^{(i)}(z). 
\end{equation}
Comparing \eqref{Ck} and \eqref{CQ} we get $C_k=C_Q$ if
\begin{equation}\label{k/kappa} 
Q=-\xi/\kappa, \qquad \kappa=k+r.\end{equation}
\textit{We'll always assume that} (\ref{k/kappa}) \textit{holds}.
The fields $W_1(z),\dots,W_r(z)$ 
generate a vertex subalgebra of $\pi_0$ which is isomorphic to
$W_k(\gen\len_r)$, see \cite[sec.~5.4.11]{BZF}.
An explicit expression of the field
$W_d(z)$ in $\End(\pi_0)[[z^{-1},z]]$ yields to complicated formulas. The following is enough for our
purpose.

\begin{prop} \label{prop:symbolW}
For $d\neq1$, modulo lower terms in the order filtration of 
$\Uen(W_k(\gen\len_1))^{\widehat\otimes r}$, 
$$W_d(z)=-\kappa \sum_{s=0}^d(-r)^{s-d}
\Big(\begin{smallmatrix}r-s\\r-d\end{smallmatrix}\Big)
\sum_{i_1<i_2<\cdots<i_s}
\pmb:J(z)^{d-s}b^{(i_1)}(z)b^{(i_2)}(z)\cdots b^{(i_s)}(z)\pmb:.$$
\end{prop}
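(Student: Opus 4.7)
The plan is to extract the top-order part of $W_d(z)$ directly from the Miura formula \eqref{miura} and then re-express the resulting elementary symmetric polynomial of the $h^{(i)}(z)$'s in terms of the $b^{(i)}(z)$'s and $J(z)$ by means of the linear relation $h^{(i)}=b^{(i)}-J/r$, which is forced by the identities $\sum_i h^{(i)}=0$ and $J=\sum_i b^{(i)}$ (equivalently, $b^{(i)}$ is the standard basis of $\hen^*$ while the $h^{(i)}$ are its projections to the $\sen\len_r$-hyperplane).

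First I would expand the normal-ordered product $-\kappa\,\pmb :\!\prod_{i=1}^r(Q\partial_z+h^{(i)}(z))\!\pmb :$ into monomials in $Q\partial_z$ and the fields $h^{(i)}(z)$, move all $Q\partial_z$'s to the right using $Q\partial_z\cdot h^{(j)}(z)=h^{(j)}(z)\cdot Q\partial_z+Q\,\partial_z h^{(j)}(z)$ (the normal ordering being applied to the operator-valued coefficients), and read off the coefficient of $(Q\partial_z)^{r-d}$, which is by definition $W_d(z)$. The leading piece, coming from the terms in which no $Q\partial_z$ has ever acted on an $h^{(j)}(z)$, is exactly $-\kappa\,\pmb :\!e_d(h^{(1)}(z),\ldots,h^{(r)}(z))\!\pmb :$. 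All remaining terms involve at least one factor of the form $\partial_z^k h^{(i)}(z)$ with $k\geqslant 1$, and hence lie in strictly lower pieces of the order filtration of $\Uen(W_k(\gen\len_1))^{\widehat\otimes r}$, since a differentiated boson contributes strictly less to that filtration than an undifferentiated one.

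Next I would apply the standard shift identity for elementary symmetric polynomials,
$$e_d(u_1-v,\ldots,u_r-v)=\sum_{s=0}^d\binom{r-s}{d-s}(-v)^{d-s}e_s(u_1,\ldots,u_r),$$
which follows at once from $\prod_i(1+t(u_i-v))=(1-tv)^r\prod_i(1+tu_i/(1-tv))$. Substituting $u_i=b^{(i)}(z)$ and $v=J(z)/r$ (and respecting the normal ordering throughout, which is legitimate because the fields $b^{(i)}$ commute at equal points inside $\pmb :\,\pmb :$) one gets
$$\pmb :\!e_d(h^{(1)},\ldots,h^{(r)})\!\pmb :=\sum_{s=0}^d\binom{r-s}{d-s}(-1/r)^{d-s}\sum_{i_1<\cdots<i_s}\pmb :\!J^{d-s}b^{(i_1)}\cdots b^{(i_s)}\!\pmb :;$$
multiplying by $-\kappa$ and using $(-1/r)^{d-s}=(-r)^{s-d}$ together with $\binom{r-s}{d-s}=\binom{r-s}{r-d}$ yields precisely the claimed expression. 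The hypothesis $d\neq 1$ simply excludes the case where the leading term $-\kappa\,\pmb :\!e_1(h)\!\pmb :=0$ is replaced by the auxiliary generator $J=\PW_1$.

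The main obstacle is the bookkeeping in the first step: one must check carefully that every term arising from the normal-ordered expansion of $-\kappa\,\pmb :\!\prod(Q\partial_z+h^{(i)})\!\pmb :$ that contains at least one $\partial_z^k h^{(i)}$ factor lands, after collection by powers of $Q\partial_z$, in strictly lower order in the filtration of $\Uen(W_k(\gen\len_1))^{\widehat\otimes r}$ than the $d$-fold undifferentiated product $\pmb :\!h^{(i_1)}\cdots h^{(i_d)}\!\pmb :$. This is a standard feature of the PBW-type filtration on the current algebra of the Heisenberg vertex algebra and can be verified directly, but it is where the real content sits; the $r=2$, $d=2$ case is a useful sanity check, as the derivative correction $\kappa Q\partial_z\rho(z)$ appearing in \eqref{W2} is manifestly of lower order than the Sugawara tail $\frac{\kappa}{2}\sum_i\pmb :\!b^{(i)}(z)^2\!\pmb :-\frac{\kappa}{2r}\pmb :\!J(z)^2\!\pmb :$, and this is exactly the $d=2$ specialisation of the formula we want.
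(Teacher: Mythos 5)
Your proof is correct and follows exactly the route the paper takes: observe that modulo lower terms in the order filtration the Miura formula gives $W_d(z)\equiv -\kappa\,\pmb:\!e_d(h^{(1)}(z),\ldots,h^{(r)}(z))\!\pmb:$, and then unwind $e_d$ via $h^{(i)}=b^{(i)}-J/r$. The paper states this in one line and calls the rest obvious, whereas you make the shift identity for elementary symmetric polynomials and the binomial/sign bookkeeping explicit — the content is the same.
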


\begin{proof}
Obvious because, modulo lower terms, we have
\begin{equation}
\aligned
W_d(z)
&\equiv-\kappa\sum_{i_1<i_2<\cdots<i_d}
\pmb :\!h^{(i_1)}(z)\,h^{(i_2)}(z)\dots h^{(i_d)}(z)\!\pmb :.
\endaligned
\end{equation}
\end{proof}

Since $\pi_\beta$ is a module over the vertex algebra $\pi_0$, 
it is also a module over
$W_k(\gen\len_r)$.
Let $\scrU(W_k(\gen\len_r))$ denote the image of
$\Uen(W_k(\gen\len_r))$ in $\End(\pi_\beta)$. 
This image may depend on the choice of $\beta$. 
We hope this will not create any confusion.
We have the following, see, e.g., \cite{Bi}.

\begin{prop}
\label{prop:pibeta} 
The representation of
$W_k(\gen\len_r)$ on $\pi_\beta$ is such that
$$\gathered
W_{d,0}|\beta\rangle=w_d(\beta)|\beta\rangle,
\qquad
W_{d,l}|\beta\rangle=0,
\qquad l\geqslant 1,\\
w_1(\beta)=\sum_{i=1}^r\langle b^{(i)},\beta\rangle,\qquad
w_d(\beta)=-\kappa\!\sum_{i_1<i_2<\cdots<i_d}\prod_{t=1}^d\Big(
\langle h^{(i_t)},\beta\rangle+(d-t)\,\xi/\kappa \Big),\qquad d\geqslant 2.
\endgathered
$$
\end{prop}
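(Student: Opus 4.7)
The plan is to transfer the problem into the Heisenberg Fock module $\pi_\beta$ via the Miura realization \eqref{miura} of $W_k(\gen\len_r)\subset\pi_0$, and to derive both assertions by elementary analysis of the bosonic action on $|\beta\rangle$.

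First, I would verify $W_{d,l}|\beta\rangle=0$ for $l\geqslant 1$. Since the fields $h^{(i)}(z)=b^{(i)}(z)$ mutually commute (as $[b^{(i)}_l,b^{(j)}_m]=l\delta^{ij}\delta_{l+m,0}/\kappa$), the Miura expression writes $W_d(z)$ as a normally ordered differential polynomial in the $h^{(i)}(z)$ of pure Heisenberg conformal weight $d$. The Heisenberg energy operator gives $\pi_\beta$ a $\Z$-grading bounded below by $|\beta\rangle$, so the mode $W_{d,l}$, which lowers this grading by $l$, must annihilate $|\beta\rangle$ whenever $l\geqslant 1$.

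For the zero-mode eigenvalue, I would apply both sides of \eqref{miura} to the test vector $z^\alpha|\beta\rangle$ with $\alpha$ a formal parameter, and compare coefficients of the leading singular power $z^{\alpha-r}$. By the first step, the right-hand side reduces at leading order to
\[
\sum_{d=0}^{r}Q^{r-d}(\alpha)_{r-d}\,W_{d,0}|\beta\rangle,
\]
where $(\alpha)_{r-d}=\alpha(\alpha-1)\cdots(\alpha-r+d+1)$ is the falling factorial. On the left-hand side, the splitting $h^{(i)}(z)|\beta\rangle=\lambda_iz^{-1}|\beta\rangle+(\text{regular in }z)|\beta\rangle$ with $\lambda_i=\langle h^{(i)},\beta\rangle$ shows that only the scalar zero-mode pieces $\lambda_iz^{-1}$ contribute at the leading singular coefficient, since the creation-mode terms $h^{(i)}_{-n}|\beta\rangle\,z^{n-1}$ produce strictly less singular powers. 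An iterative right-to-left application of the scalar-coefficient differential operators $Q\partial_z+\lambda_iz^{-1}$ to $z^\alpha$ yields
\[
-\kappa\prod_{i=1}^{r}\bigl(Q\alpha-Q(r-i)+\lambda_i\bigr)\,z^{\alpha-r}|\beta\rangle.
\]
Equating the two leading expressions gives a polynomial identity in $\alpha$; since $\{(\alpha)_{r-d}\}_{d=0}^{r}$ is a basis of polynomials of degree at most $r$, this uniquely determines each $W_{d,0}|\beta\rangle$. A combinatorial expansion of the product then identifies these eigenvalues with the stated $w_d(\beta)$, and the case $d=1$ is immediate from $W_1(z)=\sum_ib^{(i)}(z)$.

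The main technical point is the rigorous isolation of the leading singular term in the second step: one must argue that the creation-mode contributions $h^{(i)}_{-n}|\beta\rangle\,z^{n-1}$ and all OPE subtractions produced by the normal ordering remain strictly less singular than $z^{\alpha-r}$ after every subsequent application of an $X_j=Q\partial_z+h^{(j)}(z)$. This reduces to tracking how each $X_j$ shifts powers of $z$ by $-1$ on the singular piece but by $\geqslant 0$ on the regular pieces. Once this is in hand, the remaining combinatorial identity
\[
\prod_{i=1}^{r}\bigl(Q\alpha-Q(r-i)+\lambda_i\bigr)=\sum_{d=0}^{r}\Bigl(\sum_{i_1<\cdots<i_d}\prod_{t=1}^{d}(\lambda_{i_t}-(d-t)Q)\Bigr)Q^{r-d}(\alpha)_{r-d}
\]
is a routine verification, for example by downward induction on $d_0$ via the specializations $\alpha=r-d_0$, which produce a triangular system whose $d_0$-th equation isolates $w_{d_0}(\beta)$.
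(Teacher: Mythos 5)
The paper does not prove this Proposition: it quotes the result from the reference \cite{Bi}, so there is no in-text argument for you to match. Your proof is a correct reconstruction of the classical derivation of the Miura eigenvalue formula. The two steps are standard: (i) the conformal/Heisenberg grading on $\pi_\beta$ is bounded above (with $|\beta\rangle$ at the top), and $W_{d,l}$ has degree $-l$, forcing $W_{d,l}|\beta\rangle=0$ for $l\geqslant 1$; (ii) apply both sides of the Miura identity \eqref{miura} to the formal test element $z^\alpha|\beta\rangle$ and compare the coefficients of the most singular power $z^{\alpha-r}$. On the left, the coefficient of $z^{\alpha-r}$ necessarily lies in the degree-$0$ subspace of $\pi_\beta$, which is spanned by $|\beta\rangle$; and the $|\beta\rangle$-component of any normally ordered monomial $:\!h^{(i_1)}(z)^{(a_1)}\cdots h^{(i_s)}(z)^{(a_s)}\!:|\beta\rangle$ equals precisely the scalar expression obtained by replacing $h^{(i)}(z)$ with $\lambda_i z^{-1}$ (with $\lambda_i=\langle h^{(i)},\beta\rangle$), since the strictly positive annihilation modes kill $|\beta\rangle$ directly once normal ordering is in place and the creation modes land in positive degree. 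This is exactly your ``tracking'' claim, and it does close the technical gap you flag; you might state this degree argument explicitly rather than leaving it as ``one must argue''. The resulting polynomial identity in $\alpha$ then determines the $w_d(\beta)$ uniquely via the basis $\{(\alpha)_{r-d}\}_{d}$, and the triangularization via $\alpha=r-d_0$ is correct.

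One small but real imprecision: you write $h^{(i)}(z)=b^{(i)}(z)$. These are not equal. The $h^{(i)}$ are the weights of the first fundamental representation of $\sen\len_r$, which satisfy $\sum_i h^{(i)}=0$ and $\langle h^{(i)},h^{(j)}\rangle=\delta_{ij}-1/r$, while the $b^{(i)}$ form an orthonormal dual basis of $\hen^*$; concretely $h^{(i)}(z)=b^{(i)}(z)-\tfrac{1}{r}J(z)$. Your argument only uses that the $h^{(i)}(z)$ are linear combinations of commuting boson fields with scalar zero-mode action $h^{(i)}_0|\beta\rangle=\langle h^{(i)},\beta\rangle|\beta\rangle$, so the conclusion is unaffected, but the equality as written is wrong and could mislead a reader into thinking the Miura factors are independent. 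Finally, the $d=1$ case does not come from \eqref{miura} (which yields $W_1=0$ because $\sum_i h^{(i)}=0$); as you correctly note, $W_1(z)$ is redefined to be the $\gen\len_1$ current $J(z)=\sum_i b^{(i)}(z)$, and $w_1(\beta)$ is read off from its zero mode.
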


\vspace{.15in}

\subsection{The free field representation of $\SH^{(r)}_K$}
A \emph{composition} $\nu$ of $r$ is a tuple
$(\nu_1,\nu_2,\dots,\nu_d)$ of positive integers summing to $r$.
For each composition, we set
\begin{equation}
\label{8.30}
\gathered
\SH^\nu_K=\widehat{\bigotimes_{1\leqslant i\leqslant d}}\SH^{(\nu_i)}_{K_r},
\qquad
\Lb^\nu_K=\bigotimes_{1\leqslant i\leqslant d}\Lb^{(\nu_i)}_{K_r},\\
\SH^{(\nu_i)}_{K_r}=\SH^{(\nu_i)}_K\otimes_{K_{\nu_i}}\!K_r,
\qquad
\Lb^{(\nu_i)}_{K_r}=\Lb^{(\nu_i)}_K\otimes_{K_{\nu_i}}\!K_r.
\endgathered
\end{equation}
Here, the symbol $\bigotimes$ denotes the tensor product over $K_r$ and
$\widehat\bigotimes$ is the topological tensor product over $K_r$ as in 
Section \ref{sec:coproduct2}. For instance, for $d=2$, we have
\begin{equation}\gathered
\SH^\nu_K=\bigoplus_{s\in\Z}\SH^\nu_K[s],\qquad
\SH^\nu_K[s]=\varprojlim_N\Big(\bigoplus_{s_1+s_2=s}\bigotimes_{i=1,2}
\SH^{(\nu_i)}_{K_r}[s_i]\Big)\Big/\mathscr I_N[s],\\
\mathscr I_N[s]=\bigoplus_{s_2\geqslant N}\bigotimes_{i=1,2}
\SH^{(\nu_i)}_{K_r}[s_i].
\endgathered
\end{equation}
Taking only the terms in 
$\SH^{(\nu_i)}_{K_r}[s_i]$ or
$\SH^{(\nu_i)}_{K_r}[\leqslant\!l_i]$ in the definition of $\SH^\nu_K$, 
we get the subspaces
\begin{equation}
\SH^\nu_K[s_1,\dots,s_d],\qquad
\SH^\nu_K[\leqslant\!l_1,\dots,\leqslant\!l_d].
\end{equation}
For a future use, let us quote the following easy fact.

\begin{prop}
\label{8.5:prop1}
The map $\pmb\Delta^{d-1}$ factors 
to an algebra embedding
$\pmb\Delta^\nu:\SH^{(r)}_K\to\SH^\nu_K$
and
$$\gathered
\pmb\Delta^\nu\big(\SH^{(r)}_K[s]\big)\subset
\bigoplus_{s_1,\dots,s_d}\SH^\nu_K[s_1,\dots,s_d],\qquad
\pmb\Delta^\nu\big(\SH^{(r)}_K[\leqslant\!l]\big)\subset
\bigoplus_{l_1,\dots,l_d}\SH^\nu_K[\leqslant\!l_1,\dots,\leqslant\!l_d].
\endgathered$$
Here the sums run over all tuples summing
to $s$ and $l$ respectively.
\end{prop}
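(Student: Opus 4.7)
The plan is to construct $\pmb\Delta^\nu$ by descending the iterated coproduct of Theorem \ref{7.7:thm2}, verify the grading and filtration conditions on generators, and finally deduce injectivity by passing to the associated graded.

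For the construction, it suffices to handle the binary splitting $r=r_1+r_2$ and then iterate by coassociativity. Theorem \ref{7.7:thm2} gives $\pmb\Delta(\cb_l)=\cb_l\otimes 1+1\otimes\cb_l$. Composed with the specializations $\cb_l^{(1)}\mapsto p_l(\eps_1,\dots,\eps_{r_1})$ on the first tensor factor and $\cb_l^{(2)}\mapsto p_l(\eps_{r_1+1},\dots,\eps_r)$ on the second, this sends $\cb_l$ to $p_l(\eps_1,\dots,\eps_{r_1})+p_l(\eps_{r_1+1},\dots,\eps_r)=p_l(\eps_1,\dots,\eps_r)$, which is exactly the specialization defining $\SH^{(r)}_K$. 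Hence $\pmb\Delta$ descends to a $K_r$-algebra homomorphism $\pmb\Delta^{(r_1,r_2)}:\SH^{(r)}_K\to\SH^{(r_1)}_{K_r}\widehat\otimes_{K_r}\SH^{(r_2)}_{K_r}$; iterating using the coassociativity of $\pmb\Delta$ produces $\pmb\Delta^\nu$ for any composition $\nu$.

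For the grading and filtration, it is enough to check the claimed inclusions on an algebra generating set of $\SH^\cb$, for example $\{\cb_l,D_{\pm 1,0},D_{0,2}\}$ from Proposition \ref{prop:generators/involution2}. Each formula in Theorem \ref{7.7:thm2} on these generators is visibly rank-homogeneous. For the order filtration: $\cb_l$ and $D_{\pm 1,0}$ have order $0$ and coproducts of total order $0$, while $D_{0,2}$ has order $2$ and the extra terms $\xi\sum_l l\kappa^{1-l}D_{-l,0}\otimes D_{l,0}$ in $\pmb\Delta(D_{0,2})$ all have total order $0\leqslant 2$. Since $\pmb\Delta^\nu$ is an algebra morphism and multiplication on $\SH^\nu_K$ respects the rank grading and order filtration (Proposition \ref{prop:filtration-SHc}(c)), the inclusions propagate to all of $\SH^{(r)}_K$.

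For injectivity, I will pass to the associated graded for the order filtration. By Proposition \ref{prop:filtration-SHc}(a), $\overline{\SH^{(r)}_K}$ is a free polynomial $K_r$-algebra on the symbols $\bar D_{r',d'}$ with $(r',d')\in\Z\times\N\setminus\{(0,0)\}$, and similarly on each tensor factor of $\overline{\SH^\nu_K}$. Starting from Lemma \ref{lem:coproduct-D0n} and using the recursions \eqref{E:defdrl} together with the commutativity of the associated graded (which forces every commutator to lose at least one order), one shows inductively that, modulo monomials non-constant on at least two of the $d$ tensor factors,
$$\overline{\pmb\Delta^\nu}(\bar D_{r',d'})\equiv \sum_{i=1}^d 1^{\otimes(i-1)}\otimes\bar D_{r',d'}^{(i)}\otimes 1^{\otimes(d-i)}.$$
Applying the polynomial-algebra augmentation $\overline{\SH^\nu_K}\to\overline{\SH^{(\nu_1)}_{K_r}}$ that sends every generator of factors $2,\dots,d$ to $0$ then converts $\overline{\pmb\Delta^\nu}(\bar D_{r',d'})$ into $\bar D^{(1)}_{r',d'}$; since both source and image are free polynomial $K_r$-algebras on the same index set, this composition is an isomorphism, forcing $\overline{\pmb\Delta^\nu}$ and hence $\pmb\Delta^\nu$ to be injective. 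The delicate point requiring most care is verifying the displayed form of $\overline{\pmb\Delta^\nu}(\bar D_{r',d'})$: one must track the cross-terms produced by iterating the coproduct through the commutator recursion and show they never contribute at top order.
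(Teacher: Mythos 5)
Your proof takes a genuinely different route from the paper's. The paper's argument for injectivity is a one-liner: $\pmb\Delta$ admits a counit $\varepsilon$ (Theorem \ref{7.7:thm2}$(b)$, with $\varepsilon$ from Remark \ref{rem1.42}), so $\pmb\Delta^{d-1}$ has the left inverse $\varepsilon^{\otimes(d-1)}\otimes\mathrm{id}$ and is therefore injective, and the paper asserts that the induced map $\pmb\Delta^\nu$ on the specialization is ``again injective.'' You instead pass to the associated graded for the order filtration, use that $\overline{\SH^\cb}$ is a polynomial algebra on the symbols $\overline D_{r',d'}$ (Proposition \ref{prop:filtration-SHc}$(a)$), show that $\overline{\pmb\Delta^\nu}(\overline D_{r',d'})$ is a sum of factor-wise copies of $\overline D_{r',d'}$ modulo cross-terms, and then compose with the augmentation onto one tensor factor to exhibit an isomorphism. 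Since the order filtration on $\SH^{(r)}_K$ is exhaustive and separated, injectivity on the graded level does give injectivity of $\pmb\Delta^\nu$, so the overall logic is sound.

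The trade-off is worth noting. Your approach sidesteps a point the paper leaves implicit: the counit $\varepsilon$ sends $\cb_l\mapsto 0$, so it is not $F^\cb$-linear for the specialization $\cb_l\mapsto p_l(\vec\eps)$ and does not obviously factor through the tensor factors $\SH^{(\nu_i)}_{K_r}$; hence ``again injective'' is not a formal corollary of the counit splitting. Your associated-graded argument avoids that issue entirely. On the other hand, it shifts the burden onto the ``delicate point'' you flag but do not carry out: you must verify that $\pmb\Delta(D_{0,l})-\delta(D_{0,l})$ has order strictly less than $l-1$ (the order of $D_{0,l}$), then bootstrap via $D_{r',d'}=[D_{0,d'+1},D_{r',0}]$, using that $D_{r',0}$ has order $0$ and $\pmb\Delta(D_{r',0})=\delta(D_{r',0})$ exactly, so the error term is a commutator $[R,\delta(D_{r',0})]$ which cannot raise the order of $R$. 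This is a real computation (via the $E_l$ recursion and the explicit cross-term in $\pmb\Delta(D_{0,2})$) and should be written out for the argument to be complete; as it stands, the proposal leaves it as an acknowledged gap.
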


\begin{proof} 
Since the coproduct $\pmb\Delta$ admits a counit 
$\varepsilon$, the map $\pmb\Delta^{d-1}$
is an injection
\begin{equation}
\SH^\cb\to(\SH^\cb)^{\widehat\otimes d}.
\end{equation}
It factors to a $K_r$-algebra homomorphism
$\pmb\Delta^\nu:\SH^{(r)}_K\to\SH^\nu_K$ which is again injective.
\end{proof}

\vspace{.1in}

\begin{df} We define a representation $\rho^\nu$ of $\SH^{(r)}_K$ on
$\Lb^\nu_K$ by composing $\pmb\Delta^\nu$ 
with the representation of
$\SH^\nu_K$ on $\Lb^\nu_K$ in  \eqref{rhor}.
\end{df}

\begin{cor}
\label{cor:rhonu}
The representation $\rho^\nu$ is faithful.
\end{cor}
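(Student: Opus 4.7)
The plan is to prove a stronger statement, namely that $\SH^\nu_K$ itself acts faithfully on $\Lb^\nu_K$; the corollary then follows at once from Proposition \ref{8.5:prop1}, which provides the injectivity of $\pmb\Delta^\nu$. First I would observe that the action of $\SH^\nu_K$ on $\Lb^\nu_K$ is well-defined: for a pure tensor $\xb=\xb_1\otimes\cdots\otimes\xb_d$ with $\xb_i\in\Lb^{(\nu_i)}_{K_r}[n_i]$, a multidegree $(s_1,\dots,s_d)$ component of an element of $\SH^\nu_K[s]$ acts trivially unless $s_i\geqslant -n_i$ for all $i$, since $\Lb^{(\nu_i)}_{K_r}$ is graded in $\N$. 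Combined with the constraint $\sum_i s_i=s$, this forces only finitely many multi-indices to contribute on any given vector, so the formal series allowed by the topological completion act unambiguously.

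To establish faithfulness, I would suppose $u\in\SH^\nu_K[s]$ acts as zero. Each graded piece $\Lb^\nu_K[n]$ is a finite direct sum of finite-dimensional $K_r$-vector spaces indexed by the $d$-tuples $(n_1,\dots,n_d)$ summing to $n$. I would then examine matrix elements of $u$ between pure tensors of prescribed multidegree: fixing $(n_1,\dots,n_d)$ on the source side and $(n_1+s_1,\dots,n_d+s_d)$ on the target side, only the component of $u$ of multidegree exactly $(s_1,\dots,s_d)$ contributes to the resulting matrix element. Vanishing of all such matrix elements thus isolates each component $u_{s_1,\dots,s_d}$ of $u$ and shows that it acts as zero on the \emph{uncompleted} tensor product $\bigotimes_i\Lb^{(\nu_i)}_{K_r}$.

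The conclusion then follows from the standard fact that a tensor product of faithful algebra representations remains faithful. By Proposition \ref{prop:faithfulr} each $\rho^{(\nu_i)}$ embeds $\SH^{(\nu_i)}_{K_r}$ into $\End_{K_r}(\Lb^{(\nu_i)}_{K_r})$, and the canonical map $\End(M)\otimes\End(N)\hookrightarrow\End(M\otimes N)$ is injective for nonzero vector spaces; iterating in $d$ variables shows $u_{s_1,\dots,s_d}\in\bigotimes_i\SH^{(\nu_i)}_{K_r}[s_i]$ must vanish, so $u=0$.

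The main obstacle I anticipate is the careful bookkeeping around the iterated topological tensor product $\widehat{\bigotimes}$ in the definition of $\SH^\nu_K$. One must verify that, for a possibly infinite formal series $u$, the matrix-element argument genuinely extracts a single multi-index component, and that the iterated completion for $d>2$ behaves coherently under grouping. Both points reduce to the fact that $\Lb^{(\nu_i)}_{K_r}$ is non-negatively graded, so that its degree-$n_i$ piece is annihilated by $\SH^{(\nu_i)}_{K_r}[s_i]$ whenever $s_i<-n_i$; this tames the completion on every fixed homogeneous vector, after which the rest of the argument is formal.
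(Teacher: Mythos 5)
Your proposal is correct and follows the same route the paper intends with its terse citation of Proposition \ref{8.5:prop1} and Theorem \ref{thm:SH/Ur}: reduce to faithfulness of $\SH^\nu_K$ on $\Lb^\nu_K$ via the injectivity of $\pmb\Delta^\nu$, extract multidegree components using the $\N$-grading on each $\Lb^{(\nu_i)}_{K_r}$, and invoke faithfulness of each $\rho^{(\nu_i)}$ (Theorem \ref{thm:SH/Ur} / Proposition \ref{prop:faithfulr}) together with injectivity of $\End(M)\otimes\End(N)\to\End(M\otimes N)$. You have simply spelled out the bookkeeping that the paper leaves implicit.
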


\begin{proof} Use Proposition \ref{8.5:prop1} and 
Theorem \ref{thm:SH/Ur}.

\end{proof}

\vspace{.1in}

\begin{rem}
We will mostly be interested in the case $\nu=(1^r)$, where
we abbreviate $(1^r)=(1,1,\dots,1)$.
In this case,  we have
\begin{equation}
\Lb^{(1^r)}_K=(\Lb^{(1)}_{K_r})^{\otimes r}=
\Lb^{(1)}_K\otimes_K \Lb^{(1)}_{K}\otimes_K\cdots\otimes_K\Lb^{(1)}_{K},
\end{equation}
and the $K_r$-vector space structure  is given by 
\begin{equation}
\eps_i=1\otimes 1\otimes\cdots\otimes 1\otimes\eps_1\otimes 
1\otimes\cdots\otimes 1,\qquad i\in[1,r],
\end{equation}
where $\eps_1$ is at the $i$-th spot.
\end{rem}

\vspace{.15in}

\subsection{The degreewise completion of $\SH^{(r)}_K$}
We refer to \cite[sec.~1.1-1.4]{MNT} for the terminology concerning 
degreewise topological algebras.
The $K_r$-algebra $\SH^{(r)}_K$ carries 
a $\Z$-grading and an $\N$-filtration inherited from $\SH^{\cb}_K$, see Section~\ref{sec:SHc}.

\begin{df}\label{df:degreewise}
The \emph{standard degreewise topology} of $\SH^{(r)}_K$ is the 
\emph{degreewise topology}
defined by the sequence
\begin{equation}
\mathscr J_N=
\bigoplus_{s\in\Z}\mathscr J_N[s],\qquad
\mathscr J_N[s]=\sum_{t\geqslant N}\SH^{(r)}_K[t-s]\,\SH^{(r)}_K[-t].
\end{equation}
The \emph{standard degreewise completion}
of $\SH^{(r)}_K$ is the $\Z$-graded algebra given by
\begin{equation}
\gathered
\Uen(\SH^{(r)}_K)=\bigoplus_{s\in\Z}\Uen(\SH^{(r)}_K)[s],
\qquad
\Uen(\SH^{(r)}_K)[s]=\varprojlim_N\SH^{(r)}_K[s]/\mathscr J_N[s].
\endgathered
\end{equation}
The standard degreewise topologies on
$\Uen(\SH^{(r)}_K)$ is the projective limit degreewise topology.
\end{df}

The standard degreewise topologies on
$\SH^{(r)}_K$ and $\Uen(\SH^{(r)}_K)$ are linear.
They equip $\Uen(\SH^{(r)}_K)$ with the structure 
of a degreewise complete topological algebra
and the canonical map 
$\SH^{(r)}_K\to\Uen(\SH^{(r)}_K)$ 
is a morphism of degreewise topological algebras
with a \emph{degreewise dense} image.

\begin{df} A module $M$ over
$\SH^{(r)}_K$ or $\Uen(\SH^{(r)}_K)$ is \emph{admissible} if $M=\bigoplus_{s\in\Z}M[s]$
is $\Z$-graded and $M[s]=0$ for $s$ large enough. 
\end{df}

By an embedding of degreewise topological algebras we mean an 
injective morphism of degreewise topological algebras.
The following is an immediate consequence of Corollary
\ref{cor:rhonu}.

\begin{prop}\label{8.4:prop}
(a) The map $\rho^{(1^r)}$ is a faithful admissible representation 
of $\SH^{(r)}_K$ on $\Lb_K^{(1^r)}$ which extends to
an admissible representation of $\Uen(\SH^{(r)}_K)$.

(g) The canonical map $\SH^{(r)}_K\to\Uen(\SH^{(r)}_K)$ is an embedding 
of degreewise topological algebras.
\end{prop}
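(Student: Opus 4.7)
The plan is to deduce both assertions from Corollary \ref{cor:rhonu} together with the boundedness of the grading on $\Lb^{(1^r)}_K$. Faithfulness of $\rho^{(1^r)}$ is immediate from Corollary \ref{cor:rhonu}. Admissibility is equally direct: each tensor factor $\Lb^{(1)}_{K_r}=\bigoplus_{n\geqslant 0}\Lb^{(1)}_{n,K_r}$ is concentrated in a single graded component per integer $n$, and is supported in $n\geqslant 0$, so after the sign convention matching the rank grading on $\SH^\cb$ (under which $D_{1,l}$ is a lowering operator with respect to the module grading), each piece $\Lb^{(1)}_{K_r}[s]$ vanishes for $s$ outside a half-line. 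The tensor product description \eqref{8.30} then shows that $\Lb^{(1^r)}_K[s]=0$ for $s\gg 0$, which is admissibility.

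To extend $\rho^{(1^r)}$ to the degreewise completion $\Uen(\SH^{(r)}_K)$, I would check continuity of the action: for fixed $s\in\Z$ and fixed $v\in\Lb^{(1^r)}_K[s']$, one must show that $\mathscr J_N[s]\cdot v=0$ for all $N$ large enough. A typical element of $\mathscr J_N[s]$ is a sum of products $xy$ with $y\in\SH^{(r)}_K[-t]$, $x\in\SH^{(r)}_K[t-s]$, $t\geqslant N$. The intermediate element $y\cdot v$ lies in $\Lb^{(1^r)}_K[s'-t]$, and by admissibility this vanishes as soon as $t$ exceeds the bound determined by the support of $\Lb^{(1^r)}_K$ relative to $s'$. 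Hence $\mathscr J_N[s]\cdot v=0$ for $N\gg 0$, and the discrete $\SH^{(r)}_K$-action on $\Lb^{(1^r)}_K[\bullet]$ factors through the projective limit defining $\Uen(\SH^{(r)}_K)[s]$, giving the desired extension.

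For part (b), continuity of the canonical map $\SH^{(r)}_K\to\Uen(\SH^{(r)}_K)$ is tautological: each component map $\SH^{(r)}_K[s]\to\varprojlim_N\SH^{(r)}_K[s]/\mathscr J_N[s]$ is continuous by the very definition of the projective limit topology. Injectivity amounts to showing $\bigcap_N\mathscr J_N[s]=\{0\}$ for every $s\in\Z$. If $x$ belongs to this intersection, pick any $v\in\Lb^{(1^r)}_K$; by the vanishing argument of part (a), there exists $N=N(v)$ with $\mathscr J_N[s]\cdot v=0$, hence $xv=0$. Since this holds for all $v$, the faithfulness of $\rho^{(1^r)}$ forces $x=0$.

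The only delicate point is the vanishing $y\cdot v=0$ for $y\in\SH^{(r)}_K[-t]$ with $t\gg 0$ used repeatedly above. This hinges on matching the sign convention in the rank grading of $\SH^\cb$ (Section \ref{sec:SHc}) with the grading on $\Lb^{(1^r)}_K$ inherited from \eqref{8.30}, so that the raising/lowering dichotomy on $\SH^{(r)}_K$ is compatible with the one-sided support of $\Lb^{(1)}_{K_r}$. Once this compatibility is laid down (which only requires unpacking the grading on the correspondences defining $\rho^{(r)}$ in Section~\ref{sec:Ur}), the rest of the argument is a routine manipulation of projective limits and continuity.
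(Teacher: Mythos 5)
Your proposal is correct and takes the same approach as the paper, which simply declares Proposition~\ref{8.4:prop} an immediate consequence of Corollary~\ref{cor:rhonu}; you just fill in the routine details (admissibility from the one-sided grading on each tensor factor, continuity via the vanishing of $\mathscr J_N[s]\cdot v$, and injectivity of the canonical map from faithfulness of $\rho^{(1^r)}$). One internal sign slip worth fixing: under the convention you yourself set up (where $D_{1,l}$, of rank degree $1$, \emph{lowers} the module grading, so $\SH^{(r)}_K[a]$ acts with degree $-a$), an element $y\in\SH^{(r)}_K[-t]$ applied to $v\in\Lb^{(1^r)}_K[s']$ lands in $\Lb^{(1^r)}_K[s'+t]$, not $\Lb^{(1^r)}_K[s'-t]$; the desired vanishing for $t\gg 0$ then follows directly from the admissibility statement $\Lb^{(1^r)}_K[s]=0$ for $s\gg 0$, so the conclusion stands once the degree bookkeeping is made consistent.
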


\vspace{.05in}

\begin{rem}
If $M$ is admissible then the actions
\begin{equation}
\SH^{(r)}_K[s]\times M[s']\to M[s+s'],
\qquad \Uen(\SH^{(r)}_K)[s]\times M[s']\to M[s+s']
\end{equation}
are continuous with respect to the standard topology on
$\SH^{(r)}_K[s]$, $\Uen(\SH^{(r)}_K)[s]$
and the discrete topology on $M[s']$, $M[s+s']$.
\end{rem}

\vspace{.05in}

\begin{rem}
\label{rem:8.11}
The order filtration on $\SH^{(r)}_K$ induces a filtration on
$\Uen(\SH^{(r)}_K)$ called again the order filtration. 
By Proposition \ref{prop:filtration-SHc} it is determined by 
putting $D_{r,d}$ in degree $d$ for any $r,d$.
\end{rem}

\vspace{.15in}

\subsection{From $\SH^{(1)}_K$ to $W_k(\gen\len_1)$}
\label{sec:gl1}
In this section we set $\kb=K_1$ and $\kappa=k+1$. 
Recall that $W_k(\gen\len_1)$ 
is the vertex algebra 
$\pi_{0}$ associated with the Heisenberg algebra $\mathscr H^{(1)}$.
We'll abbreviate 
\begin{equation}
W_1(z)=b(z)=\sum_{l \in \Z} b_l z^{-l-1}.
\end{equation}
Thus $W_k(\gen\len_1)$ is spanned, as a vector space, by elements
\begin{equation}\label{E:gl11}
 b_{-l_1} \cdots b_{-l_t} |0\rangle,\qquad
l_i\geqslant 1,\qquad t\geqslant 0.
\end{equation}

\begin{df}
The subspace $W_k(\gen\len_1)[\preccurlyeq \!d]$ of 
\emph{standard order}
at most $d$ is the span of the elements in \eqref{E:gl11} with $t \leqslant d$. 
\end{df}

The \emph{standard filtration} on $W_k(\gen\len_1)$ should not be confused with the order filtration.
The associated graded of $W_k(\gen\len_1)$ 
with respect to the standard filtration is a commutative vertex 
algebra. The current algebra
$\Uen(W_k(\gen\len_1))$ has a standard filtration as well, 
for which the elements $b_l$ are of standard order $1$.
Now, we  consider the $\mathscr H^{(1)}$-module
\begin{equation}\pi^{(1)}=
\pi_{\beta},\qquad \beta=-\eps_1/\kappa.
\end{equation}
Recall that $\scrU(W_k(\gen\len_1))$ is the image of
$\Uen(W_k(\gen\len_1))$ in $\End(\pi^{(1)})$. 
By Proposition \ref{prop:heis} there is a unique isomorphism $K_1$-vector space 
$\Lb^{(1)}_K \to \pi^{(1)}$ such that
$[I_{\emptyset}]\mapsto |\beta \rangle$ 
which intertwines the operator $\rho^{(1)}(b_{-l})=\rho^{(1)}(y^{-l}D_{l,0})$ on $\Lb^{(1)}_K$ with the
operator $b_{-l}$ on $\pi^{(1)}$.
Following \eqref{Phi}, we identify $\pi^{(1)}$ 
with $\Lambda_{K_1}$ in the usual way.
This yields an isomorphism 
\begin{equation}\label{Phibis}
\Lb^{(1)}_K =\pi^{(1)}=\Lambda_{K_1}.
\end{equation}
Our next result describes the action of 
the element $H_l$ introduced in \eqref{bl}.

\vspace{.1in}

\begin{prop}
\label{prop:LBinfty} 
We have the following relation in $\End(\pi^{(1)})$ 
$$\rho^{(1)}(H_{l})=
\frac{\kappa}{2}\sum_{h\in\Z}\rho^{(1)}(\pmb:\!b_{l-h}b_h\pmb:),
\qquad l\in\Z.$$
\end{prop}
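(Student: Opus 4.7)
The plan is to verify $\rho^{(1)}(H_l)=L_l$ by direct calculation on $\pi^{(1)}\simeq\Lambda_{K_1}$, where I set $L_l=\tfrac{\kappa}{2}\sum_{h\in\Z}\rho^{(1)}(\pmb:\!b_{l-h}b_h\!\pmb:)$. Both sides are well-defined because $\pi^{(1)}$ is a positive-energy Fock module. The case $l=0$ will reduce to $l=\pm 1$ via the definition $H_0=[H_1,H_{-1}]/2$ combined with the Virasoro relation $[L_1,L_{-1}]=2L_0$ (an immediate consequence of the Sugawara identity $[L_l,b_k]=-kb_{l+k}$, itself a standard computation from $[b_l,b_k]=l\delta_{l+k,0}/\kappa$), and the case $l\leq -1$ is the mirror image of $l\geq 1$ obtained by swapping creation and annihilation modes. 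So I focus on $l\geq 1$.

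First I will put $H_l$ into a representation-independent form. Combining \eqref{bl} with the identity $D_{-l,1}=[D_{-l,0},D_{0,2}]$ from \eqref{E:defdrl} and with $b_l=(-x)^{-l}D_{-l,0}$ gives the unified formula
\begin{equation*}
H_l=[b_l,D_{0,2}]/l+(1-l)\xi b_l/2\qquad(\cb_0=1).
\end{equation*}
Under $\rho^{(1)}$, Remark~\ref{rem:3.6A} yields $\rho^{(1)}(D_{0,2})=\kappa\square-\eps_1 N$, where $N$ is the degree operator on $\Lambda_{K_1}$ and $\square$ is the Laplace--Beltrami operator expressed in the modes $b_k$ via the formula in part~$(d)$ of the Proposition in the Introduction. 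Using the Heisenberg bracket $[b_l,b_{-m}]=l\delta_{l,m}/\kappa$, the identity $[b_l,N]=lb_l$, and the specialization $b_0=E_1/\kappa=-\eps_1/\kappa$ (from $E_1=-\cb_1+\cb_0(\cb_0-1)\xi/2$ at $\cb_0=1,\cb_1=\eps_1$), a direct calculation of $[b_l,\kappa\square]/l$ then produces
\begin{equation*}
\rho^{(1)}(H_l)=\tfrac{\kappa}{2}\Bigl(\sum_{m=1}^{l-1}b_m b_{l-m}+2\sum_{k\geq 1}b_{-k}b_{l+k}\Bigr)-\eps_1 b_l,
\end{equation*}
the coefficient of $b_l$ collapsing to $-\eps_1$ precisely because $\xi=1-\kappa$.

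Next I will compute $L_l$ explicitly by splitting the sum $\sum_{h\in\Z}\pmb:\!b_{l-h}b_h\!\pmb:$ into four regions: the terms $h\in\{0,l\}$ contribute $2b_0 b_l=-2\eps_1 b_l/\kappa$; the range $1\leq h\leq l-1$ contributes $\sum_{m=1}^{l-1}b_m b_{l-m}$ (commuting annihilation modes); and the two ranges $h\geq l+1$ and $h\leq -1$ each contribute $\sum_{k\geq 1}b_{-k}b_{l+k}$ after normal ordering and reindexing. Multiplying by $\kappa/2$ and adding reproduces exactly the formula above for $\rho^{(1)}(H_l)$, completing the case $l\geq 1$. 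The case $l\leq -1$ is analogous, using the same unified formula $H_l=[b_l,D_{0,2}]/l+(1-|l|)\xi b_l/2$ (valid for $l\neq 0$, by $y^{-|l|}D_{|l|,1}=[D_{0,2},b_l]$).

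The main obstacle is the bookkeeping in assembling the coefficient of $b_l$. Three separate contributions appear---the shift $(1-|l|)\xi b_l/2$ built into the definition of $H_l$, the term $-\eps_1 N$ in $\rho^{(1)}(D_{0,2})$ (whose commutator with $b_l$ produces $-\eps_1 b_l$), and the linear contribution $\tfrac{1}{2}(l-1)(1-\kappa)b_l$ coming from the quadratic part of the Laplace--Beltrami operator---and they must combine to match the single term $2b_0b_l=-2\eps_1b_l/\kappa$ arising in the Sugawara sum. This cancellation hinges precisely on the identity $\xi=1-\kappa$ together with the explicit value $b_0=-\eps_1/\kappa$ dictated by the specialization $\cb_0=1,\cb_1=\eps_1$.
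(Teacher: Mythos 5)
Your proposal is correct and runs on the same rails as the paper's proof: both pivot on the free field formula \eqref{LB} for $\square$ (itself deduced from Stanley's computation) together with $\rho^{(1)}(D_{0,2})=\kappa\square-\eps_1\rho^{(1)}(D_{0,1})$ from Remark \ref{rem:3.6A}. The execution differs in two minor ways. First, the paper verifies the bracket $[H_k,b_l]=-lb_{l+k}$ and then invokes uniqueness of an operator with prescribed Heisenberg commutators and conformal degree, whereas you expand $\rho^{(1)}(H_l)$ and $\tfrac{\kappa}{2}\sum_h:\!b_{l-h}b_h\!:$ into the same explicit quadratic polynomial in the modes; the two checks involve essentially the same bookkeeping, so this is a stylistic choice. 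Second, for $l=0$ the paper writes $H_0=D_{0,1}+\kappa b_0^2/2$ and then separately derives $\rho^{(1)}(D_{0,1})=\kappa\sum_{l\geqslant 1}b_{-l}b_l$ from $[D_{0,1},D_{l,0}]=lD_{l,0}$ and $D_{0,1}([I_\emptyset])=0$, while you deduce the $l=0$ case from $l=\pm 1$ using $H_0=[H_1,H_{-1}]/2$ and $[L_1,L_{-1}]=2L_0$; your reduction is arguably cleaner since it recycles the already-established cases rather than running a separate calculation. Both ways are sound.
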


\begin{proof} 
To unburden the notation we omit the symbol $\rho^{(1)}$ everywhere.
We must prove that
\begin{equation}
H_{0}=\kappa\sum_{l\geqslant 1}b_{-l}b_l+\kappa b_0^2/2,
\qquad
H_{k}=\kappa\sum_{l\in\Z}b_{k-l}b_l/2,\qquad k\neq 0.
\end{equation}
Recall that $b_{-l}$ acts on $\Lambda_{K_1}$
by multiplication by $p_l$ and that
$b_{l}$ acts by the operator $\kappa^{-1}l\partial_{p_l}$.
Next, the computation in the proof of \cite[thm.~3.1]{Stanley} implies that
\begin{equation}
\square(p_\lambda)=\kappa^{-1}\xi n(\lambda')p_\lambda+
\frac{1}{2\kappa}\sum_{r\neq s}\lambda_r\lambda_s
p_{\lambda_r}^{-1}p_{\lambda_s}^{-1}
p_{\lambda_r+\lambda_s}
p_\lambda+
\frac{1}{2}\sum_r\sum_{j=1}^{\lambda_r-1}
\lambda_rp_{\lambda_r}^{-1}p_jp_{\lambda_r-j}p_\lambda.
\end{equation}
So we have the following formula 
\begin{equation}\label{LB}
\square=\xi
\sum_{l\geqslant 1}(l-1)b_{-l}b_l/2+ 
\kappa\sum_{l, k \geqslant 1}
\big(b_{-l-k}b_l b_k + b_{-l}b_{-k} b_{l+k}\big)/2.
\end{equation}
Now, Remark \ref{rem:3.6A} yields
\begin{equation}\label{LBinfty}
D_{0,2}+\eps_1D_{0,1}=\kappa\,\square.
\end{equation}
Further, a direct computation (left to the reader)
using \eqref{bl}, \eqref{LB} and \eqref{LBinfty} gives
\begin{equation}
[H_{k},b_l]=-l b_{l+k},\qquad 
[H_{-k},b_l]=-l b_{l-k},\qquad l\in\Z,\qquad k\geqslant 1.
\end{equation}
This implies the formula for $H_{k}$ and $k\neq 0$.
Next, a direct computation using \eqref{bl} yields
\begin{equation}
H_{0}=D_{0,1}+\kappa b_0^2/2.
\end{equation}
Further, by Lemma \ref{lem:D5} we have
$[D_{0,1},D_{l,0}]=lD_{l,0}$ and by
\eqref{3.6:rem} we have $D_{0,1}([I_\emptyset])=0$.
This yields the following formula for $D_{0,1}$, which implies the formula for $H_0$,
\begin{equation}\label{LB'}
D_{0,1}=\kappa\sum_{l\geqslant 1}b_{-l}b_l.
\end{equation}

\end{proof}

\vspace{.1in}

Equations \eqref{LB}, \eqref{LBinfty} and \eqref{LB'} give 
the expression for the action of $D_{0,1}$ and $D_{0,2}$ on 
$\pi^{(1)}$. 
Since $\SH^{(1)}_K$ is generated by $\{D_{0,2},\,b_l\;;\; l \in \Z\},$ 
the proof above also gives the following.

\vspace{.1in}

\begin{prop}\label{prop:8.6:rank1} There is an embedding 
\begin{equation}\label{8.49}
\Theta~:\SH^{(1)}_K\to\scrU(W_k(\gen\len_1)),\quad b_l\mapsto b_l,
\end{equation}
which intertwines the representations of $\SH^{(1)}_K$ 
and $\scrU(W_k(\gen\len_1))$ on $\pi^{(1)}$.
\end{prop}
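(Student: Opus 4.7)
The plan is to construct $\Theta$ simply as the restriction of $\rho^{(1)}$, after verifying that its image lands inside $\scrU(W_k(\gen\len_1))\subset\End(\pi^{(1)})$. The faithfulness of $\rho^{(1)}$ (Proposition \ref{prop:faithfulr} at $r=1$, combined with the identification $\Lb^{(1)}_K=\pi^{(1)}=\Lambda_{K_1}$) will then upgrade this restriction to an injective algebra morphism $\Theta:\SH^{(1)}_K\to\scrU(W_k(\gen\len_1))$, and the intertwining property will be automatic by construction.

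First I would make precise the generation statement invoked in the paragraph preceding the proposition. By Proposition \ref{prop:generators/involution2}, $\SH^{\cb}$ is generated over $F^{\cb}$ by $D_{\pm 1,0}$ and $D_{0,2}$. Specializing to $\SH^{(1)}_K$ and using \eqref{bl}, the elements $D_{\mp 1,0}$ are scalar multiples of $b_{\pm 1}$, while the remaining $b_{\pm l}$ for $l\geqslant 2$ arise from iterated commutators via \eqref{1.8:Dl0}, and $b_0=E_1/\kappa$. Hence $\SH^{(1)}_K$ is generated as a $K_1$-algebra by $D_{0,2}$ together with $\{b_l\,;\,l\in\Z\}$.

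Next I would check that the images of these generators under $\rho^{(1)}$ lie in $\scrU(W_k(\gen\len_1))$. The identification $\Lb^{(1)}_K=\pi^{(1)}$ is defined precisely so that $\rho^{(1)}(b_l)=b_l$ for every $l\in\Z$, so these generators map to elements of $\scrU(W_k(\gen\len_1))$ by definition. For $D_{0,2}$, Remark \ref{rem:3.6A} yields $\rho^{(1)}(D_{0,2})=\kappa\square-\eps_1\rho^{(1)}(D_{0,1})$; formulas \eqref{LB} and \eqref{LB'} from the proof of Proposition \ref{prop:LBinfty} give explicit normally ordered expressions of $\square$ and $D_{0,1}$ in the Heisenberg generators $\{b_l\}$. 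The resulting infinite sum is well defined in the degreewise-complete topological algebra $\scrU(W_k(\gen\len_1))$ because, as an operator on the admissible module $\pi^{(1)}$, only finitely many terms act nontrivially on each graded component. Therefore $\rho^{(1)}(D_{0,2})\in\scrU(W_k(\gen\len_1))$ as well.

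Since $\scrU(W_k(\gen\len_1))$ is closed under multiplication and contains the image of every generator of $\SH^{(1)}_K$, the whole image $\rho^{(1)}(\SH^{(1)}_K)$ lies inside $\scrU(W_k(\gen\len_1))$. Combined with the faithfulness of $\rho^{(1)}$, this produces the injective algebra morphism $\Theta:\SH^{(1)}_K\hookrightarrow\scrU(W_k(\gen\len_1))$ with $\Theta(b_l)=b_l$, and the intertwining property $\rho^{(1)}=\Theta$ (as maps into $\End(\pi^{(1)})$) holds tautologically. The only real subtlety — and the only thing that required genuine work — was already carried out in the proof of Proposition \ref{prop:LBinfty}: expressing the Calogero–Sutherland operator $D_{0,2}$ explicitly as a normally ordered quadratic polynomial in the bosonic modes $b_l$. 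Everything else in the argument is formal.
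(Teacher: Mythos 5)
Your proposal is correct and follows essentially the same route as the paper: both arguments note that $\SH^{(1)}_K$ is generated by $\{D_{0,2},\,b_l\}$, use \eqref{LB}, \eqref{LBinfty}, \eqref{LB'} to exhibit $\rho^{(1)}(D_{0,2})$ as a normally ordered expression in the Heisenberg modes, and conclude via faithfulness of $\rho^{(1)}$. Your extra remarks on the degreewise-complete topology and the derivation of the generation statement from Proposition \ref{prop:generators/involution2} merely spell out steps the paper leaves implicit.
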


\vspace{.1in}

Thanks to Proposition \ref{prop:8.6:rank1}, we may speak of the standard order of
an element of $\SH^{(1)}_K$.

\begin{prop}\label{prop:leading_term_D_0n} 
For $d\geqslant 1$ we have
\begin{equation}\label{symbol}
\rho^{(1)}(D_{0,d})\equiv \frac{\kappa^d}{d(d+1)}
\sum_{l_0,\dots,l_d}\rho^{(1)}(:\!b_{l_0}b_{l_1}\dots b_{l_d}\!:).
\end{equation}
The sum runs over all tuples of integers with sum $0$.
The symbol $\equiv$ means that the equality holds 
modulo the action of terms of standard order $\preccurlyeq d-1$.
\end{prop}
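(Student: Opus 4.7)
The plan is to argue by induction on $d$. The base cases $d = 1, 2$ follow by direct verification. For $d = 1$, formula \eqref{LB'} gives $\rho^{(1)}(D_{0,1}) = \kappa \sum_{l \geqslant 1} b_{-l} b_l$, which coincides with the right-hand side $\frac{\kappa}{2} \sum_{l + m = 0} \pmb :\! b_l b_m \!\pmb :$ modulo the scalar $\frac{\kappa}{2} b_0^2 = \eps_1^2/(2\kappa)$, which is of standard order $0$. For $d = 2$, Remark \ref{rem:3.6A} combined with \eqref{LBinfty} and \eqref{LB} writes $\rho^{(1)}(D_{0,2}) = \kappa\,\square - \eps_1\rho^{(1)}(D_{0,1})$; a careful combinatorial count of ordered triples $(l_0, l_1, l_2)$ summing to $0$---separating those with a zero entry (which produce $b_0$-scalar corrections of standard order $\preccurlyeq 1$) from those without---identifies the top standard-order part of both sides with $\frac{\kappa^2}{6} \sum_{l_0 + l_1 + l_2 = 0}\pmb :\! b_{l_0} b_{l_1} b_{l_2}\!\pmb :$.

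For the inductive step, let $\widetilde{\mathcal D}_{d+1}$ denote the right-hand side of \eqref{symbol} for $d + 1$, and assume the formula for all $d' \leqslant d$. The key input is the iterated commutator identity $D_{1,d} = \ad(D_{0,2})^d(D_{1,0})$, obtained by repeated application of \eqref{E:rel1bis2}, together with $D_{1,d} = [D_{0, d+1}, D_{1,0}]$ from \eqref{E:defD1l}. Since $D_{1,0} = y b_{-1}$ and by the induction hypothesis $D_{0,2} \equiv \widetilde{\mathcal D}_2$ modulo standard order $\preccurlyeq 1$, the standard-order-leading part of $\rho^{(1)}(D_{1,d})$ equals $y \cdot \ad(\widetilde{\mathcal D}_2)^d(b_{-1})$. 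Using the Heisenberg bracket $[b_h, b_l] = h\,\delta_{h + l, 0}/\kappa$ from Proposition \ref{prop:heis} (with $\cb_0 = 1$), a recursive calculation---whose key combinatorial step is that averaging the asymmetric coefficient appearing at each stage over permutations of ordered $(d+1)$-tuples summing to $-1$ produces a uniform coefficient---yields
\begin{equation*}
\ad(\widetilde{\mathcal D}_2)^d(b_{-1}) \;\equiv\; \frac{\kappa^d}{d + 1}\sum_{m_0 + \cdots + m_d = -1}\pmb :\! b_{m_0} \cdots b_{m_d}\!\pmb :
\end{equation*}
modulo lower standard order. Independently, a direct computation using $[b_l, b_{-1}] = \delta_{l, 1}/\kappa$ and the Leibniz rule shows $[\widetilde{\mathcal D}_{d+1}, b_{-1}]$ equals the same right-hand side.

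Matching these gives $[\rho^{(1)}(D_{0, d+1}) - \widetilde{\mathcal D}_{d+1}, b_{-1}]$ is of standard order $\preccurlyeq d - 1$. An analogous argument using $D_{-1, 0} = -x b_1$ and \eqref{E:rel2bis2} yields the same statement for $b_1$, and iterated commutation with $D_{0, 2}$ extends this to all $b_l$ with $l \neq 0$. Hence $\rho^{(1)}(D_{0, d+1}) - \widetilde{\mathcal D}_{d+1}$ commutes with the full Heisenberg subalgebra modulo standard order $\preccurlyeq d - 1$. Since $\mathscr H^{(1)}$ acts irreducibly on $\pi^{(1)}$, its commutant in the appropriate completion of $\End(\pi^{(1)})$ reduces to scalars, so the difference is a scalar modulo that order; evaluating on $|\beta\rangle = [I_\emptyset]$ and using $\rho^{(1)}(D_{0, d+1})|\beta\rangle = 0$ from \eqref{3.6:rem} pins this scalar down. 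The main obstacle is the recursive computation of $\ad(\widetilde{\mathcal D}_2)^d(b_{-1})$: the precise normalization $\frac{\kappa^d}{d(d+1)}$ emerges only after delicate combinatorial symmetrizations of the coefficients produced by the iterated Heisenberg brackets at each step.
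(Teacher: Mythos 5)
Your strategy is the same as the paper's: probe the difference $u = \rho^{(1)}(D_{0,d+1}) - \widetilde{\mathcal D}_{d+1}$ by commuting with Heisenberg generators, then conclude a standard-order bound. The write-up, however, has two genuine gaps.

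First, the combinatorial claim that $\ad(\widetilde{\mathcal D}_2)^d(b_{-1})$ equals $\frac{\kappa^d}{d+1}\sum_{m_0+\cdots+m_d=-1}:\!b_{m_0}\cdots b_{m_d}\!:$ modulo lower standard order is asserted but not proved. The phrase ``a recursive calculation \ldots\ produces a uniform coefficient'' is not a proof; each application of $\ad(\widetilde{\mathcal D}_2)$ contributes a single contraction whose weight depends on the index produced at that stage, and verifying that the sum over contraction histories symmetrizes to the claimed uniform coefficient is precisely where the normalization $\kappa^d/(d(d+1))$ comes from. The paper sidesteps this by proving the \emph{exact} iterated-commutator formula $b_{m_1,\dots,m_d}(D_{0,d}) = (d-1)!\,(m_1\cdots m_d)\,b_m$ by induction in $d$, applying $\ad(D_{1,1})$ to pass from $D_{0,d}$ to $D_{1,d}$ and then using $E_{d+2}=[D_{-1,2},D_{1,d}]$ together with \eqref{leadingterm} to descend back to $D_{0,d+1}$; the numerical factors then fall out mechanically and the induction closes in one pass.

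Second, the step ``iterated commutation with $D_{0,2}$ extends this to all $b_l$'' does not go through as stated. To reduce $[u, b_{-2}]$ to $[u, b_{-1}]$ by Jacobi one must control $[u, D_{1,1}]$; but $[D_{0,d+1}, D_{1,1}] = D_{1,d+1}$ has standard order $d+2$, as does $[\widetilde{\mathcal D}_{d+1}, D_{1,1}]$, so you would need to know these agree at the top two orders --- a claim of exactly the strength you are trying to establish. The bound on $[u, b_l]$ is genuinely needed for \emph{every} $l$ before the counting argument can be applied, and this is what the paper's exact formula supplies for free, since the $m_i$ there are arbitrary integers.

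Finally, the appeal to irreducibility of $\pi^{(1)}$ under $\mathscr H^{(1)}$ and its commutant is both unnecessary and imprecise (the element $u$ does not commute with the Heisenberg, only modulo lower order). What is actually needed is the elementary counting statement: if $b_{m_1,\dots,m_d}(u)=0$ for all tuples $(m_1,\dots,m_d)$ then $u$ has standard order $\preccurlyeq d-1$. The paper states and proves this as a lemma at the end of its proof by directly inspecting the coefficients of $u$ in the basis of normal-ordered monomials; no Schur-type argument or completion is required.
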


\begin{proof}
To unburden the notation we omit the 
symbol $\rho^{(1)}$ everywhere.
Further, for any integers $m_1, \ldots, m_d$ we abbreviate
\begin{equation}
b_{m_1,\dots,m_d}=\ad(b_{m_1})\circ\ad(b_{m_2})\circ\cdots\circ\ad(b_{m_d}).
\end{equation}
Recall that $:\!b_{m_1}\cdots b_{m_d}\!:$ is the monomial obtained from
$b_{m_1}\cdots b_{m_d}$ by moving all $b_{m_i}$, $m_i<0$, to the left of all 
$b_{m_j}$ with $m_j\geqslant 0$.
First, we prove that for any  $m_1, \ldots, m_d$ we have 
\begin{equation}\label{proofleadingterm}
b_{m_1,\dots,m_d}(D_{0,d})=
(d-1)! \,(m_1m_2\dots m_d)\,b_m,\qquad
m=m_1+\cdots+m_d.
\end{equation}
We proceed by induction on $d$. 
Note that  \eqref{LB}-\eqref{LB'} imply that
\begin{equation}
D_{0,1}\equiv\frac{\kappa}{2}\sum_{l_0, l_1}:\!b_{l_0}b_{l_1}\!:\,,\qquad
D_{0,2}\equiv\frac{\kappa^2}{6}\sum_{l_0, l_1, l_2}:\!b_{l_0}b_{l_1}b_{l_2}\!:\,,
\end{equation}
where the $l_i$'s are integers which sum to 0.
This implies the claim for $d=1,2$.
Assume that \eqref{proofleadingterm} 
is proved for $d$. Applying $\ad(D_{1,1})$ 
to \eqref{proofleadingterm}, the formula \eqref{heis2} gives
\begin{equation*}
\begin{split}
(d-1)!(m_1m_2\cdots m_d) m b_{m -1}
&=b_{m_1,\dots,m_d}(D_{1,d})
+ \sum_i m_i 
b_{m_1,\dots,m_i-1,\dots,m_d}
(D_{0,d})\\
&=b_{m_1,\dots,m_d} (D_{1,d})
+ (d-1)!\sum_i (m_i-1) (m_1m_2\cdots m_d)  b_{m -1}.
\end{split}
\end{equation*}
This implies the formula
\begin{equation}\label{E:prooflead2}
b_{m_1,\dots,m_d}(D_{1,d})=
d! (m_1m_2\cdots m_d)b_{m-1}.
\end{equation}
Similarly, we have
\begin{equation}\label{E:prooflead3}
b_{m_1,\dots,m_d}(D_{-1,d})=
\kappa d! (m_1m_2\cdots m_d)b_{m+1}.
\end{equation}
Next, we compute 
\begin{equation}\label{E:prooflead4}
\begin{split}
b_{m_1,\dots,m_{d+1}} (E_{d+2})&= 
b_{m_1,\dots,m_{d+1}} ([D_{-1,2}, D_{1,d}])\\
&=\sum_{i<j} \big[b_{m_i,m_j}(D_{-1,2}), 
b_{m_1,\dots,\widehat m_i,\dots,\widehat m_j,\dots ,m_{d+1}}(D_{1,d})\big]+\\
&\qquad+\sum_i\big[b_{m_i}(D_{-1,2}),
b_{m_1,\dots,\widehat m_i,\dots,m_{d+1}}(D_{1,d})\big],
\end{split}
\end{equation}
where the symbol $\widehat m_i$ means that the index $m_i$ is omitted.
Write $m^+=m+m_{d+1}$.
The first sum on the right hand side of \eqref{E:prooflead4} is equal to
\begin{equation*}
\begin{aligned}
\sum_{i<j} 2\kappa l_il_j
b_{m_i+m_j+1,m_1,\dots,\widehat m_i,\dots,\widehat m_j,\dots ,m_{d+1}}(D_{1,d})
&=
2\kappa d!
(m_1\cdots m_{d+1})
\sum_{i<j}(m_i+m_j+1)
b_{m^+}\\
&=2\kappa d!(m_1\cdots m_{d+1})\big(dm^++d(d+1)/2\big)
b_{m^+}
\end{aligned}
\end{equation*}
while the second sum evaluates to
\begin{equation*}
\begin{aligned}
-d!\sum_i
(m_1\cdots\widehat m_i\cdots m_{d+1})
b_{m^+-m_i-1,m_i}(D_{-1,2})
&=
-2\kappa d!(m_1\cdots m_{d+1})
\sum_i(m^+-m_i-1)\,b_{m^+}\\
&=
2\kappa d!(m_1\cdots m_{d+1})\big(-dm^++(d+1)\big)b_{m^+}.
\end{aligned}
\end{equation*}
We obtain
\begin{equation}
b_{m_1,\dots,m_{d+1}}(E_{d+2})=
\kappa(d+2)!(m_1\cdots m_{d+1})b_{m^+}.
\end{equation}
By \eqref{leadingterm} we have 
$E_{d+2}=\kappa(d+2)(d+1) D_{0,d+1} +u$ where $u$ is a polynomial in 
$D_{0,1}, \ldots, D_{0,d}$ of order $\leqslant d$. 
Thus, we have
\begin{equation}
b_{m_1,\dots,m_{d+1}}(E_{d+2})=
\kappa(d+2)(d+1)b_{m_1,\dots,m_{d+1}} (D_{0,d+1}).
\end{equation}
From this we finally
deduce relation \eqref{proofleadingterm} for $d+1$. We are done.

Now, relation \eqref{symbol} follows from
\eqref{proofleadingterm}.
Indeed, given integers $l_0,l_1,\dots,l_d$ we have
\begin{equation}
\ad(b_m)(b_{l_0}b_{l_1}\dots b_{l_d})=
\frac{m}{\kappa}\sum_{l_i=-m}
b_{l_0}\dots b_{l_{i-1}}b_{l_{i+1}}\dots b_{l_d},
\end{equation}
where the sum is over all $i$'s with $l_i=-m$. 
Thus, if $l_0,l_1, \ldots, l_d$ 
sum to $0$ we have
\begin{equation}
\gathered
b_{m_1,\dots,m_d}
(:\!b_{l_0}b_{l_1}\dots b_{l_d}\!:)=c\, b_m,
\endgathered
\end{equation}
for some constant $c$ which is zero unless
$l_0, l_1,\dots l_d$ are equal to
$m,-m_1, -m_2,\dots, -m_d$, up to a permutation,
and which, in this case, is equal to
$(m_1\cdots m_d)/\kappa^d$ times the number 
$c_{l_0,\dots,l_d}$ of permutations $\sigma$ of
$\{0,1,\dots,d\}$ such that $l_{\sigma(0)}=m$ and
$l_{\sigma(s)}=-m_s$ for $s=1,2,\dots,d$.
In other words, if  $l_0,l_1, \ldots, l_d$
are equal to $m,-m_1, -m_2,\dots, -m_d$ up to a permutation, then we have
\begin{equation}
b_{m_1,\dots,m_d}
\Big(D_{0,d}-\frac{\kappa^d(d-1)!}{c_{l_0,\dots,l_d}}\, :\!b_{l_0}b_{l_1}\dots b_{l_d}\!:\Big)=0.
\end{equation}
Therefore, for any integers $m_1,m_2,\dots,m_d$ we have
\begin{equation}
b_{m_1,\dots,m_d}
\Big(D_{0,d}-\frac{\kappa^d}{d(d+1)}\sum_{l_0,\dots,l_d}:\!b_{l_0}b_{l_1}\dots 
b_{l_d}\!:\Big)=0.
\end{equation}
The sum runs over all tuples of integers 
summing to $0$.
To conclude, we use the following lemma.

\begin{lem} Let $u \in \Uen(W_k(\gen\len_1))$ be annihilated by
$b_{m_1,\dots,m_d}$ for any integers $m_1, \ldots, m_d.$
Then $u$ is of standard order $\preccurlyeq d-1$.
\end{lem}

\begin{proof} We may express $u$ as an infinite sum 
\begin{equation}
u=\sum_{s\geqslant 0}\sum_{l_1,\ldots,l_s} a_{l_1, \ldots, l_s} :\!b_{l_1} 
\cdots b_{l_s}\!:.
\end{equation}
Now observe that
\begin{equation}
\gathered
s<t\ \Rightarrow\  b_{m_1,\dots,m_t}(:\!b_{l_1} \cdots b_{l_s}\!: )=0,\qquad
b_{m_1, \cdots ,m_s}(:\!b_{l_1} \cdots b_{l_s}\!:)
=c_{m_1, \ldots, m_s},
\endgathered
\end{equation}
where $c_{m_1, \ldots, m_s}\neq 0$ if and only if
$l_1,\dots l_s$ are equal, up to a permutation, to $-m_1,\dots,-m_s$.
The lemma follows easily.
\end{proof}
\end{proof}

\vspace{.15in}

\subsection{From $\SH^{(2)}_K$ to $W_k(\gen\len_2)$}
We are interested in higher rank analogues of the inclusion \eqref{8.49}.
In this section we deal with the case $r=2$. 
We set $\kb=K_2$ and $\kappa=k+2$. 
We write
\begin{equation}\label{beta_r=2}
\pi^{(1^2)}=\pi_{\beta},\qquad
\langle b^{(i)},\beta\rangle=-\eps_i/\kappa+(i-1)\xi/\kappa, \qquad i=1,2.
\end{equation}
Recall that $\scrU(W_k(\gen\len_2))$ is the image of
$\Uen(W_k(\gen\len_2))$ in $\End(\pi^{(1^2)})$. 
The isomorphism  \eqref{Phibis}
yields an isomorphism $\Lb_K^{(1^2)}=\Lambda^{\otimes 2}_{K_2}$.
Composing it with the isomorphism 
$\Lambda^{\otimes 2}_{K_2}=\pi^{(1^2)}$ such that
$1\otimes 1\mapsto|\beta\rangle$ which intertwines the operators
$b_{-l} \otimes 1$, $1 \otimes b_{-l}$ on $\Lambda^{\otimes 2}_{K_2}$ with the 
operators $b^{(1)}_{-l}$, $b^{(2)}_{-l}$  on $\pi^{(1^2)}$, we get an isomorphism
\begin{equation}\label{8.66}
\Lb_K^{(1^2)}=\pi^{(1^2)}=\Lambda^{\otimes 2}_{K_2}
\end{equation}
which identifies  $[I_\emptyset]^{\otimes 2}$, $|\beta\rangle$ and $1^{\otimes 2}$.
Using \eqref{8.66} together with Propositions \ref{prop:pibeta} and 
\ref{8.4:prop} we get inclusions of
$\scrU(W_k(\gen\len_2))$ and $\SH^{(2)}_K$ into $\End(\pi^{(1^2)}).$

\begin{prop}\label{8.6:prop}
The representation $\rho^{(1^2)}$ 
yields an embedding of degreewise topological $K_2$-algebras
$\Theta~:\SH^{(2)}_K\to\scrU(W_k(\gen\len_2))$.
\end{prop}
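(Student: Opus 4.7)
The plan is to reduce the statement to a check on a small set of generators of $\SH^{(2)}_K$, carry out the verification via explicit computation using the coproduct, and then deduce the topological assertions. By Proposition \ref{prop:generators/involution2}, $\SH^{(2)}_K$ is generated over $K_2$ by $D_{1,0}$, $D_{-1,0}$ and $D_{0,2}$, since the generators $\cb_0, \cb_1$ specialize to the scalars $\eps_1+\eps_2$ and $\eps_1^2+\eps_2^2$ in $K_2$. Once the image of each of these three elements under $\rho^{(1^2)}$ is shown to lie in $\scrU(W_k(\gen\len_2))\subset\End(\pi^{(1^2)})$, the algebra map $\Theta$ exists, injectivity follows from the faithfulness of $\rho^{(1^2)}$ (Proposition \ref{8.4:prop}(a)), and continuity with respect to the standard degreewise topologies is a compatibility check for the order filtrations (Remark \ref{rem:8.11} and Section \ref{sec:8.2}).

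For $D_{\pm 1,0}$ the verification is immediate. Theorem \ref{7.7:thm2} gives $\pmb\Delta(D_{\pm 1,0}) = \delta(D_{\pm 1,0})$, and the $r=1$ relations $D_{1,0}=yb_{-1}$, $D_{-1,0}=-xb_1$ from \eqref{bl} combine to yield
\[
\rho^{(1^2)}(D_{1,0}) = y\bigl(b^{(1)}_{-1}+b^{(2)}_{-1}\bigr) = y\,W_{1,-1},\qquad
\rho^{(1^2)}(D_{-1,0}) = -x\bigl(b^{(1)}_{1}+b^{(2)}_{1}\bigr) = -x\,W_{1,1},
\]
since $W_1(z)=J(z)=b^{(1)}(z)+b^{(2)}(z)$ by the conventions of Section \ref{sec:miura}. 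The main step is the analysis of $D_{0,2}$. Using the coproduct formula $\pmb\Delta(D_{0,2}) = \delta(D_{0,2}) + \xi\sum_{l\geqslant 1}l\kappa^{1-l}D_{-l,0}\otimes D_{l,0}$ together with $\rho^{(1)}(D_{l,0})=y^l b_{-l}$, $\rho^{(1)}(D_{-l,0})=(-x)^l b_l$ and the expression for $\rho^{(1)}(D_{0,2})$ obtained from \eqref{LB}-\eqref{LB'} and Remark \ref{rem:3.6A}, I plan to write
\[
\rho^{(1^2)}(D_{0,2}) \;=\; \sum_{i=1,2}\bigl(\kappa\,\square^{(i)}-\eps_i\,D_{0,1}^{(i)}\bigr) \;+\; \xi\kappa\sum_{l\geqslant 1}l\,x^{2l}\,b^{(1)}_l\,b^{(2)}_{-l},
\]
where $\square^{(i)}$ acts as the Laplace--Beltrami operator on the $i$-th tensor factor. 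One then extracts the zero mode $W_{2,0}$ from the Miura formula \eqref{W2} for $r=2$ and compares: the diagonal piece $\tfrac{\kappa}{2}\sum_i\pmb :\!b^{(i)}(z)^2\!\pmb :$ of $W_2(z)$ accounts for the two copies of $\rho^{(1)}(D_{0,2})$, with the linear correction $\sum_i\eps_i D_{0,1}^{(i)}$ matched by the background-charge contribution $\kappa Q\,\partial_z\rho(z) = -\xi\,\partial_z\rho(z)$; the off-diagonal piece $-\tfrac{\kappa}{4}\pmb :\!J(z)^2\!\pmb :$ produces precisely the cross term $\xi\kappa\sum_l lx^{2l}b^{(1)}_l b^{(2)}_{-l}$ modulo polynomials in the modes $W_{1,l}$. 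Consequently $\rho^{(1^2)}(D_{0,2})$ equals $W_{2,0}$ modulo a multiple of $W_{1,0}$, a quadratic in the $W_{1,l}$, and scalars in $K_2$, all of which already belong to $\scrU(W_k(\gen\len_2))$.

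Continuity of $\Theta$ is then easy: $D_{0,2}$ and $W_{2,0}$ both have order $1$ by Proposition \ref{prop:filtration-SHc} and by the discussion of the order filtration of $\Uen(W_k(\gen\len_r))$ in Section \ref{sec:8.2}, and the $\Z$-gradings match. The main obstacle is exactly the identification of $\rho^{(1^2)}(D_{0,2})$ with $W_{2,0}$ modulo modes of $W_1$ and scalars: this requires careful bookkeeping of normal-orderings, of the Heisenberg central charge $\kappa^{-1}$ on each bosonic factor, and of the precise matching of the highest-weight shifts \eqref{beta_r=2} against the linear corrections produced by the background charge $Q=-\xi/\kappa$ in the Miura transform.
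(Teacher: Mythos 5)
The reduction to the generators $D_{\pm 1,0}$, $D_{0,2}$ and the identification $\rho^{(1^2)}(D_{\pm 1,0})\in K_2\,W_{1,\mp 1}$ are fine and agree with the paper. The gap is in the treatment of $D_{0,2}$, and it is a real one: your degree bookkeeping is off by one in the bosonic grading. You claim that $\rho^{(1^2)}(D_{0,2})$ equals $W_{2,0}$ modulo a quadratic in the $W_{1,l}$ and scalars, and that the diagonal piece $\tfrac{\kappa}{2}\sum_i\pmb:b^{(i)}(z)^2\pmb:$ of $W_2(z)$ "accounts for the two copies of $\rho^{(1)}(D_{0,2})$". But $\rho^{(1)}(D_{0,2})=\kappa\square-\eps_1 D_{0,1}$, and the Laplace--Beltrami operator $\square$ is \emph{cubic} in the bosonic modes (see \eqref{LB}): it contains $\sum_{l,k}(b_{-l-k}b_lb_k+b_{-l}b_{-k}b_{l+k})$. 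On the other hand $W_{2,0}$, a multiple of $W_{1,0}$, any finite polynomial quadratic in the $W_{1,l}$ and a scalar are all at most quadratic in the bosons. A cubic element cannot equal a quadratic one modulo lower-order corrections, so the claimed identification fails at leading order in the standard filtration.

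The correct statement is the technical lemma inside the paper's proof of Proposition \ref{8.6:prop}:
\begin{equation*}
\begin{split}
\rho^{(1^2)}(D_{0,2})=&
\frac{\kappa}{2}\sum_{l\in\Z}\pmb:W_{1,-l}W_{2,l}\pmb:
+\frac{\kappa^2}{24}\sum_{k,l\in\Z}\pmb:W_{1,-k-l}W_{1,k}W_{1,l}\pmb:\\
&+\frac{\kappa\xi}{4}\sum_{l\in\Z}(|l|-1)\pmb:W_{1,-l}W_{1,l}\pmb:
+\xi W_{2,0}+c,
\end{split}
\end{equation*}
in which the first two summands are \emph{infinite sums of cubic products of $W$-modes}, not quadratic corrections. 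Exhibiting these explicitly (and verifying they converge degreewise so as to lie in the completion $\scrU(W_k(\gen\len_2))$) is the substance of the proof; simply observing that $W_{2,0}$ sits inside the image does not suffice. The structure you aim for would be right for $D_{0,1}$ (which is quadratic and matches $L_0$ up to scalars), but not for $D_{0,2}$.

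One secondary remark: the factor $x^{2l}$ you insert in the cross term $\xi\kappa\sum_l l\,x^{2l}b_l^{(1)}b_{-l}^{(2)}$ disagrees with the paper's formula \eqref{8.6:form1}, which has no such factor. Using the coproduct formula for $D_{0,2}$ written in terms of $b_l\otimes b_{-l}$ (as in the Introduction's statement, $\pmb\Delta(D_{0,2})=\delta(D_{0,2})+\kappa\xi\sum_{l\geqslant 1}l\,b_l\otimes b_{-l}$) rather than in terms of $D_{-l,0}\otimes D_{l,0}$ eliminates the spurious power of $x$; this matters when comparing against the Miura normalization, where all mode operators are the bare bosonic ones.
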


\begin{proof} 
It is enough to check that  $\rho^{(1^2)}(b_l)$ and $\rho^{(1^2)}(D_{0,2})$ belong to
$\scrU(W_k(\gen\len_2))$. For $b_l$, this follows from the easily checked relation 
\begin{equation}
\rho^{(1^2)}(b(z))=J(z),\qquad
b(z)=\sum_{l \in \Z} b_l z^{-l-1} \in \SH^{(2)}_K[[z,z^{-1}]].
\end{equation} 
For $D_{0,2}$, this is a consequence of the lemma below.

\begin{lem} 
There is a constant $c$ such that
\begin{equation*}
\begin{split}
\rho^{(1^2)}(D_{0,2})=&
\frac{\kappa}{2}\sum_{l\in\Z}\pmb :\!W_{1,-l}W_{2,l}\!\pmb :+
\frac{\kappa^2}{24}\sum_{k,l\in\Z}\pmb :\!W_{1,-k-l}W_kW_l\!\pmb :+\\
&+\frac{\kappa\xi}{4}\sum_{l\in \Z}(|l|-1)\pmb:W_{1,-l}W_{1,l}\pmb: 
+\xi W_{2,0} +c.
\end{split}
\end{equation*}
\end{lem}

\begin{proof}
First, note that \eqref{LB}, \eqref{LBinfty}, \eqref{LB'} and Theorem 
\ref{7.7:thm2} imply that
\begin{equation}\label{8.6:form1}
\aligned
\rho^{(1^2)}(D_{0,2})
&=
\frac{\kappa^2}{2}\sum_{k,l \geqslant 1}
\Big(
b_{-l-k}^{(1)}b_l^{(1)} b_k^{(1)}+b_{-l}^{(1)}b_{-k}^{(1)} b_{l+k}^{(1)}+
b_{-l-k}^{(2)}b_l^{(2)} b_k^{(2)}+b_{-l}^{(2)}b_{-k}^{(2)} b_{l+k}^{(2)}
\Big)+\\
&+\frac{\kappa\xi}{2}
\sum_{l\geqslant 1}(l-1)\Big(
b_{-l}^{(1)}b_l^{(1)}+ b_{-l}^{(2)}b_l^{(2)}\Big)
-\kappa\sum_{l\geqslant 1}
\Big(\eps_1b_{-l}^{(1)}b_l^{(1)}+\eps_2b_{-l}^{(2)}b_l^{(2)}\Big)+\\
&+\kappa\xi\sum_{l\geqslant 1}lb_{-l}^{(2)}b_l^{(1)}.
\endaligned
\end{equation}
Using $\eps_1=-\kappa b_0^{(1)}$ and $\eps_2=\xi-\kappa b_0^{(2)}$,
we can rewrite \eqref{8.6:form1} in the following way
\begin{equation}
\aligned
\rho^{(1^2)}(D_{0,2})
&=
\frac{\kappa^2}{6}\sum_{k,l\in\Z}
\pmb:b_{-l-k}^{(1)}b_l^{(1)} b_k^{(1)}+b_{-l-k}^{(2)}b_l^{(2)} b_k^{(2)}\pmb:
+\\
&+
\frac{\kappa\xi}{4}
\sum_{l\in\Z}(|l|-2)\pmb:
b_{-l}^{(1)}b_l^{(1)}+ b_{-l}^{(2)}b_l^{(2)}\pmb:
+\frac{\kappa\xi}{4}
\sum_{l\in\Z}\pmb:
b_{-l}^{(1)}b_l^{(1)}- b_{-l}^{(2)}b_l^{(2)}\pmb:+
\\
&+\kappa\xi\sum_{l\geqslant 1}lb_{-l}^{(2)}b_l^{(1)}
+ c_1
\endaligned
\end{equation}
for some constant $c_1$.
Next, recall that 
\begin{equation}
\aligned
W_1(z)&=b^{(1)}(z)+b^{(2)}(z),\\
W_2(z)&=\frac{\kappa}{2}\pmb :\!b^{(1)}(z)^2+b^{(2)}(z)^2\!\pmb :
-\,\frac{\kappa}{4}\pmb :\!W_1(z)^2\!\pmb :-\,\xi\,\partial_z\rho(z).
\endaligned
\end{equation}
This implies that
\begin{equation}
\aligned
W_{2,l}&=
-\frac{\kappa}{2}\sum_{k\in\Z}
\pmb:b_{l-k}^{(1)}b_k^{(2)}\pmb:
+\frac{\kappa}{4}\sum_{k\in\Z}
\pmb:b_{l-k}^{(1)}b_k^{(1)}+b_{l-k}^{(2)}b_k^{(2)}\pmb:
+\frac{\xi}{2}(l+1)\big(b_{l}^{(1)}-b_{l}^{(2)}\big).
\endaligned
\end{equation}
Further, we have the following formulas
\begin{equation}
\aligned
\sum_{k,l\in\Z}\pmb :\!W_{1,-k-l}W_{1,k}W_{1,l}\!\pmb :
&=3\sum_{k,l\in\Z}\pmb:
b_{-k-l}^{(1)}b_k^{(1)}b_{l}^{(2)}+
b_{-k-l}^{(2)}b_k^{(2)}b_{l}^{(1)}\pmb:+\\
&+
\sum_{k,l\in\Z}\pmb:
b_{-k-l}^{(1)}b_k^{(1)}b_{l}^{(1)}+
b_{-k-l}^{(2)}b_k^{(2)}b_{l}^{(2)}\pmb:
\endaligned
\end{equation}
\begin{equation}
\sum_{l\in\Z}|l|\pmb:\!W_{1,-l}W_{1,l}\pmb:=
2\sum_{l\in\Z}|l| \pmb:b_{-l}^{(1)}b_l^{(2)}\pmb:+
\sum_{l\in\Z}|l| \pmb:b_{-l}^{(1)}b_l^{(1)}+b_{-l}^{(2)}b_l^{(2)}\pmb:
\end{equation}
\begin{equation}
\aligned
\sum_{l\in\Z}\pmb :\!W_{1,-l}W_{2,l}\!\pmb :
&=
-\frac{\kappa}{4}\sum_{k,l\in\Z} 
\pmb:b_{-k-l}^{(1)}b_{k}^{(1)}b_{l}^{(2)}+
b_{-k-l}^{(2)}b_{k}^{(2)}b_{l}^{(1)}\pmb:+\\
&+\frac{\kappa}{4}\sum_{k,l\in\Z} 
\pmb:b_{-k-l}^{(1)}b_{k}^{(1)}b_{l}^{(1)}+
b_{-k-l}^{(2)}b_{k}^{(2)}b_{l}^{(2)}
\pmb:-\\
&+\frac{\xi}{2}\sum_{l\in\Z}\pmb:
b_{-l}^{(1)}b_l^{(1)}-b_{-l}^{(2)}b_l^{(2)}\pmb:
-\xi\sum_{l\in\Z}l\pmb:b_{-l}^{(1)}b_l^{(2)}\pmb:\\
\endaligned
\end{equation}
Therefore, we get
\begin{equation}
\aligned
\rho^{(1^2)}(D_{0,2})&
-\frac{\kappa}{2}\sum_{l\in\Z}\pmb :\!W_{1,-l}W_{2,l}\!\pmb :
-\frac{\kappa^2}{24}\sum_{k,l\in\Z}\pmb :\!W_{1,-k-l}W_{1,k}W_{1,l}\!\pmb :
-\frac{\kappa\xi}{4}\sum_{l\in\Z}|l|\pmb:W_{1,-l}W_{1,l}\pmb:
\\
&=-\frac{\kappa\xi}{2}
\sum_{l\in\Z}\pmb:
b_{-l}^{(1)}b_l^{(1)}+ b_{-l}^{(2)}b_l^{(2)}\pmb:
+c_1.
\endaligned
\end{equation}
Next, observe that
\begin{equation}
\gathered
W_{2,0}=
-\frac{\kappa}{2}\sum_{l\in\Z}
\pmb:b_{-l}^{(1)}b_l^{(2)}\pmb:+
\frac{\kappa}{4}\sum_{l \in\Z}
\pmb:b_{-l}^{(1)}b_l^{(1)}+b_{-l}^{(2)}b_l^{(2)}\pmb:+
\frac{\xi}{2}\big(b_{0}^{(1)}-b_{0}^{(2)}\big),\\
\sum_{l\in\Z}\pmb:\!W_{1,-l}W_{1,l}\pmb:=
2\sum_{l\in\Z}\pmb:b_{-l}^{(1)}b_l^{(2)}\pmb:+
\sum_{l\in\Z}\pmb:b_{-l}^{(1)}b_l^{(1)}+b_{-l}^{(2)}b_l^{(2)}\pmb:.
\endgathered
\end{equation}
Therefore, we have
\begin{equation}
\frac{\kappa\xi}{2}\sum_{l \in\Z}\pmb:b_{-l}^{(1)}b_l^{(1)}+b_{-l}^{(2)}b_l^{(2)}\pmb:=
\frac{\kappa\xi}{4}\sum_{l\in\Z}\pmb:\!W_{1,-l}W_{1,l}\pmb:+\xi W_{2,0}-
\frac{\xi^2}{2}\big(b_{0}^{(1)}-b_{0}^{(2)}\big).
\end{equation}
The lemma follows, the constant $c$ being given by
\begin{equation}
\aligned
c&=p_3(\vec\varepsilon)/6\kappa + p_2(\vec\varepsilon)\xi/4\kappa - 
p_1(\vec\varepsilon)\xi^2/2\kappa + \xi^3/12\kappa.
\endaligned
\end{equation}
\end{proof}
\end{proof}

\vspace{.15in}

\subsection{From $\SH^{(r)}_K$ to $W_k(\gen\len_r)$}
Now $r$ is arbitrary. 
We set $\kb=K_r$ and $\kappa=k+r$. 
We write 
\begin{equation}\label{beta}
\pi^{(1^r)}=\pi_{\beta},\qquad
\langle b^{(i)},\beta\rangle=-\eps_i/\kappa+(i-1)\xi/\kappa,\qquad
i\in[1,r].
\end{equation}
Recall that $\scrU(W_k(\gen\len_r))$ is the image of
$\Uen(W_k(\gen\len_r))$ in $\End(\pi^{(1^r)})$. 
We construct as in \eqref{8.66} a $K_r$-linear isomorphism
\begin{equation}\label{piL}
\Lb_K^{(1^r)}=\pi^{(1^r)}=\Lambda^{\otimes r}_{K_r}
\end{equation}
which identifies $[I_\emptyset]^{\otimes r}$,
$|\beta\rangle$ and $1^{\otimes r}$ and
which intertwines the operator
$b^{(i)}_{-l}$ on $\pi^{(1^r)}$ with 
\begin{equation}
\label{8.81}
1\otimes \cdots\otimes 1\otimes b_{-l}\otimes 1\otimes\cdots\otimes 1
\qquad(b_{-l}\text{\ is\ at\ the\ $i$-th\ spot})
\end{equation}
on $\Lambda^{\otimes r}_{K_r}$ and with the operator on $\Lb_K^{(1^r)}$ given by
\begin{equation}
1\otimes \cdots\otimes 1\otimes \rho^{(1)}(y^{-l}D_{l,0})\otimes 1\otimes\cdots\otimes 1.
\end{equation}
Propositions \ref{prop:pibeta}, \ref{8.4:prop} then provide inclusions of
$\scrU(W_k(\gen\len_r))$ and $\SH^{(r)}_K$ into $\End(\pi^{(1^r)}).$
We equip $\SH^{(r)}_K$, $\Uen(\SH^{(r)}_K)$ and $\scrU(W_k(\gen\len_r))$ 
with the standard degreewise topologies.

\vspace{.1in}

\begin{theo}
\label{8.7:thm}
The representation $\rho^{(1^r)}$ 
yields an embedding of degreewise topological $K_r$-algebras
$\Theta~:\SH^{(r)}_K\to\scrU(W_k(\gen\len_r))$
with a degreewise dense image.
The morphism $\Theta$ is compatible with the order filtrations.
\end{theo}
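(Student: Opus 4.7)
The plan is to construct $\Theta$ by composing the iterated coproduct $\pmb\Delta^{(1^r)}$ with the rank-one embedding of Proposition \ref{prop:8.6:rank1}, and then to identify the image with $\scrU(W_k(\gen\len_r))$ using the Feigin--Frenkel screening characterization together with the rank-two case of Proposition \ref{8.6:prop}.

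First I would set
\begin{equation*}
\tilde\Theta = \Theta_1^{\widehat\otimes r} \circ \pmb\Delta^{(1^r)} : \SH^{(r)}_K \to \scrU(W_k(\gen\len_1))^{\widehat\otimes r},
\end{equation*}
where $\Theta_1$ denotes the rank-one embedding of Proposition \ref{prop:8.6:rank1} and $\pmb\Delta^{(1^r)}$ is the iterated coproduct of Proposition \ref{8.5:prop1}. Under the identification \eqref{piL} the map $\tilde\Theta$ realises the action of $\SH^{(r)}_K$ on $\pi^{(1^r)}$ provided by $\rho^{(1^r)}$, so injectivity of $\tilde\Theta$ follows from the faithfulness of $\rho^{(1^r)}$ (Proposition \ref{8.4:prop}) together with the injectivity of $\pmb\Delta^{(1^r)}$ from Proposition \ref{8.5:prop1}. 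That $\tilde\Theta$ is a morphism of degreewise topological algebras is formal once one checks continuity of $\pmb\Delta$, which is built into the construction of the topology on $\SH^\cb \,\widehat\otimes\, \SH^\cb$ in Section \ref{sec:coproduct2}.

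The crucial step is to show that $\tilde\Theta(\SH^{(r)}_K)$ actually lands in the subalgebra $\scrU(W_k(\gen\len_r))$ of $\scrU(W_k(\gen\len_1))^{\widehat\otimes r}$ coming from the quantum Miura transform \eqref{miura}. By the theorem of Feigin and Frenkel, $W_k(\sen\len_r) \subset \pi_0^{\otimes r}$ is cut out as the simultaneous kernel of $r-1$ screening operators $\tilde S_1,\dots,\tilde S_{r-1}$, where each $\tilde S_i$ acts nontrivially only on the $i$-th and $(i+1)$-th Heisenberg tensor factor. By coassociativity of $\pmb\Delta$ one factors, for $\nu_i = (1^{i-1},2,1^{r-i-1})$,
\begin{equation*}
\pmb\Delta^{(1^r)} = \bigl(\mathrm{id}^{\otimes (i-1)} \otimes \pmb\Delta^{(1^2)} \otimes \mathrm{id}^{\otimes(r-i-1)}\bigr) \circ \pmb\Delta^{\nu_i}.
\end{equation*}
The inner factor $\pmb\Delta^{(1^2)}$ composed with $\Theta_1^{\widehat\otimes 2}$ is precisely the rank-two embedding of Proposition \ref{8.6:prop}, so it lands in $\scrU(W_k(\gen\len_2))$, which by Feigin--Frenkel equals the kernel of $\tilde S_i$ on the two relevant Heisenberg factors; the remaining $r-2$ tensor slots commute with $\tilde S_i$ trivially. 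Hence $\tilde\Theta(u) \in \Ker\tilde S_i$ for every $u$ and every $i$, and intersecting over $i$ yields $\tilde\Theta(\SH^{(r)}_K) \subset \scrU(W_k(\gen\len_r))$. We set $\Theta = \tilde\Theta$.

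It remains to check filtration compatibility and degreewise density. For the filtration, Lemma \ref{lem:coproduct-D0n} gives $\pmb\Delta^{r-1}(D_{0,d+1}) = \delta^{r-1}(D_{0,d+1})$ modulo strictly lower order, and combining with Proposition \ref{prop:leading_term_D_0n} applied in each tensor factor together with the symbol formula of Proposition \ref{prop:symbolW} shows that $\Theta(D_{0,d+1})$ lies in $\scrU(W_k(\gen\len_r))[\leqslant\! d] \setminus \scrU(W_k(\gen\len_r))[<\! d]$; Proposition \ref{prop:filtration-SHc}(c) then extends the compatibility to all of $\SH^{(r)}_K$. For density I would induct on the order: the order-zero part of $\scrU(W_k(\gen\len_r))$ is topologically generated by the Heisenberg modes $W_{1,l} = \Theta(b_l)$ and hence lies in the closure of the image; granting the statement through order $d-1$, the order-$d$ part of $\Theta(D_{0,d+1})$ matches a nonzero multiple of $W_{d+1,0}$ modulo products of lower-order generators, as witnessed concretely by the $r=2$ formula in the proof of Proposition \ref{8.6:prop}, and iterated commutators with $\Theta(b_j) = W_{1,j}$ then produce all Fourier modes $W_{d+1,l}$ up to lower-order corrections, closing the induction.

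The main obstacle will be step two: although the coassociativity reduction to the rank-two case is conceptually clean, one must check that the highest weight $\beta$ of $\pi^{(1^r)}$ in \eqref{beta} restricts correctly on each pair of adjacent tensor factors to the rank-two highest weight \eqref{beta_r=2}, so that the Feigin--Frenkel screening $\tilde S_i$ really corresponds to the embedding $\scrU(W_k(\gen\len_2)) \subset \scrU(W_k(\gen\len_1))^{\widehat\otimes 2}$ obtained from Proposition \ref{8.6:prop} in the $(i,i+1)$-slot. This amounts to a compatibility check between the shift $\xi\rho$ entering $\beta$ and the $\xi\cb_0 \otimes \cb_0$ term in $\pmb\Delta(b_0)$ from Theorem \ref{7.7:thm2}, a routine but delicate bookkeeping.
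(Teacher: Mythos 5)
Your strategy is correct in outline and the embedding step (your steps 1--6) coincides with the paper's proof of its Lemma \ref{8.6:lem1}: you use coassociativity to factor $\pmb\Delta^{(1^r)}$ through an adjacent-pair coproduct $\pmb\Delta^{(1^2)}$, feed this into Proposition \ref{8.6:prop}, and invoke the Feigin--Frenkel screening-intersection characterization. This part is fine (and you correctly flag the highest-weight bookkeeping in \eqref{beta} vs.\ \eqref{beta_r=2} as the point requiring care).

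The density and filtration-compatibility argument has a genuine gap. You claim that the order-$d$ part of $\Theta(D_{0,d+1})$ is a nonzero multiple of $W_{d+1,0}$ modulo products of lower-order generators, and that iterated commutators with $\Theta(b_j)=W_{1,j}$ then yield the other modes $W_{d+1,l}$. Neither step holds as stated. First, the explicit $r=2$ formula in the proof of Proposition \ref{8.6:prop} shows that $\rho^{(1^2)}(D_{0,2})$ contains $\tfrac{\kappa}{2}\sum_l\pmb:\!W_{1,-l}W_{2,l}\!\pmb:$ as part of its order-$1$ component: this involves all Fourier modes $W_{2,l}$ at once, multiplied by order-zero generators $W_{1,-l}$, and is therefore not a multiple of $W_{2,0}$ modulo terms built solely from \emph{lower}-order generators. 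Second, and more fundamentally, the associated graded $\overline\Uen(W_k(\gen\len_r))$ is commutative, so $[\Uen[\leqslant\! a],\Uen[\leqslant\! b]]\subset\Uen[\leqslant\! a+b-1]$; since $W_{1,j}$ has order $0$, the commutator $[W_{1,j},W_{d+1,l}]$ has order $\leqslant d-1$. Commuting with Heisenberg modes therefore \emph{lowers} the order and cannot produce new modes $W_{d+1,l}$ from $W_{d+1,0}$. To shift conformal degree while staying at order $d$ you must use elements of strictly positive order --- in the paper's language, the $Y_{\pm l,d}$ of \eqref{yrd}, built from $D_{\pm1,1}$ (order $1$) and $D_{\pm1,0}$. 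The paper's actual Lemma \ref{8.6:lem2} avoids the induction entirely: it imports the \emph{standard} filtration from $\scrU(W_k(\gen\len_1))^{\widehat\otimes r}$ (counting $b$-factors), computes the top symbol of $W_d(z)$ via Proposition \ref{prop:symbolW} (Claim \ref{slem:1}) and the top symbols of all $Y_{l,d}$ via Proposition \ref{prop:leading_term_D_0n} plus the coproduct estimate (Claim \ref{slem:2}), and matches the two associated graded objects in one shot; the compatibility with the order filtration of Section \ref{sec:8.2} is then read off from the observation that $\scrU(W_k(\gen\len_r))[\preccurlyeq\!d]\subseteq\scrU(W_k(\gen\len_r))[<\!d]$. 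You should replace the induction-plus-$b_j$-commutator step by that two-claim symbol comparison.
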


The theorem is a direct consequence of Lemmas
\ref{8.6:lem1}, \ref{8.6:lem2} below. 
Note that the map $\Theta$ is homogeneous of degree zero
relatively to the rank degree on $ \SH^{(r)}_K$ and the conformal degree
on $\scrU(W_k(\gen\len_r))$.

\begin{lem}
\label{8.6:lem1}
The representation $\rho^{(1^r)}$ 
yields an embedding of degreewise topological $K_r$-algebras
$\Theta~:\SH^{(r)}_K\to\scrU(W_k(\gen\len_r)).$
\end{lem}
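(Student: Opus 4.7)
My plan is to verify the containment $\rho^{(1^r)}(\SH^{(r)}_K)\subseteq \scrU(W_k(\gen\len_r))$ on a set of topological generators and then extract injectivity and continuity from what has already been established. By Proposition \ref{prop:generators/involution2}, it suffices to treat the scalars $\cb_l$, the elements $D_{\pm 1,0}$, and the operator $D_{0,2}$. The scalars $\cb_l$ act as elements of $K_r$ and so trivially lie in $\scrU(W_k(\gen\len_r))$. For the Heisenberg part, Theorem \ref{7.7:thm2} gives $\pmb\Delta(b_l)=\delta(b_l)$ for $l\neq 0$, which iterates under $\pmb\Delta^{(1^r)}$ to $\sum_{i=1}^r 1\otimes\cdots\otimes b_l\otimes\cdots\otimes 1$; under the identification \eqref{piL} and Proposition \ref{prop:8.6:rank1} this maps to $\sum_{i=1}^r b_l^{(i)}=W_{1,l}\in\scrU(W_k(\gen\len_r))$. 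The case $l=0$ is handled via $b_0=E_1/\kappa$ and Remark \ref{rem1.42}. Since $D_{1,0}=yb_{-1}$ and $D_{-1,0}=-x b_1$ by \eqref{bl}, these are also taken care of.

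The main obstacle is $D_{0,2}$. I propose to proceed by induction on $r$, with the base cases $r=1,2$ settled by Proposition \ref{prop:8.6:rank1} and Proposition \ref{8.6:prop}. For $r\geqslant 3$ my plan is to invoke the Feigin--Frenkel characterization of $W_k(\gen\len_r)$ as the intersection of the kernels of $r-1$ screening operators $S_1,\dots,S_{r-1}$ inside $\pi_0^{\otimes r}$, and to check that $\rho^{(1^r)}(D_{0,2})$ commutes with each $S_i$ on $\pi^{(1^r)}$. The crucial reduction is that each $S_i$ involves only the two consecutive bosons $b^{(i)}$ and $b^{(i+1)}$, so by coassociativity and Proposition \ref{8.5:prop1} applied to the composition $\nu=(1,\dots,1,2,1,\dots,1)$ (with the $2$ at slot $i$), the commutator $[\rho^{(1^r)}(D_{0,2}),S_i]$ decomposes into contributions that either come from a single tensor factor (which commute with $S_i$ trivially) or come from the $i,i{+}1$ pair. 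Applying Theorem \ref{7.7:thm2} to pull the coproduct through this pair reduces these latter contributions to operators built from $\rho^{(1)}\otimes\rho^{(1)}$ applied to $\pmb\Delta(D_{0,2})$, where commutation with $S_i$ is exactly the $r=2$ assertion of Proposition \ref{8.6:prop}.

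For injectivity of $\Theta$, faithfulness of $\rho^{(1^r)}$ was established in Corollary \ref{cor:rhonu}, so $\Theta$ is automatically injective on $\SH^{(r)}_K$. Compatibility with the standard degreewise topologies follows because $\pmb\Delta^{(1^r)}$ preserves the $\Z$-grading by Proposition \ref{8.5:prop1}, the map $\rho^{(1)}$ sends $\SH^{(1)}_K[s]$ to the operators of conformal degree $-s$ (as follows from the explicit formulas of Section \ref{sec:gl1}), and the defining ideals $\mathscr J_N$ of the degreewise topology on $\SH^{(r)}_K$ are sent into the corresponding ideals of $\scrU(W_k(\gen\len_r))$ because both are generated by products with negatively graded factors of sufficiently large absolute degree.

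The hard part, as outlined, is making the screening-operator commutation argument rigorous: while the reduction to the rank-$2$ block is conceptually clean, one must carefully track how the infinite sum $\xi\sum_{l\geqslant 1}l\kappa^{1-l}D_{-l,0}\otimes D_{l,0}$ appearing in $\pmb\Delta(D_{0,2})$ interacts with the coproduct applied to the remaining $r-2$ tensor factors, and verify term by term that no obstruction to commutation with $S_i$ remains. An alternative, more computational approach would be to match the leading symbol of $\rho^{(1^r)}(D_{0,2})$ with a suitable combination of $W_{2,l}$, $W_{1,k}W_{1,l}$ and lower-order terms by combining Proposition \ref{prop:symbolW} with Proposition \ref{prop:leading_term_D_0n} and the coproduct; induction on the order filtration of $\SH^{(r)}_K$ then closes the argument.
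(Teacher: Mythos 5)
Your strategy is the same as the paper's: factor $\rho^{(1^r)}$ through the compositions $\omega_i=(1,\dots,1,2,1,\dots,1)$ by coassociativity, observe that the ranks $1$ and $2$ cases (Propositions \ref{prop:8.6:rank1} and \ref{8.6:prop}) put the image inside each $\scrU^{\omega_i}$, and invoke the Feigin--Frenkel theorem.

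However, the ``hard part'' you flag at the end is a phantom difficulty created by your choice of formulation. You propose to check explicitly that $\rho^{(1^r)}(D_{0,2})$ commutes with each screening operator $S_i$ on $\pi^{(1^r)}$, and you worry about how the infinite tail $\xi\sum_{l\geqslant 1}l\kappa^{1-l}D_{-l,0}\otimes D_{l,0}$ of $\pmb\Delta(D_{0,2})$ propagates through the coproduct. The paper avoids this entirely by using the Feigin--Frenkel theorem in the form of an algebra-level intersection: $\bigcap_{i=1}^{r-1}\scrU^{\omega_i}=\scrU(W_k(\gen\len_r))$ inside $\End(\pi^{(1^r)})$, where $\scrU^{\omega_i}$ is the degreewise-complete subalgebra $\scrU(W_{\kappa-1}(\gen\len_1))^{\widehat\otimes(i-1)}\,\widehat\otimes\,\scrU(W_{\kappa-2}(\gen\len_2))\,\widehat\otimes\,\scrU(W_{\kappa-1}(\gen\len_1))^{\widehat\otimes(r-i-1)}$. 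Since $\rho^{(1^r)}=\rho^{\omega_i}\circ\phi^{\omega_i}$ by coassociativity, and $\rho^{(1)}$, $\rho^{(1^2)}$ land in $\scrU(W_{\kappa-1}(\gen\len_1))$, $\scrU(W_{\kappa-2}(\gen\len_2))$ respectively, the inclusion $\rho^{(1^r)}(\SH^{(r)}_K)\subset\scrU^{\omega_i}$ is immediate --- for \emph{every} element of $\SH^{(r)}_K$, not just $D_{0,2}$, and with no need to inspect the structure of $\pmb\Delta(D_{0,2})$. The commutation with each $S_i$ is then a consequence, not something to be verified term by term. You should drop the generator-by-generator verification and the induction framing; the argument is uniform in $r\geqslant 2$ given the two base cases. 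Your treatment of injectivity (via Corollary \ref{cor:rhonu}) and of the degreewise topology compatibility is fine.
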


\begin{proof}
We may assume that $r\geqslant 2$.
Consider the composition of $r$ given by
\begin{equation}
\omega_i=(1,\dots,1,2,1,\dots, 1),\qquad i\in[1,r),
\end{equation}
where $2$ is at the $i$-th spot. 
Let $\phi^{\omega_i}:\SH^{(r)}_K\to\SH^{\omega_i}_K$
be the $K_r$-algebra homomorphism given by the iterated coproduct,
and let $\rho^{\omega_i}$ be the representation of  
$\SH^{\omega_i}_K$ on $\pi^{(1^r)}$ given by
\begin{equation}
\rho^{\omega_i}=\rho^{(1)}\otimes\cdots\otimes \rho^{(1)}\otimes
\rho^{(1^2)}\otimes \rho^{(1)}\otimes\cdots\otimes \rho^{(1)}.
\end{equation}
The coassociativity of the coproduct
implies that the representation $\rho^{(r)}$ is the pull-back of the representation $\rho^{\omega_i}$
by the algebra homomorphism $\phi^{\omega_i}$.
By Propositions \ref{prop:8.6:rank1}, \ref{8.6:prop} the representations 
$\rho^{(1)}$ and $\rho^{(1^2)}$ give inclusions
\begin{equation}
\SH^{(1)}_K\subset\scrU(W_{\kappa-1}(\gen\len_1)),\qquad
\SH^{(2)}_K\subset\scrU(W_{\kappa-2}(\gen\len_2)).
\end{equation}
Therefore, for each $i$ the representation $\rho^{(1^r)}$ gives an inclusion
of $\SH^{(r)}_K$ into the subalgebra of $\End(\pi^{(1^r)})$ given by
\begin{equation}
\scrU^{\omega_i}=\scrU(W_{\kappa-1}(\gen\len_1))^{\widehat\otimes(i-1)}
\;\widehat\otimes\;\scrU(W_{\kappa-2}(\gen\len_2))\;\widehat\otimes\;
\scrU(W_{\kappa-1}(\gen\len_1))^{\widehat\otimes(r-i-1)}.
\end{equation}
Thus, the lemma is a consequence of the following classical result due to Feigin and Frenkel
\footnote{A. Okounkov told us that he used a similar argument in his proof with D. Maulik}.

\begin{theo}
We have the equality $\bigcap_{i=1}^{r-1}\scrU^{\omega_i}=\scrU(W_k(\gen\len_r))$
in $\End(\pi^{(r)})$.
\end{theo}

This is a direct corollary of the characterization of $\scrU(W_k(\gen\len_r))$ as the intersection of screening 
operators
associated with each simple root of $\mathfrak{sl}_n$, see \cite[thm.~4.6.9]{FF}. 
The above formulation appears in \cite[sec.~15.4.15]{BZF}.
\end{proof}

\vspace{.1in}

\begin{lem}
\label{8.6:lem2}
The inclusion $\Theta:\SH^{(r)}_K\to\scrU(W_k(\gen\len_r))$ gives a surjective 
morphism of degreewise topological $K_r$-algebras
$\Uen(\SH^{(r)}_K)\to\scrU(W_k(\gen\len_r))$ which
is compatible with the order filtrations.
\end{lem}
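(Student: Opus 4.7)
The plan is to proceed in three steps: extend $\Theta$ to the completion, establish filtration compatibility, then deduce surjectivity by examining the induced map on associated graded algebras.

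First I would check that $\Theta: \SH^{(r)}_K \to \scrU(W_k(\gen\len_r))$ is continuous for the standard degreewise topologies on both sides. The defining fundamental system $\mathscr J_N$ of neighbourhoods of zero in $\SH^{(r)}_K$ is built from products $\SH^{(r)}_K[t-s]\cdot \SH^{(r)}_K[-t]$ with $t \geqslant N$; under $\rho^{(1^r)}$ these are carried into the analogous system in $\scrU(W_k(\gen\len_r))$ generated by deep positive modes of the fields $W_{i}(z)$, by the conformal-degree compatibility already established in Lemma \ref{8.6:lem1} combined with the fact that $\pmb\Delta$ preserves each conformal degree (Proposition \ref{8.5:prop1}) and the admissibility of $\pi^{(1^r)}$ (Proposition \ref{8.4:prop}). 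The target being degreewise complete, the universal property of the completion $\Uen(\SH^{(r)}_K)$ then produces a unique continuous extension $\widetilde\Theta:\Uen(\SH^{(r)}_K)\to \scrU(W_k(\gen\len_r))$.

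Second, for filtration compatibility, I would check on the generators $D_{r,d}$ of $\SH^{(r)}_K$ (Proposition \ref{prop:filtration-SHc}). Using the iterated coproduct $\pmb\Delta^{(1^r)}:\SH^{(r)}_K\to (\SH^{(1)}_{K_r})^{\widehat\otimes r}$ from Proposition \ref{8.5:prop1}, the problem reduces to a slot-by-slot rank-1 computation. From Proposition \ref{prop:leading_term_D_0n}, $\rho^{(1)}(D_{0,d+1})$ is a sum of normally ordered monomials of length $d+1$ in the $b^{(i)}_l$'s, and from Proposition \ref{prop:symbolW} together with \eqref{W2} such products fit inside $W_k(\gen\len_r)[\leqslant d]$: indeed, expressing $:\!b^{(i_1)}\cdots b^{(i_s)}\!:$ in terms of the Miura-basis $W_1,\ldots,W_r$ yields a combination of products of the $W_j$'s whose total order (counted with $W_j$ of order $j-1$) is $\leqslant d$. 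The mixed tensor terms arising from $\pmb\Delta(D_{0,l})-\delta(D_{0,l})$, controlled by Lemma \ref{lem:coproduct-D0n}, are easily seen to have strictly smaller order, so the filtration is respected.

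Third, for surjectivity, I would argue on the associated graded. Since $\scrU(W_k(\gen\len_r))$ is degreewise complete and $\widetilde\Theta$ is continuous and compatible with both filtrations, it is enough to show that the induced map
\begin{equation*}
\gr\widetilde\Theta:\gr \Uen(\SH^{(r)}_K)\longrightarrow \overline{\scrU}(W_k(\gen\len_r))
\end{equation*}
is degreewise surjective. Since $\overline{\scrU}(W_k(\gen\len_r))$ is (degreewise completed) polynomial in the symbols $\overline W_{i,l}$, it suffices to produce, for each pair $(i,l)$ with $1 \leqslant i\leqslant r$ and $l\in\Z$, an element of $\Uen(\SH^{(r)}_K)$ of order $\leqslant i-1$ whose symbol equals $\overline W_{i,l}$ modulo lower order. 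Applying $\pmb\Delta^{(1^r)}$ to $D_{0,i}$ and combining with the rank-1 leading-term formula (Proposition \ref{prop:leading_term_D_0n}) one obtains the fully symmetric ``diagonal'' sum $\sum_a :b^{(a)}_{l_0}\cdots b^{(a)}_{l_{i-1}}:$; iteratively commuting with the rank-$\pm k$ generators $D_{\pm k,0}$ produces the missing mixed monomials $:b^{(a_1)}(z)\cdots b^{(a_i)}(z):$ (for all tuples) modulo lower order. The expansion in Proposition \ref{prop:symbolW} then allows one to invert the resulting triangular system and express $\overline W_{i,l}$ itself as a symbol coming from the image; induction on $i$ completes the argument.

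The main obstacle will be Step 3, specifically the explicit matching between the iterated-coproduct leading terms and the Miura-basis $W_1,\ldots,W_r$. The coassociativity of $\pmb\Delta$ and Lemma \ref{8.6:lem1} guarantee \emph{a priori} that the image lies in the Feigin-Frenkel intersection, so leading terms must combine into $W$-fields, but producing preimages requires a careful bookkeeping of the ``off-diagonal'' contributions from $\pmb\Delta(D_{0,l})-\delta(D_{0,l})$ (Lemma \ref{lem:coproduct-D0n}) through all $r-1$ iterations, to ensure that every symmetric combination $\sum_{i_1<\cdots<i_s}:\!b^{(i_1)}\cdots b^{(i_s)}\!:$ appearing in Proposition \ref{prop:symbolW} is realized.
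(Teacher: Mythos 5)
Your Step 1 (continuous extension to the completion) coincides with the paper's argument, and the general spirit of Step 2 (filtration compatibility via the leading-term formulas of Propositions \ref{prop:leading_term_D_0n} and \ref{prop:symbolW}) is aligned with the paper's argument, although the paper formulates it slightly differently: it first exhibits, for each $(l,d)$, an explicit $u_{l,d}\in\scrU(W_k(\gen\len_r))[\leqslant d]$ such that $\Theta(D_{l,d})-u_{l,d}$ has \emph{standard} order $\preccurlyeq d$, and then uses the general inclusion $\scrU(W_k(\gen\len_r))[\preccurlyeq d]\subseteq\scrU(W_k(\gen\len_r))[<d]$ to conclude.

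Your Step 3, however, has a genuine gap. You propose to take the associated graded with respect to the \emph{order} filtration and to hit each symbol $\overline W_{i,l}$ by first producing the diagonal sums $\sum_a :\!b^{(a)}_{l_0}\cdots b^{(a)}_{l_{i-1}}\!:$ from $D_{0,i}$, and then ``iteratively commuting with the rank-$\pm k$ generators $D_{\pm k,0}$'' to obtain the mixed monomials $:\!b^{(a_1)}(z)\cdots b^{(a_i)}(z)\!:$. This mechanism does not work: under the free-field embedding $\rho^{(1^r)}$, the generators $D_{\pm k,0}$ map (up to normalization) to the diagonal Heisenberg modes $\sum_a b^{(a)}_{\mp k}=J_{\mp k}$, and since distinct tensor slots commute, the commutator $\big[\sum_a :\!b^{(a)}_{l_0}\cdots b^{(a)}_{l_{i-1}}\!:,\ \sum_{a'} b^{(a')}_m\big]$ again lies in the span of single-slot diagonal monomials. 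No mixed monomial in distinct slots can appear. Indeed Claim \ref{slem:2} asserts precisely that the leading standard-order term of $\rho^{(1^r)}(Y_{l,d})$ is purely diagonal; all mixed contributions are necessarily of strictly smaller standard order. The paper avoids this trap by running the whole surjectivity argument with respect to the \emph{standard} filtration (each $b^{(i)}_l$ of order $1$), inside the fixed ambient graded algebra $\gr\scrU(W_k(\gen\len_1))^{\widehat\otimes r}$. There the associated graded is a commutative polynomial algebra, and one compares the subalgebra generated by the diagonal power-sum symbols (Claim \ref{slem:2}) with the one generated by the elementary-symmetric/Miura symbols of the $W_d(z)$'s (Claim \ref{slem:1}): both equal the ring of $\Sen_r$-symmetric elements, so the two subalgebras of the completed ambient algebra coincide. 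Your approach would need to recover this essentially combinatorial fact about power sums versus elementary symmetric polynomials inside the correct filtration; as written, the passage from diagonal to mixed monomials by commutation has no valid mechanism.
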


\begin{proof}
By the universal property of completions,
for each integer $s$, the inclusion
\begin{equation}
\SH^{(r)}_K[s]\to\scrU(W_k(\gen\len_r))[s],
\end{equation}
which is a continuous map,
extends uniquely to a continuous map
\begin{equation}\Uen(\SH^{(r)}_K)[s]\to\scrU(W_k(\gen\len_r))[s].
\end{equation}
Taking the sum over all $s$ we get a map
\begin{equation}
\Theta:\Uen(\SH^{(r)}_K)\to\scrU(W_k(\gen\len_r)).
\end{equation}
It is a morphism of degreewise topological $K_r$-algebras.
We must prove that it is surjective. 
We have already seen that $\Theta(b(z))=W_1(z)$.
We now consider the fields $W_d(z)$ with $d>1$. 
The free field representation of $W_k(\gen\len_r)$ yields an embedding
\begin{equation}
\label{8.46}
\scrU(W_k(\gen\len_r)) \subset 
\scrU(W_k(\gen\len_1))^{\widehat{\otimes} r}.
\end{equation}
By Propositions \ref{8.4:prop}, \ref{prop:8.6:rank1}, the  representation 
$\rho^{(1^r)}$ 
yields an embedding
\begin{equation}
\Uen(\SH^{(r)}_K) \subset \scrU(W_k(\gen\len_1))^{\widehat{\otimes} r}.
\end{equation}
The standard filtration on
$\scrU(W_k(\gen\len_1))$ introduced in Section~\ref{sec:gl1} induces the standard filtrations
\begin{equation}\label{inducedfiltration}
\Uen(\SH^{(r)}_K)=\bigcup_d \Uen(\SH^{(r)}_K)[\preccurlyeq\! d], \qquad  
\scrU(W_k(\gen\len_r))=\bigcup_d  \scrU(W_k(\gen\len_r))[\preccurlyeq\! d].
\end{equation}
The map $\Theta$ is compatible with these filtrations. 
Proposition~\ref{prop:symbolW} yields the following.

\begin{slem}\label{slem:1} For $d \neq 1$,
under the inclusion \eqref{8.46} we have
$$W_d(z)=-\kappa \sum_{s=0}^d(-r)^{s-d}
\Big(\begin{smallmatrix}r-s\\r-d\end{smallmatrix}\Big)
\sum_{i_1<i_2<\cdots<i_s}
\pmb:J(z)^{d-s}b^{(i_1)}(z)b^{(i_2)}(z)\cdots b^{(i_s)}(z)\pmb:$$
modulo terms of standard order $\preccurlyeq d-1$
in $\Uen(W_k(\gen\len_1))^{\widehat{\otimes} r}[[z,z^{-1}]]$.
\end{slem}

\vspace{.1in}

Recall the elements $Y_{r,d}$ defined in \eqref{yrd}.

\begin{slem}\label{slem:2}
For $l,$ $d$ with $d \geqslant 0$ there is a constant $c(l,d) \neq 0$ such that
\begin{equation}\label{E:sublemma2}
\rho^{(1^r)}(Y_{l,d})\equiv
c(l,d) \sum_{i=1}^r \sum_{ l_0, \ldots, l_d} :\! b^{(i)}_{l_0} \cdots b^{(i)}_{l_d} \!:\,.
\end{equation}
The sum runs over all tuples of integers with sum $-l$.
The symbol $\equiv$ means that the equality holds modulo 
terms of standard order $\preccurlyeq d$
in $\Uen(W_k(\gen\len_1))^{\widehat{\otimes} r}[[z,z^{-1}]]$.
\end{slem}

\begin{proof} First, we prove the following estimate
\begin{equation}\label{eq:estimate_1}
\rho^{(1^r)}(Y_{l,d}) \equiv \delta^{r-1}(\rho^{(1)}(Y_{l,d})).
\end{equation}
Equation \eqref{eq:estimate_1} is clear
from the definition of the coproduct on $\SH^{\cb}$ for $d=0,1$ or 
for $l=0,$ $d=2$. Next, the operator
$\ad(\rho^{(1^r)}(D_{0,2}))$ 
increases the standard order by at most one, see e.g., 
formula \eqref{8.6:form1} in the case $r=2$.
Hence using relations
\begin{equation}
\ad(D_{0,2})^d(D_{1,0})=D_{1,d}, \qquad 
\ad(D_{0,2})^d (D_{-1,0})=(-1)^d D_{-1,d}
\end{equation}
we deduce \eqref{eq:estimate_1} for $l=\pm 1$. 
Likewise, the operator $\ad(\rho^{(1^r)}(D_{\pm 1,1}))$ preserves
the standard filtration and the operator $\ad(\rho^{(1^r)}(D_{1,0}))$
decreases the standard filtration by one. This implies that 
\eqref{eq:estimate_1} holds for $l\neq 0$. Thus we have also
\begin{equation}
\rho^{(1^r)}(E_{d}) \equiv \delta^{r-1}(\rho^{(1)}(E_{d})).
\end{equation} 
This implies that \eqref{eq:estimate_1} also holds for
$D_{0,d}$ for any $d$. 

Next, combining \eqref{eq:estimate_1} and
Proposition~\ref{prop:leading_term_D_0n} yields \eqref{E:sublemma2} for $l=0$ with
\begin{equation}\label{c0d}
c(0,d)=\frac{\kappa^d}{d(d+1)}.
\end{equation}
Finally, acting by 
\begin{equation}
\gathered
\ad(\rho^{(1^r)}(D_{\pm 1,1}))
\equiv\ad(\delta^{r-1}(\rho^{(1)}(D_{\pm 1,1}))),\\
\ad(\rho^{(1^r)}(D_{\pm 1,0}))\equiv\ad(\delta^{r-1}(\rho^{(1)}(D_{\pm 1,0})))
\endgathered
\end{equation}
now yields \eqref{E:sublemma2} for all values of $l,$ $d$. We are done.
Note that, since \eqref{heis2} implies that
\begin{equation}
[b_l,D_{1,1}]=ylb_{l-1},\qquad
[b_l,D_{-1,1}]=y^{-1}\kappa lb_{l+1},
\end{equation}
we get
\begin{equation}\label{cpm1d}
c(1,d)=y\kappa^d/(d+1),\qquad c(-1,d)=-y^{-1}\kappa^{d+1}/(d+1).
\end{equation}
\end{proof}

By Lemma~\ref{8.6:lem1}, we have
$\Im(\Theta) \subseteq \scrU(W_k(\gen\len_r))$.
Using Claims~\ref{slem:1} and \ref{slem:2} we see that the associated 
graded of $\Im(\Theta)$ and $\scrU(W_k(\gen\len_r))$
with respect to the standard filtration are equal. 
This implies that 
\begin{equation}
\Im(\Theta)=\scrU(W_k(\gen\len_r)).
\end{equation}
To finish, we prove the compatibility of $\Theta$ with the order filtration $\scrU(W_k(\gen\len_r))[\leqslant\!d]$ 
defined in Section \ref{sec:8.2}. Recall the filtration $\scrU(W_k(\gen\len_r))[\preccurlyeq\! d]$ 
defined in \eqref{inducedfiltration}.
By Claims~\ref{slem:1} and \ref{slem:2}, there exists
for any $l,d$ an explicit element 
\begin{equation}
u_{l,d} \in \scrU(W_k(\gen\len_r))[\leqslant\! d]
\end{equation}
such that 
\begin{equation}
\Theta(D_{l,d})-u_{l,d} \in \scrU(W_k(\gen\len_r))[\preccurlyeq\! d].
\end{equation}
But from the definition of the filtrations, we have
\begin{equation}
\scrU(W_k(\gen\len_r))[\preccurlyeq\! d] \subseteq 
\scrU(W_k(\gen\len_r))[<\!d].
\end{equation}
By Remark \ref{rem:8.11}, 
the order filtration on $\Uen(\SH^{(r)}_K)$ is determined by 
putting $D_{r,d}$ (or equivalently $Y_{r,d}$) in degree $d$ for any $(r,d)$.
Thus Lemma~\ref{8.6:lem2} is proved.

\end{proof}

Theorem \ref{8.7:thm} has the following consequence.

\begin{cor}\label{8.7:cor}
The pull-back by the morphism
$\Theta:\SH^{(r)}_K\to\scrU(W_k(\gen\len_r))$ 
is an equivalence from the category of
admissible $\scrU(W_k(\gen\len_r))$-modules to the category
of admissible $\SH^{(r)}_K$-modules.
This equivalence takes $\pi^{(1^r)}$ to $\rho^{(1^r)}$.
\end{cor}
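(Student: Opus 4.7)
My plan is to reduce the corollary to the claim that the surjection $\Theta:\Uen(\SH^{(r)}_K)\twoheadrightarrow\scrU(W_k(\gen\len_r))$ of Theorem~\ref{8.7:thm} is actually an isomorphism of degreewise topological $K_r$-algebras, and to the compatibility of admissibility under passage to the completion. The identification $\Theta^*\pi^{(1^r)}=\rho^{(1^r)}$ is immediate from \eqref{piL}, since both actions come by construction from the same embeddings $\SH^{(r)}_K,\,\scrU(W_k(\gen\len_r))\hookrightarrow\End(\pi^{(1^r)})$.

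The first step will be to prove that an admissible $\SH^{(r)}_K$-module $M$ uniquely extends to an admissible $\Uen(\SH^{(r)}_K)$-module on the same underlying graded space. For $v\in M[s']$ and $s\in\Z$, I will check that $\mathscr J_N[s]$ annihilates $v$ once $N$ is large enough: every element $\sum x_{t-s}y_{-t}\in\mathscr J_N[s]$ (with $t\geqslant N$) acts through $y_{-t}\cdot v\in M[s'-t]$, which vanishes by admissibility for $t\geqslant N$ when $N$ exceeds an explicit bound depending on $s'$. Thus the map $\SH^{(r)}_K[s]\to M[s+s']$, $x\mapsto x\cdot v$, is continuous for the standard topology on the source and the discrete topology on the target, and extends uniquely to a continuous $\Uen(\SH^{(r)}_K)[s]$-action. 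Uniqueness of this extension simultaneously yields full faithfulness of the pullback, since any $\SH^{(r)}_K$-linear morphism between admissible modules must commute with the unique continuous extension and hence with the induced $\scrU(W_k(\gen\len_r))$-action obtained through $\Theta$.

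The main obstacle will be the second step, namely proving that $\Theta$ induces an isomorphism $\Uen(\SH^{(r)}_K)\xrightarrow{\sim}\scrU(W_k(\gen\len_r))$; this yields essential surjectivity formally, since every admissible $\Uen(\SH^{(r)}_K)$-action can then be re-read as a $\scrU(W_k(\gen\len_r))$-action. By Lemma~\ref{8.6:lem2} the map is already surjective, so the remaining issue is injectivity, which amounts to showing that the standard topology on $\SH^{(r)}_K[s]$ agrees with the subspace topology inherited from $\scrU(W_k(\gen\len_r))[s]\subset\End(\pi^{(1^r)})$ via $\Theta$. One inclusion is immediate from the annihilator calculation of step one, applied to $v\in\pi^{(1^r)}$. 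For the reverse inclusion, I will use the faithful representation $\rho^{(1^r)}$ of Proposition~\ref{8.4:prop} together with the triangular decomposition $\SH^{(r)}_K=\SH^{(r),>}_K\otimes\SH^{(r),0}_K\otimes\SH^{(r),<}_K$ coming from Proposition~\ref{1.8:prop1}: this reduces the comparison of topologies to a graded-piece statement in which $\SH^{(r),<}_K[-t]$ acts as zero on $\pi^{(1^r)}[s']$ precisely for $t$ exceeding an explicit bound, matching the defining ideals $\mathscr J_N[s]$ of the standard topology. Once the two topologies coincide, $\Theta$ is a topological isomorphism on completions, and the equivalence of admissible module categories with the stated compatibility at $\pi^{(1^r)}$ follows by transport of structure.
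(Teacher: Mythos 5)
Your first step — extending the $\SH^{(r)}_K$-action on an admissible module $M$ to $\Uen(\SH^{(r)}_K)$, and the resulting observation that full faithfulness follows from degreewise density — is sound, and the identification $\Theta^*\pi^{(1^r)}=\rho^{(1^r)}$ is indeed immediate. However, your main reduction has a genuine gap. You reduce essential surjectivity to the claim that $\Theta:\Uen(\SH^{(r)}_K)\to\scrU(W_k(\gen\len_r))$ is an isomorphism of degreewise topological algebras, and your sketch of the injectivity argument (comparing the standard topology on $\SH^{(r)}_K[s]$ with the subspace topology from $\scrU(W_k(\gen\len_r))[s]$) is far from complete: the phrases ``matching the defining ideals'' and ``once the two topologies coincide'' assert precisely the point that needs to be proved. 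This is not a pedantic complaint — the paper states, in the remark immediately following the corollary, ``It is likely that the map $\Theta$ is an isomorphism of degreewise topological $K_r$-algebras $\Uen(\SH^{(r)}_K)\to\scrU(W_k(\gen\len_r))$. We'll not need this.'' The authors explicitly leave this isomorphism open and design the proof so as not to rely on it. Your proposal goes the other way, hinging the whole corollary on a claim the authors decline to establish.

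The paper's route is both shorter and safer. For essential surjectivity it views $\SH^{(r)}_K$ as a degreewise dense degreewise topological \emph{subalgebra} of $\scrU(W_k(\gen\len_r))$ (an inclusion already furnished by Theorem~\ref{8.7:thm}), and extends the action map $\SH^{(r)}_K[s]\times M[s']\to M[s+s']$ directly, by continuity and density, to $\scrU(W_k(\gen\len_r))[s]\times M[s']\to M[s+s']$. This never passes through $\Uen(\SH^{(r)}_K)$ and hence never needs the kernel of $\Theta$ on the completion to vanish. If you want to salvage your strategy, you would need an actual proof that the subspace topology on $\SH^{(r)}_K[s]$ induced from $\scrU(W_k(\gen\len_r))[s]$ coincides with the standard one defined by the $\mathscr J_N[s]$, using the explicit description of the standard degreewise topology on current algebras from \cite{MNT}, \cite{Ara}; the triangular-decomposition observation you gesture at is a reasonable starting point but you have not carried it through to the needed two-sided comparison.
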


\begin{proof}
Since the image of $\SH^{(r)}_K$ in 
$\scrU(W_k(\gen\len_r))$ is degreewise dense,  
this functor is fully faithful. 
Thus, it is enough to check that it is essentially surjective.
To do that, let $M$ be an admissible $\SH^{(r)}_K$-module.
View $\SH^{(r)}_K$ as a degreewise dense degreewise topological subalgebra
of $\scrU(W_k(\gen\len_r))$. Then, for any $s,s'$ the action map
\begin{equation}
\SH^{(r)}_K[s]\times M[s']\to M[s+s']
\end{equation}
extends uniquely to a continuous map
\begin{equation}
\scrU(W_k(\gen\len_r))[s]\times M[s']\to M[s+s'].
\end{equation}
This yields an admissible $\scrU(W_k(\gen\len_r))$-module structure on $M$.
The corollary follows.
\end{proof}

\vspace{.1in}

\begin{rem}
It is likely that the map $\Theta$
is an isomorphism of degreewise topological $K_r$-algebras
$\Uen(\SH^{(r)}_K)\to\scrU(W_k(\gen\len_r))$. We'll not need this.
\end{rem}

\vspace{.15in}

\subsection{The Virasoro field} 
We set $\kb=K_r$ and $\kappa=k+r$. 
Now, we describe the preimage under the map $\Theta$  in 
Theorem \ref{8.7:thm}
of the Virasoro field $W_2(z)$.
We keep all the conventions of the previous section.
We have introduced in \eqref{bl} some elements $b_l$, $H_l$. 
Consider the fields in $\SH^{(r)}_K[[z,z^{-1}]]$ given by
\begin{equation}
H(z)=\sum_{l\in\Z}H_l\,z^{-l-2}, \qquad b(z)=\sum_{l \in \Z} b_l z^{-l-1}.
\end{equation} 
Recall the field 
$\rho(z)$ in $\End(\pi^{(1^r)})[[z^{-1}, z]]$ given by
\begin{equation}
\rho(z)=\sum_{i=1}^r(r/2-i+1/2)\,b^{(i)}(z).
\end{equation}

\vspace{.1in}

\begin{prop}\label{prop8.2:vir}
We have the following equalities
$$\rho^{(1^r)}(b(z))=J(z),\qquad 
\rho^{(1^r)}(H(z))=\frac{\kappa}{2}\sum_i\pmb: b^{(i)}(z)^2\pmb:
-\xi\,\partial_z\rho(z).$$
\end{prop}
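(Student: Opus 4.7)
The plan is to establish the two equalities separately. The first equality $\rho^{(1^r)}(b(z)) = J(z)$ reduces to the modewise identity $\rho^{(1^r)}(b_l) = \sum_{i=1}^r b^{(i)}_l$ for each $l \in \Z$. For $l \neq 0$, Theorem \ref{7.7:thm2} gives $\pmb{\Delta}(D_{\pm l, 0}) = \delta(D_{\pm l, 0})$, hence $\pmb{\Delta}(b_l) = \delta(b_l)$; iterating across $\nu = (1^r)$ and using the intertwining \eqref{piL} identifies each $b_l^{[i]}$ with $b^{(i)}_l$ on $\pi^{(1^r)}$. For $l = 0$, I would write $b_0 = E_1/\kappa$ with $E_1 = -\cb_1 + \cb_0(\cb_0-1)\xi/2$, apply $\pmb{\Delta}$ using $\pmb{\Delta}(\cb_l) = \delta(\cb_l)$, iterate, and check that the resulting scalar on $|\beta\rangle$ matches $W_1(\beta) = \sum_i \langle b^{(i)}, \beta\rangle$ from \eqref{beta}.

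For the second equality, the base case $r=1$ reduces to $\rho^{(1)}(H(z)) = (\kappa/2)\pmb{:}b(z)^2\pmb{:}$ (since $\rho(z) = 0$ in rank one), which is precisely Proposition \ref{prop:LBinfty}. For general $r$, I would first establish the claim for $H_{\pm 1}$. Theorem \ref{7.7:thm2} yields $\pmb{\Delta}(D_{-1,1}) = \delta(D_{-1,1}) + \xi D_{-1,0} \otimes \cb_0$, which translates to $\pmb{\Delta}(H_1) = \delta(H_1) + \xi b_1 \otimes \cb_0$; iterating over $(1^r)$ and specialising $\cb_0^{[j]} = 1$ in each rank-one factor gives $\rho^{(1^r)}(H_1) = \sum_i H_1^{[i]} + \xi \sum_i (r-i)\, b_1^{[i]}$. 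Substituting the rank-one formula from Proposition \ref{prop:LBinfty} into each $H_1^{[i]}$ and rewriting in terms of the modes $b^{(i)}_l$ of $\pi^{(1^r)}$---while tracking the zero-mode shift $\tilde b^{(i)}_0 = b^{(i)}_0 - (i-1)\xi/\kappa$ forced by the weight $\beta$ of \eqref{beta}---produces a normal-ordered quadratic together with a linear correction $-\xi(i-1) b^{(i)}_1$ on each factor. Combining with the coproduct correction $\xi(r-i) b^{(i)}_1$ yields the coefficient $\xi(r - 2i + 1) = 2\xi(r/2 - i + 1/2)$, matching exactly the $z^{-3}$ Fourier mode of $-\xi\,\partial_z\rho(z)$. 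The argument for $H_{-1}$ is symmetric, using $\pmb{\Delta}(D_{1,1}) = \delta(D_{1,1}) + \xi \cb_0 \otimes D_{1,0}$.

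To extend from $H_{\pm 1}$ to all $H_l$, I would argue that both $\rho^{(1^r)}(H(z))$ and the claimed right-hand side have identical commutators with every $b^{(j)}(w)$: on the left, this follows from Proposition \ref{prop:heis} combined with part (a), while on the right it is a standard free-field calculation. Their difference therefore centralises the full rank-$r$ Heisenberg algebra acting on $\pi^{(1^r)}$, and hence reduces to a scalar on the irreducible Fock module; this scalar vanishes because the action of $H_0 = \tfrac{1}{2}[H_1, H_{-1}]$ on $|\beta\rangle$ can be computed on both sides and matched via Proposition \ref{prop:pibeta}. The principal obstacle is the bookkeeping in the previous paragraph: the precise cancellation between the shift $-\xi(i-1)$ arising from the zero-mode redefinition $\tilde b^{(i)}_0 \mapsto b^{(i)}_0$ and the shift $\xi(r-i)$ arising from the non-trivial part of $\pmb{\Delta}(H_1)$, which combine to produce the Weyl-vector coefficient $(r/2 - i + 1/2)$ of the field $\rho(z)$ in the quantum Miura transform \eqref{miura}.
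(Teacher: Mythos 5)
Your treatment of the first claim, the base case $r=1$, and the modes $H_{\pm 1}$ is essentially correct and closely parallels the paper's own computation (the coproduct of $D_{\mp 1,1}$ from Theorem \ref{7.7:thm2}, iterated over $(1^r)$ with $\cb_0$ specialised to $1$ in each factor, combined with Proposition \ref{prop:LBinfty} and the zero-mode bookkeeping coming from the weight $\beta$).

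The extension to all $H_l$ via a Heisenberg centraliser argument, however, has a genuine gap. To conclude that $\rho^{(1^r)}(H(z)) - (\tfrac{\kappa}{2}\sum_i \pmb{:} b^{(i)}(z)^2\pmb{:} - \xi\,\partial_z\rho(z))$ centralises the rank-$r$ Heisenberg, you need the commutators $[\rho^{(1^r)}(H_l), b^{(j)}_m]$ for all $l\in\Z$ and every individual index $j$. Proposition \ref{prop:heis} together with part (a) only supplies two weaker pieces of information: it gives $[H_{\pm 1}, b_m]$ alone (not $[H_l, b_m]$ for $|l|\geqslant 2$, since those $H_l$ are not Lie-generated from $H_{\pm 1}$ and $H_0$ inside the Virasoro algebra), and it concerns only the \emph{total} Heisenberg modes $b_m = \sum_j b^{(j)}_m$, not the individual factors $b^{(j)}_m$. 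The centraliser of the rank-one subalgebra generated by $\sum_j b^{(j)}_m$ inside the rank-$r$ Fock space is far larger than the scalars, so even for $H_{\pm 1}$ the argument as stated does not close; and for $|l|\geqslant 2$ you have no a priori access to the relevant commutators without first computing $\rho^{(1^r)}(H_l)$ explicitly, which is exactly what the shortcut was designed to avoid. The paper's proof avoids this by observing that the coproduct formula $\pmb\Delta(D_{0,2})$ from Theorem \ref{7.7:thm2}, together with $D_{\pm k,1} = \pm[D_{0,2}, D_{\pm k,0}]$ and the Heisenberg coproducts $\pmb\Delta(D_{\pm k,0}) = \delta(D_{\pm k,0})$, determines $\pmb\Delta(D_{\pm k,1})$ for \emph{every} $k$; one therefore computes $\rho^{(1^r)}(H'_{\pm k}) = \sum_i H^{(i)}_{\pm k} + k\xi\sum_i(\cdot)\, b^{(i)}_{\pm k}$ directly for all $k\geqslant 1$ and handles $H_0$ separately via $H_0 = \tfrac12[H_1,H_{-1}]$. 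Your computation for $H_{\pm 1}$ extends verbatim to general $k$ once you apply the coproduct to $D_{0,2}$ and push it through the definition of $D_{\pm k,1}$; doing that is the correct repair.
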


\begin{proof}
The first claim is obvious. Note, indeed, that we have
\begin{equation}
\rho^{(1^r)}(b_0)=-p_1(\eps_1,\dots,\eps_r)/\kappa+r(r-1)\xi/2\kappa=\sum_{i=1}^r\langle b^{(i)},\beta\rangle.
\end{equation}
Let us concentrate on the second one.
For $k\geqslant 1$ we set
$$\aligned
H'_{k}=H_k+(r-1)(k-1)\xi b_{k}/2,\qquad
H'_{-k}=H_{-k}+(r-1)(k-1)\xi b_{-k}/2.
\endaligned$$
We must prove the following formulas
\begin{equation}
\gathered
\rho^{(1^r)}(H_0)=\kappa\sum_i\sum_{l\geqslant 1} b_{-l}^{(i)} b_l^{(i)}+
\kappa\sum_i(b_0^{(i)})^2/2+\xi\rho_0,\\
\rho^{(1^r)}(H'_{-k})=
\kappa\sum_i\sum_{l}b_{-k-l}^{(i)}b_l^{(i)}/2-(k-1)\xi\rho_{-k}+
(r-1)(k-1)\xi J_{-k}/2,\\
\rho^{(1^r)}(H'_{k})=
\kappa\sum_i\sum_{l}b_{k-l}^{(i)}b_l^{(i)}/2+(k+1)\xi\rho_{k}
+(r-1)(k-1)\xi J_k/2.
\endgathered
\end{equation}
Write
\begin{equation}
H_k^{(i)}=1\otimes 1\otimes\cdots\otimes 1\otimes \rho^{(1)}(H_k)\otimes 
1\otimes\cdots\otimes 1,\qquad i\in[1,r],
\end{equation}
where $H_k$ is at the $i$-th spot.
We have
\begin{equation}
\aligned
H'_{k}
=(-x)^kD_{-k,1}/k+(1-k)\xi b_{k}/2,\qquad
H'_{-k}
=y^{-k}D_{k,1}/k+(1-k)\xi b_{-k}/2.
\endaligned
\end{equation}
Thus, Theorem \ref{7.7:thm2} yields
\begin{equation}
\gathered
\rho^{(1^r)}(H'_{-k})=\sum_i H_{-k}^{(i)}+k\xi \sum_i(i-1)b_{-k}^{(i)},\\
\rho^{(1^r)}(H'_{k})=\sum_i H_{k}^{(i)}+k\xi \sum_i(r-i)b_{k}^{(i)}.
\endgathered
\end{equation}
Proposition \ref{prop:LBinfty} now yields
\begin{equation}
\gathered
\aligned
\rho^{(1^r)}(H'_{-k})
&=\kappa\sum_i\sum_{l\neq 0,-k}b_{-k-l}^{(i)}b_l^{(i)}/2+
\sum_i\big(-\eps_i+k(i-1)\xi\big)b_{-k}^{(i)},\\
&=\kappa\sum_i\sum_{l}b_{-k-l}^{(i)}b_l^{(i)}/2+
(k-1)\xi\sum_i(i-1) b_{-k}^{(i)},\\
&=\kappa\sum_i\sum_{l}b_{-k-l}^{(i)}b_l^{(i)}/2-
(k-1)\xi\rho_{-k}+(r-1)(k-1)\xi J_{-k}/2,\\
\endaligned
\\
\aligned
\rho^{(1^r)}(H'_{k})
&=\kappa\sum_i\sum_{l\neq 0,k}b_{k-l}^{(i)}b_l^{(i)}/2+
\sum_i\big(-\eps_i+k(r-i)\xi\big)b_{k}^{(i)},\\
&=\kappa\sum_i\sum_{l}b_{k-l}^{(i)}b_l^{(i)}/2+
\xi\sum_i\big((k-1)(r-1)+(k+1)(r-2i+1)\bigr) b_{k}^{(i)}/2,\\
&=\kappa\sum_i\sum_{l}b_{k-l}^{(i)}b_l^{(i)}/2+(k+1)\xi\rho_{k}+
(r-1)(k-1)\xi J_k/2.
\endaligned
\endgathered
\end{equation}
We have
$[H_k^{(i)},b_l^{(i)}]=-lb_{k+l}^{(i)}.$ Therefore, we get
\begin{equation}
\aligned
\rho^{(1^r)}(H_0)
&=\sum_i H_{0}^{(i)}+\xi\sum_i(i-1)[H_1^{(i)},b_{-1}^{(i)}]/2-
\xi\sum_i(r-i)[H_{-1}^{(i)},b_{1}^{(i)}]/2+\\
&\qquad+\xi^2\sum_i(r-i)(i-1)[b_1^{(i)},b_{-1}^{(i)}]/2\\
&=\sum_i H_{0}^{(i)}-(r-1)\xi\sum_i\eps_i/2\kappa+
\xi^2\sum_i(r-i)(i-1)/2\kappa\\
&=\kappa\sum_i\sum_{l\geqslant 1}b_{-l}^{(i)}b_l^{(i)}+
\sum_i(\eps_i-(i-1)\xi)(\eps_i-(r-i)\xi)/2\kappa\\
&=\kappa\sum_i\sum_{l\geqslant 1}b_{-l}^{(i)}b_l^{(i)}-
\sum_ib_0^{(i)}(\eps_i-(r-i)\xi)/2\\
&=\kappa\sum_i\sum_{l\geqslant 1}b_{-l}^{(i)}b_l^{(i)}+
\kappa\sum_i(b_0^{(i)})^2/2+
\xi\sum_ib_0^{(i)}(r-2i+1)/2\\
&=\kappa\sum_i\sum_{l\geqslant 1}b_{-l}^{(i)}b_l^{(i)}+
\kappa\sum_i(b_0^{(i)})^2/2+\xi\rho_0.
\endaligned
\end{equation}

\end{proof}

\vspace{.1in}

The fields $W_1(z)$ and $W_2(z)$ give two fields
in $\End(\pi^{(1^r)})[[z^{-1},z]]$. Let us denote them again by
$W_1(z)$ and $W_2(z)$. 
Consider the field $L(z)$ in $\Uen(\SH^{(r)}_K)[[z^{-1},z]]$ given by
\begin{equation}
\label{Virasoro}
L(z)=H(z)-\frac{\kappa}{2r}\pmb:\,b(z)^2\!\pmb:.
\end{equation}
Proposition \ref{prop8.2:vir} and \eqref{W2} imply that
\begin{equation}
\label{8.6:8.20}
\rho^{(1^r)}(b(z))=W_1(z),\qquad
\rho^{(1^r)}(L(z))=W_2(z).
\end{equation}
Therefore, by definition of the map $\Theta$, we have the following.

\begin{cor}\label{cor:8.26} We have
$\Theta(b(z))=W_1(z)$ and
$\Theta(L(z))=W_2(z).$
\end{cor}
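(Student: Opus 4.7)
The plan is to deduce the corollary directly from Proposition \ref{prop8.2:vir}, which computes $\rho^{(1^r)}(b(z))$ and $\rho^{(1^r)}(H(z))$ explicitly in the free field realization. The key observation is that, by the construction of $\Theta$ in Theorem \ref{8.7:thm} and Lemma \ref{8.6:lem2}, the map $\Theta$ is the unique factorization through $\scrU(W_k(\gen\len_r))$ of the representation $\rho^{(1^r)}:\Uen(\SH^{(r)}_K)\to\End(\pi^{(1^r)})$. Since $\scrU(W_k(\gen\len_r))$ is, by definition, realized as a subalgebra of $\End(\pi^{(1^r)})$, any identity of operators on $\pi^{(1^r)}$ is an identity in $\scrU(W_k(\gen\len_r))$.

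First I would observe that both $b(z)\in\SH^{(r)}_K[[z,z^{-1}]]$ and $L(z)\in\Uen(\SH^{(r)}_K)[[z,z^{-1}]]$ are well-defined power series whose Fourier coefficients lie in (the completion of) $\SH^{(r)}_K$, so $\Theta$ applies coefficientwise and produces elements of $\scrU(W_k(\gen\len_r))[[z,z^{-1}]]$. Combining the formula $\Theta(u)|_{\pi^{(1^r)}}=\rho^{(1^r)}(u)$ with \eqref{8.6:8.20} immediately yields the corollary, provided \eqref{8.6:8.20} is established.

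The first identity in \eqref{8.6:8.20} is the content of the first statement of Proposition \ref{prop8.2:vir}. For the second, apply $\rho^{(1^r)}$ to the definition \eqref{Virasoro} of $L(z)$ to obtain
$$
\rho^{(1^r)}(L(z))=\rho^{(1^r)}(H(z))-\frac{\kappa}{2r}\pmb:\rho^{(1^r)}(b(z))^2\pmb:.
$$
Substitute the two formulas from Proposition \ref{prop8.2:vir}, use $\rho^{(1^r)}(b(z))=W_1(z)=J(z)$, and compare the result with the free field expression for $W_2(z)$ given in \eqref{W2}. The match is immediate once one remembers the identification $\kappa Q=-\xi$ coming from \eqref{k/kappa}, which converts the derivative term $\kappa Q\,\partial_z\rho(z)$ in \eqref{W2} into the $-\xi\,\partial_z\rho(z)$ appearing in Proposition \ref{prop8.2:vir}.

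There is essentially no obstacle here: this is the punchline assembling pieces that have already been proven. The only thing to be slightly careful about is the compatibility of normal ordering between the two sides, but since $b(z)$ and $J(z)$ have identical mode expansions (both are realized as $\sum_ib^{(i)}(z)$ on the free field space under the identification \eqref{piL}), the normal orderings agree. Thus the corollary follows at once.
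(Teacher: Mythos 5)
Your proof is correct and follows essentially the same route as the paper: derive $\rho^{(1^r)}(b(z))=W_1(z)$ and $\rho^{(1^r)}(L(z))=W_2(z)$ from Proposition~\ref{prop8.2:vir} together with \eqref{W2} and $\kappa Q=-\xi$, then invoke the defining property of $\Theta$ (namely $\pi^{(1^r)}\circ\Theta=\rho^{(1^r)}$) to transfer these operator identities into $\scrU(W_k(\gen\len_r))$. The remark about normal ordering and the coefficientwise well-definedness of $\Theta$ on $L(z)$ are reasonable clarifications that the paper leaves implicit.
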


\vspace{.15in}

\subsection{The representation $\pi^{(r)}$ of 
$W_k(\gen\len_r)$ on $\Lb^{(r)}_K$}
We set $\kb=K_r$ and $\kappa=k+r$. 
Let $\beta$ be as in \eqref{beta}.
The representation $\rho^{(r)}$ of $\SH^{(r)}_K$ on $\Lb^{(r)}_K$ is admissible.

\begin{df}
Let $\pi^{(r)}$ be the unique admissible representation of $W_k(\gen\len_r)$ which is taken to
$\rho^{(r)}$ by the equivalence of categories in Corollary \ref{8.7:cor}.
\end{df}

By Corollary \ref{cor:8.26} we have
\begin{equation}
\label{8.9:8.32}
\rho^{(r)}(b(z))=\pi^{(r)}(W_1(z)),\qquad
\rho^{(r)}(L(z))=\pi^{(r)}(W_2(z)).
\end{equation}
Write $|0\rangle$ for the element $[I_\emptyset]$ of $\Lb^{(r)}_K$.
Write $|\beta\rangle$ for the $r$-th tensor power of
the element $[I_\emptyset]$ in $\Lb^{(1)}_K$.
We view $|\beta\rangle$ as an element of $\Lb^{(1^r)}_K$.
The following is one of the main results of this paper.

\begin{theo}
\label{Theo:Main}
The representation $\pi^{(r)}$ of $W_k(\gen\len_r)$ on
$\Lb^{(r)}_K$ is isomorphic to the Verma module
whose highest weight is given by the following rules
$$\gathered
\pi^{(r)}(W_{d,0})|0\rangle=w_d|0\rangle,
\qquad
\pi^{(r)}(W_{d,l})|0\rangle=0,
\qquad l\geqslant 1,\\
w_1=\sum_{i=1}^r\langle b^{(i)},\beta\rangle,\qquad
w_d=-\kappa\!\sum_{i_1<i_2<\cdots<i_d}\prod_{t=1}^d\Big(
\langle h^{(i_t)},\beta\rangle+(d-t)\xi/\kappa\, \Big),\qquad d\geqslant 2.
\endgathered$$
This Verma module is irreducible.
Further, for  $l\geqslant 0$ and $d\in [2,r]$ we have
\begin{equation}
\label{unitarityW}
\gathered
\pi^{(r)}(W_{1,-l})^*=
(-1)^{rl}\pi^{(r)}(W_{1,l}),\qquad
\pi^{(r)}(W_{d,-l})^*=
(-1)^{rl+d}\pi^{(r)}(W_{d,l}).
\endgathered
\end{equation}
\end{theo}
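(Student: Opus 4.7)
The plan is to verify that $|0\rangle = [I_\emptyset]$ satisfies the defining properties of the Verma module $M_\beta$ with the prescribed highest weight, match characters to conclude the identification, and then deduce irreducibility and the adjoint formulas from the intersection pairing. First I would check the highest weight conditions. By Corollary \ref{cor:8.26} we have $\Theta(b_l) = W_{1,l}$ and $\Theta(L_l) = W_{2,l}$, so for $l\geq 1$, the definition \eqref{bl} writes these modes as $(-x)^{-l}D_{-l,0}$, respectively $(-x)^{-l}D_{-l,1}/l$ plus correction terms and the Virasoro normal-ordered piece \eqref{Virasoro}. All of these lie in (or reduce, after normal ordering, to products involving) $\SH^{(r),<}$, and $\rho^{(r)}(\SH^{(r),<})$ strictly lowers the second Chern class grading on $\Lb^{(r)}_K$, so they annihilate $[I_\emptyset]\in\Lb^{(r)}_0$. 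For higher $d\geq 3$, there is no closed formula for $\Theta^{-1}(W_{d,l})$; I would circumvent this by using Claims \ref{slem:1}-\ref{slem:2} together with Proposition \ref{prop:symbolW}, which identify $W_{d,l}$ modulo the standard filtration with a universal expression in the $b^{(i)}_k$'s, thereby reducing annihilation of $[I_\emptyset]^{\otimes r}$ in $\Lb^{(1^r)}_K$ to the rank-one case and then transferring it to $\Lb^{(r)}_K$ via Corollary \ref{8.7:cor}. The Cartan eigenvalues $w_d$ on $[I_\emptyset]$ are pinned down by noting that $D_{0,l}([I_\emptyset])=0$ for $l\geq 1$ by \eqref{3.6:rem}, so only the central parameters $\cb_0 = r$ and $\cb_l = p_l(\vec\varepsilon)$ contribute; the resulting values are exactly those of Proposition \ref{prop:pibeta}.

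Granting the highest weight condition, there is a surjection $M_\beta \twoheadrightarrow \Lb^{(r)}_K$ from the Verma module to the cyclic $W_k(\gen\len_r)$-submodule generated by $[I_\emptyset]$. Cyclicity over $\scrU(W_k(\gen\len_r))$ follows from cyclicity over $\SH^{(r)}_K$ (Theorem A, referenced via Lemma \ref{lem:cyclicr}) and the inclusion $\Theta$. To upgrade surjectivity to an isomorphism I would compare graded dimensions: the PBW basis \eqref{8.1:verma} for $M_\beta$ gives character $\prod_{l\geq 1}(1-q^l)^{-r}$, and the same character is obtained for $\Lb^{(r)}_K$ from the enumeration of $r$-partitions via the $\widetilde D$-fixed points on $M_{r,n}$. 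A more conceptual alternative, which I would prefer, is to compare $\rho^{(r)}$ with the free field module $\rho^{(1^r)}$: both are cyclic admissible $\SH^{(r)}_K$-modules with highest weight vectors annihilated by $\SH^{(r),<}$ and sharing the same eigenvalues on $\SH^{(r),0}$ (the latter because $\pmb\Delta^{(1^r)}(D_{0,l})$ equals $\delta^{r-1}(D_{0,l})$ modulo terms that vanish on $[I_\emptyset]^{\otimes r}$ by Lemma \ref{lem:coproduct-D0n}). By the universal property of the Verma module for $\SH^{(r)}_K$, they are isomorphic, and since $\pi^{(1^r)}$ is a $W_k(\gen\len_r)$-Verma by Proposition \ref{prop:pibeta}, so is $\pi^{(r)}$.

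For irreducibility, I would argue that the intersection pairing on $\Lb^{(r)}_K$ is non-degenerate since the fixed-point basis $\{[I_\lambda]\}$ is orthogonal with nonzero norms $\eu_\lambda$ by \eqref{pairing}. Once the adjoint relations \eqref{unitarityW} are established, this pairing is (up to scalar normalization) the Shapovalov form on $M_\beta$, and non-degeneracy of the Shapovalov form is equivalent to irreducibility of the Verma module. The formulas \eqref{unitarityW} for $d=1$ follow from Proposition \ref{prop:unitarity} applied to $D_{\pm l,0}$ via $b_l = (-x)^{-l}D_{-l,0}$; the sign $(-1)^{rl}$ appears directly, the monomial $x^{-2l}$ being absorbed into the chosen normalization of the pairing. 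The case $d=2$ is analogous using $L_l$ and Proposition \ref{prop:unitarity} applied to $D_{\pm l,1}$, and for general $d$ the required symmetry is captured by Proposition \ref{prop:generators/involution2}$(b)$: the antiinvolution $\pi$ of $\SH^\cb$ interchanges $D_{\pm l,d}$ and, via Claim \ref{slem:2}, corresponds to the expected involution of the $b^{(i)}_k$'s on the free field side, with the sign $(-1)^{rl+d}$ dictated by the degree and parity of the normally ordered expression.

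The principal obstacle is handling general $d$ in Step 1, since there is no explicit formula for $\Theta^{-1}(W_{d,l})$ inside $\Uen(\SH^{(r)}_K)$; bridging this gap requires the density of $\Theta$ and compatibility with both the order filtration and the standard filtration, as controlled by Claims \ref{slem:1}-\ref{slem:2}. The comparison with $\rho^{(1^r)}$ via the coproduct bypasses this, reducing the full theorem essentially to Proposition \ref{prop:pibeta} together with bookkeeping of signs in \eqref{unitarityW}.
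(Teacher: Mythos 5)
Your overall plan matches the paper's: show $|0\rangle$ is highest weight, use the character count of $r$-partitions to conclude the module is a Verma, deduce irreducibility from nondegeneracy of the intersection pairing plus the adjoint relations, and transfer the adjoint structure from the free-field side. The character-comparison route (your first option) is exactly what the paper does.

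However, your ``more conceptual alternative'' for the Verma identification is flawed. Knowing that $\rho^{(r)}$ and $\rho^{(1^r)}$ are both cyclic admissible $\SH^{(r)}_K$-modules whose highest-weight vectors are killed by $\SH^{(r),<}_K$ and share the same $\SH^{(r),0}_K$-eigenvalues yields surjections onto each from an initial object, not an isomorphism between them; without already knowing that the initial object is irreducible (which is what one is trying to prove) this is circular. Moreover, Proposition \ref{prop:pibeta} only records the highest-weight data on $\pi_\beta$; it does not assert that $\pi_\beta$ is a Verma module, so one cannot read off the Verma property of $\pi^{(r)}$ from it directly. Stick with the character argument.

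Your computation of the Cartan eigenvalues is also too loose. It is not enough to note that $D_{0,l}$ and $\cb_l$ act scalarly on $[I_\emptyset]$: one must actually express a preimage $W'_{d,0}\in\Uen(\SH^{(r)}_K)$ of $W_{d,0}$ in the PBW order of the triangular decomposition $\SH^{>}\otimes\SH^{\cb,0}\otimes\SH^{<}$ (Proposition \ref{1.8:prop1}), observe that every monomial with a genuine $D_{-1,l}$ or a nonconstant $D_{0,l}$ factor on the right annihilates $[I_\emptyset]$ both in $\Lb^{(r)}_K$ and in $\Lb^{(1^r)}_K$ (Lemma \ref{lem:weight2}), and only then conclude that the eigenvalue equals the one computed on $|\beta\rangle$ in Proposition \ref{prop:pibeta}. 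Finally, the adjoint relations \eqref{unitarityW} for general $d$ cannot be read off from the leading symbol in Claim \ref{slem:2} alone; the paper must introduce and compare two anti-involutions $\omega$ and $\varpi$ on $\SH^{(r)}_K$ (differing by signs $(-1)^{(r-1)l}$), establish $\rho^{(r)}(u)^*=\rho^{(r)}(\omega(u))$, $\rho^{(1^r)}(u)^*=\rho^{(1^r)}(\varpi(u))$, and $\pi^{(1^r)}(u)^*=\pi^{(1^r)}(\varpi(u))$, and then push the sign bookkeeping through $\Theta$ to obtain the stated signs on $W_{d,l}$. Your parity heuristic identifies the answer but is not a proof.
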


\begin{proof}
The homomorphism
$\Theta:\SH^{(r)}_K\to\scrU(W_k(\gen\len_r))$
is compatible with the $\ZZ$-gradings.
Therefore $\Lb^{(r)}_K$ is 
a $\N$-graded $\scrU(W_k(\gen\len_r))$-module.
Thus $|0\rangle$ is a highest weight vector of $\Lb^{(r)}_K$, because it has the degree 0.
Next, we must prove that $|0\rangle$ is a generator of $\Lb^{(r)}_K$ over
$\scrU(W_k(\gen\len_r))$. Since $\Lb^{(r)}_K$ is admissible and $\SH^{(r)}_K$ is degreewise dense in 
$\scrU(W_k(\gen\len_r)),$ it is enough to prove the following.

\begin{lem}\label{lem:cyclicr} We have
$\Lb^{(r)}_K=\rho^{(r)}\big(\SH^{(r)}_K\big)|0\rangle.$
\end{lem}

\begin{proof}
We must check that $[I_\lambda]$ 
belongs to the right hand side for each $\lambda$.
We proceed by induction on the weight $|\lambda|$ of the $r$-partition $\lambda$.
Assume that $|\lambda|=n$ and that $[I_\mu]$ belongs to 
$\rho^{(r)}\big(\SH^{(r)}_K\big)|0\rangle$ whenever $|\mu|<n$.
The formulas from Section \ref{app:B} imply that there is a $r$-partition $\mu$ of $n-1$ 
such that the coefficient of $[I_\lambda]$
in $\rho^{(r)}(D_{1,l})([I_\mu])$ is non zero (in $K_r$) for some $l\in\N$.
Next, we have
\begin{equation}
\rho^{(r)}(D_{0,l+1})([I_\lambda])=\sum_a\sum_{s\in\lambda^{(a)}}(c_a(s)/x)^l\,[I_\lambda],
\qquad l\geqslant 0.
\end{equation}
We can regard $[I_\lambda]$ as the set
\begin{equation}
\{c_a(s)/x\,;\,a=1,\dots,r,\,s\in\lambda^{(a)}\}.
\end{equation}
Then, the action of $D_{0,l+1}$ on $[I_\lambda]$ 
is simply the evaluation of the $l$-th power sum
polynomial on the $K_r$-point $[I_\lambda]$ of $(K_r)^n/\Sen_n.$ Since all these points are distincts,
by Hilbert's Nullstellensatz, for each $\lambda$ there is a polynomial $f$ in the $D_{0,l+1}$'s 
such that $f([I_\lambda])=1$ and $f([I_\sigma])=0$ for any $r$-partition $\sigma$
of $n$ different from $\lambda$.
This finishes the proof.

\end{proof}

\vspace{.1in}

Next, the graded dimension of $\Lb^{(r)}_K$ 
is given by the number of $r$-partitions.
Therefore,
the previous arguments imply that $\Lb^{(r)}_K$ is a Verma module with highest weight vector $|0\rangle$.
Now, let us compute the weight of 
$|0\rangle$.
We claim that it is the same as the weight of the element
$|\beta\rangle$ in $\Lb^{(1^r)}_K$.
The later has been computed in Proposition
\ref{prop:pibeta} because $\Lb^{(1^r)}_K$ is isomorphic to
$\pi^{(1^r)}$ as a $W_k(\gen\len_r)$-module by Corollary \ref{8.7:cor}.
So the claim implies the first part of the theorem.
To prove the claim observe first that we have

\begin{lem}
\label{lem:weight2}
(a) We have
$\rho^{(r)}(D_\xb)|0\rangle=0$ for $\xb\in\mathscr E^-$.

(b) We have 
$\rho^{(1^r)}(D_\xb)|0\rangle=0$ for $\xb\in\mathscr E^-$.
\end{lem}

\begin{proof}
Part $(a)$ follows from \eqref{3.6:rem},
and $(b)$ from $(a)$ and Lemma \ref{lem:coproduct-D0n}.
\end{proof}

\noindent 
Now, for each $d\geqslant 1$, we fix an element $W'_{d,0}$ in 
$\Uen(\SH^{(r)}_K)$ which is taken to $W_{d,0}$ by the map $\Theta$
in Lemma \ref{8.6:lem2}. We must prove that it acts
in the same way on the vacua of $\Lb^{(r)}_K$ and $\Lb^{(1^r)}_K$.
By Proposition \ref{1.8:prop1} the element $W'_{d,0}$ is an infinite sum of 
monomials 
\begin{equation}
D_{k_1,l_1}D_{k_2,l_2}\cdots D_{k_r,l_r},\qquad (k_s,l_s)\in\mathscr E,\qquad
k_1+k_2+\dots+k_r=0,
\end{equation}
where the $D_{0,l}$'s and the $D_{-1,l}$'s are on the right.
Thus the claim follows from Lemma \ref{lem:weight2}.

Now, we must check that $\rho^{(r)}$ is irreducible.
It is enough to check that $\Lb^{(r)}_K$
is irreducible as a $\SH^{(r)}_K$-module.
The bilinear form $(\bullet,\bullet)$ on $\Lb^{(r)}_K$ is nondegenerate,
because the elements $[I_\lambda]$ form an orthogonal basis. 
Further, by Lemmas \ref{lem:cyclicr} and \ref{lem:weight2}, we have
\begin{equation}
\Lb^{(r)}_K=K_r|0\rangle\oplus\rho^{(r)}\big(\SH^{(r),>}_K\big)|0\rangle,
\end{equation}
Thus, by Proposition \ref{prop:unitarity},
any element in $\Lb^{(r)}_K$ which is killed by 
$\rho^{(r)}(\SH^{(r),<}_K)$ is proportional to $|0\rangle$.
This implies that $\Lb^{(r)}_K$ does not contain any proper
$\SH^{(r)}_K$-submodule.

Finally, we must prove \eqref{unitarityW}. By Proposition 
\ref{prop:generators/involution2} there is a unique anti-involution
\begin{equation}
\omega:\SH^{(r)}_K\to\SH^{(r)}_K,\quad
D_{l,d}\mapsto (-1)^{(r-1)l}x^ly^lD_{-l,d},\quad d,l\geqslant 0.
\end{equation}
Further, by Proposition \ref{prop:unitarity} we have
\begin{equation}\label{8.124}
\rho^{(r)}(u)^*=\rho^{(r)}(\omega(u)),\qquad u\in\SH^{(r)}_K.
\end{equation}
Next, recall that
$\Lb^{(1^r)}_K=\big(\Lb^{(1)}_{K_r}\big)^{\otimes r}$
and that $\Lb^{(1)}_K$ is equipped with the pairing in \eqref{pairing}.
Thus we can equip $\Lb^{(1^r)}_K$ with the unique $K_r$-bilinear form such that
\begin{equation}\label{8.128}
(u_1\otimes\cdots\otimes u_r,v_r\otimes\cdots\otimes v_1)=(u_1,v_1)\cdots(u_r,v_r),\qquad
u_i,v_i\in\Lb^{(1)}_{K_r}.
\end{equation}
Let $f^*$ denote the adjoint of a $K_r$-linear operator $f$ on $\Lb^{(1^r)}_K$
with respect to this pairing. Note that we used the same symbol for the adjoint with respect to the pairing on
$\Lb^{(r)}_K$ in Section \ref{sec:pairing}.
We claim that
\begin{equation}\label{8.125}
\rho^{(1^r)}(u)^*=\rho^{(1^r)}(\varpi(u)),\qquad u\in\SH^{(r)}_K,
\end{equation}
where $\varpi$ is the anti-involution
\begin{equation}
\varpi:\SH^{(r)}_K\to\SH^{(r)}_K,\quad
D_{l,d}\mapsto x^ly^lD_{-l,d},\quad d,l\geqslant 0.
\end{equation}
Indeed, it is enough to prove \eqref{8.125} for $u=D_{l,0},$ $D_{0,2}$. Then, it follows from the formulas
\begin{equation}
\gathered
\rho^{(1^r)}(D_{l,0})=\sum_{i=1}^r\rho^{(1)}(D_{l,0})^{(i)},\\
\rho^{(1^r)}(D_{0,2})=\sum_{i=1}^r\rho^{(1)}(D_{0,2})^{(i)}+
\xi\sum_{l\geqslant 1}\sum_{i<j}l\kappa^{1-l}\rho^{(1)}(D_{-l,0})^{(i)}\rho^{(1)}(D_{l,0})^{(j)}\endgathered
\end{equation}
which are proved in Theorem \ref{7.7:thm2}, and from the formulas
\begin{equation}
\gathered
\rho^{(1)}(D_{l,0})^*=x^ly^l\rho^{(1)}(D_{-l,0}),\qquad
\rho^{(1)}(D_{0,2})^*=\rho^{(1)}(D_{0,2}),
\endgathered
\end{equation}
which follows from \eqref{8.124}.
On the other hand, there is a unique anti-involution 
\begin{equation}
\gathered
\varpi:\Uen(W_k(\gen\len_r))\to\Uen(W_k(\gen\len_r)),\\
W_{d,-l}\mapsto(-1)^{l+d}W_{d,l},\quad 
W_{1,-l}\mapsto (-1)^lW_{1,l},\quad d\geqslant 2,\quad l\geqslant 0.
\endgathered
\end{equation}
By \eqref{piL} we have $\Lb_K^{(1^r)}=\pi^{(1^r)}$.
Let $\pi^{(1^r)}$ denote also the map $\Uen(W_k(\gen\len_r))\to\End(\pi^{(1^r)})$.
An easy computation using \eqref{miura} yields
\begin{equation}\label{8.129}
\pi^{(1^r)}(u)^*=\pi^{(1^r)}(\varpi(u)),\qquad u\in\Uen(W_k(\gen\len_r)).
\end{equation}
Finally, by Corollary  \ref{8.7:cor} we have
\begin{equation}
\rho^{(1^r)}=\pi^{(1^r)}\circ\Theta,\qquad \rho^{(r)}=\pi^{(r)}\circ\Theta,
\end{equation}
and $\Theta$ is compatible with the rank grading on $\SH^{(r)}_K$ and the conformal grading
on $\Uen(W_k(\gen\len_r))$.
Therefore, comparing \eqref{8.124}, \eqref{8.125} and \eqref{8.129}, we get
\eqref{unitarityW}.

\end{proof}



\vspace{.2in}

\section{The Gaiotto state}

\vspace{.15in}

\subsection{The definition of the element $G$}

Let $[M_{r,n}]$ denote the fundamental class of $M_{r,n}$. 
It is characterized, up to a scalar,
by the fact that it lies in $\Lb^{(r)}_n$ and has
the cohomological degree zero. 
Further, we have the following formula 
\begin{equation}
[M_{r,n}]=\sum_{\lambda} \eu_{\lambda}^{-1} [I_{\lambda}],
\end{equation}
where the sum runs over all $r$-partitions of size $n$. 
We define an element in $\widehat\Lb^{(r)}_K=\prod_{n \geqslant 0} \Lb^{(r)}_n$ by
\begin{equation}
G=\sum_{n \geqslant 0} [M_{r,n}].
\end{equation}

\begin{prop}\label{prop:Whittaker-SH}
The element $G$ satisfies the following properties 
\begin{equation}\label{E:whittaker1}
\rho^{(r)}(D_{-l,d})(G)=0, \qquad 
l \geqslant 1, \qquad d\in[0,r-2],
\end{equation}
\begin{equation}\label{E:whittaker2}
\rho^{(r)}(D_{-1,r-1})(G)=x^{-r}y^{-1} G, 
\qquad 
\rho^{(r)}(D_{-l,r-1})(G)=0, \qquad l \geqslant 2,
\end{equation}
\begin{equation}\label{E:whittaker3}
\rho^{(r)}(D_{-1,r})(G)=-x^{-r}y^{-1}\big(\sum_i \eps_i\big) G.
\end{equation}
\end{prop}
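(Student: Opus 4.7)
The plan is to verify each identity by pairing $\rho^{(r)}(D_{-l,d})(G)$ against the fixed-point basis. The elementary but crucial observation is that $([I_\lambda],G)=1$ for every $r$-partition $\lambda$, since $G=\sum_\mu\eu_\mu^{-1}[I_\mu]$ and $([I_\lambda],[I_\mu])=\delta_{\lambda,\mu}\eu_\lambda$. Combined with the adjointness relation $\rho^{(r)}(D_{l,d})^*=(-1)^{(r-1)l}x^ly^l\rho^{(r)}(D_{-l,d})$ from Proposition \ref{prop:unitarity}, this yields, for every $r$-partition $\pi$,
$$([I_\pi],\rho^{(r)}(D_{-l,d})(G))=(-1)^{(r-1)l}x^{-l}y^{-l}\sum_{\lambda\supset\pi,\,|\lambda|=|\pi|+l}\langle\lambda\,;\,D_{l,d}\,;\,\pi\rangle,$$
so each claim reduces to computing, for every $\pi$, the row sum of matrix coefficients of $\rho^{(r)}(D_{l,d})$ in the fixed-point basis.

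First, I would handle the case $l=1$. Substituting $D_{1,d}=x^{1-d}yf_{1,d}$ and the explicit formula for $f_{1,d}[I_\pi]$ from \eqref{formdeg}, the three claims translate into the $\pi$-independent combinatorial statement that
$$\Sigma_d(\pi):=\sum_{(a,s)\text{ addable to }\pi}c_a(s)^d\,\frac{\eu(N_{\pi,\pi\cup\{(a,s)\}}^*)}{\eu_{\pi\cup\{(a,s)\}}}$$
vanishes for $0\leqslant d\leqslant r-2$ and equals explicit scalars for $d=r-1,r$. Rewriting the Euler class ratio as a rational function of the character variables using \eqref{HR:2.5} and \eqref{HR:3}, one recognizes $\Sigma_d(\pi)$ as the sum of residues at the addable-box values $z=c_a(s)$ of the one-form $z^d\,\mathcal Y_\pi(z)\,dz$, where $\mathcal Y_\pi(z)$ is a product of standard Nekrasov $Y$-observables attached to the components of $\pi$ shifted by the $e_a$. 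By the residue theorem $\Sigma_d(\pi)=-\mathrm{Res}_{z=\infty}(z^d\mathcal Y_\pi(z)dz)$, and the Laurent expansion $\mathcal Y_\pi(z)=1+O(z^{-2})$, with subleading coefficient proportional to $\sum_ie_i$, yields both the vanishing for $d\leqslant r-2$ and the required constants for $d=r-1,r$.

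For $l\geqslant 2$, I would prove the vanishing in \eqref{E:whittaker1} and \eqref{E:whittaker2} by a multi-variable generalization of the same residue argument. Via the shuffle realization (Theorem \ref{thm:isomC/U} and Corollary \ref{cor:SC/SH}), each $D_{l,d}\in\SH^{(r),>}_K$ corresponds to a symmetric polynomial of degree $d$ in $z_1,\ldots,z_l$ together with the shuffle kernel $k(z_1,\ldots,z_l)$, and the row sum $\sum_\lambda\langle\lambda;D_{l,d};\pi\rangle$ becomes an iterated contour integral of a rational $l$-form built out of the $\mathcal Y_\pi(z_i)$'s and $k$. Applying the residue theorem variable by variable and using that $d\leqslant r-1$ forces the combined residue at infinity to vanish in each integration, whence $\rho^{(r)}(D_{-l,d})(G)=0$.

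The main obstacle will be the explicit construction of the $Y$-observable $\mathcal Y_\pi(z)$ so that its residues at the values $c_a(s)$ reproduce exactly the Euler-class ratios appearing in $\Sigma_d(\pi)$, together with the careful Laurent expansion of $\mathcal Y_\pi$ at infinity needed to extract the precise scalars for $d=r-1,r$. The cancellations between boxes belonging to distinct components of $\pi$ are delicate and crucially rely on the tangent character identities \eqref{HR:2.5} and \eqref{HR:3}.
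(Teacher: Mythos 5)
Your reduction for $l=1$ is the paper's computation in different clothing. You use the adjointness $\rho^{(r)}(D_{1,d})^*=(-1)^{r-1}xy\,\rho^{(r)}(D_{-1,d})$ together with $([I_\lambda],G)=1$ to reduce $([I_\pi],\rho^{(r)}(D_{-1,d})(G))$ to the row sum $\sum_{\lambda\supset\pi}\langle\lambda\,;\,D_{1,d}\,;\,\pi\rangle$; the paper instead expands $f_{-1,d}(G)$ directly in the fixed-point basis, but unwinding both reductions yields the identical rational-function identity indexed by the addable and removable boxes of $\pi$. Your residue-at-infinity argument for $z^d\mathcal{Y}_\pi(z)\,dz$ is precisely the content of Lemma \ref{lem:G1}: the sum is a rational function of the addable-box contents whose apparent simple poles along $z_i=z_j$ cancel, hence a homogeneous polynomial of degree $d-r+1$, which vanishes for $d<r-1$ and whose leading coefficients (the residue at infinity) give the constants for $d=r-1,r$. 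So, for $l=1$, your route and the paper's meet at the same place and use the same argument.

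For $l\geqslant 2$, however, you have taken a much harder route and left a genuine gap. The multi-variable shuffle/residue computation you sketch must contend with the poles and zeros of the shuffle kernel $k(z_i-z_j)$, and with the fact that for $l\geqslant 2$ the element $D_{l,d}$ corresponds to a nontrivial symmetric polynomial under the isomorphism of Corollary \ref{cor:SC/SH}; you do not explain how to control the iterated residue at infinity in the presence of those extra singularities, nor why the degree bound $d\leqslant r-1$ forces it to vanish. The paper's proof of this case is far shorter: for $l\geqslant 2$ the element $D_{-l,d}$ is obtained from $D_{-1,d}$ and $D_{-1,d+1}$ by iterated commutators with $D_{-1,0}$ or $D_{-1,1}$ (compare the elements $Y_{-l,n}$ of Section 1.9 and the argument in the proof of Proposition \ref{prop:8.12}), and since the $l=1$ computations show that $D_{-1,d'}$ annihilates $G$ for $d'<r-1$ and acts by a scalar for $d'=r-1,r$, a short induction on $l$ gives $\rho^{(r)}(D_{-l,d})(G)=0$ for $d\leqslant r-1$ and $l\geqslant 2$. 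You should replace your multi-variable shuffle plan for $l\geqslant 2$ by this commutator argument.
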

\begin{proof} See Appendix~G.
\end{proof}

\vspace{.1in}

\begin{rem} It is not true that $G$ is an eigenvector for the operators 
$\rho^{(r)}(D_{-1,l})$ with $l >r$. 
\end{rem}

\vspace{.15in}

\subsection{The Whittaker condition for $G$}
Now, we give a characterization of $G$ using only the representation
$\pi^{(r)}$ of $W_k(\gen\len_r)$. 
Let $\chi$ be a character of the
subalgebra of $\Uen(W_k(\gen\len_r))$ generated by $W_{d,l}$ for 
$l\geqslant 1$ and $d\in [1, r]$. 

\begin{df}
An element $v$ of $\widehat\Lb^{(r)}_{K}$
is a \textit{Whittaker vector} for $W_k(\gen\len_r)$ associated with $\chi$ if
\begin{equation}
\pi^{(r)}(W_{d,l})v=\chi(W_{d,l}) v, 
\qquad d\in [1,r],\qquad l\geqslant 1. 
\end{equation}
\end{df}

\begin{prop}
\label{prop:8.12}
The element $G$ is a Whittaker vector for $W_k(\gen\len_r)$ 
associated with the character 
\begin{equation}\label{whit}
\chi(W_{r,1})=y^{1-r}x^{-1}, \qquad \chi(W_{d,l})=0 \;\;\; \text{if}\;\; 
d \neq r\;\;\text{or}\;\; d=r,\, l \neq 1.
\end{equation}
It is characterized, up to a scalar, by this property.
\end{prop}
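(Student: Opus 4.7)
The proof splits into verification that $G$ satisfies the Whittaker conditions with character $\chi$, and uniqueness up to scalar.

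For verification, I transport the Whittaker conditions across the embedding $\Theta: \SH^{(r)}_K \hookrightarrow \scrU(W_k(\gen\len_r))$ of Theorem \ref{8.7:thm}. By Corollary \ref{8.7:cor}, $\pi^{(r)}$ is obtained by pulling $\rho^{(r)}$ back along $\Theta$, and the induced surjection $\Uen(\SH^{(r)}_K) \twoheadrightarrow \scrU(W_k(\gen\len_r))$ from Lemma \ref{8.6:lem2} preserves both the rank/conformal grading and the order filtration. Since $W_{d,l}$ lies in rank $-l$ and order $d-1$, for each $(d,l)$ with $d \in [1,r]$ and $l \geq 1$ I can pick a lift $\widetilde{W}_{d,l} \in \Uen(\SH^{(r)}_K)[-l]$ of order $\leq d-1$, reducing the identity $\pi^{(r)}(W_{d,l}) G = \chi(W_{d,l}) G$ to a computation of $\rho^{(r)}(\widetilde{W}_{d,l}) G$. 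By Propositions \ref{1.8:prop1} and \ref{prop:filtration-SHc}, this lift expands as a sum of monomials in the generators $D_{-l'_i, d'_i}$ (modulated by elements of $\SH^{\cb,0}$ acting diagonally) with $\sum l'_i = l$ and $\sum d'_i \leq d - 1$, each term containing at least one factor of strictly negative rank.

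The computation now hinges on Proposition \ref{prop:Whittaker-SH}, which pins down precisely which negative-rank generators $D_{-l', d'}$ annihilate $G$: all those with $d' \in [0, r-2]$, and those with $d' = r-1$ and $l' \geq 2$. Arithmetic on the constraints $\sum l'_i = l$, $\sum d'_i \leq d-1$ shows that for $(d,l) \neq (r,1)$ (with $d \in [1,r]$, $l \geq 1$), every monomial lift contains such an annihilating factor, so $\pi^{(r)}(W_{d,l}) G = 0 = \chi(W_{d,l}) G$. For $(d,l) = (r,1)$, the only term surviving modulo lower order is a scalar multiple of $D_{-1, r-1}$, whose action on $G$ is $x^{-r}y^{-1}$ by \eqref{E:whittaker2}. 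The precise scalar $\chi(W_{r,1}) = y^{1-r}x^{-1}$ is then identified by matching the leading symbols of $W_{r,1}$ and $\Theta(D_{-1,r-1})$ on the free-field side $\Uen(W_k(\gen\len_1))^{\widehat{\otimes} r}$, via Claims \ref{slem:1}--\ref{slem:2} from the proof of Lemma \ref{8.6:lem2} together with the constant $c(-1, r-1) = -y^{-1}\kappa^r/r$.

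For uniqueness, I invoke the irreducibility of $\pi^{(r)}$ from Theorem \ref{Theo:Main}, together with the self-adjointness relations \eqref{unitarityW}. Given any Whittaker vector $v \in \widehat{\Lb}^{(r)}_K$ with character $\chi$, the intersection pairing of $v$ against an arbitrary vector $W_{d_1, -m_1} \cdots W_{d_k, -m_k} [I_\emptyset]$ with $m_j \geq 1$ can, using \eqref{unitarityW} to move adjoints across, be expressed as a purely $\chi$-dependent scalar times the component $v_0$ of $v$ on $K_r \cdot [I_\emptyset]$. Since $\pi^{(r)}$ is irreducible, its intersection pairing is non-degenerate, and the vectors $W_{d_1, -m_1} \cdots W_{d_k, -m_k} [I_\emptyset]$ span the weight spaces; hence $v$ is uniquely determined by $v_0$. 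The main technical obstacle will be the final normalization in verification: the leading symbols of $W_{r,1}$ and $\Theta(D_{-1, r-1})$ on the free-field side have genuinely different combinatorial form—products of $r$ bosons with pairwise distinct indices $i_1 < \cdots < i_r$, versus $r$ bosons of a single index summed over $i$—so their agreement modulo lower-order terms and modulo operators annihilating $G$ requires a careful exploitation of the precise annihilation pattern in Proposition \ref{prop:Whittaker-SH}, rather than a direct symbolic comparison.
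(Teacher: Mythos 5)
Your verification argument follows the paper's route: transport the Whittaker conditions across $\Theta$, use the order filtration and the triangular decomposition from Proposition \ref{1.8:prop1}, feed in the annihilation data of Proposition \ref{prop:Whittaker-SH}, and pin down the scalar at $(d,l)=(r,1)$ via the free-field symbols in Claims \ref{slem:1}--\ref{slem:2}. One step, however, is stated too loosely. The phrase ``every monomial lift contains such an annihilating factor'' does not by itself force the monomial to kill $G$: an annihilator sitting in the middle of a word does not help unless the factors to its right first carry $G$ into $K_r\cdot G$. What is actually needed, and what the paper supplies through its case analysis with the elements $Y_{-l,d}$, is that in the normal form $P_1 P_0 Q$ the negative-rank block $Q$ already annihilates $G$. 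For $d<r$ this is automatic, since each factor of $Q$ has order $\leqslant r-2$; for $d=r$, $l\geqslant 2$, one must treat separately those monomials whose rightmost factor is $D_{-1,r-1}$ (so the remaining factors all have order zero), using that $D_{-1,r-1}G \in K_r\,G$ and that any subsequent negative-rank order-zero factor then kills it. Also, your assertion that the lift involves only negative and zero rank factors is not right --- the filtration piece at rank $-l$ contains words $P_1P_0Q$ with $P_1$ of positive rank --- but this is harmless because the damage is already done once $Q$ annihilates $G$. Your ``arithmetic on the constraints'' is the right instinct, but this positional bookkeeping is precisely what has to be spelled out.

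Your uniqueness argument is genuinely different from the paper's. The paper argues by induction on conformal degree, invoking irreducibility directly: $v_{n_0}-[M_{r,n_0}]$ is killed by all $W_{d,l}$ with $l\geqslant 1$, hence is a singular vector of positive degree and must vanish. You instead use the adjunction relations \eqref{unitarityW} to peel creation operators off the paired vector, reducing every inner product $\big(v,\,W_{d_1,-m_1}\cdots W_{d_k,-m_k}[I_\emptyset]\big)$ to a $\chi$-dependent scalar times $(v_0,[I_\emptyset])$, and then invoke non-degeneracy of the intersection pairing together with the spanning property of such vectors (the Verma property from Theorem \ref{Theo:Main}). Both arguments are correct, and yours has the merit of making the normalization explicit in a single step. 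One attribution is off: the non-degeneracy of the pairing is not a consequence of irreducibility but follows immediately from the orthogonal basis $\{[I_\lambda]\}$ in \eqref{pairing}; indeed the paper uses non-degeneracy to \emph{prove} irreducibility, so the logical dependence runs the other way.
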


\begin{proof}
We will work in the representation $\Lb_K^{(r)}$ and omit to write the symbol $\rho^{(r)}$ to unburden the notations.
Equation (\ref{E:whittaker1}) implies that
\begin{equation}
\Uen(\SH^{(r)}_K)[\leqslant\!r-2]\cdot G=0.
\end{equation}
By Lemma \ref{8.6:lem2}
the map $\Theta$ gives a surjective 
morphism of degreewise topological $K_r$-algebras
$
\Theta:\Uen(\SH^{(r)}_K)\to\scrU(W_k(\gen\len_r))
$
which is compatible with the order filtrations.
This implies that 
\begin{equation}
\Uen(W_k(\gen\len_r))[\leqslant\!r-2] \cdot G=0.
\end{equation}
Since $W_{d,l}$ has the order $d-1$ by \eqref{8.1:filt}, this implies that
\begin{equation}
W_{d,l}\cdot G=0,\qquad
\qquad d<r.
\end{equation}
Let us now assume that $d=r$. It will be convenient to use the elements 
$Y_{l,n}$ from Section~1.9. We have
\begin{equation}
\SH_K^{-}[l, \leqslant \!n]=\SH_K^-[l, <\! n] \oplus K_r Y_{-l, n}
\end{equation}
with
\begin{equation}
Y_{-l,n}=\begin{cases} [D_{-1,0}, D_{1-l,n+1}] & \text{if}\; l-1=n,\\ 
[D_{-1,1}, D_{1-l,n}] & \text{if}\; l-1 \neq n,\end{cases} 
\end{equation}
Assume first that $r=2$. Then 
\begin{equation}
Y_{-2,1} \cdot G=[D_{-1,0}, D_{-1,2}] \cdot G=0.
\end{equation}
More generally, we have
\begin{equation}
Y_{-l-1,1} \cdot G=[D_{-1,1}, Y_{-l,1}] \cdot G=0
\end{equation}
for any $l \geqslant 2$. Next, let us assume
that $r >2$. Then
\begin{equation}
\gathered
Y_{-2,r} \cdot G=[D_{-1,1}, D_{-1,r}] \cdot G=0,\\
Y_{-2,r-1} \cdot G=[D_{-1,1}, D_{-1,r-1}] \cdot G=0
\endgathered
\end{equation}
and, acting by $\ad(D_{-1,1})$,
\begin{equation}
Y_{-l,r} \cdot G=Y_{-n,r-1} \cdot G=0, \qquad 2 \leqslant l \leqslant r, 
\;\;2 \leqslant n \leqslant r-1.
\end{equation}
Therefore we have
\begin{equation}
Y_{-r,r-1} \cdot G=[D_{-1,0}, Y_{1-r,r}] \cdot G=0
\end{equation}
from which we deduce, by acting by $\ad(D_{-1,1})$ again, 
that $Y_{-l,r-1} \cdot G=0$ for $l > r$.
We have thus proved that $\Uen(\SH^{(r)})[-l, \leqslant r-1]\cdot G=0$ 
for $l >1$, and hence that
\begin{equation}
\Uen(W_k(\gen\len_r))[l,\leqslant\!r-1] \cdot G=0, \qquad l >1.
\end{equation}
In particular, we have $W_{r,l} \cdot G=0$ for $l >1$. 
To prove that $G$ is a Whittaker vector, it now remains to
compute $W_{r,1} \cdot G$. We will do this by expressing $W_{r,1}$ 
in terms of the elements $D_{l,n}$ up
to terms of order $<r-1$. We will use the representation
$\rho^{(1^r)}$ of $\SH^{(r)}$. Let us first introduce some notation. If 
$f=f(z_1, \ldots, z_r)=
\sum_{\underline{i}} a_{\underline{i}} z_1^{i_1} \cdots z_r^{i_r}$ 
is a polynomial
then we write 
\begin{equation}
:\!f (\underline{z}) \!:=\sum_{\underline{i}} 
a_{\underline{i}} :\! b^{(1)}(z)^{i_1} \cdots b^{(r)}(z)^{i_r}\!:.
\end{equation}
Further,
if $u(z)=\sum_i u_i z^{-i-d}$ is a field of conformal dimension $d$ then we write $\big(u(z)\big)_i=u_i$. 
By Claim \ref{slem:1} we have,
up to terms of order $<r-1$ in the order filtration on $\Uen(W_k(\gen\len_1))^{\hat{\otimes} r}$
\begin{equation}\label{E:Whit4}
W_r(z)=-\kappa \sum_{s=0}^r (-r)^{s-r} 
:\! p_1(\underline{z})^{r-s} e_s(\underline{z}) \!:
\end{equation}
while by Claim \ref{slem:2} and \eqref{cpm1d} we have, again up to order $<r-1$,
\begin{equation}\label{E:Whit5}
\rho^{(1^r)}(D_{-1,d})=-\frac{y^{-1}\kappa^{d+1}}{d+1} \big( :\! p_{d+1}(\underline{z})\!:\big)_{-1}, \qquad 
\rho^{(1^r)}(D_{0,d})=\frac{\kappa^{d+1}}{d(d+1)} \big( :\! p_{d+1}(\underline{z})\!:\big)_{0}.
\end{equation}
Combining \eqref{E:Whit4} and \eqref{E:Whit5} and using the identity 
\begin{equation}
(p_r, p_1^{r-s}e_s)=\delta_{r,s} (-1)^{r-1}/r
\end{equation}
from the theory of 
symmetric functions we deduce that, up to terms of order $<r-1$,
\begin{equation}
\pi^{(1^r)}(W_{r,1})=(-1)^{r-1}y\kappa^{1-r} \rho^{(1^r)}(D_{-1,r-1}) + u
\end{equation}
where $u$ is a linear combination of monomials 
$\rho^{(1^r)} (D_{0,d_1} \cdots D_{0,d_s} D_{-1,d})$ with $d <r-1$.
Acting on $G$ and using 
Proposition \ref{prop:Whittaker-SH} we obtain
\begin{equation}
W_{r,1} \cdot G = y^{1-r}x^{-1} G.
\end{equation}
To finish the proof of Proposition \ref{prop:8.12}, we now show that there is, 
up to a scalar, at most one Whittaker vector
of $W_k(\gen\len_r)$ in $\Lb^{(r)}_K$ associated with the character $\chi$. 
So, assume that $v=\sum_{n\geqslant 0}v_n$ is a Whittaker vector, with
$v_n\in\Lb_{n,K}^{(r)}$ for all $n$. Assume also that we have proved that
for some $n_0\geqslant 1$ we have
$v_n=[M_{r,n}]$ for all $n<n_0$. Then the equation \eqref{whit} for $G$ and $v$
gives the following identities in $\Lb^{(r)}_{n_0-l,K}$ for any $l\geqslant 1$
\begin{equation}
\pi^{(r)}(W_{d,l})(v_{n_0}-[M_{r,n_0}])=0,\qquad
d\in [1,r].
\end{equation}
Since $\Lb^{(r)}_K$ is irreducible as a $W_k(\gen\len_r)$-module,
this implies that $v_{n_0}=[M_{r,n_0}]$.
The proposition follows easily.

\end{proof}

\vspace{.1in}

\vspace{.3in}

{\centerline{\textbf{Acknowledgments}}}

\vspace{.1in}

We would like to thank T. Arakawa, A. Braverman, D. Maulik, H. Nakajima,
A. Okounkov, V. Pasquier and S. Yanigada for useful discussions and correspondences. 
O.S. would especially like to thank H. Nakajima
for his suggestion to construct a coproduct on
the algebra $\SH^{\cb}$, leading to the free field realization. 
Part of this work was done while the second author was visiting Columbia university. 
E.V. is grateful to A. Okounkov for his kind invitation.

\newpage

\appendix

\section{Some useful formulas}
\label{A:comb}
In this section we gathered a few formulas concerning the functions
$\lg_l$, $\varphi_l$ and $\phi_l$ which are used throughout the paper.
Recall that, for $l\geqslant 0$, we have
\begin{equation}
\label{formule/lg}
\lg_0(s)=-\log(s),\qquad\lg_l(s)=(s^{-l}-1)/l, \qquad l\neq 0,
\end{equation}
\begin{equation}
\label{formule/phi}
\varphi_l(s)=s^l\lg_l(1-s)+s^l\lg_l(1+\kappa s)+s^l\lg_l(1+\xi s)-
s^l\lg_l(1+s)-s^l\lg_l(1-\kappa s)-s^l\lg_l(1-\xi s),
\end{equation}
\begin{equation}
\label{formule/phi2}
\phi_l(s)=s^l\lg_l(1+\xi s).
\end{equation}
In particular, we have
\begin{equation}\label{D7}
\gathered
\varphi_l(s)=(l+2)(l+1)\kappa\xi s^{l+3}+\Oc(s^{l+4}),\\
\phi_l(s)=
-\xi s^{l+1}+(l+1)\xi^2s^{l+2}/2-(l+2)(l+1)\xi^3s^{l+3}/6+\Oc(s^{l+4}).
\endgathered
\end{equation}
Note also that for each $a,b$ we have
\begin{equation}
\label{formule/log}
\log(1+s(a+b))=
\sum_{l\geqslant 1}(-1)^{l+1}(a+b)^ls^l/l=
\sum_{l\geqslant 0}(-1)^{l+1}a^l\,s^l\,\lg_l(1+bs).
\end{equation}

\begin{rem}\label{rem:combA}
Note that for each $l\in\N$ there is a non-zero constant 
$a\in F$ such that $\phi_{l+2}-a\varphi_l$ is a formal series in
$\varphi_{l+1}, \varphi_{l+2},\dots$.
\end{rem}

\vspace{.2in}

\section{Proof of Proposition \ref{P:<0>}}
\label{app:A}

\vspace{.1in}

\subsection{The reduction}
We begin with the proof of relation (\ref{E:rel3}).
We will use the polynomial representation $\rho_n$ of $\SH_n$ in
$\V_n$ in order to compute the expression \eqref{E:kop1} below. 
However, because the theory of Jack 
polynomials is only well-behaved for symmetric polynomials 
(as opposed to symmetric Laurent polynomials), 
we will need to somehow restrict ourselves to the subspace 
$\Lambda_n$. 
For this we will use the inner automorphism 
$$\sigma=\text{Ad}(e_n^{(n)}) \in \Aut(\SH_n),\qquad
e_n^{(n)}=X_1X_2\cdots X_n.$$
Note that $\V_n=\Lambda_n[(e_n^{(n)})^{-1}]$

\vspace{.1in}

\begin{lem}\label{lem:kop2} 
Let $U \subset \SH_n$ be a finite dimensional subspace which is 
stable under $\sigma$ and let
$u\in U$. 
If $u((e_n^{(n)})^k\Lambda_n)=\{0\}$ for some integer $k$ then $u=0$.
\end{lem}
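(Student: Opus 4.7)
The plan is to use the faithfulness of the polynomial representation $\rho_n:\SH_n\to\End(\V_n)$ (Cherednik's theorem), which reduces the statement to showing that $u$ annihilates all of $\V_n=\bigcup_{j\geqslant 0}(e_n^{(n)})^{k-j}\Lambda_n$. Write $v=e_n^{(n)}$ for brevity. The first observation is that since $\sigma=\operatorname{Ad}(v)$, for every $i\in\Z$ the element $\sigma^i(u)=v^iuv^{-i}$ annihilates $v^{k+i}\Lambda_n$. Indeed, for $g\in\Lambda_n$ we have $\sigma^i(u)(v^{k+i}g)=v^iu(v^{k}g)=0$ by hypothesis.

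The key step is to exploit the finite-dimensionality and $\sigma$-stability of $U$. The restriction $\sigma|_U$ is an invertible linear endomorphism of the finite-dimensional space $U$, hence admits a minimal polynomial $m(x)=x^d+c_{d-1}x^{d-1}+\cdots+c_0$ with $c_0\neq 0$ (since $0$ is not a root of $m$). Applying $\sigma^{-d}$ to the identity $m(\sigma)(u)=0$ and solving for $u$ yields a relation of the form
\begin{equation*}
u=-c_d^{-1}\bigl(c_{d-1}\sigma^{-1}(u)+c_{d-2}\sigma^{-2}(u)+\cdots+c_0\sigma^{-d}(u)\bigr)
\end{equation*}
in $\SH_n$ (where $c_d=1$).

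With this identity in hand, the conclusion follows by a short induction on $j\geqslant 0$: assuming $u$ annihilates $v^{k-j}\Lambda_n$, the observation at the start (applied with $u$ replaced by the same element and $k$ replaced by $k-j$) shows that each $\sigma^{-i}(u)$ annihilates $v^{k-j-i}\Lambda_n$ for $i\geqslant 1$. Hence for $h\in v^{k-j-1}\Lambda_n\subset v^{k-j-i}\Lambda_n$ (using the chain $\cdots\subset v\Lambda_n\subset\Lambda_n\subset v^{-1}\Lambda_n\subset\cdots$), every term on the right-hand side of the displayed relation vanishes when applied to $h$, so $u(h)=0$. This completes the induction, giving $u(\V_n)=0$, hence $u=0$.

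There is no real obstacle here; the only subtle point is recognizing that the minimal polynomial of $\sigma|_U$ has nonzero constant term (so that one can solve for $u$ in terms of \emph{negative} powers of $\sigma$ applied to $u$), which is what makes the induction go in the correct direction towards $v^{k-j}\Lambda_n$ with $j\to+\infty$ rather than away from it.
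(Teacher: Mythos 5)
Your argument is correct, but it takes a different route than the paper's. The paper introduces for each $k\in\Z$ the annihilator $Z_k\subset\SH_n$ of $(e_n^{(n)})^k\Lambda_n$, notes that $Z_k\subset Z_{k+1}$, that $\sigma(Z_k)=Z_{k+1}$, and that $\bigcap_k Z_k=\{0\}$ by faithfulness; it then observes that $\sigma(U\cap Z_k)=U\cap Z_{k+1}$ forces $\dim(U\cap Z_k)$ to be constant, while the stabilization of the decreasing chain $U\cap Z_k$ (as $k\to -\infty$) in the finite-dimensional space $U$ forces that constant to be zero. Your approach instead converts the $\sigma$-stability of $U$ into an algebraic identity via the minimal polynomial of $\sigma|_U$, and then runs a direct induction on $j$ to show $u$ annihilates $(e_n^{(n)})^{k-j}\Lambda_n$ for all $j\geqslant 0$. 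Both arguments exploit the same three ingredients (faithfulness of $\rho_n$, the exhaustion $\V_n=\bigcup_j(e_n^{(n)})^{k-j}\Lambda_n$, and the shift identity $\sigma^i(u)$ kills $(e_n^{(n)})^{k+i}\Lambda_n$); the paper's is a one-paragraph dimension count, while yours is more explicit and gives an actual recursion for $u$. One small inaccuracy in your final remark: the ability to express $u$ as a combination of the $\sigma^{-i}(u)$ with $i\geqslant 1$ comes solely from the \emph{monicity} of the minimal polynomial (always true), not from $c_0\neq 0$. In fact $c_0\neq 0$ is what would let you solve for $u$ in terms of \emph{positive} powers of $\sigma$, which would push the induction in the useless direction. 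Your derivation via applying $\sigma^{-d}$ is correct as written and never uses $c_0\neq 0$, so the proof stands; only the closing commentary misidentifies which coefficient is doing the work.
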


\begin{proof} For $k \in \Z$ let $Z_k \subset \SH_n$ 
be the annihilator of $(e_n^{(n)})^k\Lambda_n$. 
We have $Z_k \subset Z_{k+1}$ and $\sigma(Z_k)=Z_{k+1}$.
Further, since $\rho_n$ is faithful we have also
$\bigcap_{k} Z_k=\{0\}$. 
Thus, since $U$ is finite dimensional, there exists 
$l \in \Z$ such that $U \cap Z_l=\{0\}$. 
But $\sigma(U \cap Z_k)=U \cap Z_{k+1}$ for all $k$.
Thus, we have $U \cap Z_k=\{0\}$ for all $k$. \end{proof}

\vspace{.1in}

For $k \geqslant 0$ let $A(k)$ be the subspace of elements of 
$F[D_{0,1}^{(n)}, \ldots, D_{0,k}^{(n)}]$ 
of degree $k$. Here $D_{0,l}^{(n)}$ is in degree $l$.
Consider the following finite-dimensional subspace of $\SH_n$
$$B(k)=\big[ D_{-1,0}^{(n)},[H_k, D_{1,0}^{(n)}]\big] + A(k).$$
From \eqref{E:rel'1} and \eqref{E:rel'2} it follows that 
for $l, k \geqslant 0$ we have
\begin{equation}\label{E:kop1}\begin{split}
[D_{-1,k}^{(n)}, D_{1,l}^{(n)}]
&=[[D_{-1,0}^{(n)},D_{0,k}^{(n)}],D_{1,l}^{(n)}]\\
&=[D_{-1,0}^{(n)},[D_{0,k}^{(n)},D_{1,l}^{(n)}]]\\
&=[D_{-1,0}^{(n)},D_{1,l+k}^{(n)}]\\
&=[D_{-1,0}^{(n)},[D_{0,l+k}^{(n)},D_{1,0}^{(n)}]].
\end{split}\end{equation}
Thus, both sides of \eqref{E:rel3} belong to $B(k+l)$. 
One checks that 
$$\sigma(y_l)=y_l-1,\qquad
\sigma(D_{\pm 1, 0}^{(n)})=D_{\pm 1,0}^{(n)},$$ 
from which we see that the subspace $B(k+l)$ is stable under $\sigma$. 
By Lemma~\ref{lem:kop2} we see that it is 
enough to check
(\ref{E:rel3}) in $(e_n^{(n)})^k \Lambda_n$ for some $k\in\Z$. 
This is what we will do in the next paragraphs.

\vspace{.15in}

\subsection{The Pieri formula for $e_{-1}$} 
We state here a Pieri formula for the multiplication of Jack 
polynomials by the elementary symmetric Laurent polynomial 
$$e_{-1}^{(n)}=X_1^{-1} + \cdots + X_n^{-1}.$$  
Since the product $e_{-1}^{(n)} \cdot J_{\lambda}^{(n)}$ may not be 
a polynomial, we need to restrict the range of application. 
For $\lambda=(\lambda_1, \ldots, \lambda_n)$ we write
$$\lambda-(1^n)=(\lambda_1-1, \ldots, \lambda_n-1),\qquad (1^n)=(1,1,\dots,1).$$
We'll use the following result \cite[sec.~5]{Stanley}.

\begin{lem}\label{lem:kop25} Let $\lambda$ be a partition of 
length $n$. We have
$$J_{\lambda}^{(n)}=c_{\lambda}(\kappa)\, e_n^{(n)} \,
J_{\lambda-(1^n)}^{(n)},\qquad
c_{\lambda}(\kappa)=\prod_{i=1}^n h^{\lambda}(i,1).$$
\end{lem}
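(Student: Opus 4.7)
The strategy is to combine two things: the uniqueness of Jack polynomials as joint eigenvectors of the Sekiguchi operators (stated right after Lemma~\ref{lem:eigenvalues}), and a conjugation identity relating the Dunkl operators $y_i$ to multiplication by the monomial $e_n^{(n)} = X_1 X_2 \cdots X_n$.

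First, I would establish the identity
\[
y_i \cdot e_n^{(n)} = e_n^{(n)} \cdot (y_i + 1) \quad \text{in } \H_n, \qquad i \in [1,n].
\]
This is a direct computation from relation \eqref{E:3}, proved by pushing $y_i$ through the product $X_1 X_2 \cdots X_n$ one factor at a time. At each stage an off-diagonal term of the form $\kappa X_{\min(i,j)} s_{ij}$ is produced, but after multiplying by all the remaining $X_k$'s these off-diagonal contributions cancel in pairs (the verification for $n=2$ already exhibits the pattern). Since the $y_i$'s commute pairwise, this promotes to
\[
f(y) \cdot e_n^{(n)} = e_n^{(n)} \cdot f(y+1)
\]
for every symmetric polynomial $f$, and since $\Sb$ commutes with the symmetric element $e_n^{(n)}$, one gets $\Delta_n(u) \cdot e_n^{(n)} = e_n^{(n)} \cdot \Delta_n(u+1)$.

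Next, I would apply this to $J_{\lambda-(1^n)}^{(n)}$ using Lemma~\ref{lem:eigenvalues} with the partition $\lambda-(1^n)$: writing $\mu_i = \lambda_i - 1$,
\[
\Delta_n(u) \cdot \big(e_n^{(n)} J_{\lambda-(1^n)}^{(n)}\big) = e_n^{(n)} \prod_{i=1}^n \big(u+1+\mu_i+\kappa(n-i)\big) J_{\lambda-(1^n)}^{(n)} = \prod_{i=1}^n \big(u+\lambda_i+\kappa(n-i)\big) \cdot e_n^{(n)} J_{\lambda-(1^n)}^{(n)},
\]
which is exactly the eigenvalue profile of $J_\lambda^{(n)}$. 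By the simplicity of the joint Sekiguchi spectrum, $e_n^{(n)} J_{\lambda-(1^n)}^{(n)}$ must be a scalar multiple of $J_\lambda^{(n)}$, so $J_\lambda^{(n)} = c_\lambda(\kappa) \cdot e_n^{(n)} J_{\lambda-(1^n)}^{(n)}$ for some $c_\lambda(\kappa) \in F^\times$.

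Finally, the scalar is identified by comparing coefficients of the leading monomial $m_\lambda^{(n)}$ on both sides: since $e_n^{(n)} \cdot m_\nu^{(n)} = m_{\nu+(1^n)}^{(n)}$, this coefficient ratio equals $[J_\lambda^{(n)} : m_\lambda]/[J_{\lambda-(1^n)}^{(n)} : m_{\lambda-(1^n)}]$, and each such leading coefficient for an integral-form Jack polynomial is a standard hook product. Removing the first column of $\lambda$ does not affect the arm length or leg length of any box in columns $\geq 2$, so the hook factors for all such boxes cancel in the ratio, and what remains is precisely the product over the first column,
\[
c_\lambda(\kappa) = \prod_{i=1}^n h^\lambda(i,1).
\]

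The computational heart of the argument is the inductive cancellation of off-diagonal $s_{ij}$-terms in the first step; the combinatorial bookkeeping in the final step is comparatively straightforward once one notes that single-column removal preserves all other hook lengths.
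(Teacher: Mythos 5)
The paper cites Stanley (Sec.\ 5) for this lemma and gives no argument, so there is nothing internal to compare against; your sketch is a correct, self-contained proof. The conjugation $y_i\cdot e_n^{(n)}=e_n^{(n)}\cdot(y_i+1)$ does hold, and it is worth making the cancellation pairing precise rather than saying ``cancel in pairs'': the off-diagonal term $-\kappa X_{\min(i,j)}s_{ij}$ arising from the factor $X_j$ ($j\neq i$), once $s_{ij}$ is pushed to the right through the remaining $X_k$'s, is the exact negative of the $k=j$ summand $\kappa X_{\min(i,j)}s_{ij}$ inside the diagonal commutator $[y_i,X_i]$; this matches up every off-diagonal contribution one-to-one. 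The eigenvalue shift $\Delta_n(u)\,e_n^{(n)}=e_n^{(n)}\,\Delta_n(u+1)$, the appeal to simplicity of the joint Sekiguchi spectrum over $F=\C(\kappa)$, and the identification of the scalar via the $m_\lambda$-coefficient together with the observation that column removal preserves all remaining arm and leg lengths are all correct and form the cleanest route to $\prod_{i=1}^n h^\lambda(i,1)$. One caveat to record: the identity $[m_\lambda]J_\lambda^{(n)}=\prod_{s\in\lambda}h^\lambda(s)$ that drives the cancellation is a $\kappa$-rescaled form of the usual hook-product formula; taking Macdonald's $J_\lambda^{(1/\kappa)}$ literally inserts an extra $\kappa^{-|\lambda|}$ in the leading coefficient and hence a factor $\kappa^{-n}$ in $c_\lambda(\kappa)$. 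This tension is already latent in the paper's own conventions (the stated value of $c_\lambda(\kappa)$ sits uneasily with the claim that $J_\lambda$ is literally Macdonald's integral form), so it is not a gap in your reasoning, but the leading-coefficient identity you invoke deserves one explicit line rather than being left as ``a standard hook product''.
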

 
\noindent
Thus, we have
$e_n^{(n)} \Lambda_n=
\bigoplus_{\lambda} F J_{\lambda}^{(n)},$
where the sum runs over the partitions of length $n$.

\vspace{.1in}

\begin{prop}\label{E:Pierin} Let $\lambda$ be a partition of length $n$. We have
$$e_{-1}^{(n)} J_{\lambda}^{(n)}=
\sum_{\mu \subset \lambda} \phi_{\lambda\setminus\mu}\, J_{\mu}^{(n)}$$
where the sum ranges over all $\mu \subset \lambda$ with 
$|\mu|=|\lambda|-1$ and where
$$\phi_{\lambda\setminus\mu}=
\frac{1}{\kappa} \frac{{h}^{\lambda}(1,j)}{h^{\mu}(1,j)}
\prod_{s \in C_{\lambda\backslash \mu}} \frac{h^{\lambda}(s)}{h^{\mu}(s)} 
\prod_{s \in R_{\lambda \backslash \mu}} \frac{h_{\lambda}(s)}{h_{\mu}(s)},\quad
j=y(\lambda \backslash \mu)+1.$$
\end{prop}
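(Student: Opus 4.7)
Plan: The strategy is to reduce the Pieri-type formula for $e_{-1}^{(n)}$ to the known Pieri formula for $e_1^{(n)}$ (Theorem~\ref{T:Pieri}) by invoking Lemma~\ref{lem:kop25}. The key algebraic identity is
\[
e_{-1}^{(n)}\cdot e_n^{(n)} = e_{n-1}^{(n)}
\]
in the Laurent polynomial ring $F[X_1^{\pm 1},\ldots,X_n^{\pm 1}]$. Since $l(\lambda)=n$, Lemma~\ref{lem:kop25} lets us rewrite $J_\lambda^{(n)} = c_\lambda(\kappa)\,e_n^{(n)}\,J_{\lambda-(1^n)}^{(n)}$, whence
\[
e_{-1}^{(n)} J_\lambda^{(n)} = c_\lambda(\kappa)\,e_{n-1}^{(n)}\,J_{\lambda-(1^n)}^{(n)}.
\]
The problem is thereby reduced to computing the action of $e_{n-1}^{(n)}$ on a Jack polynomial, which is handled by the classical Pieri formula for multiplication by $e_r$ on integral Jack polynomials (here with $r=n-1$). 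This formula is dual to Theorem~\ref{T:Pieri} and may be derived, for instance, from Macdonald's book or by adjointness under the Jack inner product combined with the conjugation involution $\omega_\kappa$.

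Applying the $e_{n-1}$-Pieri rule, one expands $e_{n-1}^{(n)} J_{\lambda-(1^n)}^{(n)}$ as a sum over partitions $\nu\supset\lambda-(1^n)$ for which $\nu/(\lambda-(1^n))$ is a vertical $(n-1)$-strip; only the $\nu$ with $l(\nu)\leqslant n$ contribute, and a simple box-counting shows that in fact $l(\nu)=n$ throughout. Setting $\mu:=\nu-(1^n)$ and applying Lemma~\ref{lem:kop25} a second time to replace $J_\nu^{(n)}$ by $c_{\mu+(1^n)}(\kappa)^{-1}\,e_n^{(n)} J_\mu^{(n)}$, one finds that these $\nu$'s are in bijection with the partitions $\mu\subset\lambda$ satisfying $|\mu|=|\lambda|-1$, yielding an expansion
\[
e_{-1}^{(n)} J_\lambda^{(n)} = \sum_{\mu}\frac{c_\lambda(\kappa)}{c_{\mu+(1^n)}(\kappa)}\,\psi'_{(\mu+(1^n))/(\lambda-(1^n))}\, J_\mu^{(n)},
\]
where $\psi'$ denotes the relevant Pieri coefficient. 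The index set already matches that of the proposition.

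The remaining step is to identify the coefficient with $\phi_{\lambda\setminus\mu}$, which amounts to a purely combinatorial identity between products of hook lengths. The ratio $c_\lambda(\kappa)/c_{\mu+(1^n)}(\kappa)$ contributes a product over boxes of the first column, while $\psi'$ contributes products over the boxes of the vertical strip. Careful comparison of the hook lengths of the four partitions $\lambda,\mu,\lambda-(1^n),\mu+(1^n)$ at each relevant box, noting that the shift $\nu\mapsto\nu\pm(1^n)$ affects only the arm-length in the first column (by $\mp 1$) and that removing a single box changes exactly one hook per row and per column, produces large cancellations. After simplification only the products over $R_{\lambda\setminus\mu}$ and $C_{\lambda\setminus\mu}$ survive, together with the boundary contribution $(1/\kappa)\,h^\lambda(1,j)/h^\mu(1,j)$ coming from the first-column entry in row $j=y(\lambda\setminus\mu)+1$ of the removed box. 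The main obstacle is precisely this hook-length bookkeeping: while mechanical, it requires a careful case analysis according to the position of the removed box relative to the first column and to the shape of the vertical strip.
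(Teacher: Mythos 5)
Your overall strategy coincides with the paper's: both proofs start from the identity $e_{-1}^{(n)}\,e_n^{(n)}=e_{n-1}^{(n)}$ together with Lemma~\ref{lem:kop25} to reduce to the action of $e_{n-1}^{(n)}$ on $J_{\lambda-(1^n)}^{(n)}$, and both then invoke Stanley's $e_r$-Pieri rule (or its dual form via the conjugation involution) and leave the hook-length bookkeeping to the reader.

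However, there is a concrete error in the middle of your argument. After applying the $e_{n-1}^{(n)}$-Pieri rule to $J_{\lambda-(1^n)}^{(n)}$, the partitions $\nu$ that appear already satisfy
$$|\nu|=|\lambda-(1^n)|+(n-1)=|\lambda|-1,$$
and a direct box-by-box check shows that the map $\mu\mapsto\mu$ is a bijection between the set $\{\nu\supset\lambda-(1^n):\ \nu/(\lambda-(1^n))\ \text{vertical }(n-1)\text{-strip},\ l(\nu)\leqslant n\}$ and the set $\{\mu\subset\lambda:\ |\mu|=|\lambda|-1\}$. In other words, the $\nu$'s \emph{are} the $\mu$'s of the proposition; there is no need for (and no room for) a second application of Lemma~\ref{lem:kop25}. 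Your substitution $\mu:=\nu-(1^n)$ produces a partition of size $|\lambda|-1-n$, not $|\lambda|-1$, so it cannot index the sum in the statement, and the accompanying ``second application of Lemma~\ref{lem:kop25}'' reintroduces an $e_n^{(n)}$ that is not supposed to be there. This does not invalidate the overall approach -- dropping that shift and the second application, and matching $\nu=\mu$ directly, restores the paper's argument -- but as written your coefficient expression $\frac{c_\lambda(\kappa)}{c_{\mu+(1^n)}(\kappa)}\,\psi'_{(\mu+(1^n))/(\lambda-(1^n))}$ is based on the wrong identification, and the subsequent hook-length cancellation you describe would not produce $\phi_{\lambda\setminus\mu}$ in the stated form.
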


\begin{proof}We have
\begin{equation}\label{E:pierideux}
\begin{split}
e_{-1}^{(n)} J_{\lambda}^{(n)}&=
c_{\lambda}(\kappa)\, e_{-1}^{(n)}\, e_n^{(n)}\, J_{\lambda-(1^n)}^{(n)}
=c_{\lambda}(\kappa)\, e_{n-1}^{(n)}\, J_{\lambda-(1^n)}^{(n)}.
\end{split}
\end{equation}
This allows us to use the Pieri formulas for the multiplication by 
$e_{n-1}^{(n)}$ 
given in \cite[thm.~6.1]{Stanley},
using the duality \cite[thm.~3.3]{Stanley}. We leave the details to the reader. 
\end{proof}

\vspace{.15in}

\subsection{Proof of Proposition \ref{P:<0>}}
For a linear operator $f$ on $\Lambda_n$ we define 
$\langle \mu \; ;\; f \; ;\; \lambda \rangle$ by 
$$f(J_{\lambda}^{(n)})=\sum_{\mu} \langle \mu \; ;\; f \; ;\; \lambda \rangle 
\,J_{\mu}^{(n)}.$$
Using the explicit expressions of the Pieri rules for $e_{\pm 1}^{(n)}$ 
it is easy to check that 
\begin{equation}\label{E:kop3}
\langle \mu \; ;\; [D_{-1,0}^{(n)},[D_{0,l}^{(n)}, D_{1,0}^{(n)}]]\; ;\; 
\lambda \rangle=0 \end{equation}
for any $\mu \neq \lambda$ with $l(\lambda)=n$,
compare \cite[app.~ A]{SV2}. In the remainder of this paragraph, 
we compute precisely the coefficient arising in \eqref{E:kop3}.
We will use the following notation introduced by Garsia and Tesler.
Label the removable boxes of $\lambda$ by $B_1, B_2,
\ldots, B_r$ from left to right, and the addable boxes  $A_0,
\ldots, A_{r}$ also from left to right. Set 
$I=\{0, \ldots, r\}$, $J=\{1, \ldots, r\}$ and 
\begin{equation}
\label{ux}
a_i=c(A_i), \quad  b_j= c(B_j),\quad i\in I, \quad j\in J.
\end{equation}
Observe that we have
$$x(A_0)=y(A_r)=0,\quad
x(A_j)=x(B_j)+1,\quad y(A_{j-1})=y(B_{j})+1,\quad j\in J.$$ 

\begin{ex} Here is an example
with $\lambda=(4,2^2,1^2)$

\centerline{
\begin{picture}(250,150)
\thicklines
\put(0,20){\line(0,1){100}}
\put(0,20){\line(1,0){80}} 
\put(0,40){\line(1,0){80}}
\put(0,60){\line(1,0){40}}
\put(0,80){\line(1,0){40}}
\put(0,120){\line(1,0){20}}
\put(0,100){\line(1,0){20}}
\put(20,20){\line(0,1){100}}
\put(40,20){\line(0,1){60}}
\put(60,20){\line(0,1){20}}
\put(80,20){\line(0,1){20}}
\put(5,108){{${B_1}$}}
\put(25,68){{${B_2}$}}
\put(65,28){{${B_3}$}}
\put(85,28){{${A_3}$}}
\put(5,128){{${A_0}$}}
\put(25,88){{${A_1}$}}
\put(45,48){{${A_2}$}}
\thinlines
\multiput(0,120)(0,4){5}{\line(0,1){2}}
\multiput(20,120)(0,4){5}{\line(0,1){2}}
\multiput(0,140)(4,0){5}{\line(1,0){2}}
\multiput(20,100)(4,0){5}{\line(1,0){2}}
\multiput(40,80)(0,4){5}{\line(0,1){2}}
\multiput(60,40)(0,4){5}{\line(0,1){2}}
\multiput(40,60)(4,0){5}{\line(1,0){2}}
\multiput(100,20)(0,4){5}{\line(0,1){2}}
\multiput(80,20)(4,0){5}{\line(1,0){2}}
\multiput(80,40)(4,0){5}{\line(1,0){2}}
\put(160,80){{$b_3=3$}} \put(160,95){{$b_2=1-2\kappa$}}
\put(160,65){{$a_0=-5\kappa$}} \put(160,110){{$b_1=-4\kappa$}}
\put(160,50){{$a_1=1-3\kappa$}} \put(160,20){{$a_3=4$}}
\put(160,35){{$a_2=2-\kappa$}} 
\end{picture}}
\vspace{.05in}
\centerline{\textbf{Figure 2.} Garsia and Tesler's variables.}
\end{ex}

\vspace{.15in}

Let us begin by rewriting the expressions appearing in the Pieri rules in terms of Garsia and Tesler's notation. 
Let $\lambda$ be a fixed partition and let 
$B_j, A_i, a_i, b_j$ be associated with $\lambda$ as above. 
A direct computation yields

\vspace{.1in}

\begin{lem}\label{lem:formuliux} For $i\in I$, $j\in J$ we have
$$\prod_{s \in C_{A_i}} \frac{h_{\lambda}(s)}{h_{\sigma}(s)} \prod_{s \in R_{A_i}} 
\frac{h^{\lambda}(s)}{h^{\sigma}(s)}=\prod_{j\in J}(a_i-\xi-b_j) 
\prod_{\substack{k\in I\setminus\{ i\}}} \frac{1}{a_i-a_k},\quad
\sigma=\lambda + A_i,$$
$$\prod_{s \in C_{B_j}}\frac{h^{\lambda}(s)}{h^{\nu}(s)} \prod_{s \in R_{B_j}} \frac{h_{\lambda}(s)}{h_{\nu}(s)}
=-\frac{1}{\kappa}\prod_{i\in I}(a_i-\xi-b_j) \prod_{\substack{k\in J\setminus\{j\}}} \frac{1}{b_k-b_j},\quad
\nu=\lambda-B_j.$$
\end{lem}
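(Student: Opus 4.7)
The two identities are proved by the same method; I describe the approach for the first one, involving the addable corner $A_i$ and $\sigma = \lambda + A_i$, the second following either by applying the same argument to $B_j$ and $\nu = \lambda - B_j$ or by inverting the first. The starting point is the direct computation of how hooks change upon adding $A_i$: for $s \in C_{A_i}$, adding $A_i$ at the top of column $x(A_i)+1$ raises the leg length of $s$ by one while leaving the arm length unchanged, so $h_\sigma(s) = h_\lambda(s) + \kappa$; for $s \in R_{A_i}$, the roles are reversed and $h^\sigma(s) = h^\lambda(s) + 1$. The left-hand side of the identity is thus a purely combinatorial ratio built from the hook lengths of $\lambda$ along the row and column of $A_i$.

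The core of the proof is a telescoping argument. Write $A_i = (C_i, R_i)$ in column/row coordinates, parameterize $C_{A_i}$ by the row index $r \in \{1, \ldots, R_i - 1\}$, set $u_r = \lambda_r - C_i + 1$, and decompose the range of $r$ into maximal \emph{plateaus} on which $u_r$ is constant. Along a plateau, only the leg length varies and it takes consecutive integer values, so the product
$$
\prod_{r \in \text{plateau}} \frac{\kappa\, l_\lambda(s) + u_r}{\kappa\, (l_\lambda(s)+1) + u_r}
$$
telescopes to a single ratio of boundary hook values. The same decomposition and telescoping argument is applied to $R_{A_i}$, this time parameterized by the column $c \in \{1, \ldots, C_i - 1\}$ with plateaus defined by constancy of $\lambda'_c$. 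A direct check on a small example such as $\lambda = (2,1)$ with $i = 1$ confirms the formula.

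The remaining task is to match the surviving boundary terms with the contents $a_i$, $b_j$, $a_k$. The drops separating consecutive plateaus in $C_{A_i}$ (and similarly in $R_{A_i}$) correspond to the removable corners $B_j$ of $\lambda$: if $B_j$ sits at the row where one plateau drops to the next in the column direction, then using $c(A_i) = a_i$, $c(B_j) = b_j$ and $\xi = 1 - \kappa$, the relevant telescoped factor becomes $-(a_i - \xi - b_j)$. The outer plateau boundaries — the top of the column, the left end of the row, and the boundary touching $A_i$ — pair up through an analogous computation with the remaining addable corners $A_k$ ($k \neq i$) to produce $\prod_{k \ne i}(a_i - a_k)^{-1}$, with signs accounting for the extra $-1/\kappa$ prefactor appearing in the $B_j$-version of the identity. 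The main obstacle is precisely this bookkeeping: among the roughly $2r+1$ plateau endpoints coming from $C_{A_i}$ and $R_{A_i}$ together, one must identify exactly $r$ with the $r$ removable corners and the remaining $r$ with the $r$ other addable corners, while carefully tracking the $\kappa$- versus $1$-shifts arising from the asymmetry between the column-hooks $h_\lambda$ and the row-hooks $h^\lambda$.
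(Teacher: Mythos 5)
Your basic strategy — telescope along the arm-length plateaus of $C_{A_i}$ and $R_{A_i}$ and express the boundary hooks in terms of the contents $a_i$, $b_j$, $a_k$ — is a reasonable way to supply the ``direct computation'' the paper alludes to, and the opening relations $h_\sigma(s)=h_\lambda(s)+\kappa$ on $C_{A_i}$, $h^\sigma(s)=h^\lambda(s)+1$ on $R_{A_i}$ are correct. But the boundary-to-corner matching and the account of the signs are wrong, and the casework needed for the second identity is omitted.

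For the first identity: on $C_{A_i}$ each plateau telescopes to $N_{r_2}/D_{r_1}$, and one finds $N_{r_2}=-(a_i-\xi-b_k)$ for the removable corner $B_k$ at the \emph{top} of that plateau while $D_{r_1}=-(a_i-a_k)$ for the addable corner $A_k$ at its \emph{bottom}; on $R_{A_i}$ the analogous boundary factors appear with a $+$ sign instead. Thus each plateau contributes one $B$ to the numerator and one $A$ to the denominator, the minuses cancel in pairs, and there is nothing further to track. Your description — inner drops give all the $B_j$, outer boundaries give all the $A_k$ — cannot be correct: there are only four outer boundaries but $r$ addable corners $A_k$ with $k\neq i$, and in fact the two outer boundaries adjacent to $A_i$ (namely $r_2=\mathrm{row}(A_i)-1$ in $C_{A_i}$ and $c_2=\mathrm{col}(A_i)-1$ in $R_{A_i}$) give the \emph{removable} corners $B_{i+1}$ and $B_i$, not addable ones. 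What makes the first identity close with no leftovers is that, since $A_i$ is addable, the plateaus of $C_{A_i}$ are exactly the staircase intervals $[\mathrm{row}(A_k),\mathrm{row}(B_k)]$ for $k>i$, and those of $R_{A_i}$ are $[\mathrm{col}(A_{k-1}),\mathrm{col}(B_k)]$ for $k\leqslant i$.

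For the second identity, two of your claims fail. It does not follow from the first ``by inverting'': the two statements involve \emph{different} hook functions on each set of boxes ($h_\lambda$ versus $h^\lambda$ on $C$, and conversely on $R$), so one side is not the reciprocal of the other. And the $-1/\kappa$ prefactor does not come from signs. Because $B_j$ is removable, the last plateau of $C_{B_j}$ (ending at $r_2=\mathrm{row}(B_j)-1$) is truncated: if $\lambda_{\mathrm{row}(B_j)-1}=\mathrm{col}(B_j)$ its upper boundary gives a bare factor $\kappa$ instead of a content difference, while if $\lambda_{\mathrm{row}(B_j)-1}>\mathrm{col}(B_j)$ the addable corner $A_j$ is missing from the telescoping numerators and one must use the auxiliary identity $a_j-\xi-b_j=\kappa$, valid precisely in that case. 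A symmetric dichotomy on $R_{B_j}$ involves the factor $1$ and the identity $a_{j-1}-\xi-b_j=-1$. Verifying that all four combinations of these degenerations produce exactly the stated $-1/\kappa$ is the real content of the second half of the lemma, and it is absent from your sketch. The paper gives no proof of its own, so the plateau-telescoping route is fine in spirit — but as written your proposal has the matching wrong and significantly underestimates the remaining work.
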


\vspace{.1in}

Set $I^\times=I\setminus\{0\}$. The above lemma yields the following.

\begin{cor} If $\lambda$ has length $n$ then, for $l\geqslant 0$, we have
\begin{equation}\label{E:kop4}
\begin{split}
\langle  \lambda \; ;\; [D_{-1,0}^{(n)},[D_{0,l}^{(n)},D_{1,0}^{(n)}]]\; ;\; \lambda \rangle &=
\sum_{i \in I^\times} a_i^l  
\prod_{k\in I^\times\setminus\{i\}} \frac{a_i-a_k+\xi}{a_i-a_k}
\prod_{j\in J} \frac{a_i-b_j-\xi}{a_i-b_j} \\
& \qquad  -\sum_{j\in J} b_j^l \prod_{i\in I^\times} \frac{b_j-a_i+\xi}{b_j-a_i} 
\prod_{k \in J\setminus\{j\}} \frac{b_j-b_k-\xi}{b_j-b_k}
\end{split}
\end{equation}
 \end{cor}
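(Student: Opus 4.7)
The plan is to compute the diagonal matrix coefficient
$\langle \lambda \,;\, [D_{-1,0}^{(n)},[D_{0,l}^{(n)},D_{1,0}^{(n)}]] \,;\, \lambda \rangle$
directly by tracking how each of the three operators moves between Jack polynomials under the faithful polynomial representation $\rho_n^+$ on $\Lambda_n$. The operator $D_{1,0}^{(n)}$ adds one box to $\lambda$ and acts via the Pieri coefficients $\psi_{\sigma\setminus\lambda}$ of Theorem~\ref{T:Pieri}; the operator $D_{-1,0}^{(n)}$, being the action of $e_{-1}^{(n)}$, removes one box and acts via the dual Pieri coefficients $\phi_{\lambda\setminus\nu}$ of Proposition~\ref{E:Pierin}; and $D_{0,l}^{(n)}$ is diagonal in the Jack basis with eigenvalue $\sum_{s\in\lambda}c(s)^{l-1}$ by formula \eqref{E:Horace}. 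Crucially, because $l(\lambda)=n$, the only addable boxes visible in the $n$-variable polynomial representation are indexed by $i\in I^{\times}$: the box $A_0$ would push $\lambda$ to length $n+1$, on which the projection $\pi_n$ vanishes. This is why $I^{\times}$ (and not $I$) appears in the first sum on the right-hand side of \eqref{E:kop4}.

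Next, I would expand the iterated commutator and compute the coefficient of $J_{\lambda}^{(n)}$ in each of the four resulting compositions. In the inner commutator $[D_{0,l}^{(n)},D_{1,0}^{(n)}]$, the diagonal contribution of $D_{0,l}^{(n)}$ on $\lambda$ cancels against that on $\sigma_i=\lambda+A_i$, leaving only the single box $A_i$ being added; the eigenvalue telescopes to $c(A_i)^{l-1}=a_i^{l-1}$, and analogously one gets $b_j^{l-1}$ when the composition first removes $B_j$. Extracting the coefficient of $J_\lambda^{(n)}$ in the outer commutator then collapses the expression to a sum over $I^{\times}$ (boxes added and then immediately removed) minus a sum over $J$ (boxes removed and then immediately added), weighted respectively by $\psi_{\sigma_i\setminus\lambda}\,\phi_{\sigma_i\setminus\lambda}$ and by $\phi_{\lambda\setminus\nu_j}\,\psi_{\lambda\setminus\nu_j}$ together with the appropriate power of $a_i$ or $b_j$.

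The final step is to substitute the hook-product identities of Lemma~\ref{lem:formuliux}. For the $A_i$-contribution one has $\psi_{\sigma_i\setminus\lambda}=\prod_{j\in J}(a_i-\xi-b_j)\prod_{k\in I\setminus\{i\}}(a_i-a_k)^{-1}$, while for $\phi_{\sigma_i\setminus\lambda}$ one combines Lemma~\ref{lem:formuliux} with the extra hook ratio $h^\lambda(1,j)/h^\mu(1,j)$ appearing in Proposition~\ref{E:Pierin}. Multiplying the two expressions, reindexing $a_k\leftrightarrow a_i$ and using the elementary rewriting $-(a_i-a_k-\xi)=(a_k-a_i)+\xi$ to move the shift $\xi$ into the numerators, produces the rational expressions $\prod_{k\in I^{\times}\setminus\{i\}}(a_i-a_k+\xi)/(a_i-a_k)$ and $\prod_{j\in J}(a_i-b_j-\xi)/(a_i-b_j)$, and the same bookkeeping handles the $B_j$-sum.

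The main obstacle will be the combinatorial bookkeeping of this last step: one must carefully absorb the index $A_0$, which appears in Lemma~\ref{lem:formuliux} as an entry of the full product over $I$, into the extra hook-length ratio $h^\lambda(1,j)/h^\mu(1,j)$ from Proposition~\ref{E:Pierin}, and verify that the signs and the various powers of $\kappa$ match after the ensuing cancellations. Once this is done, the identity \eqref{E:kop4} follows by direct substitution, and together with Lemma~\ref{lem:kop2} it will reduce the proof of \eqref{E:rel3} to the check that the generating function \eqref{E:rel35} produces the same diagonal coefficient, which is a routine contour-integral manipulation of the functions $\lg_l$, $\varphi_l$.
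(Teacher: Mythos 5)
Your proposal is the paper's own route to the corollary: expand the double commutator in the faithful polynomial representation using the Pieri rule for $e_1$ (Theorem~\ref{T:Pieri}), the inverse Pieri rule for $e_{-1}$ (Proposition~\ref{E:Pierin}), and the eigenvalue formula \eqref{E:Horace}, and then rewrite the hook products via Lemma~\ref{lem:formuliux}. In particular your explanation of why $I^{\times}$ rather than $I$ indexes the addable boxes, namely that $\pi_n$ annihilates $\lambda+A_0$ because $l(\lambda)=n$, is exactly right and is the one nontrivial observation the paper leaves implicit.

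However, there is a tension you should resolve before asserting that the identity follows by direct substitution. Your telescoping produces the exponent $a_i^{l-1}$: since the eigenvalue of $D_{0,l}^{(n)}$ is $\sum_{s\in\lambda}c(s)^{l-1}$, the inner commutator $[D_{0,l}^{(n)},D_{1,0}^{(n)}]$ equals $D_{1,l-1}^{(n)}$, and the iterated commutator is $E_{l-1}^{(n)}$. The displayed right-hand side of \eqref{E:kop4} carries the exponent $a_i^l$, and the hook-product ratios coming from Lemma~\ref{lem:formuliux} do not supply the missing factor of $a_i$. A sanity check at $\lambda=(1)$, $n=1$, $l=1$ makes this concrete: the left side is $E_0^{(1)}=[X_1^{-1},X_1]=0$, whereas with $a_1=1$ and $b_1=0$ the right-hand side as printed evaluates to $a_1(1-\xi)=\kappa\neq 0$, while replacing $a_i^l$, $b_j^l$ by $a_i^{l-1}$, $b_j^{l-1}$ gives $0$, consistent with your computation. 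You should therefore either locate where an extra factor of $a_i$ would enter (it does not), or state explicitly that the exponent your method produces is $l-1$ and reconcile this index shift with the printed formula and with the generating-function manipulation that follows it in the appendix.
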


Note that in (\ref{E:kop4}) the variable $a_0$ never appears.
In fact, we have $a_0=-n\kappa$ since $l(\lambda)=n$.
Let us now form the generating series 
$$X^{(n)}(t)=\sum_{l\geqslant 0}\langle 
 \lambda \; ;\; [D_{-1,0}^{(n)},[D_{0,l}^{(n)},D_{1,0}^{(n)}]]\; ;\; \lambda \rangle\, t^l.$$
By (\ref{E:kop4}) we have
\begin{equation*}
\begin{split}
X^{(n)}(t)=&
\sum_{i \in I^\times}\frac{1}{1-a_it}
\prod_{k\in I^\times\setminus\{i\}} \frac{a_i-a_k+\xi}{a_i-a_k}\prod_{j\in J} \frac{a_i-b_j-\xi}{a_i-b_j}
 \\
& \qquad  -\sum_{j\in J} \frac{1}{1-b_jt} \prod_{i\in I^\times} \frac{b_j-a_i+\xi}{b_j-a_i} 
\prod_{k \in J\setminus\{j\}} 
\frac{b_j-b_k-\xi}{b_j-b_k}.
\end{split}
\end{equation*}

\vspace{.1in}

\begin{lem}\label{lem:kop5} Given two disjoint sets of commutative formal variables
$\{a_i\;;\;i\in I^\times\}$ and $\{b_j\;;\;j\in J\}$ we have
\begin{equation*}
\begin{split}
& \sum_{i\in I^\times} \frac{t\xi}{1-a_it}\, 
\prod_{k \in I^\times\setminus\{i\}} \frac{a_i-a_k+\xi}{a_i-a_k}\prod_{j\in J} \frac{a_i-b_j-\xi}{a_i-b_j}-
\sum_{j\in J} \frac{t\xi}{1-b_jt}\,\prod_{i\in I^\times}\frac{b_j-a_i+\xi}{b_j-a_i}
\prod_{k\in J\setminus\{j\}} \frac{b_j-b_k-\xi}{b_j-b_k}=\\
&=\prod_{i\in I^\times} \frac{1-t(a_i-\xi)}{1-ta_i}\prod_{j\in J}\frac{1-t(b_j+\xi)}{1-tb_j} -1.
\end{split}
\end{equation*}
\end{lem}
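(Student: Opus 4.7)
The plan is to regard both sides of the claimed identity as rational functions of $t$ (with $\xi$ and the $a_i,b_j$ treated as generic parameters) and to match them via their polar data together with an auxiliary evaluation at $t=0$. This is the standard ``partial fractions'' strategy, and it is well suited here because both sides have the same small set of candidate simple poles.

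First I would note that the right hand side, written as $F(t)-1$ with $F(t)$ the displayed product, has at most simple poles at $t=1/a_i$ ($i\in I^\times$) and $t=1/b_j$ ($j\in J$); likewise, every summand on the left hand side is a constant (in $t$) multiple of $\tfrac{t\xi}{1-a_it}$ or $\tfrac{t\xi}{1-b_jt}$, hence also has at most simple poles at the same points. Next, I would match residues pole by pole: at $t=1/a_i$ only the $i$-th factor of $F(t)$ contributes, and a short computation yields
\[
\operatorname*{Res}_{t=1/a_i}F(t)\;=\;-\frac{\xi}{a_i^{2}}\prod_{k\in I^\times\setminus\{i\}}\frac{a_i-a_k+\xi}{a_i-a_k}\prod_{j\in J}\frac{a_i-b_j-\xi}{a_i-b_j},
\]
which coincides with $\operatorname{Res}_{t=1/a_i}$ of the corresponding summand on the left hand side, once one uses $\operatorname{Res}_{t=1/a_i}\tfrac{t\xi}{1-a_it}=-\xi/a_i^{2}$. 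The same calculation, with the roles of $a$'s and $b$'s interchanged and a sign flip, handles the poles at $t=1/b_j$. Since furthermore both sides vanish at $t=0$ (left: explicit factor of $t$; right: $F(0)=1$), and both are bounded as $t\to\infty$, their difference is a polynomial in $t$ with no constant term and bounded at infinity, hence identically zero.

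The main obstacle will be purely combinatorial bookkeeping: keeping track of signs and of the shifts by $\pm\xi$ in the numerators when computing the residue of $F(t)$ at $t=1/a_i$, and similarly at $t=1/b_j$. Once one verifies the single algebraic identity $\operatorname{Res}_{t=1/a_i}\tfrac{1-t(a_i-\xi)}{1-ta_i}=-\xi/a_i$ (from which the global factor $-\xi/a_i^{2}$ arises after dividing by $a_i$), the remaining products match the combinatorial prefactors in the two summations on the left side term by term, and the lemma follows.
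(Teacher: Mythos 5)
Your proof is correct and follows essentially the same strategy as the paper's: both sides are rational functions of $t$ with at most simple poles at $t=1/a_i$ and $t=1/b_j$; matching residues shows the difference is a polynomial, boundedness at infinity makes it a constant, and evaluation at $t=0$ shows that constant is zero. (One small slip in your final parenthetical: the residue of $\tfrac{1-t(a_i-\xi)}{1-ta_i}$ at $t=1/a_i$ is $-\xi/a_i^{2}$, not $-\xi/a_i$, as your main computation in fact already correctly records.)
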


\begin{proof} Both sides of the equality are rational functions in $t$ of degree $0$, with at most simple poles. 
One checks that the poles and residues are the same. This implies the equality, 
up to a possible constant. But both sides vanish at $t=0$. So this constant is zero.
\end{proof}

\vspace{.1in}

The above lemma implies the equality
\begin{equation*}
\begin{split}
1+t\xi X^{(n)}(t)&=\prod_{i\in I^\times}\frac{1-t(a_i-\xi)}{1-ta_i}\prod_{j\in J}\frac{1-t(b_j+\xi)}{1-tb_j}\\
&=\exp\bigg(\sum_{l \geqslant 1} \big(p_l(a_i)^\times-p_l(a_i-\xi)^\times
+p_l(b_j)-p_l(b_j+\xi)\big){t^l}/l\bigg),
\end{split}
\end{equation*}
where
$$p_l(a_i)^\times=\sum_{i \in I^\times} a_i^l,\quad
p_l(a_i-\xi)^\times=\sum_{i \in I^\times}(a_i-\xi)^l,\quad 
p_l(b_j)=\sum_{j\in J}b_j^l,\quad\text{etc.}$$
The last step is to identify the expression above
with the 
eigenvalue of an element in $\SH_n^0$ on the Jack polynomial 
$J_{\lambda}(X_1, \ldots, X_n)$.
From \eqref{E:Horace} we get
$$\langle \mu\; ;\;D_{0,l}^{(n)}\; ;\; \mu \rangle=\sum_{s \in \mu}c(s)^{l-1}.$$
We'll use the following notation
$$\gathered
p_l(a_i)=\sum_{i\in I} a_i^l,\\
p_l(a_i-\xi)=\sum_{i\in I}(a_i-\xi)^l,\\
\sigma_l(x)=(x+1)^l-(x-1)^l+(x+\xi-1)^l-(x-\xi+1)^l+(x-\xi)^l-(x+\xi)^l.
\endgathered$$

\vspace{.1in}

\begin{lem}\label{lem:kop6} 
We have
\begin{equation}\label{E:kop6}
\gathered
p_l(a_i)-p_l(a_i-\xi)+p_l(b_j)-p_l(b_j+\xi)=
(-1)^{l+1}\xi^l+\sum_{s \in \lambda} \sigma_l\big(c(s)\big).
\endgathered
\end{equation}
\end{lem}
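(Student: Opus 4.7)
The plan is to translate the identity into a product formula via generating series, and then verify that product formula by induction on $|\lambda|$.

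First, multiplying both sides of \eqref{E:kop6} by $t^l/l$ and summing over $l\geqslant 1$ turns it, via $\sum_{l\geqslant 1}(ta)^l/l=-\log(1-ta)$, into the product identity
\[
\prod_{i\in I}\frac{1-t(a_i-\xi)}{1-ta_i}\,\prod_{j\in J}\frac{1-t(b_j+\xi)}{1-tb_j}=(1+t\xi)\prod_{s\in\lambda}\frac{(1-t(c(s)-1))(1-t(c(s)+\kappa))(1-t(c(s)+\xi))}{(1-t(c(s)+1))(1-t(c(s)-\kappa))(1-t(c(s)-\xi))}
\]
of formal power series in $t$ with constant term $1$. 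The base case $\lambda=\emptyset$ is immediate: then $I=\{0\}$, $J=\emptyset$, $a_0=0$, and both sides reduce to $1+t\xi$.

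For the inductive step I take $\mu=\lambda\cup\{s_0\}$ where $s_0=A_{i_0}$ is an addable box of $\lambda$ at position $(x_0+1,y_0+1)$ with content $c_0=x_0-\kappa y_0$. The right-hand side then picks up the factor
\[
\Phi(c_0) = \frac{(1-t(c_0-1))(1-t(c_0+\kappa))(1-t(c_0+\xi))}{(1-t(c_0+1))(1-t(c_0-\kappa))(1-t(c_0-\xi))}.
\]
To compute the corresponding ratio of the left-hand sides, I track the transition of the addable and removable sets. The box $A_{i_0}$ leaves the addable set while $s_0$ joins the removable set (both with content $c_0$), producing the factor $(1-t(c_0+\xi))/(1-t(c_0-\xi))$. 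Then I analyse the right and top neighbours of $s_0$ separately, observing in each direction a mutually exclusive dichotomy.

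For the right direction, either the neighbour $(x_0+2,y_0+1)$ becomes a new addable box of $\mu$ (when $y_0=0$ or $\lambda_{y_0}\geqslant x_0+2$), contributing the factor $(1-t(c_0+1-\xi))/(1-t(c_0+1))=(1-t(c_0+\kappa))/(1-t(c_0+1))$ using $\xi=1-\kappa$; or else $\lambda_{y_0}=x_0+1$, in which case the box $(x_0+1,y_0)$, which was a removable of $\lambda$ of content $c_0+\kappa$, ceases to be removable in $\mu$, contributing the same factor thanks to $c_0+\kappa+\xi=c_0+1$. A symmetric dichotomy at the top neighbour produces the factor $(1-t(c_0-1))/(1-t(c_0-\kappa))$ in both sub-cases, using $c_0-\kappa-\xi=c_0-1$ and $c_0-1+\xi=c_0-\kappa$. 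Multiplying the three factors gives exactly $\Phi(c_0)$, completing the induction.

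The main obstacle is the combinatorial case analysis of how the addable and removable sets transform when a box is added; the argument becomes clean only because the numerical coincidences $c_0+1-\xi=c_0+\kappa$ and $c_0-\kappa-\xi=c_0-1$ equate the contributions of the two alternatives in each direction, so no further subdivision is needed.
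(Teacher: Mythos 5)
Your proposal is correct and takes essentially the same approach as the paper: induction on $|\lambda|$ by adding one box, with the key inductive step being the same local identity (the paper states it directly in power-sum form as $p_l(a_i)-p_l(a_i-\xi)+p_l(b_j)-p_l(b_j+\xi)=p_l(a'_i)-p_l(a'_i-\xi)+p_l(b'_j)-p_l(b'_j+\xi)+\sigma_l(c(s))$ and leaves its verification to the reader). Your generating-function reformulation and explicit tracking of how the addable/removable sets transform is a clean way of supplying exactly the details the paper omits.
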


\begin{proof} 
The proof is by induction on $|\lambda|$. If $|\lambda|=0$
then $r=0$ and
$a_0=0$. Assume that (\ref{E:kop6}) holds for all partitions of
 size at most $m-1$ and let $\lambda$ be a partition
of size $m$. Let $\mu \subset \lambda$ be a subpartition of $\lambda$ of size $m-1$, and set $s=\lambda \backslash \mu$. Let $r,b_j, a_i$ and $r', b'_j,a'_i$ be associated with $\lambda$ and $\mu$ respectively. Note 
that we may have $r'=r$, $r'= r-1$ or $r'=r+1$. One checks that
$$
p_l(a_i)-p_l(a_i-\xi)+p_l(b_j)-p_l(b_j+\xi)=p_l(a'_i)-p_l(a'_i-\xi)+p_l(b'_j)-p_l(b'_j+\xi)
+ \sigma_l(c(s)),$$
which closes the induction step. We leave the details to the reader.
\end{proof}

\vspace{.1in}

Using Lemma~\ref{lem:kop6} and the fact that for $l(\lambda)=n$ we have 
$$a_{0}=-n\kappa=-\kappa D_{0,0}^{(n)},$$ we
get that the formal series $1+ \xi t X^{(n)}(t)$ is equal to
\begin{equation*}
\begin{split}
&\exp\Big(\sum_{l\geqslant 1}(-1)^{l+1}\xi^l t^l/l\Big) 
\exp\Big(\sum_{l\geqslant 1}(-1)^l\big((\xi+\kappa D_{0,0}^{(n)})^l-
(\kappa D_{0,0}^{(n)})^l\big)t^l/l\Big)
\exp \Big(\sum_{l\geqslant 1}\sum_{s\in\lambda}\sigma_l(c(s)) 
t^l/l\Big).
\end{split}
\end{equation*}

Now, from \eqref{formule/log} we get
\begin{equation}
\label{formule/log2}
\frac{1+at}{1+at+\xi t}=\exp\Bigl(\sum_{l\geqslant 0}(-1)^{l}a^l\phi_l(t)\Bigr).
\end{equation}
Using this, we may finally write 
$$\gathered
1+ \xi t X^{(n)}(t)=K(\kappa,D^{(n)}_{0,0},t) \; 
\exp \big( \sum_{l \geqslant 0}\sum_{s \in \lambda}
c(s)^{l}\, \varphi_l(t)\big),\\
K(\kappa,\omega,t)=\frac{\big(1+\xi t\big)\big(1+\kappa \omega t\big)}
{1+\xi t+\kappa\omega t}=(1+\xi t)
\exp\Bigl(\sum_{l\geqslant 0}(-1)^{l}\kappa^l\omega^l\phi_l(t)\Bigr).
\endgathered$$
Therefore, by \eqref{E:Horace}, we see that (\ref{E:rel35}) holds when applied to $J_{\lambda}$. 
Since this is true for all $\lambda$ of length $n$, 
the identity \eqref{E:rel35} holds when applied to any 
$v$ in $e_n\mathbf{V}_n^+$ by Lemma~\ref{lem:kop25}. 
But then, by Lemma~\ref{lem:kop2}, (\ref{E:rel35}) holds unconditonally. 
This concludes the proof of Proposition~\ref{P:<0>}.

\vspace{.2in}

\section{Complements on Section \ref{sec:prooflevel1}}
\label{app:B1}

\vspace{.1in}

\subsection{The canonical representation of $\widetilde\U_K^{(1),+}$ on $\widetilde\Lb^{(1)}_K$}
\label{sec:part1level1}
In this section we describe the canonical representation 
of $\widetilde\U_K^{(1),+}$ on $\widetilde\Lb^{(1)}_K$ explicitly. 
The following lemma is well-known.

\vspace{.1in}

\begin{lem} 
(a) The convolution product gives
$[I_{\lambda\mu}] \dot [I_{\nu}]=\delta_{\mu,\nu} \eu_{\nu} [I_{\lambda}].$

(b) For a $T$-equivariant vector bundle $\mathcal{V}$ over $\Hin$ of rank $r$
we have
$$\cc_l(\mathcal{V})=\sum_{\lambda \vdash n} \eu_{\lambda}^{-1} 
\cc_l(\mathcal{V}|_{I_{\lambda}}) [I_{\lambda}],\qquad l\in[1,r].$$
\end{lem}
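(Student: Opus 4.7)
The plan is to deduce both statements from the Atiyah--Bott localization theorem, exploiting the fact that $\Hin$ is a smooth variety with only isolated $T$-fixed points $\{I_\lambda\}_{\lambda\vdash n}$, and similarly $\Hin\times\Hin$ has only the isolated fixed points $\{(I_\lambda,I_\mu)\}$. Under these hypotheses the classes $\{[I_\lambda]\}$ form a $K$-basis of $H^T(\Hin)\otimes_{\HT}K$, with self-intersection $\iota_{I_\mu}^*[I_\lambda]=\delta_{\lambda,\mu}\,\eu_\lambda$, and analogously on $\Hin\times\Hin$ with Euler class $\eu_\lambda\,\eu_\mu$ at $(I_\lambda,I_\mu)$.

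For part $(b)$, the localization formula states that every $\alpha\in H^T(\Hin)\otimes_{\HT}K$ expands in the fixed-point basis as
\[\alpha=\sum_{\lambda\vdash n}\eu_\lambda^{-1}\,(\iota_{I_\lambda}^*\alpha)\,[I_\lambda].\]
Applying this to $\alpha=\cc_l(\Vc)$ and invoking naturality of Chern classes, namely $\iota_{I_\lambda}^*\cc_l(\Vc)=\cc_l(\Vc|_{I_\lambda})$, delivers the claim. No further computation is needed.

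For part $(a)$, I would unwind the definition of convolution:
\[[I_{\lambda\mu}]\star[I_\nu]=\pi_{1*}\bigl(\pi_2^*[I_\nu]\cdot [I_{\lambda\mu}]\bigr).\]
Using Poincar\'e duality on the smooth variety $\Hin\times\Hin$ to reinterpret $\pi_2^*[I_\nu]\cdot(-)$ as cup product with a cohomology class, and observing that $[I_{\lambda\mu}]$ is supported at the single point $(I_\lambda,I_\mu)$, the product depends only on the pullback
\[\iota_{(I_\lambda,I_\mu)}^*\pi_2^*[I_\nu]=\iota_{I_\mu}^*[I_\nu]=\delta_{\mu,\nu}\,\eu_\nu.\]
Hence $\pi_2^*[I_\nu]\cdot [I_{\lambda\mu}]=\delta_{\mu,\nu}\,\eu_\nu\,[I_{\lambda\mu}]$, and pushing forward along $\pi_1$ (which maps the support $(I_\lambda,I_\mu)$ isomorphically to $I_\lambda$) produces $\delta_{\mu,\nu}\,\eu_\nu\,[I_\lambda]$, as claimed.

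Neither step presents any real obstacle; the only point requiring mild care is the translation between the Borel--Moore homological and cohomological formalisms when evaluating the intersection $\pi_2^*[I_\nu]\cdot[I_{\lambda\mu}]$, but this is automatic from the smoothness of $\Hin$ and $\Hin\times\Hin$ together with the Poincar\'e duality recalled in Section~\ref{sec:homology}.
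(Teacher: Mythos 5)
The paper states this lemma as ``well-known'' and provides no proof, so there is nothing to compare against; your argument supplies the standard one. Both parts are correct: part (b) is a direct application of the Atiyah--Bott localization formula in equivariant Borel--Moore homology together with naturality of Chern classes (with the harmless observation that $\eu(T^*_\lambda)=\eu(T_\lambda)$ since $T_\lambda$ is even-dimensional, so the paper's convention $\eu_\lambda=\eu(T^*_\lambda)$ agrees with the Euler class of the normal bundle appearing in localization), and part (a) follows by unwinding the convolution, using $\pi_2\circ i_{(I_\lambda,I_\mu)}=i_{I_\mu}$, the self-intersection formula $i_{I_\mu}^*[I_\nu]=\delta_{\mu,\nu}\eu_\nu$, and the projection formula before pushing forward along $\pi_1$. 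This is exactly the argument one would expect.
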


\vspace{.1in}

From the above lemma we obtain the formulas
\begin{equation}\label{E:31}
\cc_1(\tau_{n,n+1})^l=\sum_{\mu \subset \lambda} 
\cc_1(\tau_{\mu,\lambda})^l 
\eu(N^*_{\mu,\lambda})\eu_{\mu,\lambda}^{-1}  [I_{\mu,\lambda}] 
\end{equation}
\begin{equation}\label{E:32}
\cc_1(\tau_{n+1,n})^l=\sum_{\mu \subset \lambda} 
\cc_1(\tau_{\lambda,\mu})^l 
\eu(N^*_{\lambda,\mu})\eu_{\lambda,\mu}^{-1}  [I_{\lambda,\mu}] 
\end{equation}
\begin{equation}\label{E:33}
\cc_l(\tau_{n,n})=\sum_{\mu\vdash n} 
\cc_l(\tau_{\mu,\mu}) \eu_{\mu}^{-1}  [I_{\mu,\mu}], 
\end{equation}
where the first two sums range over all pairs $\mu,\lambda$ 
with $\mu \subset \lambda$ and 
$\mu \vdash n,$ $ \lambda \vdash n+1$. Combining
the above \eqref{E:31}-\eqref{E:33} with the explicit expressions deduced 
from \eqref{E:21} and \eqref{E:26}
$$\eu_{\lambda}=\prod_{s \in \lambda} \big( l(s) y -(a(s)+1)x\big)\big( -(l(s) +1)y+a(s)x\big)$$
$$\eu(N^*_{\lambda,\mu})=\eu(N^*_{\mu,\lambda})=
\prod_{s \in \mu} \big(l_{\mu}(s) y-(a_{\lambda}(s)+1)x\big)
\big( -(l_{\lambda}(s) +1)y+a_{\mu}(s)x\big)$$
we get the following formulas 
\begin{equation}\label{E:34}
f_{1,l} ([I_{\mu}])=
y^{-1}x^{l-1}\sum_{\lambda \supset \mu} c(\lambda\backslash \mu)^{l} 
L_{\mu,\lambda}(x,y)\, [I_{\lambda}],
\end{equation}
\begin{equation}\label{E:35}
f_{-1,l} ([I_{\lambda}])=x^l\sum_{\mu \subset \lambda} 
c(\lambda\backslash \mu)^{l} L_{\lambda,\mu}(x,y)\, [I_{\mu}],
\end{equation}
\begin{equation}\label{E:36}
f_{0,l} ([I_{\lambda}])=x^l\sum_{s \in \lambda}c(s)^{l}\, [I_{\lambda}].
\end{equation}
Here $c(s)$ is defined in (\ref{E:defc}), we have
$$L_{\mu,\lambda}(x,y)=\prod_{s \in C_{\lambda \backslash \mu}}\frac{l_{\mu}(s)y-(a_{\mu}(s)+1)x}
{(l_{\mu}(s)+1)y-(a_{\mu}(s)+1)x}
\prod_{s \in R_{\lambda \backslash \mu}}\frac{(l_{\mu}(s)+1)y-a_{\mu}(s)x}{(l_{\mu}(s)+1)y-(a_{\mu}(s)+1)x}$$
and the sum in (\ref{E:34}) ranges over all $\lambda$ containing $\mu$ satisfying $|\lambda|=|\mu|+1$.
We set also
$$L_{\lambda,\mu}(x,y)=\prod_{s \in C_{\lambda \backslash \mu}}
\frac{(l_{\lambda}(s)+1)y-a_{\lambda}(s)x}{l_{\lambda}(s)y-a_{\lambda}(s)x}
\prod_{s \in R_{\lambda \backslash \mu}}
\frac{l_{\lambda}(s)y-(a_{\lambda}(s)+1)x}{l_{\lambda}(s)y-a_{\lambda}(s)x}$$
and the sum in (\ref{E:35}) ranges over all $\mu$ which are contained in 
$\lambda$ and satisfy 
$|\mu|=|\lambda|-1$.

\vspace{.15in}

\subsection{The triangular decomposition of $\widetilde\U^{(1)}_K$}
\label{app:C}
We begin with the following lemma.

\vspace{.05in}

\begin{lem}\label{L:automut}
There are one parameter subgroups 
$\tau^{\pm}~: \C \to \Aut(\widetilde\U^{(1),\pm}_{K})$ defined by
$$\tau^{\pm}_u (f_{0,l})=\tau_u (f_{0,l})=\sum_{i=0}^l \binom{l}{i} u^{l-i} f_{0,i},
\qquad
\tau^{\pm}_u(f_{\pm 1, l})=\sum_{i=0}^l \binom{l}{i} u^{l-i} f_{\pm 1, i},
\qquad l \geqslant 0.$$
\end{lem}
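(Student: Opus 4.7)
The plan is to realize $\tau_u^\pm$ as the transport, via the isomorphism $\Psi^\pm$ of Proposition~\ref{P:proof1}, of a natural ``shift'' automorphism on the degenerate DAHA side. First I would verify that for any scalar $c \in K$, the assignment $y_i \mapsto y_i + c$, $X_i^{\pm 1} \mapsto X_i^{\pm 1}$, $s \mapsto s$ extends to an algebra automorphism $\sigma_c$ of $\mathbf{H}_n \otimes_F K$. Relation~\eqref{E:1} does not involve $y$; after substitution into \eqref{E:2} the extra terms $c\,s_i$ appear on both sides and cancel (the shift leaves $\langle x_i - x_{i+1}, y\rangle$ unchanged, as $c$ is in the kernel of this linear functional); relation~\eqref{E:3} is preserved since commutators with the central scalar $c$ vanish. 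Restricting to the spherical subalgebra and passing to the inverse limit of Section~\ref{sec:SHinfty} then produces an automorphism $\widetilde{\sigma}_c$ of $\widetilde{\mathbf{SH}}^{(1)}_K$.

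Next I would compute the action of $\widetilde{\sigma}_c$ on the generators. The elements $D_{\pm l, 0}^{(n)}$ involve only the $X_i^{\pm 1}$ and are therefore fixed. For $D_{0,l}^{(n)} = \mathbf{S}\,B_{l-1}^{(n)}(y_1 - n\kappa, \ldots, y_n - n\kappa)\,\mathbf{S}$, Corollary~\ref{C:Philemon} together with Lemma~\ref{lem:Br} shows that $\widetilde{\sigma}_c(D_{0,l}^{(n)})$ acts on the Jack polynomial $J_\lambda^{(n)}$ by the eigenvalue $\sum_{s\in\lambda}(c(s)+c)^{l-1}$. Binomial expansion together with the faithfulness of the polynomial representation yields
\begin{equation*}
\widetilde{\sigma}_c(D_{0,l}) = \sum_{j=1}^l \binom{l-1}{j-1} c^{l-j} D_{0,j}.
\end{equation*}
Applying $\widetilde{\sigma}_c$ to the defining commutators \eqref{E:defD1l} and using the invariance of $D_{\pm 1, 0}$ together with the relations \eqref{E:rel1bis2}--\eqref{E:rel2bis2} then give $\widetilde{\sigma}_c(D_{\pm 1, l}) = \sum_{i=0}^l \binom{l}{i} c^{l-i} D_{\pm 1, i}$, and the family $\{\widetilde{\sigma}_c\}_{c\in K}$ visibly forms a one-parameter subgroup since $\sigma_{c_1}\circ\sigma_{c_2}=\sigma_{c_1+c_2}$.

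Finally, specializing $c = u/x \in K$ for $u \in \mathbb{C}$ and transporting $\widetilde{\sigma}_{u/x}|_{\widetilde{\mathbf{SH}}^{(1),\pm}_K}$ through $\Psi^\pm$, the rescalings $h_{0,l+1} = x^{-l} f_{0,l}$, $h_{1,l} = x^{1-l} y f_{1,l}$, $h_{-1,l} = x^{-l} f_{-1,l}$ from Section~\ref{sec:U/SH} exactly convert each factor $c^{l-i}$ into $u^{l-i}$, producing the two formulas of the statement; the group law $\tau_u^\pm \circ \tau_v^\pm = \tau_{u+v}^\pm$ is then inherited from the corresponding property of $\widetilde{\sigma}$. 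The main subtlety will lie in deducing the operator identity above for $\widetilde{\sigma}_c(D_{0,l})$ from its eigenvalues on Jacks; this comes down to the polynomial identity
\begin{equation*}
B_{l-1}^{(n)}(z_1+c,\ldots,z_n+c)=\sum_{j=0}^{l-1}\binom{l-1}{j}c^{l-1-j}B_j^{(n)}(z_1,\ldots,z_n),
\end{equation*}
which is immediate from Lemma~\ref{lem:Br} since both sides agree on the Zariski-dense set $\{(\lambda_1-\kappa,\ldots,\lambda_n-n\kappa)\}$ indexed by partitions, by direct binomial expansion.
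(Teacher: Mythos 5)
Your approach is genuinely different from the paper's: the paper realizes $\tau_u^+$ on $\widetilde\U^{(1),>}_K$ through the shuffle algebra (Theorem~\ref{thm:isomC/U} and Corollary~\ref{Cor:automshuffle}, where the automorphism is literally the substitution $z_i\mapsto z_i+u$), handles $\widetilde\U^{(1),0}_K$ as a free polynomial ring, and glues via the semidirect product decomposition. Transporting a DDAHA shift through $\Psi^\pm$ is an interesting alternative, and the part of your argument concerning $D_{\pm 1,l}$ (hence $f_{\pm 1,l}$) is essentially sound, because the cross-relations are commutators and the discrepancy you overlook is central. However, there is a genuine gap in the part concerning $D_{0,l}$.

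The polynomial identity you invoke at the end,
\begin{equation*}
B_{l-1}^{(n)}(z_1+c,\ldots,z_n+c)=\sum_{j=0}^{l-1}\binom{l-1}{j}c^{l-1-j}B_j^{(n)}(z_1,\ldots,z_n),
\end{equation*}
is false. Lemma~\ref{lem:Br} pins down the value of $B_{l-1}^{(n)}$ only at the points $(\mu_1-\kappa,\ldots,\mu_n-n\kappa)$, and the shifted point $(\lambda_1-\kappa+c,\ldots,\lambda_n-n\kappa+c)$ is not of that form, so ``both sides agree on the Zariski-dense set'' is not something Lemma~\ref{lem:Br} gives you. In fact the two sides differ by a nonzero constant depending on $n$. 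Already for $l=1$: one has $B_0^{(n)}(z_1,\ldots,z_n)=\sum_i z_i+\kappa\, n(n+1)/2$, so $B_0^{(n)}(z_1+c,\ldots,z_n+c)=B_0^{(n)}(z_1,\ldots,z_n)+nc$, whereas your claimed right-hand side is just $B_0^{(n)}$. Correspondingly, $\widetilde\sigma_c(D_{0,1}^{(n)})=D_{0,1}^{(n)}+nc\,\Sb$, not $D_{0,1}^{(n)}$, while the lemma requires $\tau_u(f_{0,0})=f_{0,0}$. This discrepancy has a structural consequence: the family $(nc\,\Sb)_n$ is the element $c\,\omega$ of $\SHo$, which does \emph{not} lie in $\SH^+$ (nor in $\widetilde\SH^{(1)}_K$, which has no $\omega$). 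So $\sigma_c$ does not preserve $\widetilde\SH^{(1),+}_K$ inside $\prod_n\SH^+_n$, and your $\widetilde\sigma_c$ is not a well-defined automorphism of $\widetilde\SH^{(1)}_K$; the claim in your second paragraph that ``passing to the inverse limit produces an automorphism of $\widetilde{\mathbf{SH}}^{(1)}_K$'' already fails.

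The repair would be to restrict $\sigma_c$ to $\SH^>_n$ (resp.\ $\SH^<_n$), where one can check directly through the commutator relation $D_{1,l}^{(n)}=[D_{0,l+1}^{(n)},D_{1,0}^{(n)}]$ that $\sigma_c(D_{1,l}^{(n)})=\sum_{j}\binom{l}{j}c^{l-j}D_{1,j}^{(n)}$ (the $n$-dependent central error in $\sigma_c(D_{0,l+1}^{(n)})$ vanishes in the commutator with $D_{1,0}^{(n)}$), so that $\sigma_c$ is compatible with the maps $\Phi_{n+1,n}$ and does descend to $\SH^>$. One would then define $\tau_u$ on $\widetilde\U^{(1),0}_K$ directly by the substitution in the free polynomial algebra, and glue the two pieces by verifying compatibility with $[f_{0,l},f_{1,n}]=f_{1,l+n}$ — which is precisely the structure of the paper's own argument, with the shuffle realization replaced by the DDAHA shift for the positive half.
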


\begin{proof} It is enough to deal with $\tau^+$. 
By Theorem~\ref{thm:isomC/U} there is an algebra isomorphism 
$$\eta : \mathbf{SC}_K \to\widetilde\U^{(1),>}_K,\qquad \theta_l\mapsto f_{1,l},\qquad
l \geqslant 0.$$ By Corollary \ref{Cor:automshuffle}, the assignment 
$\theta_l \mapsto \sum_{i=0}^l \binom{l}{i} u^{l-i} \theta_{i}$ extends to an 
automorphism of $\mathbf{SC}_K$. This shows that $\tau^+_u$ is well-defined on 
$\widetilde\U^{(1),>}_K$. Next, since 
$\widetilde\U^{(1),0}_K = K[f_{0,l}\,;\,l\geqslant 1]$, 
the map $\tau^+_u$ is well-defined on $\widetilde\U^{(1),0}_K$ as well. 
To finish the proof, it remains to observe that we have
\begin{equation}
\widetilde\U^{(1),+}_K = \widetilde\U^{(1),0}_K \ltimes \widetilde\U^{(1),>}_K
\end{equation}
 with respect to the adjoint 
action $[f_{0,l}, f_{1,n}]=f_{1,l+n}$, and that
\begin{equation*}
\begin{split}
[\tau^+_u(f_{0,l}),\tau^+_u(f_{1,n})]&=
\sum_{i=0}^l\sum_{j=0}^n\binom{l}{i}\binom{n}{j} u^{l+n-i-j}[f_{0,i},f_{1,j}]\\
&=\sum_{k=0}^{l+n} \binom{l+n}{k} u^{l+n-k} f_{1,k}=\tau^+_u(f_{1,n+l}).
\end{split}
\end{equation*}
\end{proof}

We now turn to the proof of Proposition~\ref{P:proof2}. 
It is adapted from the proof of \cite[prop.~4.8]{SV2}. 
The same argument as for $\SHo$ implies that the multiplication map 
is surjective
$$m:\widetilde \U^{(1),>}_K \otimes_K \widetilde\U^{(1),0}_K \otimes_K 
\widetilde\U^{(1),<}_K \to\widetilde \U^{(1)}_K.$$
We only have to prove its injectivity. We argue by 
contradiction. Let $x=\sum_i P_i \otimes R_i \otimes Q_i$ be a nonzero 
homogeneous element in $\Ker(m)$. We
may assume that the elements $R_i$
are linearly independent polynomials 
in the $f_{0,l}$'s and that $P_i,Q_i\neq 0$. Multiplying by an
element of $\widetilde\U^{(1),>}_K$ or $\widetilde\U^{(1),<}_K$ if necessary, we may also assume
that $x$ is of degree zero. For all partition $\lambda$ we have
\begin{equation}\label{E:PP4}
\sum_i P_i \circ R_i \circ Q_i ([I_{\lambda}])=0.
\end{equation}
We'll apply \eqref{E:PP4} to certain partitions.
Given partitions $\lambda_1, \lambda_2, \ldots, \lambda_k$ and given an 
integer $n \gg |\lambda_1|,\ldots, |\lambda_k|,$ let the symbol
$\lambda_1\circledast\ldots\circledast\lambda_k$ denote the following partition

\centerline{
\begin{picture}(200,200)
\put(20,170){{$\lambda_1$}}
\put(75,115){{$\lambda_2$}}
\put(185,17){{$\lambda_k$}}
\put(57,157){{\small{$(n,kn-n)$}}}
\put(112,102){{\small{$(2n,kn-2n)$}}}
\put(167,57){{\small{$(kn-n,n)$}}}
\multiput(0,155)(5,0){5}{\line(1,0){3}}
\put(0,0){\line(0,1){180}}
\put(0,0){\line(1,0){200}}
\put(0,180){\line(1,0){5}}
\put(5,180){\line(0,-1){5}}
\put(5,175){\line(1,0){5}}
\put(10,175){\line(0,-1){5}}
\put(10,170){\line(1,0){5}}
\put(15,170){\line(0,-1){10}}
\put(15,160){\line(1,0){10}}
\put(25,160){\line(0,-1){5}}
\put(25,155){\line(1,0){30}}
\multiput(55,100)(0,5){4}{\line(0,1){3}}
\multiput(55,100)(5,0){5}{\line(1,0){3}}
\put(55,155){\line(0,-1){35}}
\put(55,120){\line(1,0){5}}
\put(60,120){\line(0,-1){5}}
\put(60,115){\line(1,0){5}}
\put(65,115){\line(0,-1){5}}
\put(65,110){\line(1,0){10}}
\put(75,110){\line(0,-1){5}}
\multiput(165,0)(0,5){4}{\line(0,1){3}}
\put(75,105){\line(1,0){5}}
\put(80,105){\line(0,-1){5}}
\put(80,100){\line(1,0){30}}
\multiput(112,78)(5,-5){5}{\circle*{1}}
\put(110,100){\line(0,-1){20}}
\put(135,55){\line(1,0){30}}
\put(165,55){\line(0,-1){35}}
\put(165,20){\line(1,0){5}}
\put(170,20){\line(0,-1){5}}
\put(170,15){\line(1,0){10}}
\put(180,15){\line(0,-1){5}}
\put(180,10){\line(1,0){5}}
\put(185,10){\line(0,-1){5}}
\put(185,5){\line(1,0){15}}
\put(200,5){\line(0,-1){5}}
\end{picture}}
\vspace{.2in}

Note that $\lambda_1\circledast\ldots\circledast\lambda_k$ is well-defined as soon as 
$n > \text{sup}_i(l(\lambda_i),l(\lambda_i'))$.
Put
\begin{equation}\label{defr}
t=\text{sup}_i (\deg(P_i)) =\text{sup}_i(-\deg(Q_i)).\end{equation}
For an operator $f$ on
$\widetilde\Lb^{(1)}_K$ we denote by $\langle \mu ; f ; \lambda\rangle$ the
coefficient of $[I_{\mu}]$ in $f([I_{\lambda}])$.
For $n$ large enough we consider the coefficients $$\langle
\bar\lambda_1\circledast\lambda_2\circledast\bar\lambda_3\, ; P_i R_i Q_i\,;
\lambda_1\circledast\lambda_2\circledast\lambda_3\rangle,\quad \bar\lambda_1 \subset
\lambda_1,\quad \lambda_3 \subset \bar\lambda_3,\quad |\lambda_1 \backslash
\bar\lambda_1|=|\bar\lambda_3\backslash \lambda_3|=t.$$
Since  $Q_i$ is an annihilation operator and $P_i$ is a
creation operator, by \eqref{defr} the only way to
obtain $\bar\lambda_1\circledast\lambda_2\circledast\bar\lambda_3$ from
$\lambda_1\circledast\lambda_2\circledast\lambda_3$ is to use all of $Q_i$ to reduce $\lambda_1$
to $\bar\lambda_1$ and to use all of $P_i$ to increase $\lambda_3$ to
$\bar\lambda_3$. Therefore we have
\begin{equation}\label{E:PP5}
\begin{gathered}
\langle \bar\lambda_1\circledast\lambda_2\circledast\bar\lambda_3\,; P_i R_i Q_i\,; 
\lambda_1\circledast\lambda_2\circledast\lambda_3\rangle=\cr
=\langle\bar\lambda_1\circledast\lambda_2\circledast\bar\lambda_3\,;P_i\,;
\bar\lambda_1\circledast\lambda_2\circledast\lambda_3
\rangle\,\langle\bar\lambda_1\circledast\lambda_2\circledast\lambda_3\,; R_i \,;
\bar\lambda_1\circledast\lambda_2\circledast
\lambda_3\rangle\cr
\langle \bar\lambda_1\circledast\lambda_2\circledast\lambda_3\,;Q_i
\,;\lambda_1\circledast\lambda_2\circledast\lambda_3\rangle.
\end{gathered}
\end{equation}
Note that \eqref{E:PP5} is zero unless $\deg(P_i)=-\deg(Q_i)=t$.

\vspace{.1in}

\begin{lem}\label{L:PK1} There are  non-zero $c,d\in K$ such that,
for $P\in\widetilde \U^{(1),>}_K[t]$ and $Q \in\widetilde \U^{(1),<}_K[-t]$,
\begin{equation}\label{E:tauy}
\langle \bar\lambda_1\circledast\lambda_2\circledast\lambda_3\,; Q\,;
\lambda_1\circledast\lambda_2\circledast\lambda_3\rangle=c \;  \langle \bar\lambda_1;
\tau^-_{2ny}(Q);\lambda_1 \rangle,
\end{equation}
\begin{equation}\label{E:rhoy}
\langle \bar\lambda_1\circledast\lambda_2\circledast\bar\lambda_3\,; P\,;
\bar\lambda_1\circledast\lambda_2\circledast\lambda_3\rangle=d \; \langle \bar\lambda_3;
\tau^+_{2nx}(P) ;\lambda_3 \rangle.
\end{equation}
\end{lem}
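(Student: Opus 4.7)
The plan is to verify both identities by reducing to monomials in the natural generators and then matching explicit matrix coefficients from the polynomial representation \eqref{E:34}--\eqref{E:35} step-by-step along a removal or addition path. First I would reduce to monomials: $\widetilde\U^{(1),<}_K$ is generated by $\{f_{-1,l}\}_{l\ge 0}$ by its definition, and by Lemma~\ref{L:automut} the automorphism $\tau^-_u$ sends each $f_{-1,l}$ to a linear combination of $f_{-1,m}$'s. Since both sides of \eqref{E:tauy} are $K$-linear in $Q$, it suffices to verify it when $Q = f_{-1,l_1}\cdots f_{-1,l_t}$ is a monomial. Both sides then expand as a sum over paths $\lambda_1 = \lambda^{(0)} \supset \lambda^{(1)} \supset \cdots \supset \lambda^{(t)} = \bar\lambda_1$ of intermediate partitions, with step $i$ removing one box $s_i$ and contributing the explicit factor from \eqref{E:35}; the analogous reduction applies to $P$ and \eqref{E:rhoy}, using addition paths in $\lambda_3$ and \eqref{E:34}.

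Next I would compare step-by-step. A box $s$ in the $\lambda_1$-region of the composite has coordinates $(x_{\lambda_1}(s), y_{\lambda_1}(s) + 2n)$, the shift by $2n$ coming from the total height of the two lower stories. Hence $x\,c_{\text{comp}}(s) = x\,c_{\lambda_1}(s) + 2ny$, and via the binomial expansion defining $\tau^-_u$, the content factor of $f_{-1,l_i}$ in the composite matches exactly the content factor of $\tau_{2ny}(f_{-1,l_i})$ acting on $\lambda^{(i-1)} \subset \lambda_1$; analogously boxes in the $\lambda_3$-region see their $x$-coordinate shifted by $2n$, yielding the $\tau^+_{2nx}$-twist needed in \eqref{E:rhoy}. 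Moreover, because $\lambda_1$ occupies the top-left and $\lambda_3$ the bottom-right corner of the composite, the row and the column of any box within the respective region do not extend east or north beyond that corner; therefore the arm and leg of $s' \in C_{s_i} \cup R_{s_i}$ lying in the corresponding region coincide in the composite and in the single corner, and the associated contributions to the $L$-factor match automatically.

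The only discrepancy comes from the extras: for \eqref{E:tauy}, the set $C_{s_i}$ in the composite collects $2n$ additional boxes from the frozen rectangle $[0,n) \times [0,2n)$ below $\lambda_1$ (while $R_{s_i}$ collects none, since rows at height $\ge 2n$ are filled only by $\lambda^{(i-1)}$); dually, for \eqref{E:rhoy}, it is $R_{s_i}$ which collects $2n$ extras from the frozen staircase to the left of $\lambda_3$. Each extra contributes to the $L$-factor a rational function depending on the current column or row height, which varies along the path. I would group these contributions by the column $x$ (for \eqref{E:tauy}) or row $y$ (for \eqref{E:rhoy}) in which boxes are being altered, and verify that within each such group the successive factors telescope, collapsing to a quotient depending only on the initial and final column (or row) heights $\lambda_1{}'_{x+1}$ and $\bar\lambda_1{}'_{x+1}$ (or $\lambda_3(y+1)$ and $\bar\lambda_3(y+1)$) together with the frozen widths and heights encoded by $\lambda_2, \lambda_3, n$ (or $\bar\lambda_1, \lambda_2, n$). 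The resulting global factor is then path-independent and independent of the choice of $Q$ or $P$; it is the constant $c$ or $d$ of the lemma, manifestly in $K^\times$.

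The main technical obstacle is precisely this telescoping step: one must bookkeep carefully how the column or row height of each frozen extra box evolves across successive path-steps, and check that the resulting finite product of ratios collapses to the boundary-only quotient described above. Once the telescoping is in hand, the rest of the argument is a direct comparison with \eqref{E:34}--\eqref{E:35}, showing that the remaining ``non-extra'' factors reassemble exactly into the matrix coefficient of $\tau^-_{2ny}(Q)$ on $\lambda_1$ alone (resp.\ $\tau^+_{2nx}(P)$ on $\lambda_3$ alone), as required.
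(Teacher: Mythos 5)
Your proposal is essentially correct and follows the same route as the paper's proof: reduce to monomials $Q=f_{-1,k_t}\cdots f_{-1,k_1}$, expand both sides as sums over removal chains in the $\lambda_1$-corner using \eqref{E:34}--\eqref{E:35}, observe the content shift $x\,c_{\mathrm{comp}}(s)=x\,c_{\lambda_1}(s)+2ny$ producing the $\tau^-_{2ny}$-twist, note that arm/leg contributions within the $\lambda_1$-region agree while $C_s$ picks up $2n$ frozen extras (and $R_s$ none), and factor out the extra contribution as a path-independent constant. The only minor presentational difference is that you propose demonstrating path-independence of the extra factor by a telescoping argument column-by-column, whereas the paper observes it directly: for $u\in C'(s_i)$ one has $l_{\sigma_i}(u)=y(s_i)-y(u)$ and $a_{\sigma_i}(u)=a_\sigma(u)$, so each factor depends only on the pair $(s_i,u)$ and hence the full product is manifestly a product over $s\in\lambda_1\setminus\bar\lambda_1$ and $u\in C'(s)$, independent of the chain; your telescoping yields the same conclusion but is slightly less direct.
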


\begin{proof} We prove \eqref{E:tauy}. The proof of \eqref{E:rhoy} is identical.
If $Q=f_{-1,k_t} \cdots f_{-1,k_1}$ then
\begin{equation}\label{formB}
\langle\bar\lambda_1\circledast\lambda_2\circledast\lambda_3\,; Q\,;  
\lambda_1\circledast\lambda_2\circledast\lambda_3\rangle=
\sum\prod_{i=1}^t c(s_i)^{k_i}\,
L_{\mu_i\circledast\lambda_2\circledast\lambda_3,\,
\mu_{i+1}\circledast\lambda_2\circledast\lambda_3}(x,y).
\end{equation}
In \eqref{formB}
the sum runs over all sequences
$$\lambda_1 =\mu_1 \supsetneq \mu_2 \cdots
\supsetneq \mu_{t+1}=\bar\lambda_1$$
and we have set $s_i=\mu_i\backslash \mu_{i+1}$.
For partitions $\alpha \supset \beta$ with $|\alpha|=|\beta|+1$ we have
$$L_{\alpha,\beta}(x,y)=\prod_{s \in C_{\alpha \backslash \beta}} \frac{
(l_{\alpha}(s)+1)y-a_{\alpha}(s)x}{l_{\alpha}(s)y-a_{\alpha}(s)x}
\cdot \prod_{s \in R_{\alpha \backslash \beta}} \frac{
l_{\alpha}(s)y-(a_{\alpha}(s)+1)x}{l_{\alpha}(s)x-a_{\alpha}(s)y}.
$$
Next, in \eqref{formB} again, for a box $s$ in $\mu_i$ we have
$x(s)=x_{\lambda_1}(s)$ and $y(s)=y_{\lambda_1}(s)+2n$,
where $x_{\lambda_1}$ and $y_{\lambda_1}$ denote the coordinate values
when we place the origin at the
bottom left corner of $\lambda_1$, i.e., at the point $(0,2n)$,
as opposed to the point $(0,0)$ which is the origin of $\lambda_1\circledast\lambda_2\circledast\lambda_3$.
Similarly, we have
\begin{align*}
R(s)=R_{\lambda_1}(s),\qquad
C(s)=C_{\lambda_1}(s) \sqcup C'(s),\qquad
C'(s)=\{(x(s),0), \ldots, (x(s),2n-1)\}.
\end{align*}
Finally, observe that the armlength $a(s)$ or the
leglength $l(s)$ are
the same whether we consider $s$ as belonging to $\mu_i$
or to $\mu_i\circledast\lambda_2\circledast\lambda_3$.
Now, write $\sigma_i=\mu_i\circledast\lambda_2\circledast\lambda_3$ and
$\sigma=\lambda_1\circledast\lambda_2\circledast\lambda_3$. 
From the above formulae we deduce that
$$c(s_i)=c_{\lambda_1}(s_i) + 2ny,$$
\begin{equation*}
\begin{split}
\prod_{i=1}^t L_{\mu_i\circledast\lambda_2\circledast\lambda_3, \,
\mu_{i+1}\circledast\lambda_2\circledast\lambda_3}(x,y)=
\prod_{i=1}^t L_{\mu_i,\mu_{i+1}}(x,y) \prod_{s \in C'(s_i)}
\frac{(l_{\sigma_i}(s)+1)y-a_{\sigma_i}(s)x}{l_{\sigma_i}(s)y-a_{\sigma_i}(s)x}.
\end{split}
\end{equation*}
Note also that the quantity
$$\prod_{i=1}^t
\prod_{s \in
C'(s_i)}\frac{(l_{\mu_i}(s)+1)y-a_{\mu_i}(s)x}{l_{\mu_i}(s)y-a_{\mu_i}(s)x}
=\prod_{s \in \lambda_1 \backslash\bar \lambda_1}\prod_{u\in C'(s)}
\frac{(y(s)-y(u)+1)y-a_{\sigma}(u)x}{(y(s)-y(u))y-a_{\sigma}(u)x}$$
is independent
of the choice of the chain of subdiagrams $(\mu_i)$, and that
$$\sum\prod_{i=1}^t (c_{\lambda_1}(s_i)+2ny)^{k_i}\,
L_{\mu_i,\mu_{i+1}}(x,y)= \langle
\bar\lambda_1\,; \tau^-_{2ny}(Q)\,; \lambda_1 \rangle.$$ The lemma is proved.
\end{proof}

\vspace{.1in}

Using \eqref{E:PP5} and Lemma \ref{L:PK1}, the linear relation (\ref{E:PP4}) gives
\begin{equation}\label{E:PP6}
\sum_i \langle \bar\lambda_3; \tau^+_{2nx}(P_i); \lambda_3\rangle \,
\langle \bar\lambda_1\circledast\lambda_2\circledast\lambda_3\,; R_i
\,;\bar\lambda_1\circledast\lambda_2\circledast\lambda_3\rangle \, \langle
\bar\lambda_1;\tau^-_{2ny}( Q_i) ; \lambda_1\rangle=0
\end{equation}
for all $\lambda_1,\bar\lambda_1,\lambda_2, \lambda_3, \bar\lambda_3$ as above and
all large enough $n$. Choose $\lambda_3$, $\bar\lambda_3$ and
$\lambda_1$, $\bar\lambda_1$ such that 
$$\langle \bar\lambda_3\,;
\tau^+_{2nx}(P_i)\,; \lambda_3\rangle ,\langle \bar\lambda_1\,;
\tau^-_{2ny}(Q_i)\,; \lambda_1\rangle \neq 0$$ for some
$i$. Fix the integer $n$ and let us vary the partition $\lambda_2$. We abbreviate $\lambda=\emptyset\circledast\emptyset\circledast\emptyset$.
The matrix coefficient $\langle \bar\lambda_1\circledast\lambda_2\circledast\lambda_3\,; \f_{0,l}\,; 
\bar\lambda_1\circledast\lambda_2\circledast\lambda_3\rangle$ is equal to
\begin{equation*}
\langle \bar\lambda_1\,;\tau_{2ny}(\f_{0,l})\,; \bar\lambda_1\rangle +
 \langle \lambda_2\,;\tau_{nx+ny}(\f_{0,l})\,; \lambda_2\rangle +
\langle \lambda_3\,; \tau_{2nx}(\f_{0,l})\,; \lambda_3\rangle + \langle
\lambda\,; \f_{0,l}\,;
\lambda\rangle.
\end{equation*}
Recall that
$R_i=R_i(\f_{0, 1},\f_{0,2}, \ldots)$ is a polynomial in the operators $\f_{0,l}.$ 
Set
\begin{equation*}
R'_i= \langle \bar\lambda_3\,; \tau^+_{2nx}(P_i)\,; \lambda_3\rangle\, \langle
\bar\lambda_1\,;\tau^-_{2ny}(Q_i) \,; \lambda_1\rangle \, R_i(\tau_{nx+ny}(\f_{0, 1})+
\alpha_{1}, \tau_{nx+ny}(\f_{0,2})+\alpha_{ 2}, \ldots)
\end{equation*}
where
$$\alpha_l=
\langle \bar\lambda_1\,; \tau_{2ny}(\f_{0,l})\,; \bar\lambda_1\rangle +
\langle \lambda_3\,; \tau_{2nx}(\f_{0,l})\,; \lambda_3\rangle+
\langle \lambda\,; \f_{0,l}\,; 
\lambda\rangle,\qquad l \geqslant 1.$$ We may
rewrite (\ref{E:PP6}) as
\begin{equation}\label{E:PP8}
\sum_i \langle \lambda_2\,; R'_i\,; \lambda_2 \rangle =0.
\end{equation}
Since this holds for all $\lambda_2$ with $l(\lambda_2), l(\lambda_2') < n$ and $n$ is large enough, we 
deduce that 
$\sum_i R'_i  = 0$. Remember that the $R_i$'s were chosen to be linearly independent. Then the 
$R'_i$'s are also linearly independent and we arrive at a contradiction. This concludes the proof of 
Proposition~\ref{P:proof2}.\qed

\vspace{.2in}

\section{Complements on Sections \ref{sec:3} and \ref{sec:proofthmSH/Ur}}
\label{app:B}

\vspace{.1in}

\subsection{Proof of Proposition \ref{prop:glue/r}}
The proof is adapted from the computations in \cite[sec.~4.5]{VV1}. 
First, we have the following formulas, compare
\eqref{E:31}, \eqref{E:32} and \eqref{E:33},

\begin{equation}\label{Er:31}
\cc_1(\tau_{n-1,n})^l=\sum_{\sigma \subset \lambda} 
\cc_1(\tau_{\sigma,\lambda})^l \,
\eu(N^*_{\sigma,\lambda})\,\eu_{\sigma,\lambda}^{-1}\,  [I_{\sigma,\lambda}],
\end{equation}
\begin{equation}\label{Er:32}
\cc_1(\tau_{n+1,n})^l=\sum_{\lambda\subset \pi} 
\cc_1(\tau_{\pi,\lambda})^l \,
\eu(N^*_{\pi,\lambda})\,\eu_{\pi,\lambda}^{-1}\,  [I_{\pi,\lambda}], 
\end{equation}
\begin{equation}\label{Er:33}
\cc_l(\tau_{n,n})=\sum_{\lambda} 
\cc_l(\tau_{\lambda})\, \eu_{\lambda}^{-1} \, [I_{\lambda,\lambda}].
\end{equation}
Here $\sigma$, $\lambda$ and $\pi$ are $r$-partitions 
of $n-1$, $n$ and $n+1$ respectively. 
Now, assume that $\lambda$, $\mu$ are $r$-partitions 
of $n$ and that $\sigma$, $\pi$ are $r$-partitions 
of $n-1$, $n+1$ respectively, with
$\sigma\subset \lambda,\mu\subset\pi$ and
 $\lambda\neq\mu$. Then, the $r$-partitions
$\sigma$, $\pi$ are completely determined by $\lambda$, $\mu$ and \eqref{HR:2} gives
the following identity
$$\tau_\lambda+\tau_\mu=\tau_\sigma+\tau_\pi.$$
Therefore, using the identities from Sections 
\ref{sec:tautr}, \ref{sec:hecker}, a short computation gives
\begin{equation}
\label{tutu2}
N_{\lambda,\sigma}+N_{\mu,\sigma}-T_\sigma= 
N_{\pi,\lambda}+N_{\pi,\mu}-T_\pi.
\end{equation}
Therefore, using \eqref{Er:31}, \eqref{Er:32} and \eqref{HR:5}, \eqref{tutu2} we get 
\begin{equation}
\label{tutu3}
[h_{-1,k},h_{1,l}]([I_\lambda])=c_{\lambda,k+l}\,[I_\lambda]
\end{equation}
for some constant $c_{\lambda,k+l}$ which remains to be computed. 
To do so, observe first that
$$\gathered
f_{-1,k}f_{1,l}([I_\lambda])=\sum_{\lambda\subset\pi}
\cc_1(\tau_{\lambda,\pi})^{k+l}\,
\eu(N_{\lambda,\pi}^*+N_{\pi,\lambda}^*)\,
\,\eu_{\lambda,\pi}^{-1}\,[I_\lambda],\\
f_{1,l}f_{-1,k}([I_\lambda])=\sum_{\sigma\subset\lambda}
\cc_1(\tau_{\sigma,\lambda})^{k+l}\,
\eu(N_{\lambda,\sigma}^*+N_{\sigma,\lambda}^*)\,
\,\eu_{\lambda,\sigma}^{-1}\,[I_\lambda],
\endgathered$$
modulo $[I_\mu]$'s with $\mu\neq\lambda$. 
Next, set $H_\lambda=(1-q)(1-t)\tau_\lambda-W.$ 
For $\lambda\subset\pi$ we have
\begin{equation}
\label{formH}
\aligned
H_\lambda&=H_\pi-(1-q)(1-t)\tau_{\lambda,\pi},\\
N_{\lambda,\pi}-T_\lambda
&=-v\tau_{\lambda,\pi}^* H_{\lambda}-v,\\
&=-v\tau_{\lambda,\pi}^* H_{\pi}+1-q^{-1}-t^{-1},\\
N_{\pi,\lambda}-T_\pi
&=\tau_{\lambda,\pi}H_\pi^*-v,\\
&=\tau_{\lambda,\pi}H_{\lambda} ^*+1-q^{-1}-t^{-1}.
\endaligned
\end{equation}
Now, we consider the following sums
\begin{equation}\label{AB}
B_\lambda=\sum_{\sigma\subset\lambda}\tau_{\sigma,\lambda},
\quad A_\lambda=\sum_{\lambda\subset\pi}\tau_{\lambda,\pi}.
\end{equation}
The proof of the following lemma is left to the reader, 
compare \cite[lem.~7]{VV1}.

\begin{lem}
\label{lem:toto1}
For each $r$-partition $\lambda$ of $n$ we have the equality of characters
$$H_\lambda=v^{-1}B_\lambda-A_\lambda.$$
\end{lem}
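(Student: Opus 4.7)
The plan is to reduce the identity, which a priori mixes the equivariant parameters $q,t,\chi_1,\dots,\chi_r$, to a purely combinatorial statement about a single partition, and then to verify that statement by a direct generating-function computation.

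First, I will observe that both sides of the claimed identity $H_\lambda=v^{-1}B_\lambda-A_\lambda$ are $\Z$-linear combinations of the $\chi_a^{-1}$, and the $\chi_a^{-1}$-coefficient of each side depends only on the component $\lambda^{(a)}$ of the $r$-partition $\lambda$. Indeed, from \eqref{HR:2} we have $\tau_\lambda=\sum_{a=1}^r\chi_a^{-1}b_{\lambda^{(a)}}(q,t)$ with $b_\mu(q,t)=\sum_{s\in\mu}q^{x(s)}t^{y(s)}$, and $W=\sum_a\chi_a^{-1}$. On the other hand, the addable (resp.\ removable) boxes of $\lambda$ in the sense of Section \ref{sec:hecker} are exactly the disjoint union over $a$ of the addable (resp.\ removable) boxes of $\lambda^{(a)}$, so by \eqref{HR:5}
$$A_\lambda=\sum_a\chi_a^{-1}\!\sum_{s\in\mathrm{Add}(\lambda^{(a)})}\!\!q^{x(s)}t^{y(s)},\qquad B_\lambda=\sum_a\chi_a^{-1}\!\sum_{s\in\mathrm{Rem}(\lambda^{(a)})}\!\!q^{x(s)}t^{y(s)}.$$
Thus the identity decouples across the $r$ components and it suffices to prove, for an arbitrary (ordinary) partition $\mu$, the scalar identity
\begin{equation}\label{E:toto1}
(1-q)(1-t)\,b_\mu(q,t)\;=\;1\;-\!\!\sum_{s\in\mathrm{Add}(\mu)}\!\!q^{x(s)}t^{y(s)}\;+\;qt\!\!\sum_{s\in\mathrm{Rem}(\mu)}\!\!q^{x(s)}t^{y(s)}.
\end{equation}

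To prove \eqref{E:toto1}, I would write $\mu=(\mu_1\geqslant\mu_2\geqslant\cdots\geqslant\mu_l)$ with $\mu_{l+1}=0$, and compute row by row:
$$(1-q)\,b_\mu(q,t)=\sum_{j=1}^l t^{j-1}(1-q^{\mu_j}),$$
so that multiplying once more by $(1-t)$ and telescoping in the index $j$ gives
$$(1-q)(1-t)\,b_\mu(q,t)=1-t^l-\sum_{j=1}^l q^{\mu_j}t^{j-1}+\sum_{j=1}^l q^{\mu_j}t^j.$$
One then checks that the remaining sum is precisely $\sum_{\mathrm{Add}(\mu)}q^{x(s)}t^{y(s)}-qt\sum_{\mathrm{Rem}(\mu)}q^{x(s)}t^{y(s)}$, by partitioning the indices $j\in[1,l+1]$ (with the convention $\mu_{l+1}=0$) according to whether $\mu_{j-1}>\mu_j$ or $\mu_j>\mu_{j+1}$: a strict descent $\mu_{j-1}>\mu_j$ at level $j$ corresponds to a removable box $(\mu_j-1,j-1)\in\mathrm{Rem}(\mu)$ (contributing $q^{\mu_j-1}t^{j-1}$, so $qt$ times this is $q^{\mu_j}t^{j}$ after reindexing), while a strict descent $\mu_j>\mu_{j+1}$ yields an addable box $(\mu_{j+1},j)\in\mathrm{Add}(\mu)$ (contributing $q^{\mu_{j+1}}t^{j}$); the remaining contribution $1-t^l$ absorbs the addable box at $(0,l)$ and the constant term. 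I would present this matching cleanly by the substitution which, to each $j$ with $\mu_{j-1}>\mu_j$, assigns either an addable or removable box and checks that the bookkeeping matches \eqref{E:toto1}.

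The main obstacle is purely combinatorial bookkeeping in the final step: keeping track of the boundary indices $j=1$ and $j=l+1$ correctly and verifying that the extra terms $1$ and $-t^l$ produced by telescoping match exactly the contributions of the two ``trivial'' addable corners at $(\mu_1,0)$ and $(0,l)$ (when applicable). There is no conceptual difficulty, and an alternative proof by induction on $|\mu|$ is also straightforward: when a box $s_0=(x_0,y_0)$ is added to $\mu$ to form $\mu'$, the left-hand side of \eqref{E:toto1} grows by $(1-q)(1-t)q^{x_0}t^{y_0}$, and one verifies that the right-hand side changes by the same amount, since $s_0$ leaves $\mathrm{Add}(\mu)$ to enter $\mu'$ while possibly creating up to two new addable boxes to the right and above $s_0$, and one removable box (namely $s_0$ itself) in $\mathrm{Rem}(\mu')$, each contribution fitting the required polynomial identity. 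Once \eqref{E:toto1} is established, multiplying by $\chi_a^{-1}$, summing over $a$, and rearranging yields exactly the asserted equality of characters $H_\lambda=v^{-1}B_\lambda-A_\lambda$.
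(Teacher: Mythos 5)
Your plan is correct, and since the paper does not actually prove this lemma (it only refers the reader to \cite[lem.~7]{VV1}, the K-theoretic analogue), a self-contained argument along these lines is exactly what is wanted. The reduction to a scalar identity for each single component $\lambda^{(a)}$ is valid, and your telescoping computation of $(1-q)(1-t)\,b_\mu(q,t)$ is correct, as is the target identity \eqref{E:toto1}. The only issues are bookkeeping slips in the final matching. First, the assignment of descents to boxes is scrambled: a strict descent $\mu_{j-1}>\mu_j$ produces the \emph{addable} box $(\mu_j,j-1)$, contributing $q^{\mu_j}t^{j-1}$, not the removable box $(\mu_j-1,j-1)$; a strict descent $\mu_j>\mu_{j+1}$ produces both the removable box $(\mu_j-1,j-1)$, whose $qt$-multiple is $q^{\mu_j}t^{j}$, and the addable box $(\mu_{j+1},j)$. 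Second, there is a global sign flip: the "remaining sum" $-t^l-\sum_j q^{\mu_j}t^{j-1}+\sum_j q^{\mu_j}t^{j}$ should equal $-\sum_{\mathrm{Add}(\mu)}q^{x(s)}t^{y(s)}+qt\sum_{\mathrm{Rem}(\mu)}q^{x(s)}t^{y(s)}$, the negative of what you wrote. These are self-correcting typos rather than conceptual errors: absorb $-t^l$ as the $j=l+1$ term of $\sum_j q^{\mu_j}t^{j-1}$ (using $\mu_{l+1}=0$), then note that the $j$-terms in $\sum_{j=1}^{l}q^{\mu_j}t^j$ and $\sum_{j=1}^{l+1}q^{\mu_j}t^{j-1}$ cancel precisely when $\mu_j=\mu_{j+1}$, leaving exactly the addable and removable contributions with the right signs. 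Finally, in the inductive alternative you omit that adding $s_0=(x_0,y_0)$ may cause a removable box below or to the left of $s_0$ to \emph{leave} $\mathrm{Rem}(\mu)$; the count still works because for each of the two directions exactly one of ``new addable box appears'' and ``old removable box disappears'' occurs, and both scenarios contribute the same monomial, but this dichotomy should be stated.
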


\noindent
Thus, we get
$$\aligned
N_{\lambda,\pi}^*+N_{\pi,\lambda}^*-T_\lambda^*-T_\pi^*
&=v^{-1}(\tau_{\lambda,\pi}A_{\lambda}^*+\tau_{\lambda,\pi}^*B_\lambda-1)-
(\tau_{\lambda,\pi}B_{\lambda}^*+\tau_{\lambda,\pi}^*A_\lambda-1)
-q-t,\\
N_{\lambda,\sigma}^*+N_{\sigma,\lambda}^*-T_\lambda^*-T_\sigma^*
&=v^{-1}(\tau_{\sigma,\lambda}A_{\lambda}^*+\tau_{\sigma,\lambda}^*B_\lambda-1)-
(\tau_{\sigma,\lambda}B_{\lambda}^*+\tau_{\sigma,\lambda}^*A_\lambda-1)-q-t.
\endaligned$$
We get
$$
\aligned
(-1)^rc_{\lambda,k}\,x^k&=
\sum_{\sigma\subset\lambda}\cc_1(\tau_{\sigma,\lambda})^{k}\,
\frac{\eu(v^{-1}\tau_{\sigma,\lambda} A_{\lambda}^*+
v^{-1}\tau_{\sigma,\lambda}^*B_\lambda-v^{-1})}
{\eu(\tau_{\sigma,\lambda}^* A_{\lambda}+\tau_{\sigma,\lambda}B_\lambda^*-1)}\;-\\
&\qquad-\sum_{\lambda\subset\pi}\cc_1(\tau_{\lambda,\pi})^{k}\,
\frac{\eu(v^{-1}\tau_{\lambda,\pi} A_{\lambda}^*+
v^{-1}\tau_{\lambda,\pi}^*B_\lambda-v^{-1})}
{\eu(\tau_{\lambda,\pi}^*A_{\lambda}+\tau_{\lambda,\pi}B_\lambda^*-1)}.
\endaligned
$$
Consider the formal series 
$$C(s)=\sum_{k\geqslant 0} c_{\lambda,k}\,x^ks^k.$$ 
For $u=x+y$, we have
$$\aligned
C(s)&=-\sum_{j\in J} \frac{1}{1-b_js}\, 
\prod_{i\in I} \frac{b_j-a_i+u}{b_j-a_i}
\prod_{k \in J\setminus\{j\}} \frac{b_j-b_k-u}{b_j-b_k}\;+\\
&\qquad+\sum_{i\in I} \frac{1}{1-a_is}\,
\prod_{k\in I\setminus\{i\}} \frac{a_i-a_k+u}{a_i-a_k}
\prod_{j\in J}\frac{a_i-b_j-u}{a_i-b_j},
\endaligned
$$
with
$$\{b_j\;;\;j\in J\}=\{\cc_1(\tau_{\sigma,\lambda})\;;\;\sigma\subset\lambda\},
\qquad
\{a_i\;;\;i\in  I\}=\{\cc_1(\tau_{\lambda,\pi})\;;\;\lambda\subset\pi\}.$$
Thus, by Lemma \ref{lem:kop5}, 
we get the following equality
$$\aligned
1+usC(s)
&=\prod_{i\in I}\frac{1-s(a_i-u)}{1-sa_i}\prod_{j\in J} \frac{1-s(b_j+u)}{1-sb_j}\\
&=\prod_{\sigma\subset\lambda} \frac{1-s\cc_1(v^{-1}\tau_{\sigma,\lambda})}
{1-s\cc_1(v^{-1}\tau_{\sigma,\lambda})+su}\,\Bigg/
\prod_{\lambda\subset\pi}\frac{1-s\cc_1(\tau_{\lambda,\pi})}{1-s\cc_1(\tau_{\lambda,\pi})+su}.
\endaligned$$
Now, fix splitting sums of one-dimensional characters
$$\tau_\lambda^*=\phi_{\lambda,1}+\cdots+\phi_{\lambda,n},\qquad
W^*=\chi_1+\cdots+\chi_r.$$ 
Set $f_{\lambda,i}=\eu(\phi_{\lambda,i})$
and $e_\a=\eu(\chi_\a)$.
Then, by Lemma \ref{lem:toto1}, we have
$$H_\lambda=\sum_{\sigma\subset\lambda}v^{-1}\tau_{\sigma,\lambda}-
\sum_{\lambda\subset\pi}\tau_{\lambda,\pi}=
\sum_i(1-q)(1-t)\phi_{\lambda,i}^*-\sum_\a\chi^*_\a.$$
Therefore, we get
$$
\aligned
1+usC(s)&=
\prod_{i=1}^n\frac{1+s(f_{\lambda,i}+x)}{1+s(f_{\lambda,i}-x)}\,
\frac{1+s(f_{\lambda,i}+y)}{1+s(f_{\lambda,i}-y)}\,
\frac{1+s(f_{\lambda,i}-u)}{1+s(f_{\lambda,i}+u)}
\,\prod_{\a=1}^r\frac{1+s(e_\a+u)}{1+se_\a}.
\endaligned
$$
Recall that $u=x\xi$. 
Using \eqref{formule/log} and \eqref{formule/phi}, we finally get
$$\aligned
1+\xi\sum_{k\geqslant 0}c_{\lambda,k}\,s^{k+1}
&=\prod_{\a=1}^r\frac{1+s\eps_\a+s\xi}{1+s\eps_\a}\,
\exp\Bigl(\sum_{l\geqslant 0}(-1)^{l}
p_l(f_{\lambda,i})\,x^{-l}\varphi_l(s)\Bigr).
\endaligned
$$
Now, Remark \ref{rem:f0l} gives 
$$h_{0,l+1}\,[I_\lambda]=(-1)^lx^{-l}p_l(f_{\lambda,i})\,[I_\lambda].$$ 
Thus, we obtain
$$\aligned
\Bigl(1+\xi\sum_{k\geqslant 0}c_{\lambda,k}\,s^{k+1}\Bigr)\,[I_\lambda]
&=\prod_{\a=1}^r\frac{1+s\eps_\a+s\xi}{1+s\eps_\a}\,
\exp\Bigl(\sum_{l\geqslant 0}h_{0,l+1}\,\varphi_l(s)\Bigr)\,[I_\lambda],\\
&=\exp\Bigl(\sum_{l\geqslant 0}(-1)^{l+1}p_l(\vec\eps)\phi_l(s)\Bigr)\,
\exp\Bigl(\sum_{l\geqslant 0}h_{0,l+1}\,\varphi_l(s)\Bigr)\,[I_\lambda]
\endaligned
$$
where $p_l(\vec\eps)=\sum_a \eps_a^l$. 
Comparing this expression with \eqref{tutu3}, we get the proposition.

\vspace{.15in}

\subsection{Proof of Proposition \ref{prop:faithfulr}}
\label{app:B.3}
Now, we prove that the
representation $\rho^{(r)}$ of $\SH^{(r)}_K$ on $\Lb^{(r)}_K$ is faithful.
For an operator $f$ on
$\mathbf{L}^{(r)}_K$ and $r$-partitions $\lambda,$ $\mu$
we denote by $\langle \mu ; f ; \lambda\rangle$ the
coefficient of $[I_{\mu}]$ in $f([I_{\lambda}])$.
Given partitions $\lambda_1, \lambda_2, \ldots, \lambda_k$ and given an 
integer $n \gg |\lambda_1|,\ldots, |\lambda_k|,$ let the symbol
$\lambda_1\circledast\ldots\circledast\lambda_k$ denote the $r$-partition
whose first part is the partition $\lambda_1\circledast\ldots\circledast\lambda_k$
from Section \ref{app:C} and the $r-1$ other partitions are empty.
Given a finite family of elements 
$$P_i\in\SH^{(r),>}_K,\qquad R_i\in\SH^{(r),0}_K,\qquad Q_i\in\SH^{(r),<}_K,$$
we set $x=\sum_iP_iR_iQ_i$.
Assume that $\rho^{(r)}(x)=0$. We may also assume that $P_iQ_iR_i$ is homogeneous of degree 0
(for the rank grading) for each $i$. Then
$$\sum_i\langle \mu\,;\,\rho^{(r)}(P_iR_iQ_i)\,;\,\lambda\rangle=0$$
for each $r$-partitions $\lambda$, $\mu$.
For $n$ large enough we consider the coefficients 
\begin{equation}
\langle
\bar\lambda_1\circledast\lambda_2\circledast\bar\lambda_3\, ; \rho^{(r)}(P_i R_i Q_i)\,;
\lambda_1\circledast\lambda_2\circledast\lambda_3\rangle,\quad \bar\lambda_1 \subset
\lambda_1,\quad \lambda_3 \subset \bar\lambda_3,\quad |\lambda_1 \backslash
\bar\lambda_1|=|\bar\lambda_3\backslash \lambda_3|=t
\end{equation}
with 
$t=\text{sup}_i (\deg(P_i)) =\text{sup}_i(-\deg(Q_i)).$
Since  $Q_i$ is an annihilation operator and $P_i$ is a
creation operator, the coefficient
\begin{equation}
\begin{gathered}
\langle \bar\lambda_1\circledast\lambda_2\circledast\bar\lambda_3\,; \rho^{(r)}(P_i R_i Q_i)\,; 
\lambda_1\circledast\lambda_2\circledast\lambda_3\rangle\end{gathered}
\end{equation}
factorizes as in \eqref{E:PP5}, and it is zero unless $\deg(P_i)=-\deg(Q_i)= t$.
We claim that \eqref{E:tauy}, \eqref{E:rhoy} hold again for some non-zero $c,d\in K_r$.
Then \eqref{E:PP6} hold again
for all $\lambda_1,\bar\lambda_1,\lambda_2, \lambda_3, \bar\lambda_3$ as above and
all large enough $n$. If $x\neq 0$ we may assume that the elements $R_i$
are linearly independent polynomials in the $f_{0,l}$'s and that $P_i,Q_i\neq 0$.
Then the same argument as in Section \ref{app:C} yields a contradiction.
The proof of the claim is the same as the proof of Lemma \ref{L:PK1}. It is left to the reader.
\qed

\vspace{.1in}

The proof above has the following corollary.

\begin{cor}\label{cor:injectivity}
The multiplication map
$\SH^{(r),>}_K\otimes\SH^{(r),0}_K\otimes\SH^{(r),<}_K\to\SH^{(r)}_K$ 
is injective.
\end{cor}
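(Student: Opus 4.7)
The plan is to observe that the proof of Proposition~\ref{prop:faithfulr} given in Section~\ref{app:B.3}, when read carefully, establishes this injectivity statement with no additional input. Suppose $x = \sum_i P_i \otimes R_i \otimes Q_i \in \SH^{(r),>}_K \otimes \SH^{(r),0}_K \otimes \SH^{(r),<}_K$ lies in $\ker(m)$. Since $\SH^{(r),0}_K$ is the polynomial algebra $K_r[D_{0,l}\,;\,l\geqslant 1]$, I may (by a base change on the middle tensor factor) assume that the $R_i$ are linearly independent, and that $P_i,Q_i\neq 0$; by splitting into rank-graded pieces I may further assume $x$ is homogeneous of rank degree zero. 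Then $m(x) = \sum_i P_i R_i Q_i = 0$ in $\SH^{(r)}_K$, so applying $\rho^{(r)}$ we obtain $\sum_i \rho^{(r)}(P_i)\circ\rho^{(r)}(R_i)\circ\rho^{(r)}(Q_i) = 0$ as an operator on $\Lb^{(r)}_K$.

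At this point the argument of Section~\ref{app:B.3} applies verbatim. I would evaluate matrix coefficients of the form
\begin{equation*}
\langle\bar\lambda_1\circledast\lambda_2\circledast\bar\lambda_3\,;\,\rho^{(r)}(P_iR_iQ_i)\,;\,\lambda_1\circledast\lambda_2\circledast\lambda_3\rangle,
\end{equation*}
where the $r$-partitions are concentrated in the first component and $\bar\lambda_1\subset\lambda_1$, $\lambda_3\subset\bar\lambda_3$ with $|\lambda_1\setminus\bar\lambda_1|=|\bar\lambda_3\setminus\lambda_3| = t := \sup_i \deg P_i$. Because $Q_i$ lowers the rank and $P_i$ raises it by at most $t$, only terms with $\deg P_i = -\deg Q_i = t$ can contribute, and for those terms the matrix coefficient factorizes as a product of three matrix coefficients, one for each of $P_i$, $R_i$, $Q_i$, as in \eqref{E:PP5}. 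The rank-$r$ analogues of Lemma~\ref{L:automut} and Lemma~\ref{L:PK1} (whose existence is asserted at the end of Section~\ref{app:B.3}) then let me rewrite the $P_i$- and $Q_i$-factors as $\langle\bar\lambda_3;\tau^+_{2nx}(P_i);\lambda_3\rangle$ and $\langle\bar\lambda_1;\tau^-_{2ny}(Q_i);\lambda_1\rangle$, with the $R_i$-factor evaluated on $\bar\lambda_1\circledast\lambda_2\circledast\lambda_3$. Fixing $\lambda_1,\bar\lambda_1,\lambda_3,\bar\lambda_3$ so that these $P_i$- and $Q_i$-coefficients are jointly nonzero for some $i$, and then letting $\lambda_2$ vary, reduces the vanishing to a linear relation among polynomial expressions $R_i'$ in the $f_{0,l}$-eigenvalues evaluated at arbitrary partitions $\lambda_2$. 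The assumed linear independence of the $R_i$ — hence of the $R_i'$ after the affine change of variables — contradicts this, forcing $x=0$.

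The main obstacle is checking that the two rank-$r$ analogues invoked in Section~\ref{app:B.3} really do hold. The shift automorphism $\tau^{\pm}_u$ of $\SH^{(r),\pm}_K$ is obtained by transporting the shuffle-algebra automorphism of Corollary~\ref{Cor:automshuffle} through the identifications $\SH^{(r),>}_K\cong\SC_{K_r}$ and $\SH^{(r),<}_K\cong(\SC_{K_r})^\op$ supplied by Theorem~\ref{thm:isomC/U} and Remark~\ref{opisom}, together with the compatibility between the adjoint action of $\SH^{(r),0}_K$ on $\SH^{(r),>}_K$ and the grading shift; this is essentially the argument of Lemma~\ref{L:automut}. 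The factorization formula for matrix coefficients on $r$-partitions of the shape $\lambda_1\circledast\lambda_2\circledast\lambda_3$ is a direct computation using the Hecke-correspondence formulas of Sections \ref{sec:tautr}--\ref{sec:Ur} and the explicit form of $N_{\mu,\lambda}$ in \eqref{HR:3}, entirely parallel to the rank-one computation in the proof of Lemma~\ref{L:PK1}; the key point is that when the three pieces $\lambda_i$ of the partition are moved far apart (using large $n$), the normal-bundle Euler classes factor into contributions from each piece plus an explicit translation-by-$2ny$ (resp.\ $2nx$) of the coordinate of each box in $\lambda_1$ (resp.\ $\lambda_3$), which is exactly what is needed to produce $\tau^-_{2ny}$ and $\tau^+_{2nx}$.
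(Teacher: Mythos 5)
Your proposal is correct and takes essentially the same approach as the paper, which derives Corollary~\ref{cor:injectivity} directly from the proof of Proposition~\ref{prop:faithfulr}: apply $\rho^{(r)}$ to a kernel element $\sum_i P_i\otimes R_i\otimes Q_i$ and run the $\lambda_1\circledast\lambda_2\circledast\lambda_3$ matrix-coefficient argument to deduce that the tensor itself vanishes. Your additional discussion of the rank-$r$ analogues of Lemma~\ref{L:automut} and Lemma~\ref{L:PK1} is a sensible elaboration of what the paper leaves to the reader.
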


\vspace{.15in}

\subsection{Proof of Lemma \ref{L:shuffleWilson}} 
\label{app:D2}
Fix $r$-multipartitions $\mu,$ $\lambda$ such that
$\mu \subset \lambda$ and $|\lambda \backslash \mu|=n$. 
Let $l_1, \ldots, l_n$ be integers $\geqslant 0$.
We need to prove that 
\begin{equation}
\langle \lambda; f_{1,l_1} \cdots f_{1,l_n} ; \mu \rangle=
\varpi_n(z_1^{l_1} \cdots z_n^{l_n}) (\tau_{\mu,\lambda}) \,
a_{\mu,\lambda}\, \eu_{\mu}.
\end{equation}
Let $s_1, \ldots, s_n$ be the boxes of $\lambda \backslash \mu$. We have
\begin{equation}\langle \lambda; f_{1,l_1} \cdots f_{1,l_n} ; \mu \rangle=
\sum_{\sigma \in \mathfrak{S}_n} \langle \lambda^{\sigma,0}; f_{1,l_1}; 
\lambda^{\sigma,1}\rangle \cdots \langle \lambda^{\sigma,n-1}; f_{1,l_1}; 
\lambda^{\sigma,n}\rangle
\end{equation}
where $\lambda^{\sigma,0}=\lambda$ and 
$\lambda^{\sigma,i}=\lambda \backslash \{s_{\sigma(1)}, \ldots, s_{\sigma(i)}\}$
for $i=1, \ldots, n$. We set 
$\langle \lambda^{\sigma,i-1}; f_{1,l_i}; \lambda^{\sigma,i}\rangle=0$ 
if $\lambda^{\sigma,i-1}$ or $\lambda^{\sigma,i}$ 
is not a multipartition. We say that $\sigma$ is \emph{admissible} if 
$\lambda^{\sigma,1}, \ldots, \lambda^{\sigma,n-1}$
are all multipartitions.
If $n=1$ then we have
\begin{equation}
\langle \lambda; f_{1,l}; \mu \rangle=
\cc_1(\tau_{\mu,\lambda})^l 
\eu(N^*_{\lambda,\mu}-T^*_\lambda)=
\cc_1(\tau_{\mu,\lambda})^l a_{\mu,\lambda}. 
\end{equation}
Hence, if $\sigma$ is admissible then
$$\prod_{i=1}^n \langle \lambda^{\sigma,i-1};f_{1,l_i};\lambda^{\sigma,i}
\rangle=\cc_1(\tau_{s_{\sigma(1)}})^{l_1} \cdots 
\cc_1(\tau_{s_{\sigma(n)}})^{l_n} 
\eu\big( \sum_{i=1}^n N^*_{\lambda^{\sigma,i-1},\lambda^{\sigma,i}}-
T^*_{\lambda^{\sigma,i-1}}\big).$$
Now let $\sigma \in \mathfrak{S}_n$ be arbitrary. Using 
\eqref{HR:2.5}, \eqref{HR:3},
we get after a straightforward computation
\begin{equation}\label{Eq:prooflemfundclass}
\begin{split}
\sum_{i=1}^n \big(N^*_{\lambda^{\sigma,i-1},\lambda^{\sigma,i}}-
T^*_{\lambda^{\sigma,i-1}}\big)&=
\big( (1-q)(1-t)( \tau^*_{\mu,\lambda} \otimes \tau_{\lambda}) 
-\tau^*_{\mu,\lambda} \otimes W -nv^{-1}\big)-\\
& \quad -(1-q)(1-t)\sum_{i >j} 
\tau^*_{ \lambda^{\sigma,i},\lambda^{\sigma,i-1}} \otimes
\tau_{\lambda^{\sigma,j},\lambda^{\sigma,j-1}}.
\end{split}
\end{equation}
We have already seen in the proof of Proposition 
\ref{prop:torsionfreeWilson} that
$$a_{\mu,\lambda}=
\eu\big( (1-q)(1-t)( \tau^*_{\mu,\lambda} \otimes \tau_{\lambda}) 
-\tau^*_{\mu,\lambda} \otimes W -nv^{-1}\big)$$ is nonzero and well-defined.
A similar reasoning shows that 
\begin{equation}
\eu \big(-(1-q)(1-t)\sum_{i >j} \tau^*_{
\lambda^{\sigma,i},\lambda^{\sigma,i-1}} \otimes \tau_{
\lambda^{\sigma,j},\lambda^{\sigma,j-1}}\big)
\end{equation}
is well-defined and vanishes if $\sigma$ is not admissible.
Now note that
\begin{equation}\label{E:killop}
\eu \big(-(1-q)(1-t)\sum_{i >j} \tau^*_{
\lambda^{\sigma,i},\lambda^{\sigma,i-1}} \otimes
\tau_{
\lambda^{\sigma,j},\lambda^{\sigma,j-1}}\big)=g(\cc_1(\tau_{s_{\sigma(1)}}), \ldots, 
\cc_1(\tau_{s_{\sigma(n)}})).
\end{equation}
It follows that
\begin{equation*}
\begin{split}
\sum_{\sigma \in \mathfrak{S}_n}&\cc_1(\tau_{s_{\sigma(1)}})^{l_1} \cdots 
\cc_1(\tau_{s_{\sigma(n)}})^{l_n}
\eu \big(-(1-q)(1-t)\sum_{i >j} \tau^*_{\lambda^{\sigma,i-1} \backslash 
\lambda^{\sigma,i}} \otimes \tau_{\lambda^{\sigma,j-1} \backslash 
\lambda^{\sigma,j}} \big)=\\
&=\sum_{\sigma \in \mathfrak{S}_n} \cc_1(\tau_{s_{\sigma(1)}})^{l_1} \cdots 
\cc_1(\tau_{s_{\sigma(n)}})^{l_n}
g(\cc_1(\tau_{s_{\sigma(1)}}), \ldots, \cc_1(\tau_{s_{\sigma(n)}}))\\
&=\varpi_n(z_1^{l_1} \cdots z_n^{l_n})(\tau_{\mu,\lambda}).
\end{split}
\end{equation*}
 Lemma \ref{L:shuffleWilson} is an easy consequence.

\vspace{.2in}

\section{The Heisenberg subalgebra}
\label{app:D}

\vspace{.1in}

In this section we prove the formula in Section \ref{sec:heis}.

\vspace{.1in}

\begin{lem}\label{lem:D1}
For $k,l\geqslant 0$ we have $[D_{l,0},D_{k,0}]=[D_{-l,0}, D_{-k,0}]=0$.
\end{lem}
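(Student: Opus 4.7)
The plan is to reduce the commutativity in $\SH^\cb$ to the obvious commutativity of the operators of multiplication by power sums at each finite level $n$, using the inclusions
\[
\SH^> \hookrightarrow \prod_{n\geqslant 1}\SH^>_n,\qquad \SH^< \hookrightarrow \prod_{n\geqslant 1}\SH^<_n
\]
coming from Definition \ref{df:1.1} and Proposition \ref{P:triang}(b), together with the embedding $\SH^{>},\SH^{<}\hookrightarrow\SH^{\cb}$ of Proposition \ref{1.8:prop1}.

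First, the case $l=0$ or $k=0$ is trivial because $D_{0,0}=0$ in $\SH^\cb$ by Definition \ref{1.8:def}. Assume now $l,k\geqslant 1$. Recall that the elements $D_{l,0}$ have been defined in Section \ref{sec:SHc} inductively by \eqref{1.8:Dl0} from $D_{1,0}$ and $D_{1,1}=[D_{0,2},D_{1,0}]$. By Lemma \ref{lem:D5} (invoked in Proposition \ref{prop:filtration-SHc}), these elements lie in $\SH^{>}$, and moreover they are the images of the ``same'' elements under the canonical identification of $\SH^{>}$ with the subalgebra of $\prod_n\SH^{+}_n$ generated by $\{D_{1,l}\,;\,l\geqslant 0\}$: indeed, Proposition \ref{P:triang}(b) shows that $D_{l,0}\in\SH^{>}$ corresponds to the family $(D^{(n)}_{l,0})_n$.

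Next, I would use that in each $\SH^{+}_n$, the operator $D^{(n)}_{l,0}=\Sb\, p_l(X_1,\dots,X_n)\,\Sb$ acts on $\Lambda_{n,F}$ as multiplication by $p_l$, and the polynomial representation $\rho_n^+$ is faithful (Theorem of Cherednik, Section \ref{sec:pol}). Since multiplication operators on a commutative ring commute, one has $[D^{(n)}_{l,0},D^{(n)}_{k,0}]=0$ in $\SH^{+}_n$ for all $n,l,k$. Applying $(\prod_n\Phi_n)$ and using that this is an embedding of $\SH^{>}$, we conclude $[D_{l,0},D_{k,0}]=0$ in $\SH^{>}$, and hence in $\SH^\cb$ via Proposition \ref{1.8:prop1}. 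The negative case is completely symmetric: $D_{-l,0}\in\SH^{<}$ by Lemma \ref{lem:D5}, corresponds to the family $(D^{(n)}_{-l,0})_n=(\Sb\,p_l(X_1^{-1},\dots,X_n^{-1})\,\Sb)_n$, and these again act as commuting multiplication operators on a suitable polynomial subspace in the polynomial representation, so $[D_{-l,0},D_{-k,0}]=0$.

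There is no serious obstacle; the only point worth a moment's care is the identification (provided by Lemma \ref{lem:D5}) between the $D_{l,0}$ constructed \emph{inside $\SH^\cb$} via the iterated brackets \eqref{1.8:Dl0} and the ``concrete'' $D_{l,0}\in\SH^{>}\subset\prod_n\SH^{>}_n$ coming from Definition \ref{df:1.1}. Once that identification is in hand, the commutativity is automatic from the triviality of $[\Sb\, p_l\,\Sb,\Sb\, p_k\,\Sb]=0$ at each finite level.
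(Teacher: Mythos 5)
Your proof is correct and takes essentially the same route as the paper's: the paper's one-line proof invokes Remark \ref{rem:com}, which in turn rests on exactly the two ingredients you spell out — the embedding $\prod_n\Phi_n$ of Proposition \ref{P:triang}(b) and the commutativity of $D_{l,0}^{(n)}$ in $\SH_n$ (which holds because these are multiplication operators in the faithful polynomial representation). You have simply unwound the remark rather than citing it.
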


\begin{proof} 
Follows from Remark \ref{rem:com}.
\end{proof}

\vspace{.1in}

\begin{lem}\label{lem:D2}
For $l\geqslant 0$ we have $[D_{l,0},D_{-1,0}]=-E_0\,\delta_{l,1}$ and
$[D_{1,0},D_{-l,0}]=-E_0\,\delta_{l,1}$.
\end{lem}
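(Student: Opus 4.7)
The plan is to reduce everything to the base relation (\ref{E:rel3bis2}) with $k=l=0$, namely $[D_{-1,0},D_{1,0}]=E_0$, and then to propagate it to arbitrary $l$ by induction using the recursive definitions (\ref{1.8:Dl0}) of $D_{\pm l,0}$.

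First I would dispose of the small cases. For $l=0$ both identities are trivial since $D_{0,0}=0$. For $l=1$, relation (\ref{E:rel3bis2}) with $k=l=0$ yields $[D_{-1,0},D_{1,0}]=E_0$, which gives simultaneously $[D_{1,0},D_{-1,0}]=-E_0$ and $[D_{-1,0},D_{1,0}]=E_0$, matching the claimed values with $\delta_{l,1}=1$.

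For $l\geqslant 2$ I would argue by induction, assuming $[D_{l,0},D_{-1,0}]=0$ (respectively $[D_{1,0},D_{-l,0}]=0$). Using (\ref{1.8:Dl0}) I rewrite $D_{l+1,0}=\tfrac{1}{l}[D_{1,1},D_{l,0}]$ and apply the Jacobi identity:
\begin{equation*}
[D_{l+1,0},D_{-1,0}]=\tfrac{1}{l}\bigl([D_{1,1},[D_{l,0},D_{-1,0}]]-[D_{l,0},[D_{1,1},D_{-1,0}]]\bigr).
\end{equation*}
The first bracket vanishes by induction. For the second, relation (\ref{E:rel3bis2}) with $k=0$, $l=1$ gives $[D_{-1,0},D_{1,1}]=E_1$, so $[D_{1,1},D_{-1,0}]=-E_1$. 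It therefore suffices to show that $[D_{l,0},E_1]=0$. This is the one ingredient that needs a separate observation: from the computation of $E_1$ recorded just after Definition \ref{1.8:def}, namely $E_1=-\cb_1+\cb_0(\cb_0-1)\xi/2$, we see that $E_1$ lies in the central subalgebra $F[\cb_l]$, and hence commutes with everything in $\SH^\cb$. A symmetric computation, using $D_{-l-1,0}=\tfrac{1}{l}[D_{-l,0},D_{-1,1}]$ and $[D_{1,0},D_{-1,1}]=-E_1$, handles $[D_{1,0},D_{-l,0}]$.

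The only genuine subtlety is the verification that $E_0$ and $E_1$ commute with the $D_{\pm k,0}$, which I get for free from the explicit polynomial expressions in the central generators $\cb_0,\cb_1$. After that the argument is a bookkeeping exercise in the Jacobi identity, so I do not expect any serious obstacle.
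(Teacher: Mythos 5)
Your argument is correct and follows essentially the same route as the paper's: base relation from \eqref{E:rel3bis2}, induction via the recursive definition \eqref{1.8:Dl0} of $D_{l,0}$ and the Jacobi identity, with centrality of $E_1$ closing the inductive step; the paper merely dispatches the second identity by applying the anti-involution $\pi$ of Proposition~\ref{prop:generators/involution2} rather than redoing the symmetric calculation, but that is purely cosmetic. One small note: ``the first bracket vanishes by induction'' does not literally cover the base step $l=1$, where $[D_{1,0},D_{-1,0}]=-E_0\neq 0$; there the bracket $[D_{1,1},-E_0]$ vanishes because $E_0=\cb_0$ is central --- a minor slip also present in the paper's own phrasing and trivially patched.
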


\begin{proof}
The proof is by induction on $l$. From \eqref{E:rel3bis2} we have
$[D_{1,0},D_{-1,0}]=-E_0$.
Next, we have
\begin{equation}
\label{D3}
[D_{1,1}, D_{l,0}]=l D_{l+1,0}, \qquad [D_{-1,1},D_{-l,0}]=-l D_{-l-1,0}.
\end{equation}
Thus, using the induction hypothesis and \eqref{E:rel3bis2}, we get
\begin{equation}
\label{D4}
l[D_{l+1,0}, D_{-1,0}]=[[D_{1,1},D_{l,0}], D_{-1,0}]=
-[D_{l,0}, [D_{1,1},D_{-1,0}]]=[D_{l,0},E_1].
\end{equation}
Now, by  definition, $E_1$ is a central element.
Thus, we have
$$[D_{l,0},D_{-1,0}]=0,\qquad l\geqslant 2.$$
The second identity follows from the first 
one by applying the anti-involution $\pi$.
\end{proof}

\vspace{.1in}

\begin{lem}\label{lem:D5}
For $l\in\Z$ we have $[D_{0,1},D_{l,0}]=lD_{l,0}$.
\end{lem}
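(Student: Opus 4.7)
The plan is to proceed by induction on $|l|$, using the defining relations of $\SH^\cb$ together with the inductive definition of $D_{l,0}$ in \eqref{1.8:Dl0}. The base cases $l=0,\pm 1$ should be read off directly from the definitions: for $l=0$ one has $D_{0,0}=0$ by Definition~\ref{1.8:def}, while for $l=\pm 1$ the relations \eqref{E:rel1bis2}, \eqref{E:rel2bis2} specialised to $l=1,$ $k=0$ yield
\[
[D_{0,1},D_{1,0}]=D_{1,0},\qquad [D_{0,1},D_{-1,0}]=-D_{-1,0},
\]
which is the claim in these cases.

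For the inductive step with $l\geqslant 2$, I would use that by \eqref{1.8:Dl0} one has $(l-1)D_{l,0}=[D_{1,1},D_{l-1,0}]$, so
\[
(l-1)\,[D_{0,1},D_{l,0}]=\bigl[D_{0,1},[D_{1,1},D_{l-1,0}]\bigr].
\]
Applying the Jacobi identity and using $[D_{0,1},D_{1,1}]=D_{1,1}$ (a special case of \eqref{E:rel1bis2}) together with the induction hypothesis $[D_{0,1},D_{l-1,0}]=(l-1)D_{l-1,0}$, the right-hand side becomes
\[
[D_{1,1},D_{l-1,0}]+(l-1)\,[D_{1,1},D_{l-1,0}]=l(l-1)\,D_{l,0},
\]
whence $[D_{0,1},D_{l,0}]=l\,D_{l,0}$ as required. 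The case $l\leqslant -2$ is entirely analogous: one uses the second half of \eqref{1.8:Dl0}, namely $(m-1)D_{-m,0}=[D_{-m+1,0},D_{-1,1}]$ with $m=-l$, together with the relation $[D_{0,1},D_{-1,1}]=-D_{-1,1}$ coming from \eqref{E:rel2bis2}, and the Jacobi identity produces the required coefficient $-m$.

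There is no real obstacle here: the only thing to be careful about is that the identities used as input ($[D_{0,1},D_{\pm 1,1}]=\pm D_{\pm 1,1}$ and \eqref{1.8:Dl0}) have already been established earlier in the section, so the argument is a two-line induction in each sign.
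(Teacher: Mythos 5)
Your proof is correct, and it takes a genuinely different route from the one in the paper. You argue purely algebraically: read off the base cases $l=0,\pm1$ from the defining relations \eqref{E:rel1bis2}--\eqref{E:rel2bis2}, then run an induction on $|l|$ using the inductive definition \eqref{1.8:Dl0} of $D_{l,0}$ together with the Jacobi identity, feeding in $[D_{0,1},D_{\pm1,1}]=\pm D_{\pm1,1}$. I checked the two sign-cases of the inductive step and the arithmetic comes out right: in the positive case one gets $(l-1)[D_{0,1},D_{l,0}]=l(l-1)D_{l,0}$, and in the negative case (with $m=-l$) one gets $(m-1)[D_{0,1},D_{-m,0}]=-m(m-1)D_{-m,0}$, both of which divide out correctly.

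The paper instead uses a representation-theoretic shortcut: for $l\geqslant 1$ the relation is verified inside the faithful Fock representation $\rho^+$ of $\SH^{+}$ from Proposition~\ref{prop:rho+}, where $\rho^+(D_{l,0})$ is multiplication by $p_l$ and $\rho^+(D_{0,1})$ is the degree operator, so the commutator visibly acts as $l$ times multiplication by $p_l$; the negative range then follows by applying the anti-involution $\pi$ of Proposition~\ref{prop:generators/involution2}, which exchanges $D_{l,0}$ and $D_{-l,0}$ and fixes $D_{0,1}$, flipping the sign of the commutator as needed. Your version is more self-contained and makes no appeal to the polynomial realization or to faithfulness; the paper's version is shorter because it leans on machinery already built. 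Both are valid and sound. The one point worth being explicit about in your write-up is that the input identity $(l-1)D_{l,0}=[D_{1,1},D_{l-1,0}]$ is the \emph{definition} of $D_{l,0}$ via \eqref{1.8:Dl0}, not a separately proved relation, so there is no circularity.
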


\begin{proof} 
From the faithful representation of $\SH^{+}$ in the Fock space 
(see Proposition~\ref{prop:rho+}) we get 
$$[D_{0,1}, D_{l,0}]=l D_{l,0},\qquad l\geqslant 1.$$ 
Now apply the anti-automorphism $\pi$
and Section \ref{sec:heis}.
\end{proof}

\vspace{.1in}

\begin{lem}\label{lem:D6}
For $l\geqslant 2$ we have $[D_{1,1},D_{-l,0}]=-\kappa lD_{1-l,0}$
and $[D_{-1,1},D_{l,0}]=\kappa lD_{l-1,0}$.
\end{lem}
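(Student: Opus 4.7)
The plan is to establish both identities simultaneously. Observe first that the two statements are exchanged by the anti-involution $\pi$ of Proposition \ref{prop:generators/involution2}: since $\pi$ sends $D_{\pm 1, l} \mapsto D_{\mp 1, l}$ and $D_{l, 0} \mapsto D_{-l, 0}$ (the latter by Lemma \ref{lem:D5} combined with the inductive definition \eqref{1.8:Dl0}), applying $\pi$ to $[D_{1,1}, D_{-l,0}] = -\kappa l\, D_{1-l,0}$ yields $[D_{l,0}, D_{-1,1}] = -\kappa l\, D_{l-1,0}$, i.e.\ $[D_{-1,1}, D_{l,0}] = \kappa l\, D_{l-1,0}$. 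Hence it suffices to prove the first identity, which I will do by induction on $l \geq 2$.

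For the base case, use $D_{-2,0} = [D_{-1,0}, D_{-1,1}]$ from \eqref{1.8:Dl0} and expand
$$[D_{1,1}, D_{-2,0}] = \bigl[[D_{1,1}, D_{-1,0}], D_{-1,1}\bigr] + \bigl[D_{-1,0}, [D_{1,1}, D_{-1,1}]\bigr]$$
by the Jacobi identity. By \eqref{E:rel3bis2} we have $[D_{1,1}, D_{-1,0}] = -E_1$ and $[D_{1,1}, D_{-1,1}] = -E_2$. The explicit formulas from Section \ref{A:comb} show that $E_1 = -\cb_1 + \cb_0(\cb_0-1)\xi/2$ is central and that $E_2 \equiv 2\kappa D_{0,1}$ modulo central terms. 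Therefore the first Jacobi term vanishes, while the second equals $-2\kappa\,[D_{-1,0}, D_{0,1}] = -2\kappa\, D_{-1,0}$ by Lemma \ref{lem:D5}, matching $-\kappa \cdot 2 \cdot D_{-1,0}$ as required.

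For the inductive step, suppose the formula holds for some $l \geq 2$. Write $D_{-l-1,0} = l^{-1}[D_{-l,0}, D_{-1,1}]$ from \eqref{1.8:Dl0} and apply Jacobi:
$$[D_{1,1}, D_{-l-1,0}] = l^{-1}\Bigl\{\bigl[[D_{1,1}, D_{-l,0}], D_{-1,1}\bigr] + \bigl[D_{-l,0}, [D_{1,1}, D_{-1,1}]\bigr]\Bigr\}.$$
The induction hypothesis rewrites the first bracket as $-\kappa l\,[D_{1-l,0}, D_{-1,1}] = -\kappa l(l-1)\, D_{-l,0}$, again by \eqref{1.8:Dl0} (with index $l-1 \geq 1$). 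The second bracket equals $-2\kappa\,[D_{-l,0}, D_{0,1}] = -2\kappa l\, D_{-l,0}$ by Lemma \ref{lem:D5}. Summing yields $-\kappa(l+1) D_{-l,0}$, which is the desired value.

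There is no genuine obstacle here: the argument is a direct induction whose only ingredients are the Jacobi identity, the commutation rule \eqref{1.8:Dl0} that lets one climb the ladder of $D_{-l,0}$'s via $\ad(D_{-1,1})$, and the explicit expressions for $E_1, E_2$ recalled in Section \ref{A:comb}. The only point deserving attention is to verify that $E_1$ is truly central and that the non-central part of $E_2$ is precisely $2\kappa D_{0,1}$, so that all auxiliary commutators reduce by Lemma \ref{lem:D5} to scalar multiples of the expected elements.
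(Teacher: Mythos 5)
Your proof is correct and follows essentially the same strategy as the paper: induction on $l$ via the Jacobi identity, using centrality of $E_1$ and of $E_2 - 2\kappa D_{0,1}$, together with Lemma~\ref{lem:D5} and \eqref{1.8:Dl0}, then transferring the second identity by the anti-involution $\pi$. Your explicit treatment of the base case $l=2$ is a touch cleaner than the paper's, which absorbs $l=2$ into the general inductive step by tacitly relying on $D_{0,0}=0$ in $\SH^{\cb}$.
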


\begin{proof}
The second relation follows from the first one and the anti-involution $\pi$ of $\SH^\cb$.
Let us concentrate on the first relation.
The proof is by induction on $l$.
By \eqref{E:rel3bis2} and the induction hypothesis we have
$$\begin{aligned}
{[D_{1,1},D_{-l,0}]}
&=[D_{1,1},[D_{-1,1},D_{1-l,0}]]/(1-l)\\
&=[[D_{1,1},D_{-1,1}],D_{1-l,0}]/(1-l)+[D_{-1,1},[D_{1,1},D_{1-l,0}]]/(1-l)\\
&=-[E_2,D_{1-l,0}]/(1-l)+\kappa[D_{-1,1},D_{2-l,0}].
\end{aligned}$$
Now, using \eqref{D7}, 
we get that the element $E_2-2\kappa D_{0,1}$ is central.
Therefore, by \eqref{D3} and Lemma \ref{lem:D5}, we have
$$\begin{aligned}
{[D_{1,1},D_{-l,0}]}
&=-2\kappa[D_{0,1},D_{1-l,0}]/(1-l)+\kappa(2-l)D_{1-l,0}\\
&=-2\kappa D_{1-l,0}+\kappa(2-l)D_{1-l,0}\\
&=-\kappa lD_{1-l,0}.
\end{aligned}$$
\end{proof}

\vspace{.1in}

We now prove formula \eqref{heis}, which is equivalent to
$(I_{l,k})$ below
\begin{equation*}
[D_{l,0},D_{-k,0}]=A_lE_0\,\delta_{l,k},\qquad 
A_l=-l\kappa^{l-1},\qquad
k,l\geqslant 1. \tag{$I_{l,k}$}
\end{equation*}
For $k=1$ or $l=1$ this is Lemma \ref{lem:D2}. 
Let us prove the formula $(I_{l,k+1})$, assuming that we have already 
proved $(I_{\bullet,k})$ and $(I_{l',k+1})$ for $l'\leqslant l$.
Using $(I_{l,k})$ and Lemma \ref{lem:D6} we get 
$$\begin{aligned}
{[D_{l+1,0},D_{-k,0}]}&=[[D_{1,1},D_{l,0}],D_{-k,0}]/l\\
&=-[D_{l,0},[D_{1,1},D_{-k,0}]]/l\\
&=\kappa k[D_{l,0},D_{1-k,0}]/l\\
&=\kappa(l+1)A_lE_0\delta_{l+1,k}/l.
\end{aligned}$$
We deduce that $A_{l+1}=\kappa (l+1)A_l/l$. Since $A_1=-1$ this shows that $A_l=-l\kappa^{l-1}$ as wanted.

\qed

\vspace{.2in}

\section{Relation to $W_{1+\infty}$}
\label{app:F}
Let $W_{1+\infty}$ be the 
universal central extension of the Lie 
algebra, over $\C$, of regular differential operators on the circle. 
To unburden the notation we'll abbreviate $\Wen=W_{1+\infty}$.
The aim of this section is to prove that the specialization 
at $\kappa=1$ of $\SH^{\cb}$ is 
isomorphic to the enveloping algebra of $\Wen$.

\subsection{The integral form of $\SH^{\cb}$} 
Let $\SH^{\cb}_{A}$
be the $A$-subalgebra of $\SH^\cb$ generated by the set
$\{\cb_{l}, D_{\pm 1,0}, D_{0,l}\;;\; l \geqslant 0\}$. 
From \eqref{E:rel4} and \eqref{E:defD1l} 
it follows that $\SH^{\cb}_{A}$ 
contains the elements $D_{l,0}$, $D_{\pm 1, l}$ for
any $l\geqslant 0$. Let $\SH^>_{A}$, $\SH^{\cb,0}_{A}$ and
$\SH^<_{A}$ be the $A$-subalgebras
generated by $\{D_{1,l}\;;\; l \geqslant 0\},$ 
$\{\cb_{l}, D_{0,l} \;;\; l \geqslant 0\}$
and $\{D_{-1,l}\;;\; l \geqslant 0\}$ respectively.
Replacing everywhere $A$ by $A_1$ we obtain the $A_1$-algebras
$\SH^>_{A_1}$, $\SH^{\cb,0}_{A_1}$, $\SH^<_{A_1}$ and $\SH_{A_1}$.

\begin{prop}\label{P:integralSH} 
(a) The $A_1$-module $\SH^{\cb}_{A_1}$ is  free  and we have
$\SH^{\cb}_{A_1} \otimes_{A_1} F=\SH^{\cb}$. 

(b) We have a triangular decomposition 
$\SH^{\cb}_{A_1}=
\SH^{>}_{A_1}\otimes_{A_1}\SH^{\cb,0}_{A_1}\otimes_{A_1}\SH^{<}_{A_1}.$
\end{prop}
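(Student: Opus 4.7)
The plan is to lift the $F$-linear triangular decomposition of Proposition~\ref{1.8:prop1} to an $A_1$-linear one by using the order filtration of Proposition~\ref{prop:filtration-SHc} and the explicit integral generators $D_{r,d}$ defined by \eqref{1.8:Dl0} and \eqref{E:defdrl}. I would first prove (b), and then deduce (a) as a consequence of the freeness of each factor in the triangular decomposition.

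First I would show that each of the $A_1$-algebras $\SH^>_{A_1}$, $\SH^{\cb,0}_{A_1}$, $\SH^<_{A_1}$ is a free $A_1$-module with an explicit PBW basis. For $\SH^{\cb,0}_{A_1}$, the elements $\cb_l$ are central formal parameters, and the $D_{0,l}$ are polynomially independent over $A$ thanks to the compatible surjections $\Phi_n$ onto the spherical DDAHA's and the explicit polynomial representation $\rho^+_n$ on $\Lambda_{n,A}$; hence $\SH^{\cb,0}_{A_1}\cong A_1[\cb_l, D_{0,l}\mid l\geqslant 0]$. For $\SH^>_{A_1}$ (and symmetrically $\SH^<_{A_1}$), the generators $D_{r,d}$ all lie in $\SH^>_{A_1}$ because the defining commutators in \eqref{1.8:Dl0}--\eqref{E:defdrl} involve only $A_1$-integral elements. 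By Proposition~\ref{prop:filtration-SHc}(a), the symbols $\overline{D}_{r,d}$ freely generate $\overline{\SH}^>$ over $F$, and the Poincar\'e series computation of Lemma~\ref{lem:poincare+} goes through verbatim over $A_1$ (since $\SH^+_{n,A}$ is defined and free over $A$, by the PBW theorem for ${\H}_{n,A}$). It follows that ordered monomials in the $D_{r,d}$ give an $A_1$-basis of $\SH^>_{A_1}$, and similarly for $\SH^<_{A_1}$. Surjectivity of the multiplication map
\begin{equation*}
m:\SH^>_{A_1}\otimes_{A_1}\SH^{\cb,0}_{A_1}\otimes_{A_1}\SH^<_{A_1}\to\SH^{\cb}_{A_1}
\end{equation*}
then follows from the straightening argument of Proposition~\ref{1.8:prop1}: the defining relations \eqref{E:rel0bis2}--\eqref{E:rel3bis2}, together with the series $\varphi_l$ and $\phi_l$ (polynomial in $\kappa$ and $\xi$, see Appendix~\ref{A:comb}), all have structure constants in $A$, so the straightening is carried out entirely over $A_1$.

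For injectivity of $m$, I would compare with the generic triangular decomposition over $F$. The map $m$ fits in a commutative square with its base change to $F$, which is an isomorphism by Proposition~\ref{1.8:prop1}. Since the source of $m$ is free over $A_1$ by the first step, it injects into its generic fiber, so it suffices to prove that the natural map $\SH^\cb_{A_1}\to\SH^\cb$ is injective, i.e., that $\SH^\cb_{A_1}$ is $A_1$-torsion free. Since $A_1$ is a discrete valuation ring with uniformizer $\kappa-1$, this reduces to showing the absence of $(\kappa-1)$-torsion. Here I would invoke Remark~\ref{rem:uno}, which asserts simplicity of the specialization $\SH_{n,A_1}/(\kappa-1)$ for every $n$, together with the compatible family of algebra homomorphisms $\Phi_n : \SH^\cb \to \SH_n$ (and the fact that these surjections are compatible with the $A_1$-integral forms, as all generators map to $A_1$-integral elements). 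A torsion element would vanish after base change to $F$, hence after composing with every $\Phi_n$, contradicting simplicity at $\kappa=1$ combined with $\bigcap_n\operatorname{Ker}\Phi_n=\{0\}$. Once injectivity of $m$ is established, part (a) follows: $\SH^\cb_{A_1}$ is free over $A_1$ because the three factors are, and the base change identity $\SH^\cb_{A_1}\otimes_{A_1}F=\SH^\cb$ follows by comparing with the $F$-triangular decomposition.

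The hard part will be the torsion-freeness step. The delicate point is that the associative multiplication in $\SH^\cb$ is defined through the exponential formulas of \eqref{E:rel3bis2}, whose series coefficients $\varphi_l(s)$ specialize at $\kappa=1$ to degenerate expressions (several terms in \eqref{formule/phi} coalesce when $\xi\to 0$); this is exactly the phenomenon controlled by the simplicity of the spherical rational DAHA at $\kappa=1$ invoked in Remark~\ref{rem:uno} (via \cite[prop.~4.1]{Su}). Promoting this finite-$n$ simplicity to a statement about the inverse limit $\SH^\cb$ will require tracking the order filtration carefully under $\Phi_n$, using the fact that the filtration on $\SH^\cb_{A_1}$ is exhaustive and each filtered piece is of finite rank over $A_1$, so a putative torsion element is detected already by some $\Phi_n$.
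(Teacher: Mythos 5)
Your overall strategy matches the paper's — prove freeness and the base change identity for each of $\SH^{>}_{A_1}$, $\SH^{\cb,0}_{A_1}$, $\SH^{<}_{A_1}$, then get surjectivity of $m$ by straightening and injectivity by base change to $F$ — but there are two real issues in the execution.

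First, the freeness of $\SH^{>}_{A_1}$ is the technical heart of the proof and is not established by your argument. You assert that ``ordered monomials in $D_{r,d}$ give an $A_1$-basis of $\SH^{>}_{A_1}$'' from the fact that their symbols freely generate $\overline{\SH}^{>}$ over $F$ and that the Poincar\'e series ``goes through verbatim over $A_1$.'' But $\SH^{>}_{A_1}$ is \emph{defined} as the $A_1$-subalgebra generated by the $D_{1,l}$, so you must show that straightening an arbitrary monomial in the $D_{1,l}$'s into an ordered monomial in $D_{r,d}$'s produces coefficients in $A_1$ (not merely in $F$); the $\SH^{>}$-relations are not explicit enough for this to be obvious. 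The paper avoids this by working with the filtered pieces $\SH^{>}_{A_1}[r,\leqslant\! l]=\SH^{>}_{A_1}\cap\SH^{>}[r,\leqslant\! l]$, showing via the maps $\Phi_n$ (for $n$ large) that these are identified with the free $A_1$-modules $\SH^{>}_{n,A_1}[r,\leqslant\! l]$, and that the inclusions between consecutive pieces are direct summands — a step that uses the PBW theorem for $\H_{n,A_1}$ and, crucially, Remark~\ref{rem:uno} (the $A_1$-integral version of Lemma~\ref{lem:uno}, proved via simplicity of the spherical rational DAHA at $\kappa=1$). You do cite Remark~\ref{rem:uno} and the Suzuki simplicity result, but in the wrong place (see below), and not in a way that repairs this gap.

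Second, the injectivity argument contains an unnecessary detour. Once you know the source $S=\SH^{>}_{A_1}\otimes_{A_1}\SH^{\cb,0}_{A_1}\otimes_{A_1}\SH^{<}_{A_1}$ is free and that $m_F$ is an isomorphism, injectivity of $m$ is immediate: in the commutative square with vertical maps $\alpha:S\to S\otimes_{A_1}F$ and $\beta:\SH^{\cb}_{A_1}\to\SH^{\cb}$, the composite $\beta\circ m=m_F\circ\alpha$ is injective (composition of injectives), hence $m$ is injective. There is no need to establish injectivity of $\beta$, i.e., torsion-freeness of $\SH^{\cb}_{A_1}$, beforehand — that statement is part of what you are trying to prove in $(a)$ and follows \emph{after} the triangular decomposition, not before. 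So the step you flag as ``the hard part'' (proving torsion-freeness of $\SH^{\cb}_{A_1}$ via simplicity at $\kappa=1$ and the $\Phi_n$) is not needed, and the place where simplicity at $\kappa=1$ genuinely intervenes is the one you glossed over, namely the freeness of the individual factors.
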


\begin{proof}
We claim that
$\SH^>_{A_1},$ $ \SH^{\cb,0}_{A_1}$ and $\SH^{<}_{A_1}$ are free over $A_1$, 
and that
$$\SH^>_{A_1}\otimes_{A_1}F=\SH^>,
\qquad
\SH^{\cb,0}_{A_1}\otimes_{A_1}F=\SH^{\cb,0}
,\qquad
\SH^{<}_{A_1}\otimes_{A_1}F=\SH^{<}
.$$
Thus, we have an isomorphism
$$\big(\SH^{>}_{A_1} \otimes_{A_1} \SH^{\cb,0}_{A_1} 
\otimes_{A_1} \SH^{<}_{A_1}\big)
\otimes_{A_1}F= 
\SH^{>}\otimes\SH^{\cb,0}\otimes\SH^{<}.$$
Therefore, the multiplication map 
$$\SH^{>}_{A_1} \otimes_{A_1} \SH^{\cb,0}_{A_1} \otimes_{A_1} \SH^{<}_{A_1} \to 
\SH^{\cb}_{A_1},$$ being the restriction of
a similar map over $F$, it is injective 
by Proposition \ref{1.8:prop1}.
We only need to show its surjectivity. 
The proof is the same as for $\SH^{\cb}$ in Proposition \ref{1.8:prop1}.
It is based
on the fact that $D_{-1,l}, D_{1,l}\in \SH^{\cb}_{A_1}$ for
$l\geqslant 0$. 
Then, using the triangular decompostion, we get
that $\SH^{\cb}_{A_1}$ is free as an $A_1$-module and that
\begin{equation*}
\SH^{\cb}_{A_1} \otimes_{A_1} F= \SH^{\cb}.
\end{equation*}

Now, we prove the claim.
It is clear for $\SH^{\cb,0}_{A_1}$. 
The remaining two cases are similar, we only deal
with the first one. Recall that $\SH^>$ carries an $\N$-grading and an $\N$-filtration, with 
finite-dimensional pieces $\SH^>[r,\leqslant \!l]$. 
Consider the $A_1$-module
$$\SH^>_{A_1}[r,\leqslant \!l]=
\SH^>_{A_1} \cap \SH^>[r,\leqslant \!l].$$ 
Since the tensor product commutes with direct limits, 
it is enough to check that
\begin{equation}\label{intform1}
\SH^>_{A_1}[r,\leqslant \!l]\otimes_{A_1}F=
\SH^>[r,\leqslant \!l]
\end{equation}
and that the inclusion of $A_1$-modules
\begin{equation}\label{intform2}
\SH^>_{A_1}[r,<\!l]\subset
\SH^>_{A_1}[r,\leqslant \!l]
\end{equation}
is a direct summand.
Now, for $n$ large enough the map $\Phi_n$ yields an isomorphism
$$\SH^>[r,<\!l]\to \SH^>_n[r,<\!l].$$
By Remark \ref{rem:uno}, this map restricts to an isomorphism of $A_1$-modules
$$\SH^>_{A_1}[r,<\!l]\to \SH^>_{n,A_1}[r,<\!l].$$
In particular, the left hand side is finitely generated and 
torsion free. Hence it is free. 
Further, \eqref{intform1} holds by \eqref{1.4}.
Finally, to prove \eqref{intform2} it is enough to check
that the inclusion of $A_1$-modules
$$\SH^>_{n,A_1}[r,<\!l]\subset \SH^>_{n,A_1}[r,\leqslant \!l]$$
is a direct summand.
This follows from the fact that the inclusions of $A_1$-modules 
$$\H^>_{n,A_1}[r,<\!l]\subset\H^>_{n,A_1}[r,\leqslant \!l],
\qquad\SH^>_{n,A_1}[r,\leqslant\!l]\subset\H^>_{n,A_1}[r,\leqslant \!l]$$
are direct summands, by the PBW theorem and formula
$$\SH^>_{n,A_1}[r,\leqslant\!l]=\Sb\cdot\H^>_{n,A_1}[r,\leqslant \!l]\cdot\Sb.$$
 
\end{proof}

\vspace{.15in}

\subsection{The Lie algebra $W_{1+\infty}$}
The Lie algebra $\Wen$ has the basis
$\{C,w_{l,k}\; ;\;l\in\Z,\,k\in\N\}$ and, given formal variables $\alpha,$ $\beta,$ the relations 
are given by
\begin{equation}
\label{relW}
\begin{gathered}
w_{l,k}=t^lD^k,\\
[t^l\exp(\alpha D),t^k\exp(\beta D)]=
(\exp(k\alpha)-\exp(l\beta))\,t^{l+k}\exp(\alpha D+\beta D)+\\
+\delta_{l,-k}\frac{\exp(-l\alpha)-\exp(-k\beta)}{1-\exp(\alpha+\beta)}C.
\end{gathered}
\end{equation}

\vspace{.1in}

\begin{ex}
The elements $b_l=w_{l,0}$ with $l\in\Z$ satisfy the relations of the 
Heisenberg algebra
with central charge $C$, i.e., we have
$[b_{l}, b_{-k}]=l\delta_{l,k}C.$ Let $\Heis$ be this Lie subalgebra.
\end{ex}

\begin{ex}
For $\beta\in\C$,
the elements $L_l^\beta=-w_{l,1}-\beta (l+1)b_l$ with $l\in\Z$
satisfy the relations of the Virasoro algebra, i.e., 
we have
$$[L_{l}^\beta,L_{k}^\beta]=(l-k)L_{l+k}^\beta+\frac{l^3-l}{12}\,\delta_{l,-k}\,C_\beta,\quad
C_\beta=(-12\beta^2+12\beta-2)\,C.$$
In particular $\{L_l^{1/2}\,;\,l \in \Z\}$ generates a Virasoro algebra 
of central charge $C$ such that
$$[L^{1/2}_l,b_k]=-kb_{l+k}.$$
\end{ex}

\vspace{.1in}

\begin{ex}
We have the following formulas
$$[w_{0,2},w_{l,k}]=2lw_{l,k+1}+l^2w_{l,k},
\qquad
[b_1,w_{l,k}]=-\sum_{h=0}^{k-1}\begin{pmatrix}k\\h\end{pmatrix}
w_{l+1,h}+\delta_{l,-1}\delta_{k,0}C.$$
In particular, we have
$[w_{0,2},b_l]=-2lL^{1/2}_l-lb_l.$
\end{ex}

\vspace{.1in}

Let $\Wen^+,\Wen^>,\Wen^0\subset \Wen$ be the Lie subalgebras spanned by
$$\{C, w_{l,k}\; ;\;l,k\in\N\},\qquad
\{w_{l,k}\; ;\;l\geqslant 1,\, k\in\N\},\qquad
\{C,w_{0,l}\;;\;l\geqslant 0\}.$$
We define $\Wen^-$ and $\Wen^<$ in a similar fashion.
The enveloping algebra $U(\Wen)$ carries a $\Z$-grading, 
called the \emph{rank grading}, in which 
$w_{l,k}$ is placed in degree $l$ and $C$ is placed in degree zero, and an $\N$-filtration, called the \emph{order filtration},
in which $w_{l,k}$ is placed in degree $ \leqslant k$. 
The order filtration may alternatively be described as follows~: 
an element $u$ is of order $\leqslant k$ if 
$$\ad(p_1) \circ \cdots \circ \ad(p_k)(u)\in\C[C,w_{l,0}\;;\;l\in\Z],
\qquad\forall p_1, \ldots, p_k \in U(\Heis).$$ 
Let $U(\Wen)[r,\leqslant\! k]$ stands for the piece of degree $r$ 
and order $\leqslant k$.
The graded pieces $U(\Wen^>)[r,\leqslant \!k]$ and 
$U(\Wen^<)[r,\leqslant \!k]$ are finite-dimensional and the 
Poincar\'e polynomials of $U(\Wen^>)$ and $U(\Wen^<)$ 
with respect to this grading and filtration are given by
\begin{align}\label{E:poincarew}
P_{\Wen^>}(t,q)=\prod_{r > 0}\prod_{k \geqslant 0} \frac{1}{1-t^rq^k},\qquad
P_{\Wen^<}(t,q)=\prod_{r <0}\prod_{ k \geqslant 0} \frac{1}{1-t^rq^k}.
\end{align}

\vspace{.1in}

The proof of the following  result is left to the reader.

\vspace{.1in}

\begin{lem}\label{L:Wgenerate} The following holds

(a) $\Wen$ is generated by $b_{-1}$, $b_1$ and $w_{0,2}$,

(b) $\Wen^>$, $\Wen^{<}$ are generated by $\{w_{1,l}\;;\;l \geqslant 0\}$, 
$\{w_{-1,l}\;;\;l \geqslant 0\}$ respectively.
\end{lem}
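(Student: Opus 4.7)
The plan is to exploit the generating-function form of the Lie bracket \eqref{relW}, which reduces every commutator computation in $\Wen$ to a power-series identity in auxiliary variables $\alpha,\beta$. I would first treat (b) and then deduce (a), using (b) together with the explicit bracket $[w_{0,2},w_{l,k}]$ recorded in the examples above Lemma~\ref{L:Wgenerate} and the mixed bracket $[w_{-1,0},w_{1,k}]$ to pick up the remaining generators $w_{0,k}$ and the central element $C$.

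For (b), setting $\alpha=0$ in \eqref{relW} with degrees $1$ and $r-1$ gives
\[
[t,\, t^{r-1}\exp(\beta D)] \;=\; (1-e^{\beta})\, t^{r}\exp(\beta D),
\]
so extracting the coefficient of $\beta^{k}/k!$ yields
\[
[w_{1,0},\,w_{r-1,k}] \;=\; -\sum_{j=1}^{k}\binom{k}{j}\,w_{r,k-j}.
\]
At $k=1$ this gives $w_{r,0}$, and the triangular shape then allows an induction on $k$ (for fixed $r$) to obtain $w_{r,k}$ as an iterated bracket of elements $w_{1,\bullet}$ and $w_{r-1,\bullet}$; a second induction on $r$, starting from the analogous formula $[w_{1,0},w_{1,k+1}]=\sum_{j=1}^{k+1}\binom{k+1}{j}w_{2,k+1-j}$ for $r=2$, completes the proof. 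The case of $\Wen^{<}$ is identical after replacing $w_{r,k}$ by $w_{-r,k}$.

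For (a), the identity $[w_{0,2},w_{\pm 1,k}]=\pm 2\,w_{\pm 1,k+1}+w_{\pm 1,k}$ displayed just above Lemma~\ref{L:Wgenerate} shows, by induction on $k$, that every $w_{\pm 1,k}$ lies in the subalgebra generated by $b_{\pm 1}=w_{\pm 1,0}$ and $w_{0,2}$. Applying (b) then yields all $w_{\pm r,k}$ with $r\geqslant 1$. To recover $\Wen^{0}$ and $C$, I would use \eqref{relW} with $l=-1$, $k=1$, $\alpha=0$; after simplifying the central prefactor via $(1-e^{-\beta})/(1-e^{\beta})=-e^{-\beta}$, one obtains
\[
[t^{-1},\,t\exp(\beta D)] \;=\; (1-e^{\beta})\exp(\beta D) - e^{-\beta} C,
\]
whose coefficient of $\beta^{k}/k!$ reads
\[
[w_{-1,0},\,w_{1,k}] \;=\; -\sum_{m=0}^{k-1}\binom{k}{m}\,w_{0,m} + (-1)^{k+1}\,C.
\]
Specializing $k=0$ gives $C=[b_{1},b_{-1}]$, after which taking $k\geqslant 1$ expresses $w_{0,k-1}$ in terms of $C$, the already-known $w_{1,k}$, $w_{-1,0}$, and the previously produced $w_{0,m}$ with $m<k-1$; this closes the induction and completes (a).

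The entire argument is a sequence of coefficient extractions in $\beta$; no structural input beyond \eqref{relW} is needed. The only point requiring genuine attention will be the bookkeeping of the central term in the mixed bracket $[w_{-1,0},w_{1,k}]$, namely the correct evaluation of the rational expression $(\exp(-l\alpha)-\exp(-k\beta))/(1-\exp(\alpha+\beta))$ in the limit $\alpha\to 0$. Once this is handled, cleanly isolating $C$ from the $w_{0,m}$-contributions, both parts of the lemma follow without further work.
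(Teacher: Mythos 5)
The paper leaves this lemma's proof to the reader, so there is no in-text argument to compare against; your proposal supplies one, and it is essentially correct. The strategy — extract coefficients from the generating-function bracket \eqref{relW} at $\alpha=0$, climb by induction on the $t$-degree using $[w_{1,0},w_{r-1,k}]$ for (b), then for (a) first produce all $w_{\pm 1,k}$ from $b_{\pm 1}$ and $w_{0,2}$ via $[w_{0,2},w_{l,k}]=2lw_{l,k+1}+l^2w_{l,k}$ and finally recover $C$ and the $w_{0,k}$ from $[w_{-1,0},w_{1,k}]$ — is sound and complete. There are, however, a few computational slips worth fixing before anything goes to print. First, the formula $[w_{1,0},w_{r-1,k}]=-\sum_{j=1}^{k}\binom{k}{j}w_{r,k-j}$ is correct as you wrote it in the displayed equation, but the inline restatement for $r=2$ has the wrong overall sign. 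Second, in the mixed bracket you should have
\[
[t^{-1},\,t\exp(\beta D)]\;=\;(1-e^{-\beta})\exp(\beta D)-e^{-\beta}C,
\]
with $e^{-\beta}$ (not $e^{\beta}$) in the first factor, since $\exp(k\alpha)-\exp(l\beta)$ at $(l,k)=(-1,1)$, $\alpha=0$ gives $1-e^{-\beta}$. Consequently the coefficient extraction produces
\[
[w_{-1,0},w_{1,k}]\;=\;\sum_{m=1}^{k}(-1)^{m+1}\binom{k}{m}\,w_{0,k-m}+(-1)^{k+1}C,
\]
so the $w_{0,m}$-coefficients carry an alternating sign that your stated formula drops. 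None of this affects the argument: the point you actually use is that $w_{0,k-1}$ appears with nonzero coefficient (here $k$, from $m=1$), that the remaining $w_{0,m}$ have $m<k-1$, and that the central term is a fixed multiple of $C$ already produced at $k=0$, so the induction closes exactly as you describe. With these sign corrections the proof is clean.
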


\vspace{.15in}

\subsection{The Fock space of $W_{1+\infty}$} 
For $c,d\in\C$ 
we set 
$$U_{c,d}(\Wen)=U(\Wen)/(C-c,b_0-d),\qquad
U_{c,d}(\Heis)=U(\Heis)/(C-c,b_0-d).$$
Let $S_{c,d}$ be the 
\emph{irreducible vacuum module with level $(c,d)$},
see \cite[sec.~ 1]{FKRW}. It is the top of the \emph{Verma module}
$$M_{c,d}=\Ind_{\Wen^+}^{\Wen}(\C_{c,d}),$$ 
where $\C_{c,d}$ is the one-dimensional 
$\Wen^+$-module given by  
$$w_{l,k}\mapsto 0,\quad l,k \geqslant 0,\quad (l,k) \neq (0,0),\quad
C\mapsto c, \quad b_0\mapsto d.$$ 
We will mainly be interested in the pair $\eta$ given by $(c,d)=(1,-1/2)$.

\vspace{.1in}

\begin{prop}\label{prop:fideleW} 
(a) The restriction of $S_{\eta}$ to 
$\Heis$ is the level one Fock space of $\Heis$.

(b) The action of 
$U_{\eta}(\Wen)$ 
on $S_{\eta}$ is faithful.
\end{prop}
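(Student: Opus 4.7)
The plan is to realize $S_\eta$ as the bosonic Fock space $\kF := \C[p_1, p_2, \ldots]$ through an explicit free-field representation of $\Wen$ at central charge $c = 1$, and then to extract both statements from this identification.

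First I would endow $\kF$ with its standard level-one $\Heis$-module structure: $b_{-l}$ acts by multiplication by $p_l$, $b_l$ by $l\,\partial_{p_l}$ for $l \geq 1$, and $b_0$, $C$ act by the scalars $-1/2$, $1$. Classically (Frenkel--Kac--Radul--Wang) one produces operators $w_{l,k} \in \End(\kF)$ for $l \in \Z$, $k \geq 0$, expressed as normal-ordered polynomial combinations of the boson modes $b_l$, which together with $b_l = w_{l,0}$ satisfy the relations \eqref{relW} with $C$ acting as $1$; verification is a standard Wick computation using the OPE $b(z)b(w) \sim (z-w)^{-2}$. This endows $\kF$ with the structure of a $\Wen$-module extending the given $\Heis$-module structure. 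One then checks that the vector $|0\rangle := 1 \in \kF$ is annihilated by $w_{l,k}$ for $l \geq 1$ and by $w_{0,k}$ for $k \geq 1$ (immediate from the explicit formulas after normal ordering), which, together with $b_0|0\rangle = -\tfrac{1}{2}|0\rangle$ and $C|0\rangle = |0\rangle$, exhibits $|0\rangle$ as a highest-weight vector for $\Wen$ of weight $\eta$. The universal property of $M_\eta$ then yields a surjection $M_\eta \twoheadrightarrow \kF$, and since $\kF$ is already irreducible as a $\Heis$-module at $C=1$, it is a fortiori irreducible as a $\Wen$-module; hence the surjection factors through the unique irreducible quotient, giving $S_\eta \cong \kF$ as $\Wen$-modules. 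Restricting to $\Heis$ yields part (a).

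For (b), what has to be shown is that the resulting representation $\rho : U_\eta(\Wen) \to \End(\kF)$ is injective. Its kernel $K$ is a two-sided ideal, graded with respect to the rank grading on $U_\eta(\Wen)$. Suppose for a contradiction that $u \in K$ is nonzero and homogeneous. Using the triangular decomposition $U_\eta(\Wen) \cong U(\Wen^<) \otimes U_\eta(\Wen^0) \otimes U(\Wen^>)$, write $u = \sum_i P_i R_i Q_i$ with $P_i \in U(\Wen^>)$, $R_i \in U_\eta(\Wen^0)$, $Q_i \in U(\Wen^<)$, the $R_i$ linearly independent. Mimicking the proof of Proposition~\ref{P:proof2} (in particular Lemma~\ref{L:PK1}), one evaluates $\rho(u)$ on matrix coefficients of the form $\langle \bar\lambda_1 \circledast \lambda_2 \circledast \bar\lambda_3 \,;\, \rho(u) \,;\, \lambda_1 \circledast \lambda_2 \circledast \lambda_3 \rangle$ built from Schur-type vectors in the $p_l$'s with widely separated supports, together with the one-parameter families of automorphisms $\tau^{\pm}_u$ analogous to those of Lemma~\ref{L:automut}; the translation-invariance of these automorphisms then separates the contributions of $P_i$, $R_i$, $Q_i$ and forces each factor to vanish, contradicting $u \neq 0$.

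The main obstacle is the construction of the $c=1$ free-field realization itself, namely writing the explicit polynomial expressions for the $w_{l,k}$ in terms of the $b_l$ and verifying \eqref{relW} with $C=1$ via OPE; this is standard but technical, and once it is in hand both parts follow directly from the arguments above.
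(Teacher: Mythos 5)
Your treatment of part (a) matches the paper's: both rely on the Frenkel--Kac--Radul--Wang $c=1$ free-field realization (the paper simply cites \cite[thm.~5.1]{FKRW}; you sketch the construction). The argument that the surjection $M_\eta \twoheadrightarrow \mathcal{F}$ must factor through $S_\eta$ because $\mathcal{F}$ is already irreducible over $\Heis$ is exactly the right point, and it is correct.

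For part (b), however, you take a genuinely different and substantially heavier route from the paper, and one that is only sketched. The paper's proof is a short, purely Lie-algebraic argument: the kernel $I$ of the action satisfies $I \cap U(\Heis) = \{0\}$; one reduces the whole question to the lemma that any ideal of $U(\Wen)$ intersecting $U(\Heis)$ trivially is zero; and this lemma is proved by a filtration argument using that $\ad(b_l)$ strictly decreases the order filtration, so the minimal-order component of $I$ would have to centralize $U(\Heis)$, contradicting a direct computation (on the associated graded, where each $\ad(b_l)$ acts as an explicit derivation with $\bigcap_l \Ker(\sigma_l)=\{0\}$ in positive order) that the centralizer of $\Heis$ in $U(\Wen)$ is just $\C C \oplus \C b_0$. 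This avoids the Schur/Jack polynomial combinatorics entirely. Your proposal instead tries to redo the Appendix-C ``separated supports'' argument (Proposition~\ref{P:proof2} and Lemma~\ref{L:PK1}) in the $\Wen$ setting, which would require independently: writing down the explicit free-field formulas for all $w_{l,k}$ at $c=1$; computing the resulting Pieri-type matrix coefficients of the creation and annihilation modes $w_{\mp 1, k}$ in the Schur basis (note you cannot import these from $\SH_1^{(1)}$, since the isomorphism $\Theta^1:\SH_1^{(1)}\to U_\eta(\Wen)$ of Appendix~F is proved \emph{using} this proposition, so relying on it would be circular); establishing the one-parameter automorphisms $\tau_u$ on $U(\Wen^{\gtrless})$ (these do exist, induced by $D\mapsto D+u$, but you should say why the central anomaly does not obstruct this on the positive or negative halves separately); and rerunning the asymptotic separation and linear-independence argument. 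None of these steps is spelled out, and together they constitute essentially all the work. So while your plan is plausible, it substitutes a long computational argument for the paper's two-paragraph centralizer argument, and as written it leaves the genuinely nontrivial steps as assertions rather than proofs. I would recommend looking at the filtration/centralizer route: it is both shorter and cleaner.
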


\begin{proof}
See  \cite[thm.~5.1]{FKRW} for $(a)$. Now, we prove part $(b)$.
Let $I \subset U_{\eta}(\Wen)$ be the annihilator of $S_{\eta}$. 
Since $U_{\eta}(\Heis)$ acts faithfully on 
$S_\eta$ we have $I \cap U_{\eta}(\Heis)=\{0\}$. The proposition
is a consequence of the following lemma.

\begin{lem} Let $I$ be an ideal of $U(\Wen)$ such that 
$I \cap U(\Heis) =\{0\}$. Then $I=\{0\}$.
\end{lem}

\begin{proof}
Let $I$ be as above, and let $I_0 \subseteq I_1 \subseteq \cdots$ 
be the filtration on $I$ induced from the order filtration on $U(\Wen)$. 
Assuming that $I \neq \{0\}$, 
let $n$ be minimal such that $I_n \neq \{0\}$. 
Since $I_0=I \cap U(\Heis) =\{0\}$, we have $n \geqslant 1$. 
Moreover, since 
$$\ad(b_l)\big(U(\Wen)[\leqslant\! n]\bigr)\subset U(\Wen)[<\! n],$$ we have
$[I_n,U(\Heis)]=0$. This contradicts the following claim.

\vspace{.05in}

\begin{claim}  The centralizer of $\Heis$ in $U(\Wen)$ is 
$\CC C \oplus \CC b_0$.
\end{claim}

\begin{proof} For  $l \in \Z$ we consider the map
$$\sigma_l=\ad (b_l): U(\Wen)[\leqslant\! n] / U(\Wen)[<\! n] \longrightarrow  
U(\Wen)[\leqslant\! n-1] / U(\Wen)[<\! n-1].$$
The space $U(\Wen)[\leqslant\! n] / U(\Wen)[<\! n]$ 
is identified with the degree $(\bullet,n)$ 
part of the polynomial ring
$\CC[\overline{w}_{h,k}\;;\;h,k]$.
One checks from the definition of 
$\Wen$ that $\sigma_l$ acts as 
the derivation satisfying
$$\sigma_l(\overline{w}_{h,k})=\begin{cases} -kl\overline{w}_{l+h,k-1} & 
\;\text{if}\; k \geqslant 1\\ 0 & \;\text{if}\; k=0\end{cases}$$
From this it is easy to check that 
$\bigcap_l \Ker (\sigma_l) =\{0\}$ if $n \geqslant 1$. This implies that 
the centralizer of $\Heis$ in $U(\Wen)$ is contained into
$U(\Wen)[\leqslant\! 0]=U(\Heis).$
The claim now follows from the
fact that the center of $U(\Heis)$ is $\CC C \oplus \CC b_0$.
\end{proof}
This finishes the proof of the lemma and of the proposition.
\end{proof}
\end{proof}

\vspace{.1in}

\begin{lem}\label{L:wFock}
The element $w_{0,2}/2$ acts in $S_{\eta}$ as the Laplace-Beltrami 
operator specialized at $\kappa=1$, i.e., we have
$$\rho(w_{0,2})= 2\,\square^1
=\sum_{k, l >0} \big( b_{-l}b_{-k}b_{l+k} + b_{-l-k}b_{l}b_{k}\big),$$
where $\rho: U_\eta(\Wen) \to \End(S_{\eta})=\End(\C[b_{l}\,;\,l<0])$ is the 
Fock space.
\end{lem}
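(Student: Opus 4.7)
The plan is to exploit faithfulness of the $\Wen$-action on $S_\eta$ (Proposition \ref{prop:fideleW}(b)) together with the cyclicity of the vacuum: by part (a) of the same proposition, $S_\eta$ is the level-one Fock space $\C[b_{-l}\,;\,l>0]\cdot|0\rangle$. Hence any operator $T\in\End(S_\eta)$ is completely determined by $T|0\rangle$ and the commutators $[T,b_h]$, $h\neq 0$. It therefore suffices to verify that $\rho(w_{0,2})$ and $\Omega:=\sum_{l,k>0}(b_{-l}b_{-k}b_{l+k}+b_{-l-k}b_l b_k)$ agree on the vacuum and have the same commutators with every $b_h$.

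The action on $|0\rangle$ is immediate on both sides. Since $(0,2)\in \mathbb N^2\setminus\{(0,0)\}$, the element $w_{0,2}$ lies in the augmentation ideal of $\Wen^+$ and therefore kills $|0\rangle$ by definition of the vacuum module. On the other side, every monomial in $\Omega$ terminates with a $b_m$, $m>0$, which annihilates $|0\rangle$. For the commutator check, the relation from the third Example of Section F.2 gives
$$[w_{0,2},b_h]=[w_{0,2},w_{h,0}]=2h\,w_{h,1}+h^2\,b_h.$$
I would next identify $\rho(w_{h,1})$ as a quadratic in the $b_l$'s. Using $L^{1/2}_h=-w_{h,1}-\tfrac{h+1}{2}b_h$ and the formula for $C_\beta$ with $\beta=1/2$, the operators $\{\rho(L_h^{1/2})\}$ form a Virasoro representation of central charge $1$ on $S_\eta$ satisfying $[L_h^{1/2},b_k]=-kb_{h+k}$. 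Since $\rho(L_h^{1/2})-\tfrac12\sum_{k\in\Z}\!:\!b_{h-k}b_k\!:$ commutes with all of $\Heis$ and is homogeneous of nonzero $b_0$-weight $-h$, Schur's lemma applied to the irreducible Heisenberg module $S_\eta$ forces this difference to vanish for $h\neq 0$. Substituting the resulting Sugawara formula for $\rho(w_{h,1})$ and simplifying with $b_0=-\tfrac12$, I compute $\rho(2h\,w_{h,1}+h^2b_h)$ explicitly. On the other side, a direct Leibniz calculation of $[\Omega,b_h]$ using $[b_m,b_n]=m\delta_{m,-n}$, split into the cases $h>0$, $h=0$, $h<0$, produces the same expression term-by-term.

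The main obstacle is the bookkeeping in the last step. The Sugawara sum $\tfrac12\sum_{k}\!:\!b_{h-k}b_k\!:$ must be reorganized according to the sign of $k$, producing an infinite part $\sum_{j>0}b_{-j}b_{h+j}$ (for $h>0$) and a finite polynomial part $\tfrac12\sum_{0<k<h}b_{h-k}b_k$, as well as a scalar $-b_h/2$ arising from the $b_0$-terms on $S_\eta$. These three pieces, weighted by $-2h$ after inserting into $2h\rho(w_{h,1})+h^2b_h$ and combined with the cancellation $-h(h+1)b_h+h^2b_h=-hb_h$, must be matched exactly against the corresponding terms of $[\Omega,b_h]$ in each sign regime. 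Once this matching is carried out, together with the vacuum check of the previous paragraph, the faithfulness argument from the first paragraph concludes the proof.
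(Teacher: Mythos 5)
Your approach is correct and is essentially the one the paper takes: reduce to checking the commutators with $b_h$ (by cyclicity of the vacuum over $U(\Heis)$), express the relevant Virasoro modes on $S_\eta$ via the Sugawara quadratic, compute $[w_{0,2},b_h]$ from the Lie algebra relations, and match against $[\Omega,b_h]$. Two small imprecisions that do not affect the argument: what does the work in your reduction step is cyclicity, not faithfulness; and the "nonzero $b_0$-weight" used in the Schur argument should be the energy (conformal) grading, since $b_0$ is central and acts by the scalar $-1/2$ on all of $S_\eta$.
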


\begin{proof} The free field formula for $\square^1$ is obtained by setting 
$\kappa=1$ in Proposition \ref{prop:LBinfty}.
Because $S_{\eta}$ is cyclic over $U(\Heis)$, the action of 
$w_{0,2}$ on $S_{\eta}$ is completely determined by the commutation relations 
of $w_{0,2}$ with $\{b_l\;;\;l \in \Z\}$ and by the
equation $w_{0,2}\cdot 1=0$. Hence it is enough to check that 
$[\rho(w_{0,2}), b_l]/2=[\square^1,b_l]$ for all $l$, 
because $\square^1 \cdot 1=0$. Likewise, the action of the
Virasoro operators $L_l^{1/2}$ on $S_{\eta}$ is fully determined 
by their commutation relation with the Heisenberg operators. 
More precisely, from the relations
$$[L^{1/2}_l,b_k]=-kb_{l+k},\qquad L_0^{1/2} \cdot 1=1/4$$ 
it follows that
$$\rho(L_0^{1/2})=\sum_{k \geqslant 0} b_{-k}b_k,\qquad \rho(L_l^{1/2})=
\sum_{k \in \Z} b_{l-k}b_k/2, \qquad l \neq 0.$$
Now, one checks by a direct computation that 
$$[\rho(w_{0,2}),b_l]/2=\rho\big([w_{0,2}, b_l]\big)/2 = 
-l\rho\big( L_{l}^{1/2} + b_l/2\big)=
-l\big( \sum_{k \in \Z} b_{l-k}b_k + b_l\big)/2=
[\square^1, b_l]$$
(recall that $b_0=-1/2$).
The lemma is proved.
\end{proof}

\vspace{.15in}

\subsection{The isomorphism at the level $1$}
\label{sec:Wchi}
Let $\SH_{A_1}^{(1)}$ be the specialization of
$\SH_{A_1}^\cb$ at $\cb=(1,0,0,\dots)$.
Recall the representation
$\rho: \SH^{(1)}_{A_1} \to \End(\Lambda_{A_1})$. We set
\begin{equation}
\SH^{(1)}_{1}=\SH_{A_1}^{(1)} \otimes_{A_1} \C,\qquad
\Lambda_1=\Lambda_{A_1}\otimes_{A_1} \C\end{equation}
where $A_1$ acts on $\C$ via $\kappa \mapsto 1$.
We identify  $S_{\eta}$  and  $\Lambda_1$ via the assignment 
\begin{equation}
b_{-l_1} \cdots b_{-l_r} \cdot 1 \mapsto p_{l_1} \cdots p_{l_r} \cdot 1, 
\qquad l_1, \ldots, l_r \geqslant 1.
\end{equation}
This identification intertwines the actions of the 
Heisenberg generators $b_l$ in 
$U_{\eta}(\Wen)$ with the 
Heisenberg generators $D_{-l,0}$ in $\SH_{1}^{(1)}$ for $l\in \Z$. It
intertwines also the action of $w_{0,2}/2$ 
with that of $D_{0,2}$ by Lemma~\ref{L:wFock} and 
Proposition~\ref{P:newrep}. Since, by Lemma~\ref{L:Wgenerate} and 
Proposition~\ref{prop:generators/involution2},
the algebras $U_{\eta}(\Wen)$ and $\SH^{(1)}_{1}$ 
are respectively generated by 
$\{b_{-1}, w_{0,2}, b_{1}\}$ and 
$\{D_{-1,0}, D_{0,2}, D_{1,0}\}$, and since by
Proposition~\ref{prop:fideleW} the representation on $S_{\eta}$ 
is faithful we obtain in this way a canonical 
surjective algebra homomorphism
\begin{equation}\Theta^1: \SH_{1}^{(1)} \to U_{\eta}(\Wen),\quad
D_{-l,0}\mapsto b_l,\quad D_{0,2}\mapsto w_{0,2}/2,\quad l\in\Z.
\end{equation}

\begin{prop}\label{P:thetachi} The map $\Theta^1$ is an algebra isomorphism.
\end{prop}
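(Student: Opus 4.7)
Plan: The strategy is to establish that $\Theta^1$, already known to be surjective, is also injective by comparing Poincar\'e polynomials of the two algebras with respect to the rank grading and the order filtration, and then passing to the associated graded.

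First I would observe that $\Theta^1$ preserves the $\Z$-grading by rank (up to a sign reversal) and the $\N$-filtration by order. This is immediate on the generators of Proposition \ref{prop:generators/involution2}(a) specialized at $\kappa=1$: $D_{\pm 1,0}\mapsto w_{\mp 1,0}$ carries rank $\pm 1$ to rank $\mp 1$ with order $0$, while $D_{0,2}\mapsto w_{0,2}/2$ preserves rank $0$ and order $2$. Compatibility on all of $\SH^{(1)}_1$ follows from multiplicativity and the bracket.

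Next I would match Poincar\'e polynomials. For $\SH^{(1)}_1$, the triangular decomposition of Proposition \ref{P:integralSH}, Corollary \ref{P:poincareSH>} (which descends from $F$ to $\C$ at $\kappa=1$ using the $A_1$-freeness in Proposition \ref{P:integralSH}), and the identification $\SH^{(1),0}_1 = \C[D_{0,l}\,;\,l\geq 1]$ with $D_{0,l}$ of order $l$, yield
\begin{equation*}
P_{\SH^{(1)}_1}(t,q) = \prod_{(r,l)\in\Z\times\N\setminus\{(0,0)\}} (1-t^rq^l)^{-1}.
\end{equation*}
For $U_\eta(\Wen)$, the PBW theorem applied to $\Wen=\Wen^<\oplus\Wen^0\oplus\Wen^>$, combined with \eqref{E:poincarew} and the identification of the image of $U(\Wen^0)$ in $U_\eta(\Wen)$ with $\C[w_{0,l}\,;\,l\geq 1]$, yields the same Poincar\'e polynomial. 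Hence $\dim \SH^{(1)}_1[-r,\leqslant\! k] = \dim U_\eta(\Wen)[r,\leqslant\! k]$ for all $r\in\Z$, $k\in\N$.

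Then I would pass to the associated graded for the order filtration. Both $\overline{\SH}^{(1)}_1$ and $\overline{U_\eta(\Wen)}$ are bigraded polynomial $\C$-algebras of equal bigraded dimensions in each piece. The induced map $\overline{\Theta}^1$ is a bigraded algebra homomorphism, and I would establish its surjectivity on each bigraded piece by lifting generators: by Lemma \ref{L:Wgenerate}(a), every $w_{r,d}$ is expressible as an iterated Lie bracket of $\{w_{-1,0},\,w_{1,0},\,w_{0,2}\}$; translating such an expression into the analogous iterated commutator of $\{D_{1,0},\,D_{-1,0},\,D_{0,2}\}$ in $\SH^{(1)}_1$ and passing to symbols produces a preimage of the correct bidegree. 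Surjectivity plus dimension equality then force $\overline{\Theta}^1$ to be a bigraded isomorphism, whence a standard filtered-to-graded argument gives that $\Theta^1$ is itself an isomorphism.

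The main obstacle will be verifying that the iteratively constructed lifts have exactly the correct order (and not strictly larger order with vanishing symbol). This requires tracking, at each commutator step, that the order drops by precisely one, in parallel with the defining relation $[D_{0,l+1},D_{1,0}]=D_{1,l}$ of order $l$. Leading-term computations such as Proposition \ref{prop:leading_term_D_0n}, together with the corresponding normal-ordered expressions for the action of $\Wen$ on the Fock space $S_\eta$, provide the needed control.
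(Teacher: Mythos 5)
Your proposal is correct in substance and ultimately relies on the same engine as the paper's proof — the match of Poincar\'e polynomials between $\SH^{(1)}_{1}$ and $U_\eta(\Wen)$ with respect to the rank grading and order filtration, combined with surjectivity — but it organizes the argument differently. You pass directly to the associated graded of the full algebra and try to establish surjectivity of $\overline{\Theta}^1$ by hand via iterated commutators, whereas the paper cuts the problem using the triangular decompositions $\SH^{(1)}_{1}=\SH^>_{1}\otimes\SH^{(1),0}_{1}\otimes\SH^<_{1}$ and $U_\eta(\Wen)=U(\Wen^<)\otimes U_\eta(\Wen^0)\otimes U(\Wen^>)$: it first proves the isomorphism on the zero piece by explicit leading-term computations, then deduces surjectivity on the $>$ and $<$ pieces from the overall surjectivity of $\Theta^1$ and the PBW-type factorizations, and finally uses the Poincar\'e polynomial equality only on the $>$ and $<$ pieces where the graded--filtered components $\SH^>_1[r,\leqslant\!l]$ are finite dimensional. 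The latter route avoids having to prove surjectivity of $\overline{\Theta}^1$ independently: the dimension count on each piece together with surjectivity of the restriction forces the isomorphism.

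The obstacle you flag at the end — controlling that the iterated commutators have the right order, not larger with vanishing symbol — is exactly the content of the explicit computations carried out in the paper's proof: $\Theta^1(D_{1,l})=2^{-l}\ad(w_{0,2})^l(b_{-1})\in(-1)^l w_{-1,l}\oplus\bigoplus_{k<l}\C w_{-1,k}$, and consequently $\Theta^1(E_l)\in(-1)^{l+1}l\,w_{0,l-1}+\text{lower order}$ and $\Theta^1(D_{0,l})\in(-1)^l w_{0,l}/l+\text{lower order}$. These use only the bracket $[w_{0,2},w_{l,k}]=2l\,w_{l,k+1}+l^2 w_{l,k}$ from Appendix F.2 and the fact (Proposition \ref{prop:filtration-SHc}) that $D_{r,d}$ has order exactly $d$. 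Your pointer to Proposition \ref{prop:leading_term_D_0n} is not the most apt reference here: that result concerns the \emph{standard} filtration (by number of bosons) on the Fock space, not the \emph{order} filtration on $U(\Wen)$; the needed control is best obtained by the direct bracket computations just cited. With that substitution your argument goes through, so there is no genuine gap, only a slightly indirect choice of supporting lemma and a somewhat heavier route than the paper's piecewise triangular reduction.
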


\begin{proof} 
Set $\SH^{(1),0}_{1}=\SH^0_{1}/(\cb-c)$. 
We first show that $\Theta^\chi$ restricts to an isomorphism
\begin{equation}\SH^{0}_{1} \to U_{\eta}(\Wen^0).
\end{equation}
By \eqref{E:rel1bis2} we have
$D_{1,l}=\ad(D_{0,2})^l (D_{1,0})$ for $l\geqslant 0$.
So, a direct computation proves that
\begin{equation}\Theta^1(D_{1,l})=
2^{-l} \ad(w_{0,2})^l(b_{-1}) \in 
(-1)^lw_{-1,l} \oplus \bigoplus_{k=0}^{l-1} \C w_{-1,k},\qquad
l\geqslant 0.
\end{equation}
Thus, since $E_l=[D_{-1,0},D_{1,l}]$, we get
\begin{equation}\Theta^1(E_l)=
[b_1,\Theta^1(D_{1,l})]
\in (-1)^{l+1}lw_{0,l-1}\oplus 
\bigoplus_{k=1}^{l-2} \C w_{0,k} \oplus \C,\qquad l\geqslant 0.
\end{equation}
Next, from \eqref{E:rel3bis2} and \eqref{D3}, we have the following formula
in $\SH^{(1)}_{1}$
\begin{equation}E_l\in l(l-1)D_{0,l-1} 
+\C[D_{0,1},\ldots,D_{0,l-2}],\qquad l\geqslant 2.
\end{equation}
It follows that 
\begin{equation}\Theta^1(D_{0,l}) \in 
(-1)^l w_{0,l}/l+\C[w_{0,1},\ldots,w_{0,l-1}],
\qquad l\geqslant 1.
\end{equation}
Thus $\Theta^\chi$ restricts to an isomorphism 
$\SH^{\chi,0}_{1} \to U_\eta(\Wen^0).$
Next, observe that 
\begin{equation}\Theta^1(\SH^>_{1}) \subset U(\Wen^<),
\qquad\Theta^1(\SH^<_{1}) \subset U(\Wen^>).
\end{equation}
Moreover, since 
\begin{equation}\SH^{(1)}_{1} = \SH^>_{1} \otimes 
\SH^{(1),0}_{1}\otimes \SH^<_{1},\quad
U_{\eta}(\Wen) = U(\Wen^<) \otimes U_{\eta}(\Wen^0) \otimes U(\Wen^>),
\end{equation}
by Proposition \ref{1.8:prop1}.
and the PBW theorem,
and since $\Theta^1$ is surjective we deduce that 
\begin{equation}\Theta^1 ~:\SH^>_{1} \to U(\Wen^<),\quad
\Theta^1~: \SH^<_{1} \to U(\Wen^>)
\end{equation} are surjective as well. 
It only remains to prove that they are isomorphisms. 
Both $\SH^>_{1}$ and $U(\Wen^<)$ carry a 
$\Z$-grading and a $\N$-filtration. 
The map $\Theta^1$ is compatible with these gradings and filtrations, 
i.e., we have
\begin{equation}\Theta^1(\SH^>_{1}[r,\leqslant\!l]) = U(\Wen^<)[-r,\leqslant\!l].
\end{equation}
But by Corolllary~\ref{P:poincareSH>} and 
\eqref{E:poincarew} these spaces have the same dimension. It follows that 
\begin{equation}\Theta^1~:\SH^>_{1} \to U(\Wen^<)
\end{equation}
is an isomorphism. The same holds for $\SH^<_{1}$. 
We are done.
 \end{proof}

\vskip3mm

\subsection{The isomorphism for a general level} 
Now we construct a $Z$-algebra isomorphism 
\begin{equation}\Theta~: \SH^{\cb}_{1} \to U(\Wen)\otimes Z,
\qquad Z=\C[\cb_l\,;\,l\geqslant 1].
\end{equation}
The construction of $\Theta$ is  inspired by $\Theta^\chi$. 
Recall that \eqref{E:rel1bis2}, \eqref{E:rel2bis2}
yield 
\begin{equation}D_{1,l}=\ad(D_{0,2})^l (D_{1,0}),\qquad
D_{-1,l}=(-1)^l\ad(D_{0,2})^l (D_{-1,0}),\qquad l\geqslant 0.
\end{equation}
Thus, by Proposition~\ref{P:thetachi}, the assignments
\begin{equation}D_{1,l} \mapsto 2^{-l} \ad(w_{0,2})^l (b_{-1}), \qquad 
D_{-1,l} \mapsto (-2)^{-l}\ad(w_{0,2})^l (b_{1}),\qquad l\geqslant 0
\end{equation}
extend to algebra isomorphisms 
\begin{equation}\label{theta1}\Theta~:\SH^>_{1} \to U(\Wen^<),\qquad
\Theta~:\SH^<_{1} \to U(\Wen^>).\end{equation}
They coincide with the restrictions of $\Theta^1$. 
Next, we lift  the map
\begin{equation}\Theta^1~: \SH^{(1),0}_{1} \to U_{\eta}(\Wen^0)
\end{equation}
to a $Z$-algebra isomorphism 
\begin{equation}\label{theta3}
\Theta~: \SH^{\cb,0}_{1} \to U(\Wen^0)\otimes Z.
\end{equation}
For $l\geqslant 2$ we have
\begin{equation}
\gathered
E_l \in l(l-1)D_{0,l-1}+Z[\cb_0,D_{0,1},\dots,D_{0,l-2}],\\
[b_{1},\ad(w_{0,2})^l (b_{-1})] \in -(-2)^{l}lw_{0,l-1} \oplus 
\bigoplus_{k=1}^{l-2} \C w_{0,k} \oplus \C C \oplus \C b_0.
\endgathered
\end{equation}
In particular, we have 
$\SH^{\cb,0}_{1}=Z[\cb_0,E_l\,;\,l\geqslant 1].$
Thus, there is a unique $Z$-algebra isomorphism $\Theta$ as in \eqref{theta3}
such that
\begin{equation}\label{theta2}
\gathered
\Theta(\cb_0)=C, \qquad 
\Theta(E_l)=2^{-l}[b_{1},\ad(w_{0,2})^l (b_{-1})],\qquad
l\geqslant 2.
\endgathered
\end{equation}
We claim that the maps \eqref{theta1}, \eqref{theta3} glue together into a
$Z$-algebra isomorphism 
\begin{equation}\Theta~: \SH^{\cb}_{1} \to U(\Wen)\otimes Z.
\end{equation}
By the triangular decomposition argument, it is enough to prove that $\Theta$ 
is an algebra morphism, i.e., that
relations \eqref{E:rel1bis2}-\eqref{E:rel3bis2} hold in $U(\Wen)\otimes Z$. 
This is clear for
\eqref{E:rel1bis2}, \eqref{E:rel2bis2}, because $\Theta^1$ is an algebra 
morphism and $\Theta$ is a lift of $\Theta^1$. The relation 
\eqref{E:rel3bis2} holds by construction, because
\begin{equation}\gathered
\Theta([D_{-1,0},D_{1,l}])=\Theta(E_l)=[b_{1},\Theta(D_{1,l})],\qquad l\geqslant 2,\\
\Theta([D_{-1,0},D_{1,0}])=\Theta(\cb_0)=C=[b_1,b_{-1}],\\
\Theta([D_{-1,0},D_{1,1}])=\Theta(-\cb_1)=b_0+C/2=
[b_1,L_{-1}^{1/2}+b_{-1}/2]=[b_{1},\Theta(D_{1,1})].
\endgathered
\end{equation}
Therefore, we have proved the following.

\vspace{.1in}

\begin{theo} There is a unique $Z$-algebra isomorphism 
$\Theta~:\SH^{\cb}_{1} \to U(\Wen)\otimes Z$ satisfying
\begin{equation}\Theta(\cb_0)=C,\qquad
\Theta(D_{-l,0})=b_{l},\qquad 
\Theta(D_{0,2})=w_{0,2}/2,\qquad  l\neq 0.
\end{equation}
\end{theo}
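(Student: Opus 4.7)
The plan is to use the triangular decomposition $\SH^{\cb}_1 = \SH^{>}_1 \otimes \SH^{\cb,0}_1 \otimes \SH^{<}_1$ (the $\kappa = 1$ specialization of Proposition \ref{1.8:prop1}, valid after Proposition \ref{P:integralSH}) together with the already-established isomorphism $\Theta^1 : \SH^{(1)}_1 \to U_\eta(\Wen)$ at central charge $\mathbf{c}_0 = 1$. The strategy is to define $\Theta$ piecewise on the three factors, check that the defining relations \eqref{E:rel1bis2}--\eqref{E:rel3bis2} hold, and then conclude bijectivity by comparing graded dimensions.

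First, on $\SH^{>}_1$ and $\SH^{<}_1$ I would define $\Theta(D_{1,l}) = 2^{-l}\ad(w_{0,2})^l(b_{-1})$ and $\Theta(D_{-1,l}) = (-2)^{-l}\ad(w_{0,2})^l(b_1)$ for $l \geq 0$. Since these two subalgebras are generated by the $D_{\pm 1,l}$ (by the definition of $\SH^>$, $\SH^<$) and they are independent of the central parameters $\mathbf{c}_l$, the maps $\Theta:\SH^{>}_1 \to U(\Wen^{<})$ and $\Theta:\SH^{<}_1 \to U(\Wen^{>})$ are precisely the restrictions of $\Theta^1$; by Proposition \ref{P:thetachi} they are algebra isomorphisms. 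Next, I define $\Theta$ on $\SH^{\cb,0}_1$ as a $Z$-algebra map by $\Theta(\mathbf{c}_0) = C$, $\Theta(\mathbf{c}_l) = \mathbf{c}_l$ for $l \geq 1$, and
\begin{equation*}
\Theta(E_l) = 2^{-l}[b_1, \ad(w_{0,2})^l(b_{-1})], \qquad l \geq 2,
\end{equation*}
together with $\Theta(\mathbf{c}_0) = C$ and the prescribed values on $E_0, E_1$ (given by the low-order formulas \eqref{E:rel3bis2} specialized at $\kappa = 1$). By the explicit formula \eqref{leadingterm} for the leading term $l(l-1)\kappa D_{0,l-1}$ of $E_l$ and the parallel computation $[b_1, \ad(w_{0,2})^l(b_{-1})] \in -(-2)^l l\, w_{0,l-1} \oplus \text{(lower)}$, the element $E_l$ together with $\mathbf{c}_0, \mathbf{c}_1, \dots$ freely generates $\SH^{\cb,0}_1$ over $Z$, so this determines $\Theta$ on $\SH^{\cb,0}_1$ and it lands in $U(\Wen^0) \otimes Z$.

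The main technical step is to verify that these three maps glue into a $Z$-algebra homomorphism on $\SH^\cb_1$, i.e., that the defining relations \eqref{E:rel1bis2}--\eqref{E:rel3bis2} are preserved. Relations \eqref{E:rel1bis2}, \eqref{E:rel2bis2} reduce, modulo the ideal generated by $\mathbf{c}_l$ for $l \geq 1$, to relations already verified inside $\Theta^1$, and since both sides of each relation are independent of $\mathbf{c}_l$ for $l \geq 1$ (the $D_{\pm 1,\bullet}$ and $D_{0,\bullet}$ generators do not involve the central parameters on the nose), they lift verbatim. Relation \eqref{E:rel3bis2}, namely $[D_{-1,k}, D_{1,l}] = E_{k+l}$, is where the central parameters genuinely intervene, but this is handled by design: for $k = l = 0$ it is $[b_1, b_{-1}] = C = \Theta(\mathbf{c}_0)$, for $k+l = 1$ it reduces to the Virasoro/Heisenberg identity $[b_1, L^{1/2}_{-1} + b_{-1}/2] = b_0 + C/2$, and for $k + l \geq 2$ it follows tautologically from the definition $\Theta(E_l) = 2^{-l}[b_1, \ad(w_{0,2})^l(b_{-1})]$ combined with the already-verified identity $\Theta(D_{1,l}) = 2^{-l}\ad(w_{0,2})^l(b_{-1})$ and the bracket relations inside $U(\Wen)$.

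Finally, for bijectivity, surjectivity of each piece is part of the construction, and injectivity follows by comparing Poincaré series: by Corollary \ref{P:poincareSH>} and \eqref{E:poincarew}, the rank-graded, order-filtered dimensions of $\SH^{>}_1$, $\SH^{<}_1$ match those of $U(\Wen^{<})$, $U(\Wen^{>})$, while $\SH^{\cb,0}_1 \cong Z[\mathbf{c}_0, D_{0,l}\,;\,l \geq 1]$ maps bijectively onto $U(\Wen^0) \otimes Z \cong Z[C, w_{0,l}\,;\,l \geq 0]$. Uniqueness is immediate from Proposition \ref{prop:generators/involution2}(a): $\SH^\cb$ is generated by $\mathbf{c}_l$, $D_{\pm 1, 0}$, and $D_{0,2}$, and $D_{-1,0} = \Theta^{-1}(b_1)$ is already specified while $D_{1,0}$ is forced by requiring $\Theta$ to extend $\Theta^1$ on the subalgebra with trivial central parameters $\mathbf{c}_l$ ($l \geq 1$). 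The main obstacle is really just the bookkeeping in verifying \eqref{E:rel3bis2} at low orders ($k+l = 0, 1$), where the exponential generating function defining $E_{k+l}$ produces the central terms $\mathbf{c}_0, \mathbf{c}_1$ that must be matched against $C$ and a shift involving $b_0$.
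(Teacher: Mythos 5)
Your proposal follows the paper's proof step by step (triangular decomposition, definition of $\Theta$ piecewise, verification of relations \eqref{E:rel1bis2}--\eqref{E:rel3bis2}, comparison of Poincar\'e series), but there is a genuine gap in the treatment of $\cb_1$ that you do not notice and that the paper's own computation contradicts. You assert $\Theta(\cb_l)=\cb_l$ for $l\geqslant 1$, and in particular $\Theta(\cb_1)=\cb_1$. Now, at $\kappa=1$ one has $\xi=0$ and hence $E_1=-\cb_1$; so relation \eqref{E:rel3bis2} at $k+l=1$ reads $[D_{-1,0},D_{1,1}]=-\cb_1$ in $\SH^{\cb}_1$. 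Under $\Theta$ one computes, as you do,
$[b_1,\Theta(D_{1,1})]=[b_1,L^{1/2}_{-1}+b_{-1}/2]=b_0+C/2$.
For the relation to be preserved one therefore needs $\Theta(-\cb_1)=b_0+C/2$, i.e.\ $\Theta(\cb_1)=-(b_0+C/2)$, which is incompatible with the $Z$-linearity $\Theta(\cb_1)=\cb_1$ you posit. This is exactly the substitution the paper's proof makes at the step $\Theta(-\cb_1)=b_0+C/2$.

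The same assignment also breaks surjectivity. With your $\Theta$ the image of $\SH^{\cb,0}_1=Z[\cb_0,D_{0,l}\,;\,l\geqslant 1]$ in $U(\Wen^0)\otimes Z$ is $Z[C,w_{0,l}\,;\,l\geqslant 1]$: the generator $\cb_0$ goes to $C$ and, by the leading-term computation, the $D_{0,l}$ produce $w_{0,l}$ for $l\geqslant 1$, but $w_{0,0}=b_0$ is never hit (it appears only with the opposite sign in $\Theta(E_l)$ and cancels). So your final graded-dimension argument, which matches $\{\cb_0,D_{0,l}\,;\,l\geqslant 1\}$ against $\{C,w_{0,l}\,;\,l\geqslant 0\}$, hides an off-by-one: there is one more free generator on the $U(\Wen^0)\otimes Z$ side, and surjectivity requires $b_0$ to lie in the image, which forces $\Theta(\cb_1)$ to involve $b_0$. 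Correcting $\Theta(\cb_1)=-(b_0+C/2)$ (and consequently reading $Z=\C[\cb_l\,;\,l\geqslant 2]$, so that $\cb_1$ is not a scalar but is matched to the extra central generator $b_0$ of $U(\Wen)$) resolves both the relation check and the surjectivity count, and brings your argument into line with the paper's.
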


\vspace{.2in}

\section{Complements on Section 9}
\label{App:G}

We freely use the notations of Appendices B, C and D. We begin by explicitly computing $f_{-1,d}(G)$.
By definition, we have $f_{-1,d} [M_{r,n}]=c[M_{r,n-1}]$
if and only if the quantity
\begin{equation}
c_{\mu}=\sum_{\substack{\lambda \supset \mu \\ |\lambda \backslash \mu |=1}} \eu_{\mu}\eu_{\lambda}^{-1} \langle \mu \;;\; f_{-1,d}\;;\; \lambda \rangle\end{equation}
is equal to $c$ for any $r$-partition $\mu$. Using (\ref{Er:31}) and (\ref{formH}) we have
\begin{equation}
\eu_{\mu}\eu_{\lambda}^{-1} \langle \mu\;;\; f_{-1,d} \;;\; \lambda \rangle=c_1(\tau_{\lambda,\mu})^d \eu(N^*_{\lambda,\mu}-T^*_{\lambda})=(xy)^{-1}c_1(\tau_{\lambda,\mu})^d \eu(\tau_{\lambda,\mu}^*H_{\mu}+1).
\end{equation}
Furthermore, by Lemma \ref{lem:toto1},
\begin{equation}
\tau^*_{\lambda,\mu}H_{\mu} +1=qt \sum_{\sigma \subset \mu} \tau^*_{\lambda,\mu}\tau_{\sigma,\mu} - \sum_{\substack{\lambda' \supset \mu\\ \lambda' \neq \lambda}} \tau^*_{\lambda,\mu}\tau_{\lambda',\mu},
\end{equation}
where in first sum $|\mu \backslash \sigma|=1$ while in the second $|\lambda' \backslash \mu|=1$. Setting 
$a_{\lambda}=c_{1}(\tau_{\lambda,\mu})$ and $b_{\sigma}=c_1(\tau_{\mu,\sigma}) + x + y$ we obtain
\begin{equation}
c_{\mu}=(xy)^{-1} \sum_{\lambda}a_{\lambda}^d \frac{\prod_{\sigma} 
(b_{\sigma}-a_{\sigma})}{\prod_{\lambda' \neq \lambda} (a_{\lambda'}-a_{\lambda})}.
\end{equation}

\vspace{.05in}

\begin{lem}\label{lem:G1} Let $m \geqslant 0, n=m+r$ and $d \geqslant 0$. 
Let $z_1, \ldots, z_n, y_1, \ldots, y_m$ be formal variables. Then
\begin{equation}\sum_{i}z_{i}^d \frac{\prod_{k} (y_{k}-z_{i})}{\prod_{j \neq i} (z_{j}-z_{i})}
=\begin{cases} 0 & \quad \text{if}\; d <r-1\\ (-1)^{r-1} 
& \quad \text{if}\; d=r-1 \\ (-1)^r\sum_k y_k -\sum_i z_i & \quad \text{if}\; d=r. \end{cases}
\end{equation}
\end{lem}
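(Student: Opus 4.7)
The plan is to view the left-hand side as a sum of residues on $\mathbb{P}^1$ and invoke the global residue theorem. I will introduce the auxiliary rational function
$$F(z) \;=\; \frac{z^d\,\prod_{k=1}^m(y_k - z)}{\prod_{j=1}^n(z - z_j)}$$
of a single complex variable $z$, treating the $z_j$ and $y_k$ as generic parameters. A direct computation at each simple pole $z = z_i$ gives $\operatorname{Res}_{z=z_i}F(z) = z_i^d\prod_k(y_k-z_i)\big/\prod_{j\neq i}(z_i-z_j)$, which differs from the $i$-th summand in the lemma by the sign $(-1)^{n-1}$ coming from flipping the $n-1$ factors in the denominator.

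Next, I will apply the identity $\sum_i\operatorname{Res}_{z=z_i}F(z) + \operatorname{Res}_{z=\infty}F(z) = 0$ on $\mathbb{P}^1$, together with $\operatorname{Res}_{z=\infty}F(z) = -[z^{-1}]F(z)$. This reduces the lemma to extracting the coefficient of $z^{-1}$ in the Laurent expansion of $F(z)$ at infinity, and the lemma will read
$$\text{LHS} \;=\; (-1)^{n-1}[z^{-1}]F(z).$$

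To compute this coefficient I will expand the numerator using $\prod_k(1-y_k/z) = \sum_{a\geq 0}(-1)^a e_a(y)z^{-a}$, obtaining $(-1)^m z^{d+m}\sum_{a\geq 0}(-1)^a e_a(y)z^{-a}$, and the denominator reciprocal via the standard generating function $\prod_j(z-z_j)^{-1} = z^{-n}\sum_{s\geq 0}h_s(z)z^{-s}$ for the complete symmetric functions. Multiplying yields
$$F(z) \;=\; (-1)^m z^{d-r}\sum_{a,s\geq 0}(-1)^a e_a(y)\,h_s(z)\,z^{-a-s},$$
so $[z^{-1}]F(z) = (-1)^m\sum_{a+s=d-r+1}(-1)^a e_a(y)h_s(z)$; combined with the prefactor $(-1)^{n-1}$ and the relation $n = m+r$, the overall sign is $(-1)^{r-1}$.

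The three cases of the lemma will then fall out by inspecting the finite sum indexed by $\{(a,s)\in\mathbb{N}^2 : a+s = d-r+1\}$: for $d < r-1$ this index set is empty; for $d = r-1$ only $(a,s)=(0,0)$ contributes, giving $(-1)^{r-1}$; and for $d = r$ the pairs $(1,0)$ and $(0,1)$ contribute $(-1)^{r-1}(-e_1(y)+h_1(z)) = (-1)^r\bigl(\sum_k y_k - \sum_i z_i\bigr)$, matching the stated formula. The only real obstacle is careful sign bookkeeping at each step (the $(-1)^{n-1}$ from the denominator flip, the $(-1)^m$ from pulling $(-1)$'s out of $\prod_k(y_k-z)$, and the sign of the residue at infinity); there is no substantive analytic or combinatorial difficulty.
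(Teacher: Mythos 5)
Your proof is correct, and it takes a genuinely different route from the paper. The paper argues directly with the expression $P_d(z,y)$ as a rational function in $(z_1,\ldots,z_n,y_1,\ldots,y_m)$: it observes that $P_d$ is homogeneous of degree $d-r+1$ with at most simple poles along $z_i=z_j$, checks that the residues along those divisors cancel in pairs (so $P_d$ is a polynomial), concludes $P_d=0$ for $d<r-1$ by degree reasons, and then pins down the remaining two cases by specializing variables and taking limits. Your proof instead packages the sum as residues of a one-variable rational function $F(z)$ on $\mathbb{P}^1$, applies the global residue theorem, and reads off the answer as a single Laurent coefficient at infinity. What your approach buys is a uniform closed form for every $d$, namely $(-1)^{r-1}\sum_{a+s=d-r+1}(-1)^a e_a(y)\,h_s(z)$, from which all three cases are immediate; the paper's argument is more ad hoc but avoids invoking the residue theorem. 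Your sign bookkeeping is right: the pole residues, the $(-1)^{n-1}$ from flipping the denominator, the $(-1)^m$ from the numerator, and the residue-at-infinity convention $\operatorname{Res}_{z=\infty}F=-[z^{-1}]F$ are all handled correctly.

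One small but worth flagging: for $d=r$ you obtain $(-1)^r\bigl(\sum_k y_k-\sum_i z_i\bigr)$ and assert this \emph{matches} the stated formula, but the lemma as printed reads $(-1)^r\sum_k y_k-\sum_i z_i$ (no parentheses), which differs from yours by a sign on $\sum_i z_i$ when $r$ is odd. Your version is the correct one: take $r=1$, $m=0$, $n=1$; the left-hand side is $z_1^d$ (empty product over $j\neq i$), so for $d=1$ it equals $z_1$, which is $(-1)^1(0-z_1)=z_1$ and not $(-1)^1\cdot 0-z_1=-z_1$. So you have in effect corrected a typo in the statement rather than matched it, and this should be stated explicitly rather than glossed over.
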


\begin{proof} Let $P_d(z,y)$ be the left hand side of the above expression. 
It is a rational function of degree $d-r+1$ with at most
simple poles along the divisors $z_i=z_j$. It is easy to see that the residue of $P_d(z,y)$ along each of these 
divisors is in fact equal to zero, so that $P_d(z,y)$ is a homogeneous polynomial of degree $d-r+1$. 
This proves that $P_d(z,y)=0$ if $d< r-1$. To compute
the scalar $P_{r-1}(z,y)$ we may set $y_k=0$ for all $k$ and let $z_1 \mapsto \infty$. 
To compute $P_r(z,y)$ we may likewise consider
the limits $P_r(z,y)/z_i, P_r(z,y)/y_k$ as $z_i \mapsto \infty$ and $y_k \mapsto \infty$ respectively.
\end{proof}

Using Lemma \ref{lem:G1} together with the fact that for a given $r$-partition $\mu$, 
\begin{equation}\sum_{\sigma} b_{\sigma} - \sum_\lambda a_{\lambda}= -(e_1 + \cdots + e_r)
\end{equation}
we deduce
\begin{equation}c_{\mu}=\begin{cases} 0 & \quad \text{if}\; d <r-1\\ (-1)^{r-1} 
& \quad \text{if}\; d=r-1 \\ (-1)^r(e_1 + \cdots + e_r) & \quad \text{if}\; d=r .\end{cases}
\end{equation}
and thus that, for $d<r-1$,
\begin{equation}
f_{-1,d}(G)=0, \quad f_{-1,r-1}(G)=-(1)^{r-1}(xy)^{-1} G, \quad f_{-1,r}(G)=(-1)^r(\sum_i e_i)(xy)^{-1} G.
\end{equation}

This proves (\ref{E:whittaker1}) for $l=1$, the first part of (\ref{E:whittaker2}) and (\ref{E:whittaker3}). Relation (\ref{E:whittaker1})
for $l \geqslant 1$ and the second part of (\ref{E:whittaker2}) follow since $D_{-l,d}$ is obtained from $D_{-1,d}$ and $D_{-1,d+1}$
by iterated commutators with $D_{-1,0}$ or $D_{-1,1}$. Proposition \ref{prop:Whittaker-SH} is proved.

\vspace{.1in}

\begin{rem} The operator
$\rho^{(r)}(x^{d-1+2l}y^{l}D_{-l,d})$
preserves the lattice $\Lb^{(r)}$ by 
Remark \ref{rem:int}, and it has the cohomological degree
$2(2l-rl+d-1)$
by Remark \ref{rem:homdeg}. 
Thus for $l\geqslant 1$ and $r\geqslant 2$ we may deduce directly that
\begin{equation}
\label{8.56}
\gathered
\rho^{(r)}(D_{-l,d})([M_{r,n}])=0,\qquad 
\rho^{(r)}(D_{-l,r-1})([M_{r,n}])\in K_r\,[M_{r,n-l}],
\qquad d<r-1.
\endgathered
\end{equation}
\end{rem}

\newpage
 
\centerline{\textsc{Index of notations}}
 
\vspace{.1in}
 
\begin{tabular}{l @{\hspace{.3in}} r |  l @{\hspace{.3in}} r}
{\textbf{Rings, Groups}} & &{\textbf{Varieties, Bundles}} &\\
&&&\\

$F=\C(\kappa)$ & 1.1 & $\text{Hilb}_n$, $\text{Hilb}_{n,n+1}$ & 2.2, 2.4\\

$A=\C[\kappa]$ & 1.1 & $M_{r,n}, M_{r,n,n+1}$ & 3.1, 3.3\\

$\Lambda$, $\Lambda_n$&1.3
& $T_{\lambda}$ & 2.4, 3.2\\

$R=R_T=\C[x,y]$ & 2.7
& $T_{\lambda,\mu}, N_{\lambda,\mu}$&2.5, 3.3\\

$K=K_T=\C(x,y)$ & 2.7
& $\eu_{\lambda},$ $\eu_{\lambda,\mu}$ & 2.4, 3.3\\

 $K_r=K(\eps_1,\dots,\eps_r)$&1.8 
&$T_{\lambda},$ $N_{\mu,\lambda}$ & 3.3\\

$D$, $T$, $\widetilde D$&3.2
&$W=\chi_1^{-1} + \cdots + \chi_r^{-1}$ &3.4\\

& 
&$\tau_n,$ $\tau_{\lambda}$, $\tau_{\lambda,\mu}$ & 3.4\\

{\textbf{Algebras}} &&$C=C_\gen=C_E$ & 4.2\\
& & &\\

$\H_n, \H^\pm_n, \SH_n, \SH^\pm_{n}, \SH^{0}_n$ & 1.2 & \textbf{Others} &\\

$\SH^>_n, \SH^<_n$ & 1.5 & & \\
$\SH^{\pm}$& 1.5, 1.6 
& $\pi : \SH^+ \to \SH^-$& 1.2, 1.8\\

$\SH^>, \SH^<$& 1.7 
& $c(s)=x(s)-\kappa y(s)$ & 1.4\\

$\SHo, \SHoo$& 1.7 & $D_{0,l}^{(n)}, D_{0,l} $ & 1.4, 1.6
\\

$ \SH^{\cb}, \SH^{\cb,0}, \SH^{\cb}_{A}$& 1.8, F.1 & $D_{\pm 1, l}^{(n)}, D_{\pm 1,l}, E_l^{(n)}, E_l$ & 1.5, 1.7
\\   

$\SH^{(r)}_K$ & 1.8 
& $D_{l,0}^{(n)}, D_{l,0}$ & 1.5, 1.6\\

$\U^{(r)}_K,$ 
$\U^{(r),>}_K,$ $\U^{(r),<}_K,$ $\U^{(r),+}_K,$ $\U^{(r),-}_K$& 2.8, 3.6
& $\omega=D_{0,0}$ & 1.7\\

$\SC',$ $\mathbf{C}$, $\SC$,
$\SC'_K,$ $\mathbf{C}_K$, $\SC_K$& 4.4 
& $\square, \square_n$ & 1.4, 1.6 \\

$\mathbf{Sh}$& 4.5 
& $\xi=1-\kappa$ & 1.5\\

$\U^{(r),>},$ $\U^{(r),<}$& 6.1 
& $G_l(s), \varphi_l(s), \phi_l(s)$ & 1.5, 1.8\\

$\mathbb{H}_n, \mathbb{S}\mathbb{H}_n, \mathbb{S}\mathbb{H}^{\cm}$ & 7.1, 7.4
&$K(\kappa,\omega,s) $ & 1.5\\

$W_k(\gen\len_r)$ & 8.1 & $J_\lambda^{(n)}$, $J_\lambda$& 1.6\\

$\Uen(W_k(\gen\len_r)), \scrU(W_k(\gen\len_r))$ & 8.2, 8.4 
& $\epsilon : \SH^c \to F$ & 1.8\\

$\Uen(\SH^{(r)}_K)$ & 8.5 
& $D_{r,d}$ & 1.9 \\

$W_{1+\infty}$& F.2 
& $Y_{r,d}$ & 1.9 \\
 
& & $\bullet$ (Wilson operators) & 1.10, 3.7, 4.6\\
 
{\textbf{Maps}}&
& $b_l$, $H_l$ & 1.10\\

& & $q$, $t$, $x=\cc_1(q)$, $y=\cc_1(t)$ & 2.1 \\

$\pi_n : \Lambda \to \Lambda_n$, $\pi_{n+1,n}$ & 1.6 &$\kappa=-y/x$ & 2.8 \\

$\Phi_n : \SH^+ \to \SH^+_n$ & 1.6 & $\mathscr E$, $\mathscr E^+$, 
$\mathscr E^-$ &2.9 \\

$\Phi_n : \SHo \to \SH_n$ & 1.7 & $e_a=\cc_1(\chi_a)$ & 3.2 \\

$\Lambda_K\simeq\widetilde\Lb^{(1)}_K$&2.9  
& $v=q^{-1}t^{-1}$&3.2\\

$\Psi: \widetilde{\SH}^{(1)}_K \to \widetilde{\mathbf{U}}^{(1)}_K$ &  2.9
& $\eps_a=e_a/x$ & 3.6 \\

$\Psi: \SH^{(r)} \to \mathbf{U}^{(r)}_K$ & 3.6
& $c_a(s)=x(x(s)-\kappa y(s)-\eps_a)$ & 3.6\\

$\pi^{(1^r)} \to \mathbf{L}^{(1^r)}_K$&8.9
& $\SYM_n$&3.7 \\

$\Theta : \SH^{(r)}_K \to \scrU(W_k(\gen\len_r))$& 8.9
& $\theta_l$&4.5\\

&
& $a_l$&7.1 \\

\textbf{Representations}&
& $\alpha_l$&7.4 \\

& 
&  $H(z),$ $L(z)$, $b(z)$ & 8.10 \\

$\rho_n, \rho^+$ & 1.3, 1.6 & &\\

$\tilde{\rho}^{(1)} : \widetilde{\mathbf{U}}^{(1)}_K \to \text{End}(\widetilde{\Lb}^{(1)}_K)$ & 2.9 & & \\

$\rho^{(r)} : {\mathbf{U}}^{(r)}_K \to \text{End}({\Lb}^{(r)}_K)$ & 3.6 & &\\

$\rho^{(\nu)} : {\mathbf{U}}^{(|\nu|)}_K \to \text{End}({\Lb}^{(\nu)}_K)$ & 8.5 & & \\

$M_{\beta}, \pi_{\beta}$ & 8.3, 8.4 & & \\

$\pi^{(1)},$ $\pi^{(1^r)}$, $\pi^{(r)}$ & 8.7, 8.9, 8.11 & &\\

\end{tabular}

\newpage

\vspace{.3in}

\small{}

\vspace{4mm}

\noindent
O. Schiffmann, \texttt{olivier.schiffmann@math.u-psud.fr},\\
D\'epartement de Math\'ematiques, Universit\'e de Paris-Sud, B\^atiment 425
91405 Orsay Cedex, FRANCE.

\vspace{.1in}

\noindent
E. Vasserot, \texttt{vasserot@math.jussieu.fr},\\
D\'epartement de Math\'ematiques, Universit\'e de Paris 7, 175 rue du Chevaleret, 75013 Paris, FRANCE.

\end{document}